\DeclareTextSymbol{\cyrsftsn}{OT2}{126}
\DeclareTextSymbol{\textnumero}{OT2}{125}
\renewcommand{\theequation}{\thesection.\arabic{equation}}
\theoremstyle{plain}
\newtheorem*{remark}{Remark}
\newtheorem{theorem}{Theorem}[section]
\newtheorem{lemma}{Lemma}[section]
\newtheorem{corollary}{Corollary}[section]
\newtheorem{proposition}{Proposition}[section]
\newtheorem{definition}{Definition}[section]
\newtheorem{example}{Example}[section]
\begin{document}
\title{{\LARGE\bf{Stochastic Integration on Stochastic Sets of Interval Type and Applications to Mathematical Finance}\thanks{This work was supported by the National Natural Science Foundation of China (12171339).}}}
\author{Jia Yue$^a$, Ming-Hui Wang$^a$, Nan-Jing Huang$^b$\footnote{Corresponding author.  E-mail addresses: nanjinghuang@hotmail.com, njhuang@scu.edu.cn} \\
{\small\it a. School of Mathematics, Southwestern University of Finance and Economics,}\\
{\small\it Chengdu, Sichuan 610074, P.R. China}\\{\small\it b. Department of Mathematics, Sichuan University, Chengdu,
Sichuan 610064, P.R. China}}
\date{ }
\maketitle
\begin{flushleft}
\hrulefill\\
\end{flushleft}
 {\bf Abstract}.
In the works of \cite{He, Jacod}, stochastic sets $\mathbb{B}$ of interval type, along with $\mathbb{B}$-stochastic processes, were introduced within the framework of stochastic analysis. In this paper, we undertake the construction of $\mathbb{B}$-stochastic integration by exploring three novel types of $\mathbb{B}$-stochastic integrals: Stieltjes integrals of $\mathbb{B}$-predictable processes with respect to $\mathbb{B}$-adapted processes with finite variation, stochastic integrals of $\mathbb{B}$-predictable processes with respect to $\mathbb{B}$-inner local martingales, and stochastic integrals of $\mathbb{B}$-predictable processes with respect to $\mathbb{B}$-inner semimartingales.
These $\mathbb{B}$-stochastic integrals are exclusively defined on subsets $\mathbb{B}$ of $\llbracket{0,+\infty}\llbracket$, with values outside the scope of $\mathbb{B}$ being deemed irrelevant.
Additionally, we present several notable consequences, including the relationship between $\mathbb{B}$-stochastic integrals and existing stochastic integrals, as well as It\^{o}'s formula for $\mathbb{B}$-inner semimartingales.
In the context of models pertaining to uncertain time-horizons in mathematical finance, we establish essentials of mathematical finance for general markets characterized by sudden-stop horizons. This is achieved by defining self-financing strategies, admissible strategies, and no-arbitrary conditions.
In such financial markets, the exclusivity characteristic inherent in $\mathbb{B}$-stochastic integrals offers investors a viable alternative approach. This approach enables them to effectively filter out unnecessary information pertaining to asset price dynamics and portfolio strategies that extend beyond the predefined time-horizons.

\noindent{\bf Keywords:} Stochastic sets $\mathbb{B}$ of interval type; $\mathbb{B}$-stochastic integration; $\mathbb{B}$-stochastic processes; It\^{o}'s formula for $\mathbb{B}$-inner semimartingales; Financial markets with sudden-stop horizons.

\noindent{\bf Mathematics Subject Classification (2020):}  60H05, 91G10.  %60H05 Stochastic integrals; 91G10 Portfolio theory;

\section{Introduction}
\subsection{Discussion and problems}
Unlike conventional stochastic processes, $\mathbb{B}$-stochastic processes, as delineated in \cite{He, Jacod}, are exclusively defined on a stochastic set $\mathbb{B}$ of interval type, which constitutes a particular subset within $\llbracket{0,+\infty}\llbracket$. Within this framework, values outside the scope of $\mathbb{B}$ are considered inconsequential.
The exclusivity property inherent in $\mathbb{B}$-stochastic processes offers a potent tool within the domain of stochastic analysis.
Jacod and Shiryaev (see Definition 2.46 of \cite{Jacod}) define a local martingale on $\mathbb{B}=\llbracket{0,T}\llbracket$  by employing a predictable time $T$ and a sequence of local martingales. They note that the values of such a $\llbracket{0,T}\llbracket$-local martingale beyond $\llbracket{0,T}\llbracket$ do not matter. This definition is subsequently utilized to investigate semimartingales' characteristics and exponential formula (see Theorem 2.47 of \cite{Jacod}).
Under the condition of local absolute continuity between two measures, He et al. (see Subsection 12.2 of \cite{He}) apply $\mathbb{B}$-stochastic processes to the study of Girsanov's theorems for local martingales and semimartingales.
However, an overarching theory that sheds light on stochastic integration on stochastic sets of interval type
is missing in the existing literature. This deficiency poses significant challenges in applying $\mathbb{B}$-stochastic processes to more general stochastic analysis and practical domains. For instance, without a well-defined theory of stochastic integration on stochastic sets of interval type, it is exceedingly difficult to establish the change-of-variable formula (i.e., It\^{o}'s Formula) for $\mathbb{B}$-stochastic processes.
In this paper, we endeavor to develop such a theory. Our objective is to formalize the definition of stochastic integration on stochastic sets of interval type and to demonstrate its applications in various areas of applied probability. One notable application lies in the field of mathematical finance, where stochastic integration on stochastic sets of interval type holds substantial importance in problems that involve uncertain time-horizons. We shall delve deeper into this aspect later on.

To elucidate the central problem under investigation, we shall initiate our discussion by revisiting the definitions of stochastic sets $\mathbb{B}$ of interval type and $\mathbb{B}$-stochastic processes:
\begin{itemize}
  \item [(1)] According to Definition 8.16 of \cite{He}, a set $\mathbb{B}\subseteq \Omega\times \mathbf{R}^+$ is called a {\bf stochastic set of interval type} if there is a non-negative random variable $T$ such that for each $\omega\in \Omega$ the section $\mathbb{B}_\omega=\{t: (\omega,t)\in \mathbb{B}\}$ is either $[0,T(\omega)[$ or $[0,T(\omega)]$, and $\mathbb{B}_\omega\neq\emptyset$. Furthermore, if $\mathbb{B}$ is also an optional (resp. predictable) set, then it is termed an {\bf optional} (resp. {\bf predictable}) {\bf set of interval type}.
  \item [(2)] From Definition 8.19 of \cite{He}, it follows that, given a class $\mathcal{D}$ of processes possessing the property $P$, a real-valued function $X$ defined on an optional set $\mathbb{B}$ of interval type is referred to as a {\bf $\mathbb{B}$-process} having the property $P$, if there exist an increasing sequence $(T_n)_{n\in\mathbf{N}^+}$ (in short: $(T_n)$) of stopping times and a sequence $(X^{(n)})\subseteq \mathcal{D}$ of processes such that $T_n\uparrow T$ ($T$ is the debut of $\mathbb{B}^c$),
$\bigcup\limits_{n=1}^{{+\infty}}\llbracket{0,T_n}\rrbracket\supseteq \mathbb{B}$, and for each $n\in \mathbf{N}^+$, $(XI_\mathbb{B})^{T_n}=(X^{(n)}I_\mathbb{B})^{T_n}$.
The collection of all $\mathbb{B}$-processes having the property $P$ is denoted by $\mathcal{D}^\mathbb{B}$, and in this case, $(T_n,X^{(n)})_{n\in\mathbf{N}^+}$ (in short: $(T_n,X^{(n)})$) is called a {\bf fundamental coupled sequence} (in short: FCS) for $X\in\mathcal{D}^\mathbb{B}$.
\end{itemize}
If $\mathcal{D}$ denotes the class encompassing all processes, then $X\in\mathcal{D}^\mathbb{B}$ is called a $\mathbb{B}$-process, and any FCS for $X\in\mathcal{D}^\mathbb{B}$ is also called a {\bf coupled sequence} (in short: CS) for $X$.
By choosing different instances of $\mathcal{D}$, we can define various classes of $\mathbb{B}$-processes, including $\mathcal{P}^\mathbb{B}$ ($\mathbb{B}$-predictable processes), $\mathcal{V}^\mathbb{B}$ ($\mathbb{B}$-adapted processes with finite variation), $(\mathcal{M}_{\mathrm{loc}})^\mathbb{B}$ ($\mathbb{B}$-local martingales), and $\mathcal{S}^\mathbb{B}$ ($\mathbb{B}$-semimartingales).

The principal aim of this paper is to propose a constructive approach to resolving the following problem concerning $\mathbb{B}$-stochastic integration:

\noindent\textbf{Problem (SI).} {\it Let $\mathbb{B}$ be an optional (resp. predictable) set of interval type, and assume that $H\in \mathcal{P}^\mathbb{B}$ and $X\in \mathcal{S}^\mathbb{B}$. Under what integrability conditions imposed on $H$ and $X$ can we define the $\mathbb{B}$-stochastic integral $H_{\bullet}X$ of $H$ with respect to $X$?}

The study of stochastic integrals boasts a rich historical background. For a comprehensive understanding, one may consult \cite{Doleans-dade,Ito,Jacod1979,Kardaras,Kunita} and \cite{Jarrow,Kuo,Meyer2009} for detailed expositions.
It is a logical and natural requirement that the $\mathbb{B}$-stochastic integrals we establish should reduce to the existing stochastic integrals of predictable processes with respect to semimartingales when $\mathbb{B}$ is identical to $\llbracket{0,+\infty}\llbracket$. Consequently, the construction of a viable $\mathbb{B}$-stochastic integration framework necessitates a program that is analogous to the current methodologies for defining stochastic integrals of predictable processes with respect to semimartingales.

To tackle Problem (SI), let us revisit the well-established theory of stochastic integration with respect to semimartingales (see, e.g., Definition 9.13 of \cite{He}, or Definition 12.3.19 of \cite{Cohen}).
Let $\widetilde{H}$ be a predictable process and $\widetilde{X}$ be a semimartingale. We say that $\widetilde{H}$ is $\widetilde{X}$-integrable if there exists a decomposition $\widetilde{X}=\widetilde{M}+\widetilde{A}$ such that $\widetilde{H}\in \mathcal{L}_m(\widetilde{M})\cap \mathcal{L}_s(\widetilde{A})$, where $\widetilde{M}$ is a local martingale, $\widetilde{A}\;(\widetilde{A}_0=0)$ is an adapted process with finite variation, $\mathcal{L}_m(\widetilde{M})$ and
$\mathcal{L}_s(\widetilde{A})$ represent the classes of predictable processes that are integrable with respect to $\widetilde{M}$ and $\widetilde{A}$, respectively. And in this scenario, the stochastic integral of $\widetilde{H}$ with respect to $\widetilde{X}$, denoted by $\widetilde{H}.\widetilde{X}$, is defined as $\widetilde{H}.\widetilde{X}=\widetilde{H}.\widetilde{A}+\widetilde{H}.\widetilde{M}$. Note that for $t\geq 0$, $(\widetilde{H}.\widetilde{X})_t=\int_{[0,t]}\widetilde{H}_sd\widetilde{X}_s=\widetilde{H}_0\widetilde{X}_0
+\int_{0}^t\widetilde{H}_sd\widetilde{X}_s$.
The collection of all predictable processes which are $\widetilde{X}$-integrable is denoted by $\mathcal{L}(\widetilde{X})$.
Given this background, in order to effectively address Problem (SI), it is of utmost importance to formulate the following two sub-problems:

\noindent\textbf{Problem (SI-1).} {\it Let $\mathbb{B}$ be an optional (resp. predictable) set of interval type, and assume that $H\in \mathcal{P}^\mathbb{B}$ and $A\in \mathcal{V}^\mathbb{B}$. Under what integrability conditions posed on $H$ and $A$ can we define the $\mathbb{B}$-stochastic integral $H_{\bullet}A$ of $H$ with respect to $A$?}

\noindent\textbf{Problem (SI-2).}  {\it Let $\mathbb{B}$ be an optional (resp. predictable) set of interval type, and assume that  $H\in \mathcal{P}^\mathbb{B}$ and $M\in (\mathcal{M}_{\mathrm{loc}})^\mathbb{B}$. Under what integrability conditions posed on $H$ and $M$ can we define the $\mathbb{B}$-stochastic integral $H_{\bullet}M$ of $H$ with respect to $M$?}

With regard to Problem (SI-1), one might initially contemplate defining the $\mathbb{B}$-stochastic integral $H_{\bullet}A$ through Stieltjes integrals as follows: For $(\omega,t)\in \mathbb{B}$, $(H_{\bullet}A)(\omega,t)=\int_{[0,t]}H_s(\omega)dA_s(\omega)$, under the condition that for all $(\omega,t)\in \mathbb{B}$, $\int_{[0,t]}|H_s(\omega)||dA_s(\omega)|<{+\infty}$. Here, $\int_{[0,t]}|dA_s(\omega)|$ denotes the variation of the process $A(\omega,\cdot)$ over the interval $[0,t]$. However, this proposed definition of $H_{\bullet}A$ may fall short of satisfying a fundamental property. Specifically, it may fail to ensure that $H_{\bullet}A$ remains a $\mathbb{B}$-adapted process with finite variation. This property is a crucial characteristic of the existing stochastic integrals (for instance, refer to Theorem 3.46(2) of \cite{He}).

We shall present an example to elucidate the aforementioned point. Let $T$ be a random variable with the cumulative distribution function defined as $G(t)=\frac{t}{2}I_{\{0\leq t<1\}}+I_{\{t\geq 1\}}$. Denote by $\mathbb{F}=(\mathcal{F}_t,t\geq 0)$ the natural filtration of the process $A=I_{\llbracket{T,{+\infty}}\llbracket}$. We define $\widehat{\mathbb{B}}=\llbracket{0,T_F}\llbracket\cap\llbracket{0,T_{F^c}}\rrbracket$, where $F=[T=1]$. It can be deduced that $T$ is an $\mathbb{F}$-stopping time with $T>0$, while $T_F$ is an $\mathbb{F}$-stopping time but not an $\mathbb{F}$-predictable time. This can be verified by Theorem 3.29(6) in \cite{He}. Consequently, $\widehat{\mathbb{B}}$ constitutes an optional set of interval type, although it is not a predictable set of interval type.
Next, we introduce the following functions defined on ${\widehat{\mathbb{B}}}$: $\widehat{H}(\omega,t)=\frac{1}{1-t}$, $\widehat{A}(\omega,t)=t$, $L(\omega,t)=\int_{[0,t]}\widehat{H}_s(\omega)d\widehat{A}_s(\omega)$, for $(\omega,t)\in\widehat{\mathbb{B}}$.
It is evident that $\widehat{H}\in \mathcal{P}^{\widehat{\mathbb{B}}}$ with the FCS $(T,HI_{\llbracket{0,1}\llbracket})$ and $\widehat{A}\in\mathcal{V}^{\widehat{\mathbb{B}}}$ with the FCS $(T,\widetilde{A})$, where $\widetilde{A}_t=t,\; t\in \mathbf{R}^+$. Through direct calculation, we can infer that $\int_{[0,t]}|\widehat{H}_s(\omega)||d\widehat{A}_s(\omega)|<{+\infty}$ and $L(\omega,t)=\ln\frac{1}{1-t}$ for all $(\omega,t)\in \widehat{\mathbb{B}}$.
Now, assume that $L$ is a $\mathbb{B}$-adapted process with finite variation, and $(T_n,L^{(n)})$ is its FCS. Given that $T_F$ is not an $\mathbb{F}$-predictable time, according to Definition 6.2.1 in \cite{Cohen}, there exists a set $B\subseteq[T=1]$ with $\mathbf{P}(B)>0$ and an integer $m\in\mathbf{N}^+$ such that $T_m(\omega)=T(\omega)=1$ for $\omega\in B$.
By the equation $L^{(m)}I_{{\widehat{\mathbb{B}}}\llbracket{0,T_m}\rrbracket}=LI_{{\widehat{\mathbb{B}}}\llbracket{0,T_m}\rrbracket}$, it is ensured that $L^{(m)}(\omega,t)=L(\omega,t)$ for $\omega\in B$ and $0\leq t<1$. However, this contradicts the property $L^{(m)}\in\mathcal{V}$, since $L^{(m)}$ cannot be c\`{a}dl\`{a}g. Consequently, we have conclusively shown that $L\notin \mathcal{V}^{\widehat{\mathbb{B}}}$. Given this result, it is not appropriate to directly define $\widehat{H}_\bullet\widehat{A}$ as follows: For $(\omega,t)\in \mathbb{B}$, $(\widehat{H}_\bullet\widehat{A})(\omega,t)=\int_{[0,t]}\widehat{H}_s(\omega)d\widehat{A}_s(\omega)$.

Regarding Problem (SI-2), given that $\widetilde{H}\in \mathcal{P}$ and $\widetilde{M}\in\mathcal{M}_{\mathrm{loc}}$, we revisit the notion of integrability. Specifically, $\widetilde{H}$ is deemed integrable with respect to $\widetilde{M}$ (see, e.g., Definition 9.1 of \cite{He}) if there exists a (unique) local martingale $L$ such that
$[L,\widetilde{N}]=\widetilde{H}.[\widetilde{M},\widetilde{N}]$
holds for every $\widetilde{N}\in\mathcal{M}_{\mathrm{loc}}$, where $[L,\widetilde{N}]$ and $[\widetilde{M},\widetilde{N}]$ represent the quadratic covariations of local martingales.
In light of this definition, as well as the construction of stochastic integration in \cite{Kardaras,Kunita}, it is crucial to introduce the concept of a $\mathbb{B}$-quadratic covariation for any processes $M,N\in (\mathcal{M}_{\mathrm{loc}})^\mathbb{B}$. According to Theorem 8.25 of \cite{He}, the $\mathbb{B}$-quadratic covariation can be characterized by a $\mathbb{B}$-process of $\mathcal{V}^\mathbb{B}$, denoted by $[M,N]$, which adheres to the following conditions: $MN-[M,N]\in (\mathcal{M}_{\mathrm{loc,0}})^\mathbb{B}$ and $\Delta [M,N]=\Delta M\Delta N$.
Here, $\Delta [M,N]$, $\Delta M$ and $\Delta N$ are $\mathbb{B}$-jump process (see Definition \ref{X+-}).
However, the practical application of such a definition presents a significant challenge. Notably, $[M,N]$ is not necessarily the unique $\mathbb{B}$-process $V\in \mathcal{V}^\mathbb{B}$ that satisfies both $MN-V\in (\mathcal{M}_{\mathrm{loc,0}})^\mathbb{B}$ and $\Delta V=\Delta M\Delta N$. This observation stands in stark contrast to the fundamental property of the quadratic covariation of local martingales, as articulated in sources such as Theorem 7.31 of \cite{He}.

We now proceed to elucidate the aforementioned point through an example that is grounded in Remark 8.24 of \cite{He}.
Let $T$ be a random variable following a unit exponential distribution, and let $\mathbb{F}=(\mathcal{F}_t,t\geq 0)$ denote the natural filtration of the process $I_{\mathbb{B}^*}$, where $\mathbb{B}^*=\llbracket{0,T}\llbracket$. According to Example 6.2.5 of \cite{Cohen} and Lemma 2.1 of \cite{Aksamit}, $T$ constitutes an $\mathbb{F}$-totally inaccessible time with $T>0$, and ${\mathbb{B}^*}$ is an optional set of interval type. We define the ${\mathbb{B}^*}$-process $\widehat{M}$ as $\widehat{M}=A^p\mathfrak{I}_{\mathbb{B}^*}$ (see Section 2.1), where $A=I_{\llbracket{T,{+\infty}}\llbracket}$, and $A^p_t=T\wedge t$ serves as the compensator of $A$ (see Proposition 2.4 of \cite{Aksamit}).
It is straightforward to verify that $\widehat{M}\in (\mathcal{M}_{\mathrm{loc}})^{\mathbb{B}^*}$ with the FCS $(T_n=T,M^{(n)}=A^p-A)$. On one hand, we observe that $\widehat{M}^{2}-\widehat{M}^{2}=0\mathfrak{I}_{\mathbb{B}^*}\in(\mathcal{M}_{\mathrm{loc},0})^{\mathbb{B}^*}$, with $\widehat{M}^{2}\in\mathcal{V}^{\mathbb{B}^*}$ and $\Delta \widehat{M}^{2}=(\Delta \widehat{M})^{2}=0\mathfrak{I}_{\mathbb{B}^*}$. On the other hand, given that $A^p-A\in\mathcal{V}$, we deduce that
$[A^p-A]=\sum_{s\leq\cdot}(\Delta(A^p-A)_s)^2=\sum_{s\leq\cdot}(\Delta A_s)^2=\sum_{s\leq\cdot}\Delta A_s=A$.
Then for each $n\in \mathbf{N}^+$, the relations
$\widehat{M}^{ 2}I_{\mathbb{B}^*\llbracket{0,T_n}\rrbracket}=((M^{(n)})^{ 2}-A)I_{\mathbb{B}^*\llbracket{0,T_n}\rrbracket}$ and $(M^{(n)})^{2}-A=(M^{(n)})^{2}-[M^{(n)}]\in\mathcal{M}_{\mathrm{loc},0}$
jointly imply that $\widehat{M}^{2}\in(\mathcal{M}_{\mathrm{loc},0})^{\mathbb{B}^*}$. This subsequently yields $\widehat{M}^{2}-0\mathfrak{I}_{\mathbb{B}^*}\in(\mathcal{M}_{\mathrm{loc},0})^{\mathbb{B}^*}$ and $\Delta (0\mathfrak{I}_{\mathbb{B}^*})=(\Delta M)^{2}=0\mathfrak{I}_{\mathbb{B}^*}$.
Consequently, both $0\mathfrak{I}_\mathbb{B}$ and $\widehat{M}^{2}$ ($\widehat{M}^{ 2}\neq 0\mathfrak{I}_{\mathbb{B}^*}$) satisfy the criteria for being the process $V\in \mathcal{V}^{\mathbb{B}^*}$ such that $\widehat{M}^{2}-V\in (\mathcal{M}_{\mathrm{loc},0})^{\mathbb{B}^*}$ and $\Delta V=(\Delta M)^2$.
This observation highlights the inappropriateness of directly defining the $\mathbb{B}$-quadratic covariation $[\widehat{M},\widehat{M}]$ as a $\mathbb{B}$-process of $\mathcal{V}^\mathbb{B}$ satisfying the conditions $\widehat{M}^2-[\widehat{M},\widehat{M}]\in (\mathcal{M}_{\mathrm{loc,0}})^\mathbb{B}$ and $\Delta [\widehat{M},\widehat{M}]=(\Delta \widehat{M})^2$.

\subsection{Contribution}
This study focuses on the construction of $\mathbb{B}$-stochastic integration by providing affirmative resolutions to Problems (SI), (SI-1), and (SI-2), among which Problem (SI-2) assumes a pivotal role. To achieve this objective, we introduce the concept of $\mathbb{B}$-quadratic covariations, which serve as the cornerstone for the development of $\mathbb{B}$-stochastic integration. To provide a preliminary insight into the implementation of this framework, we shall first revisit the construction of quadratic covariations in the classical setting (see, e.g., \cite{Cohen,He}).
In the context where $\widetilde{M}$ and $\widetilde{N}$ are both square-integrable martingales, the predictable quadratic covariation $\langle \widetilde{M},\widetilde{N}\rangle$ can be uniquely characterized as the predictable process with integrable variation, such that the process $\widetilde{M}\widetilde{N}-\langle \widetilde{M},\widetilde{N}\rangle$ constitutes a uniformly integrable martingale with a null initial value. Subsequently, the quadratic covariation $[\widetilde{M},\widetilde{N}]$ of two local martingales $\widetilde{M}$ and $\widetilde{N}$ is defined as $[\widetilde{M},\widetilde{N}]=\widetilde{M}_0\widetilde{N}_0+\langle \widetilde{M}^c,\widetilde{N}^c\rangle+\sum_{s\leq \cdot}(\Delta \widetilde{M}_s\Delta \widetilde{N}_s)$, where $\widetilde{M}^c$ and $\widetilde{N}^c$ are continuous martingale parts of $\widetilde{M}$ and $\widetilde{N}$, respectively.
In contrast to this conventional approach, our investigation commences with the scenario where $M$ and $N$ are both $\mathbb{B}$-continuous local martingales. We define the $\mathbb{B}$-predictable quadratic covariation $\langle M,N\rangle$ as the unique $\mathbb{B}$-process belonging to $(\mathcal{A}_{\mathrm{loc}}\cap \mathcal{C})^\mathbb{B}$ (see the notation of Section \ref{section1.4}), such that the process $MN-\langle M,N\rangle$ remains a $\mathbb{B}$-continuous local martingale with a null initial value. The rationale behind this definition stems from the observation that $\mathbb{B}$-square integrable martingales may not satisfy the uniqueness property inherent in the classical definition (refer to aforementioned $\mathbb{B}^*$ and $\widehat{M}$).
Subsequently, by leveraging $\mathbb{B}$-jump processes, we analogously define the $\mathbb{B}$-quadratic covariation $[M,N]$ of two $\mathbb{B}$-local martingales as $[M,N]=M_0N_0\mathfrak{I}_\mathbb{B}+\langle M^c,N^c\rangle+\Sigma (\Delta M\Delta N)$, where $\Sigma (\Delta M\Delta N)$ represents the $\mathbb{B}$-summation process of $\Delta M\Delta N$ (see Theorem \ref{restriction}(2)), and $M^c$ and $N^c$ denote the continuous martingale parts of $M$ and $N$ (see Theorem \ref{Lem-M}(1)), respectively. This definition extends the classical notion of quadratic covariation to the $\mathbb{B}$-stochastic framework, thereby facilitating the development of $\mathbb{B}$-stochastic integration.

Despite the well-defined nature of the $\mathbb{B}$-quadratic covariation $[M,N]$ for two $\mathbb{B}$-local martingales, it may not necessarily be the unique $\mathbb{B}$-process $V\in\mathcal{V}^\mathbb{B}$ satisfying the conditions $MN-V\in (\mathcal{M}_{\mathrm{loc,0}})^\mathbb{B}$ and $\Delta V=\Delta M\Delta N$. As previously discussed (refer to aforementioned $\mathbb{B}^*$ and $\widehat{M}$), this non-uniqueness arises from the existence of a $\mathbb{B}$-local martingale whose FCS must incorporate values that transcend the scope of $\mathbb{B}$. To address this limitation, we introduce the concept of $\mathbb{B}$-inner local martingales.
A $\mathbb{B}$-inner local martingale is characterized by an FCS that inherently excludes values lying outside the domain of $\mathbb{B}$. This definition aligns seamlessly with the exclusivity property inherent in $\mathbb{B}$-stochastic processes. More crucially, it guarantees the uniqueness in defining $\mathbb{B}$-stochastic integration (refer to the Remark following Definition \ref{HM}), thereby providing a robust framework for further theoretical developments and applications.

By leveraging the concepts of $\mathbb{B}$-quadratic covariation and $\mathbb{B}$-inner local martingales, we expand the scope of integrands and integrators in stochastic integrals to encompass $\mathbb{B}$-predictable processes and $\mathbb{B}$-inner semimartingales, respectively. This extension enables the establishment of stochastic integration on stochastic sets of interval type. More precisely, the following aspects are elaborated:
\begin{itemize}
  \item [(1)] Firstly, the $\mathbb{B}$-stochastic integral $H_{\bullet}A$ is constructed through the Stieltjes integral by paths, defined as $L(\omega,t)=\int_{[0,t]}H(\omega,s)dA(\omega,s)$ for $(\omega,t)\in \mathbb{B}$. In contrast to its classical counterpart, an additional condition $L\in\mathcal{V}^\mathbb{B}$ is imposed to ensure that $H_{\bullet}A$ remains a $\mathbb{B}$-adapted process with finite variation. The necessity of this condition is substantiated through equivalent integrability conditions (refer to Theorem \ref{HA-equivalent}).

  \item [(2)] Secondly, the $\mathbb{B}$-stochastic integral $H_{\bullet}M$ is developed under the integrability condition that $M$ is a $\mathbb{B}$-inner local martingale. A notable feature of this type of $\mathbb{B}$-stochastic integral is that the processes generated by integration retain the property of being $\mathbb{B}$-inner local martingales, thereby preserving the exclusivity characteristic inherent in $\mathbb{B}$-stochastic processes. More significantly, the introduction of the class of $\mathbb{B}$-inner local martingales ensures the uniqueness of the $\mathbb{B}$-stochastic integral.

  \item [(3)] Thirdly, under the integrability condition that $X$ is a $\mathbb{B}$-inner semimartingale, the $\mathbb{B}$-stochastic integral $H_{\bullet}X$ is established by combining the aforementioned two types of $\mathbb{B}$-stochastic integrals and utilizing an inner decomposition of $X$. The resulting stochastic integral is independent of the specific choice of inner decomposition for $X$ and remains a $\mathbb{B}$-inner semimartingale. Most importantly, the It\^{o} Formula for $\mathbb{B}$-inner semimartingales is derived, which facilitates the practical applications of $\mathbb{B}$-stochastic integration.
\end{itemize}

The above approach offers pedagogical advantages, as it does not necessitate prior knowledge of stochastic analysis on stochastic sets of interval type. Not only does this method construct $\mathbb{B}$-stochastic integrals in a manner analogous to existing stochastic integration frameworks, but it also establishes a comprehensive connection with a series of established stochastic integrals. On the one hand, when $\mathbb{B}$ coincides with the stochastic interval $\llbracket{0,+\infty}\llbracket$, the $\mathbb{B}$-stochastic integration reverts to the conventional stochastic integration, thereby demonstrating the consistency and generality of our approach.
On the other hand, one can leverage the integrability conditions inherent in existing stochastic integration theories to assess the integrability of $\mathbb{B}$-stochastic integrals. Furthermore, a $\mathbb{B}$-stochastic integral can be characterized as the sum of a sequence of existing stochastic integrals, thereby providing a bridge between the new framework and the established literature. This dual perspective enhances the accessibility and applicability of $\mathbb{B}$-stochastic integration within the broader context of stochastic analysis.

\subsection{Applications to mathematical finance}
Uncertain time-horizons are pivotal in addressing problems that incorporate randomness within real financial markets. Illustrative examples of such problems encompass the exit of a stock from the market (see, e.g., \cite{Bayraktar}), the default of a security (see, e.g., \cite{Okhrati}), and the demise of an investor (see, e.g., \cite{Yaari}), among numerous other events that occur on uncertain dates.
However, when mathematically modeling these problems, it is often unavoidable that extraneous information, which lies beyond the uncertain time-horizons, is inadvertently incorporated. This inclusion of irrelevant data can obscure the analysis and lead to convoluted results.
It is therefore more judicious to formulate these problems exclusively on the basis of the necessary information encapsulated within the specified time-horizons.
Nevertheless, this poses a challenge when attempting to apply the current stochastic integral framework, as it typically relies on information that may extend beyond the necessary bounds of the time-horizons.

Let us delve deeper into the issue of unnecessary information relative to uncertain time-horizons within the following scenario. Consider a simplified financial market where an investor allocates capital to a risky stock. We denote the time span of the market by $\llbracket{0,T}\rrbracket$, where $T$ is a positive real number, and let $\tau$ represent the uncertain time at which the stock exits the market, modeled as a positive stopping time. It is evident that, if all stock information throughout the entire time span is deemed viable, the portfolio optimization problem can be addressed by applying optimal investment strategies with an uncertain time-horizon (see, e.g., \cite{Blanchet}).
However, there are two pivotal aspects worthy of discussion:
\begin{itemize}
  \item [(1)] Firstly, is it imperative for the investor to model stock information subsequent to the exit time? From the investor's perspective, investments should be made strictly prior to the exit time $\tau$ and within the terminal time $T$. It is more reasonable to characterize the investor's time-horizon using a stochastic set of interval type, denoted as $\mathbb{B}=\llbracket{0,T}\rrbracket\llbracket{0,\tau}\llbracket$. Ideally, the stock price dynamics within $\mathbb{B}$ should suffice for the investor to make informed investment decisions, and portfolio rules should be developed accordingly within this time-horizon. Consequently, if the investor can derive an optimal portfolio rule using only the stock price information within $\mathbb{B}$, then stock information outside $\mathbb{B}$ can be deemed extraneous, and the financial market should ``stop suddenly" prior to the exit time.
  \item [(2)] Secondly, does incorporating stock information outside the time-horizon $\mathbb{B}$ influence the investment decision? When the default time $\tau$ is considered as a component of the terminal time, the investor's portfolio strategy and/or wealth may indeed be affected by stock information outside $\mathbb{B}$ (see, e.g., \cite{Blanchet,Landriault,Lv,Yu}), particularly in scenarios where a significant jump in stock price occurs at the exit time $\tau$. However, when the current date $t$ ($t\leq T$) is sufficiently distant from the exit time $\tau$ (i.e., the probability $\mathbf{P}(\tau\leq t)$ is negligible), the investor would reasonably believe that the stock will not exit the market within the time-horizon $[0,t]$. In such cases, the portfolio strategy and wealth should remain independent of stock information outside $\mathbb{B}$. Therefore, stock information outside the time-horizon retains the potential to alter investment decisions, especially as the exit time approaches.
\end{itemize}

In the existing literature, scenarios analogous to the one described above are discussed by augmenting stochastic processes and integrals with extraneous information outside the designated time-horizons, utilizing the current construction of stochastic integrals (see, e.g., \cite{Blanchet, Kostyunin, Landriault, Lin, Lopez-Barrientos,Lv, Marin-Solano,Yu}).
Specifically, the stock price process is constructed over the time span $\llbracket{0,T}\rrbracket$ or $\llbracket{0,\infty}\llbracket$ through the application of stochastic integrals. Subsequently, the uncertain time $\tau$ is incorporated into the investment framework, resulting in a terminal time defined as $\tau\wedge T$ or $\tau$. However, as previously elucidated, the stock information pertaining to the time-horizon $\rrbracket{\tau\wedge T,T}\rrbracket$ or $\rrbracket{\tau,\infty}\llbracket$ may be superfluous within the context of portfolio strategies and wealth management.
Given this exclusivity property, an alternative approach to addressing problems with uncertain time-horizons involves the utilization of stochastic integrals on stochastic sets of interval type. This method offers a more focused analysis by excluding irrelevant information, thereby enhancing the applicability of the results within the realm of mathematical finance.

In Section \ref{section6}, we construct financial markets characterized by time-horizons defined as stochastic sets of interval type, which we refer to as financial markets with sudden-stop horizons. We demonstrate how to efficiently eliminate extraneous information beyond these time-horizons by employing stochastic integrals defined on such stochastic sets. The crux of our approach lies in two key aspects:
\begin{itemize}
  \item [(1)] Firstly, we address the issue of determining the terminal time. Let $\mathbb{B}$ denote the sudden-stop horizon, and assume that the stock price $S$ is a $\mathbb{B}$-inner semimartingale with an appropriate FCS $(T_n,S^{(n)})$, where $\mathbb{B}=\llbracket{0,T_F}\llbracket\;\cap\;\llbracket{0,T_{F^c}}\rrbracket$  (see Lemma \ref{B}), $T_n\uparrow T$, and $T$ represents the debut of $\mathbb{B}^c$. The sudden-stop horizon is then partitioned into a sequence of time-horizons $\rrbracket{T_{n-1},T_n}\rrbracket$ (with $T_0=0$ and $n\in \mathbf{N}^+$), each of which can be regarded as the $n$-th investment period. Consequently, if appropriate conditions are met (for instance, when $T_n<T_F$ holds or $\mathbf{P}(T_n<T_F)$ is sufficiently high), the date $T_n$ can be selected as the terminal time over the interval $\rrbracket{T_{n-1},T_n}\rrbracket$. By doing so, the default event can be effectively excluded from the current time horizon.

  \item [(2)] Secondly, we focus on excluding unnecessary information beyond $\mathbb{B}$. In scenarios where $T_n<T_F$ holds, the uncertain time $T_F$ lies beyond $\mathbb{B}$, and the essential information regarding the stock price is determined by $S^{(n)}$ over the time-horizon $\llbracket{0,T_n\wedge T_{F^c}}\rrbracket$. Thus, the investor can effectively disregard extraneous information outside $\mathbb{B}$. Conversely, in cases where $T_n<T_F$ does not hold, the uncertain time still lies beyond $\mathbb{B}$ with probability $\mathbf{P}(T_n<T_F)$. Nevertheless, the necessary stock price information is predominantly determined by $(M^{(n)})^{T_n\wedge (T_F-)}$ (which serves as the local martingale exponent of $S^{(n)}$), excluding information beyond $\mathbb{B}$ in theory. In practice, investors in real financial markets can only make a finite number of investment decisions. Therefore, the investor can select an integer $N$ such that $\mathbf{P}(T_N<T_F)\geq p$ holds for a sufficiently high $p\in(0,1)$. By choosing $\{T_1\wedge T_{F^c},T_2\wedge T_{F^c},\cdots,T_N\wedge T_{F^c}\}$ as the terminal dates, the investor can exclude unnecessary information outside $\mathbb{B}$ with a sufficiently high probability.
\end{itemize}
To elucidate these two points, we present a simplified market model with a sudden-stop horizon and investigate the investor's optimal portfolio rules. Our findings reveal that the optimal strategy excludes information regarding the stock price after the uncertain time and does not convey any information about portfolio strategies beyond the time-horizon. This result confirms the exclusivity of the market with a sudden-stop horizon, thereby differentiating it from conventional markets with an uncertain time-horizon  (see, e.g., \cite{Blanchet,Lv,Yu}).

We also establish the essentials of mathematical finance for a generalized market framework denoted as $(S,\mathbb{F},\mathbb{B})$. Here, $\mathbb{B}$ represents a stochastic set of interval type, encapsulating the investor's sudden-stop horizon, while $S$ signifies a $\mathbb{B}$-inner semimartingale, modeling the stock price dynamics. By leveraging $\mathbb{B}$-stochastic integrals, we formally define self-financing strategies and admissible strategies within the context of $(S,\mathbb{F},\mathbb{B})$. Additionally, we delineate the conditions under which the market adheres to the principle of no-arbitrage. These definitions seamlessly degenerate into the conventional notions of self-financing strategies and admissible strategies, as well as the no-arbitrage conditions, within existing financial markets (see, e.g., \cite{Aksamit}).
More significantly, we elucidate the intrinsic relationship between the market $(S,\mathbb{F},\mathbb{B})$ and existing financial markets. Specifically, for each $n\in \mathbf{N}^+$, the market $(S,\mathbb{F},\mathbb{B})$ over the time-horizon $\mathbb{B}\llbracket{0,T_n}\rrbracket$ is equivalent to a conventional market $(S^{n},\mathbb{F})$, where $(T_n,S^{n})$ constitute an inner FCS for $S\in\mathcal{S}^{i,\mathbb{B}}$. This equivalence implies that the investor does not necessitate prior knowledge of a financial market characterized by a sudden-stop horizon. Within the confines of the specified time-horizon, the market can invariably be perceived as a conventional financial market, thereby facilitating a more intuitive and tractable analysis.

\subsection{Notation}\label{section1.4}
Let $(\Omega,\mathcal{F},\mathbf{P})$ denote a probability space, and let $\mathbb{F}=(\mathcal{F}_t,t\geq 0)$ represent a given filtration on this space that satisfies the usual conditions.
Unless otherwise explicitly specified, our analysis is grounded in the filtered probability space $(\Omega,\mathcal{F},\mathbb{F},\mathbf{P})$, which serves as the fundamental starting point for our investigation. The notation presented below is formulated in accordance with the research conducted by \cite{He}.

Denote the interval $\{x:a\leq x\leq b\}$, where $-\infty\leq a<b\leq {+\infty}$, by $[a,b]$, and analogous notation is used for $[a,b[$, $]a,b]$ and $]a,b[$. Let $\mathbf{R}$ represent the set of all real numbers, $\mathbf{R}^+=[0,{+\infty}[$ denote the set of all non-negative real numbers, and $\mathbf{N}^+=\{1,2,\cdots\}$ signify the set of all positive integers. For any $a\in\mathbf{R}$, we define
$a^+=\max\{a,0\}$ and $a^-=\max\{-a,0\}$.
Let $A$ and $B$ be two subsets of $\Omega$ (resp. $\Omega\times \mathbf{R}^+$).
The union and intersection of $A$ and $B$ are denoted by $A\cup B$ and $A\cap B$ (or simply $AB$), respectively, while the complement of $A$ is denoted by $A^c$.
The indicator function of the set $A$ is defined as follows:
\begin{equation*}
I_A(\omega)=\left\{
\begin{aligned}
1,&\quad \omega\in A,\\
0,&\quad \omega\in A^c.
\end{aligned}
\right.
\quad \left(\text{resp.}\quad
I_A(\omega,t)=\left\{
\begin{aligned}
1,&\quad (\omega,t)\in A,\\
0,&\quad (\omega,t)\in A^c.
\end{aligned}
\right.
\right)
\end{equation*}
For the sake of notational simplicity, the set comprising all elements that satisfy the property $P$, namely $\{\omega\in\Omega: P(\omega)\}$ (resp. $\{(\omega,t)\in\Omega\times \mathbf{R}^+: P(\omega,t)\}$), is denoted by $[P]$, provided that such notation introduces no ambiguity.

For two stopping times $S$ and $T$, we denote their minimum by $T\wedge S=\min\{T,S\}$. Subsequently, we define four distinct types of stochastic intervals as follows:
\[
\left\{
\begin{aligned}
\llbracket{S,T}\rrbracket&=\left\{(\omega,t)\in \Omega\times \mathbf{R}^+: S(\omega)\leq t\leq T(\omega)\right\},\\
\llbracket{S,T}\llbracket&=\left\{(\omega,t)\in \Omega\times \mathbf{R}^+: S(\omega)\leq t< T(\omega)\right\},\\
\rrbracket{S,T}\rrbracket&=\left\{(\omega,t)\in \Omega\times \mathbf{R}^+: S(\omega)< t\leq T(\omega)\right\},\\
\rrbracket{S,T}\llbracket&=\left\{(\omega,t)\in \Omega\times \mathbf{R}^+: S(\omega)< t< T(\omega)\right\}.
\end{aligned}
\right.
\]
In particular, we adopt the shorthand notation $\llbracket{T}\rrbracket=\llbracket{T,T}\rrbracket$, which represents the graph of $T$. Furthermore, for a sequence $(T_n)$ of stopping times, the notation $T_n\uparrow T$ signifies that $(T_n)$ is an increasing sequence of stopping times satisfying the limit condition $\lim\limits_{n\rightarrow+\infty}T_n=T$.

A stochastic process $(X_t)_{t\in \mathbf{R}^+}$ (or simply a process, which refers to a family of real random variables indexed by $\mathbf{R}^+$) is also denoted by $X$. By convention, for any c\`{a}dl\`{a}g process $X$, we set $X_{0-}=X_0$. Two processes are considered identical if they are indistinguishable: specifically, for two processes $X$ and $Y$, the relation $X=Y$ indicates that $X$ and $Y$ are indistinguishable.
For two subsets $C$ and $\widetilde{C}$ of $\Omega\times \mathbf{R}^+$ and a mapping $X: \widetilde{C}\rightarrow \mathbf{R}$, the equality $C=\widetilde{C}$ means $I_C=I_{\widetilde{C}}$, and the term $XI_C$ is introduced to streamline notation, which is defined as follows:
\begin{equation*}
(XI_C)(\omega,t)=\left\{
\begin{aligned}
&X(\omega,t),\quad&&(\omega,t)\in C\cap \widetilde{C},\\
&0,\quad&&\text{otherwise}.
\end{aligned}
\right.
\end{equation*}

Let $\mathcal{D}$ be a class of processes. We denote by $\mathcal{D}_0$ the subclass of $\mathcal{D}$ comprising all processes of $\mathcal{D}$ with null initial values. According to Definition 7.1 in \cite{He}, the localized class of $\mathcal{D}$, denoted by $\mathcal{D}_{\mathrm{loc}}$, is the collection of all processes $X$ satisfying the follows: $X_0$ is $\mathcal{F}_0$-measurable, and there exists a sequence $(T_n)$ of stopping times with $T_n\uparrow {+\infty}$ such that  $X^{T_n}-X_0\in\mathcal{D}$ for each $n\in \mathbf{N}^+$. The sequence $(T_n)$ is referred to as a localizing sequence for $X$ (w.r.t. $\mathcal{D}$).
$\mathcal{D}$ is called stable under stopping if $X^T\in\mathcal{D}$ for any $X\in \mathcal{D}$ and any stopping time $T$. Additionally, $\mathcal{D}$ is deemed stable under localization if it satisfies the equality $\mathcal{D}=\mathcal{D}_{\mathrm{loc}}$.

Throughout this paper, we use the following notations:
\begin{itemize}
  \item  $\mathcal{P}$ (resp. $\mathcal{C}$, resp. $\mathcal{R}$) --- the class of all predictable (resp. continuous, resp. c\`{a}dl\`{a}g) processes;
  \item  $\mathcal{V}$ (resp. $\mathcal{A}$) --- the class of all adapted (resp. adapted integrable) processes with finite variation;
  \item  $\mathcal{V}^+$ (resp. $\mathcal{A}^+$) --- the class of all adapted (resp. adapted integrable) increasing processes;
  \item  $\mathcal{M}_{\mathrm{loc}}$ (resp. $\mathcal{M}^c_{\mathrm{loc}}$, resp. $\mathcal{M}^d_{\mathrm{loc}}$) --- the class of all (resp. continuous, resp. purely discontinuous) local martingales;
  \item  $\mathcal{W}_{\mathrm{loc}}$ --- the class of all locally integrable variation martingale;
  \item  $\mathcal{S}$ --- the class of all semimartingales.
\end{itemize}
Note that $\mathcal{M}^d_{\mathrm{loc}}=\mathcal{M}^d_{\mathrm{loc},0}$. {\bf We emphasize that all elements of the set $\mathcal{S}$ are presumed to be c\`{a}dl\`{a}g.} Furthermore, the following classes are stable under stopping and localization: $\mathcal{P}$, $\mathcal{V}$, $\mathcal{A}_{\mathrm{loc}}$, $\mathcal{V}^+$, $\mathcal{A}^+_{\mathrm{loc}}$, $\mathcal{M}_{\mathrm{loc}}$, $\mathcal{M}^c_{\mathrm{loc}}$, $\mathcal{M}^d_{\mathrm{loc}}$, $\mathcal{W}_{\mathrm{loc}}$, and $\mathcal{S}$, where $\mathcal{A}^+_{\mathrm{loc}}=(\mathcal{A}^+)_{\mathrm{loc}}$. For a detailed proof, one may refer to, for instance, \cite{He,Jacod}.

This paper is organised as follows.
In the forthcoming section, we lay the groundwork for our study by introducing pertinent preliminaries. Specifically, we present the fundamental properties of $\mathbb{B}$-processes and delve into an investigation of $\mathbb{B}$-jump processes.
In Section \ref{section3}, we tackle Problem (SI-1) by formally defining and scrutinizing Stieltjes integrals of $\mathbb{B}$-predictable processes with respect to $\mathbb{B}$-adapted processes with finite variation.
Proceeding to Section \ref{section4}, we construct $\mathbb{B}$-quadratic covariations and define $\mathbb{B}$-inner local martingales. By leveraging the concept of $\mathbb{B}$-quadratic covariation, we establish stochastic integrals of $\mathbb{B}$-predictable processes with respect to $\mathbb{B}$-inner local martingales. This development offers a definitive and affirmative solution to Problem (SI-2).
In Section \ref{section5}, we integrate the two types of $\mathbb{B}$-stochastic integrals introduced in the preceding sections. Specifically, we construct stochastic integrals of $\mathbb{B}$-predictable processes with respect to $\mathbb{B}$-inner semimartingales, which provides an affirmative resolution to Problem (SI). Additionally, we present It\^{o}'s formula tailored for $\mathbb{B}$-inner semimartingales.
Finally, in Section \ref{section6}, we apply the $\mathbb{B}$-stochastic integrals developed in the preceding sections to the analysis of financial markets characterized by sudden-stop horizons.

\section{Preliminaries}\label{section2}\noindent
\setcounter{equation}{0}
In this section, we delineate the preliminaries to our study. Initially, we expound upon the fundamental properties of general $\mathbb{B}$-processes, which serve as the bedrock for the construction of $\mathbb{B}$-stochastic integration. Building upon this, we then delve into an examination of a significant class of $\mathbb{B}$-jump processes. These processes constitute a potent analytical instrument, particularly in the context of $\mathbb{B}$-semimartingales.

In this paper, our focus is directed towards optional sets of interval type and predictable sets of interval type.  Whenever stochastic sets of interval type are referenced, it is specifically in the context of optional sets of interval type or predictable sets of interval type. {\bf For the sake of simplicity, we employ the notations $\mathbb{B}$ and $\mathbb{C}$ throughout the rest of our paper to represent a stochastic set of interval type and a predictable set of interval type, respectively.}

\subsection{Fundamental properties of $\mathbb{B}$-processes}
The subsequent result, originated from Theorem 8.17 in \cite{He}, provides a characterization of stochastic sets of interval type by expressing them in the form of stochastic intervals.

\begin{lemma}\label{th8.17}
\begin{description}
  \item [$(1)$] $\mathbb{B}$ is an optional (resp. predictable) set of interval type if and only if
$I_\mathbb{B}=I_FI_{\llbracket{0,T}\llbracket}+I_{F^c}I_{\llbracket{0,T}\rrbracket}$, i.e.,
      \begin{equation}\label{B}
         \mathbb{B}=\llbracket{0,T_F}\llbracket\;\cap\;\llbracket{0,T_{F^c}}\rrbracket,
      \end{equation}
      where $T$ is a stopping time and the debut of $\mathbb{B}^c$,
      and $F\in \mathcal{F}_{T}$, and $T_F=TI_{F}+(+\infty)I_{F^c}>0$ is a (resp. predictable) stopping time.
  \item  [$(2)$] $\mathbb{C}$ is a predictable set of interval type if and only if $\mathbb{C}=\bigcup\limits_{n=1}^{{+\infty}} \llbracket{0,\tau_n}\rrbracket$, where $(\tau_n)$ is an increasing sequence of stopping times. The sequence $(\tau_n)$ is called a {\bf fundamental sequence} (in short: FS) for $\mathbb{C}$.
\end{description}
\end{lemma}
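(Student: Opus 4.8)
The plan is to derive both characterizations from three classical ingredients: the debut theorem, the measurability of an optional process along the graph of a stopping time, and the criterion that a stopping time is predictable if and only if its graph is a predictable set. I would prove (1) in full and then obtain (2) as a consequence.

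For the forward implication of (1), I would let $T$ be the debut of $\mathbb{B}^c$. Since $\mathbb{B}$ is optional (resp. predictable), $\mathbb{B}^c$ is optional, so the debut theorem makes $T$ a stopping time; inspecting the two admissible section shapes $[0,T(\omega)[$ and $[0,T(\omega)]$ shows this debut coincides with the random variable $T$ from the definition of a set of interval type. I then set $F=\{(\omega,T(\omega))\notin\mathbb{B}\}$, the set on which the section is half-open. Because $I_\mathbb{B}$ is an optional process, its value $I_\mathbb{B}(\omega,T(\omega))$ is $\mathcal{F}_T$-measurable on $\{T<\infty\}$, while the section shape forces $\{T=\infty\}\subseteq F$; hence $F^c\subseteq\{T<\infty\}$ and $F\in\mathcal{F}_{T}$. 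This makes $T_F=TI_F+(+\infty)I_{F^c}$ a stopping time, and nonemptiness of the sections (which gives $T>0$ on $F$) forces $T_F>0$. A sectionwise comparison on $F$ and $F^c$ then verifies $\mathbb{B}=\llbracket{0,T_F}\llbracket\cap\llbracket{0,T_{F^c}}\rrbracket$.

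The crux is upgrading ``$T_F$ is a stopping time'' to ``$T_F$ is predictable'' in the predictable case, and I expect this to be the main obstacle. The decisive observation is the identity $\mathbb{B}=\llbracket{0,T}\rrbracket\setminus\llbracket{T_F}\rrbracket$, equivalently $\llbracket{T_F}\rrbracket=\llbracket{0,T}\rrbracket\cap\mathbb{B}^c$, which I would confirm by comparing sections (removing the right endpoint exactly on $F$). Now $\llbracket{0,T}\rrbracket=\llbracket{0}\rrbracket\cup\rrbracket{0,T}\rrbracket$ is predictable for \emph{any} stopping time $T$, and $\mathbb{B}^c$ is predictable by hypothesis, so the graph $\llbracket{T_F}\rrbracket$ is a predictable set; the graph criterion then yields that $T_F$ is predictable. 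This is where the predictability hypothesis is genuinely used: the naive route through $\llbracket{0,T_F}\llbracket$ fails because that set is not visibly predictable, whereas passing to the graph via the difference identity sidesteps the difficulty. For the converse implication of (1), I would read the section shapes off $I_\mathbb{B}=I_FI_{\llbracket{0,T}\llbracket}+I_{F^c}I_{\llbracket{0,T}\rrbracket}$ to see $\mathbb{B}$ is of interval type, and use that $\llbracket{0,T_F}\llbracket$ and $\llbracket{0,T_{F^c}}\rrbracket$ are optional (and, when $T_F$ is predictable, that $\mathbb{B}=\llbracket{0,T}\rrbracket\setminus\llbracket{T_F}\rrbracket$ is predictable) to get the required measurability of $\mathbb{B}$.

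Finally, for (2): the implication from a union of closed graphs to a predictable set of interval type is immediate, since each $\llbracket{0,\tau_n}\rrbracket$ is predictable and, for an increasing sequence, the union of the sections $[0,\tau_n(\omega)]$ is an interval of the required type with right endpoint $\lim_n\tau_n(\omega)$ (attained or not according to whether the supremum is reached). For the reverse, I would invoke (1) to write $\mathbb{C}=\llbracket{0,T_F}\llbracket\cap\llbracket{0,T_{F^c}}\rrbracket$ with $T_F$ predictable, choose an announcing sequence $\sigma_n\uparrow T_F$ with $\sigma_n<T_F$, and set $\tau_n=\sigma_n\wedge T_{F^c}$. Since $\llbracket{0,T_F}\llbracket=\bigcup_n\llbracket{0,\sigma_n}\rrbracket$, distributing the intersection over the union gives $\mathbb{C}=\bigcup_n\llbracket{0,\sigma_n\wedge T_{F^c}}\rrbracket=\bigcup_n\llbracket{0,\tau_n}\rrbracket$ with $(\tau_n)$ increasing, completing the proof.
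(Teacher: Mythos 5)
Your proposal is correct, but there is nothing in the paper to compare it against in detail: the paper's entire ``proof'' of this lemma is a one-line citation to Theorems 8.17 and 8.18 of \cite{He}, so you have supplied an actual argument where the paper outsources one. What you wrote is, in substance, a sound self-contained reconstruction of the standard textbook route. The forward direction of (1) is handled correctly: the debut theorem (legitimate here since the paper assumes the usual conditions) makes $T$ a stopping time, $\mathcal{F}_T$-measurability of $F$ follows from sampling the optional process $I_\mathbb{B}$ at $T$ together with $\{T=+\infty\}\subseteq F$, and nonemptiness of sections gives $T_F>0$. You also correctly isolate the one genuinely delicate point, namely upgrading $T_F$ to a predictable time in the predictable case, and your identity $\llbracket{T_F}\rrbracket=\llbracket{0,T}\rrbracket\cap\mathbb{B}^c$ combined with predictability of $\llbracket{0,T}\rrbracket$ for an arbitrary stopping time and the graph criterion for predictable times is exactly the right mechanism; the same identity, read the other way, settles the predictable half of the converse. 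Part (2) via an announcing sequence $\sigma_n\uparrow T_F$ (where $T_F>0$ guarantees $\sigma_n<T_F$ everywhere) and $\tau_n=\sigma_n\wedge T_{F^c}$, distributing the intersection over the union, is also correct. The only caveat worth recording is that your two black boxes --- the debut theorem and the equivalence ``$S$ predictable $\Leftrightarrow$ $\llbracket{S}\rrbracket$ predictable'' --- are themselves nontrivial results requiring the usual conditions, so the argument is elementary only modulo that machinery; this is the same level of input the cited reference relies on, and your proof could stand in the paper's appendix as a genuine replacement for the citation.
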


\begin{remark}\normalfont
Let $\mathbb{B}$ be defined as specified in \eqref{B}. The stopping time $T$, representing the debut of $\mathbb{B}^c$, is uniquely determined by $\mathbb{B}$. Conversely, the set $F$ may not be uniquely specified. For instance, consider the following equality:
\[
\llbracket{0,+\infty}\llbracket=\llbracket{0,T_\Omega}\llbracket\;\cap\;\llbracket{0,T_{\Omega^c}}\rrbracket
=\llbracket{0,T_\emptyset}\llbracket\;\cap\;\llbracket{0,T_{\emptyset^c}}\rrbracket,
\]
where $T=+\infty$. Nonetheless, the stopping time $T_F$ remains uniquely determined by $\mathbb{B}$. To elucidate this point, suppose there exists another set $G\in \mathcal{F}_{T}$ such that $T_G>0$ and
\begin{equation*}
    \mathbb{B}=\llbracket{0,T_G}\llbracket\;\cap\;\llbracket{0,T_{G^c}}\rrbracket.
\end{equation*}
It then follows that $F\cap[T<+\infty]=G\cap[T<+\infty]$. Consequently, it is rigorously established that $T_F=T_G$.
\end{remark}

In order to enhance the applicability of stochastic processes on stochastic sets of interval type, we hereby introduce the following concepts:
\begin{itemize}
  \item [(1)] Let $X$ and $Y$ be two $\mathbb{B}$-processes. $Y$ is said to be a $\mathbb{B}$-\textbf{modification} of $X$, if $XI_\mathbb{B}$ is a modification of $YI_\mathbb{B}$. $X$ and $Y$ are said to be $\mathbb{B}$-\textbf{indistinguishable}, if $XI_\mathbb{B}$ and $YI_\mathbb{B}$ are indistinguishable.
      As usual, two indistinguishable $\mathbb{B}$-processes are regarded as the same. Specifically, the relation $X=Y$ is equivalently to $XI_\mathbb{B}=YI_\mathbb{B}$.
  \item [(2)] Let $\widetilde{\mathbb{B}}$ be another stochastic sets of interval type, with the property that $\mathbb{B}\subseteq \widetilde{\mathbb{B}}$. Suppose $X$ is a $\widetilde{\mathbb{B}}$-process. We introduce the concept of the $\mathbb{B}$-\textbf{restriction} of $X$, denoted by $X\mathfrak{I}_\mathbb{B}$, which is defined as a real-valued function on $\mathbb{B}$ that satisfies the condition $(X\mathfrak{I}_\mathbb{B})I_\mathbb{B}=XI_\mathbb{B}$. As a consequence of this definition, for any class $\mathcal{D}$ and any process $Y\in \mathcal{D}$, it holds that $Y\mathfrak{I}_\mathbb{B}\in \mathcal{D}^\mathbb{B}$ with the FCS $(T_n=T,Y)$, where $T$ is the debut of $\mathbb{B}^c$.
   \item [(3)]  A stopping time $\tau$ is called a $\mathbb{B}$-\textbf{inner stopping time} if $\llbracket{0,\tau}\rrbracket\subseteq \mathbb{B}$. Let $X$ be a $\mathbb{B}$-process, and let $T$ and $S$ denote two $\mathbb{B}$-inner stopping times.
       Analogous to conventional stopped processes, we introduce the stopped process $X^T$, which is defined as follows:
       \begin{equation}\label{stop}
        X^T=XI_{\llbracket{0,T}\rrbracket}+X_TI_{\rrbracket{T,{+\infty}}\llbracket}.
       \end{equation}
It can be effortlessly verified that the relation $(X^T)^S=X^{T\wedge S}=(X^S)^T$ holds true.
\end{itemize}

To enhance the utilization of general $\mathbb{B}$-processes, we present a summary of their fundamental properties in the subsequent theorem and corollary.
\begin{theorem}\label{process}
Let $\mathbb{B}$ be given by \eqref{B}, $\mathcal{D}$ be a class of processes, and $X,Y\in \mathcal{D}^\mathbb{B}$. Suppose that $(T_n,X^{(n)})$ is an FCS for $X\in \mathcal{D}^\mathbb{B}$  (resp. a CS for $X$), and that $(S_n)$ is an increasing sequence of stopping times with $S_n\uparrow T$ and $\bigcup\limits_{n=1}^{{+\infty}}\llbracket{0,S_n}\rrbracket\supseteq \mathbb{B}$.
\begin{itemize}
  \item[$(1)$] $X=Y$ if and only if $XI_{\mathbb{B}\llbracket{0,S_n}\rrbracket}=YI_{\mathbb{B}\llbracket{0,S_n}\rrbracket}$ for each $n\in \mathbf{N}^+$.
  \item[$(2)$] $X=X^{(k)}=X^{(l)}$ on $\mathbb{B}\llbracket{0,T_k}\rrbracket$ for any $k,\;l\in \mathbf{N}^+$ with $k\leq l$, i.e.,
      \begin{equation}\label{xkl}
      XI_{\mathbb{B}\llbracket{0,T_k}\rrbracket}=X^{(k)}I_{\mathbb{B}\llbracket{0,T_k}\rrbracket}
      =X^{(l)}I_{\mathbb{B}\llbracket{0,T_k}\rrbracket}.
      \end{equation}
      Specially, $X^{(k)}I_{\llbracket{0}\rrbracket}=XI_{\llbracket{0}\rrbracket}$.
  \item[$(3)$] $(\tau_n,X^{(n)})$ is an FCS for $X\in \mathcal{D}^\mathbb{B}$ (resp. a CS for $X$), where $\tau_n=T_n\wedge S_n$ for each $n\in \mathbf{N}^+$.
  \item[$(4)$] Suppose that $\mathcal{D}$ satisfies the linearity: $aU+bV\in \mathcal{D}$ holds for all $U,V\in \mathcal{D}$ and all $a,b\in \mathbf{R}$.
      Then $aX+bY\in \mathcal{D}^\mathbb{B}$ holds for all $a,b\in \mathbf{R}$.
  \item[$(5)$] $X$ can be expressed as
      \begin{equation}\label{x-expression}
      X=\left(X_0I_{\llbracket{0}\rrbracket}+\sum\limits_{n=1}^{{+\infty}}X^{(n)}I_{\rrbracket{T_{n-1},T_n}
      \rrbracket}\right)\mathfrak{I}_\mathbb{B},\quad T_0=0.
      \end{equation}
      Furthermore, if $(S_n,\widetilde{X}^{(n)})$ is also an FCS for $X\in \mathcal{D}^\mathbb{B}$ (resp. a CS for $X$), then $X=\widetilde{X}$ where $\widetilde{X}$ is given by
      \begin{equation*}
      \widetilde{X}=\left(X_0I_{\llbracket{0}\rrbracket}+\sum\limits_{n=1}^{{+\infty}}\widetilde{X}^{(n)}I_{\rrbracket{S_{n-1},S_n}
      \rrbracket}\right)\mathfrak{I}_\mathbb{B},\quad S_0=0.
      \end{equation*}
      In this case, we say the expression of \eqref{x-expression} is independent of the choice of the FCS $(T_n,X^{(n)})$ for $X\in \mathcal{D}^\mathbb{B}$ (resp. the CS $(T_n,X^{(n)})$ for $X$).
      \end{itemize}
\end{theorem}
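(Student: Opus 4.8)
The plan is to treat the five assertions in the order stated, relying throughout on the defining coupling relation $(XI_\mathbb{B})^{T_n}=(X^{(n)}I_\mathbb{B})^{T_n}$ of an FCS together with the elementary fact that ordinary stopping at a time $\tau$ leaves a process unchanged on the graph $\llbracket{0,\tau}\rrbracket$. Since a CS is exactly an FCS for the class of all processes, every argument below applies verbatim to both the ``FCS for $X\in\mathcal{D}^\mathbb{B}$'' and the ``CS for $X$'' readings, so I would not separate the two cases. A recurring technical device is that all the equalities in sight hold up to indistinguishability, so at each step I would collect the countably many exceptional null sets into a single null set before concluding.

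For (1), the forward implication is immediate from the convention that $X=Y$ means $XI_\mathbb{B}=YI_\mathbb{B}$. For the converse I would use the covering hypothesis $\bigcup_n\llbracket{0,S_n}\rrbracket\supseteq\mathbb{B}$ to write $\mathbb{B}=\bigcup_n\mathbb{B}\llbracket{0,S_n}\rrbracket$; off the union of the null sets on which the hypothesised equalities $XI_{\mathbb{B}\llbracket{0,S_n}\rrbracket}=YI_{\mathbb{B}\llbracket{0,S_n}\rrbracket}$ fail, one then has $XI_\mathbb{B}=YI_\mathbb{B}$ pointwise, i.e. $X=Y$. For (2), fix $(\omega,t)\in\llbracket{0,T_k}\rrbracket$ with $k\le l$; since $t\wedge T_k=t\wedge T_l=t$, stopping changes nothing at $(\omega,t)$, so the coupling relations at levels $k$ and $l$ give $(XI_\mathbb{B})(\omega,t)=(X^{(k)}I_\mathbb{B})(\omega,t)=(X^{(l)}I_\mathbb{B})(\omega,t)$. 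This is precisely \eqref{xkl}, and the special case at $t=0$ follows because $\llbracket{0}\rrbracket\subseteq\mathbb{B}\llbracket{0,T_k}\rrbracket$.

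For (3) I would verify the four defining properties of an FCS for $\tau_n=T_n\wedge S_n$: monotonicity and $\tau_n\uparrow T$ are inherited from $T_n\uparrow T$ and $S_n\uparrow T$; the covering $\bigcup_n\llbracket{0,\tau_n}\rrbracket\supseteq\mathbb{B}$ holds because any $(\omega,t)\in\mathbb{B}$ satisfies $t\le T_{n_1}(\omega)$ and $t\le S_{n_2}(\omega)$ for some indices, hence $t\le\tau_n(\omega)$ with $n=\max\{n_1,n_2\}$; and the coupling relation follows by stopping $(XI_\mathbb{B})^{T_n}=(X^{(n)}I_\mathbb{B})^{T_n}$ further at $S_n$ and using $(\,\cdot\,)^{T_n\wedge S_n}=((\,\cdot\,)^{T_n})^{S_n}$. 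Part (4) is then a short corollary: given FCS $(T_n,X^{(n)})$ for $X$ and $(\widehat S_n,Y^{(n)})$ for $Y$, I would apply (3) twice to place both on the common localizing sequence $\rho_n=T_n\wedge\widehat S_n$, obtaining FCS $(\rho_n,X^{(n)})$ and $(\rho_n,Y^{(n)})$; linearity of stopping and of multiplication by $I_\mathbb{B}$ then identifies $(\rho_n,aX^{(n)}+bY^{(n)})$ as an FCS for $aX+bY$, while $aX^{(n)}+bY^{(n)}\in\mathcal{D}$ by the assumed linearity of $\mathcal{D}$.

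The substance lies in (5). Writing $Z=X_0I_{\llbracket{0}\rrbracket}+\sum_nX^{(n)}I_{\rrbracket{T_{n-1},T_n}\rrbracket}$ (with $T_0=0$), I would first observe that the intervals $\rrbracket{T_{n-1},T_n}\rrbracket$ are pairwise disjoint, so the sum involves at most one nonzero term at each point and hence poses no convergence issue, and that together with $\llbracket{0}\rrbracket$ and the covering $\bigcup_n\llbracket{0,T_n}\rrbracket\supseteq\mathbb{B}$ they exhaust $\mathbb{B}$. For fixed $(\omega,t)\in\mathbb{B}$ I would select the minimal index $m$ with $t\le T_m(\omega)$: at $t=0$ only the $\llbracket{0}\rrbracket$-term survives and equals $X_0$, whereas for $t>0$ minimality gives $T_{m-1}(\omega)<t\le T_m(\omega)$, so $(\omega,t)\in\rrbracket{T_{m-1},T_m}\rrbracket$ and $Z(\omega,t)=X^{(m)}(\omega,t)$, which by (2) equals $X(\omega,t)$ on $\mathbb{B}\llbracket{0,T_m}\rrbracket$. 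Merging the countably many null sets as before yields $XI_\mathbb{B}=ZI_\mathbb{B}$, i.e. \eqref{x-expression} (the restriction $\mathfrak{I}_\mathbb{B}$ merely makes $Z$ a $\mathbb{B}$-process, its values off $\mathbb{B}$ being irrelevant). Independence of the FCS is then automatic: applying the same formula to $(S_n,\widetilde X^{(n)})$ exhibits $\widetilde X$ as another representative of the single $\mathbb{B}$-process $X$. The one point demanding care—and the likeliest source of error—is the bookkeeping at the ``seams'': confirming that the minimal-index selection places each point in exactly one interval (including the degenerate case $T_{n-1}=T_n$, where the interval is empty), that $X_0$ is unambiguous via the special case of (2), and that every almost-everywhere identification is absorbed into one null set before indistinguishability is invoked.
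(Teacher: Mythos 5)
Your proposal is correct and follows essentially the same route as the paper's proof: both rest on the defining coupling relation $(XI_\mathbb{B})^{T_n}=(X^{(n)}I_\mathbb{B})^{T_n}$, establish (2) and (3) first, and then obtain (4) and (5) by passing to a common refinement of stopping times and decomposing $\mathbb{B}$ into $\llbracket{0}\rrbracket$ together with the disjoint intervals $\rrbracket{T_{n-1},T_n}\rrbracket$. The only cosmetic differences are that you argue pointwise with explicit null-set bookkeeping where the paper manipulates indicator identities, and that for the independence claim in (5) you simply apply the already-proved expression to the second FCS $(S_n,\widetilde{X}^{(n)})$, whereas the paper recomputes along the common refinement $\tau_l=T_l\wedge S_l$ via parts (1)--(3); both are valid.
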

\begin{corollary}\label{process-FS}
Let $(\tau_n)$ be an FS for $\mathbb{C}$, $\mathcal{D}$ be a class of processes, and $X,Y\in \mathcal{D}^\mathbb{C}$. Suppose that $(T_n,X^{(n)})$ is an FCS for $X\in \mathcal{D}^\mathbb{C}$  (resp. a CS for $X$). Put $S_n=T_n\wedge \tau_n$ for each $n\in \mathbf{N}^+$.
\begin{itemize}
  \item [$(1)$] $X=Y$ if and only if for each $n\in \mathbf{N}^+$, $XI_{\llbracket{0,\tau_n}\rrbracket}=YI_{\llbracket{0,\tau_n}\rrbracket}$, or equivalently, $X^{\tau_n}=Y^{\tau_n}$.
  \item [$(2)$] $(S_n)$ is also an FS for $\mathbb{C}$, and $(S_n,X^{(n)})$ is also an FCS for $X\in \mathcal{D}^\mathbb{C}$ (resp. a CS for $X$) satisfying
       \begin{equation*}
       X^{S_n}=(X^{(n)})^{S_n}, \quad n\in \mathbf{N}^+.
       \end{equation*}
  \item [$(3)$] $X$ can be expressed as
      \begin{equation}\label{x-expression-FS}
      X=\left(X_0I_{\llbracket{0}\rrbracket}+\sum\limits_{n=1}^{{+\infty}}X^{\tau_n}I_{\rrbracket{\tau_{n-1},\tau_n}
      \rrbracket}\right)\mathfrak{I}_\mathbb{C},\quad \tau_0=0.
      \end{equation}
      Furthermore, if $(\widetilde{\tau}_n)$ is also an FS for $\mathbb{C}$, then $X=\widetilde{X}$ where  $\widetilde{X}$ is given by
      \begin{equation*}
      \widetilde{X}=\left(X_0I_{\llbracket{0}\rrbracket}+\sum\limits_{n=1}^{{+\infty}}X^{\widetilde{\tau}_n}
      I_{\rrbracket{\widetilde{\tau}_{n-1},\widetilde{\tau}_n}
      \rrbracket}\right)\mathfrak{I}_\mathbb{C},\quad \widetilde{\tau}_0=0.
      \end{equation*}
      In this case, we say the expression of \eqref{x-expression-FS} is independent of the choice of FS $(\tau_n)$.
\end{itemize}
\end{corollary}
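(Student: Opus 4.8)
The plan is to derive all three parts from Theorem~\ref{process} by using the fundamental sequence $(\tau_n)$ as the auxiliary increasing sequence of stopping times, after recording three elementary facts about an FS. First, since $\mathbb{C}=\bigcup_{n}\llbracket{0,\tau_n}\rrbracket$ by Lemma~\ref{th8.17}(2), each graph satisfies $\llbracket{0,\tau_n}\rrbracket\subseteq\mathbb{C}$, so that every $\tau_n$ is a $\mathbb{C}$-inner stopping time and $\mathbb{C}\llbracket{0,\tau_n}\rrbracket=\llbracket{0,\tau_n}\rrbracket$. Second, reading the identity $\bigcup_n[0,\tau_n(\omega)]=\mathbb{C}_\omega$ sectionwise and recalling that $T$ is the debut of $\mathbb{C}^c$, one checks that $\sup_n\tau_n(\omega)=T(\omega)$ in both cases $\mathbb{C}_\omega=[0,T(\omega)[$ and $\mathbb{C}_\omega=[0,T(\omega)]$; since $(\tau_n)$ is increasing, this gives $\tau_n\uparrow T$. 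Third, $\bigcup_n\llbracket{0,\tau_n}\rrbracket=\mathbb{C}\supseteq\mathbb{C}$. Thus $(\tau_n)$ meets exactly the hypotheses required of the auxiliary sequence in Theorem~\ref{process}.

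For part (1), I would apply Theorem~\ref{process}(1) with the auxiliary sequence taken to be $(\tau_n)$: it yields $X=Y$ if and only if $XI_{\mathbb{C}\llbracket{0,\tau_n}\rrbracket}=YI_{\mathbb{C}\llbracket{0,\tau_n}\rrbracket}$ for every $n$, and by the first preliminary fact $\mathbb{C}\llbracket{0,\tau_n}\rrbracket=\llbracket{0,\tau_n}\rrbracket$, which is the asserted criterion. The equivalence with $X^{\tau_n}=Y^{\tau_n}$ then follows from the definition \eqref{stop}: on $\llbracket{0,\tau_n}\rrbracket$ one has $X^{\tau_n}=X$ and $Y^{\tau_n}=Y$, while on $\rrbracket{\tau_n,+\infty}\llbracket$ both stopped processes are frozen at the common value $X_{\tau_n}=Y_{\tau_n}$, so agreement of $X,Y$ on $\llbracket{0,\tau_n}\rrbracket$ is the same as agreement of the stopped processes.

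For part (2), I would feed $(\tau_n)$ into Theorem~\ref{process}(3), which immediately shows that $(S_n,X^{(n)})$ with $S_n=T_n\wedge\tau_n$ is an FCS (resp. CS). That $(S_n)$ is itself an FS is checked directly: $(S_n)$ is increasing, $\llbracket{0,S_n}\rrbracket\subseteq\llbracket{0,\tau_n}\rrbracket\subseteq\mathbb{C}$ gives $\bigcup_n\llbracket{0,S_n}\rrbracket\subseteq\mathbb{C}$, and a pointwise argument using that $T_n\uparrow T$ and $\tau_n\uparrow T$ are both increasing yields the reverse inclusion. Finally, the FCS relation $(XI_\mathbb{C})^{S_n}=(X^{(n)}I_\mathbb{C})^{S_n}$, restricted to $\llbracket{0,S_n}\rrbracket\subseteq\mathbb{C}$ where $I_\mathbb{C}=1$, forces $X=X^{(n)}$ on $\llbracket{0,S_n}\rrbracket$; substituting this into \eqref{stop} gives $X^{S_n}=(X^{(n)})^{S_n}$.

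For part (3), rather than translating Theorem~\ref{process}(5), I would verify \eqref{x-expression-FS} directly with the criterion of part (1). Writing $\widehat X$ for the right-hand side, the key observation is the telescoping identity $\llbracket{0}\rrbracket\cup\bigcup_{n=1}^{m}\rrbracket{\tau_{n-1},\tau_n}\rrbracket=\llbracket{0,\tau_m}\rrbracket$, valid because $(\tau_n)$ is increasing with $\tau_0=0$. On each piece $\rrbracket{\tau_{n-1},\tau_n}\rrbracket$ one has $t\le\tau_n$, hence $X^{\tau_n}=X$ there, while the terms with $n>m$ do not meet $\llbracket{0,\tau_m}\rrbracket$; consequently $\widehat X I_{\llbracket{0,\tau_m}\rrbracket}=XI_{\llbracket{0,\tau_m}\rrbracket}$ for every $m$, so part (1) yields $X=\widehat X$, and independence of the FS is automatic since the same computation applied to another FS $(\widetilde\tau_n)$ shows $\widetilde X=X$. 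The work is essentially bookkeeping; the only steps demanding care are establishing $\tau_n\uparrow T$ from the definition of an FS (the sectionwise debut argument) and the consistent handling of stopped $\mathbb{C}$-processes via \eqref{stop}, and I expect no genuine obstacle, since each part collapses to Theorem~\ref{process} once the identity $\mathbb{C}\llbracket{0,\tau_n}\rrbracket=\llbracket{0,\tau_n}\rrbracket$ is in hand.
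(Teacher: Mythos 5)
Your proposal is correct and follows essentially the same route as the paper: the paper's own proof likewise records that $\mathbb{C}=\bigl(\bigcup_n\llbracket{0,\tau_n}\rrbracket\bigr)\cap\bigl(\bigcup_m\llbracket{0,T_m}\rrbracket\bigr)=\bigcup_n\llbracket{0,S_n}\rrbracket$ together with the stopped-process identity $X^{S_n}I_{\llbracket{0,S_n}\rrbracket}=XI_{\llbracket{0,S_n}\rrbracket}=X^{(n)}I_{\llbracket{0,S_n}\rrbracket}=(X^{(n)})^{S_n}I_{\llbracket{0,S_n}\rrbracket}$, and then deduces all three parts from Theorem~\ref{process}. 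Your extra care in checking $\tau_n\uparrow T$ sectionwise and your direct verification of \eqref{x-expression-FS} via part (1) (rather than quoting Theorem~\ref{process}(5)) are just more explicit versions of the same reduction, not a different argument.
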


Leveraging the concept of $\mathbb{B}$-restriction, we are able to derive the following results. Notably, the third result is drawn from Theorem 8.22 in \cite{He}.
\begin{theorem}\label{restriction}
\begin{itemize}
  \item [$(1)$] $H$ is a $\mathbb{B}$-predictable (resp. $\mathbb{B}$-locally bounded predictable) process if and only if there exists a predictable (resp. locally bounded predictable) process $\widetilde{H}$ satisfying
\begin{equation}\label{couple}
H=\widetilde{H}\mathfrak{I}_\mathbb{B}.
\end{equation}
  The process $\widetilde{H}$ is called a {\bf coupled  predictable} (resp. {\bf locally bounded predictable}) {\bf process} for $H$.
  \item [$(2)$] Let $X$ be a $\mathbb{B}$-optional process. Then $X$ is a $\mathbb{B}$-thin process if and only if $XI_\mathbb{B}$ is a thin process. Provided that for all $t>0$,
      $\sum_{s\leq t}|(XI_\mathbb{B})_s|$ is a.s. finite, the $\mathbb{B}$-restriction of $\sum_{s\leq \cdot}(XI_\mathbb{B})_s$ is called the $\mathbb{B}$-{\bf summation process} of $X$, denoted by $\Sigma X$, i.e.,
   \[
   \Sigma X=\left(\sum_{s\leq \cdot}(XI_\mathbb{B})_s\right)\mathfrak{I}_\mathbb{B}.
   \]
  \item [$(3)$] Let $\mathcal{D}$ be a class of processes which is stable under stopping, and suppose that $X\in \mathcal{D}^\mathbb{B}$ with the FCS $(T_n,X^{(n)})$. Define the predictable set $\mathbb{C}$ of interval type as
\[
\mathbb{C}=\bigcup\limits_{n}\llbracket{0,T_n}\rrbracket.
\]
Then there exists $\widetilde{X}\in\mathcal{D}^\mathbb{C}$ with the FCS $(T_n,\widetilde{X}^{T_{n}})$ satisfying $XI_\mathbb{B}=\widetilde{X}I_\mathbb{B}$ and
\begin{equation}\label{continuation}
\widetilde{X}^{T_1}=(X^{(1)})^{T_1},\quad \widetilde{X}^{T_{n+1}}=\widetilde{X}^{T_{n}}+(X^{(n+1)})^{T_{n+1}}-(X^{(n+1)})^{T_n}, \; n\in \mathbf{N}^+.
\end{equation}
  The sequence $((T_n,X^{(n)}),\widetilde{X})$ (or simply $(T_n,X^{(n)},\widetilde{X})$) is called a {\bf continuation} for $X\in \mathcal{D}^\mathbb{B}$.
  \item [$(4)$] Let $\mathbb{B}$ be given by \eqref{B}, and let $\mathcal{D}$ be a class of processes. Suppose that $(X^{(n)})\subseteq \mathcal{D}$ denotes a sequence of processes, and that $(T_n)$ is an increasing sequence of stopping times, satisfying $T_n\uparrow T$ and $\bigcup\limits_{n=1}^{{+\infty}}\llbracket{0,T_n}\rrbracket\supseteq \mathbb{B}$. If for any $k, l\in \mathbf{N}^+$ with $k\leq l$,
      \begin{equation*}
      X^{(k)}I_{\mathbb{B}\llbracket{0,T_k}\rrbracket}=X^{(l)}I_{\mathbb{B}\llbracket{0,T_k}\rrbracket},
      \end{equation*}
then $X\in \mathcal{D}^\mathbb{B}$ with the FCS $(T_n,X^{(n)})$, where $X$ is defined by
\begin{equation*}
      X=\left(X^{(1)}_0I_{\llbracket{0}\rrbracket}+\sum\limits_{n=1}^{{+\infty}}X^{(n)}I_{\rrbracket{T_{n-1},T_n}
      \rrbracket}\right)\mathfrak{I}_\mathbb{B},\quad T_0=0.
      \end{equation*}
\end{itemize}
\end{theorem}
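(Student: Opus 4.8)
The plan is to verify directly that the glued function $X$ meets the three requirements in the definition of $\mathcal{D}^\mathbb{B}$ with the proposed sequence $(T_n,X^{(n)})$: that $(X^{(n)})\subseteq\mathcal{D}$ and that $(T_n)$ is increasing with $T_n\uparrow T$ and $\bigcup_n\llbracket{0,T_n}\rrbracket\supseteq\mathbb{B}$ are given by hypothesis, so the only substantive point is the coupling identity $(XI_\mathbb{B})^{T_n}=(X^{(n)}I_\mathbb{B})^{T_n}$ for every $n$. Before that I would confirm that $X$ is a well-defined real-valued function on $\mathbb{B}$. Writing $Z=X^{(1)}_0I_{\llbracket{0}\rrbracket}+\sum_{m=1}^{\infty}X^{(m)}I_{\rrbracket{T_{m-1},T_m}\rrbracket}$ with $T_0=0$, the stochastic intervals $\llbracket{0}\rrbracket,\rrbracket{T_0,T_1}\rrbracket,\rrbracket{T_1,T_2}\rrbracket,\dots$ are pairwise disjoint, so at each point at most one summand is nonzero and $Z$ is an unambiguous process on $\Omega\times\mathbf{R}^+$; since $\bigcup_m\llbracket{0,T_m}\rrbracket=\llbracket{0}\rrbracket\cup\bigcup_m\rrbracket{T_{m-1},T_m}\rrbracket\supseteq\mathbb{B}$, the restriction $X=Z\mathfrak{I}_\mathbb{B}$ is defined on all of $\mathbb{B}$ and satisfies $XI_\mathbb{B}=ZI_\mathbb{B}$.

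For the coupling identity I would first reduce the stopped-process form to an interval form. By the definition of the stopped process, for any processes $Y,W$ one has $Y^{T_n}=W^{T_n}$ if and only if $Y_t=W_t$ for all $t\leq T_n$, that is, $YI_{\llbracket{0,T_n}\rrbracket}=WI_{\llbracket{0,T_n}\rrbracket}$; the frozen values on $\rrbracket{T_n,+\infty}\llbracket$ coincide automatically because they equal the common value at the endpoint $t=T_n$. Applying this with $Y=XI_\mathbb{B}$ and $W=X^{(n)}I_\mathbb{B}$, it suffices to prove $XI_{\mathbb{B}\llbracket{0,T_n}\rrbracket}=X^{(n)}I_{\mathbb{B}\llbracket{0,T_n}\rrbracket}$ for each $n$.

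To establish this I would decompose $\llbracket{0,T_n}\rrbracket=\llbracket{0}\rrbracket\cup\bigcup_{m=1}^{n}\rrbracket{T_{m-1},T_m}\rrbracket$ and use $XI_\mathbb{B}=ZI_\mathbb{B}$ together with the disjointness to obtain $XI_{\mathbb{B}\llbracket{0,T_n}\rrbracket}=X^{(1)}_0I_{\llbracket{0}\rrbracket}+\sum_{m=1}^{n}X^{(m)}I_{\mathbb{B}\rrbracket{T_{m-1},T_m}\rrbracket}$. Here the consistency hypothesis does the work: since $\rrbracket{T_{m-1},T_m}\rrbracket\subseteq\llbracket{0,T_m}\rrbracket$, the assumption $X^{(m)}I_{\mathbb{B}\llbracket{0,T_m}\rrbracket}=X^{(n)}I_{\mathbb{B}\llbracket{0,T_m}\rrbracket}$ (for $m\leq n$) lets me replace each $X^{(m)}$ by $X^{(n)}$ on $\mathbb{B}\cap\rrbracket{T_{m-1},T_m}\rrbracket$, collapsing the sum into $X^{(n)}I_{\mathbb{B}\cap(\llbracket{0,T_n}\rrbracket\setminus\llbracket{0}\rrbracket)}$; evaluating the same hypothesis (with $m=1$) at $t=0$, where $\llbracket{0}\rrbracket\subseteq\mathbb{B}$, gives $X^{(1)}_0=X^{(n)}_0$ and settles the initial term. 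Adding the two pieces yields exactly $X^{(n)}I_{\mathbb{B}\llbracket{0,T_n}\rrbracket}$, which is the required identity, and hence $X\in\mathcal{D}^\mathbb{B}$ with FCS $(T_n,X^{(n)})$.

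The argument is in essence a careful indicator-algebra computation, so the main obstacle is bookkeeping rather than any deep idea. The two delicate points are: justifying that the stopped-process form of the coupling identity is equivalent to the interval form $XI_{\mathbb{B}\llbracket{0,T_n}\rrbracket}=X^{(n)}I_{\mathbb{B}\llbracket{0,T_n}\rrbracket}$, which hinges on the endpoint $t=T_n$ being included in $\llbracket{0,T_n}\rrbracket$; and treating the initial graph $\llbracket{0}\rrbracket$ separately so as to pin down $X^{(1)}_0=X^{(n)}_0$ and match the leading term in the defining expression of $X$.
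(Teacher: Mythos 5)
Your argument for part (4) is correct and is essentially the paper's own proof of that part: both reduce the stopped-process coupling identity to the interval identity $XI_{\mathbb{B}\llbracket{0,T_k}\rrbracket}=X^{(k)}I_{\mathbb{B}\llbracket{0,T_k}\rrbracket}$, decompose $\llbracket{0,T_k}\rrbracket$ into $\llbracket{0}\rrbracket$ and the disjoint intervals $\rrbracket{T_{m-1},T_m}\rrbracket$, and use the consistency hypothesis to replace each $X^{(m)}$ by $X^{(k)}$ on $\mathbb{B}\rrbracket{T_{m-1},T_m}\rrbracket$ before collapsing the sum. Your explicit treatment of the endpoint in the equivalence $(Y)^{T_n}=(W)^{T_n}\Leftrightarrow YI_{\llbracket{0,T_n}\rrbracket}=WI_{\llbracket{0,T_n}\rrbracket}$ and of the initial graph $\llbracket{0}\rrbracket$ is sound.

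The genuine gap is that the statement has four assertions and you have proved only the last one. Nothing in your proposal addresses: part (1), the characterization of $\mathbb{B}$-predictable (resp.\ $\mathbb{B}$-locally bounded predictable) processes as restrictions $\widetilde{H}\mathfrak{I}_\mathbb{B}$ of predictable (resp.\ locally bounded predictable) processes, whose nontrivial direction requires building $\widetilde{H}$ from an FCS via the representation $H=\bigl(H_0I_{\llbracket{0}\rrbracket}+\sum_{n}H^{(n)}I_{\rrbracket{T_{n-1},T_n}\rrbracket}\bigr)\mathfrak{I}_\mathbb{B}$ and the predictability of each $H^{(n)}I_{\rrbracket{T_{n-1},T_n}\rrbracket}$; part (2), where the necessity direction is not pure bookkeeping but rests on a measurability fact — $[XI_\mathbb{B}\neq 0]=\bigcup_n[X^{(n)}I_{\mathbb{B}\llbracket{0,T_n}\rrbracket}\neq 0]$ is an \emph{optional} set contained in a countable union of thin sets, and one must invoke the result (Theorem 3.19 of He et al.) that an optional set contained in a thin set is thin; and part (3), the continuation result, which the paper sources from Theorem 8.22 of He et al.\ and which requires constructing $\widetilde{X}$ by the telescoping recursion \eqref{continuation} and verifying $\widetilde{X}\in\mathcal{D}^\mathbb{C}$ using stability of $\mathcal{D}$ under stopping. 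As written, the proposal proves a quarter of the theorem; the indicator-algebra technique you use does not by itself yield (1), (2), or (3), each of which needs an additional input (predictability of the glued process, the optional-thin-set lemma, and the stopping-stability construction, respectively).
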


The subsequent theorem, as derived from Theorem 8.20 in \cite{He}, highlights the crucial significance of the stopped process corresponding to \eqref{stop}.
\begin{theorem}\label{fcs}
Let $S$ be a $\mathbb{B}$-inner stopping time, and let $\mathcal{D}$ be a class of processes. Suppose that $X\in \mathcal{D}^\mathbb{B}$ with an FCS $(T_n,X^{(n)})$. If the class $\mathcal{D}$ is stable under stopping and localization, then $X^S\in \mathcal{D}$, and $(T_n,(X^{(n)})^S)$ is an FCS for $X^S\mathfrak{I}_\mathbb{B}\in \mathcal{D}^\mathbb{B}$.
\end{theorem}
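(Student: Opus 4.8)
The plan is to prove two things in turn: that the stopped process $X^S$, defined pathwise by \eqref{stop} as $X^S_t=X_{t\wedge S}$, is a genuine process lying in $\mathcal{D}$, and that $(T_n,(X^{(n)})^S)$ satisfies the three defining requirements of an FCS for $X^S\mathfrak{I}_\mathbb{B}$. Two preliminary observations drive everything. First, since $\llbracket{0,S}\rrbracket\subseteq\mathbb{B}$ and every section $\mathbb{B}_\omega$ is contained in $[0,T(\omega)]$ (with $T$ the debut of $\mathbb{B}^c$ as in \eqref{B}), we have $S\leq T$; consequently $X^S_t=X_{t\wedge S}$ is well defined for all $(\omega,t)$ because $(\omega,t\wedge S(\omega))\in\llbracket{0,S}\rrbracket\subseteq\mathbb{B}$, and $X^S_0=X_0$ is $\mathcal{F}_0$-measurable (it agrees with $X^{(n)}_0\in\mathcal{D}$ by Theorem \ref{process}(2)). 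Second, Theorem \ref{process}(2) supplies the identity $XI_{\mathbb{B}\llbracket{0,T_n}\rrbracket}=X^{(n)}I_{\mathbb{B}\llbracket{0,T_n}\rrbracket}$, i.e. $X=X^{(n)}$ on $\mathbb{B}\llbracket{0,T_n}\rrbracket$, which I invoke repeatedly.

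I would first dispatch the FCS claim. Since $\mathcal{D}$ is stable under stopping, $(X^{(n)})^S\in\mathcal{D}$, and the conditions $T_n\uparrow T$ and $\bigcup_n\llbracket{0,T_n}\rrbracket\supseteq\mathbb{B}$ are inherited from the FCS of $X$. It remains to verify $(X^S I_\mathbb{B})^{T_n}=((X^{(n)})^S I_\mathbb{B})^{T_n}$. Comparing values at an arbitrary $(\omega,t)$: off $\mathbb{B}$ at time $t\wedge T_n$ both sides vanish, while if $(\omega,t\wedge T_n)\in\mathbb{B}$ the two sides equal $X_{t\wedge T_n\wedge S}$ and $X^{(n)}_{t\wedge T_n\wedge S}$ respectively; but $t\wedge T_n\wedge S\leq T_n$ and $\leq S$, so the evaluation point lies in $\llbracket{0,S}\rrbracket\cap\llbracket{0,T_n}\rrbracket\subseteq\mathbb{B}\llbracket{0,T_n}\rrbracket$, where $X=X^{(n)}$. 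This gives the required equality.

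The substantive part is showing $X^S\in\mathcal{D}$, and this is where I expect the main obstacle. The naive attempt to localise along $(T_n)$ fails because $T_n\uparrow T$, not $\uparrow+\infty$, so stability under localization cannot be invoked directly. The key device is to exploit that $X^S$ is frozen after $S$ and that $S\leq T$: I set
\[
R_n=T_n I_{[S>T_n]}+(+\infty)I_{[S\leq T_n]}.
\]
One checks that each $R_n$ is a stopping time (using $[S>T_n]=[T_n<S]\in\mathcal{F}_{T_n}$), that $(R_n)$ is increasing, and that $R_n\uparrow+\infty$: for each $\omega$ either $S(\omega)\leq T_n(\omega)$ for some $n$, forcing $R_n(\omega)=+\infty$ eventually, or $S(\omega)=+\infty$, in which case $T_n(\omega)\uparrow+\infty$ and $R_n(\omega)=T_n(\omega)\uparrow+\infty$; both alternatives follow from $\llbracket{0,S}\rrbracket\subseteq\mathbb{B}\subseteq\bigcup_n\llbracket{0,T_n}\rrbracket$. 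The crucial identity is
\[
(X^S)^{R_n}=((X^{(n)})^S)^{R_n},
\]
proved exactly as above: both sides evaluate $X$ resp. $X^{(n)}$ at time $t\wedge R_n\wedge S$, and a short case split on $[S\leq T_n]$ versus $[S>T_n]$ shows $t\wedge R_n\wedge S\leq T_n$, so once more the evaluation point lies in $\mathbb{B}\llbracket{0,T_n}\rrbracket$. Since $(X^{(n)})^S\in\mathcal{D}$ and $\mathcal{D}$ is stable under stopping, $(X^S)^{R_n}\in\mathcal{D}$.

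Finally I would close the localization, handling the initial-value subtraction built into the definition of $\mathcal{D}_{\mathrm{loc}}$. A one-line lemma suffices: for any $Y\in\mathcal{D}=\mathcal{D}_{\mathrm{loc}}$ one has $Y-Y_0\in\mathcal{D}$, because if $(V_k)$ localizes $Y$ then $(Y-Y_0)^{V_k}=Y^{V_k}-Y_0\in\mathcal{D}$ has null initial value, whence $Y-Y_0\in\mathcal{D}_{\mathrm{loc}}=\mathcal{D}$. Applying this with $Y=(X^S)^{R_n}$ yields $(X^S)^{R_n}-X^S_0\in\mathcal{D}$ for every $n$; together with $R_n\uparrow+\infty$ and the $\mathcal{F}_0$-measurability of $X^S_0$, this shows $X^S\in\mathcal{D}_{\mathrm{loc}}=\mathcal{D}$, completing the proof. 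The heart of the argument is the construction of $(R_n)$, which turns the interior approximation $T_n\uparrow T$ into an honest localizing sequence by sending it to $+\infty$ precisely once it overtakes the freezing time $S$.
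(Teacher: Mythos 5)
Your proposal is correct, and it splits into two parts of rather different status relative to the paper. The half that the paper actually proves in detail --- that $(T_n,(X^{(n)})^S)$ is an FCS for $X^S\mathfrak{I}_\mathbb{B}\in\mathcal{D}^\mathbb{B}$ --- you establish by the same mechanism the paper uses: the evaluation point $t\wedge T_n\wedge S$ always lies in $\mathbb{B}\llbracket{0,T_n}\rrbracket$, where $X=X^{(n)}$ by Theorem \ref{process}(2); the paper packages this as the intermediate identity $X^{S\wedge T_n}=(X^{(n)})^{S\wedge T_n}$ (noting $S\wedge T_n$ is a $\mathbb{B}$-inner stopping time) and then restricts to $\mathbb{B}\llbracket{0,T_n}\rrbracket$, which is your pointwise computation rewritten with indicators. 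Where you genuinely diverge is the membership claim $X^S\in\mathcal{D}$: the paper does not prove this at all but cites Theorem 8.20 of \cite{He}, whereas you supply a self-contained argument. Your key device --- the stopping times $R_n=T_nI_{[S>T_n]}+(+\infty)I_{[S\leq T_n]}$, which follow $T_n$ until the interior approximation overtakes the freezing time $S$ and jump to $+\infty$ thereafter --- is exactly what is needed to convert $T_n\uparrow T$ into a genuine localizing sequence $R_n\uparrow+\infty$, and your supporting steps are sound: $R_n$ is a stopping time because $[S>T_n]\in\mathcal{F}_{T_n}$; the dichotomy proving $R_n\uparrow+\infty$ correctly uses $\llbracket{0,S}\rrbracket\subseteq\mathbb{B}\subseteq\bigcup_n\llbracket{0,T_n}\rrbracket$ to rule out finite $S$ never being overtaken; the identity $(X^S)^{R_n}=((X^{(n)})^S)^{R_n}$ places both sides in $\mathcal{D}$ by stability under stopping; and the one-line lemma $Y\in\mathcal{D}\Rightarrow Y-Y_0\in\mathcal{D}$ is the right way to reconcile your conclusion with the initial-value subtraction built into the paper's definition of $\mathcal{D}_{\mathrm{loc}}$. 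What your route buys is a proof that stands on its own without consulting \cite{He}; what the paper's route buys is brevity, since the localization argument you reconstruct is precisely the content of the result it quotes.
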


\begin{corollary}\label{fcs-p}
Let $\mathcal{D}$ be a class of processes, and $X\in \mathcal{D}^\mathbb{C}$. Suppose that $(\tau_n)$ is an FS for $\mathbb{C}$.
\begin{enumerate}
  \item [$(1)$] $(\tau_n,X^{\tau_n})$ forms a CS for $X$.
  \item [$(2)$] If $\mathcal{D}$ is stable under stopping and localization, then $(\tau_n,X^{\tau_n})$ forms an FCS for $X\in\mathcal{D}^\mathbb{C}$.
\end{enumerate}
\end{corollary}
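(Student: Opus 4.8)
The plan is to verify directly that $(\tau_n,X^{\tau_n})$ satisfies the defining conditions of a coupled sequence for part (1), and then, for part (2), to upgrade it to a fundamental coupled sequence by invoking the stopping result of Theorem \ref{fcs}. Recall that a CS for $X$ is exactly an FCS in which the underlying class $\mathcal{D}$ is that of all processes, so part (1) only requires: (i) $(\tau_n)$ is an increasing sequence of stopping times with $\tau_n\uparrow T$, where $T$ is the debut of $\mathbb{C}^c$; (ii) $\bigcup_n\llbracket{0,\tau_n}\rrbracket\supseteq\mathbb{C}$; (iii) each $X^{\tau_n}$ is a genuine process; and (iv) the coupling identity $(XI_\mathbb{C})^{\tau_n}=(X^{\tau_n}I_\mathbb{C})^{\tau_n}$ holds.

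First I would record the structural facts about the FS. By the definition of an FS (Lemma \ref{th8.17}(2)), $(\tau_n)$ is increasing with $\bigcup_n\llbracket{0,\tau_n}\rrbracket=\mathbb{C}$, which gives (ii) at once and shows $\llbracket{0,\tau_n}\rrbracket\subseteq\mathbb{C}$, so every $\tau_n$ is a $\mathbb{C}$-inner stopping time and $X^{\tau_n}$ is defined through \eqref{stop}. Since $XI_\mathbb{C}$ is a process (it agrees with the measurable process $X^{(n)}I_\mathbb{C}$ on $\llbracket{0,T_n}\rrbracket$ and vanishes off $\mathbb{C}$), the expression $X^{\tau_n}=(XI_\mathbb{C})I_{\llbracket{0,\tau_n}\rrbracket}+(XI_\mathbb{C})_{\tau_n}I_{\rrbracket{\tau_n,+\infty}\llbracket}$ is itself a genuine process, giving (iii). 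For (i), set $\sigma=\lim_n\tau_n$; the inclusion $\llbracket{0,\tau_n}\rrbracket\subseteq\mathbb{C}$ forces $\tau_n\leq T$, hence $\sigma\leq T$, while the interval-type description, namely $\mathbb{C}_\omega=\bigcup_n[0,\tau_n(\omega)]$ equals $[0,T(\omega)[$ or $[0,T(\omega)]$, shows that every $t<T(\omega)$ lies in some $[0,\tau_m(\omega)]$, whence $\sigma(\omega)\geq T(\omega)$. Thus $\sigma=T$ and $\tau_n\uparrow T$.

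The coupling identity (iv) is then immediate: for any $t$ the point $(\omega,t\wedge\tau_n(\omega))$ lies in $\llbracket{0,\tau_n}\rrbracket\subseteq\mathbb{C}$, and on $\llbracket{0,\tau_n}\rrbracket$ formula \eqref{stop} gives $X^{\tau_n}=X$, so evaluating both stopped processes at $t\wedge\tau_n$ returns $X_{t\wedge\tau_n}$ on each side. This establishes (iv) and completes part (1).

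For part (2) the sole additional requirement is $X^{\tau_n}\in\mathcal{D}$. Since $\mathcal{D}$ is stable under stopping and localization, $X\in\mathcal{D}^\mathbb{C}$, and $\tau_n$ is a $\mathbb{C}$-inner stopping time, Theorem \ref{fcs} (applied with $\mathbb{B}=\mathbb{C}$ and $S=\tau_n$) yields $X^{\tau_n}\in\mathcal{D}$; combined with (i)--(iv) this shows that $(\tau_n,X^{\tau_n})$ is an FCS for $X\in\mathcal{D}^\mathbb{C}$. The argument is computationally light, and I do not expect a serious obstacle: the only genuinely structural point is the convergence $\tau_n\uparrow T$, which rests on the interval-type description of $\mathbb{C}$, whereas the substance of part (2) is no more than a correct application of Theorem \ref{fcs}, with only minor care needed for the cases $\tau_n=+\infty$ and for confirming that $X^{\tau_n}$ is a bona fide process.
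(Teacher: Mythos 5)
Your proposal is correct and takes essentially the same route as the paper: the paper's entire proof is the one line ``From Theorem \ref{fcs}, the proof is trivial,'' and your argument is precisely that application of Theorem \ref{fcs} (with $\mathbb{B}=\mathbb{C}$, $S=\tau_n$) for part (2), together with the routine verification of the CS conditions --- the FS structure $\mathbb{C}=\bigcup_n\llbracket{0,\tau_n}\rrbracket$, the convergence $\tau_n\uparrow T$, and the coupling identity on $\llbracket{0,\tau_n}\rrbracket$ --- which the paper leaves implicit for part (1). You have simply written out the details the authors deem trivial, and your treatment of the edge points (the interval-type argument for $\tau_n\uparrow T$, the measurability of $XI_\mathbb{C}$) is sound.
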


\begin{corollary}\label{cD=DB}
Let $\mathcal{D}$ be a class of processes. If $\mathcal{D}$ is stable under stopping and localization, then $\mathcal{D}^{\llbracket{0,+\infty}\llbracket}=\mathcal{D}$.
\end{corollary}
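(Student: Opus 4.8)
The plan is to prove the two inclusions $\mathcal{D}\subseteq\mathcal{D}^{\llbracket{0,+\infty}\llbracket}$ and $\mathcal{D}^{\llbracket{0,+\infty}\llbracket}\subseteq\mathcal{D}$ separately, after first recording that for $\mathbb{B}=\llbracket{0,+\infty}\llbracket$ one has $\mathbb{B}=\Omega\times\mathbf{R}^+$, hence $I_\mathbb{B}\equiv 1$, the debut of $\mathbb{B}^c=\emptyset$ is $T=+\infty$, and the $\mathbb{B}$-stopping in \eqref{stop} coincides with ordinary stopping. Under this identification a function on $\mathbb{B}$ is just a process, and an FCS $(T_n,X^{(n)})$ for $X$ is a sequence with $X^{(n)}\in\mathcal{D}$, $T_n\uparrow+\infty$, and $X^{T_n}=(X^{(n)})^{T_n}$ for every $n$. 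The inclusion $\mathcal{D}\subseteq\mathcal{D}^{\llbracket{0,+\infty}\llbracket}$ is then immediate: given $X\in\mathcal{D}$, the choice $T_n=n$ and $X^{(n)}=X$ furnishes an FCS (the covering condition $\bigcup_n\llbracket{0,n}\rrbracket\supseteq\mathbb{B}$ and the identity $X^{n}=(X)^{n}$ hold trivially), so $X\in\mathcal{D}^\mathbb{B}$.

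The substance lies in $\mathcal{D}^{\llbracket{0,+\infty}\llbracket}\subseteq\mathcal{D}$. Starting from $X\in\mathcal{D}^\mathbb{B}$ with FCS $(T_n,X^{(n)})$, the identity $X^{T_n}=(X^{(n)})^{T_n}$ together with stability under stopping yields $X^{T_n}\in\mathcal{D}$ for every $n$. Since $\mathcal{D}=\mathcal{D}_{\mathrm{loc}}$, the initial value of $X^{T_1}\in\mathcal{D}_{\mathrm{loc}}$ is $\mathcal{F}_0$-measurable, and as $(X^{T_1})_0=X_0$ this shows $X_0$ is $\mathcal{F}_0$-measurable. It then remains to produce a localizing sequence for $X$ itself, i.e.\ an increasing sequence $R_k\uparrow+\infty$ with $X^{R_k}-X_0\in\mathcal{D}$; this is where the only real difficulty arises, because the definition of $\mathcal{D}_{\mathrm{loc}}$ subtracts the initial value, while stability under stopping is only directly applicable to $X^{T_n}$ and not to $X^{T_n}-X_0$.

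To overcome this I would work with the family $\mathcal{T}=\{\sigma:\ X^\sigma-X_0\in\mathcal{D}\}$ of stopping times and exploit its downward stability: if $\sigma\le\sigma'$ and $\sigma'\in\mathcal{T}$, then $(X^{\sigma'}-X_0)^\sigma=X^\sigma-X_0$ (the stopped constant process $X_0$ is unchanged), so stability under stopping gives $\sigma\in\mathcal{T}$. Next, writing each $X^{T_n}\in\mathcal{D}_{\mathrm{loc}}$ via a localizing sequence $(S^n_m)_m$ produces $T_n\wedge S^n_m\in\mathcal{T}$ with $T_n\wedge S^n_m\uparrow_m T_n$; a Borel--Cantelli diagonalization (choosing $m_k$ with $\mathbf{P}(S^k_{m_k}\le k)\le 2^{-k}$) then yields $\sigma_k:=T_k\wedge S^k_{m_k}\in\mathcal{T}$ with $\sigma_k\to+\infty$ almost surely. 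Finally $R_k:=\inf_{j\ge k}\sigma_j$ is a stopping time, is increasing in $k$, tends to $+\infty$, and satisfies $R_k\le\sigma_k$, so $R_k\in\mathcal{T}$ by downward stability.

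Putting these together, $X_0$ is $\mathcal{F}_0$-measurable and $(R_k)$ is a localizing sequence with $X^{R_k}-X_0\in\mathcal{D}$, whence $X\in\mathcal{D}_{\mathrm{loc}}=\mathcal{D}$, completing the nontrivial inclusion. The main obstacle throughout is precisely the initial-value bookkeeping built into the definition of $\mathcal{D}_{\mathrm{loc}}$, combined with the need to assemble a single monotone localizing sequence out of the two independent limits $T_n\uparrow+\infty$ and $S^n_m\uparrow+\infty$; the downward stability of $\mathcal{T}$ is the device that makes both of these steps clean, and it is the only place where stability under stopping is genuinely used.
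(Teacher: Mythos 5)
Your proof is correct, but it takes a genuinely more self-contained route than the paper's. The paper disposes of the nontrivial inclusion in two lines: $\tau=+\infty$ is a $\llbracket{0,+\infty}\llbracket$-inner stopping time, so Theorem \ref{fcs} (whose key assertion $X^S\in\mathcal{D}$ is itself deferred to Theorem 8.20 of \cite{He}) immediately gives $X=X^\tau\in\mathcal{D}$. You instead unfold the definitions and re-prove that underlying fact in the special case $\mathbb{B}=\Omega\times\mathbf{R}^+$: stability under stopping turns the FCS identity into $X^{T_n}\in\mathcal{D}$, and then the family $\mathcal{T}=\{\sigma:\;X^\sigma-X_0\in\mathcal{D}\}$, its downward stability under stopping, the Borel--Cantelli diagonal extraction $\sigma_k=T_k\wedge S^k_{m_k}$, and the monotonization $R_k=\inf_{j\ge k}\sigma_j$ assemble a genuine localizing sequence, handling exactly the initial-value subtraction built into the definition of $\mathcal{D}_{\mathrm{loc}}$. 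What the paper's approach buys is brevity and reuse of an already-established result; what yours buys is independence from the citation---it is in effect a first-principles proof of the relevant case of Theorem \ref{fcs}, and your downward-stability device plus diagonalization is the honest mathematical content hiding behind that reference. The only point worth flagging is that your $R_k\uparrow+\infty$ holds almost surely rather than identically; under the usual conditions on the filtration and the paper's convention of identifying indistinguishable processes this is harmless (modify $R_k$ on the exceptional null set), but it deserves a sentence.
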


The subsequent theorem elucidates two key properties pertaining to $\mathbb{B}$-summation processes, which play a pivotal role in the definition and examination of $\mathbb{B}$-quadratic covariations (see Section \ref{section4.1}).

\begin{theorem}\label{thin}
\begin{itemize}
  \item [$(1)$] Let $X$ be a $\mathbb{B}$-thin process with an FCS $(T_n,X^{(n)})$. If for each $n\in \mathbf{N}^+$, $\Sigma X^{(n)}$ is well-defined,
       then $(T_n,\Sigma X^{(n)})$ is an FCS for $\Sigma X\in \mathcal{V}^\mathbb{B}$.
  \item [$(2)$] Let $\tau$ be a $\mathbb{B}$-inner stopping time. If $X$ be a $\mathbb{B}$-thin process and $\Sigma X$ is well-defined,  then $XI_{\llbracket{0,\tau}\rrbracket}$ is a thin process satisfying
      \begin{equation}\label{sigmaX}
      \Sigma (XI_{\llbracket{0,\tau}\rrbracket})=(\Sigma X)^\tau.
      \end{equation}
\end{itemize}
\end{theorem}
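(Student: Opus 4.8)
The plan is to reduce every $\mathbb{B}$-summation to an ordinary summation on the stochastic intervals $\llbracket{0,T_n}\rrbracket$, exploiting the fact that the section $\mathbb{B}_\omega$ is an interval anchored at $0$: whenever $(\omega,t)\in\mathbb{B}$ and $s\le t$ one automatically has $(\omega,s)\in\mathbb{B}$. This single observation is what lets me identify a partial sum $\sum_{s\le t}(XI_\mathbb{B})_s$ with $\sum_{s\le t}X^{(n)}_s$ for any $n$ with $t\le T_n$, and it is the engine behind both parts.

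For Part $(1)$ I would first record that each $\Sigma X^{(n)}$ lies in $\mathcal{V}$: the hypothesis that $\Sigma X^{(n)}$ is well-defined means $\sum_{s\le t}|X^{(n)}_s|<+\infty$ a.s., which forces $X^{(n)}$ to vanish off a thin set and makes $\Sigma X^{(n)}=\sum_{s\le\cdot}X^{(n)}_s$ a c\`{a}dl\`{a}g adapted process of finite variation. The crux is the consistency relation
\[
\Sigma X^{(k)}I_{\mathbb{B}\llbracket{0,T_k}\rrbracket}=\Sigma X^{(l)}I_{\mathbb{B}\llbracket{0,T_k}\rrbracket},\qquad k\le l.
\]
To obtain it, I fix $(\omega,t)\in\mathbb{B}\cap\llbracket{0,T_k}\rrbracket$; then every $s\le t$ satisfies $(\omega,s)\in\mathbb{B}$ and $s\le T_k(\omega)$, so by \eqref{xkl} of Theorem \ref{process}$(2)$ one has $X^{(k)}_s(\omega)=X^{(l)}_s(\omega)$, and summing over $s\le t$ gives $(\Sigma X^{(k)})_t(\omega)=(\Sigma X^{(l)})_t(\omega)$. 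With this consistency in hand I would apply Theorem \ref{restriction}$(4)$ to the sequence $(\Sigma X^{(n)})\subseteq\mathcal{V}$ and the stopping times $(T_n)$ to produce a process $Z\in\mathcal{V}^\mathbb{B}$ admitting $(T_n,\Sigma X^{(n)})$ as an FCS. It then remains to identify $Z$ with $\Sigma X$: the explicit formula of Theorem \ref{restriction}$(4)$ yields $Z_t(\omega)=(\Sigma X^{(n)})_t(\omega)=\sum_{s\le t}X^{(n)}_s(\omega)$ on $\mathbb{B}\cap\rrbracket{T_{n-1},T_n}\rrbracket$, and the same interval argument shows this equals $\sum_{s\le t}(XI_\mathbb{B})_s(\omega)=(\Sigma X)_t(\omega)$ by the definition of $\Sigma X$ in Theorem \ref{restriction}$(2)$. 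Hence $Z=\Sigma X$, and the FCS claim follows.

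For Part $(2)$ I would first use $\llbracket{0,\tau}\rrbracket\subseteq\mathbb{B}$ to write $XI_{\llbracket{0,\tau}\rrbracket}=(XI_\mathbb{B})I_{\llbracket{0,\tau}\rrbracket}$; since $XI_\mathbb{B}$ is thin by Theorem \ref{restriction}$(2)$ and multiplication by $I_{\llbracket{0,\tau}\rrbracket}$ only shrinks the jump set, $XI_{\llbracket{0,\tau}\rrbracket}$ is thin. Then I compute both sides pathwise. On the left, $(XI_{\llbracket{0,\tau}\rrbracket})_s$ equals $(XI_\mathbb{B})_s$ for $s\le\tau$ and vanishes otherwise, so $\Sigma(XI_{\llbracket{0,\tau}\rrbracket})_t=\sum_{s\le t\wedge\tau}(XI_\mathbb{B})_s$. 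On the right, $\tau$ being a $\mathbb{B}$-inner stopping time makes $(\Sigma X)_\tau$ well-defined, and \eqref{stop} gives $(\Sigma X)^\tau_t=(\Sigma X)_t=\sum_{s\le t}(XI_\mathbb{B})_s$ for $t\le\tau$ and $(\Sigma X)^\tau_t=(\Sigma X)_\tau=\sum_{s\le\tau}(XI_\mathbb{B})_s$ for $t>\tau$. In both cases the value is $\sum_{s\le t\wedge\tau}(XI_\mathbb{B})_s$, which establishes \eqref{sigmaX}; the only delicate point is that the jump at $s=\tau$ is retained on both sides, which holds because $\tau\in\llbracket{0,\tau}\rrbracket$ and $\llbracket{\tau}\rrbracket\subseteq\mathbb{B}$.

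The step I expect to be the main obstacle is the consistency relation in Part $(1)$. Although the interval-type property makes the pathwise identity transparent, care is needed to ensure that passing from the graphs $\mathbb{B}\llbracket{0,T_k}\rrbracket$ to the partial sums neither drops nor adds a jump at the endpoint $T_k$, and that the well-definedness of each $\Sigma X^{(n)}$ genuinely transfers to $\Sigma X$ as a $\mathbb{B}$-process (which it does, since on $\mathbb{B}$ every $t$ is dominated by some $T_n$, whence $\sum_{s\le t}|(XI_\mathbb{B})_s|=\sum_{s\le t}|X^{(n)}_s|<+\infty$). Everything else is routine bookkeeping with the definitions of FCS and of the $\mathbb{B}$-restriction.
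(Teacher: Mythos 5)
Your overall route coincides with the paper's: in Part (1) you verify the consistency relation $\Sigma X^{(k)}I_{\mathbb{B}\llbracket{0,T_k}\rrbracket}=\Sigma X^{(l)}I_{\mathbb{B}\llbracket{0,T_k}\rrbracket}$ using the interval property of $\mathbb{B}$ and glue via Theorem \ref{restriction}(4), then identify the glued process with $\Sigma X$ pathwise (the paper compresses all of this into a citation of Definition 7.39 of \cite{He} followed by ``the proof is trivial'', so your write-up is in fact more careful than the published one); in Part (2) you use the same pathwise stopping identity $\sum_{s\leq t\wedge\tau}(XI_\mathbb{B})_s$ for both sides of \eqref{sigmaX}. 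Those computations are correct.

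The one genuine gap is your justification of the thinness of $XI_{\llbracket{0,\tau}\rrbracket}$. You argue that ``multiplication by $I_{\llbracket{0,\tau}\rrbracket}$ only shrinks the jump set,'' i.e., that a process whose non-zero set is contained in a thin set is thin. In the He--Wang--Yan framework this principle is false: a thin set is a countable union of graphs of stopping times, and an arbitrary subset of such a union need not again be of this form. What rescues the conclusion is a measurability argument, not mere set inclusion: $XI_{\llbracket{0,\tau}\rrbracket}=X^{\tau}I_{\llbracket{0,\tau}\rrbracket}$ is an \emph{optional} process, so $[XI_{\llbracket{0,\tau}\rrbracket}\neq 0]$ is an optional set contained in the thin set $[XI_\mathbb{B}\neq 0]$, and Theorem 3.19 of \cite{He} (an optional set contained in a thin set is thin) gives the claim --- this is exactly the step the paper makes explicit. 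Equivalently, you could observe that $[XI_{\llbracket{0,\tau}\rrbracket}\neq 0]=[XI_\mathbb{B}\neq 0]\cap\llbracket{0,\tau}\rrbracket$ and that $\llbracket{S}\rrbracket\cap\llbracket{0,\tau}\rrbracket=\llbracket{S_{[S\leq\tau]}}\rrbracket$ is again the graph of a stopping time, since $[S\leq\tau]\in\mathcal{F}_S$. Either repair is short, but as written your sentence invokes an invalid general principle, and this is precisely the point the paper's proof was built to avoid.
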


\subsection{Jump processes of $\mathbb{B}$-c\`{a}dl\`{a}g processes}
In this subsection, we set $X_{0-}=X_0$ for any process $X$ that we consider.
Following the left-hand limit process (or simply, its left-limit process) of a c\`{a}dl\`{a}g process, we embark on defining left-limit processes for general $\mathbb{B}$-processes, assuming their existence, and subsequently delve into the exploration of their fundamental properties.

\begin{definition}\label{X+-}\normalfont
Let $X$ be a $\mathbb{B}$-process.
If for all $(\omega,t)\in \mathbb{B}$ with $t>0$, the left-hand limits $X(\omega,t-)$ exist, then the {\bf left-limit process} of $X$, denoted by $X_{-}$, is defined by
  \[
   X_{-}(\omega,t)=\left\{
   \begin{aligned}
    &X(\omega,0),&&\quad \omega\in \Omega,\;t=0,\\
    &X(\omega,t-)=\lim\limits_{s<t,s\uparrow t}X(\omega,s),&&\quad (\omega,t)\in \mathbb{B},\; t>0.
    \end{aligned}
    \right.
  \]
\end{definition}

Let $X$ be a $\mathbb{B}$-process. Analogously, we say the left-limit process $X_{-}$ exists, if $X_{-}$ is well-defined according to Definition \ref{X+-}. It is evident that $X_{-}$ remains a $\mathbb{B}$-process. More importantly, the definition of $X_{-}$ bears a close resemblance to that of a conventional left-limit process.
In brief, the left-limit process $X_{-}$ exists if, for each $\omega\in \Omega$, the path $X_{.}(\omega)$ admits finite left-hand limits on the section $\mathbb{B}_\omega=\{t: (\omega,t)\in \mathbb{B}\}$.

The following two results delineate sufficient conditions for the existence of the left-limit process of a $\mathbb{B}$-process, further elucidating the relationship between this left-limit process and conventional left-limit processes.
\begin{theorem}\label{X-left}
Let $X$ be a $\mathbb{B}$-process.
If there exists a CS $(T_n,X^{(n)})$ for $X$ such that $(X^{(n)})_{-}$ exists for each $n\in \mathbf{N}^+$, then $X_{-}$ exists, and $(T_n,(X^{(n)})_{-})$ is a CS for $X_{-}$.
\end{theorem}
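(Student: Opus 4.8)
The plan is to verify directly the three defining properties of a coupled sequence for $X_-$: the two structural properties ($T_n\uparrow T$ and $\bigcup_n\llbracket{0,T_n}\rrbracket\supseteq\mathbb{B}$) are inherited verbatim from $(T_n,X^{(n)})$, so the real work lies in (i) showing that the left-hand limits defining $X_-$ actually exist on $\mathbb{B}$, and (ii) proving the coupling identity $(X_-I_\mathbb{B})^{T_n}=((X^{(n)})_-I_\mathbb{B})^{T_n}$ for each $n$. The conceptual crux, exploited throughout, is that every section $\mathbb{B}_\omega$ is an interval of the form $[0,T(\omega)[$ or $[0,T(\omega)]$ containing $0$; hence whenever $(\omega,t)\in\mathbb{B}$ one automatically has $[0,t]\subseteq\mathbb{B}_\omega$, so approaching $t$ from the left never exits $\mathbb{B}$.

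For existence, I would fix $(\omega,t)\in\mathbb{B}$ with $t>0$. Since $\bigcup_n\llbracket{0,T_n}\rrbracket\supseteq\mathbb{B}$, choose $n$ with $t\leq T_n(\omega)$. By the interval property just noted, every $s$ with $0\leq s\leq t$ satisfies $(\omega,s)\in\mathbb{B}\cap\llbracket{0,T_n}\rrbracket$, so Theorem \ref{process}(2) (specifically \eqref{xkl}) yields $X(\omega,s)=X^{(n)}(\omega,s)$ for all such $s$. Because $(X^{(n)})_-$ exists by hypothesis, the path $X^{(n)}(\omega,\cdot)$ has a finite left-hand limit at $t$; therefore $X(\omega,t-)=\lim_{s\uparrow t}X(\omega,s)=\lim_{s\uparrow t}X^{(n)}(\omega,s)=X^{(n)}(\omega,t-)$ exists and is finite, which is exactly the existence of $X_-$ in the sense of Definition \ref{X+-}. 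Combining this with the $t=0$ case, handled by the identity $X^{(n)}I_{\llbracket{0}\rrbracket}=XI_{\llbracket{0}\rrbracket}$ from Theorem \ref{process}(2), gives the pointwise identity
\[
X_-(\omega,u)=(X^{(n)})_-(\omega,u),\qquad (\omega,u)\in\mathbb{B},\ u\leq T_n(\omega).
\]

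For the coupling identity I would evaluate both stopped processes pointwise. Fixing $(\omega,t)$ and setting $u=t\wedge T_n(\omega)$, standard stopping gives $(X_-I_\mathbb{B})^{T_n}(\omega,t)=(X_-I_\mathbb{B})(\omega,u)$, and likewise for $X^{(n)}$. If $(\omega,u)\notin\mathbb{B}$ both sides vanish; if $(\omega,u)\in\mathbb{B}$, then since $u\leq T_n(\omega)$ the displayed identity applies and yields $X_-(\omega,u)=(X^{(n)})_-(\omega,u)$, whence $(X_-I_\mathbb{B})(\omega,u)=((X^{(n)})_-I_\mathbb{B})(\omega,u)$. Thus $(X_-I_\mathbb{B})^{T_n}=((X^{(n)})_-I_\mathbb{B})^{T_n}$ for every $n$, and together with the inherited structural conditions this establishes that $(T_n,(X^{(n)})_-)$ is a CS for $X_-$.

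The step I expect to be most delicate is not any single computation but keeping the bookkeeping honest: one must ensure the left limit of $X$ at $(\omega,t)$ is computed solely from values $X(\omega,s)$ with $s<t$, all of which lie in $\mathbb{B}\cap\llbracket{0,T_n}\rrbracket$, so that the agreement $X=X^{(n)}$ transfers legitimately to the limit. This is precisely where the ``interval containing $0$'' structure of $\mathbb{B}_\omega$ is indispensable, and it is what keeps the argument valid even when $t=T_n(\omega)$ sits on the boundary. Notably, no appeal to c\`{a}dl\`{a}g regularity of the $X^{(n)}$ is required beyond the assumed existence of the left-limit processes $(X^{(n)})_-$.
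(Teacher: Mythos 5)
Your proof is correct and follows essentially the same route as the paper's: locate each $(\omega,t)\in\mathbb{B}$ inside some $\mathbb{B}\llbracket{0,T_n}\rrbracket$, use the CS agreement $X=X^{(n)}$ on $[0,t]$ (via Theorem \ref{process}(2)) to transfer existence and value of the left-hand limit, then verify the coupling identity $(X_-I_\mathbb{B})^{T_n}=((X^{(n)})_-I_\mathbb{B})^{T_n}$, which is exactly the paper's relation $X_{-}I_{\mathbb{B}\llbracket{0,T_n}\rrbracket}=(X^{(n)})_{-}I_{\mathbb{B}\llbracket{0,T_n}\rrbracket}$ in stopped-process form. Your added care about the interval structure of $\mathbb{B}_\omega$ and the $t=0$ convention only makes explicit what the paper leaves implicit.
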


\begin{corollary}\label{deltaX}
\begin{itemize}
  \item [$(1)$] If $X\in \mathcal{R}^\mathbb{B}$ with the FCS $(T_n,X^{(n)})$, then $X_{-}$ exists, and $(T_n,(X^{(n)})_{-})$ is a CS for $X_{-}$.
  \item [$(2)$] If $X$ is a $\mathbb{B}$-adapted c\`{a}dl\`{a}g process with the FCS $(T_n,X^{(n)})$, then $X_{-}$ is a $\mathbb{B}$-locally bounded predictable process, and $(T_n,(X^{(n)})_{-})$ is an FCS for $X_{-}$ (a $\mathbb{B}$-locally bounded predictable process).
  \item [$(3)$] Let $X$ be a $\mathbb{C}$-process, and $(\tau_n)$ be an FS for $\mathbb{C}$. If $(X^{\tau_n})_{-}$ exists for each $n\in \mathbf{N}^+$, then $X_{-}$ exists, and $(\tau_n,(X^{\tau_n})_{-})$ is a CS for $X_{-}$. Furthermore, if $X$ is a $\mathbb{C}$-adapted c\`{a}dl\`{a}g process, then $(\tau_n,(X^{\tau_n})_{-})$ is an FCS for $X_{-}$ (a $\mathbb{C}$-locally bounded predictable process).
\end{itemize}
\end{corollary}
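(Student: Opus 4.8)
The plan is to deduce all three parts directly from Theorem \ref{X-left}, combined with two classical facts about ordinary processes: \emph{(i)} every c\`{a}dl\`{a}g process $Y$ admits a left-limit process $Y_{-}$; and \emph{(ii)} if $Y$ is moreover adapted, then $Y_{-}$ is left-continuous and adapted, hence predictable, and is in addition locally bounded (taking $S_k=\inf\{t:|Y_t|\geq k\}$ one gets $|(Y_{-})^{S_k}|\leq k$ with $S_k\uparrow+\infty$). These are the only analytic inputs beyond the $\mathbb{B}$-process machinery already in place.

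For (1), the FCS $(T_n,X^{(n)})$ for $X\in\mathcal{R}^\mathbb{B}$ is in particular a CS for $X$, since $\mathcal{R}$ sits inside the class of all processes; each $X^{(n)}\in\mathcal{R}$ is c\`{a}dl\`{a}g, so $(X^{(n)})_{-}$ exists by fact (i), and Theorem \ref{X-left} applies verbatim to give that $X_{-}$ exists with $(T_n,(X^{(n)})_{-})$ a CS for $X_{-}$. For (2), I would take this very CS from (1) and upgrade it: since each $X^{(n)}$ is adapted c\`{a}dl\`{a}g, fact (ii) makes each $(X^{(n)})_{-}$ a locally bounded predictable process. The remaining step is purely definitional---a CS whose components all lie in the class $\mathcal{D}$ of locally bounded predictable processes already meets every requirement of an FCS for $X_{-}\in\mathcal{D}^\mathbb{B}$, because the conditions on $(T_n)$ are inherited from the original FCS and the identities $(X_{-}I_\mathbb{B})^{T_n}=((X^{(n)})_{-}I_\mathbb{B})^{T_n}$ are exactly the CS relations. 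Hence $X_{-}$ is a $\mathbb{B}$-locally bounded predictable process with the asserted FCS, and Theorem \ref{restriction}(1) may be cited if one wishes to exhibit an explicit coupled locally bounded predictable process.

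For (3), the first assertion follows by using Corollary \ref{fcs-p}(1) to present $(\tau_n,X^{\tau_n})$ as a CS for $X$ and then invoking Theorem \ref{X-left} under the standing hypothesis that each $(X^{\tau_n})_{-}$ exists. For the \emph{furthermore} clause, when $X$ is $\mathbb{C}$-adapted c\`{a}dl\`{a}g I would use that the class of adapted c\`{a}dl\`{a}g processes is stable under stopping and localization, so that Corollary \ref{fcs-p}(2) presents $(\tau_n,X^{\tau_n})$ as an FCS with each $X^{\tau_n}$ adapted c\`{a}dl\`{a}g; facts (i) and (ii) then render each $(X^{\tau_n})_{-}$ locally bounded predictable, and the same definitional upgrade as in (2) promotes the CS $(\tau_n,(X^{\tau_n})_{-})$ to an FCS for $X_{-}$ viewed as a $\mathbb{C}$-locally bounded predictable process.

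The only delicate point is this definitional upgrade invoked in (2) and (3): I must confirm that reinterpreting a CS as an FCS for the smaller class of locally bounded predictable processes needs no additional condition on the stopping times, and that $\mathbb{B}$-local boundedness of $X_{-}$ is faithfully captured by coupling processes that are themselves locally bounded predictable. I expect this bookkeeping, rather than any substantive analytic difficulty, to be the main obstacle; all the genuine content is discharged by the single appeal to Theorem \ref{X-left} together with the elementary behaviour of left-limit processes.
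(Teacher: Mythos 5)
Your proposal is correct and takes essentially the same route as the paper: part (1) by feeding the FCS (viewed as a CS with c\`{a}dl\`{a}g components) into Theorem \ref{X-left}, part (2) by upgrading the resulting CS using the classical fact that the left-limit process of an adapted c\`{a}dl\`{a}g process is locally bounded and predictable (the paper simply cites Theorem 7.7 of \cite{He} rather than re-deriving it), and part (3) via Corollary \ref{fcs-p} combined with parts (1) and (2). The ``definitional upgrade'' you flag as the delicate point is in fact immediate from the definition of an FCS---a CS whose components all lie in $\mathcal{D}$, with the same sequence of stopping times, is by definition an FCS for $X_{-}\in\mathcal{D}^\mathbb{B}$---so there is no gap.
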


Drawing upon Definition \ref{X+-} and Corollary \ref{deltaX}(1), we are poised to introduce the jump process of a $\mathbb{B}$-c\`{a}dl\`{a}g process.
Given $X\in \mathcal{R}^\mathbb{B}$, we adhere to the conventional notation by denoting the {\bf jump process} of $X$ as
 \begin{equation}\label{X-X-}
  \Delta X=X-X_{-}.
 \end{equation}
Corollary \ref{deltaX}(1) assures us that the jump process $\Delta X$ is well-defined. The forthcoming theorem is dedicated to presenting the fundamental properties of $\mathbb{B}$-jump processes.

\begin{theorem}\label{delta}
Let $X,Y\in \mathcal{R}^\mathbb{B}$, and $Z\in\mathcal{R}$.
\begin{itemize}
  \item [$(1)$] If $(T_n,X^{(n)})$ is an FCS for $X\in \mathcal{R}^\mathbb{B}$, then $(T_n,\Delta X^{(n)})$ is a CS for $\Delta X$. Furthermore, if $X\in \mathcal{R}^\mathbb{C}$ and $(\tau_n)$ is an FS for $\mathbb{C}$, then $(\tau_n,\Delta X^{\tau_n})$ is a CS for $\Delta X$.

  \item [$(2)$] For all $a\in \mathbf{R}$,
\begin{align}\label{cadlag-linear}
\Delta(aX)=a \Delta X, \quad \Delta(X+Y)= \Delta X+\Delta Y.
\end{align}

  \item [$(3)$] If $\widetilde{\mathbb{B}}$ is another stochastic set of interval type satisfying $\widetilde{\mathbb{B}}\subseteq \mathbb{B}$, then
  \begin{equation}\label{eqXT0}
  \Delta (X\mathfrak{I}_{\widetilde{\mathbb{B}}})=(\Delta X)\mathfrak{I}_{\widetilde{\mathbb{B}}}.
  \end{equation}
  Specially, $\Delta (Z\mathfrak{I}_{\mathbb{B}})=(\Delta Z)\mathfrak{I}_{\mathbb{B}}$.

  \item [$(4)$] If $T$ is a $\mathbb{B}$-inner stopping time, then
  \begin{align}\label{eqXT}
\Delta X^T=\Delta XI_{\llbracket{0,T}\rrbracket},\quad \Delta X^{T-}=\Delta XI_{\llbracket{0,T}\llbracket},
\end{align}
  where $X^{T-}$ is defined by
 \begin{equation*}
     X^{T-}=XI_{\llbracket{0,T}\llbracket}+X_{T-}I_{\llbracket{T,{+\infty}}\llbracket}.
  \end{equation*}
  Specially, $\Delta Z^S=\Delta ZI_{\llbracket{0,S}\rrbracket}$ and $\Delta Z^{S-}=\Delta ZI_{\llbracket{0,S}\llbracket}$, where $S$ is a stopping time.

  \item [$(5)$] $\Delta X=0\mathfrak{I}_\mathbb{B}$ if and only if $X\in \mathcal{C}^\mathbb{B}$.
  \item [$(6)$] Assume further that $X,Y\in \mathcal{S}^\mathbb{B}$. Then $\Delta X\Delta Y$ is a $\mathbb{B}$-thin process satisfying $\Sigma (\Delta X\Delta Y)\in \mathcal{V}^\mathbb{B}$. Furthermore, if $(T_n,X^{(n)})$ and $(T_n,Y^{(n)})$ are FCSs for $X\in \mathcal{S}^\mathbb{B}$ and $Y\in \mathcal{S}^\mathbb{B}$ respectively, then $(T_n,\Delta X^{(n)}\Delta Y^{(n)})$ is an FCS for $\Delta X\Delta Y$ (a $\mathbb{B}$-thin process), and $(T_n,\Sigma(\Delta X^{(n)}\Delta Y^{(n)}))$ is an FCS for $\Sigma (\Delta X\Delta Y)\in \mathcal{V}^\mathbb{B}$.
\end{itemize}
\end{theorem}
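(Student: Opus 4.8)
The plan is to make the single identity $\Delta X=X-X_{-}$ from \eqref{X-X-} the organizing principle, and to carry each assertion either through the coupled-sequence calculus of Theorem \ref{process} and Corollary \ref{deltaX}, or through a pointwise computation on the sections $\mathbb{B}_\omega$, which by \eqref{B} are intervals anchored at $0$. For (1) I would start from an FCS $(T_n,X^{(n)})$: Corollary \ref{deltaX}(1) already supplies the CS $(T_n,(X^{(n)})_{-})$ for $X_{-}$ \emph{along the same sequence} $(T_n)$, while $(T_n,X^{(n)})$ is itself a CS for $X$; subtracting two coupled sequences termwise (an elementary check on the defining relation $(XI_\mathbb{B})^{T_n}=(X^{(n)}I_\mathbb{B})^{T_n}$, of the kind underlying Theorem \ref{process}(4)) gives that $(T_n,X^{(n)}-(X^{(n)})_{-})=(T_n,\Delta X^{(n)})$ is a CS for $X-X_{-}=\Delta X$. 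The $\mathbb{C}$-version is the same, using Corollary \ref{fcs-p}(1) for the CS $(\tau_n,X^{\tau_n})$ and Corollary \ref{deltaX}(3) for the CS $(\tau_n,(X^{\tau_n})_{-})$. Part (2) is then immediate from Definition \ref{X+-}, since the left limit is an ordinary limit on each section, so $(aX)_{-}=aX_{-}$ and $(X+Y)_{-}=X_{-}+Y_{-}$; subtracting these from $aX$ and $X+Y$ (which lie in $\mathcal{R}^\mathbb{B}$ by Theorem \ref{process}(4)) yields \eqref{cadlag-linear}.

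Parts (3) and (4) I would prove pointwise, exploiting that the sections start at $0$: if $(\omega,t)\in\widetilde{\mathbb{B}}$ with $t>0$ then $[0,t]\subseteq\widetilde{\mathbb{B}}_\omega$, so just to the left of $t$ the restriction $X\mathfrak{I}_{\widetilde{\mathbb{B}}}$ coincides with $X$; hence $(X\mathfrak{I}_{\widetilde{\mathbb{B}}})_{-}=(X_{-})\mathfrak{I}_{\widetilde{\mathbb{B}}}$, and subtracting from $X\mathfrak{I}_{\widetilde{\mathbb{B}}}$ gives \eqref{eqXT0}. The special case $\Delta(Z\mathfrak{I}_\mathbb{B})=(\Delta Z)\mathfrak{I}_\mathbb{B}$ follows by viewing $Z\in\mathcal{R}=\mathcal{R}^{\llbracket{0,+\infty}\llbracket}$ via Corollary \ref{cD=DB} and taking $\widetilde{\mathbb{B}}=\mathbb{B}$. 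For (4), the same localization-at-$t$ reasoning applied to the explicit formulas \eqref{stop} for $X^T$ and for $X^{T-}$ shows that on $\llbracket{0,T}\llbracket$ the stopped processes have the same jumps as $X$, that at $t=T$ the left limit equals $X_{T-}$, so $\Delta X^T_T=\Delta X_T$ while $\Delta X^{T-}_T=0$, and that both processes are constant, hence jump-free, on $\rrbracket{T,+\infty}\llbracket$; this gives \eqref{eqXT}. Here $X^T,X^{T-}\in\mathcal{R}$ by Theorem \ref{fcs}, so their jumps are the ordinary ones, and the special cases are the choice $\mathbb{B}=\llbracket{0,+\infty}\llbracket$.

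Part (5) is where the main obstacle sits. The forward direction is easy: an FCS $(T_n,X^{(n)})$ with continuous $X^{(n)}$ has $\Delta X^{(n)}=0$, so by (1) the pair $(T_n,0)$ is a CS for $\Delta X$, forcing $\Delta X=0\mathfrak{I}_\mathbb{B}$. The converse requires producing a \emph{continuous} FCS out of the hypothesis $\Delta X=0$ on $\mathbb{B}$. I would take an FCS $(T_n,X^{(n)})$ and replace $X^{(n)}$ by $(X^{(n)})^{T_n}$; by (4) its jumps are carried by $\llbracket{0,T_n}\rrbracket$, and since $X^{(n)}=X$ has no jump on $\mathbb{B}\llbracket{0,T_n}\rrbracket$, the jump set lies in $\llbracket{0,T_n}\rrbracket\setminus\mathbb{B}$. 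Using \eqref{B}, this set collapses to the single graph $\llbracket{T}\rrbracket\cap(F\times\mathbf{R}^+)\cap[T_n=T]$, a spurious jump lying strictly outside $\mathbb{B}$. I would excise it by setting $\widehat{X}^{(n)}=(X^{(n)})^{T_n}-\Delta(X^{(n)})^{T_n}_{T}\,I_{\llbracket{\rho_n,+\infty}\llbracket}$, where $\rho_n=T_{F\cap[T_n=T]}$ is a stopping time; then $\widehat{X}^{(n)}\in\mathcal{C}$, $\widehat{X}^{(n)}=X$ on $\mathbb{B}\llbracket{0,T_n}\rrbracket$, and $(T_n,\widehat{X}^{(n)})$ is a continuous FCS, so $X\in\mathcal{C}^\mathbb{B}$. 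The delicate point, and the reason one cannot simply ``announce $T$ from below'', is that in the optional case $T_F$ need not be predictable, so there is no sequence stopping strictly before $T$ on $F$; the offending jump genuinely has to be removed by hand.

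Finally, for (6) I would first align the two FCSs to a common sequence by replacing the individual $(T^X_n)$, $(T^Y_n)$ with $T^X_n\wedge T^Y_n$ via Theorem \ref{process}(3), obtaining FCSs $(T_n,X^{(n)})$, $(T_n,Y^{(n)})$ with $X^{(n)},Y^{(n)}\in\mathcal{S}$. By (1) these give CSs $(T_n,\Delta X^{(n)})$ and $(T_n,\Delta Y^{(n)})$; since the class of all processes is closed under products, the termwise product $(T_n,\Delta X^{(n)}\Delta Y^{(n)})$ is a CS for $\Delta X\Delta Y$, and because each $\Delta X^{(n)}\Delta Y^{(n)}$ is thin it is in fact an FCS exhibiting $\Delta X\Delta Y$ as a $\mathbb{B}$-thin process through Theorem \ref{restriction}(2). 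The only classical input is that for conventional semimartingales $\Sigma(\Delta X^{(n)}\Delta Y^{(n)})=\sum_{s\le\cdot}\Delta X^{(n)}_s\Delta Y^{(n)}_s$ converges (by Cauchy--Schwarz) to a process of finite variation, namely the purely discontinuous part of $[X^{(n)},Y^{(n)}]$. Feeding this into Theorem \ref{thin}(1) then yields that $(T_n,\Sigma(\Delta X^{(n)}\Delta Y^{(n)}))$ is an FCS for $\Sigma(\Delta X\Delta Y)\in\mathcal{V}^\mathbb{B}$, which completes the argument.
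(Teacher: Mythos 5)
Your proposal is correct and follows essentially the same route as the paper: parts (1), (4) and (6) use the identical coupled-sequence and pointwise arguments, and your hand-excised process $\widehat{X}^{(n)}$ in part (5) coincides exactly with the paper's pre-stopped process $(X^{(n)})^{T_n\wedge (T_F-)}$, which removes the spurious jump at $T_F$ outside $\mathbb{B}$ in a single stroke. The only cosmetic difference is that the paper establishes (2) and (3) through coupled sequences and Theorem \ref{process}(1) rather than by your direct pointwise computation on the sections, which changes nothing of substance.
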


\section{Stieltjes integrals of $\mathbb{B}$-predictable processes with respect to \\
$\mathbb{B}$-adapted processes with finite variation}\label{section3}
\setcounter{equation}{0}
In this section, we address Problem (SI-1) by extending the existing framework of Stieltjes integrals to accommodate the examination of integrals of $\mathbb{B}$-predictable processes with respect to $\mathbb{B}$-adapted processes of finite variation.
Subsequently, we proceed to investigate the fundamental properties inherent to these $\mathbb{B}$-Stieltjes integrals. Unless otherwise explicitly specified, we shall consistently assume, throughout this section, that $H\in \mathcal{P}^\mathbb{B}$ and $A\in \mathcal{V}^\mathbb{B}$.

\subsection{Definition of $H_{\bullet}A$}
In response to Problem (SI-1), we present the following affirmative resolution, which is grounded in the existing framework of Stieltjes integrals (for instance, as delineated in Definition 3.45 of \cite{He}).

\begin{definition}\normalfont\label{HA}
We say that $H$ is $\mathbb{B}$-integrable w.r.t. $A$, if the following two conditions are satisfied:
\begin{itemize}
  \item [$(1)$] For all $(\omega,t)\in \mathbb{B}$, it holds that
   \[
   \int_{[0,t]}|H_s(\omega)||dA_s(\omega)|<{+\infty};
   \]
  \item [$(2)$] $L\in \mathcal{V}^\mathbb{B}$, where $L$ is defined by
  \begin{equation}\label{HA-de}
   L(\omega,t)=\int_{[0,t]}H_s(\omega)dA_s(\omega),\quad (\omega,t)\in \mathbb{B}.
  \end{equation}
  \end{itemize}
In this case, the $\mathbb{B}$-process $L$, denoted by $H_{\bullet}A$, is called the Stieltjes integral of $H$ w.r.t $A$. The collection of all $\mathbb{B}$-predictable processes which are $\mathbb{B}$-integrable w.r.t. $A$ is denoted by $\mathcal{L}^\mathbb{B}_s(A)$.
\end{definition}

\begin{remark}\normalfont\normalfont
Let $H\in\mathcal{L}^\mathbb{B}_s(A)$.
\begin{itemize}
  \item [$(1)$] As exemplified by the instances of $\widehat{H}\in \mathcal{P}^{\widehat{\mathbb{B}}}$ and $\widehat{A}\in \mathcal{V}^{\widehat{\mathbb{B}}}$ in Section 1.1, it is essential to incorporate condition (2) into Definition \ref{HA}. This inclusion ensures that the $\mathbb{B}$-stochastic integral $H_{\bullet}A$ possesses the requisite fundamental property.

  \item [$(2)$]
  Suppose that $(T_n, H^{(n)})$ and $(T_n,A^{(n)})$ are FCSs for $H\in \mathcal{P}^\mathbb{B}$ and $A\in \mathcal{V}^\mathbb{B}$, respectively.
  For any given $(\omega,t)\in \mathbb{B}$, there exists an integer $m$ such that $(\omega,t)\in {\mathbb{B}\llbracket{0,T_m}\rrbracket}$. Consequently, it holds that $H(\omega,s)=H^{(m)}(\omega,s)$ and $A(\omega,s)=A^{(m)}(\omega,s)$ for all $s\in [0,t]$. Therefore, the integral $H_{\bullet}A$, as defined by \eqref{HA-de}, can also be interpreted as the conventional Stieltjes integral within the domain $\mathbb{B}$.

  \item [$(3)$]
  Analogous to the definition of $H_{\bullet}A$ provided in \eqref{HA-de},
  we can also define the Stieltjes integral of a $\mathbb{B}$-measurable process w.r.t.  a $\mathbb{B}$-process with finite variation. However, this extension will not be utilized within the scope of this paper.

  \item [$(4)$]
  From Corollary \ref{cD=DB}, it follows that the $\mathbb{B}$-Stieltjes integral $H_{\bullet}A$, as defined by \eqref{HA-de}, simplifies to the conventional Stieltjes integral $H.A$ when $\mathbb{B}=\llbracket{0,+\infty}\llbracket=\Omega\times\mathbf{R}^+$. To elaborate, the precise relationship between these integrals is as follows:
  \begin{itemize}
  \item [] Let $H\in \mathcal{P}^{\llbracket{0,+\infty}\llbracket}=\mathcal{P}$ and $A\in \mathcal{V}^{\llbracket{0,+\infty}\llbracket}=\mathcal{V}$. If $H\in\mathcal{L}_s^{\llbracket{0,+\infty}\llbracket}(A)$, then $H\in\mathcal{L}_s(A)$ and $H_{\bullet}A=H.A$.
  \end{itemize}
\end{itemize}
\end{remark}

\subsection{Fundamental properties of $H_{\bullet}A$}
Utilizing Definition \ref{HA}, one can readily deduce the following properties pertaining to the $\mathbb{B}$-Stieltjes integral $H_{\bullet}A$. These properties are consistent with the well-established characteristics inherent in conventional Stieltjes integrals.

\begin{theorem}\label{HAproperty}
Let $V\in\mathcal{V}^\mathbb{B}$, and $a,b\in \mathbf{R}$. Suppose that $H\in \mathcal{L}^\mathbb{B}_s(A)\cap\mathcal{L}^\mathbb{B}_s(V)$ and $K\in \mathcal{L}^\mathbb{B}_s(A)$.
\begin{itemize}
\item[$(1)$] $aH+bK\in \mathcal{L}^\mathbb{B}_s(A)$, and in this case, we have
  \begin{equation}\label{ab}
   (aH+bK)_{\bullet}A=a(H_{\bullet}A)+b(K_{\bullet}A).
  \end{equation}
\item[$(2)$] $H\in \mathcal{L}^\mathbb{B}_s(aA+bV)$, and in this case, it holds that
  \begin{equation}\label{ab2}
   H_{\bullet}(aA+bV)=a(H_{\bullet}A)+b(H_{\bullet}V).
  \end{equation}
\item[$(3)$] Let $\widetilde{H}\in \mathcal{P}^\mathbb{B}$. Then $\widetilde{H}\in \mathcal{L}^\mathbb{B}_s(H_{\bullet}A)$ if and only if $\widetilde{H}H\in \mathcal{L}^\mathbb{B}_s(A)$.
  Furthermore, if $\widetilde{H}\in \mathcal{L}^\mathbb{B}_s(H_{\bullet}A)$ (or equivalently, $\widetilde{H}H\in \mathcal{L}^\mathbb{B}_s(A)$), then
  \begin{equation}\label{ab3}
  (\widetilde{H}H)_{\bullet}A=\widetilde{H}_{\bullet}(H_{\bullet}A).
  \end{equation}
  \end{itemize}
\end{theorem}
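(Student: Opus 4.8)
The plan is to reduce each of the three assertions to a purely pathwise identity for ordinary Lebesgue--Stieltjes integrals on the sections $\mathbb{B}_\omega$, and then to lift it to the level of $\mathbb{B}$-processes by checking the two requirements of Definition~\ref{HA} together with the linearity of $\mathcal{V}^\mathbb{B}$ supplied by Theorem~\ref{process}(4). First I would record that this pathwise reduction is legitimate: by Remark (2) following Definition~\ref{HA}, for every $(\omega,t)\in\mathbb{B}$ there is an integer $m$ with $(\omega,t)\in\mathbb{B}\llbracket{0,T_m}\rrbracket$ on which $H(\omega,\cdot)$, $A(\omega,\cdot)$ (and likewise $K$, $V$) coincide over $[0,t]$ with the ordinary c\`adl\`ag finite-variation paths $H^{(m)}(\omega,\cdot)$, $A^{(m)}(\omega,\cdot)$, so every integral $\int_{[0,t]}$ below is a classical Stieltjes integral to which standard calculus applies.

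For part (1) I would verify condition (1) of Definition~\ref{HA} for $aH+bK$ via the triangle inequality $\int_{[0,t]}|aH_s+bK_s||dA_s|\le |a|\int_{[0,t]}|H_s||dA_s|+|b|\int_{[0,t]}|K_s||dA_s|<+\infty$, and then use pathwise linearity of the Stieltjes integral in the integrand to identify the process defined by \eqref{HA-de} as $a(H_{\bullet}A)+b(K_{\bullet}A)$; since $H_{\bullet}A,K_{\bullet}A\in\mathcal{V}^\mathbb{B}$, Theorem~\ref{process}(4) places this combination in $\mathcal{V}^\mathbb{B}$, which gives condition (2) and \eqref{ab}. Part (2) I would treat dually: first note $aA+bV\in\mathcal{V}^\mathbb{B}$ by Theorem~\ref{process}(4), deduce condition (1) from the domination $|d(aA+bV)_s|\le|a||dA_s|+|b||dV_s|$ of total-variation measures, and use pathwise linearity of the integral in the integrator to obtain the candidate $a(H_{\bullet}A)+b(H_{\bullet}V)\in\mathcal{V}^\mathbb{B}$, yielding condition (2) and \eqref{ab2}.

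Part (3) is where the real work lies, and I expect the density statement below to be the main obstacle. Writing $L=H_{\bullet}A\in\mathcal{V}^\mathbb{B}$, the key pathwise fact I would establish is that, on each section, the Lebesgue--Stieltjes measure of $L$ has $H$ as its density relative to that of $A$, i.e. $dL_s=H_s\,dA_s$; this yields simultaneously the total-variation identity
\begin{equation*}
\int_{[0,t]}|\widetilde{H}_s||dL_s|=\int_{[0,t]}|\widetilde{H}_s||H_s||dA_s|,\qquad (\omega,t)\in\mathbb{B},
\end{equation*}
and the value identity $\int_{[0,t]}\widetilde{H}_s\,dL_s=\int_{[0,t]}\widetilde{H}_sH_s\,dA_s$. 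From the first identity, condition (1) of Definition~\ref{HA} for $\widetilde{H}$ against $L$ holds exactly when it holds for $\widetilde{H}H$ against $A$; from the second, the process produced by \eqref{HA-de} for $\widetilde{H}$ against $L$ coincides on $\mathbb{B}$ with that produced for $\widetilde{H}H$ against $A$, so condition (2) transfers between the two. Together these give the equivalence $\widetilde{H}\in\mathcal{L}^\mathbb{B}_s(H_{\bullet}A)\Leftrightarrow\widetilde{H}H\in\mathcal{L}^\mathbb{B}_s(A)$ and, whenever it holds, the identity \eqref{ab3}.

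To justify the density identity $dL=H\,dA$ (equivalently $|dL|=|H||dA|$), which is the one nontrivial ingredient, I would again invoke the FCS: on $\mathbb{B}\llbracket{0,T_m}\rrbracket$ the path of $L$ agrees with the ordinary Stieltjes integral $\int_{[0,\cdot]}H^{(m)}\,dA^{(m)}$ of c\`adl\`ag finite-variation processes, for which $|d(\int H^{(m)}\,dA^{(m)})|=|H^{(m)}|\,|dA^{(m)}|$ is the classical chain rule for Stieltjes integrals (cf. the results around Definition 3.45 of \cite{He}); letting $m$ range so that the sets $\mathbb{B}\llbracket{0,T_m}\rrbracket$ exhaust $\mathbb{B}$ then transfers the identity to all of $\mathbb{B}$. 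Since $H_{\bullet}A$ is assumed from the outset to lie in $\mathcal{V}^\mathbb{B}$, no extra well-definedness question arises when forming $\widetilde{H}_{\bullet}(H_{\bullet}A)$, and all remaining steps are routine once the pathwise reduction and the density identity are in place.
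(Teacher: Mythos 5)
Your proposal is correct and follows essentially the same route as the paper's own proof: parts (1) and (2) by the triangle inequality together with pathwise linearity of the Stieltjes integral (with Theorem~\ref{process}(4) supplying membership in $\mathcal{V}^\mathbb{B}$), and part (3) by the pathwise density identity $dL=H\,dA$, $|dL|=|H|\,|dA|$, which transfers both the integrability condition and the value of the integral between $(\widetilde{H},H_{\bullet}A)$ and $(\widetilde{H}H,A)$. The only difference is presentational: the paper merely asserts the resulting equivalence of the two pairs of conditions, whereas you make the underlying chain-rule identity and its FCS-based justification explicit, which is a welcome elaboration rather than a deviation.
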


The following two theorems elucidate the intricate relationship between the $\mathbb{B}$-Stieltjes integral $H_{\bullet}A$ and the conventional Stieltjes integrals. Specifically, Theorem \ref{HA-equivalent} elaborates on the equivalent conditions governing the existence of $H_{\bullet}A$, thereby unveiling that the integrability of such $\mathbb{B}$-integrals is fundamentally contingent upon the integrability of the traditional Stieltjes integrals. Moreover, Theorem \ref{HA-FCS} provides a characterization of $H_{\bullet}A$ as the summation of a sequence of conventional Stieltjes integrals, thereby offering a deeper insight into the structural composition of the $\mathbb{B}$-Stieltjes integral.

\begin{theorem}\label{HA-equivalent}
The following assertions are equivalent:
\begin{description}
  \item [(A1)] $H\in\mathcal{L}^\mathbb{B}_s(A)$.
  \item [(A2)] There exist a coupled predictable process $\widetilde{H}$ for $H\in\mathcal{P}^\mathbb{B}$ and an FCS $(T_n,A^{(n)})$ for $A\in\mathcal{V}^\mathbb{B}$ such that for each $n\in \mathbf{N}^+$, $\widetilde{H}\in \mathcal{L}_s(A^{(n)})$.
  \item [(A3)] There exist FCSs $(T_n, H^{(n)})$ for $H\in\mathcal{P}^\mathbb{B}$ and $(T_n,A^{(n)})$ for $A\in\mathcal{V}^\mathbb{B}$ such that for each $n\in \mathbf{N}^+$, $H^{(n)}\in \mathcal{L}_s(A^{(n)})$.
\end{description}
\end{theorem}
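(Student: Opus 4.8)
The plan is to establish the cyclic chain of implications (A1) $\Rightarrow$ (A2) $\Rightarrow$ (A3) $\Rightarrow$ (A1). Two of these links are light. For (A2) $\Rightarrow$ (A3) I would take the coupled predictable process $\widetilde H$ supplied by (A2) and simply set $H^{(n)}=\widetilde H$ for every $n$; since $H=\widetilde H\mathfrak I_\mathbb B$ gives $(HI_\mathbb B)^{T_n}=(\widetilde H I_\mathbb B)^{T_n}$, the pair $(T_n,\widetilde H)$ is an FCS for $H\in\mathcal P^\mathbb B$, and the hypothesis $\widetilde H\in\mathcal L_s(A^{(n)})$ is exactly $H^{(n)}\in\mathcal L_s(A^{(n)})$. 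The substantive link is (A1) $\Rightarrow$ (A2); I will come to it last.

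For (A3) $\Rightarrow$ (A1) I would verify the two requirements of Definition \ref{HA} directly. Fix $(\omega,t)\in\mathbb B$ and choose $m$ with $(\omega,t)\in\mathbb B\llbracket 0,T_m\rrbracket$; since $\mathbb B$ is of interval type the whole segment $[0,t]$ lies in $\mathbb B\llbracket 0,T_m\rrbracket$, where $H=H^{(m)}$ and $A=A^{(m)}$, so $\int_{[0,t]}|H_s||dA_s|=\int_{[0,t]}|H^{(m)}_s||dA^{(m)}_s|<+\infty$ because $H^{(m)}\in\mathcal L_s(A^{(m)})$; this is condition (1). For condition (2) I set $L^{(n)}:=H^{(n)}.A^{(n)}\in\mathcal V$ and note that the same path-coincidence gives $L=L^{(n)}$ on $\mathbb B\llbracket 0,T_n\rrbracket$ for each $n$, hence $L^{(k)}I_{\mathbb B\llbracket 0,T_k\rrbracket}=L^{(l)}I_{\mathbb B\llbracket 0,T_k\rrbracket}$ for $k\le l$; Theorem \ref{restriction}(4) then yields a process of $\mathcal V^\mathbb B$ with FCS $(T_n,L^{(n)})$ which coincides with $L$ on $\mathbb B$, so $L\in\mathcal V^\mathbb B$.

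The heart of the argument is (A1) $\Rightarrow$ (A2). Using Theorem \ref{restriction}(1) I pick a coupled predictable process $\widetilde H$ with $H=\widetilde H\mathfrak I_\mathbb B$. Since $L=H_{\bullet}A\in\mathcal V^\mathbb B$ by hypothesis, I take an FCS for $A$ and an FCS for $L$ and, by the refinement in Theorem \ref{process}(3), replace both by FCSs $(\tau_n,A^{(n)})$ and $(\tau_n,L^{(n)})$ sharing a common sequence $\tau_n\uparrow T$ (take $\tau_n$ to be the minimum of the two original sequences). The obstacle is that $\tau_n$ need not be a $\mathbb B$-inner stopping time: on $F\cap[\tau_n=T]$ the single point $(\omega,T(\omega))$ lies in $\llbracket 0,\tau_n\rrbracket$ but outside $\mathbb B$, and there the values of $\widetilde H$ are uncontrolled, so a jump of $A^{(n)}$ at $\tau_n$ could destroy integrability. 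I therefore excise that jump: with $G_n:=F\cap[\tau_n=T]\in\mathcal F_{\tau_n}$ I define
\[
\bar A^{(n)}=(A^{(n)})^{\tau_n}-\Delta A^{(n)}_{\tau_n}\,I_{G_n}\,I_{\llbracket \tau_n,+\infty\llbracket}\in\mathcal V .
\]
Because the removed point is outside $\mathbb B$, $\bar A^{(n)}$ still agrees with $A$ on $\mathbb B\llbracket 0,\tau_n\rrbracket$, so $(\tau_n,\bar A^{(n)})$ remains an FCS for $A$.

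It then remains to check $\widetilde H\in\mathcal L_s(\bar A^{(n)})$ for each $n$, i.e. finiteness of $\int_{[0,t]}|\widetilde H_s||d\bar A^{(n)}_s|$. On the complement of $G_n$ the variation of $\bar A^{(n)}$ is carried by $[0,\tau_n]\subseteq\mathbb B_\omega$, where $\widetilde H=H$ and $\bar A^{(n)}=A$, so the integral equals $\int_{[0,t\wedge\tau_n]}|H_s||dA_s|<+\infty$ by condition (1) of Definition \ref{HA}. On $G_n$ the variation is carried by $[0,T(\omega)[$, and here the decisive estimate comes from the common choice of FCS: since $dL=H\,dA$ on $\mathbb B$ and $L=L^{(n)}$ on $\mathbb B\llbracket 0,\tau_n\rrbracket$ with $L^{(n)}\in\mathcal V$, one gets $\int_{[0,T[}|H_s||dA_s|=\int_{[0,T[}|dL_s|=\int_{[0,T[}|dL^{(n)}_s|\le\mathrm{Var}(L^{(n)})_T<+\infty$. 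This is precisely why forcing the $A$- and $L$-FCSs to share the times $\tau_n$ is essential, and why removing the boundary jump turns the open-interval estimate into genuine $\mathcal L_s$-integrability. I expect this coordination — reaching $T$ simultaneously in both FCSs on the optional (non-predictable) part of $F$, and neutralising the resulting boundary jump — to be the main difficulty; once it is in place the three conditions close the cycle.
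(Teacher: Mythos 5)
Your proof is correct, and on the two easy links it coincides with the paper's own argument: (A2) $\Rightarrow$ (A3) by taking $H^{(n)}=\widetilde H$, and (A3) $\Rightarrow$ (A1) by the same pathwise verification of the two conditions of Definition \ref{HA} (the paper likewise shows $LI_{\mathbb B\llbracket0,T_n\rrbracket}=(H^{(n)}.A^{(n)})I_{\mathbb B\llbracket0,T_n\rrbracket}$ and concludes $L\in\mathcal V^{\mathbb B}$ with FCS $(T_n,H^{(n)}.A^{(n)})$). Where you genuinely diverge is the key implication (A1) $\Rightarrow$ (A2). The paper's argument is algebraic: fixing $\varepsilon>0$, it writes $A=K_{\bullet}L+X_{\bullet}A$ with $K=(\tfrac{1}{H}I_{[|H|>\varepsilon]})\mathfrak I_{\mathbb B}$ and $X=I_{[|H|\leq\varepsilon]}\mathfrak I_{\mathbb B}$, both $\mathbb B$-locally bounded predictable (Lemma \ref{bound-p}), then uses Theorem \ref{HAproperty} and Lemma \ref{Lemma-HA} to manufacture an entirely new FCS $A^{(n)}=\widetilde K.L^{(n)}+\widetilde X.\widetilde A^{(n)}$ for $A$, against which $\widetilde H$ is integrable for free, since $\widetilde H\widetilde K=I_{[|\widetilde H|>\varepsilon]}$ and $\widetilde H\widetilde X=\widetilde HI_{[|\widetilde H|\leq\varepsilon]}$ are bounded. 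You instead keep the original FCS of $A$, synchronize its times with an FCS of $L$, stop at $\tau_n$, and surgically excise the boundary jump on $G_n=F\cap[\tau_n=T]$, verifying integrability pathwise. Your route is sound and more elementary, and it exposes exactly where a naive FCS fails (the non-inner boundary point on the optional part of $F$) and repairs only that; but it carries burdens the paper's method avoids: you must justify $G_n\in\mathcal F_{\tau_n}$ (true, since $\tau_n\leq T$ and $F\in\mathcal F_T$ give $F\cap[T\leq\tau_n]\in\mathcal F_{\tau_n}$), and your final estimate $\int_{[0,T[}|dL^{(n)}_s|\leq\mathrm{Var}(L^{(n)})_T<+\infty$ tacitly assumes $T(\omega)<+\infty$ --- on $[T=+\infty]$ no excision occurs and every finite $t$ lies inside $\mathbb B_\omega$, so condition (1) of Definition \ref{HA} closes that case; a trivial patch, but it should be stated. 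What the paper's truncation device buys, beyond silently absorbing such edge cases, is uniformity and reuse: the identical $K,X$ decomposition is recycled verbatim for the harder martingale analogue (M1) $\Rightarrow$ (M2) of Theorem \ref{HM=}, where your surgery cannot work, because deleting the jump of $(M^{(n)})^{\tau_n}$ at $\tau_n$ destroys the local martingale property (this is precisely the phenomenon behind the paper's $\widehat M$ counterexample and the need for inner FCSs).
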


\begin{remark}\normalfont
By appealing to Theorem \ref{process}(3), the statement labeled as (A3) in Theorem \ref{HA-equivalent} can be rephrased into an equivalent condition, as stated below:
\begin{description}
  \item[(A3$'$)]
  There exist FCSs $(T_n, H^{(n)})$ for $H\in\mathcal{P}^\mathbb{B}$ and $(S_n,A^{(n)})$ for $A\in\mathcal{V}^\mathbb{B}$ such that for each $n\in \mathbf{N}^+$, $H^{(n)}\in \mathcal{L}_s(A^{(n)})$.
\end{description}
\end{remark}

\begin{theorem}\label{HA-FCS}
Suppose that $(T_n, H^{(n)})$ and $(T_n,A^{(n)})$ are FCSs for $H\in\mathcal{P}^\mathbb{B}$ and $A\in\mathcal{V}^\mathbb{B}$ respectively such that for each $n\in \mathbf{N}^+$, $H^{(n)}\in \mathcal{L}_s(A^{(n)})$. Then $(T_n,H^{(n)}.A^{(n)})$ is an FCS for $H_{\bullet}A\in\mathcal{V}^\mathbb{B}$, and $H_{\bullet}A$ can be expressed as
      \begin{equation}\label{HA-expression2}
      H_{\bullet}A=\left((H_0A_0)I_{\llbracket{0}\rrbracket}+\sum\limits_{n=1}^{{+\infty}}(H^{(n)}.A^{(n)})
      I_{\rrbracket{T_{n-1},T_n}\rrbracket}\right)\mathfrak{I}_\mathbb{B},\quad T_0=0.
      \end{equation}
Furthermore, if $(S_n, \widetilde{H}^{(n)})$ and $(\widetilde{S}_n,\widetilde{A}^{(n)})$ are also FCSs for $H\in\mathcal{P}^\mathbb{B}$ and $A\in\mathcal{V}^\mathbb{B}$ respectively such that for each $n\in \mathbf{N}^+$, $\widetilde{H}^{(n)}\in \mathcal{L}_s(\widetilde{A}^{(n)})$, then $H_{\bullet}A=X$ where $X$ is given by
      \begin{equation*}
      X=\left((H_0A_0)I_{\llbracket{0}\rrbracket}+\sum\limits_{n=1}^{{+\infty}}(\widetilde{H}^{(n)}.\widetilde{A}^{(n)})
      I_{\rrbracket{\widetilde{T}_{n-1},\widetilde{T}_n}\rrbracket}\right)\mathfrak{I}_\mathbb{B},\quad \widetilde{T}_0=0,
      \end{equation*}
and $\widetilde{T}_n=S_n\wedge \widetilde{S}_n,\;n\in \mathbf{N}^+$. In this case, we say that the expression of \eqref{HA-expression2} is independent of the choice of FCSs $(T_n, H^{(n)})$ for $H\in\mathcal{P}^\mathbb{B}$ and $(T_n,A^{(n)})$ for $A\in\mathcal{V}^\mathbb{B}$.
\end{theorem}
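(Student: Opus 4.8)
The plan is to push every assertion down to the classical Stieltjes calculus through the coupled sequences, and then to read off \eqref{HA-expression2} from the general representation in Theorem \ref{process}(5). Before any computation I would settle well-definedness: the standing hypothesis is verbatim condition (A3) of Theorem \ref{HA-equivalent}, so the implication (A3)$\Rightarrow$(A1) proved there gives $H\in\mathcal{L}^\mathbb{B}_s(A)$; in particular the pathwise integral \eqref{HA-de} is finite on $\mathbb{B}$ and $H_{\bullet}A$ is a genuine element of $\mathcal{V}^\mathbb{B}$, so that the symbol $H_{\bullet}A$ appearing in the statement is legitimate.

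Next I would show that $(T_n,H^{(n)}.A^{(n)})$ is an FCS for $H_{\bullet}A$. Since $(T_n,A^{(n)})$ is an FCS for $A\in\mathcal{V}^\mathbb{B}$ we have $A^{(n)}\in\mathcal{V}$, and as $H^{(n)}\in\mathcal{L}_s(A^{(n)})$ the classical fact that a Stieltjes integral against a finite-variation process is itself of finite variation (Theorem 3.46(2) of \cite{He}) yields $H^{(n)}.A^{(n)}\in\mathcal{V}$; the conditions $T_n\uparrow T$ and $\bigcup_n\llbracket 0,T_n\rrbracket\supseteq\mathbb{B}$ are inherited from the given FCSs. The crux, and the step I expect to demand the most care, is the coupling identity, which in the form \eqref{xkl} reads $(H_{\bullet}A)I_{\mathbb{B}\llbracket 0,T_n\rrbracket}=(H^{(n)}.A^{(n)})I_{\mathbb{B}\llbracket 0,T_n\rrbracket}$. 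To prove it I would fix $(\omega,t)\in\mathbb{B}\llbracket 0,T_n\rrbracket$; because $\mathbb{B}$ is of interval type, every $s\in[0,t]$ still satisfies $(\omega,s)\in\mathbb{B}\llbracket 0,T_n\rrbracket$, so \eqref{xkl} applied to $H$ and to $A$ gives $H_s(\omega)=H^{(n)}_s(\omega)$ and $A_s(\omega)=A^{(n)}_s(\omega)$ on the entire segment $[0,t]$. Hence the pathwise Stieltjes integral \eqref{HA-de} defining $(H_{\bullet}A)(\omega,t)$ equals $\int_{[0,t]}H^{(n)}_s(\omega)\,dA^{(n)}_s(\omega)=(H^{(n)}.A^{(n)})(\omega,t)$ there --- precisely the reading recorded in the Remark after Definition \ref{HA} --- which is the required identity.

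With the FCS in hand, the representation \eqref{HA-expression2} follows at once by invoking Theorem \ref{process}(5) with $X=H_{\bullet}A$ and $X^{(n)}=H^{(n)}.A^{(n)}$; the only bookkeeping is to identify the initial term, namely $(H_{\bullet}A)_0=H_0A_0$, which is \eqref{HA-de} evaluated at $t=0$ under the convention $(H.A)_0=H_0A_0$, together with $H^{(n)}_0=H_0$ and $A^{(n)}_0=A_0$ on $\llbracket 0\rrbracket\subseteq\mathbb{B}$.

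Finally, for the independence assertion I would reduce the two given pairs of sequences to a common time grid. Using $(\widetilde{S}_n)$ as the auxiliary sequence in Theorem \ref{process}(3) turns $(S_n,\widetilde{H}^{(n)})$ into the FCS $(\widetilde{T}_n,\widetilde{H}^{(n)})$ for $H$ with $\widetilde{T}_n=S_n\wedge\widetilde{S}_n$, and symmetrically $(\widetilde{S}_n,\widetilde{A}^{(n)})$ becomes the FCS $(\widetilde{T}_n,\widetilde{A}^{(n)})$ for $A$; the relation $\widetilde{H}^{(n)}\in\mathcal{L}_s(\widetilde{A}^{(n)})$ is a property of the processes alone and is untouched by the change of stopping times. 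The already-established first part therefore applies to this common grid, so $(\widetilde{T}_n,\widetilde{H}^{(n)}.\widetilde{A}^{(n)})$ is again an FCS for the same $H_{\bullet}A$, and a second appeal to Theorem \ref{process}(5) --- whose output does not depend on the chosen FCS --- yields $H_{\bullet}A=X$. Beyond the index bookkeeping this last paragraph is routine; the genuine content of the proof is the pathwise coupling identity of the second paragraph.
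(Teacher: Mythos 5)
Your proposal is correct and follows essentially the same route as the paper: the FCS claim for $(T_n,H^{(n)}.A^{(n)})$ rests on the pathwise coupling identity (which is exactly the content of the paper's proof of (A3)$\Rightarrow$(A1) in Theorem \ref{HA-equivalent}, which the paper simply cites rather than re-derives), the representation \eqref{HA-expression2} then follows from Theorem \ref{process}(5), and the independence assertion is obtained by passing to the common grid $\widetilde{T}_n=S_n\wedge\widetilde{S}_n$ and invoking the independence property of \eqref{x-expression}. The only cosmetic difference is that you spell out inline what the paper handles by citation and by the word ``similarly''.
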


\begin{remark}\normalfont
Within the framework of expression \eqref{HA-expression2}, the FCS $(T_n, H^{(n)})$ for $H\in\mathcal{P}^\mathbb{B}$ can be substituted by a coupled predictable process $\widetilde{H}$ for $H\in\mathcal{P}^\mathbb{B}$. This substitution is valid since $(T_n,\widetilde{H}$) also constitutes an FCS for $H\in\mathcal{P}^\mathbb{B}$. Specifically, if $\widetilde{H}$ is a coupled predictable process for $H\in\mathcal{P}^\mathbb{B}$, and if $(T_n,A^{(n)})$ is an FCS for $A\in\mathcal{V}^\mathbb{B}$ such that for each $n\in \mathbf{N}^+$, $\widetilde{H}\in \mathcal{L}_s(A^{(n)})$, then $(T_n,\widetilde{H}.A^{(n)})$ is an FCS for $H_{\bullet}A\in\mathcal{V}^\mathbb{B}$, and $H_{\bullet}A$ can be equivalently expressed as
      \begin{equation}\label{HA-expression1}
      H_{\bullet}A=\left((H_0A_0)I_{\llbracket{0}\rrbracket}+\sum\limits_{n=1}^{{+\infty}}(\widetilde{H}.A^{(n)})
      I_{\rrbracket{T_{n-1},T_n}\rrbracket}\right)\mathfrak{I}_\mathbb{B},\quad T_0=0.
      \end{equation}
\end{remark}

\begin{corollary}\label{bound-HA}
Let $H$ further be a $\mathbb{B}$-locally bounded predictable process. If $(T_n,H^{(n)})$ is an FCS for $H$ (a $\mathbb{B}$-locally bounded predictable process), and if $(T_n,A^{(n)})$ is an FCS for $A\in\mathcal{V}^\mathbb{B}$, then $(T_n,H^{(n)}.A^{(n)})$ is an FCS for $H_{\bullet}A\in\mathcal{V}^\mathbb{B}$.
\end{corollary}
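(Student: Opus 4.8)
The plan is to recognize Corollary \ref{bound-HA} as a direct specialization of Theorem \ref{HA-FCS}: once we confirm that the given pair of FCSs satisfies the sole hypothesis of that theorem—namely $H^{(n)}\in \mathcal{L}_s(A^{(n)})$ for every $n$—the conclusion that $(T_n,H^{(n)}.A^{(n)})$ is an FCS for $H_{\bullet}A\in\mathcal{V}^\mathbb{B}$ follows immediately. Thus the entire argument reduces to verifying this one integrability condition, together with checking that $(T_n,H^{(n)})$ legitimately serves as an FCS for $H\in\mathcal{P}^\mathbb{B}$ and not merely for $H$ viewed as a $\mathbb{B}$-locally bounded predictable process.

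First I would record the structural consequences of the hypotheses. Since $(T_n,H^{(n)})$ is an FCS for $H$ regarded as a $\mathbb{B}$-locally bounded predictable process, each $H^{(n)}$ belongs to the class of locally bounded predictable processes; in particular each $H^{(n)}$ is predictable, so the very same pair $(T_n,H^{(n)})$ is an FCS for $H\in\mathcal{P}^\mathbb{B}$ as well (the defining data are unchanged—only the ambient class $\mathcal{D}$ is enlarged from the locally bounded predictable processes to all predictable processes, and this inclusion is immediate). Likewise, because $(T_n,A^{(n)})$ is an FCS for $A\in\mathcal{V}^\mathbb{B}$, each $A^{(n)}\in\mathcal{V}$, i.e. is an adapted process of finite variation.

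Next comes the one genuinely analytic step: for each $n$ I would invoke the classical fact that a locally bounded predictable process is Stieltjes-integrable with respect to any adapted process of finite variation. Concretely, if $(R_k)$ is a localizing sequence with $|H^{(n)}I_{\llbracket{0,R_k}\rrbracket}|\leq C_k$, then on $\llbracket{0,R_k}\rrbracket$ one has the pathwise bound $\int_{[0,t]}|H^{(n)}_s||dA^{(n)}_s|\leq C_k\int_{[0,t]}|dA^{(n)}_s|<{+\infty}$, and the resulting pathwise Stieltjes integral is adapted with finite variation; this is precisely the predictable-integrand version of the standard Stieltjes integrability result (e.g. Theorem 3.46 of \cite{He}). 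Hence $H^{(n)}\in\mathcal{L}_s(A^{(n)})$ for every $n\in\mathbf{N}^+$.

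With both FCSs in hand and the condition $H^{(n)}\in\mathcal{L}_s(A^{(n)})$ established, all hypotheses of Theorem \ref{HA-FCS} are satisfied, and applying it yields directly that $H\in\mathcal{L}^\mathbb{B}_s(A)$, that $H_{\bullet}A\in\mathcal{V}^\mathbb{B}$, and that $(T_n,H^{(n)}.A^{(n)})$ is an FCS for it, as claimed. The only point requiring any care—and hence the main obstacle, though a mild one—is the classical integrability fact above; everything else is bookkeeping about FCSs and the inclusion of locally bounded predictable processes within the predictable class.
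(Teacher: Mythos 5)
Your proposal is correct and follows essentially the same route as the paper: the paper's proof likewise observes that each $H^{(n)}$, being locally bounded and predictable, satisfies $H^{(n)}\in\mathcal{L}_s(A^{(n)})$ with $H^{(n)}.A^{(n)}\in\mathcal{V}$ (citing Theorem 3.46 of \cite{He}), and then concludes by Theorem \ref{HA-FCS}. Your additional remarks — the pathwise localization bound and the observation that $(T_n,H^{(n)})$ also serves as an FCS for $H\in\mathcal{P}^\mathbb{B}$ — merely spell out details the paper leaves implicit.
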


\begin{corollary}\label{HAp-equivalent}
Let $H\in \mathcal{P}^\mathbb{C}$ and $A\in \mathcal{V}^\mathbb{C}$.
\begin{itemize}
  \item [$(1)$] $H\in\mathcal{L}^\mathbb{C}_s(A)$ if and only if
      there exist an FS $(\tau_n)$ for $\mathbb{C}$ such that for each $n\in \mathbf{N}^+$, $H^{\tau_n}\in \mathcal{L}_s(A^{\tau_n})$.
  \item [$(2)$] Suppose that $H\in\mathcal{L}^\mathbb{C}_s(A)$ and $(\tau_n)$ is an FS for $\mathbb{C}$. Then $(\tau_n,H^{\tau_n}.A^{\tau_n})$ is an FCS for $H_{\bullet}A\in\mathcal{V}^\mathbb{C}$, and $H_{\bullet}A$ can be expressed as
      \begin{equation}\label{HA-expression0}
      H_{\bullet}A=\left((H_0A_0)I_{\llbracket{0}\rrbracket}+\sum\limits_{n=1}^{{+\infty}}
      (H^{\tau_n}.A^{\tau_n})I_{\rrbracket{\tau_{n-1},\tau_n}
      \rrbracket}\right)\mathfrak{I}_\mathbb{C},\quad \tau_0=0.
      \end{equation}
      Furthermore, if $(\widetilde{\tau}_n)$ is also an FS for $\mathbb{C}$, then $H_{\bullet}A=X$ where $X$ is given by
      \begin{equation*}
      X=\left((H_0A_0)I_{\llbracket{0}\rrbracket}+\sum\limits_{n=1}^{{+\infty}}
      (H^{\widetilde{\tau}_n}.A^{\widetilde{\tau}_n})I_{\rrbracket{\widetilde{\tau}_{n-1},\widetilde{\tau}_n}
      \rrbracket}\right)\mathfrak{I}_\mathbb{C},\quad \widetilde{\tau}_0=0.
      \end{equation*}
      In this case, we say that the expression of \eqref{HA-expression0} is independent of the choice of FS $(\tau_n)$ for $\mathbb{C}$.
\end{itemize}
\end{corollary}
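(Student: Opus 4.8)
The plan is to derive both assertions from the general results already established for arbitrary stochastic sets of interval type (Theorems \ref{HA-equivalent} and \ref{HA-FCS}), exploiting the special structure afforded by Lemma \ref{th8.17}(2): a predictable set $\mathbb{C}$ of interval type is generated by an FS $(\tau_n)$ whose stopping times are all $\mathbb{C}$-inner, so that each stopped object $H^{\tau_n}$, $A^{\tau_n}$ is a \emph{genuine} predictable (resp. finite-variation) process by Theorem \ref{fcs}. This is what lets me translate between $\mathbb{C}$-level integrability and the conventional integrability appearing in the statement.

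For the forward implication of (1), I would start from $H\in\mathcal{L}^\mathbb{C}_s(A)$ and invoke (A1)$\Rightarrow$(A3) of Theorem \ref{HA-equivalent} to obtain FCSs $(T_n,H^{(n)})$ and $(T_n,A^{(n)})$ with $H^{(n)}\in\mathcal{L}_s(A^{(n)})$. Choosing any FS $(\sigma_n)$ for $\mathbb{C}$ and setting $\tau_n=T_n\wedge\sigma_n$, Corollary \ref{process-FS}(2) guarantees that $(\tau_n)$ is again an FS for $\mathbb{C}$ and that $H^{\tau_n}=(H^{(n)})^{\tau_n}$, $A^{\tau_n}=(A^{(n)})^{\tau_n}$; since conventional Stieltjes integrability is preserved under stopping (stopping can only shrink the pathwise variation $\int_{[0,t]}|H^{(n)}||dA^{(n)}|$), we obtain $H^{\tau_n}\in\mathcal{L}_s(A^{\tau_n})$, the desired FS. For the converse, given an FS $(\tau_n)$ with $H^{\tau_n}\in\mathcal{L}_s(A^{\tau_n})$, Corollary \ref{fcs-p}(2), applied with $\mathcal{D}=\mathcal{P}$ and $\mathcal{D}=\mathcal{V}$ (both stable under stopping and localization), identifies $(\tau_n,H^{\tau_n})$ and $(\tau_n,A^{\tau_n})$ as FCSs for $H$ and $A$; taking $T_n=\tau_n$, condition (A3) holds verbatim, and (A3)$\Rightarrow$(A1) yields $H\in\mathcal{L}^\mathbb{C}_s(A)$.

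For (2), I would first record the compatibility identity $(H_{\bullet}A)^{\tau_n}=H^{\tau_n}.A^{\tau_n}$: since $\tau_n$ is $\mathbb{C}$-inner and the $\mathbb{C}$-integral coincides pathwise with the ordinary Stieltjes integral inside $\mathbb{C}$ (Remark (2) following Definition \ref{HA}), stopping at $\tau_n$ converts $H_{\bullet}A$ into the ordinary integral of the stopped data, using that conventional Stieltjes integration commutes with stopping. In particular $\int_{[0,t]}|H^{\tau_n}||dA^{\tau_n}|=\int_{[0,t\wedge\tau_n]}|H||dA|<{+\infty}$ on $\llbracket{0,\tau_n}\rrbracket\subseteq\mathbb{C}$, so $H^{\tau_n}\in\mathcal{L}_s(A^{\tau_n})$ for the given (hence for every) FS. With $T_n=\tau_n$, $H^{(n)}=H^{\tau_n}$, $A^{(n)}=A^{\tau_n}$, every hypothesis of Theorem \ref{HA-FCS} is met, and its conclusion yields that $(\tau_n,H^{\tau_n}.A^{\tau_n})$ is an FCS for $H_{\bullet}A\in\mathcal{V}^\mathbb{C}$ together with the expression \eqref{HA-expression0}, which is simply \eqref{HA-expression2} specialized to $T_n=\tau_n$. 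The independence from the choice of FS then follows from Corollary \ref{process-FS}(3) applied to $X=H_{\bullet}A\in\mathcal{V}^\mathbb{C}$: since $X^{\tau_n}=H^{\tau_n}.A^{\tau_n}$ and $X_0=H_0A_0$, the general expression \eqref{x-expression-FS} collapses to \eqref{HA-expression0}, and its asserted independence transfers directly.

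The step I expect to require the most care is the compatibility identity $(H_{\bullet}A)^{\tau_n}=H^{\tau_n}.A^{\tau_n}$ and the accompanying verification that $H^{\tau_n}\in\mathcal{L}_s(A^{\tau_n})$ holds for \emph{every} FS and not merely for some FS. This is precisely where the predictability of $\mathbb{C}$, equivalently that each $\tau_n$ is $\mathbb{C}$-inner (Lemma \ref{th8.17}(2)), is used to upgrade the $\mathbb{B}$-level assertions to statements about genuine processes via Theorem \ref{fcs}, after which the classical behaviour of Stieltjes integrals under stopping may be applied without circularity.
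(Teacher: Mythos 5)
Your proposal is correct and follows essentially the same route as the paper's proof: sufficiency in (1) via Corollary \ref{fcs-p}(2) together with (A3)$\Rightarrow$(A1), necessity via (A1)$\Rightarrow$(A3) followed by intersecting with an arbitrary FS and using Corollary \ref{process-FS}(2) plus the fact that Stieltjes integrals commute with stopping, and (2) via the pathwise identification of $H_{\bullet}A$ with the ordinary integral inside $\mathbb{C}$ to get $H^{\tau_n}\in\mathcal{L}_s(A^{\tau_n})$ and then Theorem \ref{HA-FCS}. The only cosmetic difference is that you derive the independence of \eqref{HA-expression0} from Corollary \ref{process-FS}(3), whereas the paper reads it off from the independence clause of Theorem \ref{HA-FCS}; both rest on the same underlying result.
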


Let $H\in\mathcal{L}^\mathbb{B}_s(A)$.
According to Theorems \ref{HA-equivalent} and \ref{HA-FCS}, the $\mathbb{B}$-Stieltjes integral $H_{\bullet}A$ is fundamentally characterized by a sequence of Stieltjes integrals that pertain to FCSs for $H\in \mathcal{P}^\mathbb{B}$ and $A\in \mathcal{V}^\mathbb{B}$. Conversely, if $(T_n, H^{(n)})$ and $(T_n,A^{(n)})$ are FCSs for $H\in\mathcal{P}^\mathbb{B}$ and $A\in\mathcal{V}^\mathbb{B}$, respectively, it is not necessarily guaranteed that $H^{(n)}\in \mathcal{L}_s(A^{(n)})$ for each $n\in \mathbf{N}^+$. This is due to the fact that the processes $H^{(n)}$ and $A^{(n)}$ encompass information beyond the scope of $\mathbb{B}$, resulting in $H^{(n)}$ potentially not being integrable w.r.t. $A^{(n)}$ for some $n\in \mathbf{N}^+$. To illustrate this point, we provide a straightforward example.

\begin{example}\label{example_A}\normalfont
Let us define $\mathbb{C}=\llbracket{0,1}\rrbracket$, $H=1\mathfrak{I}_\mathbb{C}$ and $A(\omega,t)=t$ for $(\omega,t)\in \mathbb{C}$.
For each $n\in \mathbf{N}^+$ and $(\omega,t)\in \Omega\times \mathbf{R}^+$, we set $T_n=1$, $A^{(n)}(\omega,t)=t$ and
\[
H^{(n)}(\omega,t)=I_{\llbracket{0,1}\rrbracket}(\omega,t)+\frac{1}{2-t}I_{\rrbracket{1,2}\llbracket}(\omega,t)
+I_{\llbracket{2,+\infty}\llbracket}(\omega,t).
\]
It follows that $(T_n,H^{(n)})$ and $(T_n,A^{(n)})$ are FCSs for $H\in \mathcal{P}^\mathbb{C}$ and $A\in \mathcal{V}^\mathbb{C}$, respectively. However, it is noted that $H^{(n)}\notin \mathcal{L}_s(A^{(n)})$ for each $n\in \mathbf{N}^+$. Nevertheless, $H\in\mathcal{L}^\mathbb{C}_s(A)$.
To substantiate this claim, we verify the condition stipulated in Corollary \ref{HAp-equivalent}(1). For this purpose, we set $\tau_n=1$ for each $n\in \mathbf{N}^+$. The sequence $(\tau_n)$ constitutes an FS for $\mathbb{C}$ that satisfies $H^{\tau_n}\in \mathcal{L}_s(A^{\tau_n})$ for each $n\in \mathbf{N}^+$, which is precisely the requirement needed to establish our claim.
\end{example}

\section{Stochastic integrals of $\mathbb{B}$-predictable processes with respect to $\mathbb{B}$-inner local martingales}\label{section4}
\setcounter{equation}{0}
In this section, we provide an affirmative response to Problem (SI-2) by conducting a rigorous examination of stochastic integrals of $\mathbb{B}$-predictable processes with respect to $\mathbb{B}$-inner local martingales, along with an in-depth analysis of their fundamental properties.
To begin with, we construct the $\mathbb{B}$-quadratic covariation for two $\mathbb{B}$-local martingales, and the concept of $\mathbb{B}$-inner local martingales. This construction serves as a crucial foundation for our subsequent discussions.
Building upon the established $\mathbb{B}$-quadratic covariations, we then proceed to develop the theory of stochastic integrals of $\mathbb{B}$-predictable processes with respect to $\mathbb{B}$-inner local martingales.
Finally, we undertake a comprehensive study of the fundamental properties of these $\mathbb{B}$-stochastic integrals. In doing so, we elucidate the intricate relationship between $\mathbb{B}$-stochastic integrals and existing stochastic integrals, thereby providing a deeper understanding of their interplay and significance within the broader framework of stochastic analysis.

\subsection{$\mathbb{B}$-quadratic covariations and $\mathbb{B}$-inner local martingales}\label{section4.1}
To begin, we introduce the predictable quadratic covariations of $\mathbb{B}$-continuous local martingales, a key component in defining $\mathbb{B}$-quadratic covariations. To lay the groundwork for introducing this definition, we now present the following lemma.

\begin{lemma}\label{ex-quad-p}
Let $M,N\in(\mathcal{M}^c_{\mathrm{loc}})^\mathbb{B}$. Then there exists a unique process $V\in (\mathcal{A}_{\mathrm{loc}}\cap \mathcal{C})^\mathbb{B}$ such that $MN-V\in (\mathcal{M}^c_{\mathrm{loc},0})^\mathbb{B}$.
\end{lemma}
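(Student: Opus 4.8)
The plan is to reduce the statement to the classical predictable quadratic covariation of genuine continuous local martingales and then to glue the resulting finite-variation compensators along a fundamental coupled sequence. First I would fix an FCS $(T_n,M^{(n)})$ for $M\in(\mathcal{M}^c_{\mathrm{loc}})^\mathbb{B}$ together with one for $N$, and, invoking Theorem \ref{process}(3), replace the two time-sequences by their pointwise minima so as to obtain a single increasing $(T_n)$ with $T_n\uparrow T$ and $\bigcup_n\llbracket{0,T_n}\rrbracket\supseteq\mathbb{B}$ serving simultaneously as an FCS for both $M$ and $N$, with $M^{(n)},N^{(n)}\in\mathcal{M}^c_{\mathrm{loc}}$. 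For each $n$ the classical theory furnishes a unique $V^{(n)}\in\mathcal{A}_{\mathrm{loc}}\cap\mathcal{C}$ with $M^{(n)}N^{(n)}-V^{(n)}\in\mathcal{M}^c_{\mathrm{loc},0}$; concretely $V^{(n)}=M^{(n)}_0N^{(n)}_0+\langle M^{(n)},N^{(n)}\rangle$, the added $\mathcal{F}_0$-measurable constant being exactly what makes the local martingale part start at $0$.

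The core of the existence argument is to verify that the $V^{(n)}$ are mutually consistent on overlaps, namely $V^{(k)}I_{\mathbb{B}\llbracket{0,T_k}\rrbracket}=V^{(l)}I_{\mathbb{B}\llbracket{0,T_k}\rrbracket}$ for $k\le l$, so that Theorem \ref{restriction}(4) assembles them into a single $V\in(\mathcal{A}_{\mathrm{loc}}\cap\mathcal{C})^\mathbb{B}$ with FCS $(T_n,V^{(n)})$. Here I would exploit that $\llbracket{0,T_k}\llbracket\subseteq\mathbb{B}$ (every section $\mathbb{B}_\omega$ contains $[0,T(\omega))$ and $T_k\le T$): since $M^{(k)}$ and $M^{(l)}$ agree on $\mathbb{B}\llbracket{0,T_k}\rrbracket$ by \eqref{xkl}, they agree on $\llbracket{0,T_k}\llbracket$, and continuity upgrades this to agreement on the closed graph, giving $(M^{(k)})^{T_k}=(M^{(l)})^{T_k}$, and likewise for $N$. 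Stopping $M^{(k)}N^{(k)}-V^{(k)}$ at $T_k$ and applying the classical uniqueness then yields $(V^{(k)})^{T_k}=(V^{(l)})^{T_k}$, which is the desired identity on $\mathbb{B}\llbracket{0,T_k}\rrbracket$. Once $V$ is built, I would conclude $MN-V\in(\mathcal{M}^c_{\mathrm{loc},0})^\mathbb{B}$ by noting that $(T_n,M^{(n)}N^{(n)})$ is a CS for $MN$, so $(T_n,M^{(n)}N^{(n)}-V^{(n)})$ is an FCS for $MN-V$ whose terms all lie in $\mathcal{M}^c_{\mathrm{loc},0}$.

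For uniqueness, suppose $V,V'$ both qualify; by the linearity of $\mathbb{B}$-classes (Theorem \ref{process}(4)) the difference $W=V'-V$ lies simultaneously in $(\mathcal{M}^c_{\mathrm{loc},0})^\mathbb{B}$ and in $(\mathcal{A}_{\mathrm{loc}}\cap\mathcal{C})^\mathbb{B}$, and it suffices to show $W=0\mathfrak{I}_\mathbb{B}$. Choosing (again via Theorem \ref{process}(3)) a common $(T_n)$ carrying an FCS $(T_n,P^{(n)})$ for $W$ with $P^{(n)}\in\mathcal{M}^c_{\mathrm{loc},0}$ and an FCS $(T_n,Q^{(n)})$ for $W$ with $Q^{(n)}\in\mathcal{A}_{\mathrm{loc}}\cap\mathcal{C}$, I would stop both at $T_n$. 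Because $\llbracket{0,T_n}\llbracket\subseteq\mathbb{B}\llbracket{0,T_n}\rrbracket$, the genuine processes $(P^{(n)})^{T_n}$ and $(Q^{(n)})^{T_n}$ agree on $\llbracket{0,T_n}\llbracket$, hence by continuity on $\llbracket{0,T_n}\rrbracket$ and therefore everywhere. Thus $(P^{(n)})^{T_n}$ is a continuous local martingale with null initial value that is also of finite variation, so it vanishes by the classical theorem; consequently $WI_{\mathbb{B}\llbracket{0,T_n}\rrbracket}=0$ for every $n$, and Theorem \ref{process}(1) gives $W=0\mathfrak{I}_\mathbb{B}$.

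The main obstacle is precisely the boundary behaviour of an optional $\mathbb{B}$: since $\mathbb{B}_\omega$ may be the half-open interval $[0,T(\omega))$ on the set $F$, the members of an FCS need only coincide on $\mathbb{B}$ and may genuinely disagree at the endpoint $T_n$, so one cannot directly invoke the locality of the classical quadratic covariation on the closed interval $\llbracket{0,T_n}\rrbracket$. The device that removes this difficulty throughout is continuity: agreement on the open stochastic interval $\llbracket{0,T_n}\llbracket\subseteq\mathbb{B}$ forces agreement of the continuous stopped processes on the closed graph. This is exactly why the lemma is restricted to $\mathbb{B}$-continuous local martingales, and why $\langle M,N\rangle$ must be constructed first, before the general $\mathbb{B}$-quadratic covariation $[M,N]$.
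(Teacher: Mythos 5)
Your proposal is correct and follows essentially the same route as the paper: both glue the classical predictable quadratic covariations $\langle M^{(n)},N^{(n)}\rangle$ along a common FCS, use continuity to upgrade agreement on $\mathbb{B}\llbracket{0,T_k}\rrbracket$ (which contains $\llbracket{0,T_k}\llbracket$) to agreement of the stopped processes so that Theorem \ref{restriction}(4) applies, and settle uniqueness by reducing to the classical fact that a continuous local martingale of finite variation null at $0$ vanishes (the paper isolates this as a standalone lemma for $(\mathcal{M}^c_{\mathrm{loc}})^\mathbb{B}\cap\mathcal{V}^\mathbb{B}$, which you inline). The only cosmetic difference is a convention mismatch in your explicit formula $V^{(n)}=M^{(n)}_0N^{(n)}_0+\langle M^{(n)},N^{(n)}\rangle$ (in the paper's reference \cite{He} the bracket already starts at $M_0N_0$), but your argument rests on the convention-free characterization of $V^{(n)}$, so nothing is affected.
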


\begin{definition}\normalfont\label{de-quad-p}
Let $M,N\in(\mathcal{M}^c_{\mathrm{loc}})^\mathbb{B}$. The unique process $V\in (\mathcal{A}_{\mathrm{loc}}\cap \mathcal{C})^\mathbb{B}$ in Lemma \ref{ex-quad-p}, denoted by $\langle M,N\rangle$, is called the $\mathbb{B}$-{\bf predictable quadratic covariation} of $M$ and $N$. Furthermore, if $M=N$, then $\langle M,M\rangle$ (or simply, $\langle M\rangle$) is called the $\mathbb{B}$-{\bf predictable quadratic variation} of $M$.
\end{definition}

The fundamental properties of $\mathbb{B}$-predictable quadratic covariations are expounded upon in the subsequent theorem.
\begin{theorem}\label{property-qr-p}
Let $M,\; N,\;\widetilde{M}\in (\mathcal{M}^c_{\mathrm{loc}})^\mathbb{B}$.
\begin{itemize}
  \item [$(1)$] If $(T_n,M^{(n)})$ and $(T_n,N^{(n)})$ are FCSs for $M\in(\mathcal{M}^c_{\mathrm{loc}})^\mathbb{B}$ and $N\in(\mathcal{M}^c_{\mathrm{loc}})^\mathbb{B}$ respectively, then $(T_n,\langle M^{(n)},N^{(n)}\rangle)$ is an FCS for $\langle M,N\rangle\in (\mathcal{A}_{\mathrm{loc}}\cap \mathcal{C})^\mathbb{B}$, and $(T_n,\langle M^{(n)}\rangle)$ is an FCS for $\langle M\rangle\in (\mathcal{A}^{+}_{\mathrm{loc}}\cap \mathcal{C})^\mathbb{B}$.
  \item [$(2)$] For $a,b\in \mathbf{R}$, we have
      \[
       \langle M,N\rangle=\langle N,M\rangle,\quad \langle aM+b\widetilde{M},N\rangle=a\langle M,N\rangle+b\langle\widetilde{M},N\rangle.
      \]
  \item [$(3)$] If $\tau$ is a $\mathbb{B}$-inner stopping time, then we have
      \begin{equation}\label{Mc-tau1}
       \langle M^{\tau},N^{\tau}\rangle
       =\langle M,N\rangle^{\tau}
      \end{equation}
  and
      \begin{equation}\label{Mc-tau2}
       \langle M^{\tau}\mathfrak{I}_\mathbb{B},N^{\tau}\mathfrak{I}_\mathbb{B}\rangle
      =\langle M^{\tau},N^{\tau}\rangle\mathfrak{I}_\mathbb{B}
      =\langle M,N\rangle^{\tau}\mathfrak{I}_\mathbb{B}=\langle M^{\tau}\mathfrak{I}_\mathbb{B},N\rangle.
      \end{equation}
\end{itemize}
\end{theorem}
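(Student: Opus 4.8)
The plan is to reduce every assertion to the uniqueness part of Lemma \ref{ex-quad-p} together with the corresponding classical facts for continuous local martingales, transported to the $\mathbb{B}$-setting through fundamental coupled sequences. Part $(1)$ is the crux, and parts $(2)$ and $(3)$ will follow from it almost formally. For part $(1)$, I would fix the common FCSs $(T_n,M^{(n)})$ and $(T_n,N^{(n)})$ supplied by hypothesis, note that each classical covariation $\langle M^{(n)},N^{(n)}\rangle\in\mathcal{A}_{\mathrm{loc}}\cap\mathcal{C}$ exists with $M^{(n)}N^{(n)}-\langle M^{(n)},N^{(n)}\rangle\in\mathcal{M}^c_{\mathrm{loc},0}$, and then try to assemble these into a $\mathbb{B}$-process via Theorem \ref{restriction}(4). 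The genuine obstacle is the coupling consistency required there, namely $\langle M^{(k)},N^{(k)}\rangle I_{\mathbb{B}\llbracket{0,T_k}\rrbracket}=\langle M^{(l)},N^{(l)}\rangle I_{\mathbb{B}\llbracket{0,T_k}\rrbracket}$ for $k\le l$. By Theorem \ref{process}(2) we only know that $M^{(k)}=M^{(l)}$ and $N^{(k)}=N^{(l)}$ on $\mathbb{B}\llbracket{0,T_k}\rrbracket$, and the trouble is that $\mathbb{B}\llbracket{0,T_k}\rrbracket$ may be merely half-open (on the set $F$ where $T_k=T$), so this does not at once give equality of the genuine stopped processes $(M^{(k)})^{T_k}$ and $(M^{(l)})^{T_k}$. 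Here I would exploit $\mathbb{B}$-continuity: a short case analysis on $F$ and $F^c$ shows the paths agree at worst on $[0,T_k(\omega)[$, and left-continuity upgrades this to agreement on the closed interval $[0,T_k(\omega)]$, so that $(M^{(k)})^{T_k}=(M^{(l)})^{T_k}$ and likewise for $N$. The classical stopping property then yields $\langle M^{(k)},N^{(k)}\rangle^{T_k}=\langle (M^{(k)})^{T_k},(N^{(k)})^{T_k}\rangle=\langle (M^{(l)})^{T_k},(N^{(l)})^{T_k}\rangle=\langle M^{(l)},N^{(l)}\rangle^{T_k}$, which contains the required consistency.

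With consistency established, Theorem \ref{restriction}(4) produces a $\mathbb{B}$-process $V\in(\mathcal{A}_{\mathrm{loc}}\cap\mathcal{C})^\mathbb{B}$ admitting $(T_n,\langle M^{(n)},N^{(n)}\rangle)$ as an FCS. To identify $V$ with $\langle M,N\rangle$, I would observe that $(T_n,M^{(n)}N^{(n)})$ is a CS for $MN$ and that $M^{(n)}N^{(n)}-\langle M^{(n)},N^{(n)}\rangle\in\mathcal{M}^c_{\mathrm{loc},0}$ for every $n$, whence $MN-V\in(\mathcal{M}^c_{\mathrm{loc},0})^\mathbb{B}$; the uniqueness in Lemma \ref{ex-quad-p} then forces $V=\langle M,N\rangle$, and the variation statement for $\langle M\rangle$ is just the case $N=M$ with the extra remark that $\langle M^{(n)}\rangle\in\mathcal{A}^+_{\mathrm{loc}}\cap\mathcal{C}$. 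Part $(2)$ is then quick. Symmetry $\langle M,N\rangle=\langle N,M\rangle$ is immediate because the defining condition $MN-V\in(\mathcal{M}^c_{\mathrm{loc},0})^\mathbb{B}$ is symmetric in $M$ and $N$. For bilinearity I would set $W=a\langle M,N\rangle+b\langle\widetilde{M},N\rangle$; Theorem \ref{process}(4) gives $aM+b\widetilde{M}\in(\mathcal{M}^c_{\mathrm{loc}})^\mathbb{B}$ and $W\in(\mathcal{A}_{\mathrm{loc}}\cap\mathcal{C})^\mathbb{B}$, while $(aM+b\widetilde{M})N-W=a\bigl(MN-\langle M,N\rangle\bigr)+b\bigl(\widetilde{M}N-\langle\widetilde{M},N\rangle\bigr)\in(\mathcal{M}^c_{\mathrm{loc},0})^\mathbb{B}$, so uniqueness again identifies $W$ with $\langle aM+b\widetilde{M},N\rangle$.

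Part $(3)$ I would settle through the principle that two $\mathbb{B}$-processes sharing an FCS coincide (Theorem \ref{process}(1)). Since $\tau$ is $\mathbb{B}$-inner, Theorem \ref{fcs} shows that $(T_n,(M^{(n)})^\tau)$ and $(T_n,(N^{(n)})^\tau)$ are FCSs for $M^\tau$ and $N^\tau$, and that $(T_n,\langle M^{(n)},N^{(n)}\rangle^\tau)$ is an FCS for $\langle M,N\rangle^\tau$. Applying part $(1)$ to $M^\tau,N^\tau$, the process $\langle M^\tau,N^\tau\rangle$ carries the FCS $(T_n,\langle (M^{(n)})^\tau,(N^{(n)})^\tau\rangle)$, and the classical stopping identity $\langle (M^{(n)})^\tau,(N^{(n)})^\tau\rangle=\langle M^{(n)},N^{(n)}\rangle^\tau$ shows this is literally the same FCS, giving \eqref{Mc-tau1}. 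For \eqref{Mc-tau2}, the middle equality is \eqref{Mc-tau1} restricted to $\mathbb{B}$, and the two outer equalities follow the same template: evaluating $\langle M^\tau\mathfrak{I}_\mathbb{B},N^\tau\mathfrak{I}_\mathbb{B}\rangle$ and $\langle M^\tau\mathfrak{I}_\mathbb{B},N\rangle$ via part $(1)$, the definition of the $\mathbb{B}$-restriction, and the one-sided classical stopping identity $\langle (M^{(n)})^\tau,N^{(n)}\rangle=\langle M^{(n)},N^{(n)}\rangle^\tau$, again produces the FCS $(T_n,\langle M^{(n)},N^{(n)}\rangle^\tau)$, which is precisely the FCS of $\langle M,N\rangle^\tau\mathfrak{I}_\mathbb{B}$. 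I expect the continuity upgrade in the first paragraph to be the only genuinely delicate point; the rest is bookkeeping with coupled sequences and repeated appeals to uniqueness.
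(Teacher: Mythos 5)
Your proposal is correct, and for part (1) --- the crux --- it is essentially the paper's own argument: the paper proves part (1) inside the proof of Lemma \ref{ex-quad-p}, where the coupling consistency is obtained exactly by your ``continuity upgrade,'' namely converting equality on the possibly half-open set $\mathbb{B}\llbracket{0,T_k}\rrbracket$ into $(M^{(k)})^{T_k}=(M^{(k)})^{T_k\wedge(T_F-)}=(M^{(l)})^{T_k\wedge(T_F-)}=(M^{(l)})^{T_k}$, then invoking the classical stopping identity for $\langle\cdot,\cdot\rangle$ and Theorem \ref{restriction}(4), and finally identifying the assembled process via the uniqueness in Lemma \ref{ex-quad-p}. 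For parts (2) and (3) your routes differ from the paper's in a mirror-image way. For bilinearity the paper computes with common FCSs and concludes by Theorem \ref{process}(1), whereas you apply the uniqueness characterization directly to $W=a\langle M,N\rangle+b\langle\widetilde{M},N\rangle$; your route is shorter and equally valid. For \eqref{Mc-tau1} the paper does the opposite of you: it applies the \emph{classical} uniqueness of $\langle M^\tau,N^\tau\rangle$ (Lemma 7.28 of \cite{He}) to the candidate $\langle M,N\rangle^\tau$, using Theorem \ref{fcs} to see that $\langle M,N\rangle^\tau\in\mathcal{A}_{\mathrm{loc}}\cap\mathcal{C}$ and that $M^\tau N^\tau-\langle M,N\rangle^\tau=(MN-\langle M,N\rangle)^\tau\in\mathcal{M}^c_{\mathrm{loc},0}$, while you match FCSs. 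That is the one place where your write-up falls slightly short of the stated claim: \eqref{Mc-tau1} is an identity between two genuine processes on all of $\Omega\times\mathbf{R}^+$ (both $M^\tau$ and $\langle M,N\rangle^\tau$ are classical processes, by Theorem \ref{fcs} and \eqref{stop}), whereas matching FCSs and citing Theorem \ref{process}(1) only yields indistinguishability on $\mathbb{B}$, i.e.\ the statement $\langle M^{\tau}\mathfrak{I}_\mathbb{B},N^{\tau}\mathfrak{I}_\mathbb{B}\rangle=\langle M,N\rangle^{\tau}\mathfrak{I}_\mathbb{B}$, which is part of \eqref{Mc-tau2} rather than \eqref{Mc-tau1}. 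The repair costs one line --- both sides of \eqref{Mc-tau1} are stopped at $\tau$ and $\llbracket{0,\tau}\rrbracket\subseteq\mathbb{B}$, so agreement on $\mathbb{B}$ forces agreement on $\llbracket{0,\tau}\rrbracket$ and hence everywhere --- but the paper's uniqueness argument avoids the issue altogether, which is what it buys. Your treatment of \eqref{Mc-tau2} coincides with the paper's.
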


To facilitate the expansion of the concept of $\mathbb{B}$-predictable quadratic covariations, we will now delve into the decomposition of a $\mathbb{B}$-local martingale.
\begin{theorem}\label{Lem-M}
Let $M\in (\mathcal{M}_{\mathrm{loc}})^\mathbb{B}$.
\begin{itemize}
  \item [$(1)$] $M$ admits a unique decomposition
\begin{equation}\label{con-M}
M=M_0\mathfrak{I}_\mathbb{B}+M^c+M^d,
\end{equation}
where $M^c\in (\mathcal{M}^c_{\mathrm{loc},0})^\mathbb{B}$ and $M^d\in (\mathcal{M}^d_{\mathrm{loc}})^\mathbb{B}$. $M^c$ is called the {\bf continuous martingale part} of $M$, and $M^d$ is called the {\bf purely discontinuous martingale part} of $M$.
  \item [$(2)$] If $(T_n,M^{(n)})$ is an FCS for $M\in (\mathcal{M}_{\mathrm{loc}})^\mathbb{B}$, then $(T_n,(M^{(n)})^c)$ and $(T_n,(M^{(n)})^d)$ are FCSs for $M^c\in(\mathcal{M}^c_{\mathrm{loc},0})^\mathbb{B}$ and $M^d\in(\mathcal{M}^d_{\mathrm{loc}})^\mathbb{B}$, respectively.
  \item [$(3)$] If $\tau$ is a $\mathbb{B}$-inner stopping time, then we have
  \begin{equation}\label{McS}
  (M^c)^\tau=(M^\tau)^c,\quad (M^d)^\tau=(M^\tau)^d
  \end{equation}
  and
  \begin{equation}\label{McSB}
  (M^c)^\tau\mathfrak{I}_\mathbb{B}=(M^\tau)^c\mathfrak{I}_\mathbb{B}=(M^\tau\mathfrak{I}_\mathbb{B})^c,\quad (M^d)^\tau\mathfrak{I}_\mathbb{B}=(M^\tau)^d\mathfrak{I}_\mathbb{B}=(M^\tau\mathfrak{I}_\mathbb{B})^d.
  \end{equation}
\end{itemize}

\end{theorem}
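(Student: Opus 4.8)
The overall strategy is to read off the decomposition level-by-level from the classical decomposition of local martingales (every $\widetilde{M}\in\mathcal{M}_{\mathrm{loc}}$ splits uniquely as $\widetilde{M}=\widetilde{M}_0+\widetilde{M}^c+\widetilde{M}^d$ with $\widetilde{M}^c\in\mathcal{M}^c_{\mathrm{loc},0}$ and $\widetilde{M}^d\in\mathcal{M}^d_{\mathrm{loc}}$) and then to reassemble the pieces into $\mathbb{B}$-processes via Theorem \ref{restriction}(4). Since this levelwise construction simultaneously produces the FCSs appearing in (2), I would establish (2) first and deduce (1) and (3) from it. So fix an FCS $(T_n,M^{(n)})$ for $M$ and write $M^{(n)}=M^{(n)}_0+(M^{(n)})^c+(M^{(n)})^d$ classically. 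To promote $((M^{(n)})^c)_n$ to an element of $(\mathcal{M}^c_{\mathrm{loc},0})^\mathbb{B}$ with FCS $(T_n,(M^{(n)})^c)$ using Theorem \ref{restriction}(4), the only thing to check is the compatibility relation $(M^{(k)})^c I_{\mathbb{B}\llbracket{0,T_k}\rrbracket}=(M^{(l)})^c I_{\mathbb{B}\llbracket{0,T_k}\rrbracket}$ for $k\leq l$, together with its analogue for the $d$-parts.

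By the FCS property and Theorem \ref{process}(2), $N:=M^{(l)}-M^{(k)}\in\mathcal{M}_{\mathrm{loc}}$ vanishes on $\mathbb{B}\llbracket{0,T_k}\rrbracket$, and by linearity of the classical decomposition $N^c=(M^{(l)})^c-(M^{(k)})^c$; thus it suffices to prove $N^c=0$ on $\mathbb{B}\llbracket{0,T_k}\rrbracket$. Writing $\mathbb{B}$ as in \eqref{B}, the section of $\mathbb{B}\llbracket{0,T_k}\rrbracket$ over $F^c$ is the closed interval $[0,T\wedge T_k]$, on which $N=0$ gives $N^{T\wedge T_k}=0$, and the classical commutation $(N^{T\wedge T_k})^c=(N^c)^{T\wedge T_k}$ yields $N^c=0$. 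I expect the main obstacle to be the section over $F$, namely the half-open $[0,T[\cap[0,T_k]$, which on $\{T_k=T\}$ is not a closed stochastic interval, so the (non-pathwise) continuous-part operator cannot be stopped directly. I would overcome this through the quadratic variation: wherever $N=0$ one has $[N]=0$, hence $\langle N^c\rangle=[N^c]\leq[N]=0$ pathwise on $\mathbb{B}\llbracket{0,T_k}\rrbracket$, which forces the continuous local martingale $N^c$ (with $N^c_0=0$) to be constant, i.e. $N^c=0$, on $\mathbb{B}\llbracket{0,T_k}\rrbracket$; continuity then covers the endpoint automatically. Once $N^c=0$ there, the relation $N^d=N-N_0-N^c$ immediately yields $N^d=0$ on $\mathbb{B}\llbracket{0,T_k}\rrbracket$, giving the compatibility for the $d$-parts as well. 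Theorem \ref{restriction}(4) then produces $M^c\in(\mathcal{M}^c_{\mathrm{loc},0})^\mathbb{B}$ and $M^d\in(\mathcal{M}^d_{\mathrm{loc}})^\mathbb{B}$ with the asserted FCSs, proving (2).

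For (1), existence is immediate: adding the three FCSs shows that $(T_n,M^{(n)})$ is simultaneously an FCS for $M$ and for $M_0\mathfrak{I}_\mathbb{B}+M^c+M^d$, so \eqref{con-M} follows from the uniqueness of a $\mathbb{B}$-process given its FCS (Theorem \ref{process}(1)). For uniqueness, if $M_0\mathfrak{I}_\mathbb{B}+L^c+L^d$ is another such decomposition, then $P:=M^c-L^c=L^d-M^d$ is at once a $\mathbb{B}$-continuous local martingale with null initial value and a $\mathbb{B}$-purely discontinuous local martingale. Passing to a common FCS (refine the two FCSs by minima via Theorem \ref{process}(3)), each level process is continuous with null initial value in one representation and purely discontinuous in the other; the argument of (2), comparing continuous parts through the quadratic variation, forces it to vanish on the relevant $\mathbb{B}$-slice, whence $P=0\mathfrak{I}_\mathbb{B}$ and $M^c=L^c$, $M^d=L^d$.

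For (3), note first that by Theorem \ref{fcs} the stopped objects $M^\tau$, $(M^c)^\tau$ and $(M^d)^\tau$ are genuine processes, and that $M^\tau\mathfrak{I}_\mathbb{B}$, $(M^c)^\tau\mathfrak{I}_\mathbb{B}$ and $(M^d)^\tau\mathfrak{I}_\mathbb{B}$ admit the FCSs $(T_n,(M^{(n)})^\tau)$, $(T_n,((M^{(n)})^c)^\tau)$ and $(T_n,((M^{(n)})^d)^\tau)$ respectively. The classical commutation of the decomposition with stopping, $((M^{(n)})^\tau)^c=((M^{(n)})^c)^\tau$ and $((M^{(n)})^\tau)^d=((M^{(n)})^d)^\tau$, combined with part (2) applied to $M^\tau\mathfrak{I}_\mathbb{B}$, shows that $(M^\tau)^c$ and $(M^c)^\tau$ carry the same FCS and hence coincide; since $\tau$ is $\mathbb{B}$-inner, both sides are constant after $\tau$ and agree on $\llbracket{0,\tau}\rrbracket\subseteq\mathbb{B}$, so the first identity of \eqref{McS} holds as an equality of processes, and the second is identical. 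Finally \eqref{McSB} follows by applying $\mathfrak{I}_\mathbb{B}$ to \eqref{McS}, using that $\mathbb{B}$-restriction commutes with the continuous- and purely-discontinuous-part operators (read off from the common FCS) together with the jump identity \eqref{eqXT0}.
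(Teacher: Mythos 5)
Your proposal is correct, but it takes a genuinely different route from the paper's. The paper proves (1) first: existence of \eqref{con-M} is quoted from Theorem 8.23 of \cite{He}, and uniqueness is reduced to the auxiliary Lemma \ref{LM-A3} (any element of $(\mathcal{M}^c_{\mathrm{loc},0})^\mathbb{B}\cap(\mathcal{M}^d_{\mathrm{loc}})^\mathbb{B}$ vanishes), whose proof pre-stops the FCS elements at $T_k\wedge(T_F-)$ and analyses the single jump at $T_F$: the correction term $\Delta(N^{(k)})^{T_k}_{T_F}I_{\llbracket T_F,+\infty\llbracket}$ is identified as a purely discontinuous local martingale, so the pre-stopped element lies in $\mathcal{M}^c_{\mathrm{loc},0}\cap\mathcal{M}^d_{\mathrm{loc}}=\{0\}$; part (2) is then deduced by applying the uniqueness of (1) on the smaller interval-type sets $\mathbb{B}_m=\mathbb{B}\llbracket 0,T_m\rrbracket$, and part (3) from the uniqueness of the classical decomposition of $M^\tau$. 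You reverse the order: you prove (2) first by gluing the levelwise classical parts through Theorem \ref{restriction}(4), forcing the compatibility $(M^{(l)})^cI_{\mathbb{B}\llbracket 0,T_k\rrbracket}=(M^{(k)})^cI_{\mathbb{B}\llbracket 0,T_k\rrbracket}$ by a pathwise quadratic-variation argument ($N:=M^{(l)}-M^{(k)}$ vanishes on $\mathbb{B}\llbracket 0,T_k\rrbracket$, hence $[N]=0$ there, hence $[N^c]=\langle N^c\rangle\le[N]$ vanishes, hence $N^c=0$ there), and your uniqueness step in (1) is this same mechanism, i.e.\ a quadratic-variation reproof of Lemma \ref{LM-A3}. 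What each buys: your route is self-contained (it needs neither He's Theorem 8.23 nor the single-jump classification), treats the open sections over $F$ and closed sections over $F^c$ uniformly, and produces the FCSs of (2) and existence in (1) in one stroke; the paper's route is shorter given the quoted results and stays inside martingale theory, avoiding the partition-approximation facts you rely on (that $[N]$ vanishes wherever the path of $N$ vanishes, and that a continuous local martingale null at $0$ with null bracket is null --- both standard, but they should be justified, e.g.\ via convergence in probability of the partition sums and a subsequence extraction). One step of yours should be deleted rather than repaired: the preliminary treatment of the section over $F^c$ by stopping is not legitimate as stated, since $F\in\mathcal{F}_T$ is in general not $\mathcal{F}_0$-measurable, so the global, non-pathwise operator $N\mapsto N^c$ cannot be applied ``on the event $F^c$''; this is harmless only because your quadratic-variation argument already covers all of $\mathbb{B}\llbracket 0,T_k\rrbracket$. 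Your derivation of (3) is sound but more roundabout than the paper's one-line appeal to the uniqueness of the classical decomposition of $M^\tau=M_0+(M^c)^\tau+(M^d)^\tau$.
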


By leveraging the concept of $\mathbb{B}$-predictable quadratic covariations, alongside the continuous martingale parts of $\mathbb{B}$-local martingales, we are able to formulate the definition of $\mathbb{B}$-quadratic covariations for $\mathbb{B}$-local martingales.

\begin{definition}\normalfont\label{QC}
Let $M,N\in(\mathcal{M}_{\mathrm{loc}})^\mathbb{B}$, and $M^c$ and $N^c$ be their continuous martingale parts. Define
\begin{equation}\label{[M,N]}
[M,N]=M_0N_0\mathfrak{I}_\mathbb{B}+\langle M^c,N^c\rangle+\Sigma (\Delta M\Delta N).
\end{equation}
Then $[M,N]$ is called the $\mathbb{B}$-{\bf quadratic covariation} of $M$ and $N$. In particular, $[M,M]$ (or simply $[M]$) is called the $\mathbb{B}$-{\bf quadratic variation} of $M$.
\end{definition}

Given that $M,N\in(\mathcal{M}_{\mathrm{loc}})^\mathbb{B}$, Definition \ref{de-quad-p} and Theorem \ref{delta}(6) guarantee that the quadratic covariation $[M,N]$ as defined in \eqref{[M,N]} is well-defined. The formulation of \eqref{[M,N]} is also consistent with the conventional form of the quadratic covariation of local martingales, as delineated in Definition 7.29 of \cite{He}.
However, in order to address Problem (SI-2), we encounter two major challenges. As will be demonstrated later in Section 4.2, these challenges may result in the non-uniqueness of the construction of $\mathbb{B}$-stochastic integration.
Firstly, the construction of the $\mathbb{B}$-quadratic covariation may fail to satisfy two fundamental properties that are pivotal in the context of the existing quadratic covariation. For instance, let $\mathbb{B}^*$ and $\widehat{M}\in(\mathcal{M}_{\mathrm{loc}})^{\mathbb{B}^*}$ be as defined in Section 1.1. Then:
\begin{itemize}
  \item [$(1)$] $[\widehat{M}]$ is not the unique $\mathbb{B}^*$-process $V\in \mathcal{V}^{\mathbb{B}^*}$ such that $\widehat{M}^2-V\in (\mathcal{M}_{\mathrm{loc,0}})^\mathbb{B}$ and $\Delta V=(\Delta \widehat{M})^2$.
  \item [$(2)$] The condition $[\widehat{M}]=0\mathfrak{I}_{\mathbb{B}^*}$ does not imply $\widehat{M}=0\mathfrak{I}_{\mathbb{B}^*}$.
\end{itemize}
Secondly, it is a well-known fact that $\mathcal{M}_{\mathrm{loc}}\cap\mathcal{C}=\mathcal{M}^c_{\mathrm{loc}}$. Nevertheless, the analogous statement for $\mathbb{B}$-local martingales, i.e., $(\mathcal{M}_{\mathrm{loc}})^\mathbb{B}\cap\mathcal{C}^\mathbb{B}=(\mathcal{M}^c_{\mathrm{loc}})^\mathbb{B}$, does not hold in general. This can be illustrated once again using $\mathbb{B}^*$ and $\widehat{M}\in(\mathcal{M}_{\mathrm{loc}})^\mathbb{B}$ from Section 1.1.
On the one hand, it is evident that $\widehat{M}\in(\mathcal{M}_{\mathrm{loc}})^{\mathbb{B}^*}$ with the FCS $(T_n=T,M^{(n)}=A^p-A)$, and $\widehat{M}\in\mathcal{C}^\mathbb{B}$ with the FCS $(T_n=T,N^{(n)}=A^p)$. On the other hand, $\widehat{M}$ is not a $\mathbb{B}^*$-continuous local martingale. This is because every $\mathbb{F}$-continuous local martingale $N$ must satisfy $N=a$ with a constant $a\in \mathbf{R}$ (as stated in Proposition 2.7 of \cite{Aksamit}). Consequently, the relation $\widehat{M}\in(\mathcal{M}_{\mathrm{loc}})^{\mathbb{B}^*}\cap\mathcal{C}^{\mathbb{B}^*}$ does not imply $M\in(\mathcal{M}^c_{\mathrm{loc}})^{\mathbb{B}^*}$. Therefore, we proceed to define a subclass of $\mathbb{B}$-local martingales to overcome these two issues.

\begin{definition}\normalfont\label{inner}
Let $\mathbb{B}$ be given by \eqref{B}, and $M\in(\mathcal{M}_{\mathrm{loc}})^\mathbb{B}$. We say $M$ is an {\bf essentially inner local martingale} on $\mathbb{B}$ (simply, a $\mathbb{B}$-{\bf inner local martingale}), if there exists an FCS $(T_n,M^{(n)})$ for $M\in(\mathcal{M}_{\mathrm{loc}})^\mathbb{B}$ such that
\begin{equation}\label{inner-eq}
(M^{(n)})^{T_n\wedge (T_F-)}\in\mathcal{M}_{\mathrm{loc}},\quad n\in \mathbf{N}^{+}.
\end{equation}
The collection of all $\mathbb{B}$-inner local martingales is denoted by $(\mathcal{M}_{\mathrm{loc}})^{i,\mathbb{B}}$, and we also say that $(T_n,M^{(n)})$ is an {\bf inner FCS} for $M\in(\mathcal{M}_{\mathrm{loc}})^{i,\mathbb{B}}$.
\end{definition}

In the context of Definition \ref{inner}, $(T_n,(M^{(n)})^{T_n\wedge (T_F-)})$ also constitutes an inner FCS for $M\in(\mathcal{M}_{\mathrm{loc}})^{i,\mathbb{B}}$. Hence, the definition of a $\mathbb{B}$-inner local martingale implies that $M$ admits an FCS which is determined exclusively by values on $\mathbb{B}$.
There exist three notable classes of $\mathbb{B}$-inner local martingales:
\begin{itemize}
  \item [$(1)$] $(\mathcal{M}_{\mathrm{loc}})^\mathbb{C}=(\mathcal{M}_{\mathrm{loc}})^{i,\mathbb{C}}$. If $M\in(\mathcal{M}_{\mathrm{loc}})^\mathbb{C}$ and $(\tau_n)$ is an FS for $\mathbb{C}$, then $(\tau_n,M^{\tau_n})$ is an inner FCS for $M\in(\mathcal{M}_{\mathrm{loc}})^{i,\mathbb{C}}$.
  \item [$(2)$] $(\mathcal{M}^c_{\mathrm{loc}})^\mathbb{B}\subseteq(\mathcal{M}_{\mathrm{loc}})^{i,\mathbb{B}}$. If $M\in(\mathcal{M}^c_{\mathrm{loc}})^\mathbb{B}$, then each FCS $(T_n,M^{(n)})$ for $M\in(\mathcal{M}^c_{\mathrm{loc}})^\mathbb{B}$ is an inner FCS for $M\in(\mathcal{M}_{\mathrm{loc}})^{i,\mathbb{B}}$.
  \item [$(3)$] Let $\mathbb{B}$ be given by \eqref{B}, and $\widetilde{M}\in\mathcal{M}_{\mathrm{loc},0}$ satisfying the condition $\widetilde{M}^{T_F-}\in\mathcal{M}_{\mathrm{loc},0}$. Suppose that $\widetilde{M}$ has the strong property of predictable representation, i.e., it holds that $\mathcal{M}_{\mathrm{loc},0}=\{\widetilde{H}.\widetilde{M}: \widetilde{H}\in \mathcal{L}_m(\widetilde{M})\}$ (see, e.g., Definition 13.1 in \cite{He}). Then it follows that $(\mathcal{M}_{\mathrm{loc}})^\mathbb{B}=(\mathcal{M}_{\mathrm{loc}})^{i,\mathbb{B}}$, and
      each FCS $(T_n,M^{(n)})$ for $M\in(\mathcal{M}_{\mathrm{loc}})^\mathbb{B}$ constitutes an inner FCS for $M\in(\mathcal{M}_{\mathrm{loc}})^{i,\mathbb{B}}$. Indeed, for each $n\in \mathbf{N}^+$, there exists $H^{(n)}\in \mathcal{L}_m(\widetilde{M})$ such that $M^{(n)}=M^{(n)}_0+H^{(n)}.\widetilde{M}$. Consequently, the relation $(M^{(n)})^{T_n\wedge (T_F-)}=M^{(n)}_0+H^{(n)}.\widetilde{M}^{T_n\wedge (T_F-)}\in\mathcal{M}_{\mathrm{loc}}$ implies that $(T_n,M^{(n)})$ is indeed an inner FCS for $M\in(\mathcal{M}_{\mathrm{loc}})^{i,\mathbb{B}}$.
\end{itemize}
The classes delineated in (1) and (2) are straightforward to comprehend. Furthermore, we present a specific instance of class (3).

\begin{example}\normalfont
Let $T$ be a discrete random variable satisfying $\mathbf{P}(T=1)=\mathbf{P}(T=2)=\frac{1}{2}$, and let $\mathbb{F}=(\mathcal{F}_t,t\geq 0)$ be the natural filtration of the process $A=I_{\llbracket{T,{+\infty}}\llbracket}$. Then $T$ is an $\mathbb{F}$-stopping time with $T>0$. Define $\tau=2I_{[T=2]}+(+\infty)I_{[T=1]}$, $\mathbb{B}=\llbracket{0,\tau}\llbracket$, and $\widetilde{M}=A^p-A$, where $A^p=f^T$ is the compensator of $A$ (see Proposition 2.4 in \cite{Aksamit}) with
\begin{equation}\label{ff}
f(t)=\left\{
\begin{aligned}
0,\quad &0\leq t<1,\\
\frac{1}{2},\quad &1\leq t<2,\\
\frac{3}{2},\quad &t\geq 2.
\end{aligned}
\right.
\end{equation}
It follows that $\tau>0$ is a stopping time, and $\mathbb{B}$ is an optional set of interval type. By applying \eqref{ff}, it becomes evident that
\[
\Delta \widetilde{M}_{\tau}I_{[\tau<+\infty]}=\Delta \widetilde{M}_2I_{[T=2]}=(\Delta f(2)-\Delta A_T)I_{[T=2]}=0,
\]
which yields $\widetilde{M}^{\tau-}=\widetilde{M}^{\tau}\in\mathcal{M}_{\mathrm{loc},0}$. According to Proposition 2.7 in \cite{Aksamit}, $\widetilde{M}$ has the strong property of predictable representation. Consequently, it holds that $(\mathcal{M}_{\mathrm{loc}})^\mathbb{B}=(\mathcal{M}_{\mathrm{loc}})^{i,\mathbb{B}}$.
\end{example}

In the subsequent two theorems, we present the fundamental properties of $\mathbb{B}$-inner local martingales and $\mathbb{B}$-quadratic covariations.

\begin{theorem}\label{MC}
Let $M,N\in(\mathcal{M}_{\mathrm{loc}})^{i,\mathbb{B}}$.
\begin{itemize}
  \item [$(1)$]
       If $(S_n,N^{(n)})$ is an FCS for $M\in(\mathcal{M}_{\mathrm{loc}})^\mathbb{B}$, and if $(T_n,M^{(n)})$ is an inner FCS for $M\in(\mathcal{M}_{\mathrm{loc}})^{i,\mathbb{B}}$, then $(\tau_n=S_n\wedge T_n,N^{(n)})$ is also an inner FCS for $M\in(\mathcal{M}_{\mathrm{loc}})^{i,\mathbb{B}}$.
  \item [$(2)$]  $(\mathcal{M}_{\mathrm{loc}})^{i,\mathbb{B}}\cap\mathcal{C}^\mathbb{B}=(\mathcal{M}^c_{\mathrm{loc}})^\mathbb{B}$.
  \item [$(3)$] $aM+bN\in(\mathcal{M}_{\mathrm{loc}})^{i,\mathbb{B}}$ for all $a,b\in \mathbf{R}$. If $(T_n,M^{(n)})$ and $(T_n,N^{(n)})$ are inner FCSs for $M\in (\mathcal{M}_{\mathrm{loc}})^{i,\mathbb{B}}$ and $N\in (\mathcal{M}_{\mathrm{loc}})^{i,\mathbb{B}}$ respectively, then $(T_n,aM^{(n)}+bN^{(n)})$ is an inner FCS for $aM+bN\in (\mathcal{M}_{\mathrm{loc}})^{i,\mathbb{B}}$.
  \item [$(4)$] $M^d\in(\mathcal{M}^{d}_{\mathrm{loc}})^\mathbb{B}\cap(\mathcal{M}_{\mathrm{loc}})^{i,\mathbb{B}}$. If $(T_n,M^{(n)})$ is an inner FCS for $M\in (\mathcal{M}_{\mathrm{loc}})^{i,\mathbb{B}}$, then $(T_n,(M^{(n)})^d)$ is an inner FCS for $M^d\in(\mathcal{M}^{d}_{\mathrm{loc}})^\mathbb{B}\cap(\mathcal{M}_{\mathrm{loc}})^{i,\mathbb{B}}$. We denote by $(\mathcal{M}^d_{\mathrm{loc}})^{i,\mathbb{B}}$ the collection of all $\mathbb{B}$-purely discontinuous local martingales with inner FCSs.
  \item [$(5)$] If $(T_n,M^{(n)})$ is an inner FCS for $M\in(\mathcal{M}_{\mathrm{loc}})^{i,\mathbb{B}}$, and if $(T_n,M^{(n)},\widetilde{M})$ is the continuation for $M\in(\mathcal{M}_{\mathrm{loc}})^{i,\mathbb{B}}$, then $(T_n,\widetilde{M}^{T_n})$ is an inner FCS for $M\in(\mathcal{M}_{\mathrm{loc}})^{i,\mathbb{B}}$. And $(T_n,M^{(n)},\widetilde{M})$ is called an {\bf inner continuation} for $M\in(\mathcal{M}_{\mathrm{loc}})^{i,\mathbb{B}}$.
\end{itemize}
\end{theorem}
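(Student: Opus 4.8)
The plan is to reduce all five assertions to a single structural observation about the operation $X\mapsto X^{T_n\wedge(T_F-)}$ appearing in the inner condition \eqref{inner-eq}: \emph{it reads only the values of $X$ on $\mathbb{B}$}. Writing $\mathbb{B}$ as in \eqref{B}, on $F$ one has $T_F=T$ and $\mathbb{B}_\omega=[0,T(\omega))$, while on $F^c$ one has $T_F=+\infty$ and $\mathbb{B}_\omega=[0,T(\omega)]$. In either case the ``active region'' $\{t\le T_n\}\cap\{t<T_F\}$, on which $X^{T_n\wedge(T_F-)}$ coincides with $X$, is contained in $\mathbb{B}\llbracket{0,T_n}\rrbracket$ (using $T_n\le T$), and off this region the process is frozen at a left limit determined by values on the active region. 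Hence, whenever two processes $P,Q$ agree on $\mathbb{B}\llbracket{0,T_n}\rrbracket$, one has $P^{T_n\wedge(T_F-)}=Q^{T_n\wedge(T_F-)}$. I would record this as the key lemma, since every part rests on it, together with the fact that $(\cdot)^{T_n\wedge(T_F-)}$ is linear, fixes constants, and reduces to ordinary stopping at $T_n\wedge T_F$ on continuous processes.

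For (1), I would first invoke Theorem \ref{process}(3) with the given FCS $(S_n,N^{(n)})$ and the auxiliary sequence $(T_n)$, which satisfies $T_n\uparrow T$ and $\bigcup_n\llbracket{0,T_n}\rrbracket\supseteq\mathbb{B}$ because $(T_n,M^{(n)})$ is an FCS; this yields that $(\tau_n,N^{(n)})$, $\tau_n=S_n\wedge T_n$, is again an FCS for $M\in(\mathcal{M}_{\mathrm{loc}})^\mathbb{B}$. Since both $N^{(n)}$ and $M^{(n)}$ coincide with $M$ on $\mathbb{B}\llbracket{0,\tau_n}\rrbracket$ by \eqref{xkl}, the key lemma gives $(N^{(n)})^{\tau_n\wedge(T_F-)}=(M^{(n)})^{\tau_n\wedge(T_F-)}$; the latter is $(M^{(n)})^{T_n\wedge(T_F-)}$ stopped at $\tau_n\le T_n$, hence lies in $\mathcal{M}_{\mathrm{loc}}$ by stability under stopping together with \eqref{inner-eq}. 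This is exactly the inner condition for $(\tau_n,N^{(n)})$.

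Assertion (2) splits into two inclusions. The inclusion $\supseteq$ is immediate from the already-noted $(\mathcal{M}^c_{\mathrm{loc}})^\mathbb{B}\subseteq(\mathcal{M}_{\mathrm{loc}})^{i,\mathbb{B}}$ and the trivial $(\mathcal{M}^c_{\mathrm{loc}})^\mathbb{B}\subseteq\mathcal{C}^\mathbb{B}$. For $\subseteq$, I take $M\in(\mathcal{M}_{\mathrm{loc}})^{i,\mathbb{B}}\cap\mathcal{C}^\mathbb{B}$ with inner FCS $(T_n,M^{(n)})$ and set $\widetilde{M}^{(n)}:=(M^{(n)})^{T_n\wedge(T_F-)}\in\mathcal{M}_{\mathrm{loc}}$. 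On its active region $\widetilde{M}^{(n)}=M^{(n)}=M$, which is $\mathbb{B}$-continuous because $\Delta M=0\mathfrak{I}_\mathbb{B}$ by Theorem \ref{delta}(5); since the freezing at $T_F$ uses the left limit and the stopping at $T_n$ occurs at a point of continuity, $\widetilde{M}^{(n)}$ has no jumps, so $\widetilde{M}^{(n)}\in\mathcal{M}_{\mathrm{loc}}\cap\mathcal{C}=\mathcal{M}^c_{\mathrm{loc}}$. As $\widetilde{M}^{(n)}=M^{(n)}$ on $\mathbb{B}\llbracket{0,T_n}\rrbracket$, the pair $(T_n,\widetilde{M}^{(n)})$ is an FCS exhibiting $M\in(\mathcal{M}^c_{\mathrm{loc}})^\mathbb{B}$.

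The remaining parts are then largely formal. For (3), Theorem \ref{process}(4) gives $aM+bN\in(\mathcal{M}_{\mathrm{loc}})^\mathbb{B}$ with FCS $(T_n,aM^{(n)}+bN^{(n)})$, and linearity of the operation yields $(aM^{(n)}+bN^{(n)})^{T_n\wedge(T_F-)}=a(M^{(n)})^{T_n\wedge(T_F-)}+b(N^{(n)})^{T_n\wedge(T_F-)}\in\mathcal{M}_{\mathrm{loc}}$. For (4), Theorem \ref{Lem-M}(2) already makes $(T_n,(M^{(n)})^d)$ an FCS for $M^d\in(\mathcal{M}^d_{\mathrm{loc}})^\mathbb{B}$; applying the operation to the classical decomposition $M^{(n)}=M^{(n)}_0+(M^{(n)})^c+(M^{(n)})^d$ and using $((M^{(n)})^c)^{T_n\wedge(T_F-)}=((M^{(n)})^c)^{T_n\wedge T_F}\in\mathcal{M}^c_{\mathrm{loc}}$ isolates $((M^{(n)})^d)^{T_n\wedge(T_F-)}$ as a difference of elements of $\mathcal{M}_{\mathrm{loc}}$. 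For (5), the continuation of Theorem \ref{restriction}(3) already furnishes $(T_n,\widetilde{M}^{T_n})$ as an FCS for $M$, and since $\widetilde{M}=M^{(n)}$ on $\mathbb{B}\llbracket{0,T_n}\rrbracket$ by $MI_\mathbb{B}=\widetilde{M}I_\mathbb{B}$ and \eqref{continuation}, the key lemma gives $\widetilde{M}^{T_n\wedge(T_F-)}=(M^{(n)})^{T_n\wedge(T_F-)}\in\mathcal{M}_{\mathrm{loc}}$. The main obstacle I anticipate is making the key lemma fully rigorous, namely justifying that the freezing $(\cdot)^{T_F-}$ commutes with stopping at $\tau_n\le T_n$ and, in (4), with the continuous/purely-discontinuous splitting, across the parts of $F$ and $F^c$ where $T_F$ equals $T$ or $+\infty$ and where $T_n$ may coincide with $T$; once the active-region description is pinned down, the rest follows from stability of $\mathcal{M}_{\mathrm{loc}}$ and $\mathcal{M}^c_{\mathrm{loc}}$ under stopping.
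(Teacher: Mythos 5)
Your proposal is correct and follows essentially the same route as the paper: the ``key lemma'' you isolate --- that agreement of two processes on $\mathbb{B}\llbracket{0,T_n}\rrbracket$ is equivalent to equality of their $T_n\wedge(T_F-)$-stopped versions --- is precisely the equivalence the paper invokes (implicitly) in every part, and your arguments for (1), (3) and (4) match the paper's, namely Theorem \ref{process}(3)/(4), linearity of the stopping operation, and the decomposition $M^{(n)}=M^{(n)}_0+(M^{(n)})^c+(M^{(n)})^d$ together with $((M^{(n)})^c)^{T_n\wedge(T_F-)}=((M^{(n)})^c)^{T_n\wedge T_F}\in\mathcal{M}^c_{\mathrm{loc}}$. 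The only cosmetic deviations are in (2), where the paper identifies $(M^{(n)})^{T_n\wedge(T_F-)}$ with $(N^{(n)})^{T_n\wedge(T_F-)}\in\mathcal{C}$ via an FCS $(T_n,N^{(n)})$ for $M\in\mathcal{C}^\mathbb{B}$ whereas you deduce continuity from $\Delta M=0\mathfrak{I}_\mathbb{B}$ and the jump formulas of Theorem \ref{delta}(4) (the paper argues exactly this way inside its proof of Theorem \ref{delta}(5)), and in (5), where the paper runs an induction on \eqref{continuation} whereas you apply the key lemma directly; both variants are valid.
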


\begin{theorem}\label{[M]-property}
Let $M,N\in(\mathcal{M}_{\mathrm{loc}})^\mathbb{B}$.
\begin{itemize}
  \item [$(1)$] If $(T_n,M^{(n)})$ and $(T_n,N^{(n)})$ are FCSs for $M\in(\mathcal{M}_{\mathrm{loc}})^\mathbb{B}$ and $N\in(\mathcal{M}_{\mathrm{loc}})^\mathbb{B}$ respectively, then $(T_n,[M^{(n)},N^{(n)}])$ is an FCS for $[M,N]\in \mathcal{V}^\mathbb{B}$, and $(T_n,[M^{(n)}])$ is an FCS for $[M]\in (\mathcal{V}^+)^\mathbb{B}$, and $(T_n,\sqrt{[M^{(n)}]})$ is an FCS for $\sqrt{[M]}\in (\mathcal{A}^+_{\mathrm{loc}})^\mathbb{B}$.

  \item [$(2)$] For $\widetilde{M}\in(\mathcal{M}_{\mathrm{loc}})^\mathbb{B}$ and $a,b\in \mathbf{R}$,
      \[
       [M,N]=[N,M],\quad [aM+b\widetilde{M},N]=a[M,N]+b[\widetilde{M},N].
      \]

  \item [$(3)$]  If $\tau$ is a $\mathbb{B}$-inner stopping time, then
      \begin{equation}\label{MN}
       [M^{\tau},N^{\tau}]=[M,N]^{\tau}
      \end{equation}
       and
      \begin{equation}\label{MNB}
      [M^{\tau}\mathfrak{I}_\mathbb{B},N^{\tau}\mathfrak{I}_\mathbb{B}]
      =[M^{\tau},N^{\tau}]\mathfrak{I}_\mathbb{B}
      =[M,N]^{\tau}\mathfrak{I}_\mathbb{B}=[M^{\tau}\mathfrak{I}_\mathbb{B},N].
      \end{equation}

  \item [$(4)$] Further suppose $M,N\in(\mathcal{M}_{\mathrm{loc}})^{i,\mathbb{B}}$. Then $[M,N]$ is the unique $\mathbb{B}$-adapted process with finite variation such that $MN-[M,N]\in (\mathcal{M}_{\mathrm{loc,0}})^{i,\mathbb{B}}$ and $\Delta [M,N]=\Delta M\Delta N$.

  \item [$(5)$] Further suppose $M\in(\mathcal{M}_{\mathrm{loc}})^{i,\mathbb{B}}$. Then $[M]=0\mathfrak{I}_\mathbb{B}$ if and only of $M=0\mathfrak{I}_\mathbb{B}$.
\end{itemize}
\end{theorem}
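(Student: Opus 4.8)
The guiding principle throughout is to reduce every assertion to the three-term definition $[M,N]=M_0N_0\mathfrak{I}_\mathbb{B}+\langle M^c,N^c\rangle+\Sigma(\Delta M\Delta N)$ from \eqref{[M,N]} and then to invoke the already-established behaviour of each summand: the constant $M_0N_0\mathfrak{I}_\mathbb{B}$, the predictable quadratic covariation $\langle M^c,N^c\rangle$ (Theorem \ref{property-qr-p}), and the summation process $\Sigma(\Delta M\Delta N)$ (Theorems \ref{thin}, \ref{delta}). Parts (1)--(3) are then essentially bookkeeping, whereas the substance of the theorem --- and the whole reason the class $(\mathcal{M}_{\mathrm{loc}})^{i,\mathbb{B}}$ was introduced --- lies in the characterization (4) and the faithfulness statement (5).

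For (1) I would combine the FCS of each summand: $(T_n,(M^{(n)})^c)$ is an FCS for $M^c$ by Theorem \ref{Lem-M}(2), so $(T_n,\langle (M^{(n)})^c,(N^{(n)})^c\rangle)$ is an FCS for $\langle M^c,N^c\rangle$ by Theorem \ref{property-qr-p}(1), and $(T_n,\Sigma(\Delta M^{(n)}\Delta N^{(n)}))$ is an FCS for $\Sigma(\Delta M\Delta N)$ by Theorem \ref{delta}(6); since the classical $[M^{(n)},N^{(n)}]$ is exactly the sum of these three pieces, additivity of FCSs gives the claim, and the $(\mathcal{V}^+)^\mathbb{B}$, $(\mathcal{A}^+_{\mathrm{loc}})^\mathbb{B}$ memberships of $[M]$ and $\sqrt{[M]}$ follow from non-negativity of the summands together with the classical fact $\sqrt{[M^{(n)}]}\in\mathcal{A}^+_{\mathrm{loc}}$. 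Part (2) is immediate from the symmetry and bilinearity of each summand, using that the continuous-martingale part is linear (uniqueness in Theorem \ref{Lem-M}(1)) and that $\Delta$ and $\Sigma$ are linear. For (3) I would apply \eqref{[M,N]} to the genuine local martingales $M^\tau,N^\tau$ (which lie in $\mathcal{M}_{\mathrm{loc}}$ by Theorem \ref{fcs}, viewed on $\llbracket{0,+\infty}\llbracket$ via Corollary \ref{cD=DB}) and substitute $(M^\tau)^c=(M^c)^\tau$ (Theorem \ref{Lem-M}(3)), $\langle (M^c)^\tau,(N^c)^\tau\rangle=\langle M^c,N^c\rangle^\tau$ (Theorem \ref{property-qr-p}(3)), and $\Sigma(\Delta M^\tau\Delta N^\tau)=(\Sigma(\Delta M\Delta N))^\tau$ (from $\Delta M^\tau=\Delta M\,I_{\llbracket{0,\tau}\rrbracket}$ in \eqref{eqXT} with \eqref{sigmaX}); linearity of stopping then yields $[M^\tau,N^\tau]=[M,N]^\tau$, and applying $\mathfrak{I}_\mathbb{B}$ and reusing the corresponding restriction identities of Theorems \ref{property-qr-p}(3) and \ref{Lem-M}(3) delivers the remaining chain of equalities.

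For (4) I would first verify that the defined $[M,N]$ satisfies both properties. The jump identity $\Delta[M,N]=\Delta M\Delta N$ holds for all $M,N\in(\mathcal{M}_{\mathrm{loc}})^\mathbb{B}$: the constant and the $\mathbb{B}$-continuous term $\langle M^c,N^c\rangle$ contribute no jump (Theorem \ref{delta}(5)), while $\Delta\Sigma(\Delta M\Delta N)=\Delta M\Delta N$ by the defining property of the summation process (Theorem \ref{restriction}(2) with Theorem \ref{delta}(3)). For $MN-[M,N]\in(\mathcal{M}_{\mathrm{loc,0}})^{i,\mathbb{B}}$ I would pass to a common inner FCS $(T_n,M^{(n)}),(T_n,N^{(n)})$ (available by Theorem \ref{MC}(1)) and set $\widehat M^{(n)}=(M^{(n)})^{T_n\wedge(T_F-)}$, $\widehat N^{(n)}=(N^{(n)})^{T_n\wedge(T_F-)}$, which are genuine local martingales by Definition \ref{inner}. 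The classical product rule gives $\widehat M^{(n)}\widehat N^{(n)}-[\widehat M^{(n)},\widehat N^{(n)}]\in\mathcal{M}_{\mathrm{loc,0}}$; being frozen after $T_n\wedge(T_F-)$ this process equals its own stopping there, hence meets \eqref{inner-eq}, and since $\widehat M^{(n)}=M,\ \widehat N^{(n)}=N$ on $\mathbb{B}\llbracket{0,T_n}\rrbracket$ (the excised jump at $T_F$ lies outside $\mathbb{B}$) it coincides with $MN-[M,N]$ there, producing the desired inner FCS. For uniqueness, if $V\in\mathcal{V}^\mathbb{B}$ also satisfies the two properties I would set $W=V-[M,N]$; then $W\in\mathcal{V}^\mathbb{B}$ with $\Delta W=0\mathfrak{I}_\mathbb{B}$, so $W\in\mathcal{C}^\mathbb{B}$ by Theorem \ref{delta}(5), and $W\in(\mathcal{M}_{\mathrm{loc,0}})^{i,\mathbb{B}}$ as a difference (Theorem \ref{MC}(3)); thus $W\in(\mathcal{M}_{\mathrm{loc}})^{i,\mathbb{B}}\cap\mathcal{C}^\mathbb{B}=(\mathcal{M}^c_{\mathrm{loc}})^\mathbb{B}$ by Theorem \ref{MC}(2), with $W_0=0$.

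The decisive step --- and the one I expect to be the main obstacle --- is then to show that such a $W\in(\mathcal{M}^c_{\mathrm{loc}})^\mathbb{B}\cap\mathcal{V}^\mathbb{B}$ with $W_0=0$ vanishes $\mathbb{B}$-identically. The naive argument fails because an FCS representative of $W$ agrees with a finite-variation process only on $\mathbb{B}$, not off it --- precisely the pathology the paper warns about. The fix is that an FCS for a $\mathbb{B}$-continuous local martingale is automatically inner (the class following Definition \ref{inner}): taking $\widehat W^{(n)}=(W^{(n)})^{\tau_n\wedge(T_F-)}$ with $\tau_n=T_n\wedge S_n$ a common refinement of the two FCSs, I would check that $\widehat W^{(n)}$ is a genuine continuous local martingale which, being frozen once it leaves $\mathbb{B}$, has finite variation everywhere and null initial value; the classical theorem that a continuous finite-variation local martingale is constant forces $\widehat W^{(n)}=0$, whence $W=0$ on $\mathbb{B}\llbracket{0,\tau_n}\rrbracket$ for every $n$ and so $W=0\mathfrak{I}_\mathbb{B}$, i.e. $V=[M,N]$. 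Finally, for (5) the implication $M=0\mathfrak{I}_\mathbb{B}\Rightarrow[M]=0\mathfrak{I}_\mathbb{B}$ is immediate from \eqref{[M,N]}; conversely $[M]=M_0^2\mathfrak{I}_\mathbb{B}+\langle M^c\rangle+\Sigma((\Delta M)^2)=0\mathfrak{I}_\mathbb{B}$ forces each non-negative summand to vanish, giving $M_0=0$, $\Delta M=0\mathfrak{I}_\mathbb{B}$, and $\langle M^c\rangle=0\mathfrak{I}_\mathbb{B}$. The same inner-stopping device applied to $M^c$ (continuous) and to $M^d$ (purely discontinuous, with $\Delta M^d=\Delta M=0\mathfrak{I}_\mathbb{B}$, inner by Theorem \ref{MC}(4)) shows $M^c=0\mathfrak{I}_\mathbb{B}$ and $M^d=0\mathfrak{I}_\mathbb{B}$, so $M=M_0\mathfrak{I}_\mathbb{B}+M^c+M^d=0\mathfrak{I}_\mathbb{B}$ by Theorem \ref{Lem-M}(1).
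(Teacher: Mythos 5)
Your proposal is correct and is essentially the paper's own proof: parts (1)--(3) proceed by comparing the three summands of \eqref{[M,N]} through FCSs and the stopping identities of Theorems \ref{property-qr-p}, \ref{Lem-M} and \ref{delta}, and in part (4) you obtain existence by freezing inner FCSs at $T_n\wedge(T_F-)$ and uniqueness by combining Theorems \ref{delta}(5), \ref{MC}(3) and \ref{MC}(2) with the fact that a $\mathbb{B}$-continuous local martingale of finite variation is $\mathbb{B}$-constant --- your ``decisive step'' is precisely the paper's Lemma~\ref{LM-A2}, which the paper proves by the very same freezing argument. The only variation is part (5), where the paper argues directly that $[(M^{(n)})^{T_n\wedge(T_F-)}]=[M^{(n)}]^{T_n\wedge(T_F-)}=0$ annihilates the frozen local martingales, whereas you pass through the decomposition $M=M_0\mathfrak{I}_\mathbb{B}+M^c+M^d$; this also works (the $M^d$ step tacitly needs the Lemma~\ref{LM-A3}-type verification that the frozen process remains purely discontinuous), and both routes rest on the same inner-freezing device.
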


\begin{remark}\normalfont
\begin{itemize}
  \item [(1)] A $\mathbb{B}$-local martingale is not necessarily a $\mathbb{B}$-inner local martingale. For instance, consider $\mathbb{B}^*$ and $\widehat{M}\in(\mathcal{M}_{\mathrm{loc}})^{\mathbb{B}^*}$ in Section 1.1. Then $\widehat{M}\in (\mathcal{M}_{\mathrm{loc}})^{\mathbb{B}^*}$ is indeed a $\mathbb{B}^*$-local martingale, yet it does not qualify as a $\mathbb{B}^*$-inner local martingale. Specifically, it is straightforward to observe that $[\widehat{M}]=0\mathfrak{I}_{\mathbb{B}^*}$ while $\widehat{M}\neq 0\mathfrak{I}_{\mathbb{B}^*}$. This observation, in conjunction with Theorem \ref{[M]-property}(5), leads to the conclusion that $\widehat{M}\notin (\mathcal{M}_{\mathrm{loc}})^{i,\mathbb{B}^*}$.
  \item [(2)] Drawing upon Theorem \ref{MC}(2), Theorem \ref{[M]-property}(4) and Theorem \ref{[M]-property}(5), it can be established that $\mathbb{B}$-inner local martingales indeed fulfill the aforementioned fundamental properties, which are widely recognized as characteristic of local martingales.
  \end{itemize}

\end{remark}

\subsection{Definition of $H_{\bullet}M$}
Unless otherwise explicitly specified, we will uniformly assume, for the entirety of the remaining content within this section, that $H\in \mathcal{P}^\mathbb{B}$ and $M\in (\mathcal{M}_{\mathrm{loc}})^{i,\mathbb{B}}$.

Drawing upon the quadratic covariations of $\mathbb{B}$-local martingales, we extend the definition of stochastic integrals of predictable processes with respect to local martingales. This extension enables us to rigorously present the following affirmative resolution of Problem (SI-2).

\begin{definition}\normalfont\label{HM}
If there exists a $\mathbb{B}$-process $L\in(\mathcal{M}_{\mathrm{loc}})^{i,\mathbb{B}}$ such that
\begin{equation}\label{deHM}
[L,N]=H_{\bullet}[M,N]
\end{equation}
holds for every process $N\in(\mathcal{M}_{\mathrm{loc}})^\mathbb{B}$ (this naturally implies $H\in\mathcal{L}_s^\mathbb{B}([M,N])$), then we say that $H$ is $\mathbb{B}$-integrable w.r.t. $M$. In this case, the $\mathbb{B}$-process $L$, denoted by $H_{\bullet}M$, is called the stochastic integral of $H$ w.r.t. $M$, and the collection of all $\mathbb{B}$-predictable processes which are $\mathbb{B}$-integrable w.r.t. $M$ is denoted by $\mathcal{L}_m^\mathbb{B}(M)$.
\end{definition}

When considering the stochastic integral $H_{\bullet}M$ as defined in Definition \ref{HM} and $H_{\bullet}A$ as per Definition \ref{HA}, we consistently specify the classes to which $M$ and $A$ belong, thereby eliminating any potential ambiguity between the two notations $H_{\bullet}M$ and $H_{\bullet}A$. Furthermore, it is noteworthy that if the $\mathbb{B}$-process $L$ in (\ref{deHM}) exists, it is necessarily unique. To elaborate, suppose $\widetilde{L}\in(\mathcal{M}_{\mathrm{loc}})^{i,\mathbb{B}}$ is another $\mathbb{B}$-process satisfying the condition $[\widetilde{L},N]=H_{\bullet}[M,N]$ for every $N\in(\mathcal{M}_{\mathrm{loc}})^\mathbb{B}$. By setting $N=L-\widetilde{L}\in(\mathcal{M}_{\mathrm{loc}})^{i,\mathbb{B}}$, we obtain the relation $[L-\widetilde{L}]=0\mathfrak{I}_\mathbb{B}$. Invoking Theorem \ref{[M]-property}(5), we can infer that $L=\widetilde{L}$, thereby establishing the uniqueness of $L$.

\begin{remark}\normalfont
Let us delve deeper into the rationality of Definition \ref{HM}.
\begin{itemize}
\item [$(1)$] It is of considerable importance to observe that when $\mathbb{B}=\llbracket{0,+\infty}\llbracket=\Omega\times\mathbf{R}^+$, the $\mathbb{B}$-stochastic integral $H_{\bullet}M$, as defined by \eqref{deHM}, simplifies to the existing stochastic integral $H.M$. Specifically, the following relationship can be established:
    \begin{itemize}
      \item [] Let $H\in \mathcal{P}^{\llbracket{0,+\infty}\llbracket}$ and $M\in (\mathcal{M}_{\mathrm{loc}})^{\llbracket{0,+\infty}\llbracket}$. If $H\in\mathcal{L}_m^{\llbracket{0,+\infty}\llbracket}(M)$, then $H\in\mathcal{L}_m(M)$ and $H_{\bullet}M=H.M$.
    \end{itemize}
    Indeed, by invoking Corollary \ref{cD=DB}, we can deduce $\mathcal{P}=\mathcal{P}^{\llbracket{0,+\infty}\llbracket}$ and $\mathcal{M}_{\mathrm{loc}}=(\mathcal{M}_{\mathrm{loc}})^{\llbracket{0,+\infty}\llbracket}$. Subsequently, by leveraging Definition \ref{HM}, the aforementioned assertion can be straightforwardly derived.

\item [$(2)$]
    Generally, it is imperative to impose the conditions pertaining to $\mathbb{B}$-inner local martingales as stipulated in Definition \ref{HM}. Specifically, if the class $(\mathcal{M}_{\mathrm{loc}})^{i,\mathbb{B}}$ is altered to $(\mathcal{M}_{\mathrm{loc}})^\mathbb{B}$, the uniqueness of the stochastic integral $H_{\bullet}M$ may no longer be guaranteed. To illustrate this point, we provide a straightforward example. Let $\mathbb{B}^*$ and $\widehat{M}\in (\mathcal{M}_{\mathrm{loc}})^{\mathbb{B}^*}$ be as specified in Section 1.1.  It follows that $\widehat{M}\notin (\mathcal{M}_{\mathrm{loc}})^{i,{\mathbb{B}^*}}$. Now, consider the following assignments: $H=1\mathfrak{I}_{\mathbb{B}^*}$, $L_1=\widehat{M}\in (\mathcal{M}_{\mathrm{loc}})^{\mathbb{B}^*}$ and $L_2=0\mathfrak{I}_{\mathbb{B}^*}\in (\mathcal{M}_{\mathrm{loc}})^{\mathbb{B}^*}$. It can be easily verified that for every process $N\in(\mathcal{M}_{\mathrm{loc}})^{\mathbb{B}^*}$, the following relationships hold:
    \begin{align*}
    [L_1,N]&=H_{\bullet}[\widehat{M},N]=0\mathfrak{I}_{\mathbb{B}^*},\\
    [L_2,N]&=H_{\bullet}[\widehat{M},N]=0\mathfrak{I}_{\mathbb{B}^*}.
    \end{align*}
    Given that $L_1\neq L_2$, it becomes evident that the ${\mathbb{B}^*}$-stochastic integral $H_{\bullet}\widehat{M}$ can not be defined uniquely.
 \end{itemize}
\end{remark}

\subsection{Fundamental properties of $H_{\bullet}M$}
By invoking Definition \ref{HM}, in conjunction with Theorems \ref{MC} and \ref{[M]-property}, we can promptly derive the following properties associated with the $\mathbb{B}$-stochastic integral $H_{\bullet}M$. These properties align seamlessly with the well-recognized characteristics that are intrinsic to conventional stochastic integrals of predictable processes w.r.t. local martingales.

\begin{theorem}\label{HM-o-p}
Given $\widetilde{M}\in (\mathcal{M}_{\mathrm{loc}})^{i,\mathbb{B}}$ and $a,b\in\mathbf{R}$, let $H\in \mathcal{L}_m^\mathbb{B}(M)\cap\mathcal{L}_m^\mathbb{B}(\widetilde{M})$ and $K\in \mathcal{L}_m^\mathbb{B}(M)$.
\begin{itemize}
  \item [$(1)$] $aH+bK\in\mathcal{L}_m^\mathbb{B}(M)$, and in this case
  \begin{equation}\label{HM+}
      (aH+bK)_{\bullet}M=a(H_{\bullet}M)+b(K_{\bullet}M).
  \end{equation}

  \item [$(2)$] $H\in\mathcal{L}_m^\mathbb{B}(aM+b\widetilde{M})$, and in this case
  \begin{equation}\label{+HM}
      H_{\bullet}(aM+b\widetilde{M})=a(H_{\bullet}M)+b(H_{\bullet}\widetilde{M}).
  \end{equation}

  \item [$(3)$] Let $\widetilde{H}\in\mathcal{P}^\mathbb{B}$. Then  $\widetilde{H}\in\mathcal{L}_m^\mathbb{B}(H_{\bullet}M)$ if and only if
      $\widetilde{H}H\in\mathcal{L}_m^\mathbb{B}(M)$. Furthermore, if $\widetilde{H}\in\mathcal{L}_m^\mathbb{B}(H_{\bullet}M)$ (or equivalently, $\widetilde{H}H\in\mathcal{L}_m^\mathbb{B}(M)$), then
  \begin{equation}\label{hHM}
     \widetilde{H}_{\bullet}(H_{\bullet}M)=(\widetilde{H}H)_{\bullet}M.
  \end{equation}
\end{itemize}
\end{theorem}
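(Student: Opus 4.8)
The plan is to exploit the characterizing identity of Definition~\ref{HM}: a $\mathbb{B}$-process $L\in(\mathcal{M}_{\mathrm{loc}})^{i,\mathbb{B}}$ equals $H_{\bullet}M$ precisely when $[L,N]=H_{\bullet}[M,N]$ for every $N\in(\mathcal{M}_{\mathrm{loc}})^\mathbb{B}$, together with the uniqueness of such an $L$ noted immediately after Definition~\ref{HM}. For each of the three assertions I would (i) write down the natural candidate built from the integrals already known to exist, (ii) confirm it lies in $(\mathcal{M}_{\mathrm{loc}})^{i,\mathbb{B}}$ via the closure properties of Theorem~\ref{MC}, (iii) verify the defining identity by a short computation transferring the bilinearity of the $\mathbb{B}$-quadratic covariation (Theorem~\ref{[M]-property}(2)) and the linearity/associativity of the $\mathbb{B}$-Stieltjes integral (Theorem~\ref{HAproperty}) through that identity, and finally (iv) invoke uniqueness to conclude both the claimed integrability and the stated equality.

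For part~(1), set $L=a(H_{\bullet}M)+b(K_{\bullet}M)$, which is a $\mathbb{B}$-inner local martingale by Theorem~\ref{MC}(3). Then for every $N$,
\begin{equation*}
[L,N]=a[H_{\bullet}M,N]+b[K_{\bullet}M,N]=a(H_{\bullet}[M,N])+b(K_{\bullet}[M,N])=(aH+bK)_{\bullet}[M,N],
\end{equation*}
using bilinearity of $[\cdot,\cdot]$ (Theorem~\ref{[M]-property}(2)), the defining identities for $H_{\bullet}M$ and $K_{\bullet}M$, and linearity in the integrand (Theorem~\ref{HAproperty}(1)); the last step also supplies $aH+bK\in\mathcal{L}_s^\mathbb{B}([M,N])$. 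Uniqueness then yields $aH+bK\in\mathcal{L}_m^\mathbb{B}(M)$ and \eqref{HM+}. Part~(2) is entirely parallel: with $L=a(H_{\bullet}M)+b(H_{\bullet}\widetilde{M})\in(\mathcal{M}_{\mathrm{loc}})^{i,\mathbb{B}}$ and $aM+b\widetilde{M}\in(\mathcal{M}_{\mathrm{loc}})^{i,\mathbb{B}}$ (again Theorem~\ref{MC}(3)), one computes $[L,N]=a(H_{\bullet}[M,N])+b(H_{\bullet}[\widetilde{M},N])=H_{\bullet}(a[M,N]+b[\widetilde{M},N])=H_{\bullet}[aM+b\widetilde{M},N]$, now using linearity in the integrator (Theorem~\ref{HAproperty}(2)) and bilinearity of $[\cdot,\cdot]$, so that \eqref{+HM} follows from uniqueness.

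For part~(3), write $L=H_{\bullet}M$, so $[L,N]=H_{\bullet}[M,N]$ for all $N$. The crux is the chained identity
\begin{equation*}
\widetilde{H}_{\bullet}[L,N]=\widetilde{H}_{\bullet}(H_{\bullet}[M,N])=(\widetilde{H}H)_{\bullet}[M,N],
\end{equation*}
which is exactly the associativity of the $\mathbb{B}$-Stieltjes integral (Theorem~\ref{HAproperty}(3), applied with $A=[M,N]$, legitimate since $H\in\mathcal{L}_m^\mathbb{B}(M)$ already forces $H\in\mathcal{L}_s^\mathbb{B}([M,N])$ for every $N$). Since that theorem is phrased as the equivalence $\widetilde{H}\in\mathcal{L}_s^\mathbb{B}(H_{\bullet}[M,N])\iff\widetilde{H}H\in\mathcal{L}_s^\mathbb{B}([M,N])$, I would use it to match the integrability conditions built into Definition~\ref{HM}. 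Concretely: if $\widetilde{H}H\in\mathcal{L}_m^\mathbb{B}(M)$, set $P=(\widetilde{H}H)_{\bullet}M\in(\mathcal{M}_{\mathrm{loc}})^{i,\mathbb{B}}$; then $[P,N]=(\widetilde{H}H)_{\bullet}[M,N]=\widetilde{H}_{\bullet}[L,N]$, so $P$ satisfies the defining identity for $\widetilde{H}_{\bullet}L$ and uniqueness gives $\widetilde{H}\in\mathcal{L}_m^\mathbb{B}(H_{\bullet}M)$ together with \eqref{hHM}. For the converse, if $\widetilde{H}\in\mathcal{L}_m^\mathbb{B}(H_{\bullet}M)$, put $Q=\widetilde{H}_{\bullet}(H_{\bullet}M)\in(\mathcal{M}_{\mathrm{loc}})^{i,\mathbb{B}}$; membership forces $\widetilde{H}\in\mathcal{L}_s^\mathbb{B}(H_{\bullet}[M,N])$ for every $N$, whence the equivalence yields $\widetilde{H}H\in\mathcal{L}_s^\mathbb{B}([M,N])$ and $[Q,N]=(\widetilde{H}H)_{\bullet}[M,N]$, so $Q$ satisfies the defining identity for $(\widetilde{H}H)_{\bullet}M$ and uniqueness gives $\widetilde{H}H\in\mathcal{L}_m^\mathbb{B}(M)$.

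I expect the only genuinely delicate point to be the integrability bookkeeping in part~(3): one must track that $H\in\mathcal{L}_m^\mathbb{B}(M)$ already entails $H\in\mathcal{L}_s^\mathbb{B}([M,N])$ for every $N$, and that the equivalence of Theorem~\ref{HAproperty}(3) is invoked on the finite-variation process $[M,N]$ uniformly in $N$, so that both directions of the ``if and only if'' are delivered simultaneously with the value identity. Parts~(1) and~(2) present no obstacle beyond invoking the closure of $(\mathcal{M}_{\mathrm{loc}})^{i,\mathbb{B}}$ under linear combinations.
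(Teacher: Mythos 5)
Your proposal is correct and follows essentially the same route as the paper's proof: both verify the characterizing identity of Definition \ref{HM} for the natural candidate integral, using bilinearity of the $\mathbb{B}$-quadratic covariation (Theorem \ref{[M]-property}(2)), the linearity/associativity of $\mathbb{B}$-Stieltjes integrals (Theorem \ref{HAproperty}), closure of $(\mathcal{M}_{\mathrm{loc}})^{i,\mathbb{B}}$ under linear combinations (Theorem \ref{MC}(3)), and then uniqueness. Your explicit bookkeeping in part (3) of the integrability equivalence in Theorem \ref{HAproperty}(3) matches the paper's chained identity argument, just spelled out with named candidates $P$ and $Q$.
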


The subsequent theorem serves as a valuable tool for our exploration of the relationship between the integrability conditions associated with $H_{\bullet}M$ and those of conventional stochastic integrals. This relationship holds significant importance in shedding light on additional properties of $H_{\bullet}M$, as defined in Definition \ref{HM}.

\begin{theorem}\label{HM=}
The following statements are equivalent:
\begin{description}
  \item[(M1)] $H\in\mathcal{L}_m^\mathbb{B}(M)$.
  \item[(M2)] There exist a coupled predictable process $\widetilde{H}$ for $H\in \mathcal{P}^\mathbb{B}$ and an inner FCS $(T_n,M^{(n)})$ for $M\in(\mathcal{M}_{\mathrm{loc}})^{i,\mathbb{B}}$ such that $\widetilde{H}\in\mathcal{L}_m(M^{(n)})$ for each $n\in \mathbf{N}^+$.
  \item[(M3)] There exist an FCS $(T_n,H^{(n)})$ for $H\in \mathcal{P}^\mathbb{B}$ and an inner FCS $(T_n,M^{(n)})$ for $M\in(\mathcal{M}_{\mathrm{loc}})^{i,\mathbb{B}}$ such that $H^{(n)}\in\mathcal{L}_m(M^{(n)})$ for each $n\in \mathbf{N}^+$.
  \item[(M4)] $\sqrt{{H^2}_{\bullet}[M]}\in (\mathcal{A}^+_{\mathrm{loc}})^\mathbb{B}$.
\end{description}
\end{theorem}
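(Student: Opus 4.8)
The plan is to translate each of the four conditions into a termwise classical statement along a common fundamental coupled sequence and then invoke the classical integrability criterion that, for a local martingale $\widetilde{M}$ and predictable $\widetilde{H}$, one has $\widetilde{H}\in\mathcal{L}_m(\widetilde{M})$ if and only if $\sqrt{\widetilde{H}^2.[\widetilde{M}]}\in\mathcal{A}^+_{\mathrm{loc}}$ (see, e.g., \cite{He}). As a normalization I would first use Theorem \ref{process}(3) together with Theorem \ref{MC}(1) to put all the FCS's on one increasing sequence $(T_n)$, replace the FCS for $H$ by a coupled predictable process $\widetilde{H}$ via Theorem \ref{restriction}(1), and replace an inner FCS $(T_n,M^{(n)})$ by its frozen version $(T_n,\overline{M}^{(n)})$ with $\overline{M}^{(n)}=(M^{(n)})^{T_n\wedge(T_F-)}\in\mathcal{M}_{\mathrm{loc}}$, which is again an inner FCS by the remark following Definition \ref{inner}. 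The decisive structural point is that $[\overline{M}^{(n)}]$ is frozen after $T_n\wedge(T_F-)$, hence supported on $\llbracket 0,T_n\wedge T_F\llbracket\subseteq\mathbb{B}$, so that $\widetilde{H}^2.[\overline{M}^{(n)}]$ is a genuine finite-variation process agreeing on $\mathbb{B}\llbracket 0,T_n\rrbracket$ with the $\mathbb{B}$-Stieltjes integral $H^2_{\bullet}[M]$ (using Theorem \ref{[M]-property}(1) to identify $(T_n,[\overline{M}^{(n)}])$ as an FCS for $[M]$). I would then prove the cycle (M1)$\Rightarrow$(M4)$\Rightarrow$(M2)$\Rightarrow$(M3)$\Rightarrow$(M1).

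For (M1)$\Rightarrow$(M4) I would argue purely algebraically with the $\mathbb{B}$-quadratic covariation. Taking $N=M$ in \eqref{deHM} gives $[L,M]=H_{\bullet}[M]$; taking $N=L\in(\mathcal{M}_{\mathrm{loc}})^{i,\mathbb{B}}\subseteq(\mathcal{M}_{\mathrm{loc}})^\mathbb{B}$ gives $[L]=[L,L]=H_{\bullet}[M,L]=H_{\bullet}[L,M]$ by the symmetry in Theorem \ref{[M]-property}(2). Substituting $[L,M]=H_{\bullet}[M]$ and applying Theorem \ref{HAproperty}(3) (the integrability $H\in\mathcal{L}_s^\mathbb{B}(H_{\bullet}[M])$ is exactly what the defining relation supplies through $N=L$) yields $[L]=H_{\bullet}(H_{\bullet}[M])=H^2_{\bullet}[M]$. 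Since $L$ is a $\mathbb{B}$-inner local martingale, Theorem \ref{[M]-property}(1) gives $\sqrt{[L]}\in(\mathcal{A}^+_{\mathrm{loc}})^\mathbb{B}$, which is (M4).

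The implications (M2)$\Rightarrow$(M3) and (M3)$\Rightarrow$(M1) are the constructive ones. The first is immediate, since $(T_n,\widetilde{H})$ is itself an FCS for $H\in\mathcal{P}^\mathbb{B}$, so one may take $H^{(n)}=\widetilde{H}$. For (M3)$\Rightarrow$(M1) I would set $L^{(n)}=H^{(n)}.M^{(n)}\in\mathcal{M}_{\mathrm{loc},0}$ and glue them into a $\mathbb{B}$-process through Theorem \ref{restriction}(4); the compatibility $L^{(k)}I_{\mathbb{B}\llbracket 0,T_k\rrbracket}=L^{(l)}I_{\mathbb{B}\llbracket 0,T_k\rrbracket}$ for $k\leq l$ holds because, after stopping at $T_k\wedge(T_F-)$, the integrators $(M^{(k)})^{T_k\wedge(T_F-)}$ and $(M^{(l)})^{T_k\wedge(T_F-)}$ are genuine local martingales agreeing before $T_k\wedge T_F$, the integrands agree there, and stopping commutes with the stochastic integral. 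The resulting $L$ is inner because $(L^{(n)})^{T_n\wedge(T_F-)}=H^{(n)}.(M^{(n)})^{T_n\wedge(T_F-)}\in\mathcal{M}_{\mathrm{loc}}$. Finally, choosing an aligned FCS $(T_n,N^{(n)})$ for arbitrary $N\in(\mathcal{M}_{\mathrm{loc}})^\mathbb{B}$ and using the classical identity $[H^{(n)}.M^{(n)},N^{(n)}]=H^{(n)}.[M^{(n)},N^{(n)}]$, Theorem \ref{[M]-property}(1) and Theorem \ref{HA-FCS} show that both sides of \eqref{deHM} share the FCS $(T_n,H^{(n)}.[M^{(n)},N^{(n)}])$, so $[L,N]=H_{\bullet}[M,N]$, giving (M1).

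The hard step is (M4)$\Rightarrow$(M2), and this is where the inner-FCS hypothesis is indispensable. From (M4) the integral $H^2_{\bullet}[M]$ exists, so $H^2\in\mathcal{L}_s^\mathbb{B}([M])$; I would then show that the classical criterion $\sqrt{\widetilde{H}^2.[\overline{M}^{(n)}]}\in\mathcal{A}^+_{\mathrm{loc}}$ holds for each $n$, which is precisely $\widetilde{H}\in\mathcal{L}_m(\overline{M}^{(n)})$ and hence delivers (M2) with inner FCS $(T_n,\overline{M}^{(n)})$. The obstacle is that $\mathcal{A}^+_{\mathrm{loc}}$-membership is a statement about the whole path of $\sqrt{\widetilde{H}^2.[\overline{M}^{(n)}]}$, whereas (M4) only controls behaviour on $\mathbb{B}$; the freezing construction resolves this, because $\sqrt{\widetilde{H}^2.[\overline{M}^{(n)}]}$ is constant after $T_n\wedge(T_F-)$ and coincides on its support with the restriction of $\sqrt{H^2_{\bullet}[M]}$, so its local integrability is inherited from $\sqrt{H^2_{\bullet}[M]}\in(\mathcal{A}^+_{\mathrm{loc}})^\mathbb{B}$ by stopping at an inner stopping time and applying Theorem \ref{fcs}. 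Verifying that this freezing genuinely confines the relevant variation to $\mathbb{B}$ — that is, that the inner condition $(M^{(n)})^{T_n\wedge(T_F-)}\in\mathcal{M}_{\mathrm{loc}}$ forces $[\overline{M}^{(n)}]$ to accumulate no mass outside $\mathbb{B}$ — is the crux that makes the equivalence with the purely $\mathbb{B}$-intrinsic condition (M4) possible, and is exactly the place where a non-inner $\mathbb{B}$-local martingale such as the $\widehat{M}$ of Section 1.1 would break the argument.
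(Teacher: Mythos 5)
Your proof cycle (M1)$\Rightarrow$(M4)$\Rightarrow$(M2)$\Rightarrow$(M3)$\Rightarrow$(M1) is genuinely different from the paper's, which proves (M1)$\Leftrightarrow$(M2) directly and then (M2)$\Rightarrow$(M3)$\Rightarrow$(M4)$\Rightarrow$(M2). Your direct implication (M1)$\Rightarrow$(M4) is correct and is slicker than the paper's route to (M4): taking $N=M$ and then $N=L=H_{\bullet}M$ in \eqref{deHM}, using symmetry and Theorem \ref{HAproperty}(3), gives $[L]=H_{\bullet}(H_{\bullet}[M])={H^2}_{\bullet}[M]$, and Theorem \ref{[M]-property}(1) (which requires no inner hypothesis) yields $\sqrt{[L]}\in(\mathcal{A}^+_{\mathrm{loc}})^\mathbb{B}$; this also lets you avoid the paper's truncation construction ($K=\frac{1}{H}I_{[|H|>\varepsilon]}$, $X=I_{[|H|\leq\varepsilon]}$) used in its proof of (M1)$\Rightarrow$(M2). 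Your (M2)$\Rightarrow$(M3) is the paper's. Your (M3)$\Rightarrow$(M1) matches the paper in substance (it is the content of Theorem \ref{eq-HM}); the gluing compatibility you describe informally is exactly the identity $H^{(l)}.(M^{(l)})^{T_k\wedge(T_F-)}=H^{(k)}.(M^{(k)})^{T_k\wedge(T_F-)}$, which the paper proves by the quadratic-variation computation $[(H^{(l)}-H^{(k)}).(M^{(k)})^{T_k\wedge(T_F-)}]=(H^{(l)}-H^{(k)})^2.[(M^{(k)})^{T_k\wedge(T_F-)}]=0$; ``the integrands agree where the integrator moves'' is not by itself a valid rule for stochastic integrals, so this computation must be supplied, but your plan points at the right mechanism.

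The genuine gap is in (M4)$\Rightarrow$(M2), exactly the step you call the crux. You fix an \emph{arbitrary} coupled predictable process $\widetilde{H}$ for $H$ (Theorem \ref{restriction}(1)) and claim $\sqrt{\widetilde{H}^2.[\overline{M}^{(n)}]}\in\mathcal{A}^+_{\mathrm{loc}}$, justifying the inheritance of integrability ``by stopping at an inner stopping time and applying Theorem \ref{fcs}.'' That mechanism cannot close the argument. On the event $\{T_F\leq T_n\}$ (so $\omega\in F$, and $\mathbb{B}$ is right-open at $T_F$), the measure $d[\overline{M}^{(n)}]$ charges all of $\llbracket{0,T_F}\llbracket$, so membership of $\widetilde{H}^2.[\overline{M}^{(n)}]$ in $\mathcal{V}^+$ requires finiteness of $\lim_{s\uparrow T_F}\int_{[0,s]}H_u^2\,d[M]_u$; but (M4) gives you the values of ${H^2}_{\bullet}[M]$ only at points of $\mathbb{B}$, i.e.\ at each fixed $s<T_F$, and every $\mathbb{B}$-inner stopping time $S$ satisfies $\llbracket{0,S}\rrbracket\subseteq\mathbb{B}$, hence $S<T_F$ on $F$ — so no amount of stopping at inner times and invoking Theorem \ref{fcs} controls the accumulation up to $T_F-$ (this is the same left-endpoint blow-up phenomenon as the $\ln\frac{1}{1-t}$ example of Section 1.1). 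The paper closes precisely this hole by \emph{not} working with an arbitrary coupled process: it applies (A1)$\Rightarrow$(A2) of Theorem \ref{HA-equivalent} to $H^2\in\mathcal{L}^\mathbb{B}_s([M])$ to obtain a coupled process $\widetilde{K}$ for $H^2$ and an FCS $(T_n,V^{(n)})$ of $[M]$ with the \emph{global} integrability $|\widetilde{K}|\in\mathcal{L}_s(V^{(n)})$, and then manufactures the coupled process for $H$ as $\widetilde{H}=\sqrt{|\widetilde{K}|}\,I_{[\widehat{H}\geq 0]}-\sqrt{|\widetilde{K}|}\,I_{[\widehat{H}<0]}$, so that $\sqrt{\widetilde{H}^2.[M^{(n)}]}=\bigl(\sqrt{|\widetilde{K}|.V^{(n)}}\bigr)^{T_n\wedge(T_F-)}\in\mathcal{A}^+_{\mathrm{loc}}$ is automatic. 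Your route could instead be repaired by exploiting that an FCS $(S_m,W^{(m)})$ for $\sqrt{{H^2}_{\bullet}[M]}\in(\mathcal{A}^+_{\mathrm{loc}})^\mathbb{B}$ consists of globally defined c\`adl\`ag processes: their finite left limits at $T_F$ bound the frozen integral, and $\mathcal{A}^+_{\mathrm{loc}}$ is stable under $X\mapsto X^{T_F-}$ since $0\leq X^{T_F-}\leq X^{T_F}$ for increasing $X$ — but this argument, not Theorem \ref{fcs}, is what has to be written down.
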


\begin{remark}\normalfont
By invoking Theorem \ref{process}(3), the assertion designated as (M3) in Theorem \ref{HM=} can be reformulated into an equivalent condition as follows:
\begin{description}
  \item[(M3$'$)] There exist an FCS $(T_n,H^{(n)})$ for $H\in \mathcal{P}^\mathbb{B}$ and an inner FCS $(S_n,M^{(n)})$ for $M\in(\mathcal{M}_{\mathrm{loc}})^{i,\mathbb{B}}$, such that $H^{(n)}\in\mathcal{L}_m(M^{(n)})$ for each $n\in \mathbf{N}^+$.
\end{description}
\end{remark}

The $\mathbb{B}$-stochastic integral $H_{\bullet}M$, as delineated in Definition \ref{HM}, can be precisely described as the aggregation of a sequence of conventional stochastic integrals of predictable processes w.r.t. local martingales. This characterization is formally established in the ensuing theorem.

\begin{theorem}\label{eq-HM}
Let $H\in \mathcal{L}_m^\mathbb{B}(M)$.
Suppose that $(T_n,H^{(n)})$ is an FCS for $H\in\mathcal{P}^\mathbb{B}$ and $(T_n,M^{(n)})$ is an inner FCS for $M\in(\mathcal{M}_{\mathrm{loc}})^{i,\mathbb{B}}$ such that for each $n\in \mathbf{N}^+$, $H^{(n)}\in\mathcal{L}_m(M^{(n)})$. Then $(T_n,H^{(n)}.M^{(n)})$ is an inner FCS for $H_{\bullet}M\in(\mathcal{M}_{\mathrm{loc}})^{i,\mathbb{B}}$, and $H_{\bullet}M$ can be expressed as
  \begin{equation}\label{HM-expression-2}
      H_{\bullet}M=\left((H_0M_0)I_{\llbracket{0}\rrbracket}+\sum\limits_{n=1}^{{+\infty}}
      (H^{(n)}.M^{(n)})I_{\rrbracket{T_{n-1},T_n}
      \rrbracket}\right)\mathfrak{I}_\mathbb{B},\quad T_0=0.
      \end{equation}
Furthermore, if $(S_n, \widetilde{H}^{(n)})$ is another FCS for $H\in\mathcal{P}^\mathbb{B}$ and $(\widetilde{S}_n,\widetilde{M}^{(n)})$ is another inner FCS for $M\in(\mathcal{M}_{\mathrm{loc}})^{i,\mathbb{B}}$ such that for each $n\in \mathbf{N}^+$, $\widetilde{H}^{(n)}\in\mathcal{L}_m(\widetilde{M}^{(n)})$, then $H_{\bullet}M=\widetilde{X}$ where $\widetilde{X}$ is given by
      \begin{equation*}
      \widetilde{X}=\left((H_0M_0)I_{\llbracket{0}\rrbracket}
      +\sum\limits_{n=1}^{{+\infty}}(\widetilde{H}^{(n)}.\widetilde{M}^{(n)})
      I_{\rrbracket{\widetilde{T}_{n-1},\widetilde{T}_n}\rrbracket}\right)\mathfrak{I}_\mathbb{B},\quad \widetilde{T}_0=0,
      \end{equation*}
and $\widetilde{T}_n=S_n\wedge \widetilde{S}_n,\;n\in \mathbf{N}^+$. In this case, we say that the expression of \eqref{HM-expression-2} is independent of the choice of the FCS $(T_n, H^{(n)})$ for $H\in\mathcal{P}^\mathbb{B}$ and the inner FCS $(T_n,M^{(n)})$ for $M\in(\mathcal{M}_{\mathrm{loc}})^{i,\mathbb{B}}$.
\end{theorem}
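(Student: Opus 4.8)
The plan is to construct the candidate process directly from the given sequences, prove it is a $\mathbb{B}$-inner local martingale, and then identify it with $H_{\bullet}M$ through the defining relation \eqref{deHM}; independence of the FCS will follow from the uniqueness recorded after Definition \ref{HM}.

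First I would verify that $(T_n,H^{(n)}.M^{(n)})$ meets the consistency hypothesis of Theorem \ref{restriction}(4), i.e. $(H^{(k)}.M^{(k)})I_{\mathbb{B}\llbracket{0,T_k}\rrbracket}=(H^{(l)}.M^{(l)})I_{\mathbb{B}\llbracket{0,T_k}\rrbracket}$ for $k\le l$. The geometric fact underpinning this is that, since $T_k\le T\le T_{F^c}$, one has $\mathbb{B}\llbracket{0,T_k}\rrbracket=\llbracket{0,T_k}\rrbracket\cap\llbracket{0,T_F}\llbracket$, so the relevant restriction is exactly the part of $\llbracket{0,T_k}\rrbracket$ lying strictly before $T_F$. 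On this set \eqref{xkl} gives $H^{(k)}=H^{(l)}$ and $M^{(k)}=M^{(l)}$, so the pre-$T_F$, stopped-at-$T_k$ versions agree. Here the inner FCS condition \eqref{inner-eq} is indispensable: it guarantees that $(M^{(k)})^{T_k\wedge(T_F-)}$ and $(M^{(l)})^{T_k\wedge(T_F-)}$ are genuine local martingales, whence the classical localization of stochastic integrals (integration commutes with stopping) yields $(H^{(k)}.M^{(k)})^{T_k\wedge(T_F-)}=(H^{(l)}.M^{(l)})^{T_k\wedge(T_F-)}$; restricting to $\llbracket{0,T_k}\rrbracket\cap\llbracket{0,T_F}\llbracket=\mathbb{B}\llbracket{0,T_k}\rrbracket$ gives the consistency. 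Combined with $H^{(n)}.M^{(n)}\in\mathcal{M}_{\mathrm{loc}}$ (classical, from $H^{(n)}\in\mathcal{L}_m(M^{(n)})$) and the same pre-$T_F$ stopping showing $(H^{(n)}.M^{(n)})^{T_n\wedge(T_F-)}=H^{(n)}.\bigl((M^{(n)})^{T_n\wedge(T_F-)}\bigr)\in\mathcal{M}_{\mathrm{loc}}$, Theorem \ref{restriction}(4) produces $L\in(\mathcal{M}_{\mathrm{loc}})^\mathbb{B}$ with FCS $(T_n,H^{(n)}.M^{(n)})$ obeying \eqref{inner-eq}; hence $L\in(\mathcal{M}_{\mathrm{loc}})^{i,\mathbb{B}}$, and $L$ admits the expression \eqref{HM-expression-2} by Theorem \ref{process}(5).

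Next I would show $L=H_{\bullet}M$ by checking $[L,N]=H_{\bullet}[M,N]$ for every $N\in(\mathcal{M}_{\mathrm{loc}})^\mathbb{B}$. Fixing such an $N$ with FCS $(T_n',N^{(n)})$ and passing to common localizing times $\tau_n=T_n\wedge T_n'$ via Theorem \ref{process}(3), the quadruple $(\tau_n,H^{(n)})$, $(\tau_n,M^{(n)})$, $(\tau_n,H^{(n)}.M^{(n)})$, $(\tau_n,N^{(n)})$ remains FCSs for $H$, $M$, $L$, $N$. By Theorem \ref{[M]-property}(1), $[L,N]$ has FCS $(\tau_n,[H^{(n)}.M^{(n)},N^{(n)}])$, which the classical identity $[H^{(n)}.M^{(n)},N^{(n)}]=H^{(n)}.[M^{(n)},N^{(n)}]$ rewrites. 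On the other side, Theorem \ref{[M]-property}(1) gives $[M,N]$ the FCS $(\tau_n,[M^{(n)},N^{(n)}])$, and since $H^{(n)}\in\mathcal{L}_m(M^{(n)})$ classically implies $H^{(n)}\in\mathcal{L}_s([M^{(n)},N^{(n)}])$, Theorem \ref{HA-FCS} shows $H_{\bullet}[M,N]$ has FCS $(\tau_n,H^{(n)}.[M^{(n)},N^{(n)}])$. The two $\mathbb{B}$-processes share an FCS, hence coincide, so $[L,N]=H_{\bullet}[M,N]$; as $L\in(\mathcal{M}_{\mathrm{loc}})^{i,\mathbb{B}}$, the uniqueness noted after Definition \ref{HM} forces $L=H_{\bullet}M$, establishing \eqref{HM-expression-2}.

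Finally, for independence I would apply the first two steps to the second data: replacing $(S_n,\widetilde{H}^{(n)})$ and $(\widetilde{S}_n,\widetilde{M}^{(n)})$ by the common times $\widetilde{T}_n=S_n\wedge\widetilde{S}_n$ (Theorem \ref{process}(3), which preserves both the FCS property and the integrability $\widetilde{H}^{(n)}\in\mathcal{L}_m(\widetilde{M}^{(n)})$), the construction yields an inner FCS $(\widetilde{T}_n,\widetilde{H}^{(n)}.\widetilde{M}^{(n)})$ whose associated process is again $H_{\bullet}M$ by the identification above, so $\widetilde{X}=H_{\bullet}M$. I expect the main obstacle to be the consistency step of the first paragraph: stochastic integrals are not pathwise, so agreement of integrands and integrators on $\mathbb{B}\llbracket{0,T_k}\rrbracket$ does not transfer to the integrals automatically, and it is precisely the inner FCS condition — making the pre-$T_F$ stopped martingales honest local martingales — that legitimizes the localization. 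Extra care is needed in handling the stopping ``just before'' $T_F$ at the boundary graph $\llbracket{T_F}\rrbracket$, which I would control by the identity $\mathbb{B}\llbracket{0,T_k}\rrbracket=\llbracket{0,T_k}\rrbracket\cap\llbracket{0,T_F}\llbracket$ isolated above.
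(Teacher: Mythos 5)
Your proposal is correct and follows essentially the same route as the paper's proof: construct $L$ from $(T_n,H^{(n)}.M^{(n)})$ via the consistency check and Theorem \ref{restriction}(4), verify the inner FCS property through $(H^{(n)}.M^{(n)})^{T_n\wedge(T_F-)}=H^{(n)}.\bigl((M^{(n)})^{T_n\wedge(T_F-)}\bigr)\in\mathcal{M}_{\mathrm{loc}}$, establish $[L,N]=H_{\bullet}[M,N]$ by comparing FCSs (Theorems \ref{[M]-property}(1) and \ref{HA-FCS}), conclude $L=H_{\bullet}M$ by the uniqueness noted after Definition \ref{HM}, and derive independence from Theorem \ref{process}(3) and (5). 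The only difference is in the consistency step, where your appeal to ``the integral only sees the pre-$T_F$ region'' is exactly what the paper makes rigorous: it shows the difference $(H^{(l)}-H^{(k)}).(M^{(k)})^{T_k\wedge(T_F-)}$ is a local martingale with null initial value whose quadratic variation $(H^{(l)}-H^{(k)})^2.[(M^{(k)})^{T_k\wedge(T_F-)}]$ vanishes because the bracket measure is carried by $\mathbb{B}\llbracket{0,T_k}\rrbracket$ where the integrands agree, hence the difference is zero --- this is precisely the ``extra care at the boundary $\llbracket{T_F}\rrbracket$'' you flagged.
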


\begin{remark}\normalfont
Within the framework of expression \eqref{HM-expression-2}, it is feasible to substitute the FCS $(T_n, H^{(n)})$ for $H\in\mathcal{P}^\mathbb{B}$ with a coupled predictable process $\widetilde{H}$ for $H\in\mathcal{P}^\mathbb{B}$ (given that $(T_n,\widetilde{H}$) also forms an FCS for $H\in\mathcal{P}^\mathbb{B}$). Specifically, if $\widetilde{H}$ is a coupled predictable process for $H\in\mathcal{P}^\mathbb{B}$ and $(T_n,M^{(n)})$ is an inner FCS for $M\in(\mathcal{M}_{\mathrm{loc}})^{i,\mathbb{B}}$ such that $\widetilde{H}\in\mathcal{L}_m(M^{(n)})$ for each $n\in \mathbf{N}^+$, then $(T_n,\widetilde{H}.M^{(n)})$ is an inner FCS for $H_{\bullet}M\in(\mathcal{M}_{\mathrm{loc}})^{i,\mathbb{B}}$, and $H_{\bullet}M$ can be formulated as follows:
  \begin{equation}\label{HM-expression-1}
      H_{\bullet}M=\left((H_0M_0)I_{\llbracket{0}\rrbracket}+\sum\limits_{n=1}^{{+\infty}}
      (\widetilde{H}.M^{(n)})I_{\rrbracket{T_{n-1},T_n}
      \rrbracket}\right)\mathfrak{I}_\mathbb{B},\quad T_0=0.
      \end{equation}
\end{remark}

\begin{corollary}\label{bound-HM}
Assume that $H$ is a $\mathbb{B}$-locally bounded predictable process. Then
$H\in \mathcal{L}_m^\mathbb{B}(M)$. Furthermore, given that $(T_n,M^{(n)})$ is an inner FCS for $M\in(\mathcal{M}_{\mathrm{loc}})^{i,\mathbb{B}}$, if $\widetilde{H}$ is a coupled locally bounded predictable process for $H$, and if $(T_n,H^{(n)})$ is an FCS for $H$ (a $\mathbb{B}$-locally bounded predictable process), then both $(T_n,\widetilde{H}.M^{(n)})$ and $(T_n,H^{(n)}.M^{(n)})$ are inner FCSs for $H_{\bullet}M\in(\mathcal{M}_{\mathrm{loc}})^{i,\mathbb{B}}$.
\end{corollary}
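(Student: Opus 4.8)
The plan is to reduce the entire statement to the single classical fact that a locally bounded predictable process is integrable with respect to any local martingale, and then to feed this into the characterizations already established in Theorem \ref{HM=} and Theorem \ref{eq-HM}. The order of operations matters here: since Theorem \ref{eq-HM} takes $H\in\mathcal{L}_m^\mathbb{B}(M)$ as a standing hypothesis, I would first prove membership $H\in\mathcal{L}_m^\mathbb{B}(M)$ and only afterwards deduce the two inner-FCS assertions.

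First I would establish $H\in\mathcal{L}_m^\mathbb{B}(M)$ by verifying condition (M2) of Theorem \ref{HM=}. By Theorem \ref{restriction}(1) the hypothesis that $H$ is $\mathbb{B}$-locally bounded predictable supplies a coupled locally bounded predictable process $\widetilde{H}$ with $H=\widetilde{H}\mathfrak{I}_\mathbb{B}$; this $\widetilde{H}$ is in particular a coupled predictable process for $H\in\mathcal{P}^\mathbb{B}$. Fix any inner FCS $(T_n,M^{(n)})$ for $M\in(\mathcal{M}_{\mathrm{loc}})^{i,\mathbb{B}}$; each $M^{(n)}$ is an ordinary local martingale. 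Since $\widetilde{H}$ is locally bounded predictable, the classical integrability of locally bounded predictable integrands against local martingales yields $\widetilde{H}\in\mathcal{L}_m(M^{(n)})$ for every $n\in\mathbf{N}^+$. This is exactly condition (M2), so the equivalence (M2)$\Leftrightarrow$(M1) gives $H\in\mathcal{L}_m^\mathbb{B}(M)$, proving the first assertion.

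With $H\in\mathcal{L}_m^\mathbb{B}(M)$ in hand I would then derive the two inner-FCS claims. For the claim about $(T_n,\widetilde{H}.M^{(n)})$ I would invoke the Remark following Theorem \ref{eq-HM}: there it is shown that whenever $\widetilde{H}$ is a coupled predictable process for $H\in\mathcal{P}^\mathbb{B}$ and $(T_n,M^{(n)})$ is an inner FCS with $\widetilde{H}\in\mathcal{L}_m(M^{(n)})$ for each $n$, the pairs $(T_n,\widetilde{H}.M^{(n)})$ form an inner FCS for $H_{\bullet}M\in(\mathcal{M}_{\mathrm{loc}})^{i,\mathbb{B}}$; the integrability $\widetilde{H}\in\mathcal{L}_m(M^{(n)})$ was just verified. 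For the claim about $(T_n,H^{(n)}.M^{(n)})$ I would apply Theorem \ref{eq-HM} directly. Here $(T_n,H^{(n)})$ is an FCS for $H$ as a $\mathbb{B}$-locally bounded predictable process, so each $H^{(n)}$ is itself locally bounded predictable; hence $H^{(n)}\in\mathcal{L}_m(M^{(n)})$ for every $n$ by the same classical fact. Noting that such a sequence is a fortiori an FCS for $H\in\mathcal{P}^\mathbb{B}$, Theorem \ref{eq-HM} concludes that $(T_n,H^{(n)}.M^{(n)})$ is an inner FCS for $H_{\bullet}M\in(\mathcal{M}_{\mathrm{loc}})^{i,\mathbb{B}}$.

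The argument is essentially a bookkeeping composition of already-proved results, so I do not anticipate a serious analytic obstacle; the only genuine external input is the classical integrability of locally bounded predictable processes against local martingales. The one point demanding care is the bookkeeping of classes: I must make explicit that a \emph{coupled locally bounded predictable} process is in particular a coupled predictable process, and that an FCS whose entries are locally bounded predictable is in particular an FCS for $H$ viewed inside $\mathcal{P}^\mathbb{B}$, so that the hypotheses of Theorem \ref{HM=} and Theorem \ref{eq-HM} (which are phrased for $\mathcal{P}^\mathbb{B}$) are legitimately met.
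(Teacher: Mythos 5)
Your proposal is correct and follows essentially the same route as the paper's proof: both rest on the single classical fact (Theorem 9.2 in \cite{He}) that locally bounded predictable processes are integrable with respect to any local martingale, giving $\widetilde{H}\in\mathcal{L}_m(M^{(n)})$ and $H^{(n)}\in\mathcal{L}_m(M^{(n)})$ for each $n$, after which membership and the two inner-FCS claims are read off from Theorems \ref{HM=} and \ref{eq-HM} (your appeal to the Remark following Theorem \ref{eq-HM} is just the same application with $H^{(n)}=\widetilde{H}$). Your write-up merely makes explicit the bookkeeping that the paper's terse proof leaves implicit.
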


\begin{corollary}\label{HMc=}
Let $H\in \mathcal{P}^\mathbb{C}$ and $M\in (\mathcal{M}_{\mathrm{loc}})^\mathbb{C}$. Then the following statements are equivalent:
\begin{itemize}
  \item[$(i)$]$H\in\mathcal{L}_m^\mathbb{C}(M)$.
  \item[$(ii)$]There exist a coupled predictable process $\widetilde{H}$ for $H\in \mathcal{P}^\mathbb{C}$ and an FCS $(T_n,M^{(n)})$ for $M\in(\mathcal{M}_{\mathrm{loc}})^\mathbb{C}$ such that $\widetilde{H}\in\mathcal{L}_m(M^{(n)})$ for each $n\in \mathbf{N}^+$.
  \item[$(iii)$]There exist an FCS $(T_n,H^{(n)})$ for $H\in \mathcal{P}^\mathbb{C}$ and an FCS $(T_n,M^{(n)})$ for $M\in(\mathcal{M}_{\mathrm{loc}})^\mathbb{C}$ satisfying $H^{(n)}\in\mathcal{L}_m(M^{(n)})$ for each $n\in \mathbf{N}^+$.
  \item[$(iv)$] There exists an FS $(\tau_n)$ for $\mathbb{C}$ such that $H^{\tau_n}\in\mathcal{L}_m(M^{\tau_n})$ for each $n\in \mathbf{N}^+$.
\end{itemize}
\end{corollary}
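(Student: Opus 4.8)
The plan is to deduce all four equivalences from Theorem \ref{HM=} (the equivalence (M1)--(M4), specialised to $\mathbb{B}=\mathbb{C}$), exploiting the special structure of a predictable set of interval type. The crucial preliminary observation is that, because $\mathbb{C}$ is predictable, $(\mathcal{M}_{\mathrm{loc}})^\mathbb{C}=(\mathcal{M}_{\mathrm{loc}})^{i,\mathbb{C}}$, and moreover any FCS can be converted into an inner FCS at no cost. Fix an FS $(\tau_n)$ for $\mathbb{C}$, so that $\tau_n\uparrow T$ and $\bigcup_n\llbracket{0,\tau_n}\rrbracket\supseteq\mathbb{C}$. Given any FCS $(T_n,M^{(n)})$ for $M\in(\mathcal{M}_{\mathrm{loc}})^\mathbb{C}$, Theorem \ref{process}(3) shows that $(S_n,M^{(n)})$ with $S_n=T_n\wedge\tau_n$ is again an FCS; and since $\llbracket{0,S_n}\rrbracket\subseteq\llbracket{0,\tau_n}\rrbracket\subseteq\mathbb{C}\subseteq\llbracket{0,T_F}\llbracket$ forces $S_n<T_F$, we obtain $(M^{(n)})^{S_n\wedge(T_F-)}=(M^{(n)})^{S_n}\in\mathcal{M}_{\mathrm{loc}}$, i.e. $(S_n,M^{(n)})$ is an inner FCS. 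The same intersection converts any FCS for $H\in\mathcal{P}^\mathbb{C}$ into one sharing the stopping times $S_n$. It is precisely here that the predictability of $\mathbb{C}$ (forcing $S_n<T_F$) is used in an essential way.

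With this device I would establish $(i)\Leftrightarrow(ii)\Leftrightarrow(iii)$. For $(i)\Rightarrow(ii)$ and $(i)\Rightarrow(iii)$ I invoke Theorem \ref{HM=} directly: (M1) supplies the data of (M2) resp. (M3), and an inner FCS is in particular an FCS, which is all that (ii) resp. (iii) demands. For the converses, starting from the FCS for $M$ furnished by (ii) or (iii), I replace its stopping times by $S_n=T_n\wedge\tau_n$ as above to obtain an inner FCS, keeping the same $M^{(n)}$ and the same integrand processes $\widetilde{H}$ resp. $H^{(n)}$, so the integrability $\widetilde{H}\in\mathcal{L}_m(M^{(n)})$ resp. $H^{(n)}\in\mathcal{L}_m(M^{(n)})$ is untouched. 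This yields exactly the hypotheses (M2) resp. (M3), whence (M1), i.e. (i).

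For statement (iv) one direction is immediate: if $(\tau_n)$ is an FS with $H^{\tau_n}\in\mathcal{L}_m(M^{\tau_n})$, then by Corollary \ref{fcs-p}(2) (applied to $\mathcal{P}$ and $\mathcal{M}_{\mathrm{loc}}$, both stable under stopping and localization) $(\tau_n,H^{\tau_n})$ and $(\tau_n,M^{\tau_n})$ are FCSs for $H$ and $M$ sharing the stopping times $\tau_n$, so (iv) is a special case of (iii) and hence gives (i). For $(i)\Rightarrow(iv)$ I would use the criterion (M4): (i) is equivalent to $\sqrt{{H^2}_{\bullet}[M]}\in(\mathcal{A}^+_{\mathrm{loc}})^\mathbb{C}$, and since $\mathcal{A}^+_{\mathrm{loc}}$ is stable under stopping and localization, Corollary \ref{fcs-p}(2) yields $\big(\sqrt{{H^2}_{\bullet}[M]}\,\big)^{\tau_n}\in\mathcal{A}^+_{\mathrm{loc}}$ for every $n$. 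Using that stopping commutes with the pathwise square root, together with Corollary \ref{HAp-equivalent}(2) and Theorem \ref{[M]-property}(3) to identify $({H^2}_{\bullet}[M])^{\tau_n}$ with the classical integral $(H^{\tau_n})^2.[M^{\tau_n}]$, this reads $\sqrt{(H^{\tau_n})^2.[M^{\tau_n}]}\in\mathcal{A}^+_{\mathrm{loc}}$. Finally, the classical characterization of integrability with respect to a local martingale --- which is itself the case $\mathbb{C}=\llbracket{0,+\infty}\llbracket$ of the equivalence (M1)$\Leftrightarrow$(M4), via Corollary \ref{cD=DB} --- turns this into $H^{\tau_n}\in\mathcal{L}_m(M^{\tau_n})$, establishing (iv).

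I expect the main obstacle to be the direction $(i)\Rightarrow(iv)$, and within it the bookkeeping that identifies the restriction $({H^2}_{\bullet}[M])^{\tau_n}$ of the $\mathbb{C}$-Stieltjes integral with the ordinary integral $(H^{\tau_n})^2.[M^{\tau_n}]$. One must verify that both processes are stopped at $\tau_n$ and agree on $\llbracket{0,\tau_n}\rrbracket$, so that the comparison is legitimate: the FCS $(\tau_n,(H^{\tau_n})^2.[M^{\tau_n}])$ from Corollary \ref{HAp-equivalent}(2) and the CS $(\tau_n,({H^2}_{\bullet}[M])^{\tau_n})$ from Corollary \ref{fcs-p}(1) must be reconciled through the uniqueness of coupled sequences in Corollary \ref{process-FS}(1). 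Once this identification is in place, the passage from the $\mathbb{C}$-level condition (M4) to the classical pathwise criterion is clean, and the remaining implications reduce to the ``intersect with an FS'' trick of the first paragraph, which are comparatively routine.
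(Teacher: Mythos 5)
Your proposal is correct, and its skeleton coincides with the paper's: both reduce everything to Theorem \ref{HM=}, using the predictability of $\mathbb{C}$ to ensure $(\mathcal{M}_{\mathrm{loc}})^\mathbb{C}=(\mathcal{M}_{\mathrm{loc}})^{i,\mathbb{C}}$ and to upgrade any FCS to an inner FCS by intersecting its stopping times with an FS. The one genuine divergence is how statement $(iv)$ is reached. The paper proves the cycle $(i)\Rightarrow(ii)\Rightarrow(iii)\Rightarrow(iv)\Rightarrow(i)$ and handles $(iii)\Rightarrow(iv)$ with an elementary transfer: setting $(\tau_n)=(S_n\wedge T_n)$, Corollary \ref{process-FS}(2) gives $H^{\tau_n}=(H^{(n)})^{\tau_n}$ and $M^{\tau_n}=(M^{(n)})^{\tau_n}$, and the classical identity $(H^{(n)}.M^{(n)})^{\tau_n}=(H^{(n)})^{\tau_n}.(M^{(n)})^{\tau_n}$ immediately yields $H^{\tau_n}\in\mathcal{L}_m(M^{\tau_n})$. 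You instead prove $(i)\Rightarrow(iv)$ via the criterion (M4): stop $\sqrt{{H^2}_{\bullet}[M]}$ at $\tau_n$, identify $({H^2}_{\bullet}[M])^{\tau_n}$ with $(H^{\tau_n})^2.[M^{\tau_n}]$ through \eqref{MN} and Corollary \ref{HAp-equivalent}(2), and invoke the classical characterization of $\mathcal{L}_m$ (Theorem 9.2 of \cite{He}). This is heavier machinery for that step, but it is sound --- indeed it is verbatim the argument the paper itself uses to prove Corollary \ref{eq-HMc}(1) --- and the ``bookkeeping'' you flag as the main obstacle is resolved exactly as you anticipate. What each route buys: the paper's direct $(iii)\Rightarrow(iv)$ is a one-line consequence of the stability of classical stochastic integrals under stopping; yours bypasses $(iii)$ and works from $(i)$ alone, at the cost of the square-root and Stieltjes-integral identifications. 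One small imprecision in your preliminary observation: the claim $S_n<T_F$ fails on paths where $S_n=T_F=+\infty$; however, on such paths $(M^{(n)})^{S_n\wedge(T_F-)}=(M^{(n)})^{S_n}$ holds trivially, so the conclusion that $(S_n,M^{(n)})$ is an inner FCS is unaffected.
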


\begin{corollary}\label{eq-HMc}
Let $M\in (\mathcal{M}_{\mathrm{loc}})^\mathbb{C}$ and $H\in \mathcal{L}_m^\mathbb{C}(M)$.
\begin{itemize}
  \item [$(1)$] If $(\tau_n)$ is an FS for $\mathbb{C}$, then $(\tau_n,H^{\tau_n}.M^{\tau_n})$ is an FCS for $H_{\bullet}M\in(\mathcal{M}_{\mathrm{loc}})^\mathbb{C}$, and $H_{\bullet}M$ can be expressed as
      \begin{equation}\label{HM-expression0}
      H_{\bullet}M=\left((H_0M_0)I_{\llbracket{0}\rrbracket}+\sum\limits_{n=1}^{{+\infty}}
      (H^{\tau_n}.M^{\tau_n})I_{\rrbracket{\tau_{n-1},\tau_n}
      \rrbracket}\right)\mathfrak{I}_\mathbb{C},\quad \tau_0=0,
      \end{equation}
      which is independent of the choice of the FS $(\tau_n)$ for $\mathbb{C}$.

  \item [$(2)$] If $(T_n,H^{(n)})$ is an FCS for $H\in\mathcal{P}^\mathbb{C}$ and $(T_n,M^{(n)})$ is an FCS for $M\in(\mathcal{M}_{\mathrm{loc}})^\mathbb{C}$ such that for each $n\in \mathbf{N}^+$, $H^{(n)}\in\mathcal{L}_m(M^{(n)})$, then $(T_n,H^{(n)}.M^{(n)})$ is an FCS for $H_{\bullet}M\in(\mathcal{M}_{\mathrm{loc}})^\mathbb{C}$, and $H_{\bullet}M$ can be expressed as
      \begin{equation}\label{HMc-expression-2}
      H_{\bullet}M=\left((H_0M_0)I_{\llbracket{0}\rrbracket}+\sum\limits_{n=1}^{{+\infty}}
      (H^{(n)}.M^{(n)})I_{\rrbracket{T_{n-1},T_n}
      \rrbracket}\right)\mathfrak{I}_\mathbb{C},\quad T_0=0.
      \end{equation}
      which is independent of the choice of FCSs $(T_n, H^{(n)})$ for $H\in\mathcal{P}^\mathbb{C}$ and $(T_n,M^{(n)})$ for $M\in(\mathcal{M}_{\mathrm{loc}})^\mathbb{C}$.
\end{itemize}
\end{corollary}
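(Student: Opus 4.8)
The plan is to reduce both parts to the inner-local-martingale theory of Theorem~\ref{eq-HM}, exploiting that for a predictable set of interval type one has $(\mathcal{M}_{\mathrm{loc}})^\mathbb{C}=(\mathcal{M}_{\mathrm{loc}})^{i,\mathbb{C}}$ (the first of the three classes of $\mathbb{B}$-inner local martingales noted after Definition~\ref{inner}). Consequently $M\in(\mathcal{M}_{\mathrm{loc}})^{i,\mathbb{C}}$, so $H_{\bullet}M$ is well defined by Definition~\ref{HM}, and for every FS $(\tau_n)$ the pair $(\tau_n,M^{\tau_n})$ is an inner FCS for $M$, while $(\tau_n,H^{\tau_n})$ is an FCS for $H\in\mathcal{P}^\mathbb{C}$ by Corollary~\ref{fcs-p}(2) (since $\mathcal{P}$ is stable under stopping and localization).

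For part (1) I first record the integrability fact $H^{\tau_n}\in\mathcal{L}_m(M^{\tau_n})$ for \emph{every} FS $(\tau_n)$: Corollary~\ref{HMc=} furnishes \emph{some} FS $(\sigma_n)$ with $H^{\sigma_n}\in\mathcal{L}_m(M^{\sigma_n})$, and then $H^{\tau_m\wedge\sigma_n}=(H^{\sigma_n})^{\tau_m}\in\mathcal{L}_m(M^{\tau_m\wedge\sigma_n})$ for $n\geq m$ by stability of $\mathcal{L}_m$ under stopping; letting $n\to\infty$ with $\tau_m\wedge\sigma_n\uparrow\tau_m$ and invoking localization stability of $\mathcal{L}_m$ upgrades this to $H^{\tau_m}\in\mathcal{L}_m(M^{\tau_m})$ (equivalently, one may restrict the criterion (M4), $\sqrt{H^2_{\bullet}[M]}\in(\mathcal{A}^+_{\mathrm{loc}})^\mathbb{C}$, to $\llbracket{0,\tau_n}\rrbracket$ via Theorems~\ref{fcs} and \ref{[M]-property}). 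Applying Theorem~\ref{eq-HM} to the FCS $(\tau_n,H^{\tau_n})$ and the inner FCS $(\tau_n,M^{\tau_n})$ then immediately yields that $(\tau_n,H^{\tau_n}.M^{\tau_n})$ is an inner FCS for $H_{\bullet}M$, the representation \eqref{HM-expression0}, and its independence of the FS (the latter being exactly the independence-of-FCS clause of Theorem~\ref{eq-HM} read for two FSs $(\tau_n)$ and $(\widetilde{\tau}_n)$).

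For part (2) I would choose an auxiliary FS $(\tau_n)$ and set $S_n=T_n\wedge\tau_n$. Then $(S_n,M^{(n)})$ is an inner FCS for $M$ by Theorem~\ref{MC}(1), $(S_n,H^{(n)})$ is an FCS for $H$ by Theorem~\ref{process}(3), and $H^{(n)}\in\mathcal{L}_m(M^{(n)})$ holds by hypothesis, so Theorem~\ref{eq-HM} makes $(S_n,H^{(n)}.M^{(n)})$ an inner FCS for $H_{\bullet}M$. The main obstacle is to upgrade the index from $S_n$ to $T_n$, i.e. to show $H_{\bullet}M=H^{(n)}.M^{(n)}$ on all of $\mathbb{C}\llbracket{0,T_n}\rrbracket$ and not merely on $\mathbb{C}\llbracket{0,S_n}\rrbracket$. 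Once this holds, $(T_n,H^{(n)}.M^{(n)})$ is an FCS for $H_{\bullet}M\in(\mathcal{M}_{\mathrm{loc}})^\mathbb{C}$, and both \eqref{HMc-expression-2} and its independence follow directly from the general representation \eqref{x-expression} of Theorem~\ref{process}(5), with $X_0=H_0M_0$.

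To carry out the upgrade I would argue pointwise: fix $(\omega,t)\in\mathbb{C}\llbracket{0,T_n}\rrbracket$ and choose $k\geq n$ with $(\omega,t)\in\llbracket{0,\tau_k}\rrbracket$ (possible since $\mathbb{C}=\bigcup_k\llbracket{0,\tau_k}\rrbracket$), so that $t\leq T_n\wedge\tau_k=:\rho\leq S_k$. On $\llbracket{0,\rho}\rrbracket\subseteq\mathbb{C}\llbracket{0,T_n}\rrbracket$ the consistency relation \eqref{xkl} gives $H^{(n)}=H^{(k)}$ and $M^{(n)}=M^{(k)}$, so the locality of ordinary stochastic integration (commutation with stopping) yields $(H^{(n)}.M^{(n)})^\rho=(H^{(k)}.M^{(k)})^\rho$; combined with $H_{\bullet}M=H^{(k)}.M^{(k)}$ on $\llbracket{0,S_k}\rrbracket\supseteq\llbracket{0,\rho}\rrbracket$ and evaluated at $(\omega,t)$, this gives $H_{\bullet}M=H^{(n)}.M^{(n)}$ there. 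The delicate point is that $M^{(n)}$ and $M^{(k)}$ agree only on $\mathbb{C}\llbracket{0,T_n}\rrbracket$, not on a full stochastic interval, which is precisely why one must use $\rho=T_n\wedge\tau_k$ rather than $S_k$, ensuring that the locality argument runs on a genuine interval $\llbracket{0,\rho}\rrbracket$ lying inside $\mathbb{C}\llbracket{0,T_n}\rrbracket$.
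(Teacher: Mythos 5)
Your overall architecture and your part (2) follow the paper's own proof. The paper likewise proves part (1) first and then handles part (2) by passing to $S_n=T_n\wedge\tau_n$ for an auxiliary FS $(\tau_n)$, noting $H^{S_n}=(H^{(n)})^{S_n}$ and $M^{S_n}=(M^{(n)})^{S_n}$, and upgrading the index from $S_n$ to $T_n$ by partitioning $\mathbb{C}\llbracket{0,T_n}\rrbracket$ along $\llbracket{0}\rrbracket\cup\bigcup_i\rrbracket{\tau_{i-1},\tau_i}\rrbracket$ and using that ordinary stochastic integration commutes with stopping at $T_n\wedge\tau_i$; your pointwise patching with $\rho=T_n\wedge\tau_k$ is the same argument in different bookkeeping (the paper compares against the expression \eqref{HM-expression0} from part (1), you compare against the inner FCS produced by Theorem \ref{eq-HM} along $(S_k)$).

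There is, however, a genuine gap in your \emph{primary} argument for the integrability fact $H^{\tau_m}\in\mathcal{L}_m(M^{\tau_m})$ in part (1). The step ``letting $n\to\infty$ with $\tau_m\wedge\sigma_n\uparrow\tau_m$ and invoking localization stability of $\mathcal{L}_m$'' is not a valid principle: $\mathcal{L}_m$-integrability is stable under localization along stopping times increasing to $+\infty$, whereas $\tau_m\wedge\sigma_n$ increases only to $\tau_m$ and may stay strictly below it, so nothing controls the behaviour of $(H^{\tau_m})^2.[M^{\tau_m}]$ as $t\uparrow\tau_m$. Concretely, take $N_t=W_{t\wedge 1}$ a stopped Brownian motion, $K_t=(1-t)^{-1}I_{[t<1]}$ and $\sigma_n=1-1/n$: then $K^{\sigma_n}\in\mathcal{L}_m(N^{\sigma_n})$ for every $n$, but
\begin{equation*}
(K^2.[N])_1=\int_0^1(1-s)^{-2}\,ds=+\infty,
\end{equation*}
so $K\notin\mathcal{L}_m(N)$; the naked localization claim is false (this blow-up at the left endpoint is exactly the phenomenon behind the paper's Section 1.1 example for Stieltjes integrals). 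What rules this out in the corollary's setting is not localization but the hypothesis $H\in\mathcal{L}_m^{\mathbb{C}}(M)$ itself, through criterion (M4): $\sqrt{{H^2}_{\bullet}[M]}$ is a genuine $\mathbb{C}$-process of $(\mathcal{A}^+_{\mathrm{loc}})^{\mathbb{C}}$, hence finite on all of $\llbracket{0,\tau_m}\rrbracket\subseteq\mathbb{C}$; stopping it at the inner stopping time $\tau_m$ (Theorem \ref{fcs}, via Corollary \ref{fcs-p}(2)) and using \eqref{MN} together with Corollary \ref{HAp-equivalent} gives $\sqrt{(H^{\tau_m})^2.[M^{\tau_m}]}=\bigl(\sqrt{{H^2}_{\bullet}[M]}\bigr)^{\tau_m}\in\mathcal{A}^+_{\mathrm{loc}}$, whence $H^{\tau_m}\in\mathcal{L}_m(M^{\tau_m})$ by Theorem 9.2 of \cite{He}. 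This is precisely your parenthetical ``equivalently'' remark, and it is exactly the paper's proof of part (1); promote that parenthesis to the main argument and delete the localization step, and your proof is correct.
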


\begin{remark}\normalfont
In the context of Corollary \ref{eq-HMc}(2), there is no need to enforce the requirement that  $(T_n,H^{(n)}.M^{(n)})$ must necessarily be an inner FCS for $H_{\bullet}M\in(\mathcal{M}_{\mathrm{loc}})^{i,\mathbb{C}}=(\mathcal{M}_{\mathrm{loc}})^{\mathbb{C}}$. This relaxation of the condition is justified by Theorem \ref{MC}(1), which asserts that $(T_n,H^{(n)}.M^{(n)})$ can always be converted into an inner FCS for $H_{\bullet}M\in(\mathcal{M}_{\mathrm{loc}})^{i,\mathbb{C}}$.
\end{remark}

Finally, the subsequent theorem elucidates the relationship between the $\mathbb{B}$-stochastic integral $H_{\bullet}M$, as defined in Definition \ref{HM}, and its continuous and purely discontinuous martingale parts.

\begin{theorem}\label{HM-property}
Let $H\in \mathcal{L}_m^\mathbb{B}(M)$, and assume that $M=M_0\mathfrak{I}_{\mathbb{B}}+M^c+M^d$ denotes the decomposition of $M$, where $M^c\in (\mathcal{M}^c_{\mathrm{loc},0})^\mathbb{B}$ and $M^d\in (\mathcal{M}^d_{\mathrm{loc}})^{i,\mathbb{B}}$.  Then:
\begin{itemize}
  \item [$(1)$]
  $H_{\bullet}M^c\in(\mathcal{M}^c_{\mathrm{loc},0})^\mathbb{B}$, and $(H_{\bullet}M)^c=H_{\bullet}M^c$,
  \item [$(2)$]
  $H_{\bullet}M^d\in(\mathcal{M}^d_{\mathrm{loc}})^{i,\mathbb{B}}$, and $(H_{\bullet}M)^d=H_{\bullet}M^d$.
  \item [$(3)$] $\mathcal{L}^\mathbb{B}_m(M)=\mathcal{L}_m^\mathbb{B}(M^c)\bigcap\mathcal{L}_m^\mathbb{B}(M^d)$.
\end{itemize}
\end{theorem}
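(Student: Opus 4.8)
The plan is to reduce every assertion to the corresponding classical fact about stochastic integration with respect to (purely dis)continuous local martingales, transported along a single, carefully chosen pair of coupled sequences. First I would fix, using the characterization (M3) of Theorem \ref{HM=}, an FCS $(T_n,H^{(n)})$ for $H\in\mathcal{P}^\mathbb{B}$ together with an inner FCS $(T_n,M^{(n)})$ for $M\in(\mathcal{M}_{\mathrm{loc}})^{i,\mathbb{B}}$ such that $H^{(n)}\in\mathcal{L}_m(M^{(n)})$ for every $n$. Decomposing each $M^{(n)}=M_0^{(n)}+(M^{(n)})^c+(M^{(n)})^d$ classically, Theorem \ref{Lem-M}(2) shows that $(T_n,(M^{(n)})^c)$ and $(T_n,(M^{(n)})^d)$ are FCSs for $M^c$ and $M^d$; moreover these are \emph{inner} FCSs, the first because $(\mathcal{M}^c_{\mathrm{loc}})^\mathbb{B}\subseteq(\mathcal{M}_{\mathrm{loc}})^{i,\mathbb{B}}$ and the second by Theorem \ref{MC}(4). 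The classical theory (see, e.g., \cite{He}) gives $H^{(n)}\in\mathcal{L}_m((M^{(n)})^c)\cap\mathcal{L}_m((M^{(n)})^d)$, so reading the equivalence (M1)$\Leftrightarrow$(M3) of Theorem \ref{HM=} in the reverse direction yields $H\in\mathcal{L}_m^\mathbb{B}(M^c)$ and $H\in\mathcal{L}_m^\mathbb{B}(M^d)$. This is precisely the inclusion $\mathcal{L}_m^\mathbb{B}(M)\subseteq\mathcal{L}_m^\mathbb{B}(M^c)\cap\mathcal{L}_m^\mathbb{B}(M^d)$, and it also guarantees that the integrals appearing in (1) and (2) are well defined.

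For (1) and (2) I would match coupled sequences. By Theorem \ref{eq-HM}, $(T_n,H^{(n)}.M^{(n)})$ is an inner FCS for $H_{\bullet}M$, so by Theorem \ref{Lem-M}(2) the pair $(T_n,(H^{(n)}.M^{(n)})^c)$ is an FCS for $(H_{\bullet}M)^c$ and $(T_n,(H^{(n)}.M^{(n)})^d)$ is one for $(H_{\bullet}M)^d$. The classical identities $(H^{(n)}.M^{(n)})^c=H^{(n)}.(M^{(n)})^c$ and $(H^{(n)}.M^{(n)})^d=H^{(n)}.(M^{(n)})^d$ rewrite these as $(T_n,H^{(n)}.(M^{(n)})^c)$ and $(T_n,H^{(n)}.(M^{(n)})^d)$. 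On the other hand, Theorem \ref{eq-HM} applied to $H_{\bullet}M^c$ and to $H_{\bullet}M^d$, using the inner FCSs of $M^c$ and $M^d$ found above, produces these very same two sequences. Since a $\mathbb{B}$-process is determined by any one of its FCSs through the expression \eqref{x-expression} of Theorem \ref{process}(5), I conclude $(H_{\bullet}M)^c=H_{\bullet}M^c$ and $(H_{\bullet}M)^d=H_{\bullet}M^d$. The membership claims then follow: $H_{\bullet}M^c=(H_{\bullet}M)^c\in(\mathcal{M}^c_{\mathrm{loc},0})^\mathbb{B}$ by Theorem \ref{Lem-M}(1), while $H_{\bullet}M^d=(H_{\bullet}M)^d\in(\mathcal{M}^d_{\mathrm{loc}})^\mathbb{B}$, together with the fact that $H_{\bullet}M^d$ carries an inner FCS (Theorem \ref{eq-HM}), places it in $(\mathcal{M}^d_{\mathrm{loc}})^{i,\mathbb{B}}$.

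It remains to prove the reverse inclusion in (3), for which I would use linearity in the integrator instead of coupled sequences. Given $H\in\mathcal{L}_m^\mathbb{B}(M^c)\cap\mathcal{L}_m^\mathbb{B}(M^d)$, Theorem \ref{HM-o-p}(2) yields $H\in\mathcal{L}_m^\mathbb{B}(M^c+M^d)$, where $M^c+M^d\in(\mathcal{M}_{\mathrm{loc}})^{i,\mathbb{B}}$ by Theorem \ref{MC}(3). Writing the decomposition $M=M_0\mathfrak{I}_\mathbb{B}+M^c+M^d$ of Theorem \ref{Lem-M}(1), it then suffices to check $H\in\mathcal{L}_m^\mathbb{B}(M_0\mathfrak{I}_\mathbb{B})$ and to invoke Theorem \ref{HM-o-p}(2) once more. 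The constant-in-time $\mathbb{B}$-inner local martingale $M_0\mathfrak{I}_\mathbb{B}$ satisfies $[M_0\mathfrak{I}_\mathbb{B}]=M_0^2\mathfrak{I}_\mathbb{B}$, hence $\sqrt{H^2_{\bullet}[M_0\mathfrak{I}_\mathbb{B}]}=|H_0M_0|\mathfrak{I}_\mathbb{B}\in(\mathcal{A}^+_{\mathrm{loc}})^\mathbb{B}$ is constant, and the criterion (M4) of Theorem \ref{HM=} gives $H\in\mathcal{L}_m^\mathbb{B}(M_0\mathfrak{I}_\mathbb{B})$ (with $H_{\bullet}(M_0\mathfrak{I}_\mathbb{B})=H_0M_0\mathfrak{I}_\mathbb{B}$). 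Combining the two applications of Theorem \ref{HM-o-p}(2) delivers $H\in\mathcal{L}_m^\mathbb{B}(M)$, finishing (3).

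The main obstacle I anticipate is bookkeeping rather than a single hard idea: one must arrange that the \emph{same} index sequence $(T_n)$ simultaneously serves as an FCS for $H$, as an inner FCS for $M$, $M^c$ and $M^d$, and as an inner FCS for all three integrals, so that the classical pathwise identities apply term by term and the resulting coupled sequences can be compared directly. The genuinely delicate points are (i) verifying that the classical martingale parts $(M^{(n)})^c$, $(M^{(n)})^d$ remain \emph{inner}, which is exactly where Theorem \ref{MC}(4) and the inclusion $(\mathcal{M}^c_{\mathrm{loc}})^\mathbb{B}\subseteq(\mathcal{M}_{\mathrm{loc}})^{i,\mathbb{B}}$ enter, and (ii) disposing of the initial term $M_0\mathfrak{I}_\mathbb{B}$ in the reverse inclusion of (3), since it is not accounted for by the continuous/purely-discontinuous split and must be treated separately via the constant-integrator computation above.
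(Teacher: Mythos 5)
Your proposal is correct and follows essentially the same route as the paper's own proof: fix a common pair consisting of an FCS for $H$ and an inner FCS for $M$ via (M3), transport the classical identities $(H^{(n)}.M^{(n)})^c=H^{(n)}.(M^{(n)})^c$ and $(H^{(n)}.M^{(n)})^d=H^{(n)}.(M^{(n)})^d$ through Theorems \ref{Lem-M}(2), \ref{MC}(4), \ref{HM=} and \ref{eq-HM} to get parts (1), (2) and the forward inclusion in (3), and obtain the reverse inclusion in (3) from the linearity \eqref{+HM}. The only divergence is that you explicitly dispose of the initial term $M_0\mathfrak{I}_\mathbb{B}$ via the criterion (M4) and the computation $[M_0\mathfrak{I}_\mathbb{B}]=M_0^2\mathfrak{I}_\mathbb{B}$, a detail the paper subsumes in the phrase ``straightforwardly inferred from \eqref{+HM}''; this is a welcome clarification of the same argument rather than a different one.
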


\section{Stochastic integrals of $\mathbb{B}$-predictable processes with respect to $\mathbb{B}$-inner semimartingales}\label{section5}\setcounter{equation}{0}
In this section, we provide a constructive solution to Problem (SI) by synthesizing the $\mathbb{B}$-stochastic integrals developed in Sections \ref{section3} and \ref{section4}.
First, we formally define $\mathbb{B}$-inner semimartingales through a decomposition of $\mathbb{B}$-semimartingales relative to $\mathbb{B}$-inner local martingales.  We also extend the concept $\mathbb{B}$-quadratic covariations to the broader class of $\mathbb{B}$-semimartingales.
Next, we construct stochastic integrals of $\mathbb{B}$-predictable processes with respect to $\mathbb{B}$-inner semimartingales.
Following the construction, we examine the fundamental properties of these $\mathbb{B}$-stochastic integrals. In particular, we clarify their connections to existing stochastic integrals of predictable processes with respect to semimartingales.
Finally, we derive the It\^{o}'s formula for $\mathbb{B}$-inner semimartingales. This formula serves as a key result, emphasizing the importance of $\mathbb{B}$-stochastic integrals established in this section.

\subsection{$\mathbb{B}$-inner semimartingales and $\mathbb{B}$-quadratic covariations of $\mathbb{B}$ semimartingales}
Analogously to a semimartingale, a $\mathbb{B}$-semimartingale can be equivalently defined as the sum of a $\mathbb{B}$-local martingale and a $\mathbb{B}$-adapted process with finite variation, as is formalized in the subsequent lemma.

\begin{lemma}\label{XMA}
$X\in \mathcal{S}^\mathbb{B}$ if and only if $X$ admits a decomposition
    \begin{equation}\label{eq-XMA}
      X=M+A, \quad M\in (\mathcal{M}_{\mathrm{loc}})^\mathbb{B},\quad A\in (\mathcal{V}_0)^\mathbb{B}.
    \end{equation}
\end{lemma}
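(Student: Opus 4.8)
The plan is to prove both implications through the fundamental–coupled–sequence (FCS) apparatus, isolating the only genuine difficulty in a telescoping construction carried out on a predictable set of interval type.

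The reverse implication is essentially formal. Suppose $X=M+A$ with $M\in(\mathcal{M}_{\mathrm{loc}})^\mathbb{B}$ and $A\in(\mathcal{V}_0)^\mathbb{B}$. Since $\mathcal{M}_{\mathrm{loc}}\subseteq\mathcal{S}$ and $\mathcal{V}_0\subseteq\mathcal{S}$, any FCS exhibiting $M$ (resp.\ $A$) as a $\mathbb{B}$-local martingale (resp.\ $\mathbb{B}$-finite variation process) is simultaneously an FCS showing $M\in\mathcal{S}^\mathbb{B}$ (resp.\ $A\in\mathcal{S}^\mathbb{B}$), because the defining relations are unchanged when $\mathcal{D}$ is enlarged to $\mathcal{S}$. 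As $\mathcal{S}$ is linear, Theorem \ref{process}(4) gives $X=M+A\in\mathcal{S}^\mathbb{B}$. The substantive direction is the forward one, and the obstacle there is that the classical decomposition of a semimartingale into a local martingale plus a finite–variation part is \emph{not} unique, so picking arbitrary decompositions of the members $X^{(n)}$ of an FCS for $X$ will in general fail to agree on the overlaps $\mathbb{B}\llbracket{0,T_k}\rrbracket$.

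To circumvent this, I would first pass to coherent global representatives. Given an FCS $(T_n,X^{(n)})$ for $X\in\mathcal{S}^\mathbb{B}$, set $\mathbb{C}=\bigcup_n\llbracket{0,T_n}\rrbracket$ and apply Theorem \ref{restriction}(3) (with $\mathcal{D}=\mathcal{S}$, stable under stopping) to obtain a continuation $\widetilde X\in\mathcal{S}^\mathbb{C}$ with $X=\widetilde X\mathfrak{I}_\mathbb{B}$; the sequence $(\tau_n):=(T_n)$ is an FS for $\mathbb{C}$. Writing $Y_n=\widetilde X^{\tau_n}$, Theorem \ref{fcs} (equivalently Corollary \ref{fcs-p}) shows each $Y_n\in\mathcal{S}$ is a genuine global semimartingale, and the relations $(X^T)^S=X^{T\wedge S}$ yield the crucial coherence $Y_n=Y_n^{\tau_n}$ and $Y_{n+1}^{\tau_n}=Y_n$. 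This coherence is exactly what the original FCS lacked and is what makes the decompositions gluable.

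The heart of the argument is then an inductive decomposition. I would decompose $Y_1=M_1+A_1$ classically, absorbing the initial value into the martingale part so that $M_1\in\mathcal{M}_{\mathrm{loc}}$, $A_1\in\mathcal{V}_0$, and, using $Y_1=Y_1^{\tau_1}$ together with stability under stopping, arrange $M_1=M_1^{\tau_1}$, $A_1=A_1^{\tau_1}$. Assuming $(M_n,A_n)$ stopped at $\tau_n$ with $Y_n=M_n+A_n$, I would decompose $Y_{n+1}=\widehat M+\widehat A$ (stopped at $\tau_{n+1}$) and set
\[
M_{n+1}=M_n+(\widehat M-\widehat M^{\tau_n}),\qquad A_{n+1}=A_n+(\widehat A-\widehat A^{\tau_n}).
\]
Using $Y_{n+1}^{\tau_n}=Y_n$ one checks $M_{n+1}+A_{n+1}=Y_{n+1}$, while $M_{n+1}^{\tau_n}=M_n^{\tau_n}=M_n$ and $A_{n+1}^{\tau_n}=A_n$ by construction, and $M_{n+1}^{\tau_{n+1}}=M_{n+1}$ keeps the inductive hypothesis alive. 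Iterating gives $M_l^{\tau_k}=M_k$ for $l\ge k$, which is precisely the coherence condition $M_kI_{\llbracket{0,\tau_k}\rrbracket}=M_lI_{\llbracket{0,\tau_k}\rrbracket}$ (and likewise for $A$). Theorem \ref{restriction}(4) then assembles $M\in(\mathcal{M}_{\mathrm{loc}})^\mathbb{C}$ and $A\in(\mathcal{V}_0)^\mathbb{C}$ with FCSs $(\tau_n,M_n)$, $(\tau_n,A_n)$; since $(\tau_n,M_n+A_n)=(\tau_n,Y_n)$ is also an FCS for $\widetilde X$, the uniqueness in Theorem \ref{process}(5) forces $\widetilde X=M+A$ on $\mathbb{C}$. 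Restricting to $\mathbb{B}$ (which preserves both classes, as $\mathbb{B}\subseteq\mathbb{C}$ share the debut $T$ of $\mathbb{B}^c$) yields $X=M\mathfrak{I}_\mathbb{B}+A\mathfrak{I}_\mathbb{B}$ with $M\mathfrak{I}_\mathbb{B}\in(\mathcal{M}_{\mathrm{loc}})^\mathbb{B}$ and $A\mathfrak{I}_\mathbb{B}\in(\mathcal{V}_0)^\mathbb{B}$. The step I expect to demand the most care is the inductive coherence, i.e.\ arranging the nonunique classical decompositions to be mutually compatible; the remaining steps are bookkeeping within the FCS/FS framework already established.
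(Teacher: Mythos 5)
Your proposal is correct and follows essentially the same route as the paper's own proof: reduction to a predictable set $\mathbb{C}$ via the continuation of Theorem \ref{restriction}(3), a telescoping recursion $M_{n+1}=M_n+\widehat M-\widehat M^{\tau_n}$ that is exactly the paper's construction \eqref{MMAA} making the non-unique classical decompositions of the stopped processes coherent, assembly by Theorem \ref{restriction}(4), identification with $\widetilde X$ by the uniqueness/indistinguishability results, and finally restriction $\mathfrak{I}_\mathbb{B}$ back to $\mathbb{B}$. The only cosmetic difference is that the paper states the predictable case as a separate first step and verifies the sufficiency by exhibiting the FCS $(T_n,M^{(n)}+A^{(n)})$ directly rather than citing the linearity in Theorem \ref{process}(4), but these are the same arguments.
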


In general, the decomposition \eqref{eq-XMA} of a $\mathbb{B}$-semimartingale is not unique. To illustrate this point, consider a $\mathbb{B}$-process $M\in (\mathcal{W}_{\mathrm{loc},0})^\mathbb{B}$ such that $M\neq 0\mathfrak{I}_\mathbb{B}$.
It is evident that $M\in \mathcal{S}^\mathbb{B}$ admits two different decompositions, both conforming to the form specified by \eqref{eq-XMA}. Specifically, $M$ can be decomposed as $M=M+0\mathfrak{I}_\mathbb{B}$, where $M\in (\mathcal{M}_{\mathrm{loc}})^\mathbb{B}$ and $0\mathfrak{I}_\mathbb{B}\in (\mathcal{V}_0)^\mathbb{B}$, and alternatively as $M=0\mathfrak{I}_\mathbb{B}+M$, where $0\mathfrak{I}_\mathbb{B}\in (\mathcal{M}_{\mathrm{loc}})^\mathbb{B}$ and $M\in (\mathcal{V}_0)^\mathbb{B}$.
Furthermore, the decomposition \eqref{eq-XMA} underscores the pivotal role of $\mathbb{B}$-local martingales in the study of $\mathbb{B}$-semimartingales. Consequently, motivated by \eqref{eq-XMA} and the importance of $\mathbb{B}$-inner local martingales, we proceed to define the following concept of $\mathbb{B}$-inner semimartingales.

\begin{definition}\normalfont\label{SiB}
\begin{itemize}
  \item [$(1)$] Let $X\in \mathcal{S}^\mathbb{B}$. $X$ is called an {\bf essentially inner semimartingale} on $\mathbb{B}$ (or simply, a $\mathbb{B}$-{\bf inner semimartingale}), if $X$ admits a decomposition
  \begin{equation*}
      X=M+A, \quad M\in (\mathcal{M}_{\mathrm{loc}})^{i,\mathbb{B}},\quad A\in (\mathcal{V}_0)^\mathbb{B}.
    \end{equation*}
Such a decomposition is called an {\bf inner decomposition} of $X$, and the collection of all $\mathbb{B}$-inner semimartingales is denoted by $\mathcal{S}^{i,\mathbb{B}}$.
  \item [$(2)$] Let $X\in\mathcal{S}^{i,\mathbb{B}}$. Suppose that $(T_n,X^{(n)})$ is an FCS for $X\in \mathcal{S}^\mathbb{B}$, and that for each $n\in\mathbf{N}^+$, $X^{(n)}\in \mathcal{S}$ admits an decomposition $X^{(n)}=M^{(n)}+A^{(n)}$, where $M^{(n)}\in\mathcal{M}_{\mathrm{loc}}$ and $A^{(n)}\in \mathcal{V}_0$.
      If $(T_n,M^{(n)})$ is an inner FCS for a $\mathbb{B}$-process $M\in (\mathcal{M}_{\mathrm{loc}})^{i,\mathbb{B}}$ and $(T_n,A^{(n)})$ is an FCS for a $\mathbb{B}$-process $A\in (\mathcal{V}_0)^\mathbb{B}$ satisfying $X=M+A$, then $(T_n,X^{(n)})$ is called an {\bf inner FCS} for $X\in\mathcal{S}^{i,\mathbb{B}}$, and $(T_n,M^{(n)},A^{(n)})$ is called a {\bf decomposed inner FCS} for $X\in\mathcal{S}^{i,\mathbb{B}}$.
\end{itemize}
\end{definition}

Based on Theorem \ref{MC}(3), it is established that $\mathcal{S}^{i,\mathbb{B}}$ exhibits linearity, satisfying the property that $aX+bY\in\mathcal{S}^{i,\mathbb{B}}$ for any $a,b\in \mathbf{R}$ and $X,Y\in\mathcal{S}^{i,\mathbb{B}}$. Additionally, there exist several remarkable classes of $\mathbb{B}$-inner semimartingales within this framework:
\begin{itemize}
  \item [$(1)$] $\mathcal{S}^{i,\mathbb{C}}=\mathcal{S}^\mathbb{C}$.  In this case,
      a decomposition of $X\in\mathcal{S}^\mathbb{C}$ expressed as $X=M+A$, where $M\in (\mathcal{M}_{\mathrm{loc}})^{\mathbb{C}}$ and $A\in (\mathcal{V}_0)^\mathbb{C}$, holds true if and only if it constitutes an inner decomposition of $X\in\mathcal{S}^{i,\mathbb{C}}$.

  \item [$(2)$] If $\mathcal{M}_{\mathrm{loc}}=\mathcal{M}^c_{\mathrm{loc}}$, then  $\mathcal{S}^{i,\mathbb{B}}=\mathcal{S}^\mathbb{B}$.  In this case, the representation $X=M+A$, where $M\in (\mathcal{M}_{\mathrm{loc}})^{\mathbb{B}}$ and $A\in (\mathcal{V}_0)^\mathbb{B}$, constitutes a decomposition of $X\in\mathcal{S}^\mathbb{B}$ if and only if it simultaneously serves as an inner decomposition of $X\in\mathcal{S}^{i,\mathbb{B}}$.

  \item [$(3)$] Let $\mathbb{B}$ be given by \eqref{B}, and $\widetilde{M}\in\mathcal{M}_{\mathrm{loc},0}$ satisfying the condition $\widetilde{M}^{T_F-}\in\mathcal{M}_{\mathrm{loc},0}$. If $\widetilde{M}$ has the strong property of predictable representation, then $\mathcal{S}^{i,\mathbb{B}}=\mathcal{S}^\mathbb{B}$.  In this case, a decomposition of $X\in\mathcal{S}^\mathbb{B}$, expressed in the form $X=M+A$ where $M\in (\mathcal{M}_{\mathrm{loc}})^{\mathbb{B}}$ and $A\in (\mathcal{V}_0)^\mathbb{B}$, is valid if and only if it also constitutes an inner decomposition of $X\in\mathcal{S}^{i,\mathbb{B}}$.

  \item [$(4)$] $(\mathcal{W}_{\mathrm{loc}})^{\mathbb{B}}\subseteq \mathcal{S}^{i,\mathbb{B}}$. If $M\in(\mathcal{W}_{\mathrm{loc}})^{\mathbb{B}}$, then $M=M_0\mathfrak{I}_\mathbb{B}+(M-M_0\mathfrak{I}_\mathbb{B})$ is an inner decomposition of $M\in\mathcal{S}^{i,\mathbb{B}}$, where $M_0\mathfrak{I}_\mathbb{B}\in (\mathcal{M}_{\mathrm{loc}})^{i,\mathbb{B}}$ and $M-M_0\mathfrak{I}_\mathbb{B}\in (\mathcal{V}_0)^\mathbb{B}$.
\end{itemize}

\begin{remark}\normalfont
The question of whether the relation $\mathcal{S}^{i,\mathbb{B}}=\mathcal{S}^{\mathbb{B}}$ necessarily holds for any given $\mathbb{B}$ remains unresolved. Nonetheless, it is worth noting that even in instances where the equality $\mathcal{S}^{i,\mathbb{B}}=\mathcal{S}^{\mathbb{B}}$ does hold, the structure of $\mathcal{S}^{i,\mathbb{B}}$ may exhibit substantial discrepancies when compared to that of $\mathcal{S}^{\mathbb{B}}$.
To illustrate this point, consider the setting provided in Section 1.1, where $\mathbb{B}^*$, $\mathbb{F}$, and $\widehat{M}\in (\mathcal{W}_{\mathrm{loc}})^{\mathbb{B}^*}$ are given.
Due to the fact that $A-A^p\in\mathcal{W}_{\mathrm{loc},0}$ possesses the strong property of predictable representation (see, e.g., Proposition 2.7 in \cite{Aksamit}), it follows that all $\mathbb{F}$-local martingales are also $\mathbb{F}$-locally integrable variation martingales. This implication leads to the equality $\mathcal{S}^{i,{\mathbb{B}^*}}=\mathcal{S}^{\mathbb{B}^*}$.
From the perspective of $\mathcal{S}^{\mathbb{B}^*}$, the process $\widehat{M}$ exhibits characteristics of both a $\mathbb{B}^*$-local martingale and a $\mathbb{B}^*$-adapted process with finite variation. This is evident from the fact that $M$ can be decomposed in two ways:
\[
\widehat{M}=\widehat{M}+0\mathfrak{I}_{\mathbb{B}^*} \;(\widehat{M}\in (\mathcal{M}_{\mathrm{loc}})^{\mathbb{B}^*}, 0\mathfrak{I}_{\mathbb{B}^*}\in (\mathcal{V}_0)^{\mathbb{B}^*})\quad\text{and}\quad \widehat{M}=0\mathfrak{I}_{\mathbb{B}^*}+M\; (0\mathfrak{I}_{\mathbb{B}^*}\in (\mathcal{M}_{\mathrm{loc}})^{\mathbb{B}^*}, \widehat{M}\in (\mathcal{V}_0)^{\mathbb{B}^*}).
\]
However, from the viewpoint of $\mathcal{S}^{i,{\mathbb{B}^*}}$, the process $\widehat{M}$ is more akin to a ${\mathbb{B}^*}$-adapted process with finite variation rather than a ${\mathbb{B}^*}$-local martingale. This is because $\widehat{M}\notin(\mathcal{M}_{\mathrm{loc}})^{i,{\mathbb{B}^*}}$ and the so-called inner decomposition $\widehat{M}=\widehat{M}+0\mathfrak{I}_{\mathbb{B}^*}$ $(\widehat{M}\in (\mathcal{M}_{\mathrm{loc}})^{i,{\mathbb{B}^*}}, 0\mathfrak{I}_{\mathbb{B}^*}\in (\mathcal{V}_0)^{\mathbb{B}^*})$ is not valid.
\end{remark}

Upon substituting (\ref{con-M}) into (\ref{eq-XMA}), we derive that $X\in \mathcal{S}^\mathbb{B}$
permits a further decomposition as follows:
\begin{equation}\label{decomX}
X=X_0\mathfrak{I}_\mathbb{B}+M^c+M^d+A,
\end{equation}
where $M\in (\mathcal{M}_{\mathrm{loc}})^\mathbb{B}$ and $A\in (\mathcal{V}_0)^\mathbb{B}$.
The forthcoming lemma will elucidate that $M^c$ is uniquely determined by $X$.

\begin{lemma}\label{uXc}
Let $X\in \mathcal{S}^\mathbb{B}$, and let $(\ref{decomX})$ be a decomposition of $X$. Suppose that $X$ admits another decomposition
\begin{equation*}
X=X_0\mathfrak{I}_\mathbb{B}+N^c+N^d+V,
\end{equation*}
where $N\in (\mathcal{M}_{\mathrm{loc}})^\mathbb{B}$ and $V\in (\mathcal{V}_0)^\mathbb{B}$. It then necessarily follows that $M^c=N^c$.
\end{lemma}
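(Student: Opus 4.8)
The plan is to reduce the statement to the classical fact that a local martingale of finite variation has a null continuous martingale part, and then transfer that fact to the $\mathbb{B}$-setting through a single fundamental coupled sequence. Writing the two decompositions of $X$ as $X=M+A=N+V$ with $M,N\in(\mathcal{M}_{\mathrm{loc}})^\mathbb{B}$ and $A,V\in(\mathcal{V}_0)^\mathbb{B}$, one gets immediately $W:=M-N=V-A$, so that the single $\mathbb{B}$-process $W$ lies simultaneously in $(\mathcal{M}_{\mathrm{loc}})^\mathbb{B}$ and in $(\mathcal{V}_0)^\mathbb{B}$. Since passage to the continuous martingale part is additive along a common FCS (Theorem \ref{Lem-M}(2) together with the classical linearity $(P-Q)^c=P^c-Q^c$), we have $W^c=M^c-N^c$; hence it suffices to prove $W^c=0\mathfrak{I}_\mathbb{B}$.

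First I would align the data: by Theorem \ref{process}(3) choose one increasing sequence $(T_n)$ with $T_n\uparrow T$ and $\bigcup_n\llbracket{0,T_n}\rrbracket\supseteq\mathbb{B}$, together with representatives $W^{(n)}:=M^{(n)}-N^{(n)}\in\mathcal{M}_{\mathrm{loc}}$ and $\widehat{W}^{(n)}:=V^{(n)}-A^{(n)}\in\mathcal{V}_0$, so that $(T_n,W^{(n)})$ is an FCS for $W\in(\mathcal{M}_{\mathrm{loc}})^\mathbb{B}$ and $(T_n,\widehat{W}^{(n)})$ is an FCS for $W\in(\mathcal{V}_0)^\mathbb{B}$. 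By Theorem \ref{process}(2), $W^{(n)}=\widehat{W}^{(n)}$ on $\mathbb{B}\llbracket{0,T_n}\rrbracket$. The essential geometric observation is that $T_n\le T$ forces $\llbracket{0,T_n}\llbracket\subseteq\mathbb{B}\llbracket{0,T_n}\rrbracket$ (regardless of whether a path of $\mathbb{B}$ is closed or half-open at its right endpoint $T$), so the paths of $W^{(n)}$ coincide with those of the finite-variation process $\widehat{W}^{(n)}$ on the open interval $[0,T_n)$.

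The key step is then purely classical: the stopped process $(W^{(n)})^{T_n}$ is a local martingale (optional stopping) which is moreover of finite variation, since on $[0,T_n)$ its paths agree with $\widehat{W}^{(n)}$ and at $T_n$ it acquires only a single finite jump before becoming constant. By the classical fact that a finite-variation local martingale is purely discontinuous, its continuous martingale part vanishes, i.e. $\bigl((W^{(n)})^c\bigr)^{T_n}=\bigl((W^{(n)})^{T_n}\bigr)^c=0$; consequently $(W^{(n)})^c=0$ on $\llbracket{0,T_n}\rrbracket\supseteq\mathbb{B}\llbracket{0,T_n}\rrbracket$. Finally, since $(T_n,(W^{(n)})^c)$ is an FCS for $W^c$ by Theorem \ref{Lem-M}(2) and $(W^{(n)})^c$ vanishes on $\mathbb{B}\llbracket{0,T_n}\rrbracket$ for every $n$, Theorem \ref{process}(1) yields $W^c=0\mathfrak{I}_\mathbb{B}$, that is $M^c=N^c$.

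I expect the main obstacle to be precisely that the representatives $W^{(n)}$ of the $\mathbb{B}$-local martingale are genuine local martingales that agree with the finite-variation representatives $\widehat{W}^{(n)}$ only on $\mathbb{B}\llbracket{0,T_n}\rrbracket$ and are otherwise arbitrary outside $\mathbb{B}$; in particular $W^{(n)}$ itself need not be of finite variation, so the classical lemma cannot be applied to it directly. The resolution is the stopping at $T_n$: because $T_n\le T$, the open interval $\llbracket{0,T_n}\llbracket$ sits inside $\mathbb{B}$, the junk outside $\mathbb{B}$ is cut off, and the only new feature, a lone jump at $T_n$, is harmless for finite variation. This is what makes $(W^{(n)})^{T_n}$ a finite-variation local martingale and lets the classical result close the argument. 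Note that no inner-martingale hypothesis is needed, consistent with the lemma asserting uniqueness of $M^c$ for arbitrary, not necessarily inner, decompositions.
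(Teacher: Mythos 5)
Your proof is correct, and its technical engine coincides with the paper's: both arguments reduce to the classical fact that a finite-variation local martingale with null initial value is purely discontinuous, applied to local-martingale representatives that have been stopped so that the arbitrary values outside $\mathbb{B}$ are cut off and only a single jump survives. This is exactly the mechanism of the paper's auxiliary Lemma \ref{LM-A4}, whose proof stops at $T_n\wedge(T_F-)$ and places the correcting jump at $T_F$, whereas you stop at $T_n$ and place the jump there; that difference is immaterial. Where your route genuinely differs is in how the classical fact is propagated back to the $\mathbb{B}$-level. The paper stays at the level of $\mathbb{B}$-classes: from $A-V\in(\mathcal{M}_{\mathrm{loc}})^\mathbb{B}\cap(\mathcal{V}_0)^\mathbb{B}$ it deduces $A-V\in(\mathcal{W}_{\mathrm{loc},0})^\mathbb{B}\subseteq(\mathcal{M}^d_{\mathrm{loc}})^\mathbb{B}$, rearranges to obtain $N^c-M^c\in(\mathcal{M}^c_{\mathrm{loc},0})^\mathbb{B}\cap(\mathcal{M}^d_{\mathrm{loc}})^\mathbb{B}$, and then needs a second auxiliary lemma (Lemma \ref{LM-A3}) asserting that this intersection is $\{0\mathfrak{I}_\mathbb{B}\}$. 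You bypass the classes $(\mathcal{W}_{\mathrm{loc}})^\mathbb{B}$ and $(\mathcal{M}^d_{\mathrm{loc}})^\mathbb{B}$ altogether: you identify $W^c=M^c-N^c$ through the uniqueness in Theorem \ref{Lem-M}(1), kill the continuous part of each stopped representative directly via $((W^{(n)})^{T_n})^c=((W^{(n)})^c)^{T_n}=0$, and glue with Theorem \ref{Lem-M}(2) and Theorem \ref{process}(1). Your version is shorter and self-contained for this particular lemma; the paper's factorization instead records $\mathbb{B}$-level facts of independent use (the triviality of $(\mathcal{M}^c_{\mathrm{loc},0})^\mathbb{B}\cap(\mathcal{M}^d_{\mathrm{loc}})^\mathbb{B}$ is also what drives the uniqueness in Theorem \ref{Lem-M}). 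One presentational slip: the hypotheses give $X=X_0\mathfrak{I}_\mathbb{B}+M^c+M^d+A$, not literally $X=M+A$, and in general $M-N\neq V-A$ (they differ by $(M_0-N_0)\mathfrak{I}_\mathbb{B}$); you should take $W=(M^c+M^d)-(N^c+N^d)=V-A$, or relabel $\overline{M}=X_0\mathfrak{I}_\mathbb{B}+M^c+M^d$ so that $X=\overline{M}+A$ with $\overline{M}^c=M^c$. Since this adjustment only shifts the local martingales by constants, which have no continuous martingale part, the rest of your argument goes through unchanged.
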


In the context of the decomposition \eqref{decomX}, akin to the continuous martingale part of a semimartingale, the component $M^c$ is designated as the {\bf continuous martingale part} of $X$, alternatively denoted as $X^c$. The following theorem presents the fundamental properties of the continuous martingale part of a $\mathbb{B}$-semimartingale.

\begin{theorem}\label{XTc}
Let $X\in \mathcal{S}^\mathbb{B}$.
\begin{itemize}
  \item [$(1)$]If $(T_n,X^{(n)})$ is an FCS for $X\in \mathcal{S}^\mathbb{B}$, then $(T_n,(X^{(n)})^c)$ is an FCS for $X^c\in (\mathcal{M}^c_{\mathrm{loc},0})^\mathbb{B}$.
  \item [$(2)$]If $\tau$ is a $\mathbb{B}$-inner stopping time, then
  \begin{equation}\label{Xc-eq}
  (X^\tau)^c=(X^c)^\tau
  \end{equation}
  and
  \begin{equation}\label{XcB-eq}
  (X^\tau)^c\mathfrak{I}_\mathbb{B}=(X^c)^\tau\mathfrak{I}_\mathbb{B}=(X^\tau\mathfrak{I}_\mathbb{B})^c.
  \end{equation}
\end{itemize}
\end{theorem}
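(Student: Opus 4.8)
The plan is to reduce everything to the classical continuous martingale part of ordinary semimartingales, transported to $\mathbb{B}$ by the fundamental coupled sequence machinery (Theorems \ref{process}, \ref{fcs}, \ref{restriction}(4) and the decomposition Theorem \ref{Lem-M}(2)). The one genuinely analytic ingredient I would isolate first is the following classical sublemma: \emph{if $Y,Z\in\mathcal{S}$ and $S$ is a stopping time with $Y=Z$ on $\llbracket 0,S\llbracket$, then $(Y^c)^S=(Z^c)^S$.} To see this, $Y=Z$ on $\llbracket 0,S\llbracket$ forces $Y_{S-}=Z_{S-}$, hence the pre-stopped processes coincide, $Y^{S-}=Z^{S-}$; and for any semimartingale $(Y^{S-})^c=(Y^c)^S$, since pre-stopping only deletes the jump at $S$ together with the evolution after $S$, a finite variation modification carrying no continuous martingale part, while $(Y^c)^{S-}=(Y^c)^S$ because $Y^c$ is continuous. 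I expect this sublemma to be the main obstacle: it is precisely where agreement on a stochastic set of interval type, which is never a genuine closed interval $\llbracket 0,\cdot\rrbracket$ at its right endpoint, must be converted into agreement of the continuous parts. Everything else is bookkeeping.

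For (1), I would first record the containment $\llbracket 0,S\llbracket\subseteq\mathbb{B}$ for every stopping time $S\le T$ (points strictly before $T$ always lie in $\mathbb{B}$ by \eqref{B}), so that agreement of two processes on $\mathbb{B}\llbracket 0,T_k\rrbracket$ already entails agreement on $\llbracket 0,T_k\llbracket$. Since $X^{(k)}=X^{(l)}$ on $\mathbb{B}\llbracket 0,T_k\rrbracket$ for $k\le l$ by Theorem \ref{process}(2), the sublemma gives $(X^{(k)})^c=(X^{(l)})^c$ on $\llbracket 0,T_k\rrbracket\supseteq\mathbb{B}\llbracket 0,T_k\rrbracket$. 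As each $(X^{(n)})^c\in\mathcal{M}^c_{\mathrm{loc},0}$, Theorem \ref{restriction}(4) produces a $\mathbb{B}$-process $W\in(\mathcal{M}^c_{\mathrm{loc},0})^\mathbb{B}$ with FCS $(T_n,(X^{(n)})^c)$; it then remains to identify $W$ with $X^c$. For this I would invoke Lemma \ref{XMA} to write $X=M+A$ with $M\in(\mathcal{M}_{\mathrm{loc}})^\mathbb{B}$ and $A\in(\mathcal{V}_0)^\mathbb{B}$, choose common FCSs $(S_n,M^{(n)})$ and $(S_n,A^{(n)})$, so that $(S_n,(M^{(n)})^c)$ is an FCS for $M^c=X^c$ by Theorem \ref{Lem-M}(2). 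Refining both by $\sigma_n=T_n\wedge S_n$ (Theorem \ref{process}(3)) and using $(M^{(n)}+A^{(n)})^c=(M^{(n)})^c$ since $A^{(n)}\in\mathcal{V}_0$, the sublemma again yields $(X^{(n)})^c=(M^{(n)})^c$ on $\mathbb{B}\llbracket 0,\sigma_n\rrbracket$; hence $W=X^c$ on each $\mathbb{B}\llbracket 0,\sigma_n\rrbracket$, and Theorem \ref{process}(1) gives $W=X^c$, proving (1).

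For (2), fix a $\mathbb{B}$-inner stopping time $\tau$ and an FCS $(T_n,X^{(n)})$, so $(T_n,(X^{(n)})^c)$ is an FCS for $X^c$ by (1). Using $\llbracket 0,\tau\rrbracket\subseteq\mathbb{B}$ together with $X=X^{(n)}$ on $\mathbb{B}\llbracket 0,T_n\rrbracket$, I would first check $(X^\tau)^{T_n}=((X^{(n)})^\tau)^{T_n}$ and $((X^c)^\tau)^{T_n}=(((X^{(n)})^c)^\tau)^{T_n}$. The classical commutation $((X^{(n)})^\tau)^c=((X^{(n)})^c)^\tau$ with stability under stopping then gives $((X^\tau)^c)^{T_n}=(((X^{(n)})^c)^\tau)^{T_n}=((X^c)^\tau)^{T_n}$ for every $n$. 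Since both $(X^\tau)^c$ and $(X^c)^\tau$ are frozen after $\tau$ and $[T_n\ge\tau]\uparrow\Omega$ (because $\llbracket 0,\tau\rrbracket\subseteq\mathbb{B}\subseteq\bigcup_n\llbracket 0,T_n\rrbracket$), passing to the limit yields the ordinary equality \eqref{Xc-eq}. Finally \eqref{XcB-eq} follows by restriction, together with the observation that, by Theorem \ref{fcs} and part (1), both $(X^\tau\mathfrak{I}_\mathbb{B})^c$ and $(X^c)^\tau\mathfrak{I}_\mathbb{B}$ admit the \emph{same} FCS $(T_n,((X^{(n)})^c)^\tau)$, hence coincide as $\mathbb{B}$-processes.
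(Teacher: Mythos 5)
Your proposal is correct, but it takes a genuinely different route from the paper's. For part (1), the paper never leaves the $\mathbb{B}$-world: it restricts both $X$ and $X^{(n)}$ to the smaller optional set of interval type $\mathbb{B}_n=\mathbb{B}\llbracket{0,T_n}\rrbracket$ and applies the uniqueness of continuous martingale parts of $\mathbb{B}$-semimartingales (Lemma \ref{uXc}, whose proof via Lemmas \ref{LM-A2} and \ref{LM-A4} is precisely where the half-open right end of $\mathbb{B}$ is tamed) to get $X^c\mathfrak{I}_{\mathbb{B}_n}=(X\mathfrak{I}_{\mathbb{B}_n})^c=(X^{(n)}\mathfrak{I}_{\mathbb{B}_n})^c=(X^{(n)})^c\mathfrak{I}_{\mathbb{B}_n}$ in one stroke. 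You instead stay with ordinary semimartingales: the pre-stopping sublemma (``$Y=Z$ on $\llbracket{0,S}\llbracket$ implies $(Y^c)^S=(Z^c)^S$'') carries the right-endpoint difficulty, Theorem \ref{restriction}(4) builds the candidate $W$, and Lemma \ref{XMA} together with Theorem \ref{Lem-M}(2) identifies $W=X^c$. For part (2), the paper simply stops the decomposition \eqref{decomX} at $\tau$ and invokes uniqueness of the decomposition of the ordinary semimartingale $X^\tau$ (resp.\ of the $\mathbb{B}$-semimartingale $X^\tau\mathfrak{I}_\mathbb{B}$), with no limiting argument; you use FCS agreement, the classical commutation $((X^{(n)})^\tau)^c=((X^{(n)})^c)^\tau$, a passage to the limit over $[\tau\leq T_n]\uparrow\Omega$ (valid, since both processes are frozen after $\tau$), and a same-FCS identification for \eqref{XcB-eq}. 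What each approach buys: the paper's is shorter once Lemma \ref{uXc} is in hand and shows the uniqueness machinery works on arbitrary stochastic sets of interval type; yours is more elementary and self-contained, requiring only the classical uniqueness theory plus the bookkeeping theorems, and it makes explicit exactly where agreement short of the right endpoint is converted into agreement of continuous parts, at the cost of an extra identification step for $W=X^c$.

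One cosmetic repair to your sublemma: the intermediate claim $Y^{S-}=Z^{S-}$ can fail on $[S=0]$, where $\llbracket{0,S}\llbracket$ is empty and the convention $Y_{0-}=Y_0$ gives no information. The conclusion nevertheless survives, since on that event $Y^{S-}$ and $Z^{S-}$ differ by the constant process $(Y_0-Z_0)I_{[S=0]}$, an adapted continuous finite-variation perturbation that leaves continuous martingale parts (which are null at $0$) unchanged; moreover, in all your applications agreement at time $0$ is automatic because $\llbracket{0}\rrbracket\subseteq\mathbb{B}$, so the processes compared actually agree on $\llbracket{0,S}\llbracket\cup\llbracket{0}\rrbracket$.
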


Now, given two $\mathbb{B}$-semimartingales,  we can establish their quadratic covariation by employing  their continuous martingale parts, as the subsequent definition will illustrate.

\begin{definition}\normalfont\label{[X,Y]}
Let $X,Y\in\mathcal{S}^\mathbb{B}$. The $\mathbb{B}$-{\bf quadratic covariation} of $X$ and $Y$, denoted by $[X,Y]$, is defined as follows:
\[
[X,Y]=X_0Y_0\mathfrak{I}_\mathbb{B}+\langle X^c,Y^c\rangle+\Sigma(\Delta X\Delta Y),
\]
where $X^c\in(\mathcal{M}^c_{\mathrm{loc},0})^\mathbb{B}$ and $Y^c\in(\mathcal{M}^c_{\mathrm{loc},0})^\mathbb{B}$ are the continuous martingale parts of $X$ and $Y$, respectively.
In the specific case where $X=Y$, the $\mathbb{B}$-process $[X,X]$ (or simply, $[X]$) is called the $\mathbb{B}$-{\bf quadratic variation} of $X$.
\end{definition}

The definition provided for the $\mathbb{B}$-quadratic covariation aligns with the formulation presented in Definition 8.2 of \cite{He} if $\mathbb{B}=\llbracket{0,+\infty}\llbracket$. The subsequent theorem outlines the fundamental properties of $\mathbb{B}$-quadratic covariations.

\begin{theorem}\label{[X,Y]-fcs}
Let $X,\; Y\in \mathcal{S}^\mathbb{B}$.
\begin{itemize}
  \item [$(1)$] If $Z\in \mathcal{S}^\mathbb{B}$ and $a,b\in \mathbf{R}$, then
  \[
    [X,Y]=[Y,X],\quad [aX+bY,Z]=a[X,Z]+b[Y,Z].
  \]
  \item [$(2)$] If $(T_n,X^{(n)})$ and $(T_n,Y^{(n)})$ are FCSs for $X\in \mathcal{S}^\mathbb{B}$ and $Y\in \mathcal{S}^\mathbb{B}$ respectively, then $(T_n,[X^{(n)},Y^{(n)}])$ is an FCS for $[X,Y]\in \mathcal{V}^\mathbb{B}$, and $(T_n,[X^{(n)}])$ is an FCS for $[X]\in (\mathcal{V}^+)^\mathbb{B}$.
  \item [$(3)$] If $\tau$ be a $\mathbb{B}$-inner stopping time, then
\begin{equation}\label{XY}
      [X^{\tau},Y^{\tau}]=[X,Y]^{\tau}
\end{equation}
and
\begin{equation}\label{XYB}
[X^{\tau}\mathfrak{I}_\mathbb{B},Y^{\tau}\mathfrak{I}_\mathbb{B}]
=[X,Y]^{\tau}\mathfrak{I}_\mathbb{B}=[X^{\tau},Y^{\tau}]\mathfrak{I}_\mathbb{B}=[X^{\tau}\mathfrak{I}_\mathbb{B},Y].
\end{equation}
\end{itemize}
\end{theorem}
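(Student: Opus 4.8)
The plan is to reduce every assertion to the three constituent processes in Definition \ref{[X,Y]} --- the initial term $X_0Y_0\mathfrak{I}_\mathbb{B}$, the $\mathbb{B}$-predictable quadratic covariation $\langle X^c,Y^c\rangle$ of the continuous martingale parts, and the $\mathbb{B}$-summation process $\Sigma(\Delta X\Delta Y)$ of the jump products --- and then to invoke, for each building block, the corresponding result already established. I would prove part (1) first, since it is purely algebraic; then part (2), which is the heart of the matter; and finally deduce part (3) from (2) together with the classical stopping identities for genuine semimartingales.

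For part (1), symmetry is immediate because each of the three terms is symmetric in its two arguments, using $\langle X^c,Y^c\rangle=\langle Y^c,X^c\rangle$ from Theorem \ref{property-qr-p}(2) and $\Delta X\Delta Y=\Delta Y\Delta X$. For bilinearity I would first record that $X\mapsto X^c$ is linear: writing the decompositions \eqref{decomX} of $X$ and $Y$ and forming $aX+bY$, the continuous part $aX^c+bY^c$ lies in $(\mathcal{M}^c_{\mathrm{loc},0})^\mathbb{B}$ by Theorem \ref{process}(4), so $(aX+bY)^c=aX^c+bY^c$ by the uniqueness in Lemma \ref{uXc}. Bilinearity of $[\cdot,\cdot]$ then follows term by term: the initial term is trivially linear, linearity of $\langle(aX+bY)^c,Z^c\rangle$ comes from Theorem \ref{property-qr-p}(2), and for the jump term I would combine $\Delta(aX+bY)=a\Delta X+b\Delta Y$ (Theorem \ref{delta}(2)) with the evident linearity of the summation operator $\Sigma$.

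For part (2), the key observation is that for the genuine semimartingales $X^{(n)}$ and $Y^{(n)}$ the classical quadratic covariation (Definition 8.2 of \cite{He}) is precisely $[X^{(n)},Y^{(n)}]=X_0^{(n)}Y_0^{(n)}+\langle(X^{(n)})^c,(Y^{(n)})^c\rangle+\sum_{s\le\cdot}\Delta X_s^{(n)}\Delta Y_s^{(n)}$, i.e.\ the sum of the three building blocks evaluated at the coupling processes. I would then furnish an FCS for each building block, all with the same sequence $(T_n)$: namely $(T_n,X_0^{(n)}Y_0^{(n)})$ for $X_0Y_0\mathfrak{I}_\mathbb{B}$; $(T_n,\langle(X^{(n)})^c,(Y^{(n)})^c\rangle)$ for $\langle X^c,Y^c\rangle$, obtained by combining Theorem \ref{XTc}(1) (which makes $(T_n,(X^{(n)})^c)$ an FCS for $X^c$) with Theorem \ref{property-qr-p}(1); and $(T_n,\Sigma(\Delta X^{(n)}\Delta Y^{(n)}))$ for $\Sigma(\Delta X\Delta Y)$ by Theorem \ref{delta}(6). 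Each being an FCS satisfies the coherence relation \eqref{xkl}, so their sum $[X^{(n)},Y^{(n)}]\in\mathcal{V}$ satisfies it as well, whence Theorem \ref{restriction}(4) makes $(T_n,[X^{(n)},Y^{(n)}])$ an FCS for a $\mathbb{B}$-process in $\mathcal{V}^\mathbb{B}$; this process has the same fundamental coupled sequence as the sum of the three building blocks and therefore equals $[X,Y]$ by Theorem \ref{process}(5). The diagonal case gives $[X]\in(\mathcal{V}^+)^\mathbb{B}$ because each $[X^{(n)}]\in\mathcal{V}^+$.

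Finally, for part (3) I would argue by FCS comparison rather than by direct expansion. Fix FCSs $(T_n,X^{(n)})$, $(T_n,Y^{(n)})$ for $X,Y$; by Theorem \ref{fcs}, $(T_n,(X^{(n)})^\tau)$ and $(T_n,(Y^{(n)})^\tau)$ are FCSs for $X^\tau\mathfrak{I}_\mathbb{B}$ and $Y^\tau\mathfrak{I}_\mathbb{B}$, while $[X,Y]^\tau$ has FCS $(T_n,[X^{(n)},Y^{(n)}]^\tau)$, again by Theorem \ref{fcs} applied to $[X,Y]\in\mathcal{V}^\mathbb{B}$ (recall $\mathcal{V}$ is stable under stopping and localization). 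Applying part (2) to the stopped pairs and invoking the classical identities $[(X^{(n)})^\tau,(Y^{(n)})^\tau]=[(X^{(n)})^\tau,Y^{(n)}]=[X^{(n)},Y^{(n)}]^\tau$ shows that $[X^\tau\mathfrak{I}_\mathbb{B},Y^\tau\mathfrak{I}_\mathbb{B}]$, $[X^\tau\mathfrak{I}_\mathbb{B},Y]$ and $[X,Y]^\tau\mathfrak{I}_\mathbb{B}$ all carry the common FCS $(T_n,[X^{(n)},Y^{(n)}]^\tau)$, which yields \eqref{XYB}; the genuine-process identity \eqref{XY} then follows because both $[X^\tau,Y^\tau]$ and $[X,Y]^\tau$ are stopped at $\tau$ and hence determined by their (coinciding) values on $\llbracket{0,\tau}\rrbracket\subseteq\mathbb{B}$. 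I expect part (2) to be the principal obstacle: the only genuine subtlety is verifying that the three heterogeneous FCSs amalgamate into the single FCS of the classical quadratic covariation and that the resulting $\mathbb{B}$-process coincides with $[X,Y]$ of Definition \ref{[X,Y]} independently of the chosen coupling sequence; once this is secured, parts (1) and (3) are essentially bookkeeping.
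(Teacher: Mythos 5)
Your parts (1) and (2) are essentially the paper's own argument. Part (1) is the same term-by-term expansion of Definition \ref{[X,Y]} (the paper silently uses $(aX+bY)^c=aX^c+bY^c$; you rightly make this explicit via Theorem \ref{process}(4) and the uniqueness in Lemma \ref{uXc}). Part (2) is the same matching of the classical quadratic covariation $[X^{(n)},Y^{(n)}]$ against the three building blocks through Theorems \ref{delta}(6), \ref{property-qr-p}(1) and \ref{XTc}(1); the paper verifies the coincidence on $\mathbb{B}\llbracket{0,T_n}\rrbracket$ directly and concludes by Theorem \ref{process}(1), while you route it through Theorem \ref{restriction}(4) — an immaterial difference.

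Part (3) is where you genuinely diverge, and the divergence is worth noting. The paper proves the genuine-process identity \eqref{XY} \emph{first}, by expanding both sides from the definition and using the $\mathbb{B}$-level stopping identities of the building blocks (Theorem \ref{property-qr-p}(3), Theorem \ref{XTc}(2), together with \eqref{sigmaX} and \eqref{eqXT}), and only then obtains \eqref{XYB} by the FCS comparison you describe. You reverse the order: \eqref{XYB} first, via part (2), Theorem \ref{fcs} and the classical stopping identities applied to the coupling processes, and then \eqref{XY} as a corollary. This reversal can be made to work, but as written it has an implicit step: to claim that $[X^{\tau},Y^{\tau}]$ and $[X,Y]^{\tau}$ have ``coinciding values on $\llbracket{0,\tau}\rrbracket$'' you must first relate the \emph{classical} quadratic covariation $[X^{\tau},Y^{\tau}]$ to the $\mathbb{B}$-quadratic covariation $[X^{\tau}\mathfrak{I}_\mathbb{B},Y^{\tau}\mathfrak{I}_\mathbb{B}]$ that actually appears in your version of \eqref{XYB}; these are objects of different kinds and their agreement on $\mathbb{B}$ is not automatic. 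The bridge is available with your own tools: apply part (2) to the trivial FCSs $(T,X^{\tau})$ and $(T,Y^{\tau})$ of the restrictions ($T$ the debut of $\mathbb{B}^c$), which gives $[X^{\tau}\mathfrak{I}_\mathbb{B},Y^{\tau}\mathfrak{I}_\mathbb{B}]=[X^{\tau},Y^{\tau}]\mathfrak{I}_\mathbb{B}$; after that, the observation that both $[X^{\tau},Y^{\tau}]$ and $[X,Y]^{\tau}$ are stopped at $\tau$ and agree on $\llbracket{0,\tau}\rrbracket\subseteq\mathbb{B}$ does yield \eqref{XY}. In exchange for this extra bridging lemma, your route avoids re-deriving the stopping identities at the $\mathbb{B}$-level, invoking them only in their classical form through the coupling sequences; the paper's order avoids the bridge entirely because \eqref{XY} is established directly from the definitions before \eqref{XYB} is ever touched.
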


Finally, we present an illustrative example of semimartingales on stochastic sets of interval type. This exemplification is grounded in Theorem 12.18 from \cite{He}, and underscors the practical applicability of $\mathbb{C}$-semimartingales within the framework of measure transformations.

\begin{example}\normalfont\label{ex-absolute}
Suppose that $\mathbf{Q}$ represents another probability measure on the filtered space $(\Omega,\mathcal{F},\mathbb{F})$, and that $\mathbf{Q}$ is locally absolutely continuous w.r.t. $\mathbf{P}$, denoted as
$\mathbf{Q}\overset{\mathrm{loc}}\ll\mathbf{P}$ (for details, see \cite{He,Jacod}). Define
\begin{align}
\mathbb{C}&=\bigcup_n\llbracket{0,\tau_n}\rrbracket,\label{absolute}\\
\tau_n&=\inf\left\{t: \widetilde{Z}_t\leq \frac{1}{n}\right\},\quad n\in \mathbb{N}^+,\nonumber
\end{align}
where $\widetilde{Z}$ is the density process of $\mathbf{Q}$, relative to $\mathbf{P}$. It follows that $\mathbb{C}$ is a predictable set of interval type, and $(\tau_n)$ is an FS for $\mathbb{C}$. Let $\mathcal{S}(\mathbf{Q})$ denote the classes of all $\mathbf{Q}$-semimartingales,
$\widetilde{Y}$ be an adapted c\`{a}dl\`{a}g process, and $Y=\widetilde{Y}\mathfrak{I}_\mathbb{C}$. Then according to Theorem 12.18 in \cite{He}, the following equivalence holds true:
  \[
  \widetilde{Y}\in\mathcal{S}(\mathbf{Q})\Leftrightarrow Y\in\mathcal{S}^\mathbb{C}.
   \]
 \end{example}

\subsection{Definition of $H_{\bullet}X$}
Unless otherwise explicitly specified, we shall uniformly adopt the following assumptions for the remaining content in this section: $H\in \mathcal{P}^\mathbb{B}$ and $X\in \mathcal{S}^{i,\mathbb{B}}$.

The conventional construction of the stochastic integral of a predictable process with respect to a semimartingale hinges on a decomposition of the semimartingale itself. Notably, this integral remains invariant under alternative decompositions of the semimartingale. In light of this foundational framework, we proceed by introducing the following lemma, which serves as a critical stepping stone toward the rigorous formulation of a constructive resolution to Problem (SI).

\begin{lemma}\label{HX-unique}
Assume that $X=M+A$ and $X=N+V$ are both inner decompositions of $X$, where $M\in (\mathcal{M}_{\mathrm{loc}})^{i,\mathbb{B}}$, $A\in (\mathcal{V}_0)^\mathbb{B}$, $N\in (\mathcal{M}_{\mathrm{loc}})^{i,\mathbb{B}}$ and $V\in (\mathcal{V}_0)^\mathbb{B}$. If $H\in \mathcal{L}^\mathbb{B}_m(M)\bigcap\mathcal{L}^\mathbb{B}_m(N)$ and $H\in \mathcal{L}^\mathbb{B}_s(A)\bigcap\mathcal{L}^\mathbb{B}_s(V)$, then
\begin{equation}\label{HX-unique-1}
H_{\bullet}M+H_{\bullet}A=H_{\bullet}N+H_{\bullet}V.
\end{equation}
\end{lemma}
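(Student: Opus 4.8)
The plan is to reduce \eqref{HX-unique-1} to a statement about a single $\mathbb{B}$-process that is simultaneously a $\mathbb{B}$-inner local martingale and a $\mathbb{B}$-adapted process with finite variation, and then to invoke the classical coincidence between the It\^o integral and the pathwise Stieltjes integral. First I would rearrange \eqref{HX-unique-1} as $H_{\bullet}M-H_{\bullet}N=H_{\bullet}V-H_{\bullet}A$. Since $H\in\mathcal{L}^\mathbb{B}_m(M)\cap\mathcal{L}^\mathbb{B}_m(N)$, Theorem \ref{HM-o-p}(2) gives $H\in\mathcal{L}^\mathbb{B}_m(M-N)$ and $H_{\bullet}M-H_{\bullet}N=H_{\bullet}(M-N)$; likewise, since $H\in\mathcal{L}^\mathbb{B}_s(A)\cap\mathcal{L}^\mathbb{B}_s(V)$, Theorem \ref{HAproperty}(2) gives $H_{\bullet}V-H_{\bullet}A=H_{\bullet}(V-A)$. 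Setting $W:=M-N=V-A$ (the two expressions agree because $X=M+A=N+V$), Theorem \ref{MC}(3) shows $W\in(\mathcal{M}_{\mathrm{loc}})^{i,\mathbb{B}}$ while linearity of $\mathcal{V}_0$ together with Theorem \ref{process}(4) shows $W\in(\mathcal{V}_0)^\mathbb{B}$, with $W_0=0$. Thus it suffices to prove that the inner-local-martingale integral $H_{\bullet}W$ and the Stieltjes integral $H_{\bullet}W$ coincide.

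The key step is to exhibit a single FCS serving both integrals. Let $\widetilde H$ be a coupled predictable process for $H$ (Theorem \ref{restriction}(1)). Using the characterizations in Theorem \ref{HM=}(M2) and Theorem \ref{HA-equivalent}(A2) together with Theorem \ref{process}(3), I would produce a common increasing sequence $(T_n)$, an inner FCS $(T_n,P^{(n)})$ for $W\in(\mathcal{M}_{\mathrm{loc}})^{i,\mathbb{B}}$ and an FCS $(T_n,Q^{(n)})$ for $W\in(\mathcal{V}_0)^\mathbb{B}$, with $P^{(n)}\in\mathcal{M}_{\mathrm{loc}}$, $Q^{(n)}\in\mathcal{V}_0$ and $(P^{(n)})^{T_n\wedge(T_F-)}\in\mathcal{M}_{\mathrm{loc}}$. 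The crucial observation is that $\llbracket{0,T_n\wedge(T_F-)}\llbracket\;\subseteq\mathbb{B}$: on $F^c$ one has $T_F=+\infty$ and $T_n\le T$, while on $F$ the half-open interval stops strictly before $T_F=T$, so in both cases it lies in $\mathbb{B}_\omega$. Since $P^{(n)}$ and $Q^{(n)}$ both coincide with $W$ on $\mathbb{B}\cap\llbracket{0,T_n}\rrbracket$, the stopped processes agree: $W^{(n)}:=(P^{(n)})^{T_n\wedge(T_F-)}=(Q^{(n)})^{T_n\wedge(T_F-)}$. The left identification gives $W^{(n)}\in\mathcal{M}_{\mathrm{loc}}$ and, by the remark following Definition \ref{inner}, shows $(T_n,W^{(n)})$ is again an inner FCS for $W$; the right identification gives $W^{(n)}\in\mathcal{V}_0$ and shows $(T_n,W^{(n)})$ is an FCS for $W\in(\mathcal{V}_0)^\mathbb{B}$. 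Hence each $W^{(n)}\in\mathcal{M}_{\mathrm{loc},0}\cap\mathcal{V}_0=\mathcal{W}_{\mathrm{loc},0}$ is a local martingale of finite variation. Because the variation and quadratic variation of $W^{(n)}$ are carried by $\llbracket{0,T_n\wedge(T_F-)}\rrbracket\subseteq\mathbb{B}$, integrability against $W^{(n)}$ depends only on the values of the integrand on $\mathbb{B}$; as all coupled predictable processes for $H$ agree on $\mathbb{B}$, the integrable representatives furnished by (M2) and (A2) transfer to $\widetilde H$, yielding $\widetilde H\in\mathcal{L}_m(W^{(n)})\cap\mathcal{L}_s(W^{(n)})$ for every $n$.

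Finally, for each fixed $n$ the process $W^{(n)}\in\mathcal{W}_{\mathrm{loc},0}$ is a local martingale of finite variation, so the classical coincidence of the It\^o integral with the Lebesgue--Stieltjes integral (see, e.g., \cite{He}) gives $\widetilde H.W^{(n)}=\widetilde H.W^{(n)}$, the left side understood in the martingale sense and the right in the Stieltjes sense. Invoking the expression \eqref{HM-expression-1} for the inner-local-martingale integral and \eqref{HA-expression1} for the Stieltjes integral (both with initial term $H_0W_0=0$), each $\mathbb{B}$-integral equals $\big(\sum_{n}(\widetilde H.W^{(n)})I_{\rrbracket{T_{n-1},T_n}\rrbracket}\big)\mathfrak{I}_\mathbb{B}$, and the per-$n$ coincidence forces the two $\mathbb{B}$-processes to be identical, giving $H_{\bullet}W=H_{\bullet}W$ and hence \eqref{HX-unique-1}. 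I expect the main obstacle to be the construction in the second paragraph: reconciling a martingale representative with a finite-variation representative into one process of class $\mathcal{W}_{\mathrm{loc},0}$. This is precisely where the inner hypothesis is indispensable, and the delicate points will be the boundary analysis at $T_F$ on the set $F$, where $\mathbb{B}$ is only half-open, together with checking that integrability transfers correctly after stopping at $T_n\wedge(T_F-)$.
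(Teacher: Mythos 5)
Your proof is correct, and it is organized differently from the paper's even though both arguments pivot on the same consequence of the inner hypothesis: representatives coming from the two decompositions coincide after the operation $(\cdot)^{T_n\wedge(T_F-)}$, which stays inside $\mathbb{B}$. The paper never forms $W=M-N=V-A$. Instead it keeps all four processes, draws coupled predictable processes and FCSs from (M2)/(A2) for each of $M,N,A,V$, and manufactures a single integrand
$L=\min\{\widetilde H^+,\widehat H^+,\widetilde K^+,\widehat K^+\}-\min\{\widetilde H^-,\widehat H^-,\widetilde K^-,\widehat K^-\}$,
which is again a coupled predictable process for $H$ and is dominated by all four representatives, hence lies in $\mathcal{L}_m(M^{(n)})\cap\mathcal{L}_s(A^{(n)})\cap\mathcal{L}_m(N^{(n)})\cap\mathcal{L}_s(V^{(n)})$; the chain
$L.M^{(n)}+L.A^{(n)}=L.(M^{(n)}+A^{(n)})^{T_n\wedge(T_F-)}=L.(N^{(n)}+V^{(n)})^{T_n\wedge(T_F-)}=L.N^{(n)}+L.V^{(n)}$
on $\mathbb{B}\llbracket{0,T_n}\rrbracket$ (i.e.\ the decomposition-independence of the conventional semimartingale integral) then closes the proof via Theorem \ref{process}(1). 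Your reduction to $W\in(\mathcal{M}_{\mathrm{loc}})^{i,\mathbb{B}}\cap(\mathcal{V}_0)^{\mathbb{B}}$ and to a common sequence $(T_n,W^{(n)})$ with $W^{(n)}\in\mathcal{W}_{\mathrm{loc},0}$ isolates the classical fact at the heart of the lemma and is arguably more transparent, but it carries two steps that must be written out with care. First, transferring integrability between different coupled predictable processes for $H$ is not automatic, since membership in $\mathcal{L}_m$ or $\mathcal{L}_s$ generally sees values off $\mathbb{B}$; your justification---that $|dW^{(n)}|$ and $d[W^{(n)}]$ are carried by $\llbracket{0,T_n}\rrbracket\cap\llbracket{0,T_F}\llbracket\subseteq\mathbb{B}$, combined with Theorem 9.2 of \cite{He}---is sound, whereas the paper's min-trick sidesteps the issue entirely by producing one integrand dominated by every representative. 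Second, the ``classical coincidence'' you invoke is true under your hypotheses but is not a bare one-line citation: with $\widetilde H\in\mathcal{L}_m(W^{(n)})\cap\mathcal{L}_s(W^{(n)})$ in hand, the cleanest route is to view $W^{(n)}=W^{(n)}+0=0+W^{(n)}$ as two decompositions of the semimartingale $W^{(n)}$ and quote the well-posedness of the conventional semimartingale integral (Definition 9.13 of \cite{He})---precisely the fact the paper uses---or, equivalently, to argue that the Stieltjes integral has the same jumps as the martingale integral, is therefore special, hence of locally integrable variation, and has vanishing compensator $\widetilde H.(W^{(n)})^p=0$. With those two points made explicit, your argument is complete and fully consistent with the paper's machinery.
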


\begin{definition}\normalfont\label{de-HX}
We say that $H$ is $\mathbb{B}$-integrable w.r.t. $X$ in the domain of $\mathbb{B}$-inner semimartingales (or simply, $H$ is $(X,\mathbb{B})$-integrable), if there exists an inner decomposition $X=M+A$ ($M\in (\mathcal{M}_{\mathrm{loc}})^{i,\mathbb{B}}$ and $A\in (\mathcal{V}_0)^\mathbb{B}$) such that $H\in \mathcal{L}_m^{\mathbb{B}}(M)\cap\mathcal{L}_s^{\mathbb{B}}(A)$.
In this case, the $\mathbb{B}$-process defined by
\begin{equation}\label{HX}
H_{\bullet}X=H_{\bullet}M+H_{\bullet}A
\end{equation}
is called the stochastic integral of $H$ w.r.t. $X$, and $X=M+A$ is an $(H,\mathbb{B})$-decomposition of $X$. The collection of all $\mathbb{B}$-predictable processes which are $(X,\mathbb{B})$-integrable is denoted by $\mathcal{L}^{\mathbb{B}}(X)$.
\end{definition}

Lemma \ref{HX-unique} ensures that the $\mathbb{B}$-stochastic integral $H_{\bullet}X$, as defined by  \eqref{HX}, remains invariant with respect to any $(H,\mathbb{B})$-decomposition of $X$. This property mirrors the behavior of the conventional stochastic integral, thereby establishing a significant analogy.

\begin{remark}\normalfont
From Corollary \ref{cD=DB}, the following relations are established:
\begin{equation*}
\mathcal{S}=\mathcal{S}^{\llbracket{0,+\infty}\llbracket}, \quad \mathcal{M}_{\mathrm{loc}}=(\mathcal{M}_{\mathrm{loc}})^{\llbracket{0,+\infty}\llbracket},
\quad \text{and}\quad \mathcal{V}_0=(\mathcal{V}_0)^{\llbracket{0,+\infty}\llbracket}.
\end{equation*}
Consequently, it becomes evident that the $\mathbb{B}$-stochastic integral $H_{\bullet}X$, as defined by \eqref{HX}, simplifies to the conventional stochastic integral $H.X$, under the condition that $\mathbb{B}=\llbracket{0,+\infty}\llbracket=\Omega\times\mathbf{R}^+$.
Specifically, such a relationship between the two integrals can be expressed as follows:
\begin{itemize}
  \item [] Let $H\in \mathcal{P}^{\llbracket{0,+\infty}\llbracket}$ and $X\in \mathcal{S}^{\llbracket{0,+\infty}\llbracket}$. If $H\in\mathcal{L}^{\llbracket{0,+\infty}\llbracket}(X)$, then $H\in\mathcal{L}(X)$ and $H_{\bullet}X=H.X$.
\end{itemize}
\end{remark}

\subsection{Fundamental properties of $H_{\bullet}X$}
Drawing upon Theorems \ref{HA-equivalent} and \ref{HM=}, the integrability within the domain of $\mathbb{B}$-inner semimartingales, as specified in Definition \ref{de-HX}, demonstrates a profound and intricate correlation with the existing integrability within the domain of semimartingales. This relationship is further expounded upon by the subsequent theorem.

\begin{theorem}\label{HX=}
The following statements are equivalent:
\begin{description}
  \item[(X1)] $H\in\mathcal{L}^\mathbb{B}(X)$.
  \item[(X2)] There exist a coupled predictable process $\widetilde{H}$ for $H\in\mathcal{P}^\mathbb{B}$ and a decomposed inner FCS $(T_n,M^{(n)},A^{(n)})$ for $X\in \mathcal{S}^{i,\mathbb{B}}$ such that $\widetilde{H}\in\mathcal{L}_m(M^{(n)})\cap\mathcal{L}_s(A^{(n)})$ for each $n\in \mathbf{N}^+$.
  \item[(X3)] There exist an FCS $(T_n,H^{(n)})$ for $H\in\mathcal{P}^\mathbb{B}$ and a decomposed inner FCS $(T_n,M^{(n)},A^{(n)})$ for $X\in \mathcal{S}^{i,\mathbb{B}}$ such that $H^{(n)}\in\mathcal{L}_m(M^{(n)})\cap\mathcal{L}_s(A^{(n)})$ for each $n\in \mathbf{N}^+$.
\end{description}
\end{theorem}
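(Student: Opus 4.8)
The plan is to establish the cycle $(\mathrm{X1})\Rightarrow(\mathrm{X2})\Rightarrow(\mathrm{X3})\Rightarrow(\mathrm{X1})$, reducing everything to the two master equivalences already proved for the constituent pieces of an inner decomposition: Theorem \ref{HA-equivalent} for the finite-variation part and Theorem \ref{HM=} for the inner-local-martingale part, glued together through Definition \ref{de-HX}. The only genuinely substantive direction is $(\mathrm{X1})\Rightarrow(\mathrm{X2})$; the other two are bookkeeping.

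For $(\mathrm{X1})\Rightarrow(\mathrm{X2})$, I would first unpack Definition \ref{de-HX}: choose an inner decomposition $X=M+A$ with $H\in\mathcal{L}_m^{\mathbb{B}}(M)\cap\mathcal{L}_s^{\mathbb{B}}(A)$. Applying $(\mathrm{M1})\Rightarrow(\mathrm{M2})$ of Theorem \ref{HM=} yields a coupled predictable process $\widetilde H$ and an inner FCS $(S_n,M^{(n)})$ with $\widetilde H\in\mathcal{L}_m(M^{(n)})$; applying $(\mathrm{A1})\Rightarrow(\mathrm{A2})$ of Theorem \ref{HA-equivalent} yields a possibly different coupled predictable process together with an FCS $(S_n',A^{(n)})$ carrying it into $\mathcal{L}_s(A^{(n)})$. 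Setting $U_n=S_n\wedge S_n'$ and invoking Theorem \ref{process}(3) turns $(U_n,M^{(n)})$ and $(U_n,A^{(n)})$ into simultaneous FCSs, the inner property of $(U_n,M^{(n)})$ surviving since $\mathcal{M}_{\mathrm{loc}}$ is stable under stopping; the integrability statements persist because the integrators themselves are unchanged.

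The key device is then to replace each integrator by its inner-stopped version $\widehat M^{(n)}=(M^{(n)})^{U_n\wedge(T_F-)}$ and $\widehat A^{(n)}=(A^{(n)})^{U_n\wedge(T_F-)}$. Using the structure \eqref{B} of $\mathbb{B}$, stopping strictly before $T_F$ confines the support of $d[\widehat M^{(n)}]$ and of $|d\widehat A^{(n)}|$ to a subset of $\mathbb{B}$. Consequently any two coupled predictable processes for $H$—in particular the two produced above—coincide on these supports, since they all agree with $H$ on $\mathbb{B}$. Because integrability with respect to a local martingale, resp.\ a process of finite variation, is preserved under stopping the integrator and depends on the integrand only through its values on that support, the single process $\widetilde H$ lies in $\mathcal{L}_m(\widehat M^{(n)})\cap\mathcal{L}_s(\widehat A^{(n)})$ for every $n$. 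Verifying that $(U_n,\widehat M^{(n)},\widehat A^{(n)})$ is a decomposed inner FCS for $X$—the inner property via the remark following Definition \ref{inner}, and $\widehat M^{(n)}+\widehat A^{(n)}=X$ on $\mathbb{B}\llbracket{0,U_n}\rrbracket$—then delivers $(\mathrm{X2})$.

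The remaining implications are routine. For $(\mathrm{X2})\Rightarrow(\mathrm{X3})$, I would take the constant FCS representative $H^{(n)}=\widetilde H$, which is a legitimate FCS for $H\in\mathcal{P}^{\mathbb{B}}$ since $\widetilde H\mathfrak{I}_{\mathbb{B}}=H$, and inherit the required integrability directly. For $(\mathrm{X3})\Rightarrow(\mathrm{X1})$, the two halves of the decomposed inner FCS are precisely hypotheses $(\mathrm{M3})$ of Theorem \ref{HM=} and $(\mathrm{A3})$ of Theorem \ref{HA-equivalent}, giving $H\in\mathcal{L}_m^{\mathbb{B}}(M)$ and $H\in\mathcal{L}_s^{\mathbb{B}}(A)$; as $X=M+A$ is an inner decomposition, Definition \ref{de-HX} yields $H\in\mathcal{L}^{\mathbb{B}}(X)$. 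The main obstacle is the one confronted in the third paragraph: a priori the martingale and finite-variation halves supply different integrand representatives, and merging them into one requires the inner stopping that pushes all the mass of the integrators into $\mathbb{B}$. This is exactly where the inner hypothesis on $X$ is indispensable—for a non-inner $M$ (such as the $\widehat M$ over $\mathbb{B}^*$ of Section 1.1) the stopped integrator would retain mass outside $\mathbb{B}$ and the merge would break down.
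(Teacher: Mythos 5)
Your cycle is correct, and the two easy implications ((X2)$\Rightarrow$(X3) via the constant representative, (X3)$\Rightarrow$(X1) via (M3), (A3) and Definition \ref{de-HX}) coincide with the paper's proof. For the substantive implication (X1)$\Rightarrow$(X2), however, you take a genuinely different route. The paper never touches the integrators: after producing $\widetilde K$ with inner FCS $(\widetilde T_n,M^{(n)})$ from (M1)$\Rightarrow$(M2) and $\widehat K$ with FCS $(\widehat T_n,A^{(n)})$ from (A1)$\Rightarrow$(A2), it sets $(T_n)=(\widetilde T_n\wedge \widehat T_n)$ and merges the two \emph{integrands} into the single coupled predictable process
$\widetilde H=\min\{\widetilde K^+,\widehat K^+\}-\min\{\widetilde K^-,\widehat K^-\}$,
which still restricts to $H$ on $\mathbb{B}$ and satisfies $|\widetilde H|\leq\min\{|\widetilde K|,|\widehat K|\}$; the domination principle for integrands (Theorem 9.2 of \cite{He} on the martingale side, trivial on the Stieltjes side) then gives $\widetilde H\in\mathcal{L}_m(M^{(n)})\cap\mathcal{L}_s(A^{(n)})$ with the original $M^{(n)},A^{(n)}$. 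You instead keep one integrand and modify the \emph{integrators}, replacing them by the inner-stopped versions $(M^{(n)})^{U_n\wedge(T_F-)}$, $(A^{(n)})^{U_n\wedge(T_F-)}$, whose variation and quadratic-variation measures are carried by $\llbracket{0,U_n}\rrbracket\cap\llbracket{0,T_F}\llbracket\subseteq\mathbb{B}$, where all coupled representatives of $H$ coincide. This works: the pre-stopped $\widehat M^{(n)}$ is a local martingale precisely because $(U_n,M^{(n)})$ inherits the inner property (commutation of stopping with pre-stopping plus stability of $\mathcal{M}_{\mathrm{loc}}$ under stopping), integrability survives pre-stopping of the integrator since $[X^{T-}]=[X]^{T-}$ and $\mathcal{A}^+_{\mathrm{loc}}$ is stable under domination, and the transferred FV-integrability is a pathwise statement. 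What each approach buys: the paper's $\min$-device is shorter, needs only the standard domination principle, and is reused almost verbatim in Lemma \ref{HX-unique} and Theorem \ref{HX-p}; your construction needs a few more (standard but nontrivial) facts about pre-stopping, but it makes explicit \emph{why} the inner hypothesis is indispensable — it is what allows the integrators' mass to be pushed inside $\mathbb{B}$ — which is the conceptual heart of the paper. One small imprecision in your closing remark: for a non-inner $M$ (e.g., $\widehat M$ over $\mathbb{B}^*$) the obstruction is not that the stopped integrator ``retains mass outside $\mathbb{B}$'' but that $(M^{(n)})^{T_n\wedge(T_F-)}$ fails to be a local martingale at all, so the class $\mathcal{L}_m(\widehat M^{(n)})$ is not even defined; the substance of your point stands.
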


\begin{remark}\normalfont
Based on Theorem \ref{process}(3), the assertion (X3) in Theorem \ref{HX=} can be reformulated in an equivalent manner as follows:
\begin{description}
  \item[(X3$'$)] There exist an FCS $(T_n,H^{(n)})$ for $H\in\mathcal{P}^\mathbb{B}$ and a decomposed inner FCS $(S_n,M^{(n)},A^{(n)})$ for $X\in \mathcal{S}^{i,\mathbb{B}}$ such that $H^{(n)}\in\mathcal{L}_m(M^{(n)})\cap\mathcal{L}_s(A^{(n)})$ for each $n\in \mathbf{N}^+$.
\end{description}
\end{remark}

Based on Theorems \ref{HA-FCS} and \ref{eq-HM}, the stochastic integral $H_{\bullet}X$ as defined in \eqref{HX} can be characterized as the summation of a sequence of stochastic integrals of predictable processes with respect to semimartingales. This particular characterization is formally presented in the subsequent theorem.

\begin{theorem}\label{eq-HX}
Let $H\in \mathcal{L}^\mathbb{B}(X)$.
Suppose that $(T_n,H^{(n)})$ is an FCS for $H\in\mathcal{P}^\mathbb{B}$, and that $(T_n,M^{(n)},A^{(n)})$ is a decomposed inner FCS for $X\in \mathcal{S}^{i,\mathbb{B}}$ such that  $H^{(n)}\in\mathcal{L}_m(M^{(n)})\cap\mathcal{L}_s(A^{(n)})$ for each $n\in \mathbf{N}^+$. Then $(T_n,H^{(n)}.M^{(n)},H^{(n)}.A^{(n)})$ is a decomposed inner FCS for $H_{\bullet}X\in\mathcal{S}^{i,\mathbb{B}}$, and $H_{\bullet}X$ can be expressed as
\begin{equation}\label{HX-expression-2}
      H_{\bullet}X=\left((H_0X_0)I_{\llbracket{0}\rrbracket}+\sum\limits_{n=1}^{{+\infty}}
      (H^{(n)}.M^{(n)}+H^{(n)}.A^{(n)})I_{\rrbracket{T_{n-1},T_n}
      \rrbracket}\right)\mathfrak{I}_\mathbb{B},\quad T_0=0.
\end{equation}
Furthermore, if $(\widetilde{T}_n,K^{(n)})$ is another FCS for $H\in\mathcal{P}^\mathbb{B}$, and if $(\widehat{T}_n,N^{(n)},V^{(n)})$ is another decomposed inner FCS for $X\in \mathcal{S}^{i,\mathbb{B}}$ such that $K^{(n)}\in\mathcal{L}_m(N^{(n)})\cap\mathcal{L}_s(V^{(n)})$ for each $n\in \mathbf{N}^+$,
then $H_{\bullet}X=Z$, where
      \[
      Z=\left((H_0X_0)I_{\llbracket{0}\rrbracket}+\sum\limits_{n=1}^{{+\infty}}
      (K^{(n)}.N^{(n)}+K^{(n)}.V^{(n)})I_{\rrbracket{\tau_{n-1},\tau_n}
      \rrbracket}\right)\mathfrak{I}_\mathbb{B},\quad \tau_0=0,
      \]
and $(\tau_n)=(\widetilde{T}_n\wedge\widehat{T}_n)$.
In this case, we say that the expression \eqref{HX-expression-2} is independent of the choice of the FCS $(T_n,H^{(n)})$ for $H\in\mathcal{P}^\mathbb{B}$, and the decomposed inner FCS $(T_n,M^{(n)},A^{(n)})$ for $X\in \mathcal{S}^{i,\mathbb{B}}$.
\end{theorem}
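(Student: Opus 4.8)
The plan is to reduce the statement to its two constituent integrals $H_{\bullet}M$ and $H_{\bullet}A$ and then invoke Theorems \ref{eq-HM} and \ref{HA-FCS}, afterwards handling the change of FCS by combining Lemma \ref{HX-unique} with a common-localizing-sequence argument. First I would verify that $X=M+A$ is genuinely an $(H,\mathbb{B})$-decomposition. Since $(T_n,M^{(n)})$ is an inner FCS for $M$ and $(T_n,H^{(n)})$ is an FCS for $H$ with $H^{(n)}\in\mathcal{L}_m(M^{(n)})$, condition (M3) of Theorem \ref{HM=} gives $H\in\mathcal{L}_m^\mathbb{B}(M)$; likewise $H^{(n)}\in\mathcal{L}_s(A^{(n)})$ together with condition (A3) of Theorem \ref{HA-equivalent} yields $H\in\mathcal{L}_s^\mathbb{B}(A)$. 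Hence by Definition \ref{de-HX}, $H\in\mathcal{L}^\mathbb{B}(X)$ and $H_{\bullet}X=H_{\bullet}M+H_{\bullet}A$.

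Next I would apply Theorem \ref{eq-HM} to obtain that $(T_n,H^{(n)}.M^{(n)})$ is an inner FCS for $H_{\bullet}M\in(\mathcal{M}_{\mathrm{loc}})^{i,\mathbb{B}}$ with expression \eqref{HM-expression-2}, and Theorem \ref{HA-FCS} to obtain that $(T_n,H^{(n)}.A^{(n)})$ is an FCS for $H_{\bullet}A\in\mathcal{V}^\mathbb{B}$ with expression \eqref{HA-expression2}. Since both expressions carry the same restriction $\mathfrak{I}_\mathbb{B}$ and the same partitioning indicators, adding them and using $A_0=0$, $X_0=M_0+A_0$, and the linearity of the classical integral $H^{(n)}.X^{(n)}=H^{(n)}.M^{(n)}+H^{(n)}.A^{(n)}$, delivers \eqref{HX-expression-2}. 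The decomposed-inner-FCS property then follows directly from Definition \ref{SiB}(2): $(T_n,H^{(n)}.M^{(n)})$ is an inner FCS for the $\mathbb{B}$-inner local martingale part, $(T_n,H^{(n)}.A^{(n)})$ is an FCS for the finite-variation part, which has null initial value and so lies in $(\mathcal{V}_0)^\mathbb{B}$, and $H_{\bullet}X=H_{\bullet}M+H_{\bullet}A$ is their sum.

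For the independence assertion, the same reasoning applied to $(\widehat{T}_n,N^{(n)},V^{(n)})$ and $(\widetilde{T}_n,K^{(n)})$ shows $X=N+V$ is also an $(H,\mathbb{B})$-decomposition, so Lemma \ref{HX-unique} already guarantees $H_{\bullet}M+H_{\bullet}A=H_{\bullet}N+H_{\bullet}V$; that is, the value $H_{\bullet}X$ is independent of the decomposition. To recover the explicit representation $Z$, the technical point is that $(\widetilde{T}_n)$ and $(\widehat{T}_n)$ need not coincide, so I would pass to the common sequence $\tau_n=\widetilde{T}_n\wedge\widehat{T}_n$. By Theorem \ref{process}(3), $(\tau_n,K^{(n)})$ remains an FCS for $H$ and $(\tau_n,N^{(n)})$, $(\tau_n,V^{(n)})$ remain FCSs for $N$ and $V$; applying Theorem \ref{MC}(1) to the FCS $(\tau_n,N^{(n)})$ and the inner FCS $(\widehat{T}_n,N^{(n)})$ for $N$ certifies that $(\tau_n,N^{(n)})$ is again an inner FCS, whence $(\tau_n,N^{(n)},V^{(n)})$ is a decomposed inner FCS for $X$. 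The integrability conditions $K^{(n)}\in\mathcal{L}_m(N^{(n)})\cap\mathcal{L}_s(V^{(n)})$ concern the processes alone and are unaffected by shrinking the stopping times, so the first part of the theorem applies to these common-indexed FCSs and produces exactly $H_{\bullet}X=Z$.

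The step I expect to be the main obstacle is the independence part, specifically the bookkeeping needed to replace the two mismatched localizing sequences by $\tau_n=\widetilde{T}_n\wedge\widehat{T}_n$ while certifying that the inner-FCS structure, and not merely the plain FCS structure, survives the stopping, since it is precisely the inner property that underwrites uniqueness through Lemma \ref{HX-unique} and Theorem \ref{[M]-property}(5). The first part, by contrast, is essentially a direct assembly of Theorems \ref{eq-HM} and \ref{HA-FCS}.
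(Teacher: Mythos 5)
Your proposal is correct and follows essentially the same route as the paper's proof: verify the given data constitute an $(H,\mathbb{B})$-decomposition, assemble Theorems \ref{eq-HM} and \ref{HA-FCS} to obtain the decomposed inner FCS and the expression \eqref{HX-expression-2}, and then handle the mismatched sequences via Theorem \ref{process}(3) together with the decomposition-independence built into Definition \ref{de-HX} (i.e.\ Lemma \ref{HX-unique}) and the independence property of \eqref{x-expression}. If anything, you are slightly more careful than the paper, which compresses the passage to the common sequence $(\tau_n)$ into a citation of Theorem \ref{process}(3) alone, whereas you correctly note that Theorem \ref{MC}(1) is what certifies the \emph{inner} FCS structure survives the stopping.
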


\begin{remark}\normalfont
In the context of \eqref{HX-expression-2}, the FCS $(T_n, H^{(n)})$ for $H\in\mathcal{P}^\mathbb{B}$ can be substituted with a coupled predictable process $\widetilde{H}$ for $H\in\mathcal{P}^\mathbb{B}$, given that $(T_n,\widetilde{H}$) also constitutes an FCS for $H\in\mathcal{P}^\mathbb{B}$.
Specifically, if $\widetilde{H}$ is a coupled predictable process for $H\in\mathcal{P}^\mathbb{B}$ and $(T_n,M^{(n)},A^{(n)})$ represents a decomposed inner FCS for $X\in \mathcal{S}^{i,\mathbb{B}}$ such that $\widetilde{H}\in\mathcal{L}_m(M^{(n)})\cap\mathcal{L}_s(A^{(n)})$ for each $n\in \mathbf{N}^+$, then $(T_n,\widetilde{H}.M^{(n)},\widetilde{H}.A^{(n)})$ serves as a decomposed inner FCS for $H_{\bullet}X\in\mathcal{S}^{i,\mathbb{B}}$, and $H_{\bullet}X$ can be expressed as
      \begin{equation}\label{HX-expression-1}
      H_{\bullet}X=\left((H_0X_0)I_{\llbracket{0}\rrbracket}+\sum\limits_{n=1}^{{+\infty}}
      (\widetilde{H}.M^{(n)}+\widetilde{H}.A^{(n)})I_{\rrbracket{T_{n-1},T_n}
      \rrbracket}\right)\mathfrak{I}_\mathbb{B},\quad T_0=0.
      \end{equation}
\end{remark}

\begin{corollary}\label{bound-HX}
Suppose further that $H$ is a $\mathbb{B}$-locally bounded predictable process. Then
$H\in \mathcal{L}^\mathbb{B}(X)$. Furthermore, given a decomposed inner FCS $(T_n,M^{(n)},A^{(n)})$ for $X\in\mathcal{S}^{i,\mathbb{B}}$, if $\widetilde{H}$ is a coupled locally bounded predictable process for $H$ and $(T_n,H^{(n)})$ forms an FCS for $H$ (a $\mathbb{B}$-locally bounded predictable process),  then both $(T_n,\widetilde{H}.M^{(n)},\widetilde{H}.A^{(n)})$ and $(T_n,H^{(n)}.M^{(n)},H^{(n)}.A^{(n)})$ emerge as decomposed inner FCSs for $H_{\bullet}X\in\mathcal{S}^{i,\mathbb{B}}$.
\end{corollary}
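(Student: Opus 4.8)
The plan is to reduce the corollary to the two component integrals $H_{\bullet}M$ and $H_{\bullet}A$ already analysed in Sections \ref{section3} and \ref{section4}, exploiting that $\mathbb{B}$-local boundedness of $H$ degenerates, level by level, to ordinary local boundedness, where the classical integrability results apply automatically. First I would settle the membership claim $H\in\mathcal{L}^\mathbb{B}(X)$. Since $X\in\mathcal{S}^{i,\mathbb{B}}$, fix an inner decomposition $X=M+A$ with $M\in(\mathcal{M}_{\mathrm{loc}})^{i,\mathbb{B}}$ and $A\in(\mathcal{V}_0)^\mathbb{B}$. Because $H$ is $\mathbb{B}$-locally bounded predictable, Corollary \ref{bound-HM} gives at once $H\in\mathcal{L}_m^\mathbb{B}(M)$. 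For the finite-variation part I would invoke the equivalence (A1)$\Leftrightarrow$(A3) of Theorem \ref{HA-equivalent}: taking a coupled locally bounded predictable process $\widetilde{H}$ for $H$ (which exists by Theorem \ref{restriction}(1)) together with any FCS $(T_n,A^{(n)})$ for $A$, each $A^{(n)}$ lies in $\mathcal{V}$ and $\widetilde{H}$ is a genuine locally bounded predictable process, so the classical Stieltjes theory forces $\widetilde{H}\in\mathcal{L}_s(A^{(n)})$ for every $n$. Condition (A3) is thus met, whence $H\in\mathcal{L}_s^\mathbb{B}(A)$. By Definition \ref{de-HX} this yields $H\in\mathcal{L}^\mathbb{B}(X)$ with $H_{\bullet}X=H_{\bullet}M+H_{\bullet}A$.

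Having established integrability, I would obtain the two FCS assertions by applying Theorem \ref{eq-HX} twice. Given a decomposed inner FCS $(T_n,M^{(n)},A^{(n)})$ for $X$, we have $M^{(n)}\in\mathcal{M}_{\mathrm{loc}}$ and $A^{(n)}\in\mathcal{V}_0$. Both the coupled process $\widetilde{H}$ and each member $H^{(n)}$ of the FCS for $H$ are (ordinary) locally bounded predictable processes, so by the classical integrability of locally bounded integrands one has $\widetilde{H},\,H^{(n)}\in\mathcal{L}_m(M^{(n)})\cap\mathcal{L}_s(A^{(n)})$ for every $n$. Since $H=\widetilde{H}\mathfrak{I}_\mathbb{B}$, the constant-in-$n$ sequence $(T_n,\widetilde{H})$ is itself an FCS for $H\in\mathcal{P}^\mathbb{B}$ relative to the same $(T_n)$. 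Applying Theorem \ref{eq-HX} with the FCS $(T_n,H^{(n)})$ then shows that $(T_n,H^{(n)}.M^{(n)},H^{(n)}.A^{(n)})$ is a decomposed inner FCS for $H_{\bullet}X\in\mathcal{S}^{i,\mathbb{B}}$, and applying the same theorem with $(T_n,\widetilde{H})$ shows that $(T_n,\widetilde{H}.M^{(n)},\widetilde{H}.A^{(n)})$ is one as well, delivering both statements.

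The only genuine content resides in the first step, i.e.\ certifying $H\in\mathcal{L}^\mathbb{B}(X)$; once that is in hand the FCS claims are immediate specializations of Theorem \ref{eq-HX}. The conceptual crux, which I expect to be the sole (and still routine) obstacle, is the observation that $\mathbb{B}$-local boundedness of $H$ transfers to ordinary local boundedness of $\widetilde{H}$ and of each $H^{(n)}$, so that the classical facts ``a locally bounded predictable integrand is integrable with respect to every local martingale and every finite-variation process'' apply at each level $n$ with no supplementary integrability hypothesis. The remaining bookkeeping is benign: one checks that the finite-variation integrals land in $(\mathcal{V}_0)^\mathbb{B}$ (since $A^{(n)}\in\mathcal{V}_0$ gives $(H_{\bullet}A)_0=H_0A_0=0$, cf.\ Corollary \ref{bound-HA}) and that the martingale integrals remain $\mathbb{B}$-inner (cf.\ Corollary \ref{bound-HM}), both of which are already encoded in the conclusion of Theorem \ref{eq-HX}.
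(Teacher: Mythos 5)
Your proposal is correct and follows essentially the same route as the paper: both reduce everything to the classical fact that locally bounded predictable processes are integrable with respect to every local martingale and every finite-variation process (Theorem 9.2 of \cite{He}), applied level by level to $\widetilde{H}$ and $H^{(n)}$ against $M^{(n)}$ and $A^{(n)}$, and then conclude via Theorems \ref{HX=} and \ref{eq-HX}. The only difference is cosmetic: the paper certifies $H\in\mathcal{L}^\mathbb{B}(X)$ directly from condition (X3) of Theorem \ref{HX=}, whereas you first pass through Corollary \ref{bound-HM}, Theorem \ref{HA-equivalent} and Definition \ref{de-HX} for a fixed inner decomposition --- a harmless but redundant detour, since your level-wise integrability check in the second step already establishes (X3).
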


\begin{corollary}\label{HX=c}
Let $H\in \mathcal{P}^\mathbb{C}$ and $X\in \mathcal{S}^\mathbb{C}$.
Then the following statements are equivalent:
\begin{itemize}
  \item[$(i)$] $H\in\mathcal{L}^\mathbb{C}(X)$.
  \item[$(ii)$] There exist a coupled predictable process $\widetilde{H}$ for $H\in\mathcal{P}^\mathbb{C}$ and an FCS $(T_n,X^{(n)})$ for $X\in \mathcal{S}^\mathbb{C}$ such that $\widetilde{H}\in\mathcal{L}(X^{(n)})$ for each $n\in \mathbf{N}^+$.
  \item[$(iii)$] There exist an FCS $(T_n,H^{(n)})$ for $H\in\mathcal{P}^\mathbb{B}$ and an FCS $(T_n,X^{(n)})$ for $X\in \mathcal{S}^\mathbb{C}$ such that  $H^{(n)}\in\mathcal{L}(X^{(n)})$ for each $n\in \mathbf{N}^+$.
  \item[$(iv)$] There exist an FS $(\tau_n)$ for $\mathbb{C}$ such that $H^{\tau_n}\in\mathcal{L}(X^{\tau_n})$ for each $n\in \mathbf{N}^+$.
\end{itemize}
\end{corollary}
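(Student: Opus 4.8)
The plan is to establish the cycle of implications $(i)\Rightarrow(ii)\Rightarrow(iv)\Rightarrow(i)$ together with the separate equivalence $(iii)\Leftrightarrow(iv)$, which jointly give the four-fold equivalence. The governing observation is that, because $\mathbb{C}$ is predictable, we sit in the special situation $\mathcal{S}^{i,\mathbb{C}}=\mathcal{S}^\mathbb{C}$ and $(\mathcal{M}_{\mathrm{loc}})^{i,\mathbb{C}}=(\mathcal{M}_{\mathrm{loc}})^\mathbb{C}$ (cases $(1)$ following Definitions \ref{inner} and \ref{SiB}). Hence every decomposition $X=M+A$ with $M\in(\mathcal{M}_{\mathrm{loc}})^\mathbb{C}$ and $A\in(\mathcal{V}_0)^\mathbb{C}$ is automatically an inner decomposition, and Theorem \ref{HX=} is available with $\mathbb{B}=\mathbb{C}$.

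For $(i)\Rightarrow(ii)$ I would invoke Theorem \ref{HX=}: statement $(i)$ is exactly (X1), which yields (X2), i.e.\ a coupled predictable process $\widetilde{H}$ and a decomposed inner FCS $(T_n,M^{(n)},A^{(n)})$ with $\widetilde{H}\in\mathcal{L}_m(M^{(n)})\cap\mathcal{L}_s(A^{(n)})$; putting $X^{(n)}=M^{(n)}+A^{(n)}$ produces an FCS $(T_n,X^{(n)})$ for $X\in\mathcal{S}^\mathbb{C}$ with $\widetilde{H}\in\mathcal{L}(X^{(n)})$, which is $(ii)$. Both $(ii)\Rightarrow(iv)$ and $(iii)\Rightarrow(iv)$ rest on the same stopping-refinement device: fixing any FS $(\sigma_n)$ for $\mathbb{C}$ and setting $S_n=T_n\wedge\sigma_n$, Corollary \ref{process-FS}(2) shows $(S_n)$ is again an FS, the truncated pairs remain FCSs, and $X^{S_n}=(X^{(n)})^{S_n}$ while $H^{S_n}=(\widetilde{H})^{S_n}$ (resp.\ $(H^{(n)})^{S_n}$); since classical $\widetilde{X}$-integrability is stable under stopping, $\widetilde{H}\in\mathcal{L}(X^{(n)})$ (resp.\ $H^{(n)}\in\mathcal{L}(X^{(n)})$) forces $H^{S_n}\in\mathcal{L}(X^{S_n})$, which is $(iv)$. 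The converse $(iv)\Rightarrow(iii)$ is immediate: by Corollary \ref{fcs-p}(2) the pairs $(\tau_n,X^{\tau_n})$ and $(\tau_n,H^{\tau_n})$ are FCSs for $X\in\mathcal{S}^\mathbb{C}$ and $H\in\mathcal{P}^\mathbb{C}$ (as $\mathcal{S}$ and $\mathcal{P}$ are stable under stopping and localization), and the hypothesis $H^{\tau_n}\in\mathcal{L}(X^{\tau_n})$ is precisely the requirement in $(iii)$.

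The hard part will be $(iv)\Rightarrow(i)$, where the obstacle is that the local decompositions furnished by classical integrability are mutually incompatible. Each $H^{\tau_n}\in\mathcal{L}(X^{\tau_n})$ supplies a decomposition $X^{\tau_n}=\widetilde{M}_n+\widetilde{A}_n$ with $H^{\tau_n}\in\mathcal{L}_m(\widetilde{M}_n)\cap\mathcal{L}_s(\widetilde{A}_n)$, but for $m\le n$ one only obtains $\widetilde{M}_n^{\tau_m}-\widetilde{M}_m\in\mathcal{W}_{\mathrm{loc}}$ rather than equality, so $(\tau_n,\widetilde{M}_n)$ is not an FCS for any single $\mathbb{C}$-local martingale and direct gluing fails. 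The remedy is to pass to increments: define $M^{(1)}=\widetilde{M}_1^{\tau_1}$ and $M^{(n+1)}=M^{(n)}+\widetilde{M}_{n+1}^{\tau_{n+1}}-\widetilde{M}_{n+1}^{\tau_n}$, and analogously $A^{(n)}$ from the $\widetilde{A}_n$, exactly as in the continuation formula \eqref{continuation}. Each $M^{(n)}\in\mathcal{M}_{\mathrm{loc}}$, and since the term added at stage $n+1$ vanishes on $\llbracket{0,\tau_n}\rrbracket$ the compatibility $M^{(k)}I_{\mathbb{C}\llbracket{0,\tau_k}\rrbracket}=M^{(l)}I_{\mathbb{C}\llbracket{0,\tau_k}\rrbracket}$ holds; Theorem \ref{restriction}(4) then assembles $M\in(\mathcal{M}_{\mathrm{loc}})^\mathbb{C}$ and $A\in(\mathcal{V}_0)^\mathbb{C}$ with $M^{\tau_n}=M^{(n)}$, $A^{\tau_n}=A^{(n)}$, and $X=M+A$ on $\mathbb{C}$ (the last via the telescoping identity $\widetilde{M}_k^{\tau_k}+\widetilde{A}_k^{\tau_k}=X^{\tau_k}$). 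I would then verify $H^{\tau_n}\in\mathcal{L}_m(M^{\tau_n})\cap\mathcal{L}_s(A^{\tau_n})$ by finite additivity over the $n$ increments: on $\rrbracket{\tau_{k-1},\tau_k}\rrbracket$ the integrand $H^{\tau_n}$ agrees with $H^{\tau_k}$ and the integrator increment is supported there, so integrability against $\widetilde{M}_k$ transfers. Applying Corollary \ref{HMc=} ($(iv)\Rightarrow(i)$) and Corollary \ref{HAp-equivalent}(1) yields $H\in\mathcal{L}_m^\mathbb{C}(M)\cap\mathcal{L}_s^\mathbb{C}(A)$, whence $H\in\mathcal{L}^\mathbb{C}(X)$ by Definition \ref{de-HX}. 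The principal difficulty is thus this gluing-by-increments, together with the bookkeeping transporting integrability from the incompatible local decompositions onto the single assembled $\mathbb{C}$-decomposition.
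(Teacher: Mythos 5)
Your proposal is correct and follows essentially the same route as the paper: the easy implications go through Theorem \ref{HX=} and the stopping-refinement of Corollary \ref{process-FS}(2), and for the hard direction $(iv)\Rightarrow(i)$ you use exactly the paper's gluing-by-increments construction (the continuation formula \eqref{continuation}, which is \eqref{MMAA} in the proof of Lemma \ref{XMA}) together with the same piecewise transfer of integrability from the incompatible local decompositions onto the assembled $\mathbb{C}$-decomposition. The only cosmetic differences are the ordering of the implication cycle and that you conclude via Corollary \ref{HMc=} and Corollary \ref{HAp-equivalent}(1) rather than citing statement (X3) of Theorem \ref{HX=} directly.
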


\begin{corollary}\label{eq-HXc}
Let $X\in \mathcal{S}^\mathbb{C}$ and $H\in \mathcal{L}^\mathbb{C}(X)$.
\begin{itemize}
  \item [$(1)$] If $(\tau_n)$ is an FS for $\mathbb{C}$, then $(\tau_n,H^{\tau_n}.X^{\tau_n})$ is an FCS for $H_{\bullet}X\in\mathcal{S}^\mathbb{C}$, and $H_{\bullet}X$ can be expressed as
      \begin{equation}\label{HXc-expression0}
      H_{\bullet}X=\left((H_0X_0)I_{\llbracket{0}\rrbracket}+\sum\limits_{n=1}^{{+\infty}}
      (H^{\tau_n}.X^{\tau_n})I_{\rrbracket{\tau_{n-1},\tau_n}
      \rrbracket}\right)\mathfrak{I}_\mathbb{C},\quad \tau_0=0,
      \end{equation}
      which is independent of the choice of the FS $(\tau_n)$ for $\mathbb{C}$.

  \item [$(2)$] If $(T_n,H^{(n)})$ is an FCS for $H\in\mathcal{P}^\mathbb{C}$ and $(T_n,X^{(n)})$ is an FCS for $X\in\mathcal{S}^\mathbb{C}$ such that $H^{(n)}\in\mathcal{L}(X^{(n)})$ for each $n\in \mathbf{N}^+$, then $(T_n,H^{(n)}.X^{(n)})$ is an FCS for $H_{\bullet}X\in\mathcal{S}^\mathbb{C}$, and $H_{\bullet}X$ can be expressed as
      \begin{equation}\label{HXc-expression-2}
      H_{\bullet}X=\left((H_0X_0)I_{\llbracket{0}\rrbracket}+\sum\limits_{n=1}^{{+\infty}}
      (H^{(n)}.X^{(n)})I_{\rrbracket{T_{n-1},T_n}
      \rrbracket}\right)\mathfrak{I}_\mathbb{C},\quad T_0=0,
      \end{equation}
      which is independent of the choice of FCSs $(T_n, H^{(n)})$ for $H\in\mathcal{P}^\mathbb{C}$ and $(T_n,X^{(n)})$ for $X\in\mathcal{S}^\mathbb{C}$.
\end{itemize}
\end{corollary}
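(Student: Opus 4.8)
The plan is to treat the two parts in order, deducing the predictable-set statements from the inner-semimartingale machinery and from the martingale- and finite-variation-part results already specialized to $\mathbb{C}$. Throughout I would exploit that for a predictable set one has $\mathcal{S}^{i,\mathbb{C}}=\mathcal{S}^\mathbb{C}$ and $(\mathcal{M}_{\mathrm{loc}})^{i,\mathbb{C}}=(\mathcal{M}_{\mathrm{loc}})^\mathbb{C}$, so that every decomposition of $X\in\mathcal{S}^\mathbb{C}$ is automatically an inner decomposition.

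For Part $(1)$ I would start from Definition \ref{de-HX}, which furnishes an $(H,\mathbb{C})$-decomposition $X=M+A$ with $M\in(\mathcal{M}_{\mathrm{loc}})^\mathbb{C}$, $A\in(\mathcal{V}_0)^\mathbb{C}$, $H\in\mathcal{L}_m^\mathbb{C}(M)\cap\mathcal{L}_s^\mathbb{C}(A)$, and $H_{\bullet}X=H_{\bullet}M+H_{\bullet}A$. For the given FS $(\tau_n)$, Corollary \ref{eq-HMc}$(1)$ gives that $(\tau_n,H^{\tau_n}.M^{\tau_n})$ is an FCS for $H_{\bullet}M$ and Corollary \ref{HAp-equivalent}$(2)$ that $(\tau_n,H^{\tau_n}.A^{\tau_n})$ is an FCS for $H_{\bullet}A$; in particular $H^{\tau_n}\in\mathcal{L}_m(M^{\tau_n})\cap\mathcal{L}_s(A^{\tau_n})=\mathcal{L}(X^{\tau_n})$, so $H^{\tau_n}.X^{\tau_n}$ is the ordinary integral and $H^{\tau_n}.X^{\tau_n}=H^{\tau_n}.M^{\tau_n}+H^{\tau_n}.A^{\tau_n}$. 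Adding the two FCSs (they share the reducing sequence $(\tau_n)$) yields, directly from the definition of an FCS together with linearity of $I_\mathbb{C}$ and of stopping, that $(\tau_n,H^{\tau_n}.X^{\tau_n})$ is an FCS for $H_{\bullet}M+H_{\bullet}A=H_{\bullet}X\in\mathcal{S}^\mathbb{C}$. The representation \eqref{HXc-expression0} is then the expansion \eqref{x-expression-FS} of Corollary \ref{process-FS}$(3)$ applied to $H_{\bullet}X$ with this FCS, using $(H_{\bullet}X)_0=(H^{\tau_1}.X^{\tau_1})_0=H_0X_0$, and independence of the FS is exactly the independence clause there.

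For Part $(2)$ I would reduce to Part $(1)$. Choosing any FS $(\tau_m)$ for $\mathbb{C}$, Part $(1)$ gives that $(\tau_m,H^{\tau_m}.X^{\tau_m})$ is an FCS for $H_{\bullet}X$, hence $(H_{\bullet}X)I_{\llbracket 0,\tau_m\rrbracket}=(H^{\tau_m}.X^{\tau_m})I_{\llbracket 0,\tau_m\rrbracket}$. To verify the FCS condition for $(T_n,H^{(n)}.X^{(n)})$ I must show, for each fixed $n$, that $(H_{\bullet}X)I_{\mathbb{C}\llbracket 0,T_n\rrbracket}=(H^{(n)}.X^{(n)})I_{\mathbb{C}\llbracket 0,T_n\rrbracket}$. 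I would check this on each set $\llbracket 0,\rho_m\rrbracket$ with $\rho_m=T_n\wedge\tau_m$, noting that $\rho_m$ is a $\mathbb{C}$-inner stopping time and that $\bigcup_m\llbracket 0,\rho_m\rrbracket\supseteq\mathbb{C}\llbracket 0,T_n\rrbracket$, so that agreement on all the $\llbracket 0,\rho_m\rrbracket$ forces agreement on $\mathbb{C}\llbracket 0,T_n\rrbracket$ (Theorem \ref{process}$(1)$). On $\llbracket 0,\rho_m\rrbracket$ the classical locality of stochastic integrals gives $(H^{\tau_m}.X^{\tau_m})^{\rho_m}=H^{\rho_m}.X^{\rho_m}$, while the FCS relations $HI_{\mathbb{C}\llbracket 0,T_n\rrbracket}=H^{(n)}I_{\mathbb{C}\llbracket 0,T_n\rrbracket}$ and $XI_{\mathbb{C}\llbracket 0,T_n\rrbracket}=X^{(n)}I_{\mathbb{C}\llbracket 0,T_n\rrbracket}$ (recall $\rho_m\le T_n$ and $\llbracket 0,\rho_m\rrbracket\subseteq\mathbb{C}$) identify $H^{\rho_m}=(H^{(n)})^{\rho_m}$ and $X^{\rho_m}=(X^{(n)})^{\rho_m}$, whence $H^{\rho_m}.X^{\rho_m}=(H^{(n)}.X^{(n)})^{\rho_m}$. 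Combining these with the displayed relation at level $\tau_m$ gives $(H_{\bullet}X)I_{\llbracket 0,\rho_m\rrbracket}=(H^{(n)}.X^{(n)})I_{\llbracket 0,\rho_m\rrbracket}$ for every $m$, which is what is needed; the expansion \eqref{HXc-expression-2} and its independence then follow from Theorem \ref{process}$(5)$ exactly as in Part $(1)$.

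The main obstacle is the locality step in Part $(2)$: the ordinary integrals $H^{(n)}.X^{(n)}$ are built from $H^{(n)},X^{(n)}$, which coincide with $H,X$ only on $\mathbb{C}\llbracket 0,T_n\rrbracket$ and carry irrelevant information beyond $\mathbb{B}$, whereas a stochastic integral is a priori a global object. The delicate point is to confirm that the values of $H^{(n)}.X^{(n)}$ inside $\mathbb{C}\llbracket 0,T_n\rrbracket$ depend only on the restrictions of $H^{(n)},X^{(n)}$ to that set; this is achieved by the double localization (first by the FS $(\tau_m)$, to invoke Part $(1)$, then by $\rho_m=T_n\wedge\tau_m$, to exhaust $\mathbb{C}\llbracket 0,T_n\rrbracket$) combined with the fact that classical stochastic integration commutes with stopping. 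One must also take care that the stopped $\mathbb{C}$-processes $H^{\rho_m},X^{\rho_m}$ are genuine processes, which holds because $\rho_m$ is $\mathbb{C}$-inner (Theorem \ref{fcs}), so that the classical locality results may legitimately be applied.
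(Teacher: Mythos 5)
Your proposal is correct and follows essentially the same route as the paper: Part (1) via the $(H,\mathbb{C})$-decomposition $X=M+A$ combined with Corollary \ref{eq-HMc}(1) and Corollary \ref{HAp-equivalent}(2), and Part (2) by invoking Part (1) with an FS and then identifying $(H^{(n)}.X^{(n)})$ with $H_{\bullet}X$ on $\mathbb{C}\llbracket 0,T_n\rrbracket$ through the stopping times $T_n\wedge\tau_m$ and the fact that classical stochastic integration commutes with stopping. The paper phrases the Part (2) step as an explicit interval-by-interval decomposition of $\mathbb{C}\llbracket 0,T_n\rrbracket$ (mirroring the proof of Corollary \ref{eq-HMc}(2)), but this is the same double-localization argument you give.
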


\begin{remark}\normalfont
It is evident that $(\tau_n,H^{\tau_n}.X^{\tau_n})$ referenced in Corollary \ref{eq-HXc}(1) also forms an inner FCS for $H_{\bullet}X\in\mathcal{S}^{i,\mathbb{C}}=\mathcal{S}^\mathbb{C}$. On the other hand, it is not a prerequisite for $(T_n,H^{(n)}.X^{(n)})$ mentioned in Corollary \ref{eq-HXc}(2) to be an inner FCS for $H_{\bullet}X\in\mathcal{S}^{i,\mathbb{C}}=\mathcal{S}^{\mathbb{C}}$. This exemption arises because, as established by Theorem \ref{MC}(1), $(T_n,H^{(n)}.X^{(n)})$ can always be converted into an inner FCS for $H_{\bullet}X\in\mathcal{S}^{i,\mathbb{C}}$.
\end{remark}

In the ensuing two theorems, we delineate the fundamental properties of the $\mathbb{B}$-stochastic integral, specifically $H_{\bullet}X$ as formalized in Definition \ref{de-HX}. These properties constitute an extension of the findings derived from conventional stochastic integrals (see, e.g., Theorems 9.15 and 9.18 of \cite{He}), particularly in the context of $\mathbb{B}$-stochastic integrals.

\begin{theorem}\label{HX-p}
Let $Y\in \mathcal{S}^{i,\mathbb{B}}$, and $a,b\in\mathbf{R}$. Suppose that $H\in \mathcal{L}^\mathbb{B}(X)\cap\mathcal{L}^\mathbb{B}(Y)$ and $K\in \mathcal{L}^\mathbb{B}(X)$.
\begin{itemize}
  \item [$(1)$] $aH+bK\in\mathcal{L}^\mathbb{B}(X)$, and in this case
  \begin{equation}\label{HX+}
      (aH+bK)_{\bullet}X=a(H_{\bullet}X)+b(K_{\bullet}X).
  \end{equation}

  \item [$(2)$] $H\in\mathcal{L}^\mathbb{B}(aX+bY)$, and in this case
  \begin{equation}\label{+HX}
      H_{\bullet}(aX+bY)=a(H_{\bullet}X)+b(H_{\bullet}Y).
  \end{equation}

  \item [$(3)$] Let $L\in\mathcal{P}^\mathbb{B}$. Then $L\in\mathcal{L}^\mathbb{B}(H_{\bullet}X)$ if and only if $LH\in\mathcal{L}^\mathbb{B}(X)$.
  Furthermore, if $L\in\mathcal{L}^\mathbb{B}(H_{\bullet}X)$ (or equivalently, $LH\in\mathcal{L}^\mathbb{B}(X)$), then
  \begin{equation}\label{hHX}
     L_{\bullet}(H_{\bullet}X)=(LH)_{\bullet}X.
  \end{equation}
\end{itemize}
\end{theorem}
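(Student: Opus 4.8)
The plan is to reduce all three identities to the two building blocks already in hand, namely Theorem \ref{HAproperty} for the Stieltjes integral $H_{\bullet}A$ and Theorem \ref{HM-o-p} for the stochastic integral $H_{\bullet}M$, by exploiting the defining identity $H_{\bullet}X=H_{\bullet}M+H_{\bullet}A$ of Definition \ref{de-HX}. Throughout I would keep in mind that $H_{\bullet}M\in(\mathcal{M}_{\mathrm{loc}})^{i,\mathbb{B}}$ (Theorem \ref{eq-HM}) and $H_{\bullet}A\in(\mathcal{V}_0)^\mathbb{B}$ (its initial value vanishes because $A_0=0$), so that $H_{\bullet}X=H_{\bullet}M+H_{\bullet}A$ is itself an inner decomposition of $H_{\bullet}X$; this is exactly what lets me feed the output of one integral back in as an integrator. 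A preliminary fact I would isolate first is the \emph{decomposition-independence of integrability}: if $X=M+A$ and $X=N+V$ are two inner decompositions, then $W:=M-N=V-A$ lies in $(\mathcal{M}_{\mathrm{loc}})^{i,\mathbb{B}}\cap(\mathcal{V}_0)^\mathbb{B}=(\mathcal{W}_{\mathrm{loc},0})^{i,\mathbb{B}}$, and I would show that $H\in\mathcal{L}^\mathbb{B}(X)$ witnessed by one decomposition forces integrability witnessed by the other (equality of the values being precisely Lemma \ref{HX-unique}).

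Part (2) is the most direct and needs no reconciliation. Given inner decompositions $X=M_X+A_X$ and $Y=M_Y+A_Y$ witnessing $H\in\mathcal{L}^\mathbb{B}(X)\cap\mathcal{L}^\mathbb{B}(Y)$, the linearity of $\mathcal{S}^{i,\mathbb{B}}$ (Theorem \ref{MC}(3)) shows that $aX+bY=(aM_X+bM_Y)+(aA_X+bA_Y)$ is an inner decomposition, with $aM_X+bM_Y\in(\mathcal{M}_{\mathrm{loc}})^{i,\mathbb{B}}$ and $aA_X+bA_Y\in(\mathcal{V}_0)^\mathbb{B}$. Then Theorem \ref{HM-o-p}(2) gives $H\in\mathcal{L}_m^\mathbb{B}(aM_X+bM_Y)$ with $H_{\bullet}(aM_X+bM_Y)=a(H_{\bullet}M_X)+b(H_{\bullet}M_Y)$, Theorem \ref{HAproperty}(2) gives the analogue for the finite-variation parts, and adding these and regrouping through \eqref{HX} yields \eqref{+HX}.

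For Part (1) I would first produce a single inner decomposition $X=M+A$ against which $H$, $K$, and hence $aH+bK$, are all integrable; granting the decomposition-independence above, I transport the $K$-decomposition onto the $H$-decomposition, apply Theorem \ref{HM-o-p}(1) and Theorem \ref{HAproperty}(1) to the $M$- and $A$-components separately, and recombine via \eqref{HX} to obtain \eqref{HX+}. For Part (3) I would use the inner decomposition $H_{\bullet}X=H_{\bullet}M+H_{\bullet}A$ noted above: Theorem \ref{HM-o-p}(3) gives $L\in\mathcal{L}_m^\mathbb{B}(H_{\bullet}M)\Leftrightarrow LH\in\mathcal{L}_m^\mathbb{B}(M)$ with $L_{\bullet}(H_{\bullet}M)=(LH)_{\bullet}M$, and Theorem \ref{HAproperty}(3) gives the analogue for $H_{\bullet}A$ and $A$. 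Intersecting these two equivalences identifies $L\in\mathcal{L}^\mathbb{B}(H_{\bullet}X)$ with $LH\in\mathcal{L}^\mathbb{B}(X)$, and applying \eqref{HX} twice yields \eqref{hHX}. The only point where decomposition-independence re-enters is the forward implication, to read off integrability of $L$ against an arbitrary inner decomposition of $H_{\bullet}X$ from the specific one $H_{\bullet}M+H_{\bullet}A$.

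The main obstacle is the decomposition-independence of integrability, since everything else is bookkeeping on top of Theorems \ref{HAproperty}, \ref{HM-o-p} and Lemma \ref{HX-unique}. I expect to prove it by the FCS-reduction used throughout the paper: via Theorem \ref{HX=} and Theorem \ref{process}(3) I pass to a common stopping-time sequence $(T_n)$ and to classical decomposed inner FCS representations, so that the difference $W$ is represented by processes $W^{(n)}\in\mathcal{W}_{\mathrm{loc}}$, and then invoke the classical fact that, for a local martingale of locally integrable variation, the Stieltjes and stochastic integrals of a predictable integrand coincide whenever both are defined. Lifting this back through the expression \eqref{HX-expression-2} of Theorem \ref{eq-HX} shows $H\in\mathcal{L}_m^\mathbb{B}(W)\cap\mathcal{L}_s^\mathbb{B}(W)$ with matching integrals, which is exactly what is required to move an integrand between two inner decompositions. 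Care must be taken that the chosen FCS are genuinely inner, so that no values outside $\mathbb{B}$ intrude on the martingale parts.
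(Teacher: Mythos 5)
Your treatment of part (2) is sound and is exactly the paper's argument. The problem lies in your ``preliminary fact'': decomposition-independence of integrability is \emph{false}, and parts (1) and (3) of your outline rest entirely on it. Already in the classical case $\mathbb{B}=\llbracket{0,+\infty}\llbracket$ (to which the $\mathbb{B}$-theory reduces by Corollary \ref{cD=DB}, all classical decompositions then being inner), take $X=W\in\mathcal{W}_{\mathrm{loc},0}$ and a predictable $H$ with $H\in\mathcal{L}_m(W)$ but $H\notin\mathcal{L}_s(W)$; such $H$ exist because the martingale integrability condition only asks that $\sqrt{H^2.[W]}\in\mathcal{A}^+_{\mathrm{loc}}$ (statement (M4)), which is strictly weaker than pathwise absolute convergence of $\int|H|\,|dW|$. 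Then the inner decomposition $X=W+0$ witnesses $H\in\mathcal{L}(X)$, while the equally valid inner decomposition $X=0+W$ does not, since $H\notin\mathcal{L}_s(W)$. This is precisely why $\mathcal{L}^\mathbb{B}(X)$ is defined through the \emph{existence} of a witnessing $(H,\mathbb{B})$-decomposition, and why Lemma \ref{HX-unique} only asserts equality of the two values when $H$ happens to be integrable against both decompositions; it does not transfer integrability from one decomposition to another. Your fallback argument does not repair this: the classical fact you invoke (agreement of the Stieltjes and martingale integrals against $W\in\mathcal{W}_{\mathrm{loc}}$ ``whenever both are defined'') is beside the point, because the failure mode is exactly that one of the two integrals need not be defined at all.

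Consequently, in part (1) you cannot ``transport the $K$-decomposition onto the $H$-decomposition'', and in part (3) you can neither read off integrability of $L$ against the specific decomposition $H_{\bullet}X=H_{\bullet}M+H_{\bullet}A$ from an arbitrary witnessing decomposition of $H_{\bullet}X$, nor (in the converse direction) assume that $LH$ is integrable against $H$'s own decomposition of $X$. What the paper does instead, mirroring the classical proof that $\mathcal{L}(X)$ is a vector space, is to \emph{build a third decomposition adapted to both integrands}: fixing $\varepsilon>0$, it partitions along the predictable sets $[|\widetilde{H}+\widetilde{K}|>\varepsilon]$ and $[|\widetilde{H}+\widetilde{K}|\leq\varepsilon]$ (for part (3), along $[|\widetilde{L}|>\varepsilon]$ and $[|\widetilde{L}|\leq\varepsilon]$ in the sufficiency, and along a three-set partition involving $\widetilde{H}$ and $\widetilde{L}\widetilde{H}$ in the necessity), defines new processes $M^{(n)},A^{(n)}$ by integrating normalized integrands against the two given decomposed inner FCSs on these sets, checks via Theorem \ref{restriction}(4) that this yields a genuine decomposed inner FCS of $X$ (resp.\ of $H_{\bullet}X$), and only then verifies that the relevant integrand is integrable against it and that the integrals match. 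That merging construction is the real content of the theorem; without it, your outline establishes only part (2).
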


\begin{theorem}\label{HX-property}
Let $H\in\mathcal{L}^\mathbb{B}(X)$. Then:
\begin{itemize}
  \item [$(1)$] $(H_{\bullet}X)^c=H_{\bullet}X^c$, $\Delta (H_{\bullet}X)=H\Delta X$, and $(H_{\bullet}X)I_{\llbracket{0}\rrbracket}=HXI_{\llbracket{0}\rrbracket}$.
  \item [$(2)$] $(H_{\bullet}X)^\tau\mathfrak{I}_{\mathbb{B}}=H_{\bullet}(X^\tau\mathfrak{I}_{\mathbb{B}})
      =(HI_{\llbracket{0,\tau}\rrbracket}\mathfrak{I}_{\mathbb{B}})_{\bullet}X=(H^\tau\mathfrak{I}_{\mathbb{B}})_{\bullet}(X^\tau\mathfrak{I}_{\mathbb{B}})$,
  where $\tau$ is a $\mathbb{B}$-inner stopping time.
  \item [$(3)$] If $L\in \mathcal{P}^\mathbb{B}$ satisfying $|L|\leq |H|$, then $L\in\mathcal{L}^\mathbb{B}(X)$.
  \item [$(4)$] For any $Y\in \mathcal{S}^\mathbb{B}$, we have
\begin{equation}\label{HXY-p}
  [H_{\bullet}X,Y]=H_{\bullet}[X,Y].
\end{equation}
\end{itemize}
\end{theorem}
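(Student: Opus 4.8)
The unifying strategy is to reduce every identity to its classical counterpart at the level of a common fundamental coupled sequence. By Theorem \ref{HX=} I may fix a decomposed inner FCS $(T_n,M^{(n)},A^{(n)})$ for $X\in\mathcal{S}^{i,\mathbb{B}}$ and an FCS $(T_n,H^{(n)})$ for $H\in\mathcal{P}^\mathbb{B}$, after refining to a common sequence $(T_n)$ via Theorem \ref{process}(3), such that with $X^{(n)}=M^{(n)}+A^{(n)}$ one has $H^{(n)}\in\mathcal{L}(X^{(n)})$ for every $n$. Theorem \ref{eq-HX} then guarantees that $(T_n,H^{(n)}.X^{(n)})$ is an inner FCS for $H_{\bullet}X$. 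Thus each assertion can be established by verifying the corresponding classical property for the ordinary integrals $H^{(n)}.X^{(n)}$ (see, e.g., Theorems 9.15 and 9.18 of \cite{He}) and then transporting it through the FCS machinery, typically finishing with Theorem \ref{process}(1).

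For (1), I would note that $(T_n,(X^{(n)})^c)$ is an inner FCS for $X^c$ (Theorem \ref{XTc}(1), continuous local martingales being inner), with $H^{(n)}\in\mathcal{L}_m((X^{(n)})^c)$; hence $(T_n,H^{(n)}.(X^{(n)})^c)$ is an FCS for $H_{\bullet}X^c$ (Theorem \ref{eq-HM}), while $(T_n,(H^{(n)}.X^{(n)})^c)$ is an FCS for $(H_{\bullet}X)^c$ (Theorem \ref{XTc}(1)); the classical identity $(H^{(n)}.X^{(n)})^c=H^{(n)}.(X^{(n)})^c$ then forces $(H_{\bullet}X)^c=H_{\bullet}X^c$. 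The jump identity follows from Theorem \ref{delta}(1): $(T_n,\Delta(H^{(n)}.X^{(n)}))$ is a CS for $\Delta(H_{\bullet}X)$, while on each $\mathbb{B}\llbracket{0,T_n}\rrbracket$ the products $H^{(n)}\Delta X^{(n)}$ agree with $H\Delta X$, so $(T_n,H^{(n)}\Delta X^{(n)})$ is a CS for $H\Delta X$, and the classical relation $\Delta(H^{(n)}.X^{(n)})=H^{(n)}\Delta X^{(n)}$ gives the result. The initial-value identity is read off the $I_{\llbracket{0}\rrbracket}$ term of \eqref{HX-expression-2}. For (2), each of the four $\mathbb{B}$-processes is shown to carry the FCS $(T_n,(H^{(n)}.X^{(n)})^\tau)$: the left-hand side by Theorem \ref{fcs}, and the other three by combining Theorems \ref{fcs}, \ref{eq-HX} with the classical stopping chain $(H^{(n)}.X^{(n)})^\tau=H^{(n)}.(X^{(n)})^\tau=(H^{(n)}I_{\llbracket{0,\tau}\rrbracket}).X^{(n)}=(H^{(n)})^\tau.(X^{(n)})^\tau$, whence Theorem \ref{process}(1) identifies all four.

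For (3) the subtlety is that a domination $|L|\le|H|$ holding on $\mathbb{B}$ need not persist outside $\mathbb{B}$ for arbitrary representatives, so I would truncate: choosing (by Theorem \ref{HX=}(X2)) a coupled predictable $\widetilde{H}$ for $H$ with $\widetilde{H}\in\mathcal{L}_m(M^{(n)})\cap\mathcal{L}_s(A^{(n)})$ and a coupled predictable $\widetilde{L}$ for $L$, I set $\widetilde{L}^{*}=\widetilde{L}\,I_{[|\widetilde{L}|\le|\widetilde{H}|]}$. Then $\widetilde{L}^{*}$ is still a coupled predictable process for $L$, since on $\mathbb{B}$ one has $|\widetilde{L}|=|L|\le|H|=|\widetilde{H}|$ and so $\widetilde{L}^{*}=\widetilde{L}$ there, while $|\widetilde{L}^{*}|\le|\widetilde{H}|$ holds everywhere. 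The classical domination theorem then yields $\widetilde{L}^{*}\in\mathcal{L}_m(M^{(n)})\cap\mathcal{L}_s(A^{(n)})$ for each $n$, so $L\in\mathcal{L}^\mathbb{B}(X)$ again by Theorem \ref{HX=}(X2).

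Finally, (4) is the identity I expect to require the most care, because $Y$ is permitted to be an arbitrary $\mathbb{B}$-semimartingale, not necessarily inner. Taking an FCS $(T_n,Y^{(n)})$ for $Y$ (refined to the common $(T_n)$), I would first invoke the classical relation $[H^{(n)}.X^{(n)},Y^{(n)}]=H^{(n)}.[X^{(n)},Y^{(n)}]$ together with $H^{(n)}\in\mathcal{L}_s([X^{(n)},Y^{(n)}])$, a consequence of $H^{(n)}\in\mathcal{L}(X^{(n)})$. The genuine obstacle is to confirm $H\in\mathcal{L}^\mathbb{B}_s([X,Y])$ so that the right-hand side of \eqref{HXY-p} is even defined; this I would verify through condition (A3) of Theorem \ref{HA-equivalent}, using that $(T_n,[X^{(n)},Y^{(n)}])$ is an FCS for $[X,Y]\in\mathcal{V}^\mathbb{B}$ (Theorem \ref{[X,Y]-fcs}(2)). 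Then Theorem \ref{HA-FCS} shows $(T_n,H^{(n)}.[X^{(n)},Y^{(n)}])$ is an FCS for $H_{\bullet}[X,Y]$, and Theorem \ref{[X,Y]-fcs}(2) shows $(T_n,[H^{(n)}.X^{(n)},Y^{(n)}])$ is an FCS for $[H_{\bullet}X,Y]$; since these coincide by the classical identity, Theorem \ref{process}(1) delivers \eqref{HXY-p}. The overarching difficulty in all four parts is the FCS bookkeeping, namely arranging a single sequence $(T_n)$ that simultaneously serves $H$, the inner decomposition of $X$, and $Y$, and ensuring the truncation in (3) preserves both the coupling to $L$ and the classical integrability.
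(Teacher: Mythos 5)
Your proposal is correct and follows essentially the same route as the paper's proof: fix a common sequence $(T_n)$, reduce each identity to its classical counterpart for $H^{(n)}.X^{(n)}$, verify the needed $\mathbb{B}$-integrability via Theorems \ref{HX=}, \ref{HA-equivalent}/\ref{HA-FCS}, and conclude with Theorem \ref{process}(1) — including the key observation in (4) that $H\in\mathcal{L}_s^\mathbb{B}([X,Y])$ must first be established from $(T_n,[X^{(n)},Y^{(n)}])$ being an FCS for $[X,Y]$. The only cosmetic deviations are that in (1) you compare FCSs of the continuous parts directly rather than passing through $H_{\bullet}M+H_{\bullet}A$ and Theorem \ref{HM-property}, and in (3) you truncate the coupled predictable process by $I_{[|\widetilde{L}|\leq|\widetilde{H}|]}$ where the paper truncates the FCS terms via $\min\{(\widetilde{L}^{(n)})^{+},|H^{(n)}|\}-\min\{(\widetilde{L}^{(n)})^{-},|H^{(n)}|\}$; both implement the same domination-transfer idea.
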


Lastly, we conclude by presenting an application of aforementioned $\mathbb{C}$-stochastic integrals within the theoretical framework of measure transformations.
\begin{example}\normalfont\label{example-pq}
Let $\mathbb{C}$ and $\widetilde{Z}$ be as defined in Example \ref{ex-absolute}, and let $\widetilde{X}$ be an adapted c\`{a}dl\`{a}g process. Define $Z=\widetilde{Z}\mathfrak{I}_\mathbb{C}$ and $X=\widetilde{X}\mathfrak{I}_\mathbb{C}$. Denote by $\mathcal{M}_{\mathrm{loc}}(\mathbf{Q})$ the set of all $\mathbf{Q}$-local martingales. We then establish the following relationships:
   \begin{align}
      \widetilde{X}\in\mathcal{M}_{\mathrm{loc}}(\mathbf{Q})\Leftrightarrow&X\in\mathcal{S}^\mathbb{C}\; \text{and}\; X+ \bigg[\frac{1}{Z_{-}}{}_{\bullet}X,Z\bigg]\in (\mathcal{M}_{\mathrm{loc}})^\mathbb{C}\nonumber\\
      \Leftrightarrow&
      X\in\mathcal{S}^\mathbb{C}\; \text{and}\; X+\frac{1}{Z_{-}} {}_{\bullet}[X,Z]\in (\mathcal{M}_{\mathrm{loc}})^\mathbb{C}.\label{ex-mloc-1}
  \end{align}
Indeed, according to Theorem 12.18(4) in \cite{He}, $\widetilde{X}\in\mathcal{M}_{\mathrm{loc}}(\mathbf{Q})$ holds if and only if there exist an FS $(S_n)$ for $\mathbb{C}$ such that $X\in\mathcal{S}^\mathbb{C}$ and $Y^{(n)}\in \mathcal{M}_{\mathrm{loc}}$ for each $n\in \mathbf{N}^+$, where
\begin{equation*}
Y^{(n)}=\widetilde{X}^{S_n}+\frac{1}{(\widetilde{Z}^{S_n})_{-}} {}.[\widetilde{X}^{S_n},\widetilde{Z}^{S_n}]=X^{S_n}+\frac{1}{(Z^{S_n})_{-}} {}.[X^{S_n},Z^{S_n}].
\end{equation*}
By applying Theorem \ref{[X,Y]-fcs} and Corollary \ref{eq-HXc}(1) and observing the relations
\[
Y^{(n)}=\left(X+\frac{1}{Z_{-}} {}_{\bullet}[X,Z]\right)^{S_n},\quad n\in \mathbf{N}^+,
\]
we confirm the validity of the relationships stated in \eqref{ex-mloc-1}.
\end{example}

\subsection{It\^{o} Formula for $\mathbb{B}$-inner semimartingale}
The It\^{o} Formula, alternatively known as the change-of-variable formula, constitutes a pivotal instrument within the realm of stochastic calculus. In the subsequent theorem, we elucidate the It\^{o} Formula tailored for $\mathbb{B}$-inner semimartingales. This formula not only asserts that a ``smooth function" of a $\mathbb{B}$-inner semimartingale retains its status as a $\mathbb{B}$-inner semimartingale but also furnishes its corresponding decomposition.

\begin{theorem}\label{ito}
Fix an integer $d\in \mathbf{N}^+$, and define $Z=(X_{1},X_{2},\cdots, X_{d})$, where $X_{1},X_{2},\cdots, X_{d}$ are $\mathbb{B}$-inner semimartingales. Suppose that $F$ is a $C^2$-function on $\mathbf{R}^d$ (i.e., $F$ possesses continuous partial derivatives up to the second order). Then
\begin{align}\label{ito-eq}
F(Z)=F(Z(0))\mathfrak{I}_\mathbb{B}+\sum_{k=1}^d D_kF(Z_{-})_{\bullet}(X_k-X_k(0)\mathfrak{I}_\mathbb{B})+\eta+
\frac{1}{2}\sum_{k,l=1}^d D_{kl}F(Z_{-})_{\bullet}\langle X_k^c,X_l^c\rangle,
\end{align}
where $D_kF=\frac{\partial F}{\partial x_k}$, $D_{kl}F=\frac{\partial^2 F}{\partial x_k\partial x_l}$ for $k,l\in\{1,2,\cdots,d\}$, and
\[
\eta=\Sigma\bigg(F(Z)-F(Z_{-})-\sum_{k=1}^d D_kF(Z_{-})\Delta X_k\bigg).
\]
\end{theorem}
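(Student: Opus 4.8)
The plan is to reduce the $\mathbb{B}$-It\^o formula to the classical It\^o formula by localizing along a common inner FCS and then invoking the characterization of each $\mathbb{B}$-object (integral, quadratic covariation, summation process) as an FCS assembled from its classical counterpart. First I would fix a common sequence of stopping times: using Definition \ref{SiB}(2) together with Theorem \ref{process}(3), choose a single increasing sequence $(T_n)$ with $T_n\uparrow T$ and $\bigcup_n\llbracket{0,T_n}\rrbracket\supseteq\mathbb{B}$, and classical semimartingales $X_k^{(n)}$ so that $(T_n,X_k^{(n)})$ is simultaneously an inner FCS for each $X_k\in\mathcal{S}^{i,\mathbb{B}}$ ($k=1,\dots,d$). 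Set $Z^{(n)}=(X_1^{(n)},\dots,X_d^{(n)})$; each component is a genuine semimartingale, so the classical It\^o formula (see, e.g., \cite{He,Jacod}) applies to $F(Z^{(n)})$ and produces the four familiar terms.

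The core of the argument is a term-by-term identification on each set $\mathbb{B}\llbracket{0,T_n}\rrbracket$. By Corollary \ref{deltaX}(2) each $X_{k,-}$ is $\mathbb{B}$-locally bounded predictable and, since $F\in C^2$, so are $D_kF(Z_-)$ and $D_{kl}F(Z_-)$; because $X_k^{(n)}=X_k$ on $\mathbb{B}\llbracket{0,T_n}\rrbracket$ by \eqref{xkl}, the classical integrands $D_kF(Z^{(n)}_-)$ agree there with coupled predictable processes for $D_kF(Z_-)$. For the first-order terms I would invoke Corollary \ref{bound-HX} (local boundedness yields $(X_k,\mathbb{B})$-integrability) together with the FCS expression \eqref{HX-expression-1} of $D_kF(Z_-)_{\bullet}(X_k-X_k(0)\mathfrak{I}_\mathbb{B})$. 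For the second-order terms I would combine Theorem \ref{property-qr-p}(1) and Theorem \ref{XTc}(1), which show that $(T_n,\langle(X_k^{(n)})^c,(X_l^{(n)})^c\rangle)$ is an FCS for $\langle X_k^c,X_l^c\rangle$, with Corollary \ref{bound-HA} for the resulting $\mathbb{B}$-Stieltjes integral. For the jump term $\eta$ I would use Theorem \ref{delta}(6) to see that $F(Z)-F(Z_-)-\sum_k D_kF(Z_-)\Delta X_k$ is a $\mathbb{B}$-thin process, and Theorem \ref{thin}(1) to identify its $\mathbb{B}$-summation process with the $\mathbb{B}$-restriction of the classical summation processes of the corresponding summands for $Z^{(n)}$.

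Each classical term of $F(Z^{(n)})$ thus restricts on $\mathbb{B}\llbracket{0,T_n}\rrbracket$ to the corresponding $\mathbb{B}$-term of \eqref{ito-eq}. Since this holds for every $n$, Theorem \ref{process}(1) forces equality of the two $\mathbb{B}$-processes. This simultaneously exhibits $F(Z)$ as a sum of $\mathbb{B}$-integrals against the $\mathbb{B}$-inner semimartingales $X_k-X_k(0)\mathfrak{I}_\mathbb{B}$ (an inner-semimartingale contribution, by Corollary \ref{bound-HX} and the linearity of $\mathcal{S}^{i,\mathbb{B}}$ from Theorem \ref{MC}(3)) plus processes in $\mathcal{V}^\mathbb{B}$, whence $F(Z)\in\mathcal{S}^{i,\mathbb{B}}$ and the formula holds.

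The main obstacle I anticipate is the bookkeeping needed to show that the \emph{classical} integrals $D_kF(Z^{(n)}_-).X_k^{(n)}$ and summation processes---computed from the full processes $X_k^{(n)}$, which carry data outside $\mathbb{B}$---agree with the intended $\mathbb{B}$-integrals after restriction to $\mathbb{B}\llbracket{0,T_n}\rrbracket$. This is delicate precisely at the right endpoints: one must verify that $Z^{(n)}_-=Z_-$ on $\mathbb{B}\llbracket{0,T_n}\rrbracket$ (left limits controlled through Corollary \ref{deltaX} and the closedness of $\llbracket{0,T_n}\rrbracket$) and that the FCS expressions of Theorems \ref{eq-HX} and \ref{eq-HM} depend only on values inside $\mathbb{B}$. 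Maintaining the inner-FCS condition \eqref{inner-eq} throughout, so that the martingale part of $F(Z)$ genuinely lands in $(\mathcal{M}_{\mathrm{loc}})^{i,\mathbb{B}}$ rather than merely $(\mathcal{M}_{\mathrm{loc}})^\mathbb{B}$, is where the essential use of $\mathbb{B}$-inner local martingales enters.
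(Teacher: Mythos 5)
Your proposal is correct and follows essentially the same route as the paper's proof: fix a common inner FCS $(T_n,X_k^{(n)})$, apply the classical It\^{o} formula to $Z^{(n)}=(X_1^{(n)},\dots,X_d^{(n)})$, identify each classical term with the corresponding $\mathbb{B}$-term on $\mathbb{B}\llbracket{0,T_n}\rrbracket$ via Corollary \ref{bound-HX}, Theorems \ref{property-qr-p}(1), \ref{XTc}(1) and \ref{thin}(1), and conclude with Theorem \ref{process}(1). The only cosmetic differences are that you justify the common sequence $(T_n)$ explicitly through Theorem \ref{process}(3) and cite Theorem \ref{delta}(6) for the thin-process property of the remainder, where the paper uses Theorem \ref{delta}(1) with Corollary \ref{deltaX}(2) and the classical Theorem 9.35 of \cite{He}.
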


In Theorem \ref{ito}, the $\mathbb{B}$-process $Z=(X_{1},X_{2}\cdots X_{d})$ is denoted as a $d$-dimensional semimartingale on $\mathbb{B}$. Alternatively, we can define a $d$-dimensional semimartingale on $\mathbb{B}$ in an equivalent manner: $Z$ is termed a $d$-dimensional semimartingale on $\mathbb{B}$ if there exists a CS $(T_n,Z^{(n)})$ for $Z$ such that for each $n\in \mathbf{N}^+$, $Z^{(n)}$ is a $d$-dimensional semimartingale. Furthermore, in the case where $d=1$, the It\^{o} formula \eqref{ito-eq} simplifies to
\[
f(X)-f(X_0)\mathfrak{I}_\mathbb{B}=f'(X_{-})_{\bullet}(X-X(0)\mathfrak{I}_\mathbb{B})+\Sigma\big(f(X)-f(X_{-})-f'(X_{-})\Delta X\big)+\frac{1}{2}f''(X_{-})_{\bullet}\langle X^c\rangle,
\]
where $X$ represents a $\mathbb{B}$-inner semimartingale, and $f$ is a $C^2$-function on $\mathbf{R}$ (implying that $f$ possesses continuous derivatives of the first order $f'$ and the second order $f''$).

The subsequent corollary delineates the formula for integration by parts pertinent to two $\mathbb{B}$-inner semimartingales, constituting a pivotal application of the It\^{o} formula as articulated in \eqref{ito-eq}.

\begin{corollary}\label{IbP-p}
Let $Y\in\mathcal{S}^{i,\mathbb{B}}$. Then
\begin{equation}\label{IbP-eq}
XY={X_{-}}{}_{\bullet}Y+{Y_{-}}{}_{\bullet}X+[X,Y]-2X_0Y_0\mathfrak{I}_\mathbb{B}.
\end{equation}
\end{corollary}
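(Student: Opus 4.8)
The plan is to specialize the It\^o formula of Theorem \ref{ito} to dimension $d=2$, applied to the $2$-dimensional $\mathbb{B}$-inner semimartingale $Z=(X,Y)$ and the $C^2$-function $F(x_1,x_2)=x_1x_2$. First I would record the partial derivatives $D_1F(x_1,x_2)=x_2$, $D_2F(x_1,x_2)=x_1$, $D_{12}F=D_{21}F\equiv 1$ and $D_{11}F=D_{22}F\equiv 0$, so that $D_1F(Z_{-})=Y_{-}$ and $D_2F(Z_{-})=X_{-}$. Substituting into \eqref{ito-eq}, and noting $F(Z)=XY$ and $F(Z(0))\mathfrak{I}_\mathbb{B}=X_0Y_0\mathfrak{I}_\mathbb{B}$, the first-order term becomes ${Y_{-}}{}_{\bullet}(X-X_0\mathfrak{I}_\mathbb{B})+{X_{-}}{}_{\bullet}(Y-Y_0\mathfrak{I}_\mathbb{B})$. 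Since $X_{-}$ and $Y_{-}$ are $\mathbb{B}$-locally bounded predictable processes (Corollary \ref{deltaX}(2)), Corollary \ref{bound-HX} guarantees that every integral appearing here is well-defined, so no separate integrability verification is required.

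Next I would simplify the second-order and jump contributions. Because only the mixed second derivatives survive and equal $1$, the double sum in \eqref{ito-eq} collapses, using the symmetry of $\langle\cdot,\cdot\rangle$ (Theorem \ref{property-qr-p}(2)) and the fact that integrating the constant $1$ returns its integrator, to
\[
\tfrac12\Big((1\mathfrak{I}_\mathbb{B}){}_{\bullet}\langle X^c,Y^c\rangle+(1\mathfrak{I}_\mathbb{B}){}_{\bullet}\langle Y^c,X^c\rangle\Big)=\langle X^c,Y^c\rangle.
\]
For the jump term I would insert $\Delta X=X-X_{-}$ and $\Delta Y=Y-Y_{-}$ into the integrand defining $\eta$ and reduce the algebra to
\[
XY-X_{-}Y_{-}-Y_{-}\Delta X-X_{-}\Delta Y=(X-X_{-})(Y-Y_{-})=\Delta X\Delta Y,
\]
so that $\eta=\Sigma(\Delta X\Delta Y)$. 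Comparing with Definition \ref{[X,Y]}, the pair of terms $\langle X^c,Y^c\rangle+\eta$ is precisely $[X,Y]-X_0Y_0\mathfrak{I}_\mathbb{B}$.

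Finally I would absorb the initial-value processes. By the linearity of the integral in its integrator (Theorem \ref{HX-p}(2)) together with the observation that integrating against the time-constant process $X_0\mathfrak{I}_\mathbb{B}$ produces ${Y_{-}}{}_{\bullet}(X_0\mathfrak{I}_\mathbb{B})=X_0Y_0\mathfrak{I}_\mathbb{B}$ (its value at $0$ is $Y_{0-}X_0=Y_0X_0$ by Theorem \ref{HX-property}(1), and it has no further increments since $X_0\mathfrak{I}_\mathbb{B}$ has vanishing continuous part and jumps), I obtain ${Y_{-}}{}_{\bullet}(X-X_0\mathfrak{I}_\mathbb{B})={Y_{-}}{}_{\bullet}X-X_0Y_0\mathfrak{I}_\mathbb{B}$ and, symmetrically, ${X_{-}}{}_{\bullet}(Y-Y_0\mathfrak{I}_\mathbb{B})={X_{-}}{}_{\bullet}Y-X_0Y_0\mathfrak{I}_\mathbb{B}$. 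Assembling the four $X_0Y_0\mathfrak{I}_\mathbb{B}$ contributions---one positive from $F(Z(0))\mathfrak{I}_\mathbb{B}$ and three negative, coming from the two integral splittings and from $[X,Y]$---yields the net coefficient $-2$, so that \eqref{ito-eq} becomes exactly $XY={X_{-}}{}_{\bullet}Y+{Y_{-}}{}_{\bullet}X+[X,Y]-2X_0Y_0\mathfrak{I}_\mathbb{B}$. I expect the only delicate step to be this bookkeeping of the initial-value terms---in particular the careful evaluation of the integral against the constant process $X_0\mathfrak{I}_\mathbb{B}$ via the convention for the value at time $0$---since the algebraic identifications of $\eta$ with $\Sigma(\Delta X\Delta Y)$ and of the second-order term with $\langle X^c,Y^c\rangle$ are routine.
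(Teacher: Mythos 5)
Your proposal is correct and follows essentially the same route as the paper's own proof: apply Theorem \ref{ito} with $d=2$ and $F(x,y)=xy$, identify the jump term with $\Sigma(\Delta X\Delta Y)$ via the algebraic identity $\Delta X\Delta Y=XY-X_{-}Y_{-}-X_{-}\Delta Y-Y_{-}\Delta X$, recognize $\langle X^c,Y^c\rangle+\Sigma(\Delta X\Delta Y)=[X,Y]-X_0Y_0\mathfrak{I}_\mathbb{B}$ from Definition \ref{[X,Y]}, evaluate ${X_{-}}{}_{\bullet}(Y_0\mathfrak{I}_\mathbb{B})={Y_{-}}{}_{\bullet}(X_0\mathfrak{I}_\mathbb{B})=X_0Y_0\mathfrak{I}_\mathbb{B}$, and finish with the linearity from Theorem \ref{HX-p}(2). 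The bookkeeping of the initial-value terms, which you flag as the delicate step, matches the paper's computation exactly and yields the coefficient $-2$ correctly.
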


\section{Applications to financial markets with sudden-stop horizons}\label{section6}\noindent
\setcounter{equation}{0}
In this section, we put forward an alternative methodology by incorporating stochastic integrals on stochastic sets of interval type into the analysis of financial markets characterized by sudden-stop horizons.
Our principal aim is to construct a financial market framework in which the investor's time-horizon is circumscribed by a stochastic set of interval type. Furthermore, we extend the dynamic price of the risky asset to an inner semimartingale on such a stochastic set of interval type.

\subsection{Essentials of mathematical finance with sudden-stop horizons}
We begin by revisiting the existing financial market model $(Y,\mathbb{F})$, wherein $Y$ represents the price of a risky asset modeled as an $\mathbb{F}$-semimartingale, and for the sake of simplicity, we assume the savings account to be an asset with a constant value. At any given time $t\in \mathbf{R}^+$, an investor possesses $\vartheta_t$ shares of the stock, while the remainder of the wealth is invested in the savings account. Consequently, the investor's wealth $X_t$ can be formulated as $X=\vartheta Y+(X-\vartheta Y)$, where $X_0=x_0$ denotes the initial wealth, and $\vartheta$ is a predictable process. Let $L(Y,\mathbb{F})$ denote the set encompassing all such $\mathbb{F}$-strategies, $\vartheta\in \mathcal{L}(Y)$. Building upon this foundation, we present the following essentials of mathematical finance (see, e.g., Subsection 1.4 in \cite{Aksamit}, noting that $[0,T\rangle=[0,T]$ when $T\in]0,+\infty[$, and $[0,T\rangle=\mathbf{R}^+$ in the case where $T=+\infty$):
\begin{itemize}
  \item A strategy $\vartheta\in L(Y,\mathbb{F})$ is called self-financing if the wealth can be expressed as $X=x_0+\vartheta.Y-\vartheta_0Y_0=x_0+\int_0^\cdot\vartheta_sdY_s$.

  \item Let $\alpha>0$ be a real number. A strategy $\vartheta\in L(Y,\mathbb{F})$ is called $\alpha$-admissible on the time horizon $[0,T\rangle$, if $(\vartheta.Y)_t\geq -\alpha$, $\mathbf{P}$-a.s. for all $t\in[0,T\rangle$.
      Denote by $l_\alpha(Y,\mathbb{F},T)$ the set of all $\alpha$-admissible strategies on $[0,T\rangle$.
      Furthermore, a strategy $\vartheta\in L(Y,\mathbb{F})$ is called admissible on $[0,T\rangle$ if $\vartheta\in\bigcup_{a>0}l_a(Y,\mathbb{F},T)$, and we denote by $l(Y,\mathbb{F},T)$ the set of all admissible strategies on $[0,T\rangle$.

  \item The financial market $(Y,\mathbb{F})$ is said to satisfy no arbitrage (in short: NA) on $[0,T\rangle$ if there does not exist any strategy  $\vartheta\in l(Y,\mathbb{F},T)$ such that
\begin{equation}\label{NA0}
\mathbf{P}((\vartheta.Y)_T\geq 0)=1, \quad\text{and}\quad \mathbf{P}((\vartheta.Y)_T>0)>0.
\end{equation}
\end{itemize}
In the case where $T=+\infty$, a strategy which is $\alpha$-admissible (resp. admissible) on $[0,+\infty[$ is also called $\alpha$-admissible (resp. admissible), and a market which satisfies NA on $[0,+\infty[$ is also said to satisfy NA.

We now proceed to construct a novel financial market framework incorporating a sudden-stop horizon. Analogous to the existing financial market setup, this market features a traded risky asset denoted by $S$, alongside a savings account which maintains a constant price. However, a distinguishing feature of this market lies in the uncertainty surrounding the investor's time-horizon, which can be characterized by a stochastic set $\mathbb{B}$ of interval type. Furthermore, the dynamic price of the risky asset can be described by a $\mathbb{B}$-inner semimartingale, as opposed to a conventional semimartingale. For clarity and precision, we denote this financial market structure by the triplet $(S,\mathbb{F},\mathbb{B})$. This market is deliberately structured in such a manner as to systematically exclude extraneous information that falls outside the purview of the pre-defined time-horizon  $\mathbb{B}$.

In the financial market $(S,\mathbb{F},\mathbb{B})$, at any given time t for which $(\omega,t)\in \mathbb{B}$ holds true, the investor possesses ${\vartheta}(\omega,t)$ shares of the risky asset, with the remaining portion of the wealth invested in the savings account. The investor's wealth $X(\omega,t)$ can subsequently be formulated as:
\begin{equation*}
{X}(\omega,t)={\vartheta}(\omega,t){S}(\omega,t)+({X}(\omega,t)-{\vartheta}(\omega,t){S}(\omega,t)), \quad (\omega,t)\in \mathbb{B},
\end{equation*}
or equivalently,
\begin{equation}\label{wealth}
X={\vartheta}{S}+({X}-{\vartheta}{S}),
\end{equation}
where $X_0=x_0$ with $x_0>0$, and ${\vartheta}\in \mathcal{P}^\mathbb{B}$. Here, $x_0$ represents a constant initial wealth, and ${\vartheta}$ is called an $(\mathbb{F},\mathbb{B})$-strategy.

Drawing parallels with the essentials of conventional mathematical finance, we can define the concepts of self-financing strategies and admissible strategies within the financial market $(S,\mathbb{F},\mathbb{B})$.

\begin{definition}\normalfont\label{essential}
In the financial market $(S,\mathbb{F},\mathbb{B})$, suppose that the investor's wealth ${X}$ and trading strategy ${\vartheta}$ are defined as per \eqref{wealth}. Let $\alpha>0$ be a real number, and denote by $L(S,\mathbb{F},\mathbb{B})$ the collection of all $(\mathbb{F},\mathbb{B})$-strategies ${\vartheta}\in \mathcal{L}^\mathbb{B}({S})$.
\begin{itemize}
  \item[$(1)$] A strategy ${\vartheta}\in L(S,\mathbb{F},\mathbb{B})$ is called {\bf self-financing} if the wealth ${X}$ can be expressed as
  \begin{equation}\label{wealth1}
      {X}=({x}_0-\vartheta_0S_0)\mathfrak{I}_\mathbb{B}+{\vartheta}_{\bullet}{S}.
  \end{equation}

  \item[$(2)$] A strategy ${\vartheta}\in L(S,\mathbb{F},\mathbb{B})$ is called {\bf $\alpha$-admissible} if
      \[
      {\vartheta}_{\bullet}{S}\geq (-\alpha)\mathfrak{I}_\mathbb{B}.
      \]
      Denote by $l_\alpha(S,\mathbb{F},\mathbb{B})$ the set of all $\alpha$-admissible strategies in the financial market $(S,\mathbb{F},\mathbb{B})$. Furthermore, a strategy ${\vartheta}\in L(S,\mathbb{F},\mathbb{B})$ is called admissible if $\vartheta\in\bigcup_{a>0}l_a(S,\mathbb{F},\mathbb{B})$, and we denote by $l(S,\mathbb{F},\mathbb{B})$ the set of all admissible strategies in the financial market $(S,\mathbb{F},\mathbb{B})$.
\end{itemize}
\end{definition}

Let $(T_n,S^{(n)})$ denote an inner FCS for $S\in\mathcal{S}^{i,\mathbb{B}}$, and fix $n\in \mathbf{N}^+$. It follows that $(S^{(n)},\mathbb{F})$ constitutes a financial market with an infinite time-horizon, which is associated with the triplet $(S,\mathbb{F},\mathbb{B})$. On $\mathbb{B}\llbracket{0,T_n}\rrbracket$, the stock price process $S^{(n)}$ within the financial market $(S^{(n)},\mathbb{F})$  mirrors that of $(S,\mathbb{F},\mathbb{B})$, thereby inducing identical portfolio strategies for the investor (specifically, the strategy $\vartheta$ and the wealth $X$ as delineated in \eqref{wealth}). Consequently, we adopt the following standing assumption:
\begin{description}
  \item [(G)]  For each $n\in \mathbf{N}^+$, the investor's strategy $\vartheta^{(n)}$ and wealth $X^{(n)}$ in $(S^{(n)},\mathbb{F})$ satisfy the conditions
\[
\vartheta^{(n)}I_{\mathbb{B}\llbracket{0,T_n}\rrbracket}
=\vartheta I_{\mathbb{B}\llbracket{0,T_n}\rrbracket},\quad X^{(n)}I_{\mathbb{B}\llbracket{0,T_n}\rrbracket}
=XI_{\mathbb{B}\llbracket{0,T_n}\rrbracket}.
\]
\end{description}
Under Assumption (G), a self-financing (resp. an admissible) strategy in $(S,\mathbb{F},\mathbb{B})$ can be characterized by a sequence of such strategies in conventional markets, as formalized by the ensuing theorem.

\begin{proposition}\label{strategy-e}
Let $\alpha>0$ be a real number.
A strategy ${\vartheta}\in L(S,\mathbb{F},\mathbb{B})$ is self-financing (resp. $\alpha$-admissible) if and only if there exist an FCS $(T_n,\vartheta^{(n)})$ for ${\vartheta}\in \mathcal{P}^\mathbb{B}$ and a decomposed inner FCS $(T_n,M^{(n)},A^{(n)})$ for $S\in \mathcal{S}^{i,\mathbb{B}}$, such that for each $n\in \mathbf{N}^+$, the strategy ${\vartheta}^{(n)}$ is self-financing (resp. $\alpha$-admissible) in the financial market $(M^{(n)}+A^{(n)},\mathbb{F})$ and satisfies ${\vartheta}^{(n)}\in\mathcal{L}_m(M^{(n)})\cap\mathcal{L}_s(A^{(n)})$.
\end{proposition}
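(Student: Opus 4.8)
The plan is to treat the two equivalences (self-financing and $\alpha$-admissible) on a common integrability backbone and then split. First I would record that, in both cases, $\vartheta\in L(S,\mathbb{F},\mathbb{B})$ means exactly $\vartheta\in\mathcal{L}^\mathbb{B}(S)$, so Theorem \ref{HX=} supplies the bridge between the existence of the FCS $(T_n,\vartheta^{(n)})$ for $\vartheta\in\mathcal{P}^\mathbb{B}$ and the decomposed inner FCS $(T_n,M^{(n)},A^{(n)})$ for $S\in\mathcal{S}^{i,\mathbb{B}}$ with $\vartheta^{(n)}\in\mathcal{L}_m(M^{(n)})\cap\mathcal{L}_s(A^{(n)})$. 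Writing $Y^{(n)}:=M^{(n)}+A^{(n)}$, Theorem \ref{eq-HX} then yields the representation \eqref{HX-expression-2} of $\vartheta_\bullet S$ and, via Theorem \ref{process}(2), the two pathwise identities I will use repeatedly: $\vartheta_\bullet S=\vartheta^{(n)}.Y^{(n)}$ on $\mathbb{B}\llbracket{0,T_n}\rrbracket$ and $\vartheta_0S_0=\vartheta^{(n)}_0Y^{(n)}_0$ on $\mathbb{B}\cap\llbracket{0}\rrbracket$.

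For the self-financing equivalence I would compare \eqref{wealth1}, $X=(x_0-\vartheta_0S_0)\mathfrak{I}_\mathbb{B}+\vartheta_\bullet S$, with the classical relation $X^{(n)}=x_0+\vartheta^{(n)}.Y^{(n)}-\vartheta^{(n)}_0Y^{(n)}_0$, both read on $\mathbb{B}\llbracket{0,T_n}\rrbracket$. Using Assumption (G) ($X=X^{(n)}$ there) together with the two identities above, the right-hand sides coincide on $\mathbb{B}\llbracket{0,T_n}\rrbracket$. Hence for the ``if'' direction, classical self-financing for every $n$ forces \eqref{wealth1} to hold on each $\mathbb{B}\llbracket{0,T_n}\rrbracket$, and Theorem \ref{process}(1) upgrades this to equality of $\mathbb{B}$-processes, i.e. $\vartheta$ is self-financing in $(S,\mathbb{F},\mathbb{B})$. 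For the ``only if'' direction, \eqref{wealth1} gives the classical relation on $\mathbb{B}\llbracket{0,T_n}\rrbracket$; since the market $(Y^{(n)},\mathbb{F})$ carries only auxiliary content outside $\mathbb{B}$, I would take $X^{(n)}$ to be the self-financing wealth $x_0+\vartheta^{(n)}.Y^{(n)}-\vartheta^{(n)}_0Y^{(n)}_0$, which by the computation agrees with $X$ on $\mathbb{B}\llbracket{0,T_n}\rrbracket$ and is thus consistent with (G).

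The $\alpha$-admissible equivalence is where the real work lies. The ``if'' direction is immediate: any $(\omega,t)\in\mathbb{B}$ sits in some $\mathbb{B}\llbracket{0,T_n}\rrbracket$, on which $\vartheta_\bullet S=\vartheta^{(n)}.Y^{(n)}\ge-\alpha$, giving $\vartheta_\bullet S\ge(-\alpha)\mathfrak{I}_\mathbb{B}$. The converse is the main obstacle, because $\alpha$-admissibility of $\vartheta^{(n)}$ in $(Y^{(n)},\mathbb{F})$ requires the \emph{global} bound $(\vartheta^{(n)}.Y^{(n)})_t\ge-\alpha$ for all $t\in\mathbf{R}^+$, whereas the hypothesis $\vartheta_\bullet S\ge(-\alpha)\mathfrak{I}_\mathbb{B}$ only controls the integral on $\mathbb{B}$. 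My plan is to cut off the integrator beyond $\mathbb{B}$: replace $Y^{(n)}$ by the doubly-stopped process $\widehat{Y}^{(n)}:=\big((Y^{(n)})^{T_F-}\big)^{T_n}$, and correspondingly $\widehat{M}^{(n)}:=\big((M^{(n)})^{T_F-}\big)^{T_n}$, $\widehat{A}^{(n)}:=\big((A^{(n)})^{T_F-}\big)^{T_n}$, using the freezing operation of Theorem \ref{delta}(4). Then $\vartheta^{(n)}.\widehat{Y}^{(n)}=\big((\vartheta^{(n)}.Y^{(n)})^{T_F-}\big)^{T_n}$ coincides with $\vartheta_\bullet S$ on $\mathbb{B}\llbracket{0,T_n}\rrbracket$, while outside $\mathbb{B}$ it is frozen either at $(\vartheta_\bullet S)_{T_n}\ge-\alpha$ (on $F^c$ and on $F\cap[T_n<T_F]$, where the endpoint $(\omega,T_n)$ still lies in $\mathbb{B}$) or, on the delicate set $F\cap[T_n=T_F]$, at the left limit $(\vartheta_\bullet S)_{T_F-}$. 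Since the left-limit process exists (Corollary \ref{deltaX}) and is a limit of values $\ge-\alpha$, this frozen value is again $\ge-\alpha$, so $\vartheta^{(n)}.\widehat{Y}^{(n)}\ge-\alpha$ holds globally.

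It remains to check the modified data still qualify. Stopping and $T_F-$-freezing preserve $\mathcal{M}_{\mathrm{loc}}$, $\mathcal{V}_0$ and predictability and leave all processes unchanged on $\mathbb{B}\llbracket{0,T_n}\rrbracket$ (Theorem \ref{fcs}, Theorem \ref{delta}(4)), so $(T_n,\widehat{M}^{(n)},\widehat{A}^{(n)})$ is again a decomposed inner FCS for $S\in\mathcal{S}^{i,\mathbb{B}}$, the integrability $\vartheta^{(n)}\in\mathcal{L}_m(\widehat{M}^{(n)})\cap\mathcal{L}_s(\widehat{A}^{(n)})$ survives (stopping the integrator preserves integrability), and $(T_n,\vartheta^{(n)})$ is still an FCS for $\vartheta$; thus $\vartheta^{(n)}$ is $\alpha$-admissible in $(\widehat{M}^{(n)}+\widehat{A}^{(n)},\mathbb{F})$ and the required FCSs have been produced. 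I expect the hardest and most error-prone step to be exactly this global lower-bound construction: getting the cutoff to respect the open right boundary of $\mathbb{B}$ on $F$ (the set $[T_n=T_F]$), which is why ordinary stopping at $T_n$ is insufficient and the left-limit freezing $(\cdot)^{T_F-}$ is forced.
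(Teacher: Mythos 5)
Your proposal is correct and follows essentially the same route as the paper: the paper's own necessity argument also passes from the data supplied by Theorems \ref{HX=} and \ref{eq-HX} to the frozen decomposition $M^{(n)}=(\widetilde{M}^{(n)})^{T_n\wedge (T_F-)}$, $A^{(n)}=(\widetilde{A}^{(n)})^{T_n\wedge (T_F-)}$ (which coincides with your doubly-stopped $\widehat{M}^{(n)},\widehat{A}^{(n)}$, since $((\cdot)^{T_F-})^{T_n}=(\cdot)^{T_n\wedge (T_F-)}$), while the sufficiency direction is the same local-to-global comparison on $\mathbb{B}\llbracket{0,T_n}\rrbracket$ via Assumption (G) and Theorem \ref{process}(1). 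Your diagnosis that the global lower bound in the $\alpha$-admissible case is the real difficulty, resolved by freezing at the left limit on the set $F\cap[T_n=T_F]$, matches the paper's intent exactly (the paper writes out only the self-financing case and declares the admissible case analogous).

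One justification, however, should be corrected. You assert that ``stopping and $T_F-$-freezing preserve $\mathcal{M}_{\mathrm{loc}}$''; pre-stopping at $T_F-$ does \emph{not} preserve the local martingale property in general --- this failure is precisely the reason the paper introduces $\mathbb{B}$-inner local martingales in the first place (cf.\ the example $\widehat{M}$ on $\mathbb{B}^*$ in Section 1.1). What rescues your construction is not a general preservation principle but the definition itself: since $(T_n,M^{(n)})$ is an \emph{inner} FCS, condition \eqref{inner-eq} states exactly that $\widehat{M}^{(n)}=(M^{(n)})^{T_n\wedge (T_F-)}\in\mathcal{M}_{\mathrm{loc}}$. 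Likewise, transferring the integrability $\vartheta^{(n)}\in\mathcal{L}_m(M^{(n)})$ to the frozen integrator is not bare ``stopping preserves integrability'': for the pre-stopped part it follows from the quadratic-variation criterion (statement (M4), or Theorem 9.2 of \cite{He}) together with the identities $[\,(M^{(n)})^{T_n\wedge (T_F-)}]=[M^{(n)}]^{T_n\wedge (T_F-)}$ and $H.(M^{T-})=(H.M)^{T-}$. With these two justifications repaired, your argument goes through verbatim.
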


\begin{remark}\normalfont
Based on Definition \ref{essential}(2) and Theorem \ref{strategy-e}, it is straightforward to establish the following equivalence: A strategy ${\vartheta}\in L(S,\mathbb{F},\mathbb{B})$ is admissible if and only if there exist a real number $\alpha>0$, an FCS $(T_n,\vartheta^{(n)})$ for ${\vartheta}\in \mathcal{P}^\mathbb{B}$ and a decomposed inner FCS $(T_n,M^{(n)},A^{(n)})$ for $S\in \mathcal{S}^{i,\mathbb{B}}$, such that for each $n\in \mathbf{N}^+$, the strategy ${\vartheta}^{(n)}$ is $\alpha$-admissible in the financial market $(M^{(n)}+A^{(n)},\mathbb{F})$ and satisfies ${\vartheta}^{(n)}\in\mathcal{L}_m(M^{(n)})\cap\mathcal{L}_s(A^{(n)})$.
\end{remark}

For a fixed $T>0$ and a given $\mathbb{B}$-process $Z$, the notation $Z_T$ might lack precise definition unless $T$ qualifies as a $\mathbb{B}$-inner stopping time. As a result, the concept of no arbitrage in financial markets with sudden-stop horizons cannot be straightforwardly extended from \eqref{NA0}. Drawing inspiration from Theorem \ref{strategy-e}, we propose an alternative formulation to define NA in such financial markets with sudden-stop horizons.

\begin{definition}\normalfont
The financial market $(S,\mathbb{F},\mathbb{B})$ is said to satisfy NA if there exists an inner FCS $(T_n,S^{(n)})$ for $S\in \mathcal{S}^{i,\mathbb{B}}$ such that for each $n\in \mathbf{N}^+$, the financial market $(S^{(n)},\mathbb{F})$ satisfies NA.
\end{definition}

\begin{remark}\normalfont
Consider the scenario where $\mathbb{B}=\llbracket{0,+\infty}\llbracket=\Omega\times\mathbf{R}^+$. It is evident that the financial market $(S,\mathbb{F},\llbracket{0,+\infty}\llbracket)$ reduces to the conventional financial market $(S,\mathbb{F})$. Furthermore, a strategy $\vartheta$ is deemed self-financing (resp. admissible) in the financial market $(S,\mathbb{F},\llbracket{0,+\infty}\llbracket)$ if and only if it is self-financing (resp. admissible) in the financial market $(S,\mathbb{F})$. Lastly, the financial market $({S},\mathbb{F},\llbracket{0,+\infty}\llbracket)$ satisfies NA if and only if the financial market $(S,\mathbb{F})$ also satisfies NA.
\end{remark}

Lastly, we delve into the investor's portfolio optimization problem within the financial market framework represented by $(S,\mathbb{F},\mathbb{B})$. Let $(T_n,S^{(n)})$ serve as an inner FCS for $S\in\mathcal{S}^{i,\mathbb{B}}$. Given that $\omega\in F$ as specified in equation \eqref{B}, the investor's time-horizon is restricted to the interval $[0,T(\omega)[$. Consequently, the investor is obligated to formulate portfolio strategies strictly prior to date $T(\omega)$. As elaborated in Section 1.3, the investor selects the sequence $(T_n)$ as a constituent of terminal times. This approach enables the investor to effectively eliminate extraneous information concerning asset price dynamics and portfolio strategies that extend beyond the sudden-stop horizon.

\begin{definition}\normalfont\label{portfolio-problem}
Let the investor's wealth X and strategy ${\vartheta}\in L(S,\mathbb{F},\mathbb{B})$ be defined as per equation (\ref{wealth}). Consider $(T_n,M^{(n)},A^{(n)})$ to be a decomposed inner FCS for $S\in \mathcal{S}^{i,\mathbb{B}}$, and let $\varphi$ denote a utility function, exemplified by the logarithmic utility function given by:
\[
\varphi(x)=\ln x,\quad x>0.
\]
Define ${S}^{(n)}=M^{(n)}+A^{(n)}$ for each $n\in \mathbf{N}^+$. An admissible strategy $\pi$ in the financial market $(S,\mathbb{F},\mathbb{B})$ is deemed optimal (w.r.t. $(T_n,M^{(n)},A^{(n)})$) if there exists an FCS $(T_n,\pi^{(n)})$ for ${\pi}\in \mathcal{P}^\mathbb{B}$ such that, for each $n\in \mathbf{N}^+$, $\pi^{(n)}\in\mathcal{L}_m(M^{(n)})\cap\mathcal{L}_s(A^{(n)})$ constitutes the optimal strategy for the ensuing portfolio problem in $({S}^{(n)},\mathbb{F})$:
\begin{equation}\label{optimal-problem}
\left\{
\begin{aligned}
&\pi^{(n)}=\arg\sup\left\{\mathbb{E}\left(\varphi({X}^{(n)}_{T_n})\right): {\vartheta\in l({S}^{(n)},\mathbb{F},+\infty)},\vartheta\in\mathcal{L}_m(M^{(n)})\cap\mathcal{L}_s(A^{(n)})\right\},\\
&\mathrm{s.t.}\quad {X}^{(n)}={x}_0 +\vartheta.{S}^{(n)}-\vartheta_0S_0\geq 0.
\end{aligned}
\right.
\end{equation}
\end{definition}
It is important to note that, within the framework of Definition \ref{portfolio-problem}, the optimal strategy $\pi$ typically exhibits a dependency on the sequence $(T_n,M^{(n)},A^{(n)})$. This dependence arises from the fact that distinct selections of the sequence $(T_n,M^{(n)},A^{(n)})$, particularly with regard to the terminal dates $(T_n)$, mirror the investor's diverse perspectives on the sudden-stop horizon. Consequently, these varying selections give rise to distinct portfolio strategies.

\subsection{An illustrative example}
We conduct an investigation into a simplified financial market framework characterized by a sudden-stop horizon. Within this framework, we consider a financial market with an infinite time span, in which an investor invests in a risky stock denoted as $S$. Let $\tau>0$ be an $\mathbb{F}$-stopping time, representing the uncertain date at which the stock exits the market. The investor's time-horizon can be described by a stochastic set of interval type, specifically defined as: $\mathbb{B}=\llbracket{0,\tau}\llbracket$.

The exit time $\tau$ occupies a central role within the investor's investment process. The information pertaining to $\tau$ exerts a profound impact on the investor's investment, as exemplified by the choice of $\tau$ as the terminal time. However, obtaining precise information of the exit time is challenging, particularly in scenarios where the investment duration is prolonged (i.e., $\tau(\omega)$ is sufficiently large for all $\omega\in\Omega$). To mitigate the impact of market information following the default time, it is more appropriate for the investor to select a time preceding the default time as the terminal time. Consequently, we assume that the investor addresses the default time in the following manner.
At the initial date $\tau_0=0$, the investor prudently estimates a date $\tau_1$ such that $\tau_1\leq\tau$. Subsequently, the investor determines the driving process $N^{(1)}$ that captures the stock's uncertainty, as well as the stock return $\mu_1$ and volatility $\sigma_1$ over the first time-horizon $\llbracket{0,\tau_1}\rrbracket$. For $k\in \mathbf{N}^+$, at the date $\tau_k$ when the default has not yet occurred, the investor re-estimates a date $\tau_{k+1}$ satisfying $\tau_{k+1}\leq\tau$. Simultaneously, the investor updates the driving process from $N^{(k)}$ to $N^{(k+1)}$, and adjusts the stock return from $\mu_{k}$ to $\mu_{k+1}$ and the volatility from $\sigma_{k}$ to $\sigma_{k+1}$ over the $k+1$-th time-horizon $\rrbracket{\tau_k,\tau_{k+1}}\rrbracket$, respectively.
Here, $(\tau_n)$ is an increasing sequence of stopping time satisfies the condition $\tau_n\uparrow \tau$, $(N^{(n)})$ is a sequence of continuous local martingales with null initial values, and $(\mu_n)$ and $(\sigma_{n})$ are two bounded sequences of constants with $\sigma_{n}>0$.

The sequence $(\tau_n)$ serves as a valuable tool for the investor to exclude information that lies beyond the time-horizon $\mathbb{B}$. This can be elucidated through two distinct cases. In the first case, when $\tau_n<\tau$ for every $n\in \mathbf{N}^+$ (for instance, $\tau$ is a predictable time, and $(\tau_n)$ announce $\tau$), the default time consistently occurs after the investor's newly-defined time-horizon $\llbracket{0,\tau_{n}}\rrbracket$. From the investor's standpoint, the default time and any information pertaining to stock prices beyond $\mathbb{B}$ are entirely immaterial.
Conversely, in the second case, if there exists an $m\in \mathbf{N}^+$ such that the condition $\tau_{m}<\tau$ does not hold, the investor is compelled to confront the impact of default risk. In this scenario, not all information beyond the time-horizon $\mathbb{B}$ can be disregarded. Nevertheless, by leveraging the sequence $(\tau_n)$, the investor can, with a high probability, circumvent the influence of default risk and stock-related information that extends beyond the time-horizon $\mathbb{B}$.
In reality, within the actual market, an investor can only revise the estimation of the terminal time a finite number of times, rather than an infinite number of times. Let $\alpha$ be a fixed probability; for instance, $\alpha=95\%$. Suppose $N$ is an integer such that $\mathbf{P}(\tau_N<\tau)\geq\alpha$. By updating the terminal time through the finite sequence $\{\tau_1,\tau_2,\cdots,\tau_N\}$, the investor is capable of excluding information regarding stock prices and the default time that fall beyond the time-horizon $\mathbb{B}$ with a sufficiently high probability. Consequently, from the investor's perspective, information concerning stock prices beyond $\mathbb{B}$ becomes almost irrelevant.

The sequences $(\mu_n)$, $(\sigma_n)$ and $(N^{(n)})$ can be employed, if necessary, to elucidate the effects of default risk on stock prices. To substantiate this assertion, we consider a scenario where $\tau_1<\tau$, yet there exists an $m\in \mathbf{N}^+$ such that the condition $\tau_m<\tau$ does not hold. At the initial date, the investor, by leveraging available stock information, determines the expected stock return $\mu_1$, stock volatility $\sigma_1$, and the driving process $N^{(1)}$ over the first time-horizon $\llbracket{0,\tau_{1}}\rrbracket$. However, at date $\tau_{m-1}$, there is a non-negligible probability that a default will occur over the $m$-th time-horizon $\rrbracket{\tau_{m-1},\tau_{m}}\rrbracket$. Consequently, the influence of partial default risk compels the investor to update the expected stock return $\mu_m$, stock volatility $\sigma_m$, and the driving process $N^{(m)}$ over the interval $\rrbracket{\tau_{m-1},\tau_{m}}\rrbracket$.
Empirical evidence (see, e.g., \cite{Campbell,Chava}) indicates that there exists either a positive or a negative relationship between default risk and expected stock returns. Under a positive relationship between default risk and expected stock return, the stock return $\mu_{m}$ should be higher than $\mu_{1}$. Conversely, under a negative relationship, $\mu_{m}$ should be lower than $\mu_{1}$. This phenomenon clearly demonstrates the variation within the sequence $(\mu_n)$. To formalize this, we can assume that the sequence $(\mu_n)$ satisfies the following condition:
\begin{equation}\label{mu_n}
\mu_n=f(p_n)\mu^*.
\end{equation}
Here, $p_n=\mathbf{P}(\tau_n<\tau)$ represents the probability that information beyond the time-horizon $\mathbb{B}$ is excluded. $\mu^*$ denotes the standard expected stock return when the effect of default risk is not taken into account. The function $f(x)$ $(x\in [0,1])$ is a deterministic measurable function that fulfills the condition $f(1)=1$. Moreover, when there is a positive relationship between default risk and expected stock return, $f(x)$ is a decreasing function of $x$. In contrast, when there is a negative relationship, $f(x)$ is an increasing function of $x$.

In such a financial market, the expected stock return $\mu$, stock volatility $\sigma$, and the driven process $M$ can be characterized by
\[
 \left\{
\begin{aligned}
\mu&=\left(\mu_1I_{\llbracket{0}\rrbracket}
+\sum\limits_{n=1}^{+\infty}\mu^{(n)}I_{\rrbracket{\tau_{n-1},\tau_{n}}\rrbracket}\right)\mathfrak{I}_\mathbb{B},
&\mu^{(n)}&=\mu_1I_{\llbracket{0}\rrbracket}
+\sum\limits_{k=1}^{n-1}\mu_kI_{\rrbracket{\tau_{k-1},\tau_{k}}\rrbracket}+\mu_nI_{\rrbracket{\tau_{n-1},+\infty}\llbracket};\\
\sigma&=\left(\sigma_1I_{\llbracket{0}\rrbracket}
+\sum\limits_{n=1}^{+\infty}\sigma^{(n)}I_{\rrbracket{\tau_{n-1},\tau_{n}}\rrbracket}\right)\mathfrak{I}_\mathbb{B},
&\sigma^{(n)}&=\sigma_1I_{\llbracket{0}\rrbracket}
+\sum\limits_{k=1}^{n-1}\sigma_kI_{\rrbracket{\tau_{k-1},\tau_{k}}\rrbracket}+\sigma_nI_{\rrbracket{\tau_{n-1},+\infty}\llbracket};\\
M&=\left(\sum\limits_{n=1}^{+\infty}M^{(n)}I_{\rrbracket{\tau_{n-1},\tau_{n}}\rrbracket}\right)\mathfrak{I}_\mathbb{B},
&M^{(n)}&=\sum\limits_{k=1}^{n-1}I_{\rrbracket{\tau_{k-1},\tau_{k}}\rrbracket}.N^{(k)}
+I_{\rrbracket{\tau_{n-1},+\infty}\llbracket}.N^{(n)};
\end{aligned}
\right.
\]
where $\mu\in \mathcal{P}^\mathbb{B}$, $\sigma\in \mathcal{P}^\mathbb{B}$ and $M\in (\mathcal{M}^c_{\mathrm{loc},0})^\mathbb{B}$.
Now, the stock price $S$ is modeled as a $\mathbb{B}$-inner semimartingale, expressed through the following equation:
\begin{equation}\label{price}
\left\{
\begin{aligned}
  S&=s_0\mathfrak{I}_\mathbb{B}+S_{-}{}_{\bullet}Z,\\
  Z&=\mu_{\bullet}A+\sigma_{\bullet}M,
\end{aligned}
  \right.
\end{equation}
where $s_0>0$ denotes the initial stock price, $Z\in \mathcal{S}^{i,\mathbb{B}}$ is the log-price, and $A=\widetilde{A}\mathfrak{I}_\mathbb{B}$ with $\widetilde{A}_t=t,\; t\in\mathbf{R}^+$.

Given that both $\mu$ and $\sigma$ are $\mathbb{B}$-locally bounded predictable processes, Corollary \ref{bound-HX} implies that the process $Z$ is well-defined. Subsequently, the following lemma establishes the unique existence of the stock price $S$ as specified in equation \eqref{price}.

\begin{lemma}\label{Z}
The stock price $S\in\mathcal{S}^{i,\mathbb{B}}$ uniquely exists, and it can be equivalently represented by the following expression:
  \begin{equation}\label{expZ}
     S=s_0\exp\left\{Z-\frac{1}{2}\langle Z^c\rangle\right\}.
 \end{equation}
Furthermore, $(\tau_n,S^{(n)})$ is an inner FCS for $S\in\mathcal{S}^{i,\mathbb{B}}$, and $(\tau_n,s_0+S^{(n)}\sigma^{(n)}.M^{(n)},S^{(n)}\mu^{(n)}.\widetilde{A})$ is a decomposed inner FCS for $S\in\mathcal{S}^{i,\mathbb{B}}$, where
\begin{equation}\label{Sn}
S^{(n)}=S^{(n)}_{-}=s_0\exp\left\{\mu^{(n)}.\widetilde{A}-\frac{1}{2}(\sigma^{(n)})^2.\langle M^{(n)}\rangle+\sigma^{(n)}.M^{(n)}\right\},\quad n\in \mathbf{N}^+.
\end{equation}
\end{lemma}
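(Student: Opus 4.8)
The plan is to construct $S$ by gluing together the classical Dol\'eans--Dade exponentials of the approximating log-prices and then transferring every assertion (the exponential representation, the inner FCS, the decomposed inner FCS, the defining $\mathbb{B}$-SDE, and uniqueness) between $\mathbb{B}$ and the pieces $\rrbracket{\tau_{n-1},\tau_n}\rrbracket$ via Theorems \ref{restriction}, \ref{eq-HX} and \ref{process}. First I would record the structure of the data. Setting $Z^{(n)}=\mu^{(n)}.\widetilde A+\sigma^{(n)}.M^{(n)}$, each $Z^{(n)}$ is a continuous semimartingale with $(Z^{(n)})^c=\sigma^{(n)}.M^{(n)}$, so $S^{(n)}:=s_0\mathcal{E}(Z^{(n)})$ is exactly the continuous process in \eqref{Sn}, satisfies $S^{(n)}=S^{(n)}_-$, and is the unique classical solution of $S^{(n)}=s_0+S^{(n)}_-.Z^{(n)}=s_0+(S^{(n)}\mu^{(n)}).\widetilde A+(S^{(n)}\sigma^{(n)}).M^{(n)}$. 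Since $\mu,\sigma$ are $\mathbb{B}$-locally bounded predictable with FCSs $(\tau_n,\mu^{(n)}),(\tau_n,\sigma^{(n)})$ and $M\in(\mathcal{M}^c_{\mathrm{loc},0})^\mathbb{B}$ carries the inner FCS $(\tau_n,M^{(n)})$ (class (2) of $\mathbb{B}$-inner local martingales), Corollaries \ref{bound-HA} and \ref{bound-HM} show that $(\tau_n,\mu^{(n)}.\widetilde A)$ is an FCS for $\mu_{\bullet}A\in(\mathcal{V}_0)^\mathbb{B}$ and $(\tau_n,\sigma^{(n)}.M^{(n)})$ is an inner FCS for $\sigma_{\bullet}M\in(\mathcal{M}^c_{\mathrm{loc},0})^\mathbb{B}$; hence $(\tau_n,\sigma^{(n)}.M^{(n)},\mu^{(n)}.\widetilde A)$ is a decomposed inner FCS for $Z\in\mathcal{S}^{i,\mathbb{B}}$ and $(\tau_n,Z^{(n)})$ an inner FCS for $Z$.

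Next I would verify the compatibility $S^{(k)}I_{\mathbb{B}\llbracket{0,\tau_k}\rrbracket}=S^{(l)}I_{\mathbb{B}\llbracket{0,\tau_k}\rrbracket}$ for $k\le l$. The explicit formulas for $\mu^{(n)},\sigma^{(n)},M^{(n)}$ give the stronger full-process identities $\mu^{(k)}I_{\llbracket{0,\tau_k}\rrbracket}=\mu^{(l)}I_{\llbracket{0,\tau_k}\rrbracket}$, $\sigma^{(k)}I_{\llbracket{0,\tau_k}\rrbracket}=\sigma^{(l)}I_{\llbracket{0,\tau_k}\rrbracket}$ and $(M^{(k)})^{\tau_k}=(M^{(l)})^{\tau_k}=\sum_{j=1}^{k}I_{\rrbracket{\tau_{j-1},\tau_j}\rrbracket}.N^{(j)}$, whence $(Z^{(k)})^{\tau_k}=(Z^{(l)})^{\tau_k}$; since the Dol\'eans--Dade exponential commutes with stopping, $(S^{(k)})^{\tau_k}=(S^{(l)})^{\tau_k}$. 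Applying Theorem \ref{restriction}(4) with $\mathcal{D}=\mathcal{S}$ produces $S\in\mathcal{S}^\mathbb{B}$ with FCS $(\tau_n,S^{(n)})$; applying it again with $\mathcal{D}=\mathcal{M}^c_{\mathrm{loc}}$ to $M^{(n)}_S:=s_0+(S^{(n)}\sigma^{(n)}).M^{(n)}$ and with $\mathcal{D}=\mathcal{V}_0$ to $A^{(n)}_S:=(S^{(n)}\mu^{(n)}).\widetilde A$ (whose compatibility follows from the same identities) yields $M_S\in(\mathcal{M}^c_{\mathrm{loc}})^\mathbb{B}\subseteq(\mathcal{M}_{\mathrm{loc}})^{i,\mathbb{B}}$ and $A_S\in(\mathcal{V}_0)^\mathbb{B}$ with $S=M_S+A_S$ (checked by Theorem \ref{process}(1)). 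By Definition \ref{SiB}(2) this exhibits $(\tau_n,S^{(n)})$ as an inner FCS and $(\tau_n,s_0+S^{(n)}\sigma^{(n)}.M^{(n)},S^{(n)}\mu^{(n)}.\widetilde A)$ as a decomposed inner FCS for $S\in\mathcal{S}^{i,\mathbb{B}}$. The representation \eqref{expZ} then follows because $\langle Z^c\rangle$ has FCS $(\tau_n,\langle(Z^{(n)})^c\rangle)$ by Theorem \ref{property-qr-p}(1), so $s_0\exp\{Z-\frac{1}{2}\langle Z^c\rangle\}$ has FCS $(\tau_n,s_0\exp\{Z^{(n)}-\frac{1}{2}\langle(Z^{(n)})^c\rangle\})=(\tau_n,S^{(n)})$ and coincides with $S$ by Theorem \ref{process}(5).

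To confirm that $S$ solves \eqref{price}, I note that $S$ is $\mathbb{B}$-continuous, so $S_-=S$ is $\mathbb{B}$-locally bounded predictable (Corollary \ref{deltaX}(2)) and $S_-\in\mathcal{L}^\mathbb{B}(Z)$ (Corollary \ref{bound-HX}); Theorem \ref{eq-HX} gives $(\tau_n,S^{(n)}.Z^{(n)})=(\tau_n,S^{(n)}-s_0)$ as an FCS for $S_{-\bullet}Z$, which is precisely the FCS of $S-s_0\mathfrak{I}_\mathbb{B}$, so $S=s_0\mathfrak{I}_\mathbb{B}+S_{-\bullet}Z$. For uniqueness I would take any $S'\in\mathcal{S}^{i,\mathbb{B}}$ solving the same equation and run the piecewise representation in reverse: by Theorem \ref{eq-HX}, on each $\mathbb{B}\llbracket{0,\tau_n}\rrbracket$ the identity $S'=s_0\mathfrak{I}_\mathbb{B}+S'_{-\bullet}Z$ forces $S'$ to agree with the classical solution of the linear equation $U=s_0+U_-.Z^{(n)}$, whose unique solution is $s_0\mathcal{E}(Z^{(n)})=S^{(n)}$; hence $S'I_{\mathbb{B}\llbracket{0,\tau_n}\rrbracket}=S^{(n)}I_{\mathbb{B}\llbracket{0,\tau_n}\rrbracket}=SI_{\mathbb{B}\llbracket{0,\tau_n}\rrbracket}$ for every $n$, and Theorem \ref{process}(1) gives $S'=S$.

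I expect the delicate point to be this uniqueness step. Because $\tau_n$ need not be a $\mathbb{B}$-inner stopping time (we only have $\tau_n\le\tau$, with equality possible on a non-null set, so $\llbracket{0,\tau_n}\rrbracket\not\subseteq\mathbb{B}$), one cannot simply stop the $\mathbb{B}$-SDE inside $\mathbb{B}$ and invoke Theorem \ref{HX-property}(2); instead the reduction to the classical Dol\'eans--Dade uniqueness must be routed entirely through the piecewise expansion of Theorem \ref{eq-HX} together with the identification of left-limits (Corollary \ref{deltaX}), and one must check that the ``on $\mathbb{B}\llbracket{0,\tau_n}\rrbracket$'' identity genuinely determines a classical solution on the corresponding interval. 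The remaining verifications---the compatibility identities, that each Stieltjes/stochastic integral lands in the asserted class, and the commutation of $\exp$ and $\mathcal{E}(\cdot)$ with stopping---are routine once the FCS bookkeeping is set up.
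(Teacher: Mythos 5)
Your construction of $S$ by gluing the classical Dol\'eans--Dade exponentials $S^{(n)}=s_0\mathcal{E}(Z^{(n)})$ through Theorem \ref{restriction}(4) is correct and is a genuinely different route from the paper's: the paper defines $S$ directly by \eqref{expZ} and verifies \eqref{price} in one stroke by applying the $\mathbb{B}$-It\^{o} formula (Theorem \ref{ito}) with $f(x)=s_0e^x$ to $X=Z-\frac{1}{2}\langle Z^c\rangle$, dismissing the FCS assertions as trivial, whereas your piecewise argument actually exhibits the inner FCS $(\tau_n,S^{(n)})$ and the decomposed inner FCS, and your compatibility identities $(Z^{(k)})^{\tau_k}=(Z^{(l)})^{\tau_k}$, together with the verification that $(\tau_n,S^{(n)}.Z^{(n)})=(\tau_n,S^{(n)}-s_0)$ is an FCS for $S_{-\bullet}Z$, are checked correctly. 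On existence, your approach is fine and arguably more complete on the bookkeeping.

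The gap is in the uniqueness step, and it is exactly the point you flagged but did not resolve. The claim that ``on each $\mathbb{B}\llbracket{0,\tau_n}\rrbracket$ the identity $S'=s_0\mathfrak{I}_\mathbb{B}+S'_{-\bullet}Z$ forces $S'$ to agree with the classical solution of $U=s_0+U_-.Z^{(n)}$'' cannot be obtained from classical Dol\'eans--Dade uniqueness as stated: that theorem requires an equation between genuine processes on $\Omega\times\mathbf{R}^+$ (or up to a stopping time), while Theorem \ref{eq-HX} only hands you an identity of FCS components restricted to the random set $\mathbb{B}\llbracket{0,\tau_n}\rrbracket$; and since $\tau_n$ need not be a $\mathbb{B}$-inner stopping time (one may have $\tau_n=\tau$ with positive probability), $\llbracket{0,\tau_n}\rrbracket\not\subseteq\mathbb{B}$ and no stopping inside $\mathbb{B}$ is available. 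The missing device --- which is precisely what the paper's proof supplies, and where the ``inner'' hypothesis on $S'$ and $Z$ is actually consumed --- is to pre-stop the inner-FCS components at $\tau_n\wedge(\tau-)$. Concretely: set $Y=S-S'$, so $Y=Y_{-\bullet}Z$ with $Y_0=0$ by Theorem \ref{HX-p}; take inner FCSs $(\tau_n,\widetilde{Y}^{(n)})$ and $(\tau_n,Z^{(n)})$ and put $Y^{(n)}=(\widetilde{Y}^{(n)})^{\tau_n\wedge(\tau-)}$. By Corollaries \ref{deltaX}(2) and \ref{bound-HX}, $(\tau_n,Y^{(n)}_-.Z^{(n)})$ is also an inner FCS for $Y$, so the two component sequences agree on $\mathbb{B}\llbracket{0,\tau_n}\rrbracket$; pre-stopping this identity at $\tau_n\wedge(\tau-)$ (legitimate because both components are c\`adl\`ag and the pre-stopped processes are determined by their values on $\llbracket{0,\tau_n}\rrbracket\cap\llbracket{0,\tau}\llbracket$, the pre-stopped integrator charging no mass on $\llbracket{\tau,+\infty}\llbracket$) yields the genuine classical equation
\[
Y^{(n)}=Y^{(n)}_-.(Z^{(n)})^{\tau_n\wedge(\tau-)}
\]
on all of $\Omega\times\mathbf{R}^+$, and only now does the Dol\'eans--Dade exponential formula give $Y^{(n)}=0$, hence $Y=0\mathfrak{I}_\mathbb{B}$ by Theorem \ref{process}(1). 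Your proposal names the right ingredients (Theorem \ref{eq-HX}, Corollary \ref{deltaX}) but leaves the decisive ``one must check'' unproved; without the pre-stopping at $\tau-$ the reduction to classical uniqueness does not go through.
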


Given the stochastic set $\mathbb{B}$ of interval type and the $\mathbb{B}$-inner semimartingale $S$, we construct a financial market characterized by a sudden-stop horizon, denoted as $(S,\mathbb{F},\mathbb{B})$. However, the generalized conditions imposed on the sudden-stop horizon $\mathbb{B}$ and the stock price $S$ pose significant challenges in deriving the investor's optimal portfolio rule.
To address this issue, we introduce the following assumptions:
\begin{description}
  \item [(G1)] $(N^{(n)})$ is a sequence of independent standard Brownian Motions. Moreover, $\tau$ is a continuous random variable with a cumulative distribution function $F$, and $\tau$ is independent of $(N^{(n)})$.
  \item [(G2)] For $k\in \mathbf{N}^+$, the investor determines $(\tau_n)$ in the following form:
  \begin{equation*}
  \tau_n=a_n\wedge \tau,
  \end{equation*}
  where $(a_n)$ is an increasing sequence of positive constants satisfying the condition
  \[
  \left\{
  \begin{aligned}
  &a_n\uparrow a,\quad&&\text{if $\tau$ is bounded with $\tau\leq a$;}\\
  &a_n\uparrow +\infty,\quad&&\text{otherwise.}
  \end{aligned}
  \right.
  \]
  \item [(G3)]
  The expected stock return is positively correlated (resp. not correlated, resp. negatively correlated) with the default risk such that the investor determines the sequence $(\mu_n)$ as follows:
  \[
   \mu_n=[p_n+b(1-p_n)]\mu^*,\quad n\in \mathbf{N}^+,
  \]
  where $b>1$ (resp. $b=1$, resp. $0<b<1$) is a constant, and $p_n$ and $\mu^*>0$ are defined in the same manner as those in \eqref{mu_n}.
\end{description}

In line with a substantial body of existing literature, Brownian motions are employed in Assumption (G1) to model the uncertainty inherent in stock prices. As we will subsequently demonstrate in Lemma \ref{Sn2}, the independence property of the sequence $(N^{(n)})$ enables the representation of the uncertainty of stock prices as a standard Brownian motion. This representation is analogous to that in a market with an uncertain time-horizon (see, for instance, \cite{Blanchet}).
Moreover, Assumption (G1) characterizes the default time as a continuous random variable and implies the independence between default risk and stock uncertainty. This assumption is also prevalent in analogous research studies where continuous distributions are utilized to model uncertain time-horizons, and this uncertainty is assumed to be independent of the market. Examples of such studies include \cite{Blanchet,Liu,Pham,Richard}.
Assumption (G2) provides the investor with the most straightforward approach to obtaining the sequence $(\tau_n)$ of terminal times within the time-horizon $\mathbb{B}$. This method can be interpreted as representing various investment periods. Specifically, when $a_n=n$ for $n\in \mathbf{N}^+$, the first time-horizon $\llbracket{0,\tau_1}\rrbracket=\llbracket{0,1}\rrbracket\cap \llbracket{0,\tau}\rrbracket$ can be regarded as the first year of the investment, and the $n+1$-th time-horizon $\rrbracket{n\wedge\tau,n+1\wedge\tau}\rrbracket=\rrbracket{n,n+1}\rrbracket\cap \llbracket{0,\tau}\rrbracket$ can be viewed as the $n+1$-th year of the investment.
Regarding Assumption (G3), we assume that the default risk has a positive (respectively, negative) impact on the expected stock return. If the investor is compelled to consider the default time as the terminal time of the portfolio, a complete default risk scenario (i.e., $p_n=0$) results in a higher (resp. lower) expected stock return, denoted by $b\mu^*$
with $b>1$ (resp. $0<b<1$). For the sake of simplicity, we incorporate linearity in Assumption (G3) to model the effect of various default risk on the expected stock return.

Under the assumptions (G1)-(G3), the financial market $(S,\mathbb{F},\mathbb{B})$ demonstrates a strong affinity with conventional financial markets, as it has the potential to revert to an existing framework under certain specified conditions. Specifically, in the scenario where no default event occurs, i.e., when $\tau=+\infty$ and $\tau_n=+\infty$ for each $n\in \mathbf{N}^+$, the behavior of the stock price S reduces to that of a geometric Brownian motion. This simplified process is described by the following equation:
\begin{equation}\label{st}
\widetilde{S}_t=s_0\exp\left\{\left(\mu_1-\frac{\sigma_1^2}{2}\right)t+\sigma_1M_t^{(1)}\right\}, \quad t\in \mathbf{R}^+,
\end{equation}
or equivalently, by the stochastic differential equation (see, e.g., \cite{Karatzas-Shreve,Oksendal}):
\[
d\widetilde{S}_t=\widetilde{S}_t(\mu_1dt+\sigma_1dM_t^{(1)}),\quad \widetilde{S}_0=s_0,\quad t\in \mathbf{R}^+.
\]
The stock price $\widetilde{S}$ is extensively employed in financial research, particularly in well-established models such as the Black-Scholes model (see \cite{Black}). Conversely, the fundamental distinction between the financial market $(S,\mathbb{F},\mathbb{B})$ and the conventional financial market characterized by equation \eqref{st} resides in the nature of the information encapsulated within the stock price. Within the framework of $(S,\mathbb{F},\mathbb{B})$, the stock price $S$ is rigorously defined solely on the time-horizon $\mathbb{B}$, and it efficiently exclude extraneous information outside this pre-defined time-horizon. In contrast, the stock price represented by equation \eqref{st} provides information within the time-horizon $\llbracket{\tau,+\infty}\llbracket$. However, this information becomes superfluous if the stock default occurs at time $\tau$. This highlights a crucial difference in both the information content and the temporal scope between the two market frameworks.

By utilizing Assumption (G2), we can rewrite $\mu$, $\sigma$, and $M$ as follows:
\begin{equation}\label{mu-sigma}
 \left\{
\begin{aligned}
\mu&=\left(\mu_1I_{\llbracket{0}\rrbracket}
+\sum\limits_{n=1}^{+\infty}\widetilde{\mu}^{(n)}I_{\rrbracket{a_{n-1},a_{n}}\rrbracket}\right)\mathfrak{I}_\mathbb{B},
&\widetilde{\mu}^{(n)}&=\mu_1I_{\llbracket{0}\rrbracket}
+\sum\limits_{k=1}^{n-1}\mu_kI_{\rrbracket{a_{k-1},a_{k}}\rrbracket}+\mu_nI_{\rrbracket{a_{n-1},+\infty}\llbracket};\\
\sigma&=\left(\sigma_1I_{\llbracket{0}\rrbracket}
+\sum\limits_{n=1}^{+\infty}\widetilde{\sigma}^{(n)}I_{\rrbracket{a_{n-1},a_{n}}\rrbracket}\right)\mathfrak{I}_\mathbb{B},
&\widetilde{\sigma}^{(n)}&=\sigma_1I_{\llbracket{0}\rrbracket}
+\sum\limits_{k=1}^{n-1}\sigma_kI_{\rrbracket{a_{k-1},a_{k}}\rrbracket}+\sigma_nI_{\rrbracket{a_{n-1},+\infty}\llbracket};\\
M&=\left(\sum\limits_{n=1}^{+\infty}\widetilde{M}^{(n)}I_{\rrbracket{a_{n-1},a_{n}}\rrbracket}\right)\mathfrak{I}_\mathbb{B},
&\widetilde{M}^{(n)}&=\sum\limits_{k=1}^{n-1}I_{\rrbracket{a_{k-1},a_{k}}\rrbracket}.N^{(k)}
+I_{\rrbracket{a_{n-1},+\infty}\llbracket}.N^{(n)};
\end{aligned}
\right.
\end{equation}
where $(\tau_n,\widetilde{\mu}^{(n)})$, $(\tau_n,\widetilde{\sigma}^{(n)})$, and $(\tau_n,\widetilde{M}^{(n)})$ form FCSs for $\mu\in \mathcal{P}^\mathbb{B}$, $\sigma\in \mathcal{P}^\mathbb{B}$, and $M\in (\mathcal{M}^c_{\mathrm{loc},0})^\mathbb{B}$, respectively. Furthermore, we can update the results in Lemma \ref{Z} as follows:

\begin{lemma}\label{Sn2}
Suppose that Assumptions (G1)-(G3) hold true, and
for each $n\in \mathbf{N}^+$, define the process $\widetilde{S}^{(n)}$ by the following expression:
\begin{equation}\label{SSn}
\widetilde{S}^{(n)}=s_0\exp\left\{\left(\widetilde{\mu}^{(n)}-\frac{(\widetilde{\sigma}^{(n)})^2}{2}\right).\widetilde{A}+\widetilde{\sigma}^{(n)}. \widetilde{M}^{(n)}\right\}.
\end{equation}
\begin{itemize}
  \item [$(1)$] For each $n\in \mathbf{N}^+$, $\widetilde{M}^{(n)}$ is a standard Brownian motion.
  \item [$(2)$] $(\tau_n,\widetilde{S}^{(n)})$ forms
an inner FCS for $S\in\mathcal{S}^{i,\mathbb{B}}$, and $(\tau_n,s_0+\widetilde{S}^{(n)}\widetilde{\sigma}^{(n)}.\widetilde{M}^{(n)},\widetilde{S}^{(n)}\widetilde{\mu}^{(n)}
.\widetilde{A})$ serves as a decomposed inner FCS for $S\in\mathcal{S}^{i,\mathbb{B}}$.
\end{itemize}

\end{lemma}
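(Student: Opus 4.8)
The plan is to dispatch part (1) with L\'evy's characterization theorem and then to bootstrap part (2) from it together with the FCS-consistency already recorded for the representation \eqref{mu-sigma}.

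For part (1), note first that each $\widetilde{M}^{(n)}$ is a finite sum of stochastic integrals of bounded predictable indicator processes against the continuous local martingales $N^{(k)}$, so $\widetilde{M}^{(n)}\in\mathcal{M}^c_{\mathrm{loc},0}$ with $\widetilde{M}^{(n)}_0=0$. I would then compute its quadratic variation. By bilinearity of $\langle\cdot,\cdot\rangle$ and the independence imposed in (G1) — which forces $\langle N^{(k)},N^{(j)}\rangle=0$ for $k\neq j$ and $\langle N^{(k)}\rangle_t=t$ — all cross terms vanish, leaving
\[
\langle \widetilde{M}^{(n)}\rangle=\sum_{k=1}^{n-1} I_{\rrbracket{a_{k-1},a_k}\rrbracket}^2.\langle N^{(k)}\rangle+I_{\rrbracket{a_{n-1},+\infty}\llbracket}^2.\langle N^{(n)}\rangle.
\]
Since the $a_k$ are deterministic, the intervals $]a_{k-1},a_k]$ $(k=1,\dots,n-1)$ and $]a_{n-1},+\infty[$ are pairwise disjoint and partition $]0,+\infty[$ (with $a_0=0$), so $\langle \widetilde{M}^{(n)}\rangle_t=\int_0^t 1\,ds=t=\widetilde{A}_t$. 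As $\widetilde{M}^{(n)}$ is thus a continuous local martingale with quadratic variation $t$, L\'evy's characterization theorem yields that $\widetilde{M}^{(n)}$ is a standard Brownian motion, establishing (1).

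For part (2), the identity $\langle\widetilde{M}^{(n)}\rangle=\widetilde{A}$ just obtained is the key device: it lets me recast the defining expression \eqref{SSn} as
\[
\widetilde{S}^{(n)}=s_0\exp\Big\{\widetilde{\mu}^{(n)}.\widetilde{A}-\frac{1}{2}(\widetilde{\sigma}^{(n)})^2.\langle\widetilde{M}^{(n)}\rangle+\widetilde{\sigma}^{(n)}.\widetilde{M}^{(n)}\Big\},
\]
which is precisely the Dol\'{e}ans-Dade exponential of $\widetilde{Z}^{(n)}:=\widetilde{\mu}^{(n)}.\widetilde{A}+\widetilde{\sigma}^{(n)}.\widetilde{M}^{(n)}$ and has exactly the shape of $S^{(n)}$ in \eqref{Sn} with $(\mu^{(n)},\sigma^{(n)},M^{(n)})$ replaced by $(\widetilde{\mu}^{(n)},\widetilde{\sigma}^{(n)},\widetilde{M}^{(n)})$. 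Because $\widetilde{S}^{(n)}$ is continuous, the classical exponential SDE gives $\widetilde{S}^{(n)}=s_0+\widetilde{S}^{(n)}.\widetilde{Z}^{(n)}$, and associativity of the Stieltjes and martingale integrals splits this as
\[
\widetilde{S}^{(n)}=\big(s_0+\widetilde{S}^{(n)}\widetilde{\sigma}^{(n)}.\widetilde{M}^{(n)}\big)+\widetilde{S}^{(n)}\widetilde{\mu}^{(n)}.\widetilde{A},
\]
exhibiting the continuous local-martingale term and the continuous finite-variation term named in the statement.

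It then remains to promote these level-$n$ identities to $\mathbb{B}$-level FCS statements. Since $(\tau_n,\widetilde{\mu}^{(n)})$, $(\tau_n,\widetilde{\sigma}^{(n)})$ and $(\tau_n,\widetilde{M}^{(n)})$ are FCSs for $\mu$, $\sigma$ and $M$ (recorded after \eqref{mu-sigma}), Theorem \ref{HA-FCS}, Theorem \ref{eq-HM} and the linearity in Theorem \ref{process}(4) show that $(\tau_n,\widetilde{Z}^{(n)})$ is an FCS for $Z=\mu_{\bullet}A+\sigma_{\bullet}M$. I would then invoke the fact that the construction of $S$ in Lemma \ref{Z} is independent of the FCS used — so re-running it with $(\tau_n,\widetilde{Z}^{(n)})$ in place of the original data produces the same $\mathbb{B}$-inner semimartingale $S$ and identifies $(\tau_n,\widetilde{S}^{(n)})$ as an inner FCS for it. Finally, $s_0+\widetilde{S}^{(n)}\widetilde{\sigma}^{(n)}.\widetilde{M}^{(n)}$ is a genuine continuous local martingale on all of $\mathbf{R}^+$, hence automatically admissible for an inner FCS (every element of $(\mathcal{M}^c_{\mathrm{loc}})^\mathbb{B}$ is $\mathbb{B}$-inner), while $\widetilde{S}^{(n)}\widetilde{\mu}^{(n)}.\widetilde{A}\in\mathcal{V}_0$; together these deliver the claimed decomposed inner FCS. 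The main obstacle I anticipate lies exactly in this consistency step: verifying that the sequences built from the constant cutoffs $a_k$ are FCSs for the same processes as those built from the stopped cutoffs $\tau_k=a_k\wedge\tau$ — which rests on a pathwise comparison of $]a_{k-1},a_k]$ and $]\tau_{k-1},\tau_k]$ on $\llbracket{0,\tau_n}\rrbracket$ — and then carrying this agreement through the stochastic exponential so that the construction of $S$ is genuinely FCS-independent. Part (1) itself is routine once L\'evy's theorem is in hand.
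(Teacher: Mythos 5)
Your proof is correct and takes essentially the same route as the paper: part (1) is exactly the paper's argument (compute $\langle\widetilde{M}^{(n)}\rangle=\widetilde{A}$ via the independence in (G1) and invoke L\'evy's characterization theorem), while part (2) spells out the details behind the paper's one-line appeal to \eqref{expZ} and part (1), i.e.\ matching \eqref{SSn} with the exponential representation of $S$ and transferring the FCS data through Lemma \ref{Z}. The consistency step you flag as a potential obstacle --- that $(\tau_n,\widetilde{\mu}^{(n)})$, $(\tau_n,\widetilde{\sigma}^{(n)})$ and $(\tau_n,\widetilde{M}^{(n)})$ are FCSs for $\mu$, $\sigma$ and $M$ --- is already recorded in the paper immediately after \eqref{mu-sigma}, so it poses no real difficulty.
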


Now, we can investigate the financial market $(S,\mathbb{F},\mathbb{B})$ and solve the investor's portfolio problem.

\begin{proposition}\label{pro}
Suppose that Assumptions (G1)-(G3) hold true, and let $\varphi$ represent the logarithmic utility function.
\begin{itemize}
  \item [$(1)$] The financial market $(S,\mathbb{F},\mathbb{B})$ satisfies NA.
  \item [$(2)$] The optimal strategy $\pi$ (w.r.t. $(\tau_n,s_0+\widetilde{S}^{(n)}\widetilde{\sigma}^{(n)}.\widetilde{M}^{(n)},\widetilde{S}^{(n)}\widetilde{\mu}^{(n)}
.\widetilde{A})$) for the problem outlined in \eqref{optimal-problem} is expressed as follows:
  \begin{equation}\label{optimal-solution}
  \pi=\frac{wX^*}{S},
  \end{equation}
  where
  \[
  w=\left(\frac{(1+(b-1)F(a_1))\mu^*}{\sigma_1^2}I_{\llbracket{0}\rrbracket}
  +\sum\limits_{n=1}^{+\infty}\frac{(1+(b-1)F(a_n))\mu^*}{\sigma_n^2}I_{\rrbracket{a_{n-1},a_{n}}\rrbracket}\right)\mathfrak{I}_\mathbb{B}
  \]
  is the optimal proportion of wealth invested in the stock, and
  \[
  X^*=x_0\exp\left(\left(\frac{\mu^2}{2\sigma^2}\right){}_{\bullet}A+\frac{\mu}{\sigma}{}_{\bullet}M\right).
\]

   is the associated optimal wealth.
\end{itemize}
\end{proposition}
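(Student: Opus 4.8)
The plan is to reduce both assertions to the conventional markets $(\widetilde{S}^{(n)},\mathbb{F})$ supplied by Lemma \ref{Sn2}, where $(\tau_n,\widetilde{S}^{(n)})$ is an inner FCS for $S\in\mathcal{S}^{i,\mathbb{B}}$ and $(\tau_n,s_0+\widetilde{S}^{(n)}\widetilde{\sigma}^{(n)}.\widetilde{M}^{(n)},\widetilde{S}^{(n)}\widetilde{\mu}^{(n)}.\widetilde{A})$ is a decomposed inner FCS, so that $S^{(n)}=\widetilde{S}^{(n)}$ solves $d\widetilde{S}^{(n)}=\widetilde{S}^{(n)}(\widetilde{\mu}^{(n)}\,dt+\widetilde{\sigma}^{(n)}\,d\widetilde{M}^{(n)})$. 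For part (1) I would check NA for each $(\widetilde{S}^{(n)},\mathbb{F})$ and then apply the definition of NA for $(S,\mathbb{F},\mathbb{B})$ verbatim with this inner FCS. Since $\widetilde{M}^{(n)}$ is a Brownian motion (Lemma \ref{Sn2}(1)) and the market price of risk $\theta^{(n)}=\widetilde{\mu}^{(n)}/\widetilde{\sigma}^{(n)}$ is bounded (because $(\mu_n),(\sigma_n)$ are bounded with $\sigma_n>0$ under (G2)--(G3)), the process $\mathcal{E}(-\theta^{(n)}.\widetilde{M}^{(n)})$ is a strictly positive local martingale deflator that is a genuine martingale on every finite horizon by Novikov's criterion. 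Then every admissible self-financing wealth deflated by it is a nonnegative local martingale, hence a supermartingale, which precludes arbitrage and yields NA for each $(\widetilde{S}^{(n)},\mathbb{F})$ and therefore for $(S,\mathbb{F},\mathbb{B})$.

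For part (2) I would solve the logarithmic problem \eqref{optimal-problem} on each $(\widetilde{S}^{(n)},\mathbb{F})$, whose terminal time here is $T_n=\tau_n$. Investing a proportion $w$ of wealth in the stock and applying Itô's formula gives $\log X^{(n)}_{\tau_n}=\log x_0+\int_0^{\tau_n}\big(w_t\widetilde{\mu}^{(n)}_t-\tfrac12 w_t^2(\widetilde{\sigma}^{(n)}_t)^2\big)\,dt+\int_0^{\tau_n}w_t\widetilde{\sigma}^{(n)}_t\,d\widetilde{M}^{(n)}_t$. Because $\tau_n=a_n\wedge\tau\le a_n<+\infty$ is a bounded $\mathbb{F}$-stopping time and the coefficients are bounded, the stopped stochastic integral is a true martingale of zero mean, so maximizing $\mathbb{E}[\log X^{(n)}_{\tau_n}]$ reduces to maximizing the drift integrand pathwise, yielding the myopic log-optimal fraction $w^{(n)}_t=\widetilde{\mu}^{(n)}_t/(\widetilde{\sigma}^{(n)}_t)^2$, which is insensitive to the random horizon since the pointwise maximizer is the same for every value of $\tau_n$. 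On $\rrbracket a_{n-1},a_n\rrbracket$ one has $\widetilde{\mu}^{(n)}=\mu_n$, $\widetilde{\sigma}^{(n)}=\sigma_n$, and by (G2) together with the continuity of $\tau$ one computes $p_n=\mathbf{P}(\tau_n<\tau)=\mathbf{P}(\tau>a_n)=1-F(a_n)$; substituting into (G3) gives $\mu_n=[1+(b-1)F(a_n)]\mu^*$, so $w^{(n)}=\mu_n/\sigma_n^2=[1+(b-1)F(a_n)]\mu^*/\sigma_n^2$, matching the $n$-th block of $w$.

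It then remains to glue the per-period data into the global objects. Converting proportions into shares, $\pi^{(n)}=w^{(n)}X^{(n)}/\widetilde{S}^{(n)}$, I would verify that $(\tau_n,\pi^{(n)})$ is an FCS for a $\mathbb{B}$-predictable $\pi=wX^*/S$ and invoke Theorem \ref{eq-HX} to patch the $\pi^{(n)}$ consistently, so that Definition \ref{portfolio-problem} delivers optimality of $\pi$ with respect to the chosen decomposed inner FCS. Finally, taking $w=\mu/\sigma^2$, the self-financing relation $X^*=x_0\mathfrak{I}_\mathbb{B}+(wX^*)_{\bullet}Z$ with $Z=\mu_{\bullet}A+\sigma_{\bullet}M$ and $\langle M\rangle=A$, together with the Itô / stochastic-exponential computation of Theorem \ref{ito}, gives $X^*=x_0\exp\big((\tfrac{\mu^2}{2\sigma^2})_{\bullet}A+\tfrac{\mu}{\sigma}{}_{\bullet}M\big)$, which is exactly \eqref{optimal-solution}.

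The main obstacle I anticipate is the rigorous justification of optimality under the random, independent terminal time $\tau_n=a_n\wedge\tau$: one must argue that pathwise maximization of the drift integrand maximizes $\mathbb{E}[\log X^{(n)}_{\tau_n}]$ for the random upper limit (using boundedness of $\tau_n$ and of the coefficients, and the independence of $\tau$ from $(N^{(n)})$ to kill the martingale term), and that the candidate strategy is admissible and respects the positivity constraint in \eqref{optimal-problem}. A secondary difficulty is confirming that the locally defined $w^{(n)}$, $X^{(n)}$, $\pi^{(n)}$ patch into well-defined $\mathbb{B}$-processes satisfying the FCS consistency relations, which is where the inner FCS structure of Lemma \ref{Sn2} and Assumption (G) are indispensable.
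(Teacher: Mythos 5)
Your overall architecture coincides with the paper's: both proofs reduce matters to the per-period conventional markets $(\widetilde{S}^{(n)},\mathbb{F})$ furnished by Lemma \ref{Sn2}, solve the logarithmic problem there with terminal time $\tau_n=a_n\wedge\tau$, and glue the per-period objects $w^{(n)}$, $X^{(n)}$, $\pi^{(n)}=w^{(n)}X^{(n)}/\widetilde{S}^{(n)}$ into $\mathbb{B}$-processes through the FCS machinery so that Definition \ref{portfolio-problem} applies; your computation $p_n=\mathbf{P}(a_n<\tau)=1-F(a_n)$, hence $\mu_n=[1+(b-1)F(a_n)]\mu^*$, reproduces exactly the coefficients in \eqref{optimal-solution}. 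The genuine divergence is in how the per-period problem is settled. The paper does not re-derive the control problem at all: it verifies the hypotheses of Theorem 2 of Blanchet-Scalliet et al.\ \cite{Blanchet} (with $J(y)=1/y$, $v_t=1/x_0$, and the state-price density $H_t=\exp\{-\int_0^t(\widetilde{\mu}^{(n)}_s/\widetilde{\sigma}^{(n)}_s)\,d\widetilde{M}^{(n)}_s-\tfrac12\int_0^t(\widetilde{\mu}^{(n)}_s/\widetilde{\sigma}^{(n)}_s)^2ds\}$, checking $J(v_0)=x_0$ and that $H_tJ(v_tH_t)$ is a martingale) and reads off $w^{(n)}=\widetilde{\mu}^{(n)}/(\widetilde{\sigma}^{(n)})^2$ and $X^{(n)}=x_0/H$; likewise, for part (1) it simply cites Theorem 12.1.8 of \cite{Oksendal}, where you give the equivalent deflator/Novikov argument. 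Your route is more self-contained; the paper's buys rigor by delegation to a result tailored to uncertain horizons.

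The one step in your write-up that is not yet a proof is the claim that ``the stopped stochastic integral is a true martingale of zero mean, so maximizing $\mathbb{E}[\log X^{(n)}_{\tau_n}]$ reduces to maximizing the drift integrand pathwise.'' That is valid for your \emph{candidate} strategy, whose proportion is bounded, but not for an arbitrary competitor in \eqref{optimal-problem}: admissibility only imposes nonnegative wealth, so a competitor's proportion $w_t=\vartheta_t\widetilde{S}^{(n)}_t/X_t$ may be unbounded, and then $\int_0^{\cdot}w\widetilde{\sigma}^{(n)}\,d\widetilde{M}^{(n)}$ is merely a local martingale whose expectation at $\tau_n$ need not vanish or be nonpositive. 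The standard repair---and in substance what the cited Theorem 2 of \cite{Blanchet} encapsulates---is the duality bound you almost have in hand from part (1): for any admissible wealth $X$, the deflated process $HX$ is a nonnegative local martingale, hence a supermartingale, so optional stopping at the bounded stopping time $\tau_n$ gives $\mathbb{E}[H_{\tau_n}X_{\tau_n}]\le x_0$; combining with the pointwise inequality $\log x\le xy/x_0-1+\log x_0-\log y$ (valid for $x,y>0$) yields $\mathbb{E}[\log X_{\tau_n}]\le \log x_0-\mathbb{E}[\log H_{\tau_n}]$, and this upper bound is attained by the myopic strategy because $\widetilde{\mu}^{(n)}/\widetilde{\sigma}^{(n)}$ is bounded and $\tau_n\le a_n$, making $\int_0^{\tau_n}(\widetilde{\mu}^{(n)}/\widetilde{\sigma}^{(n)})\,d\widetilde{M}^{(n)}$ a genuine zero-mean variable. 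With that paragraph inserted, your argument closes the gap you yourself flagged, and the remaining gluing steps (FCS consistency of $w^{(n)}$, $X^{(n)}$, $\pi^{(n)}$ and the $\mathbb{B}$-It\^{o} identification of $X^*$) proceed exactly as in the paper.
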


The optimal strategy $\pi$ specified by \eqref{optimal-solution} excludes any information pertaining to the stock price subsequent to the default event and does not convey any details regarding portfolio strategies beyond the designated time-horizon $\mathbb{B}$. Consequently, we have ascertained that the investor's optimal portfolio rules within a market characterized by a sudden-stop horizon are independent of the stock dynamics that occur outside the prescribed time-horizon. This finding substantiates that, from the investor's standpoint, information concerning stock prices beyond $\mathbb{B}$ is immaterial. This represents a fundamental divergence between investing in a market with a sudden-stop horizon and investing in a conventional market with an uncertain time-horizon.
For example, Blanchet-Scalliet et al. \cite{Blanchet} investigated investments within an uncertain time-horizon $\llbracket{0,\tau\wedge T}\rrbracket$ (where $T$ denotes the (finite) time span of the economy, and $\tau>0$ is a random variable representing the date of the agent's death). They defined the dynamics of stock prices over the entire time span $\llbracket{0,T}\rrbracket$ to derive optimal portfolio rules. However, it is devoid of practical significance for the agent to determine optimal portfolio rules for the time interval $\llbracket{\tau\wedge T,T}\rrbracket$, which occurs after the agent's death. Therefore, our theory presents an alternative approach that focuses exclusively on optimal portfolio rules within the time-horizon.

We further propose a straightforward method for incorporating the impact of default risk into the investor's optimal portfolio rules within a market characterized by a sudden-stop horizon. In the time-horizon where default is not anticipated to occur (i.e., $F(a_n)=0$ in \eqref{optimal-solution}), the effect of default risk can be disregarded. Consequently, the market can be regarded as one with a fixed time-horizon.
Conversely, in the time-horizon where $b\neq 1$ and default is more likely to occur, as indicated by a higher value of $F(a_n)$ in \eqref{optimal-solution}, default risk exerts a more pronounced influence on the optimal portfolio rule. This observation sets our findings apart from those in markets with fixed time-horizons (e.g., \cite{Merton1969,Merton1971}), and markets with uncertain time-horizons (e.g., Propositions 3 and 4 of \cite{Blanchet}, and Subsection 3.6.2 of \cite{Pham}) where the uncertain time-horizon is independent of the driving process of stock uncertainty, and the optimal strategies are not related to the distribution of the exit time. The key difference stems from the fact that, in our market with the sudden-stop horizon $\mathbb{B}$, we estimate the terminal time using the sequence $(\tau_n)$, rather than directly considering the exit time as the terminal time.

\section*{Appendix: proofs}
\setcounter{section}{0}
\setcounter{equation}{0}
\setcounter{lemma}{0}
\renewcommand{\theequation}{A.\arabic{equation}}
\renewcommand{\thelemma}{A.\arabic{lemma}}

%%%%%%%%%%%%%%%%%%%%%%%%%%%%%%%%%%%%%%%%%%%%%%%%%%%%%%%%%%%%%%%%%%%%%%%%%%%%%%%%%%%%%%%%%%%%%%%%%%%%%%%%%%%%%%%%
\noindent{\bf Proof of Lemma \ref{th8.17}.} The detailed proof is provided in Theorem 8.17 and Theorem 8.18 of \cite{He}.
$\hfill\blacksquare$

%%%%%%%%%%%%%%%%%%%%%%%%%%%%%%%%%%%%%%%%%%%%%%%%%%%%%%%%%%%%%%%%%%%%%%%%%%%%%%%%%%%%%%%%%%%%%%%%%%%%%%%
\par\vspace{0.3cm}
%%%%%%%%%%%%%%%%%%%%%%%%%%%%%%%%%%%%%%%%%%%%%%%%%%%%%%%%%%%%%%%%%%%%%%%%%%%%%%%%%%%%%%%%%%%%%%%%%%%%%%%
\noindent {\bf Proof of Theorem \ref{process}.}
We just prove the case of FCS, and the case of CS can be proven similarly.

(1) The necessity is trivial, and we need to prove the sufficiency. Suppose $XI_{\mathbb{B}\llbracket{0,S_n}\rrbracket}=YI_{\mathbb{B}\llbracket{0,S_n}\rrbracket}$ holds for each $n\in \mathbf{N}^+$. It is easy to obtain $X_0I_{\llbracket{0}\rrbracket}=Y_0I_{\llbracket{0}\rrbracket}$ and
\[
 XI_{\mathbb{B}\rrbracket{S_{n-1},S_n}\rrbracket}=YI_{\mathbb{B}\rrbracket{S_{n-1},S_n}\rrbracket},
\quad S_0=0,\quad n\in \mathbf{N}^+.
\]
Then using the fact
\[
\mathbb{B}\subseteq\bigcup\limits_{n=1}^{{+\infty}}\llbracket{0,S_n}\rrbracket = \llbracket{0}\rrbracket\cup \left(\bigcup\limits_{n=1}^{{+\infty}}\rrbracket{S_{n-1},S_n}\rrbracket\right), \]
we deduce
\[
XI_\mathbb{B}=X_0I_{\llbracket{0}\rrbracket}+\sum\limits_{n=1}^{{+\infty}}XI_{\mathbb{B}\rrbracket{S_{n-1},S_n}
      \rrbracket}
 =Y_0I_{\llbracket{0}\rrbracket}+\sum\limits_{n=1}^{{+\infty}}YI_{\mathbb{B}\rrbracket{S_{n-1},S_n}
      \rrbracket}
 =YI_\mathbb{B},
\]
which implies that $X=Y$.

(2) The definition of $\mathbb{B}$-process yields $XI_{\mathbb{B}\llbracket{0,T_k}\rrbracket}=X^{(k)}I_{\mathbb{B}\llbracket{0,T_k}\rrbracket}$ directly. And using $T_k\leq T_l$, (\ref{xkl}) is finally obtained by
      \[
      XI_{\mathbb{B}\llbracket{0,T_k}\rrbracket}=
      (XI_{\mathbb{B}\llbracket{0,T_l}\rrbracket})I_{\llbracket{0,T_k}\rrbracket}
      =(X^{(l)}I_{\mathbb{B}\llbracket{0,T_l}\rrbracket})I_{\llbracket{0,T_k}\rrbracket}
      =X^{(l)}I_{\mathbb{B}\llbracket{0,T_k}\rrbracket}.
      \]

(3) It is easy to see
      \[
      (XI_\mathbb{B})^{\tau_n}=((XI_\mathbb{B})^{T_n})^{S_n}=((X^{(n)}I_\mathbb{B})^{T_n})^{S_n}=(X^{(n)}I_\mathbb{B})^{\tau_n},\quad n\in \mathbf{N}^+.
      \]
From the facts $\tau_n\uparrow T$ and $\bigcup\limits_{n=1}^{{+\infty}}\llbracket{0,\tau_n}\rrbracket\supseteq \mathbb{B}$, the sequence $(\tau_n,X^{(n)})$ is a CS for $X$. Since $X^{(n)}\in \mathcal{D}$ for each $n\in \mathbf{N}^+$,  $(\tau_n,X^{(n)})$ is indeed an FCS for $X\in \mathcal{D}^\mathbb{B}$.

(4) Suppose that $(\widetilde{T}_n,Y^{(n)})$ is an FCS for $Y\in \mathcal{D}^\mathbb{B}$. Put $\tau_n=\widetilde{T}_n\wedge T_n$ for each $n\in \mathbf{N}^+$. From part (3), $(\tau_n,X^{(n)})$ and $(\tau_n,Y^{(n)})$ are FCSs for $X\in \mathcal{D}^\mathbb{B}$ and $Y\in \mathcal{D}^\mathbb{B}$, respectively. Then for each $n\in \mathbf{N}^+$,
\begin{align*}
(aX+bY)I_{\mathbb{B}\llbracket{0,\tau_n}\rrbracket}
&=a(XI_{\mathbb{B}\llbracket{0,\tau_n}\rrbracket})+b(YI_{\mathbb{B}\llbracket{0,\tau_n}\rrbracket})\\
&=a(X^{(n)}I_{\mathbb{B}\llbracket{0,\tau_n}\rrbracket})+b(Y^{(n)}I_{\mathbb{B}\llbracket{0,\tau_n}\rrbracket})\\
&=(aX^{(n)}+bY^{(n)})I_{\mathbb{B}\llbracket{0,\tau_n}\rrbracket},
\end{align*}
which shows $(\tau_n,aX^{(n)}+bY^{(n)})$ is a CS for $aX+bY$. The linearity of $\mathcal{D}$ implies $aX^{(n)}+bY^{(n)}\in\mathcal{D}$ for each $n\in \mathbf{N}^+$. Hence, we conclude that $aX+bY\in \mathcal{D}^\mathbb{B}$.

(5) Using part (2), we have for each $l\in \mathbf{N}^+$,
\begin{align*}
XI_{\mathbb{B}\llbracket{0,T_l}\rrbracket}
=& X^{(l)}I_{\mathbb{B}\llbracket{0,T_l}\rrbracket}\\
=& \left(X^{(l)}_0I_{\llbracket{0}\rrbracket}+\sum\limits_{n=1}^{l}X^{(l)}I_{\mathbb{B}\rrbracket{T_{n-1},T_n}
      \rrbracket}\right)I_{\mathbb{B}\llbracket{0,T_l}\rrbracket} \\
=& \left(X_0I_{\llbracket{0}\rrbracket}+\sum\limits_{n=1}^{l}X^{(n)}I_{\mathbb{B}\rrbracket{T_{n-1},T_n}
      \rrbracket}\right)I_{\mathbb{B}\llbracket{0,T_l}\rrbracket}\\
=&\left\{\left(X_0I_{\llbracket{0}\rrbracket}+\sum\limits_{n=1}^{{+\infty}}X^{(n)}I_{\rrbracket{T_{n-1},T_n}
      \rrbracket}\right)\mathfrak{I}_\mathbb{B}\right\}I_{\mathbb{B}\llbracket{0,T_l}\rrbracket},
\end{align*}
which, by part (1), yields (\ref{x-expression}).

As for the independence of the choice of FCS, it suffices to prove that $XI_{\mathbb{B}\llbracket{0,\tau_l}\rrbracket}=\widetilde{X}I_{\mathbb{B}\llbracket{0,\tau_l}\rrbracket}$ holds for each $l\in \mathbf{N}^+$ and $\tau_l=T_l\wedge S_l$. From part (3), $(\tau_n,X^{(n)})$ and $(\tau_n,\widetilde{X}^{(n)})$ are both FCSs for $X\in \mathcal{D}^\mathbb{B}$. Then using part (2) again, we deduce for each $l\in \mathbf{N}^+$,
\begin{align*}
\widetilde{X}I_{\mathbb{B}\llbracket{0,\tau_l}\rrbracket}
&=\left(X_0I_{\llbracket{0}\rrbracket}+\sum\limits_{n=1}^{+\infty}\widetilde{X}^{(n)}I_{\mathbb{B}\rrbracket{S_{n-1},S_n}
      \rrbracket}\right)I_{\mathbb{B}\llbracket{0,S_l}\rrbracket}I_{\llbracket{0,T_l}\rrbracket}\\
&=\left(X_0I_{\llbracket{0}\rrbracket}+\sum\limits_{n=1}^{l}\widetilde{X}^{(n)}I_{\mathbb{B}\rrbracket{S_{n-1},S_n}
      \rrbracket}\right)I_{\mathbb{B}\llbracket{0,S_l}\rrbracket}I_{\llbracket{0,T_l}\rrbracket}\\
&=\left(X_0I_{\llbracket{0}\rrbracket}+\sum\limits_{n=1}^{l}\widetilde{X}^{(l)}I_{\mathbb{B}\rrbracket{S_{n-1},S_n}
      \rrbracket}\right)I_{\mathbb{B}\llbracket{0,S_l}\rrbracket}I_{\llbracket{0,T_l}\rrbracket}\\
&=\widetilde{X}^{(l)}I_{\mathbb{B}\llbracket{0,S_l}\rrbracket}I_{\llbracket{0,T_l}\rrbracket}\\
&=XI_{\mathbb{B}\llbracket{0,\tau_l}\rrbracket},
\end{align*}
which completes the proof.
$\hfill\blacksquare$

%%%%%%%%%%%%%%%%%%%%%%%%%%%%%%%%%%%%%%%%%%%%%%%%%%%%%%%%%%%%%%%%%%%%%%%%%%%%%%%%%%%%%%%%%%%%%%%%%%%%%%%
\par\vspace{0.3cm}
%%%%%%%%%%%%%%%%%%%%%%%%%%%%%%%%%%%%%%%%%%%%%%%%%%%%%%%%%%%%%%%%%%%%%%%%%%%%%%%%%%%%%%%%%%%%%%%%%%%%%%%
\noindent {\bf Proof of Corollary \ref{process-FS}.}
It is not difficult to obtain the facts
\[
\mathbb{C}=\left(\bigcup\limits_{n=1}^{{+\infty}}\llbracket{0,\tau_n}\rrbracket\right)\bigcap
\left(\bigcup\limits_{m=1}^{{+\infty}}\llbracket{0,T_m}\rrbracket\right)
=\bigcup\limits_{n=1}^{{+\infty}}\llbracket{0,S_n}\rrbracket
\]
and
\[
X^{S_n}I_{\llbracket{0,S_n}\rrbracket}=XI_{\llbracket{0,S_n}\rrbracket}
=X^{(n)}I_{\llbracket{0,S_n}\rrbracket}
=(X^{(n)})^{S_n}I_{\llbracket{0,S_n}\rrbracket}, \quad n\in \mathbf{N}^+.
\]
Then using these two established facts, all statements are directly deduced by Theorem\ref{process}.
$\hfill\blacksquare$

%%%%%%%%%%%%%%%%%%%%%%%%%%%%%%%%%%%%%%%%%%%%%%%%%%%%%%%%%%%%%%%%%%%%%%%%%%%%%%%%%%%%%%%%%%%%%%%%%%%%%%%
\par\vspace{0.3cm}
%%%%%%%%%%%%%%%%%%%%%%%%%%%%%%%%%%%%%%%%%%%%%%%%%%%%%%%%%%%%%%%%%%%%%%%%%%%%%%%%%%%%%%%%%%%%%%%%%%%%%%%
\noindent {\bf Proof of Theorem \ref{restriction}.}
(1) We just show the case that $H$ is a $\mathbb{B}$-predictable process, and the case that $H$ is a $\mathbb{B}$-locally bounded predictable process can be shown similarly.
The sufficiency is straightforward, and it remain to show the necessity.

Suppose $H\in \mathcal{P}^\mathbb{B}$ with the FCS $(T_n,H^{(n)})$.  Then \eqref{x-expression} yields
\[
      H=\left(H_0I_{\llbracket{0}\rrbracket}+\sum\limits_{n=1}^{{+\infty}}H^{(n)}I_{\rrbracket{T_{n-1},T_n}
      \rrbracket}\right)\mathfrak{I}_\mathbb{B},\quad T_0=0.
\]
Define
\[
\widetilde{H}=H_0I_{\llbracket{0}\rrbracket}+\sum\limits_{n=1}^{{+\infty}}H^{(n)}I_{\rrbracket{T_{n-1},T_n}
      \rrbracket}.
\]
Since, for each $n\in \mathbf{N}^+$, $H^{(n)}$ and $I_{\rrbracket{T_{n-1},T_n}\rrbracket}$ are predictable processes (see, e.g., Theorem 3.16 in \cite{He}), $\widetilde{H}$ is a predictable process satisfying \eqref{couple}.

(2) Note that, by \eqref{x-expression}, $XI_\mathbb{B}$ is an optional process.

{\it Necessity.} Suppose that $X$ is a $\mathbb{B}$-thin process with the FCS $(T_n,X^{(n)})$. For each $n\in \mathbf{N}^+$, $[X^{(n)}\neq 0]$ is a thin set. Then
\[
[XI_\mathbb{B}\neq 0]=\bigcup_{n\in\mathbf{N}^+}[XI_{\mathbb{B}\llbracket{0,T_n}\rrbracket}\neq 0]=\bigcup_{n\in\mathbf{N}^+}[X^{(n)}I_{\mathbb{B}\llbracket{0,T_n}\rrbracket}\neq 0]\subseteq\bigcup_{n\in\mathbf{N}^+}[X^{(n)}\neq 0],
\]
which shows that $[XI_\mathbb{B}\neq 0]$ is an optional set contained in a thin set. Hence, by Theorem 3.19 in \cite{He}, $[XI_\mathbb{B}\neq 0]$ is a thin set, which implies that $XI_\mathbb{B}$ is a thin process.

{\it Sufficiency.} Suppose that $XI_\mathbb{B}$ is a thin process. Let $\mathbb{B}$ be given by \eqref{B}. Then $X$ is a $\mathbb{B}$-thin process, because $(T_n=T,X^{(n)}=XI_\mathbb{B})$ is an FCS for $X$ (a $\mathbb{B}$-thin process).

(3) The detailed proof can be found in Theorem 8.22 of \cite{He}.

(4) For each $k\in \mathbf{N}^+$, it is straightforward to verify that
\begin{align*}
XI_{\mathbb{B}\llbracket{0,T_k}\rrbracket}
&=\left(X^{(1)}_0I_{\llbracket{0}\rrbracket}+\sum\limits_{n=1}^{k}X^{(n)}I_{\mathbb{B}\rrbracket{T_{n-1},T_n}
      \rrbracket}\right)I_{\mathbb{B}\llbracket{0,T_k}\rrbracket}\\
&=\left(X^{(k)}_0I_{\llbracket{0}\rrbracket}+\sum\limits_{n=1}^{k}X^{(k)}I_{\mathbb{B}\rrbracket{T_{n-1},T_n}
      \rrbracket}\right)I_{\mathbb{B}\llbracket{0,T_k}\rrbracket}\\
&=X^{(k)}I_{\mathbb{B}\llbracket{0,T_k}\rrbracket},
\end{align*}
which, according to the fact $X^{(k)}\in \mathcal{D}$, shows that $X\in \mathcal{D}^\mathbb{B}$ with the FCS $(T_n,X^{(n)})$.
$\hfill\blacksquare$

%%%%%%%%%%%%%%%%%%%%%%%%%%%%%%%%%%%%%%%%%%%%%%%%%%%%%%%%%%%%%%%%%%%%%%%%%%%%%%%%%%%%%%%%%%%%%%%%%%%%%%%
\par\vspace{0.3cm}
%%%%%%%%%%%%%%%%%%%%%%%%%%%%%%%%%%%%%%%%%%%%%%%%%%%%%%%%%%%%%%%%%%%%%%%%%%%%%%%%%%%%%%%%%%%%%%%%%%%%%%%
\noindent {\bf Proof of Theorem \ref{fcs}.}
The proof of $X^S\in \mathcal{D}$ can be found in Theorem 8.20 of \cite{He}, and it suffices to prove that $(T_n,(X^{(n)})^S)$ is an FCS for $X^S\mathfrak{I}_\mathbb{B}\in \mathcal{D}^\mathbb{B}$. For each $n\in \mathbf{N}^+$, observing that $S\wedge T_n$ is a $\mathbb{B}$-inner stopping time, we have
\[
X^{S\wedge T_n}I_{\llbracket{0,S\wedge T_n}\rrbracket}
=XI_{\llbracket{0,S\wedge T_n}\rrbracket}
=(XI_{\mathbb{B}\llbracket{0,T_n}\rrbracket})I_{\llbracket{0,S\wedge T_n}\rrbracket}
=(X^{(n)}I_{\mathbb{B}\llbracket{0,T_n}\rrbracket})I_{\llbracket{0,S\wedge T_n}\rrbracket}
=X^{(n)}I_{\llbracket{0,S\wedge T_n}\rrbracket},
\]
which implies that $X^{S\wedge T_n}=(X^{(n)})^{S\wedge T_n}$. Then the relations
\[
(X^S\mathfrak{I}_\mathbb{B})I_{\mathbb{B}\llbracket{0,T_n}\rrbracket}
=X^{S\wedge T_n}I_{\mathbb{B}\llbracket{0,T_n}\rrbracket}
=(X^{(n)})^{S\wedge T_n}I_{\mathbb{B}\llbracket{0,T_n}\rrbracket}
=(X^{(n)})^{S}I_{\mathbb{B}\llbracket{0,T_n}\rrbracket}, \quad n\in \mathbf{N}^+
\]
show that $(T_n,(X^{(n)})^S)$ is a CS for $X^S\mathfrak{I}_\mathbb{B}$. Since $\mathcal{D}$ is stable under stopping such that $(X^{(n)})^S\in \mathcal{D}$ for each $n\in \mathbf{N}^+$, $(T_n,(X^{(n)})^S)$ is indeed an FCS for $X^S\mathfrak{I}_\mathbb{B}\in \mathcal{D}^\mathbb{B}$.
$\hfill\blacksquare$

%%%%%%%%%%%%%%%%%%%%%%%%%%%%%%%%%%%%%%%%%%%%%%%%%%%%%%%%%%%%%%%%%%%%%%%%%%%%%%%%%%%%%%%%%%%%%%%%%%%%%%%
\par\vspace{0.3cm}
%%%%%%%%%%%%%%%%%%%%%%%%%%%%%%%%%%%%%%%%%%%%%%%%%%%%%%%%%%%%%%%%%%%%%%%%%%%%%%%%%%%%%%%%%%%%%%%%%%%%%%%
\noindent {\bf Proof of Corollary \ref{fcs-p}.}
From Theorem \ref{fcs}, the proof is trivial.
$\hfill\blacksquare$

%%%%%%%%%%%%%%%%%%%%%%%%%%%%%%%%%%%%%%%%%%%%%%%%%%%%%%%%%%%%%%%%%%%%%%%%%%%%%%%%%%%%%%%%%%%%%%%%%%%%%%%
\par\vspace{0.3cm}
%%%%%%%%%%%%%%%%%%%%%%%%%%%%%%%%%%%%%%%%%%%%%%%%%%%%%%%%%%%%%%%%%%%%%%%%%%%%%%%%%%%%%%%%%%%%%%%%%%%%%%%
\noindent {\bf Proof of Corollary \ref{cD=DB}.}
The inclusion $\mathcal{D}\subseteq\mathcal{D}^{\llbracket{0,+\infty}\llbracket}$ is straightforward, and it suffices to prove $\mathcal{D}^{\llbracket{0,+\infty}\llbracket}\subseteq\mathcal{D}$. Let $X\in \mathcal{D}^{\llbracket{0,+\infty}\llbracket}$. In fact, putting $\tau=+\infty$, then $\tau$ is a $\llbracket{0,+\infty}\llbracket$-inner stopping time. Theorem \ref{fcs} yields $X=X^\tau\in\mathcal{D}$, which finishes the proof.
$\hfill\blacksquare$

%%%%%%%%%%%%%%%%%%%%%%%%%%%%%%%%%%%%%%%%%%%%%%%%%%%%%%%%%%%%%%%%%%%%%%%%%%%%%%%%%%%%%%%%%%%%%%%%%%%%%%%
\par\vspace{0.3cm}
%%%%%%%%%%%%%%%%%%%%%%%%%%%%%%%%%%%%%%%%%%%%%%%%%%%%%%%%%%%%%%%%%%%%%%%%%%%%%%%%%%%%%%%%%%%%%%%%%%%%%%%
\noindent {\bf Proof of Theorem \ref{thin}.}
$(1)$ From Definition 7.39 in \cite{He}, $\Sigma X^{(n)}\in \mathcal{V}^\mathbb{B}$ for each $n\in \mathbf{N}^+$, and then the proof is trivial.

$(2)$ It is evident that $XI_{\llbracket{0,\tau}\rrbracket}=X^{\tau}I_{\llbracket{0,\tau}\rrbracket}$ is an optional process. According to Theorem \ref{restriction}(2), $[XI_\mathbb{B}\neq 0]$ is a thin set. Consequently, by invoking Theorem 3.19 in \cite{He} and the inclusion
\[
[XI_{\llbracket{0,\tau}\rrbracket}\neq 0]\subseteq [XI_\mathbb{B}\neq 0],
\]
it can be easily deduced that $XI_{\llbracket{0,\tau}\rrbracket}$ is a thin process.
Suppose that $\Sigma X$ is well-defined. By the definition of the summation process, it is straightforward to observe that
\begin{align*}
(\Sigma X)^\tau
&=\left(\left(\sum_{s\leq \cdot}(XI_\mathbb{B})_{s}\right)\mathfrak{I}_\mathbb{B}\right)^\tau
=\left(\sum_{s\leq \cdot}(XI_\mathbb{B})_{s}\right)^\tau\\
&=\sum_{s\leq \cdot}(XI_{\llbracket{0,\tau}\rrbracket})_s
=\Sigma (XI_{\llbracket{0,\tau}\rrbracket}),
\end{align*}
which ultimately leads to the derivation of \eqref{sigmaX}.
$\hfill\blacksquare$

%%%%%%%%%%%%%%%%%%%%%%%%%%%%%%%%%%%%%%%%%%%%%%%%%%%%%%%%%%%%%%%%%%%%%%%%%%%%%%%%%%%%%%%%%%%%%%%%%%%%%%%
\par\vspace{0.3cm}
%%%%%%%%%%%%%%%%%%%%%%%%%%%%%%%%%%%%%%%%%%%%%%%%%%%%%%%%%%%%%%%%%%%%%%%%%%%%%%%%%%%%%%%%%%%%%%%%%%%%%%%
\noindent {\bf Proof of Theorem \ref{X-left}.}
Let $(\omega,t)\in \mathbb{B}$ with $t>0$. It follows from the condition $\bigcup\limits_{n=1}^{{+\infty}}\llbracket{0,T_n}\rrbracket\supseteq \mathbb{B}$ that there exists an integer $m\in \mathbf{N}^+$ such that $(\omega,t)\in \mathbb{B}\llbracket{0,T_m}\rrbracket$. Leveraging the CS $(T_n,X^{(n)})$ for $X$, we deduce that $X(\omega,s)=X^{(m)}(\omega,s)$ for all $s\in[0,t]$. Consequently, given the existence of $X^{(m)}(\omega,t-)$, we infer the existence of $X(\omega,t-)$ and establish the relation $X(\omega,t-)=X^{(m)}(\omega,t-)$. By invoking the arbitrariness of $(\omega,t)\in \mathbb{B}$ with $t>0$, we generalize that for all such $(\omega,t)$, the left-hand limits $X(\omega,t-)$ exist. This, in turn, validates the existence of $X_{-}$. Similarly, we can prove that for each $n\in \mathbf{N}^+$,
\[
X(\omega,t-)=X^{(n)}(\omega,t-), \quad (\omega,t)\in \mathbb{B}\llbracket{0,T_n}\rrbracket,
\]
which, by Definition \ref{X+-}, implies that
\[
X_{-}I_{\mathbb{B}\llbracket{0,T_n}\rrbracket}=(X^{(n)})_{-}I_{\mathbb{B}\llbracket{0,T_n}\rrbracket}, \quad n\in \mathbf{N}^+.
\]
Therefore, $(T_n,(X^{(n)})_{-})$ is a CS for $X_{-}$.
$\hfill\blacksquare$

%%%%%%%%%%%%%%%%%%%%%%%%%%%%%%%%%%%%%%%%%%%%%%%%%%%%%%%%%%%%%%%%%%%%%%%%%%%%%%%%%%%%%%%%%%%%%%%%%%%%%%%
\par\vspace{0.3cm}
%%%%%%%%%%%%%%%%%%%%%%%%%%%%%%%%%%%%%%%%%%%%%%%%%%%%%%%%%%%%%%%%%%%%%%%%%%%%%%%%%%%%%%%%%%%%%%%%%%%%%%%
\noindent {\bf Proof of Corollary \ref{deltaX}.}
$(1)$ From the FCS $(T_n,X^{(n)})$ for $X\in \mathcal{R}^\mathbb{B}$, it is evident that $X^{(n)}\in \mathcal{R}$ for each $n\in \mathbf{N}^+$. Then $(T_n,X^{(n)})$ is a CS for $X$ such that $(X^{(n)})_{-}$ exists for each $n\in \mathbf{N}^+$. Hence, by invoking Theorem \ref{X-left}, the assertion is proven.

$(2)$ For each $n\in \mathbf{N}^+$, $X^{(n)}$ is an adapted c\`{a}dl\`{a}g process, and then Theorem 7.7 in \cite{He} shows that $(X^{(n)})_{-}$ is  a locally bounded predictable process. Since Theorem \ref{X-left} also shows that $(T_n,(X^{(n)})_{-})$ is a CS for $X_{-}$, the assertion holds true.

$(3)$ The assertions are immediate consequences of Corollary \ref{fcs-p} and parts $(1)$ and $(2)$.
$\hfill\blacksquare$

%%%%%%%%%%%%%%%%%%%%%%%%%%%%%%%%%%%%%%%%%%%%%%%%%%%%%%%%%%%%%%%%%%%%%%%%%%%%%%%%%%%%%%%%%%%%%%%%%%%%%%%
\par\vspace{0.3cm}
%%%%%%%%%%%%%%%%%%%%%%%%%%%%%%%%%%%%%%%%%%%%%%%%%%%%%%%%%%%%%%%%%%%%%%%%%%%%%%%%%%%%%%%%%%%%%%%%%%%%%%%
\noindent {\bf Proof of Theorem \ref{delta}.}
$(1)$ The statements are direct results of Corollaries \ref{fcs-p} and \ref{deltaX}.

$(2)$ From Theorem \ref{process}(3), we can assume that $(T_n,X^{(n)})$ and $(T_n,Y^{(n)})$ are FCSs for $X\in \mathcal{R}^\mathbb{B}$ and $Y\in \mathcal{R}^\mathbb{B}$, respectively. Then $(T_n,aX^{(n)})$ is an FCS for $aX\in \mathcal{R}^\mathbb{B}$, and $(T_n,X^{(n)}+Y^{(n)})$ is an FCS for $X+Y\in \mathcal{R}^\mathbb{B}$. It follows from part $(1)$ that $(T_n,\Delta X^{(n)})$,  $(T_n,\Delta Y^{(n)})$, $(T_n,\Delta(aX^{(n)}))$ and $(T_n,\Delta(X^{(n)}+Y^{(n)}))$ are CSs for $\Delta X$, $\Delta Y$, $\Delta(aX)$ and $\Delta(X+Y)$, respectively. Consequently, for each $n\in \mathbf{N}^+$, we deduce
\begin{align*}
\Delta(aX)I_{\mathbb{B}\llbracket{0,T_n}\rrbracket}
=\Delta(aX^{(n)})I_{\mathbb{B}\llbracket{0,T_n}\rrbracket}
=a\Delta(X^{(n)})I_{\mathbb{B}\llbracket{0,T_n}\rrbracket}
=(a\Delta X)I_{\mathbb{B}\llbracket{0,T_n}\rrbracket}
\end{align*}
and
\begin{align*}
\Delta(X+Y)I_{\mathbb{B}\llbracket{0,T_n}\rrbracket}
=\Delta(X^{(n)}+Y^{(n)})I_{\mathbb{B}\llbracket{0,T_n}\rrbracket}
=(\Delta X^{(n)}+\Delta Y^{(n)})I_{\mathbb{B}\llbracket{0,T_n}\rrbracket}
=(\Delta X+\Delta Y)I_{\mathbb{B}\llbracket{0,T_n}\rrbracket},
\end{align*}
which, by Theorem \ref{process}$(1)$, implies \eqref{cadlag-linear}.

$(3)$ Suppose that $(T_n,X^{(n)})$ is an FCS for $X\in \mathcal{R}^\mathbb{B}$. It is easy to see that $X\mathfrak{I}_{\widetilde{\mathbb{B}}}\in\mathcal{R}^{\widetilde{\mathbb{B}}}$ with the FCS $(S_n,X^{(n)})$, where $S_n=S\wedge T_n$ for each $n\in \mathbf{N}^+$, and $S$ is the debut of ${\widetilde{\mathbb{B}}}^c$. Part $(1)$ shows that $(T_n,\Delta X^{(n)})$ and $(S_n,\Delta X^{(n)})$ are CSs for $\Delta X$ and $\Delta (X\mathfrak{I}_{\widetilde{\mathbb{B}}})$, respectively. Then for each $n\in \mathbf{N}^+$,
\[
\Delta(X\mathfrak{I}_{\widetilde{\mathbb{B}}})I_{\widetilde{\mathbb{B}}\llbracket{0,S_n}\rrbracket}
=\Delta X^{(n)}I_{\widetilde{\mathbb{B}}\llbracket{0,S_n}\rrbracket}
=(\Delta X^{(n)}I_{\mathbb{B}\llbracket{0,T_n}\rrbracket})I_{\widetilde{\mathbb{B}}\llbracket{0,S_n}\rrbracket}
=(\Delta XI_{\mathbb{B}\llbracket{0,T_n}\rrbracket})I_{\widetilde{\mathbb{B}}\llbracket{0,S_n}\rrbracket}
=\left((\Delta X)\mathfrak{I}_{\widetilde{\mathbb{B}}}\right)I_{\widetilde{\mathbb{B}}\llbracket{0,S_n}\rrbracket},
\]
which, by Theorem \ref{process}(1), implies \eqref{eqXT0}.

$(4)$ Let $(\omega,t)\in\llbracket{0,T}\rrbracket\subseteq \mathbb{B}$. Definition \ref{X+-} guarantees the existence of $X(\omega,t-)$. It is easy to see that $X(\omega,s)=X(\omega,s\wedge T(\omega))=X^T(\omega,s)$ for $s\in[0,t]$ such that $X(\omega,t-)=X^T(\omega,t-)$. Then by Definition \ref{X+-}, we deduce that
\[
(\Delta X^T)(\omega,t)=X^T(\omega,t)-X^T(\omega,t-)=X(\omega,t)-X(\omega,t-)=(\Delta X)(\omega,t),
\]
which implies $\Delta X^T=\Delta XI_{\llbracket{0,T}\rrbracket}$ on $\llbracket{0,T}\rrbracket$. On the other hand, let $(\omega,t)\in\rrbracket{T,+\infty}\llbracket$. From the definition of $X^T$, it is evident that $X^T(\omega,s)=X(\omega,T(\omega))$ for $s\in]T(\omega),t]$ such that $X^T(\omega,t-)=X^T(\omega,t)$. Then we deduce that
\[
(\Delta X^T)(\omega,t)=X^T(\omega,t)-X^T(\omega,t-)=0,
\]
which implies $\Delta X^T=\Delta XI_{\llbracket{0,T}\rrbracket}$ on $\rrbracket{T,+\infty}\llbracket$. Thus, $\Delta X^T=\Delta XI_{\llbracket{0,T}\rrbracket}$ is derived. Using the facts $X^{T-}=X^T-\Delta X_TI_{\llbracket{T,+\infty}\llbracket}$ and $\Delta (I_{\llbracket{T,+\infty}\llbracket})=I_{\llbracket{T}\rrbracket}$, we have
\[
\Delta X^{T-}=\Delta X^T-\Delta (\Delta X_TI_{\llbracket{T,+\infty}\llbracket})
=\Delta XI_{\llbracket{0,T}\rrbracket}-\Delta X_TI_{\llbracket{T}\rrbracket}
=\Delta XI_{\llbracket{0,T}\llbracket},
\]
which yields $\Delta X^{T-}=\Delta XI_{\llbracket{0,T}\llbracket}$. Therefore, \eqref{eqXT} is valid, and we finish the proof of part (4).

$(5)$ {\it Sufficiency}. Suppose that $(T_n,X^{(n)})$ is an FCS for $X\in \mathcal{C}^\mathbb{B}$. Then $\Delta X^{(n)}=0$ for each $n\in \mathbf{N}^+$. From $\mathcal{C}^\mathbb{B}\subseteq \mathcal{R}^\mathbb{B}$, part $(1)$ shows
\[
\Delta X I_{\mathbb{B}\llbracket{0,T_n}\rrbracket}=\Delta X^{(n)}I_{\mathbb{B}\llbracket{0,T_n}\rrbracket}=0,\quad n\in \mathbf{N}^+,
\]
which, by Theorem \ref{process}(1), yields $\Delta X=0\mathfrak{I}_\mathbb{B}$.

{\it Necessity}. Suppose $\Delta X=0\mathfrak{I}_\mathbb{B}$. Let $\mathbb{B}$ be given by \eqref{B},
and $(T_n,X^{(n)})$ be an FCS for $X\in \mathcal{R}^\mathbb{B}$. For each $n\in \mathbf{N}^+$, $X^{(n)}$ is a c\`{a}dl\`{a}g process, and then $Y^{(n)}=(X^{(n)})^{{T_n}\wedge (T_F-)}$ is a c\`{a}dl\`{a}g process. For each $n\in \mathbf{N}^+$, part $(4)$ shows
\[
\Delta(Y^{(n)})=\Delta\left((X^{(n)})^{{T_n}\wedge (T_F-)}\right)=\Delta(X^{(n)})I_{\llbracket{0,T_n}\rrbracket\llbracket{0,T_F}\llbracket}
=\Delta XI_{\mathbb{B}\llbracket{0,T_n}\rrbracket}=0,
\]
which implies $Y^{(n)}\in \mathcal{C}$. It is evident that
\[
XI_{\mathbb{B}\llbracket{0,T_n}\rrbracket}=X^{(n)}I_{\mathbb{B}\llbracket{0,T_n}\rrbracket}=Y^{(n)}I_{\mathbb{B}\llbracket{0,T_n}\rrbracket},
\quad n\in \mathbf{N}^+.
\]
Hence, $X\in \mathcal{C}^\mathbb{B}$ with the FCS $(T_n,Y^{(n)})$.

$(6)$ Suppose that $(T_n,X^{(n)})$ and $(T_n,Y^{(n)})$ are FCSs for $X\in \mathcal{S}^\mathbb{B}$ and $Y\in \mathcal{S}^\mathbb{B}$, respectively. For each $n\in \mathbf{N}^+$, both $X^{(n)}$ and $Y^{(n)}$ are adapted c\`{a}dl\`{a}g processes such that both $\Delta X^{(n)}$ and $\Delta Y^{(n)}$ are thin process. Consequently, it follows that
 \[
 [\Delta X^{(n)}\Delta Y^{(n)}\neq 0]=[\Delta X^{(n)}\neq 0]\cap [\Delta Y^{(n)}\neq 0]\subseteq [\Delta X^{(n)}\neq 0], \quad n\in \mathbf{N}^+,
 \]
 which implies that the set $[\Delta X^{(n)}\Delta Y^{(n)}\neq 0]$ is an optional set contained in a thin set. By invoking Theorem 3.19 in \cite{He}, it is deduced that for each $n\in \mathbf{N}^+$, $\Delta X^{(n)}\Delta Y^{(n)}$ is a thin process. Furthermore, in light of part (1), it is evident that $(T_n,\Delta X^{(n)}\Delta Y^{(n)})$ constitutes a CS for $\Delta X\Delta Y$. This indicates that $\Delta X\Delta Y$ is a $\mathbb{B}$-thin process with the FCS $(T_n,\Delta X^{(n)}\Delta Y^{(n)})$. Utilizing Definition 8.2 in \cite{He}, we establish that for each $n\in \mathbf{N}^+$ and for all $t>0$,
\[
\sum_{s\leq t} |\Delta X_s^{(n)}\Delta Y_s^{(n)}|\leq \frac{1}{2}\left(\sum_{s\leq t} (\Delta X_s^{(n)})^2+\sum_{s\leq t} (\Delta Y_s^{(n)})^2\right)<+\infty,\quad a.s.,
\]
which demonstrates that $\Sigma(\Delta X^{(n)}\Delta Y^{(n)})$ is well-defined. Therefore, it is deduced from Theorem \ref{thin}(1) that $\Sigma (\Delta X\Delta Y)\in \mathcal{V}^\mathbb{B}$ with the FCS $(T_n,\Sigma(\Delta X^{(n)}\Delta Y^{(n)}))$.
$\hfill\blacksquare$

%%%%%%%%%%%%%%%%%%%%%%%%%%%%%%%%%%%%%%%%%%%%%%%%%%%%%%%%%%%%%%%%%%%%%%%%%%%%%%%%%%%%%%%%%%%%%%%%%%%%%%%
\par\vspace{0.3cm}
%%%%%%%%%%%%%%%%%%%%%%%%%%%%%%%%%%%%%%%%%%%%%%%%%%%%%%%%%%%%%%%%%%%%%%%%%%%%%%%%%%%%%%%%%%%%%%%%%%%%%%%
\noindent{\bf Proof of Theorem \ref{HAproperty}.}
$(1)$  By applying the conditions $H,K\in \mathcal{L}^\mathbb{B}_s(A)$, it holds that, for all $(\omega,t)\in \mathbb{B}$,
\[
\int_{[0,t]}|aH_s(\omega)+bK_s(\omega)||dA_s(\omega)|
\leq|a|\int_{[0,t]}|H_s(\omega)||dA_s(\omega)|+|b|\int_{[0,t]}|K_s(\omega)||dA_s(\omega)|<+\infty.
\]
For $(\omega,t)\in \mathbb{B}$, we define
\[
L(\omega,t)=\int_{[0,t]}(aH_s(\omega)+bK_s(\omega))dA_s(\omega).
\]
Utilizing the definitions of $H_{\bullet}A$ and $K_{\bullet}A$, it is straightforward to deduce that for all $(\omega,t)\in \mathbb{B}$,
\[
L(\omega,t)
=a\int_{[0,t]}H_s(\omega)dA_s(\omega)+b\int_{[0,t]}K_s(\omega)dA_s(\omega)
=a(H_{\bullet}A)(\omega,t)+b(K_{\bullet}A)(\omega,t).
\]
Consequently, it follows that $L\in\mathcal{V}^\mathbb{B}$ and $L=a(H_{\bullet}A)+b(K_{\bullet}A)$.
Therefore, by Definition \ref{HA}, we verify the assertion.

$(2)$ By invoking the condition $H\in \mathcal{L}^\mathbb{B}_s(A)\cap\mathcal{L}^\mathbb{B}_s(V)$, we deduce that, for all $(\omega,t)\in \mathbb{B}$,
\[
\int_{[0,t]}|H_s(\omega)||d(aA_s(\omega)+bV_s(\omega))|
\leq|a|\int_{[0,t]}|H_s(\omega)||dA_s(\omega)|+|b|\int_{[0,t]}|H_s(\omega)||dV_s(\omega)|<+\infty.
\]
For $(\omega,t)\in \mathbb{B}$, we define
\[
L(\omega,t)
=\int_{[0,t]}H_s(\omega)d(aA_s(\omega)+bV_s(\omega)).
\]
It follows from the definitions of $H_{\bullet}A$ and $H_{\bullet}V$ that, for all $(\omega,t)\in \mathbb{B}$,
\[
L(\omega,t)=a\int_{[0,t]}H_s(\omega)dA_s(\omega)+b\int_{[0,t]}H_s(\omega)dV_s(\omega)
=a(H_{\bullet}A)(\omega,t)+b(H_{\bullet}V)(\omega,t).
\]
As a result, we obtain the relations $L\in\mathcal{V}^\mathbb{B}$ and $L=a(H_{\bullet}A)+b(K_{\bullet}A)$.
Hence, the assertion is proven by Definition \ref{HA}.

$(3)$ The first assertion can be proven by the equivalence between
\[
\int_{[0,t]}|\widetilde{H}_s(\omega)H_s(\omega)||dA_s(\omega)|<+\infty, \;(\omega,t)\in \mathbb{B},\quad L_1\in\mathcal{V}^\mathbb{B}
\]
and
\[
\int_{[0,t]}|\widetilde{H}_s(\omega)||d\widetilde{A}_s(\omega)|< +\infty,\;(\omega,t)\in \mathbb{B},\quad L_2\in\mathcal{V}^\mathbb{B},
\]
where $\widetilde{A}$, $L_1$ and $L_2$ are given by for $(\omega,t)\in \mathbb{B}$
\[
\widetilde{A}_t(\omega)=\int_{[0,t]}H_s(\omega)dA_s(\omega),\quad L_1(\omega,t)=\int_{[0,t]}\widetilde{H}_s(\omega)H_s(\omega)dA_s(\omega),\quad
L_2(\omega,t)=\int_{[0,t]}\widetilde{H}_s(\omega)d\widetilde{A}_s(\omega).
\]

Suppose that $\widetilde{H}H\in \mathcal{L}^\mathbb{B}_s(A)$. By employing the equality $L_1=L_2$ and the definitions of $(\widetilde{H}H)_{\bullet}A$ and $\widetilde{H}_{\bullet}\widetilde{A}$, it is straightforward to derive \eqref{ab3}.
$\hfill\blacksquare$

%%%%%%%%%%%%%%%%%%%%%%%%%%%%%%%%%%%%%%%%%%%%%%%%%%%%%%%%%%%%%%%%%%%%%%%%%%%%%%%%%%%%%%%%%%%%%%%%%%%%%%%
\par\vspace{0.3cm}
%%%%%%%%%%%%%%%%%%%%%%%%%%%%%%%%%%%%%%%%%%%%%%%%%%%%%%%%%%%%%%%%%%%%%%%%%%%%%%%%%%%%%%%%%%%%%%%%%%%%%%%
\noindent {\bf Proof of Theorem \ref{HA-equivalent}.}
In order to establish the proof, we commence by introducing two pivotal lemmas.
\begin{lemma}\label{bound-p}
Let $H\in \mathcal{P}^\mathbb{B}$. If there exists a constant $C>0$ such that $|H|\leq C$, i.e., $|H(\omega,t)|\leq C$ for all $(\omega,t)\in \mathbb{B}$, then $H$ is a $\mathbb{B}$-locally bounded predictable process.
\end{lemma}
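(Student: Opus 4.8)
The plan is to reduce the statement to Theorem~\ref{restriction}(1), which characterizes $\mathbb{B}$-locally bounded predictable processes as the $\mathbb{B}$-restrictions of genuinely locally bounded predictable processes. Since $H\in\mathcal{P}^\mathbb{B}$, the predictable half of Theorem~\ref{restriction}(1) already supplies a coupled predictable process $\widehat{H}$ with $H=\widehat{H}\mathfrak{I}_\mathbb{B}$; the only defect is that $\widehat{H}$ may take arbitrarily large values outside $\mathbb{B}$ and so need not be locally bounded. The key observation is that we are free to modify $\widehat{H}$ off $\mathbb{B}$ without changing $H$, so I would simply truncate it.

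Concretely, I would set $\widetilde{H}=(\widehat{H}\wedge C)\vee(-C)$. First I would note that $\widetilde{H}$ is predictable: it is the composition of the predictable process $\widehat{H}$ with the continuous (indeed Lipschitz) map $x\mapsto(x\wedge C)\vee(-C)$, and such a composition preserves predictability. Moreover $|\widetilde{H}|\leq C$ everywhere on $\Omega\times\mathbf{R}^+$, so $\widetilde{H}$ is a bounded, hence locally bounded, predictable process (one may take the trivial localizing sequence $T_n=+\infty$).

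Next I would verify that $\widetilde{H}\mathfrak{I}_\mathbb{B}=H$. The hypothesis $|H|\leq C$ on $\mathbb{B}$ together with $HI_\mathbb{B}=\widehat{H}I_\mathbb{B}$ forces $|\widehat{H}|\leq C$ on $\mathbb{B}$ up to an evanescent set, so on $\mathbb{B}$ the truncation is inactive and $\widetilde{H}=\widehat{H}$. Consequently $\widetilde{H}I_\mathbb{B}=\widehat{H}I_\mathbb{B}=HI_\mathbb{B}$, that is, $\widetilde{H}\mathfrak{I}_\mathbb{B}=H$. Applying the locally bounded half of Theorem~\ref{restriction}(1) to the locally bounded predictable process $\widetilde{H}$ then yields that $H$ is a $\mathbb{B}$-locally bounded predictable process, which completes the argument.

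The proof is essentially mechanical and I do not anticipate a serious obstacle. The only point requiring a moment's care is the bookkeeping in the last step, namely passing from the bound $|H|\leq C$ on $\mathbb{B}$ to the identity $\widetilde{H}=\widehat{H}$ on $\mathbb{B}$; this is immediate once one recalls that equality of $\mathbb{B}$-processes means $\mathbb{B}$-indistinguishability, so the truncation can differ from $\widehat{H}$ only on the evanescent set where $H$ and $\widehat{H}$ already disagree.
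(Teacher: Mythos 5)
Your proposal is correct and follows essentially the same route as the paper: both extract a coupled predictable process via Theorem~\ref{restriction}(1) and truncate it to obtain a bounded (hence locally bounded) predictable process agreeing with $H$ on $\mathbb{B}$. In fact your two-sided truncation $(\widehat{H}\wedge C)\vee(-C)$ is slightly more careful than the paper's one-sided $\widetilde{H}\wedge C$, which as written is only bounded above; the paper then exhibits the constant FCS $(T_n=T,H^{(n)})$ directly, whereas you invoke the locally bounded half of Theorem~\ref{restriction}(1), an equivalent finish.
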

\begin{proof}
Let $\mathbb{B}$ be given by \eqref{B}. In fact, from \eqref{couple}, there exists a predictable process $\widetilde{H}$ such that $H=\widetilde{H}\mathfrak{I}_\mathbb{B}$. For each $n\in \mathbf{N}^+$, putting $H^{(n)}=\widetilde{H}\wedge C$, then $H^{(n)}$ is a bounded predictable process satisfying
$H^{(n)}I_{\mathbb{B}}=HI_{\mathbb{B}}$. Consequently, $H$ is a $\mathbb{B}$-locally bounded predictable process with the FCS $(T_n=T,H^{(n)})$, which proves the assertion.
\end{proof}
\begin{lemma}\label{Lemma-HA}
Let $A\in \mathcal{V}^\mathbb{B}$, and suppose that $H$ is a $\mathbb{B}$-locally bounded predictable process. Then $H\in\mathcal{L}^\mathbb{B}_s(A)$. Furthermore, if $\widetilde{H}$ is a coupled locally bounded predictable process for $H$, and if $(T_n,A^{(n)})$ is an FCS for $A\in\mathcal{V}^\mathbb{B}$, then $(T_n,\widetilde{H}.A^{(n)})$ is an FCS for $H_{\bullet}A\in\mathcal{V}^\mathbb{B}$.
\end{lemma}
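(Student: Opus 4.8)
The plan is to reduce the $\mathbb{B}$-Stieltjes integral to the classical Stieltjes integral of a locally bounded predictable process against a finite-variation process, carried out simultaneously along a fundamental coupled sequence. First I would invoke Theorem \ref{restriction}(1) to obtain a coupled locally bounded predictable process $\widetilde{H}$ with $H=\widetilde{H}\mathfrak{I}_\mathbb{B}$, and fix an FCS $(T_n,A^{(n)})$ for $A\in\mathcal{V}^\mathbb{B}$, so that $A^{(n)}\in\mathcal{V}$ for every $n\in\mathbf{N}^+$. Since $\widetilde{H}$ is locally bounded predictable and each $A^{(n)}\in\mathcal{V}$, the classical Stieltjes theory (Theorem 3.46(2) of \cite{He}) guarantees $\widetilde{H}\in\mathcal{L}_s(A^{(n)})$ together with $\widetilde{H}.A^{(n)}\in\mathcal{V}$ for each $n$; this is the engine of the whole argument.

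Next I would verify the two requirements of Definition \ref{HA}. For condition $(1)$, given $(\omega,t)\in\mathbb{B}$, the section $\mathbb{B}_\omega$ is an interval containing $[0,t]$, and there is an index $m$ with $t\le T_m(\omega)$; on $[0,t]$ one then has $H_s(\omega)=\widetilde{H}_s(\omega)$ and $A_s(\omega)=A^{(m)}_s(\omega)$, whence $\int_{[0,t]}|H_s(\omega)||dA_s(\omega)|=\int_{[0,t]}|\widetilde{H}_s(\omega)||dA^{(m)}_s(\omega)|<+\infty$ by $\widetilde{H}\in\mathcal{L}_s(A^{(m)})$. In particular the path-by-path integral $L$ defined by \eqref{HA-de} is well-defined on $\mathbb{B}$. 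For condition $(2)$, I would show that $(T_n,\widetilde{H}.A^{(n)})$ is a CS for $L$: for $(\omega,t)\in\mathbb{B}\llbracket{0,T_n}\rrbracket$ the integrand agrees with $\widetilde{H}$ and the integrator with $A^{(n)}$ over the entire window $[0,t]$, so $L(\omega,t)=\int_{[0,t]}\widetilde{H}_s(\omega)dA^{(n)}_s(\omega)=(\widetilde{H}.A^{(n)})(\omega,t)$, i.e. $LI_{\mathbb{B}\llbracket{0,T_n}\rrbracket}=(\widetilde{H}.A^{(n)})I_{\mathbb{B}\llbracket{0,T_n}\rrbracket}$ for every $n$. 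Because each $\widetilde{H}.A^{(n)}\in\mathcal{V}$ and $(T_n)$ already satisfies $T_n\uparrow T$ and $\bigcup_n\llbracket{0,T_n}\rrbracket\supseteq\mathbb{B}$ (being part of the FCS for $A$), this CS is in fact an FCS, yielding $L\in\mathcal{V}^\mathbb{B}$ with FCS $(T_n,\widetilde{H}.A^{(n)})$. Combined with condition $(1)$ this gives $H\in\mathcal{L}^\mathbb{B}_s(A)$ and $H_{\bullet}A=L$, which proves both assertions of the lemma at once (the first being independent of the chosen $\widetilde{H}$ and FCS).

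The elementary heart of the argument being routine, the only genuine care is needed in the localization bookkeeping: I must ensure that on $\mathbb{B}\llbracket{0,T_n}\rrbracket$ the agreement of $H$ with $\widetilde{H}$ and of $A$ with $A^{(n)}$ propagates to every sub-interval $[0,s]$ with $s\le t$, so that the Stieltjes measures $dA(\omega,\cdot)$ and $dA^{(n)}(\omega,\cdot)$ coincide on $[0,t]$ rather than merely the primitives agreeing at the endpoint $t$. This is the main (and essentially only) obstacle, and it is dispatched by the interval structure of $\mathbb{B}_\omega$, which is $[0,T(\omega)[$ or $[0,T(\omega)]$: the inequality $s\le t\le T_n(\omega)$ forces $(\omega,s)\in\mathbb{B}\llbracket{0,T_n}\rrbracket$ for all $s\in[0,t]$, so the restriction identities hold on the whole integration window and the path-wise computations are legitimate.
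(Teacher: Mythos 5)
Your proposal is correct and follows essentially the same route as the paper's own proof: fix a coupled locally bounded predictable process $\widetilde{H}$ and an FCS $(T_n,A^{(n)})$, use the classical fact $\widetilde{H}\in\mathcal{L}_s(A^{(n)})$ with $\widetilde{H}.A^{(n)}\in\mathcal{V}$, verify condition $(1)$ of Definition \ref{HA} by localizing each $(\omega,t)\in\mathbb{B}$ inside some $\mathbb{B}\llbracket{0,T_m}\rrbracket$, and verify condition $(2)$ by identifying $L$ with $\widetilde{H}.A^{(n)}$ on $\mathbb{B}\llbracket{0,T_n}\rrbracket$ so that $(T_n,\widetilde{H}.A^{(n)})$ is an FCS for $L\in\mathcal{V}^\mathbb{B}$. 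Your closing remark about the agreement of the Stieltjes measures on the whole window $[0,t]$ (via the interval structure of $\mathbb{B}_\omega$) is a point the paper uses implicitly; making it explicit is fine but does not change the argument.
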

\begin{proof}
Assume that $\widetilde{H}$ is a coupled locally bounded predictable process for $H$, and that $(T_n,A^{(n)})$ is an FCS for $A\in\mathcal{V}^\mathbb{B}$. It is evident that for each $n\in \mathbf{N}^+$, $\widetilde{H}\in\mathcal{L}_s(A^{(n)})$. Given any $(\omega,t)\in \mathbb{B}$, there exists an integer $m$ such that $(\omega,t)\in {\mathbb{B}\llbracket{0,T_m}\rrbracket}$. Consequently, we have $H(\omega,s)=\widetilde{H}(\omega,s)$ and $A(\omega,s)=A^{(m)}(\omega,s)$ for all $s\in [0,t]$. Hence, it follows from $\widetilde{H}\in\mathcal{L}_s(A^{(m)})$ that
\[
 \int_{[0,t]}|H_s(\omega)||dA_s(\omega)|=\int_{[0,t]}|\widetilde{H}_s(\omega)||dA^{(m)}_s(\omega)|
 <+\infty.
\]
For $(\omega,t)\in \mathbb{B}$, we define
\[
 L(\omega,t)=\int_{[0,t]}H_s(\omega)dA_s(\omega).
\]
For each $n\in \mathbf{N}^+$ and for all $(\omega,t)\in \mathbb{B}\llbracket{0,T_n}\rrbracket$, by invoking the facts  $H(\omega,s)=\widetilde{H}(\omega,s)$ and $A(\omega,s)=A^{(n)}(\omega,s)$ for all $s\in [0,t]$,
it can be demonstrated that
\[
 L(\omega,t)=\int_{[0,t]}\widetilde{H}_s(\omega)dA^{(n)}_s(\omega).
\]
This naturally implies $LI_{\mathbb{B}\llbracket{0,T_n}\rrbracket}
=(\widetilde{H}.A^{(n)})I_{\mathbb{B}\llbracket{0,T_n}\rrbracket}$. Since $\widetilde{H}.A^{(n)}\in \mathcal{V}$ holds (see, e.g., Theorem 3.46 in \cite{He}) for each $n\in \mathbf{N}^+$, we establish that $(T_n,\widetilde{H}.A^{(n)})$ is an FCS for $L\in\mathcal{V}^\mathbb{B}$. Therefore, by utilizing Definition \ref{HA}, it holds that $H\in\mathcal{L}^\mathbb{B}_s(A)$ and $H_{\bullet}A=L\in\mathcal{V}^\mathbb{B}$ with the FCS $(T_n,\widetilde{H}.A^{(n)})$.
\end{proof}

Now we can show the proof of Theorem \ref{HA-equivalent}.

(A1) $\Rightarrow$ (A2).
Let $L=H_{\bullet}A$, and fix a constant $\varepsilon>0$. Suppose that $\widetilde{H}$ is a coupled predictable process for $H\in\mathcal{P}^\mathbb{B}$, and that $(T_n,\widetilde{A}^{(n)})$ and $(T_n,L^{(n)})$ are FCSs for $A\in\mathcal{V}^\mathbb{B}$ and $L\in \mathcal{V}^\mathbb{B}$, respectively. By invoking Theorem \ref{HAproperty} and Lemma \ref{Lemma-HA}, we can express the $\mathbb{B}$-process $A$ as follows:
\[
A=K_{\bullet}L+X_{\bullet}A,\quad K=\left(\frac{1}{H}I_{[|H|>\varepsilon]}\right)\mathfrak{I}_\mathbb{B},\;X=I_{[|H|\leq\varepsilon]}\mathfrak{I}_\mathbb{B},
\]
Here, according to Lemma \ref{bound-p}, $K$ and $X$ are $\mathbb{B}$-locally bounded predictable processes, ensuring that $K_{\bullet}L$ and $X_{\bullet}A$ are well-defined.
Next, define the processes:
\[
\widetilde{K}=\frac{1}{\widetilde{H}}I_{[|\widetilde{H}|>\varepsilon]},\quad \widetilde{X}=I_{[|\widetilde{H}|\leq\varepsilon]}.
\]
These are coupled locally bounded predictable processes for $K$ and $X$, respectively.
According to Lemma \ref{Lemma-HA}, $(T_n,\widetilde{K}.L^{(n)})$ and $(T_n,\widetilde{X}.\widetilde{A}^{(n)})$ are FCSs for $K_{\bullet}L\in\mathcal{V}^\mathbb{B}$ and $X_{\bullet}A\in\mathcal{V}^\mathbb{B}$, respectively.
For each $n\in\mathbf{N}^+$, we define
\[
A^{(n)}=\widetilde{K}.L^{(n)}+\widetilde{X}.\widetilde{A}^{(n)},
\]
and it follows that  $A^{(n)}\in\mathcal{V}^\mathbb{B}$ and
\begin{align*}
A^{(n)}I_{\mathbb{B}\llbracket{0,T_n}\rrbracket}
=(\widetilde{K}.L^{(n)}+\widetilde{X}.\widetilde{A}^{(n)})I_{\mathbb{B}\llbracket{0,T_n}\rrbracket}
=(K_{\bullet}L+X_{\bullet}A)I_{\mathbb{B}\llbracket{0,T_n}\rrbracket}
=AI_{\mathbb{B}\llbracket{0,T_n}\rrbracket}.
\end{align*}
Consequently, $(T_n,A^{(n)})$ is an FCS for $A\in \mathcal{V}^\mathbb{B}$. For each $n\in\mathbf{N}^+$, we infer that $\widetilde{H}\in \mathcal{L}_s(A^{(n)})$ from the identity
\[
(\widetilde{H}\widetilde{K}).L^{(n)}
+(\widetilde{H}\widetilde{X}).\widetilde{A}^{(n)}=\widetilde{H}.(\widetilde{K}.L^{(n)}
+\widetilde{X}.\widetilde{A}^{(n)})=\widetilde{H}.A^{(n)},
\]
where we use the fact that $\widetilde{H}\widetilde{K}$ and $\widetilde{H}\widetilde{X}$ are locally bounded predictable processes satisfying $\widetilde{H}\widetilde{K}\in\mathcal{L}_s(L^{(n)})$ and  $\widetilde{H}\widetilde{X}\in\mathcal{L}_s(\widetilde{A}^{(n)})$.
In conclusion, $\widetilde{H}$ is a coupled predictable process for $H\in\mathcal{P}^\mathbb{B}$, and $(T_n,A^{(n)})$ is an FCS for $A\in\mathcal{V}^\mathbb{B}$ such that for each $n\in \mathbf{N}^+$, $\widetilde{H}\in \mathcal{L}_s(A^{(n)})$, thereby establishing (A2).

(A2) $\Rightarrow$ (A3). Suppose (A2) holds. For each $n\in \mathbf{N}^+$, put $H^{(n)}=\widetilde{H}$. Then $(T_n,H^{(n)})$ is an FCS for $H\in \mathcal{P}^\mathbb{B}$ such that for each $n\in \mathbf{N}^+$, $H^{(n)}\in\mathcal{L}_s(A^{(n)})$, which yields (A3).

(A3) $\Rightarrow$ (A1). Suppose (A3) holds. Firstly, we fix any $(\omega,t)\in \mathbb{B}$. There exists an integer $m\in \mathbf{N}^+$ such that $(\omega,t)\in {\mathbb{B}\llbracket{0,T_m}\rrbracket}$. Then according to FCSs $(T_n, H^{(n)})$ for $H\in\mathcal{P}^\mathbb{B}$ and $(T_n,A^{(n)})$ for $A\in\mathcal{V}^\mathbb{B}$, we infer that
\[
H(\omega,s)=H^{(m)}(\omega,s),\quad A(\omega,s)=A^{(m)}(\omega,s),\quad s\in[0,t].
\]
By utilizing the relation $H^{(m)}\in\mathcal{L}_s(A^{(m)})$, it is established that
\[
\int_{[0,t]}|H_s(\omega)||dA_s(\omega)|
=\int_{[0,t]}|H^{(m)}_s(\omega)||dA^{(m)}_s(\omega)|<{+\infty}.
 \]
Subsequently, for all $(\omega,t)\in \mathbb{B}$, we define
\[
L(\omega,t)=\int_{[0,t]}H_s(\omega)dA_s(\omega).
\]
For each $n\in \mathbf{N}^+$ and for all $(\omega,t)\in \mathbb{B}\llbracket{0,T_n}\rrbracket$, by utilizing the relations  $H(\omega,s)=H^{(n)}(\omega,s)$ and $A(\omega,s)=A^{(n)}(\omega,s)$ for $s\in [0,t]$,
it can be demonstrated that
\[
 L(\omega,t)=\int_{[0,t]}H^{(n)}_s(\omega)dA^{(n)}_s(\omega).
\]
This indicates $LI_{\mathbb{B}\llbracket{0,T_n}\rrbracket}
=(H^{(n)}.A^{(n)})I_{\mathbb{B}\llbracket{0,T_n}\rrbracket}$. Since $H^{(n)}.A^{(n)}\in \mathcal{V}$ holds for each $n\in \mathbf{N}^+$, we obtain that $L\in\mathcal{V}^\mathbb{B}$ with the FCS $(T_n,H^{(n)}.A^{(n)})$. Therefore, according to Definition \ref{HA}, we verify the validity of $H\in\mathcal{L}^\mathbb{B}_s(A)$, and establish (A1). $\hfill\blacksquare$

%%%%%%%%%%%%%%%%%%%%%%%%%%%%%%%%%%%%%%%%%%%%%%%%%%%%%%%%%%%%%%%%%%%%%%%%%%%%%%%%%%%%%%%%%%%%%%%%%%%%%%%
\par\vspace{0.3cm}
%%%%%%%%%%%%%%%%%%%%%%%%%%%%%%%%%%%%%%%%%%%%%%%%%%%%%%%%%%%%%%%%%%%%%%%%%%%%%%%%%%%%%%%%%%%%%%%%%%%%%%%
\noindent {\bf Proof of Theorem \ref{HA-FCS}.}
The assertion that $H_{\bullet}A\in\mathcal{V}^\mathbb{B}$ with the FCS $(T_n,H^{(n)}.A^{(n)})$ has been established within the proof of the implication (A3) $\Rightarrow$ (A1) in Theorem \ref{HA-equivalent}.
The expression \eqref{HA-expression2} can be easily derived from \eqref{x-expression}.
Suppose that $(S_n, \widetilde{H}^{(n)})$ and $(\widetilde{S}_n,\widetilde{A}^{(n)})$ are also FCSs for $H\in\mathcal{P}^\mathbb{B}$ and $A\in\mathcal{V}^\mathbb{B}$ respectively such that for each $n\in \mathbf{N}^+$, $\widetilde{H}^{(n)}\in \mathcal{L}_s(\widetilde{A}^{(n)})$.
Similarly, it can be shown that $(\widetilde{T}_n,\widetilde{H}^{(n)}.\widetilde{A}^{(n)})$ is an FCS for $H_{\bullet}A\in\mathcal{V}^{\mathbb{B}}$. Consequently, $H_{\bullet}A=X$ is deduced from the independence property of \eqref{x-expression}.$\hfill\blacksquare$

%%%%%%%%%%%%%%%%%%%%%%%%%%%%%%%%%%%%%%%%%%%%%%%%%%%%%%%%%%%%%%%%%%%%%%%%%%%%%%%%%%%%%%%%%%%%%%%%%%%%%%%
\par\vspace{0.3cm}
%%%%%%%%%%%%%%%%%%%%%%%%%%%%%%%%%%%%%%%%%%%%%%%%%%%%%%%%%%%%%%%%%%%%%%%%%%%%%%%%%%%%%%%%%%%%%%%%%%%%%%%
\noindent {\bf Proof of Corollary \ref{bound-HA}.}
For each $n\in \mathbf{N}^+$, it holds that $H^{(n)}\in \mathcal{L}_s(A^{(n)})$, and $H^{(n)}.A^{(n)}\in \mathcal{V}$ (see, e.g., Theorem 3.46 in \cite{He}). Consequently, it follows from Theorem \ref{HA-FCS} that $H_{\bullet}A\in\mathcal{V}^\mathbb{B}$ with the FCS $(T_n,H^{(n)}.A^{(n)})$. $\hfill\blacksquare$

%%%%%%%%%%%%%%%%%%%%%%%%%%%%%%%%%%%%%%%%%%%%%%%%%%%%%%%%%%%%%%%%%%%%%%%%%%%%%%%%%%%%%%%%%%%%%%%%%%%%%%%
\par\vspace{0.3cm}
%%%%%%%%%%%%%%%%%%%%%%%%%%%%%%%%%%%%%%%%%%%%%%%%%%%%%%%%%%%%%%%%%%%%%%%%%%%%%%%%%%%%%%%%%%%%%%%%%%%%%%%
\noindent {\bf Proof of Corollary \ref{HAp-equivalent}.}
$(1)$ The sufficiency can be straightforwardly established by invoking Corollary \ref{fcs-p}(2) and statement (A3). Now, we proceed to demonstrate the necessity. Assume that $H\in\mathcal{L}^\mathbb{C}_s(A)$.
Let $(S_n)$ be an FS for $\mathbb{C}$, and suppose that $(T_n, H^{(n)})$ and $(T_n,A^{(n)})$ are FCSs for $H\in\mathcal{P}^\mathbb{C}$ and $A\in\mathcal{V}^\mathbb{C}$ respectively such that for each $n\in \mathbf{N}^+$, $H^{(n)}\in\mathcal{L}_s(A^{(n)})$ (see statement (A3)).
By defining $(\tau_n)=(S_n\wedge T_n)$, Corollary \ref{process-FS}(2) asserts that $(\tau_n,H^{(n)})$ and $(\tau_n,A^{(n)})$ constitute FCSs for $H\in\mathcal{P}^\mathbb{B}$ and $A\in\mathcal{V}^\mathbb{B}$, respectively, and they satisfy the relations:
\[
H^{\tau_n}=(H^{(n)})^{\tau_n},\quad A^{\tau_n}=(A^{(n)})^{\tau_n},\quad n\in\mathbf{N}^+.
\]
Furthermore, for each $n\in\mathbf{N}^+$, the identity
\[
(H^{(n)}.A^{(n)})^{\tau_n}=(H^{(n)})^{\tau_n}.(A^{(n)})^{\tau_n}
\]
implies $H^{\tau_n}\in\mathcal{L}_s(A^{\tau_n})$. consequently, the FS $(\tau_n)$ for $\mathbb{C}$ aligns with with our expectations.

$(2)$ According to Corollary \ref{fcs-p}(2), $(\tau_n, H^{\tau_n})$ and $(\tau_n,A^{\tau_n})$ are FCSs for $H\in\mathcal{P}^\mathbb{C}$ and $A\in\mathcal{V}^\mathbb{C}$, respectively.
For each $n\in \mathbf{N}^+$ and for all $(\omega,t)\in \llbracket{0,\tau_n}\rrbracket$, it necessarily follows that $H(\omega,s)=H^{\tau_n}(\omega,s)$ and $A(\omega,s)=A^{\tau_n}(\omega,s)$ for $s\in [0,t]$.
Subsequently, by applying the condition $H\in\mathcal{L}^\mathbb{C}_s(A)$, it can be demonstrated that
\[
 \int_{[0,t]}|H^{\tau_n}_s(\omega)||dA^{\tau_n}_s(\omega)|
 =\int_{[0,t]}|H_s(\omega)||dA_s(\omega)|<+\infty.
\]
This finding entails that $H^{\tau_n}\in\mathcal{L}_s(A^{\tau_n})$ for each $n\in \mathbf{N}^+$. Hence, we can establish the proof by leveraging Theorem \ref{HA-FCS}.
$\hfill\blacksquare$

%%%%%%%%%%%%%%%%%%%%%%%%%%%%%%%%%%%%%%%%%%%%%%%%%%%%%%%%%%%%%%%%%%%%%%%%%%%%%%%%%%%%%%%%%%%%%%%%%%%%%%%
\par\vspace{0.3cm}
%%%%%%%%%%%%%%%%%%%%%%%%%%%%%%%%%%%%%%%%%%%%%%%%%%%%%%%%%%%%%%%%%%%%%%%%%%%%%%%%%%%%%%%%%%%%%%%%%%%%%%%
\noindent {\bf Proof of Lemma \ref{ex-quad-p}.}
In order to establish the proof, we commence by introducing a pivotal lemma.
\begin{lemma}\label{LM-A2}
If $M\in(\mathcal{M}^c_{\mathrm{loc}})^\mathbb{B}\cap\mathcal{V}^\mathbb{B}$, then $M=M_0\mathfrak{I}_\mathbb{B}$.
\end{lemma}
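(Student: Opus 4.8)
The plan is to reduce Lemma \ref{LM-A2} to the classical fact that a continuous local martingale of finite variation is indistinguishable from its initial value. This classical statement is available to us (see, e.g., \cite{He}), and it is essential to use \emph{only} classical input here, since Lemma \ref{LM-A2} is a preparatory step inside the proof of Lemma \ref{ex-quad-p} and therefore cannot invoke $\mathbb{B}$-predictable/quadratic covariations, which are constructed afterwards.

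First I would set up a common fundamental coupled sequence. Since $M\in(\mathcal{M}^c_{\mathrm{loc}})^\mathbb{B}$, pick an FCS $(T_n,M^{(n)})$ with each $M^{(n)}\in\mathcal{M}^c_{\mathrm{loc}}$; since $M\in\mathcal{V}^\mathbb{B}$, pick an FCS $(S_n,V^{(n)})$ with each $V^{(n)}\in\mathcal{V}$. Putting $\tau_n=T_n\wedge S_n$ and applying Theorem \ref{process}(3) to each, I may assume both $(\tau_n,M^{(n)})$ and $(\tau_n,V^{(n)})$ are FCSs for $M$, so that $M^{(n)}I_{\mathbb{B}\llbracket{0,\tau_n}\rrbracket}=MI_{\mathbb{B}\llbracket{0,\tau_n}\rrbracket}=V^{(n)}I_{\mathbb{B}\llbracket{0,\tau_n}\rrbracket}$ for every $n$. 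Writing $\mathbb{B}$ as in \eqref{B}, I would then introduce the inner-stopped processes $N^{(n)}=(M^{(n)})^{\tau_n\wedge(T_F-)}$ and $P^{(n)}=(V^{(n)})^{\tau_n\wedge(T_F-)}$, exactly the device used in the proof of Theorem \ref{delta}(5). Because $\mathcal{M}^c_{\mathrm{loc}}$ and $\mathcal{V}$ are stable under stopping and under the left-stopping $X\mapsto X^{S-}$, I get $N^{(n)}\in\mathcal{M}^c_{\mathrm{loc}}$ and $P^{(n)}\in\mathcal{V}$; moreover continuity of $M^{(n)}$ makes the left-stopping inert, so $N^{(n)}=(M^{(n)})^{\tau_n}$.

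The decisive claim is $N^{(n)}=P^{(n)}$. Using $I_{\llbracket{0,\tau_n}\rrbracket}I_{\llbracket{0,T_F}\llbracket}=I_{\mathbb{B}\llbracket{0,\tau_n}\rrbracket}$, both processes are governed solely by the common values of $M^{(n)}$ and $V^{(n)}$ on $\mathbb{B}\llbracket{0,\tau_n}\rrbracket$, and I would verify the equality $\omega$-wise on the three sets $F^c$, $F\cap[\tau_n<T]$, and $F\cap[\tau_n=T]$. The main obstacle is precisely the last set: there the endpoint $T$ lies outside $\mathbb{B}$ (on $F$ the section is $[0,T[\,$), so $V^{(n)}$ may carry a jump at $T$ that $M^{(n)}$ does not see. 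This is resolved by the left-stopping at $T_F=T$, which replaces $V^{(n)}_T$ by $V^{(n)}_{T-}=\lim_{s\uparrow T}M^{(n)}_s=M^{(n)}_{T-}=M^{(n)}_T$, the last equality being continuity of $M^{(n)}$; hence the two continuations still coincide beyond $T$, and $N^{(n)}=P^{(n)}$.

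Finally, set $Y^{(n)}:=N^{(n)}=P^{(n)}\in\mathcal{M}^c_{\mathrm{loc}}\cap\mathcal{V}$. The classical result yields $Y^{(n)}=Y^{(n)}_0$, and $Y^{(n)}_0=M^{(n)}_0=M_0$ by Theorem \ref{process}(2). Since $Y^{(n)}I_{\mathbb{B}\llbracket{0,\tau_n}\rrbracket}=MI_{\mathbb{B}\llbracket{0,\tau_n}\rrbracket}$ while $Y^{(n)}=M_0$, I obtain $MI_{\mathbb{B}\llbracket{0,\tau_n}\rrbracket}=(M_0\mathfrak{I}_\mathbb{B})I_{\mathbb{B}\llbracket{0,\tau_n}\rrbracket}$ for all $n$, and Theorem \ref{process}(1) then gives $M=M_0\mathfrak{I}_\mathbb{B}$, completing the argument.
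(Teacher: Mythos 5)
Your proof is correct and follows essentially the same route as the paper's: both arguments stop the coupled processes at $\tau_n\wedge(T_F-)$ (with continuity making the left-stopping inert on the martingale side), identify the resulting process as an element of $\mathcal{M}^c_{\mathrm{loc}}\cap\mathcal{V}$, invoke the classical fact that such a process equals its initial value, and conclude via Theorem \ref{process}(1). The only differences are presentational: you make the reduction to a common sequence of stopping times (via Theorem \ref{process}(3)) and the $\omega$-wise verification of $N^{(n)}=P^{(n)}$ explicit, which the paper leaves implicit.
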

\begin{proof}
Let $\mathbb{B}$ be given by \eqref{B}. If both $(T_n,M^{(n)})$ and $(T_n,N^{(n)})$ are CSs for $M$, then  it is easy to see
\begin{equation}\label{PQ-1}
M^{(n)}I_{\mathbb{B}\llbracket{0,T_n}\rrbracket}=MI_{\mathbb{B}\llbracket{0,T_n}\rrbracket}
=N^{(n)}I_{\mathbb{B}\llbracket{0,T_n}\rrbracket},\quad n\in \mathbf{N}^+.
\end{equation}

Suppose that $(T_n,M^{(n)})$ and $(T_n,N^{(n)})$ are FCSs for $M\in (\mathcal{M}^c_{\mathrm{loc}})^\mathbb{B}$ and $M\in \mathcal{V}^\mathbb{B}$, respectively.
Fix $k\in \mathbf{N}^+$. Using $M^{(k)}\in \mathcal{M}^c_{\mathrm{loc}}$ and \eqref{PQ-1}, we deduce
\[
(M^{(k)})^{T_k}=(M^{(k)})^{T_k\wedge (T_F-)}
=(N^{(k)})^{T_k\wedge (T_F-)}.
\]
which, by $(M^{(k)})^{T_k}\in\mathcal{M}^c_{\mathrm{loc}}$ and $(N^{(k)})^{T_k\wedge (T_F-)}\in\mathcal{V}$, implies that $(M^{(k)})^{T_k}\in \mathcal{M}^c_{\mathrm{loc}}\cap\mathcal{V}$.
Lemmas I.4.13 and I.4.14 in \cite{Jacod} show $(M^{(k)})^{T_k}=M^{(k)}_0=M_0$. Then \eqref{PQ-1} yields $M=M_0\mathfrak{I}_\mathbb{B}$.
\end{proof}

Now we present the proof. Assume that $\mathbb{B}$ is given by \eqref{B}. Without loss of generalization, let $(T_n,M^{(n)})$ and $(T_n,N^{(n)})$ be FCSs for $M\in (\mathcal{M}^c_{\mathrm{loc}})^\mathbb{B}$ and $N\in (\mathcal{M}^c_{\mathrm{loc}})^\mathbb{B}$, respectively.

Firstly, we show the $\mathbb{B}$-process, defined by
\begin{equation}\label{L=<M,N>}
V=\left(M_0N_0I_{\llbracket{0}\rrbracket}+\sum\limits_{n=1}^{{+\infty}}\langle M^{(n)},N^{(n)}\rangle I_{\rrbracket{T_{n-1},T_n}
      \rrbracket}\right)\mathfrak{I}_\mathbb{B}, \quad T_0=0,
\end{equation}
satisfies $V\in (\mathcal{A}_{\mathrm{loc}}\cap \mathcal{C})^\mathbb{B}$ with the FCS $(T_n,\langle M^{(n)},N^{(n)}\rangle)$.
For any $l,k\in\mathbf{N}^+$ with $k\leq l$, Theorem \ref{process}(2) yields
$M^{(k)}I_{\mathbb{B}\llbracket{0,T_k}\rrbracket}=M^{(l)}I_{\mathbb{B}\llbracket{0,T_k}\rrbracket}$ and $N^{(k)}I_{\mathbb{B}\llbracket{0,T_k}\rrbracket}=N^{(l)}I_{\mathbb{B}\llbracket{0,T_k}\rrbracket}$, or equivalently,
\begin{equation*}
\left\{
\begin{aligned}
(M^{(k)})^{T_k}&=(M^{(k)})^{T_k\wedge(T_F-)}=(M^{(l)})^{T_k\wedge(T_F-)}=(M^{(l)})^{T_k},\\ (N^{(k)})^{T_k}&=(N^{(k)})^{T_k\wedge (T_F-)}=(N^{(l)})^{T_k\wedge (T_F-)}=(N^{(l)})^{T_k},
\end{aligned}
\right.
\end{equation*}
which, by Definition 7.29 in \cite{He}, implies that
\begin{align}\label{eq14}
 \langle M^{(k)},N^{(k)}\rangle I_{\mathbb{B}\llbracket{0,T_k}\rrbracket}
=&\langle M^{(k)},N^{(k)}\rangle^{T_k}I_{\mathbb{B}\llbracket{0,T_k}\rrbracket}\nonumber\\
=&\langle(M^{(k)})^{T_k},(N^{(k)})^{T_k}\rangle I_{\mathbb{B}\llbracket{0,T_k}\rrbracket}\nonumber\\
=&\langle(M^{(l)})^{T_k},(N^{(l)})^{T_k}\rangle I_{\mathbb{B}\llbracket{0,T_k}\rrbracket}\nonumber\\
=&\langle M^{(l)},N^{(l)}\rangle^{T_k}I_{\mathbb{B}\llbracket{0,T_k}\rrbracket}\nonumber\\
=&\langle M^{(l)},N^{(l)}\rangle I_{\mathbb{B}\llbracket{0,T_k}\rrbracket}.
\end{align}
Then Theorem \ref{restriction}(4) shows that $(T_n,\langle M^{(n)},N^{(n)}\rangle)$ is a CS for $V$. Since $\langle M^{(n)},N^{(n)}\rangle\in \mathcal{A}_{\mathrm{loc}}\cap \mathcal{C}$ (see, e.g., Lemma 7.28 and its Remark in \cite{He}) for each $n\in \mathbf{N}^+$, we deduce that $(T_n,\langle M^{(n)},N^{(n)}\rangle)$ is an FCS for $V\in (\mathcal{A}_{\mathrm{loc}}\cap \mathcal{C})^\mathbb{B}$.

Next, we show $MN-V\in (\mathcal{M}^c_{\mathrm{loc},0})^\mathbb{B}$ with the FCS $(T_n,M^{(n)}N^{(n)}-\langle M^{(n)},N^{(n)}\rangle)$, thereby proving the existence of $V$.
For each $n\in \mathbf{N}^+$, by (\ref{eq14}), we have
\begin{align*}
(MN-V)I_{\mathbb{B}\llbracket{0,T_n}\rrbracket}&=\sum\limits_{k=1}^{n}(M^{(k)}N^{(k)}-\langle M^{(k)},N^{(k)}\rangle)I_{\mathbb{B}\rrbracket{T_{k-1},T_k}
      \rrbracket}\\
      &=\sum\limits_{k=1}^{n}(M^{(n)}N^{(n)}-\langle M^{(n)},N^{(n)}\rangle)I_{\mathbb{B}\rrbracket{T_{k-1},T_k}
      \rrbracket}\\
      &=(M^{(n)}N^{(n)}-\langle M^{(n)},N^{(n)}\rangle)I_{\mathbb{B}\llbracket{0,T_n}\rrbracket},
\end{align*}
which implies $(T_n,M^{(n)}N^{(n)}-\langle M^{(n)},N^{(n)}\rangle)$ is a CS for $MN-V$.
Since $M^{(n)}N^{(n)}-\langle M^{(n)},N^{(n)}\rangle\in \mathcal{M}^c_{\mathrm{loc},0}$ (see, e.g., Lemma 7.28 and its Remark in \cite{He}) for each $n\in \mathbf{N}^+$, the relation $MN-V\in(\mathcal{M}^c_{\mathrm{loc},0})^\mathbb{B}$ holds true, and $(T_n,M^{(n)}N^{(n)}-\langle M^{(n)},N^{(n)}\rangle)$ is an FCS for $MN-V\in (\mathcal{M}^c_{\mathrm{loc},0})^\mathbb{B}$.

Finally, we show the uniqueness of $V\in (\mathcal{A}_{\mathrm{loc}}\cap \mathcal{C})^\mathbb{B}$. Suppose that there exists another $\mathbb{B}$-process $A\in (\mathcal{A}_{\mathrm{loc}}\cap \mathcal{C})^\mathbb{B}$ such that $MN-A\in (\mathcal{M}^c_{\mathrm{loc},0})^\mathbb{B}$. Put $L=(MN-A)-(MN-V)=V-A$. Then
\[
L\in (\mathcal{M}^c_{\mathrm{loc},0})^\mathbb{B}\cap(\mathcal{A}_{\mathrm{loc}}\cap \mathcal{C})^\mathbb{B}\subseteq (\mathcal{M}^c_{\mathrm{loc,0}})^\mathbb{B}\cap\mathcal{V}^\mathbb{B}.
\]
Lemma \ref{LM-A2} shows $L=0\mathfrak{I}_\mathbb{B}$, i.e., $V=A$, which yields the uniqueness of $V\in (\mathcal{A}_{\mathrm{loc}}\cap \mathcal{C})^\mathbb{B}$.

Summarizing, the process $V$ defined by \eqref{L=<M,N>} is what we need, and we finish the proof.
$\hfill\blacksquare$

%%%%%%%%%%%%%%%%%%%%%%%%%%%%%%%%%%%%%%%%%%%%%%%%%%%%%%%%%%%%%%%%%%%%%%%%%%%%%%%%%%%%%%%%%%%%%%%%%%%%%%%
\par\vspace{0.3cm}
%%%%%%%%%%%%%%%%%%%%%%%%%%%%%%%%%%%%%%%%%%%%%%%%%%%%%%%%%%%%%%%%%%%%%%%%%%%%%%%%%%%%%%%%%%%%%%%%%%%%%%%
\noindent {\bf Proof of Theorem \ref{property-qr-p}.}
$(1)$ Since $\langle M,N\rangle$ can be expressed as \eqref{L=<M,N>}, the assertion that $\langle M,N\rangle\in (\mathcal{A}_{\mathrm{loc}}\cap \mathcal{C})^\mathbb{B}$ with the FCS $(T_n,\langle M^{(n)},N^{(n)}\rangle)$ has been shown in the proof of Lemma \ref{ex-quad-p}. It remains to prove $\langle M\rangle\in (\mathcal{A}^{+}_{\mathrm{loc}}\cap \mathcal{C})^\mathbb{B}$ with the FCS $(T_n,\langle M^{(n)}\rangle)$.
The fact $\langle M\rangle=\langle M,M\rangle$ indicates that $(T_n,\langle M^{(n)}\rangle)$ is a CS for $\langle M\rangle$, and Lemma 7.28 and its Remark in \cite{He} yield $\langle M^{(n)}\rangle\in \mathcal{A}^{+}_{\mathrm{loc}}\cap \mathcal{C}$ for each $n\in \mathbf{N}^+$. Therefore, we deduce $\langle M\rangle\in (\mathcal{A}^{+}_{\mathrm{loc}}\cap \mathcal{C})^\mathbb{B}$ with the FCS $(T_n,\langle M^{(n)}\rangle)$.

$(2)$ The proof of $\langle M,N\rangle=\langle N,M\rangle$ is trivial, and it suffices to prove $\langle aM+b\widetilde{M},N\rangle=a\langle M,N\rangle+b\langle\widetilde{M},N\rangle$. Suppose that $(T_n,M^{(n)})$, $(T_n,\widetilde{M}^{(n)})$ and $(T_n,N^{(n)})$ are FCSs for $M\in (\mathcal{M}^c_{\mathrm{loc}})^\mathbb{B}$, $\widetilde{M}\in (\mathcal{M}^c_{\mathrm{loc}})^\mathbb{B}$ and $N\in(\mathcal{M}^c_{\mathrm{loc}})^\mathbb{B}$, respectively.
From part $(1)$, $(T_n,\langle M^{(n)},N^{(n)}\rangle)$, $(T_n,\langle \widetilde{M}^{(n)},N^{(n)}\rangle)$ and $(T_n,\langle aM^{(n)}+b\widetilde{M}^{(n)},N^{(n)}\rangle )$ are FCSs for $\langle M,N\rangle\in (\mathcal{A}_{\mathrm{loc}}\cap \mathcal{C})^\mathbb{B}$, $\langle \widetilde{M},N\rangle\in (\mathcal{A}_{\mathrm{loc}}\cap \mathcal{C})^\mathbb{B}$ and $\langle aM+b\widetilde{M},N\rangle\in(\mathcal{A}_{\mathrm{loc}}\cap \mathcal{C})^\mathbb{B}$, respectively. Then for each $n\in \mathbf{N}^{+}$,
\begin{align*}
\langle aM+b\widetilde{M},N\rangle I_{\mathbb{B}\llbracket{0,T_n}\rrbracket}
=&\langle aM^{(n)}+b\widetilde{M}^{(n)},N^{(n)}\rangle I_{\mathbb{B}\llbracket{0,T_n}\rrbracket}\\
=&a\langle M^{(n)},N^{(n)}\rangle I_{\mathbb{B}\llbracket{0,T_n}\rrbracket}+b\langle \widetilde{M}^{(n)},N^{(n)}\rangle I_{\mathbb{B}\llbracket{0,T_n}\rrbracket}\\
=&(a\langle M,N\rangle +b\langle \widetilde{M},N\rangle )I_{\mathbb{B}\llbracket{0,T_n}\rrbracket},
\end{align*}
which, by Theorem \ref{process}(1), finishes the proof.

$(3)$ From Lemma 7.28 and its Remark in \cite{He}, $\langle M^{\tau},N^{\tau}\rangle$ is the unique process $A\in \mathcal{A}_{\mathrm{loc}}\cap \mathcal{C}$ such that $M^{\tau}N^{\tau}-A\in \mathcal{M}^c_{\mathrm{loc},0}$. Definition \ref{de-quad-p} shows that $\langle M,N\rangle$ is the unique process $V\in (\mathcal{A}_{\mathrm{loc}}\cap \mathcal{C})^\mathbb{B}$ such that $MN-V\in (\mathcal{M}^c_{\mathrm{loc},0})^\mathbb{B}$, which, by Theorem \ref{fcs}, implies $\langle M,N\rangle^{\tau}\in\mathcal{A}_{\mathrm{loc}}\cap \mathcal{C}$ satisfying
\[
M^{\tau}N^{\tau}-\langle M,N\rangle^{\tau}=(MN-\langle M,N\rangle)^{\tau}\in \mathcal{M}^c_{\mathrm{loc},0}.
\]
Hence, by the uniqueness, \eqref{Mc-tau1} is deduced.

Suppose that $(T_n,M^{(n)})$ and $(T_n,N^{(n)})$ are FCSs for $M\in (\mathcal{M}^c_{\mathrm{loc}})^\mathbb{B}$ and $N\in(\mathcal{M}^c_{\mathrm{loc}})^\mathbb{B}$, respectively.
From part $(1)$ and Theorem \ref{fcs}, $(T_n,\langle (M^{(n)})^{\tau},(N^{(n)})^{\tau}\rangle)$, $(T_n,\langle M^{(n)},N^{(n)}\rangle^{\tau})$ and $(T_n,\langle(M^{(n)})^{\tau},N^{(n)}\rangle)$
are FCSs for $\langle M^{\tau}\mathfrak{I}_\mathbb{B},N^{\tau}\mathfrak{I}_\mathbb{B}\rangle\in (\mathcal{A}_{\mathrm{loc}}\cap \mathcal{C})^\mathbb{B}$, $\langle M,N\rangle^{\tau}\mathfrak{I}_\mathbb{B}\in (\mathcal{A}_{\mathrm{loc}}\cap \mathcal{C})^\mathbb{B}$ and $\langle M^{\tau}\mathfrak{I}_\mathbb{B},N\rangle\in(\mathcal{A}_{\mathrm{loc}}\cap \mathcal{C})^\mathbb{B}$, respectively. Consequently, the equality $\langle M^{\tau}\mathfrak{I}_\mathbb{B},N^{\tau}\mathfrak{I}_\mathbb{B}\rangle=\langle M,N\rangle^{\tau}\mathfrak{I}_\mathbb{B}$ is obtained by
\[
\langle M^{\tau}\mathfrak{I}_\mathbb{B},N^{\tau}\mathfrak{I}_\mathbb{B}\rangle I_{\mathbb{B}\llbracket{0,T_n}\rrbracket}
=\langle (M^{(n)})^{\tau},(N^{(n)})^{\tau}\rangle I_{\mathbb{B}\llbracket{0,T_n}\rrbracket}
=\langle M^{(n)},N^{(n)}\rangle^{\tau}I_{\mathbb{B}\llbracket{0,T_n}\rrbracket}
=\left(\langle M,N\rangle^{\tau}\mathfrak{I}_\mathbb{B}\right)I_{\mathbb{B}\llbracket{0,T_n}\rrbracket}
\]
for each $n\in \mathbf{N}^{+}$. Similarly, the equality $\langle M^{\tau}\mathfrak{I}_\mathbb{B},N^{\tau}\mathfrak{I}_\mathbb{B}\rangle=\langle M^{\tau}\mathfrak{I}_\mathbb{B},N\rangle$ is derived from
\[
\langle M^{\tau}\mathfrak{I}_\mathbb{B},N^{\tau}\mathfrak{I}_\mathbb{B}\rangle I_{\mathbb{B}\llbracket{0,T_n}\rrbracket}
=\langle (M^{(n)})^{\tau},(N^{(n)})^{\tau}\rangle I_{\mathbb{B}\llbracket{0,T_n}\rrbracket}
=\langle (M^{(n)})^{\tau},N^{(n)}\rangle I_{\mathbb{B}\llbracket{0,T_n}\rrbracket}
=\langle M^{\tau}\mathfrak{I}_\mathbb{B},N\rangle I_{\mathbb{B}\llbracket{0,T_n}\rrbracket}
\]
for each $n\in \mathbf{N}^{+}$. The equality $\langle M^{\tau},N^{\tau}\rangle\mathfrak{I}_\mathbb{B}=\langle M,N\rangle^{\tau}\mathfrak{I}_\mathbb{B}$ is easily deduced by \eqref{Mc-tau1}. Therefore, we establish the validity of \eqref{Mc-tau2}.
$\hfill\blacksquare$

%%%%%%%%%%%%%%%%%%%%%%%%%%%%%%%%%%%%%%%%%%%%%%%%%%%%%%%%%%%%%%%%%%%%%%%%%%%%%%%%%%%%%%%%%%%%%%%%%%%%%%%
\par\vspace{0.3cm}
%%%%%%%%%%%%%%%%%%%%%%%%%%%%%%%%%%%%%%%%%%%%%%%%%%%%%%%%%%%%%%%%%%%%%%%%%%%%%%%%%%%%%%%%%%%%%%%%%%%%%%%
\noindent {\bf Proof of Theorem \ref{Lem-M}.}
In order to establish the proof, we commence by introducing a pivotal lemma.
\begin{lemma}\label{LM-A3}
If $M\in (\mathcal{M}^c_{\mathrm{loc},0})^\mathbb{B}\cap(\mathcal{M}^d_{\mathrm{loc}})^\mathbb{B}$, then $M=0\mathfrak{I}_\mathbb{B}$.
\end{lemma}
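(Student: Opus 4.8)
The plan is to reduce the statement, by an appropriate stopping, to the classical fact that $\mathcal{M}^c_{\mathrm{loc},0}\cap\mathcal{M}^d_{\mathrm{loc}}=\{0\}$ (which follows from the uniqueness of the continuous/purely discontinuous decomposition; see \cite{He}). Writing $\mathbb{B}$ as in \eqref{B}, I would first choose an FCS $(T_n,M^{(n)})$ for $M\in(\mathcal{M}^c_{\mathrm{loc},0})^\mathbb{B}$ with each $M^{(n)}\in\mathcal{M}^c_{\mathrm{loc},0}$, and an FCS $(T_n,N^{(n)})$ for $M\in(\mathcal{M}^d_{\mathrm{loc}})^\mathbb{B}$ with each $N^{(n)}\in\mathcal{M}^d_{\mathrm{loc}}$, where the common sequence $(T_n)$ is arranged by Theorem \ref{process}(3). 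By the definition of a $\mathbb{B}$-process these two FCSs satisfy $M^{(k)}I_{\mathbb{B}\llbracket{0,T_k}\rrbracket}=MI_{\mathbb{B}\llbracket{0,T_k}\rrbracket}=N^{(k)}I_{\mathbb{B}\llbracket{0,T_k}\rrbracket}$ for every $k$. By Theorem \ref{process}(1) it suffices to prove $MI_{\mathbb{B}\llbracket{0,T_k}\rrbracket}=0$ for each $k$, so I may work one index $k$ at a time.

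The central step mirrors the proof of Lemma \ref{LM-A2}: I would show that the two genuine processes $(M^{(k)})^{T_k\wedge(T_F-)}$ and $(N^{(k)})^{T_k\wedge(T_F-)}$ are indistinguishable. This is a pathwise verification splitting on $F$ and $F^c$: on $F^c$ one has $T_F=+\infty$ and $\mathbb{B}_\omega=[0,T(\omega)]\supseteq[0,T_k(\omega)]$, so the truncation does nothing and the coincidence of $M^{(k)}$ and $N^{(k)}$ on $\llbracket{0,T_k}\rrbracket$ gives the equality; on $F$ one has $T_F=T$ and $\mathbb{B}_\omega=[0,T(\omega))$, so the $(T_F-)$-truncation uses only values strictly before $T$, where $M^{(k)}=N^{(k)}$, and the continuity of $M^{(k)}$ forces the two frozen tails to agree (via $N^{(k)}_{T-}=\lim_{t\uparrow T}M^{(k)}_t=M^{(k)}_{T-}$). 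Since $M^{(k)}$ is continuous, the same truncation satisfies $(M^{(k)})^{T_k\wedge(T_F-)}=(M^{(k)})^{T_k\wedge T_F}$, which lies in $\mathcal{M}^c_{\mathrm{loc},0}$ because that class is stable under stopping. Denoting this common process by $L^{(k)}$, it is a continuous local martingale with null initial value, and by construction $L^{(k)}=M^{(k)}$ on $\mathbb{B}\llbracket{0,T_k}\rrbracket$.

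The main obstacle is to show in addition that $L^{(k)}\in\mathcal{M}^d_{\mathrm{loc}}$, after which the classical intersection result yields $L^{(k)}=0$ and hence $MI_{\mathbb{B}\llbracket{0,T_k}\rrbracket}=L^{(k)}I_{\mathbb{B}\llbracket{0,T_k}\rrbracket}=0$, completing the proof through Theorem \ref{process}(1). The difficulty is that the $(T_F-)$-truncation of a purely discontinuous local martingale need not itself be a local martingale a priori, which is precisely the phenomenon that motivates the inner condition of Definition \ref{inner}. Here, however, I already know that $(N^{(k)})^{T_k\wedge(T_F-)}=L^{(k)}$ is a local martingale, so I can exploit the decomposition $X^{T-}=X^T-\Delta X_T\,I_{\llbracket{T,+\infty}\llbracket}$ recorded in the proof of Theorem \ref{delta}(4): with $X=(N^{(k)})^{T_k}$, the single-jump process $J:=\Delta X_{T_F}I_{\llbracket{T_F,+\infty}\llbracket}=X-X^{T_F-}$ is a difference of two local martingales, hence a local martingale, and it has finite variation, hence is purely discontinuous (a finite-variation local martingale lies in $\mathcal{M}^d_{\mathrm{loc}}$; see \cite{He}). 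Since $X=(N^{(k)})^{T_k}\in\mathcal{M}^d_{\mathrm{loc}}$ as well, I conclude $L^{(k)}=X-J\in\mathcal{M}^d_{\mathrm{loc}}$. Thus $L^{(k)}\in\mathcal{M}^c_{\mathrm{loc},0}\cap\mathcal{M}^d_{\mathrm{loc}}=\{0\}$, which gives the desired conclusion.
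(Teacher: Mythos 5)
Your proposal is correct and follows essentially the same route as the paper's own proof: both reduce to showing $(N^{(k)})^{T_k\wedge(T_F-)}=(M^{(k)})^{T_k}\in\mathcal{M}^c_{\mathrm{loc},0}$, then use the single-jump decomposition $(N^{(k)})^{T_k}-(N^{(k)})^{T_k\wedge(T_F-)}=\Delta(N^{(k)})^{T_k}_{T_F}I_{\llbracket{T_F,+\infty}\llbracket}$ to see this process lies in $\mathcal{M}^d_{\mathrm{loc}}$ as well, and conclude from $\mathcal{M}^c_{\mathrm{loc},0}\cap\mathcal{M}^d_{\mathrm{loc}}=\{0\}$. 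Your pathwise case analysis on $F$ and $F^c$ is merely a more detailed justification of the identification the paper obtains directly from \eqref{PQ-1} and the continuity of $M^{(k)}$.
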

\begin{proof}
Suppose that $(T_n,M^{(n)})$ and $(T_n,N^{(n)})$ are FCSs for $M\in (\mathcal{M}^c_{\mathrm{loc},0})^\mathbb{B}$ and $M\in (\mathcal{M}^d_{\mathrm{loc}})^\mathbb{B}$, respectively. Fix $k\in \mathbf{N}^+$. Using $M^{(k)}\in \mathcal{M}^c_{\mathrm{loc},0}$ and \eqref{PQ-1}, we deduce
\[
(N^{(k)})^{T_k\wedge (T_F-)}=(M^{(k)})^{T_k\wedge (T_F-)}=(M^{(k)})^{T_k}\in\mathcal{M}^c_{\mathrm{loc},0}.
\]
The fact $(N^{(k)})^{T_k\wedge (T_F-)}=(N^{(k)})^{T_k}-\Delta(N^{(k)})^{T_k}_{T_F}I_{\llbracket{T_F,+\infty}\llbracket}$ implies $\Delta(N^{(k)})^{T_k}_{T_F}I_{\llbracket{T_F,+\infty}\llbracket}\in\mathcal{M}_{\mathrm{loc}}$. Then it follows from Theorem 7.19 and Definition 7.21 in \cite{He} that $\Delta(N^{(k)})^{T_k}_{T_F}I_{\llbracket{T_F,+\infty}\llbracket}\in\mathcal{M}^d_{\mathrm{loc}}$. Hence, we have
\[
(N^{(k)})^{T_k\wedge (T_F-)}\in\mathcal{M}^c_{\mathrm{loc},0}\cap\mathcal{M}^d_{\mathrm{loc}},
\]
which, by Lemma 7.22 in \cite{He}, implies $(N^{(k)})^{T_k\wedge (T_F-)}=0$. Consequently, $M=0\mathfrak{I}_\mathbb{B}$ is easily deduced.
\end{proof}

$(1)$ The existence of the decomposition \eqref{con-M} has been shown in Theorem 8.23 of \cite{He}, and we just prove its uniqueness. Let $M=M_0\mathfrak{I}_\mathbb{B}+N+L$ be another decomposition of $M$,
where $N\in (\mathcal{M}^c_{\mathrm{loc},0})^\mathbb{B}$ and $L\in (\mathcal{M}^d_{\mathrm{loc}})^\mathbb{B}$. Then the fact
$M^c-N=L-M^d$ yields
\[
M^c-N\in(\mathcal{M}^c_{\mathrm{loc},0})^\mathbb{B}\cap(\mathcal{M}^d_{\mathrm{loc}})^\mathbb{B},
\]
which, by Lemma \ref{LM-A3}, implies $M^c=N$ and $M^d=L$. Hence, the decomposition \eqref{con-M} is unique.

$(2)$ Let $\mathbb{B}$ be given by \eqref{B}, and assume that $(T_n,M^{(n)})$ is an FCS for $M\in (\mathcal{M}_{\mathrm{loc}})^\mathbb{B}$.

Fix $m\in \mathbf{N}^+$, and define $\mathbb{B}_m=\mathbb{B}\llbracket{0,T_m}\rrbracket$. Using the fact
\[
\mathbb{B}_m=\llbracket{0,T_F}\llbracket\;\cap\;\llbracket{0,T_m}\rrbracket,
\]
it is easy to see that $\mathbb{B}_m$ is an optional set of interval type.  Since both $M\mathfrak{I}_{\mathbb{B}_m}$ and $M^{(m)}\mathfrak{I}_{\mathbb{B}_m}$ are $\mathbb{B}_m$-local martingales, \eqref{con-M} yields the following decompositions:
\[
\left\{
\begin{aligned}
M\mathfrak{I}_{\mathbb{B}_m}&=(M_0\mathfrak{I}_\mathbb{B}+M^c+M^d)\mathfrak{I}_{\mathbb{B}_m}=M_0\mathfrak{I}_{\mathbb{B}_m}+M^c\mathfrak{I}_{\mathbb{B}_m}+M^d\mathfrak{I}_{\mathbb{B}_m},\\
M^{(m)}\mathfrak{I}_{\mathbb{B}_m}&=\left(M_0+(M^{(m)})^c+(M^{(m)})^d\right)\mathfrak{I}_{\mathbb{B}_m}=M_0\mathfrak{I}_{\mathbb{B}_m}+(M^{(m)})^c\mathfrak{I}_{\mathbb{B}_m}+(M^{(m)})^d\mathfrak{I}_{\mathbb{B}_m}.
\end{aligned}
\right.
\]
From $M\mathfrak{I}_{\mathbb{B}_m}=M^{(m)}\mathfrak{I}_{\mathbb{B}_m}$,  the uniqueness of above decompositions deduces
\begin{equation}\label{tm11}
M^c\mathfrak{I}_{\mathbb{B}_m}=(M^{(m)})^c\mathfrak{I}_{\mathbb{B}_m},
\quad M^d\mathfrak{I}_{\mathbb{B}_m}=(M^{(m)})^d\mathfrak{I}_{\mathbb{B}_m}.
\end{equation}

Since (\ref{tm11}) holds for any $m\in \mathbf{N}^+$, we have
\[
M^cI_{\mathbb{B}\llbracket{0,T_n}\rrbracket}=(M^{(n)})^cI_{\mathbb{B}\llbracket{0,T_n}\rrbracket},\quad M^dI_{\mathbb{B}\llbracket{0,T_n}\rrbracket}=(M^{(n)})^dI_{\mathbb{B}\llbracket{0,T_n}\rrbracket},\quad n\in \mathbf{N}^+.
\]
Then $(T_n,(M^{(n)})^c)$ and $(T_n,(M^{(n)})^d)$ are CSs for $M^c$ and $M^d$, respectively. Finally, the assertion is proven by the facts $(M^{(n)})^c\in\mathcal{M}^c_{\mathrm{loc},0}$ and $(M^{(n)})^d\in\mathcal{M}^d_{\mathrm{loc}}$ for each $n\in \mathbf{N}^+$.

$(3)$ From Theorem \ref{fcs}, $M^\tau\in\mathcal{M}_{\mathrm{loc}}$ and it admits a unique decomposition
$M^\tau=M_0+(M^\tau)^c+(M^\tau)^d$. Using $M=M_0\mathfrak{I}_\mathbb{B}+M^c+M^d$, we also have another decomposition $M^\tau=M_0+(M^c)^\tau+(M^d)^\tau$. Hence, (\ref{McS}) is obtained by the uniqueness of the decomposition of $M^\tau$.

From part (1), $M^\tau\mathfrak{I}_\mathbb{B}\in (\mathcal{M}_{\mathrm{loc}})^\mathbb{B}$ admits a unique decomposition $M^\tau\mathfrak{I}_\mathbb{B}=M_0\mathfrak{I}_\mathbb{B}+(M^\tau\mathfrak{I}_\mathbb{B})^c+(M^\tau\mathfrak{I}_\mathbb{B})^d$. Using (\ref{McS}) and the fact $M^\tau=M_0+(M^\tau)^c+(M^\tau)^d$, we also deduce that $M^\tau\mathfrak{I}_\mathbb{B}=M_0\mathfrak{I}_\mathbb{B}+(M^\tau)^c\mathfrak{I}_\mathbb{B}+(M^\tau)^d\mathfrak{I}_\mathbb{B}$ and
$M^\tau\mathfrak{I}_\mathbb{B}=M_0\mathfrak{I}_\mathbb{B}+(M^c)^\tau\mathfrak{I}_\mathbb{B}+(M^d)^\tau\mathfrak{I}_\mathbb{B}$. Hence, (\ref{McSB}) is derived through the uniqueness of the decomposition of $M^\tau\mathfrak{I}_\mathbb{B}\in (\mathcal{M}_{\mathrm{loc}})^\mathbb{B}$.
$\hfill\blacksquare$

%%%%%%%%%%%%%%%%%%%%%%%%%%%%%%%%%%%%%%%%%%%%%%%%%%%%%%%%%%%%%%%%%%%%%%%%%%%%%%%%%%%%%%%%%%%%%%%%%%%%%%%
\par\vspace{0.3cm}
%%%%%%%%%%%%%%%%%%%%%%%%%%%%%%%%%%%%%%%%%%%%%%%%%%%%%%%%%%%%%%%%%%%%%%%%%%%%%%%%%%%%%%%%%%%%%%%%%%%%%%%
\noindent {\bf Proof of Theorem \ref{MC}.}
Let $\mathbb{B}$ be given by \eqref{B}.

$(1)$ From Theorem \ref{process}(3), both $(\tau_n,N^{(n)})$ and $(\tau_n,M^{(n)})$ are FCSs for $M\in(\mathcal{M}_{\mathrm{loc}})^{\mathbb{B}}$. Then it follows that
\[
(N^{(n)})^{\tau_n\wedge (T_F-)}=(M^{(n)})^{\tau_n\wedge (T_F-)}=\left((M^{(n)})^{T_n\wedge (T_F-)}\right)^{S_n}
\in\mathcal{M}_{\mathrm{loc}},\quad n\in \mathbf{N}^{+}.
\]
Hence, $(\tau_n,N^{(n)})$ is indeed an inner FCS for $M\in(\mathcal{M}_{\mathrm{loc}})^{i,\mathbb{B}}$.

$(2)$ It suffices to prove $(\mathcal{M}_{\mathrm{loc}})^{i,\mathbb{B}}\cap\mathcal{C}^\mathbb{B}\subseteq(\mathcal{M}^c_{\mathrm{loc}})^\mathbb{B}$. Let us further suppose $M\in(\mathcal{M}_{\mathrm{loc}})^{i,\mathbb{B}}\cap\mathcal{C}^\mathbb{B}$. Assume that $(T_n,M^{(n)})$ is an inner FCS for $M\in (\mathcal{M}_{\mathrm{loc}})^{i,\mathbb{B}}$, and that $(T_n,N^{(n)})$ is an FCS for $M\in \mathcal{C}^\mathbb{B}$. For each $n\in \mathbf{N}^+$, it holds that
\[
M^{(n)}I_{\mathbb{B}\llbracket{0,T_n}\rrbracket}=MI_{\mathbb{B}\llbracket{0,T_n}\rrbracket}
=N^{(n)}I_{\mathbb{B}\llbracket{0,T_n}\rrbracket},
\]
which is equivalent to the statement
\[
(M^{(n)})^{T_n\wedge (T_F-)}=(N^{(n)})^{T_n\wedge (T_F-)}.
\]
Given that $(M^{(n)})^{T_n\wedge (T_F-)}\in\mathcal{M}_{\mathrm{loc}}$ and $(N^{(n)})^{T_n\wedge (T_F-)}\in\mathcal{C}$, it follows that
\[
(M^{(n)})^{T_n\wedge (T_F-)}\in \mathcal{M}_{\mathrm{loc}}\cap\mathcal{C}=\mathcal{M}^c_{\mathrm{loc}}.
\]
Since $(T_n,(M^{(n)})^{T_n\wedge (T_F-)})$ constitutes a CS for $M$, we deduce that $M\in(\mathcal{M}^c_{\mathrm{loc}})^\mathbb{B}$, and $(\mathcal{M}_{\mathrm{loc}})^{i,\mathbb{B}}\cap\mathcal{C}^\mathbb{B}\subseteq(\mathcal{M}^c_{\mathrm{loc}})^\mathbb{B}$.

$(3)$ Suppose that $(T_n,M^{(n)})$ and $(T_n,N^{(n)})$ are inner FCSs for $M\in (\mathcal{M}_{\mathrm{loc}})^{i,\mathbb{B}}$ and $N\in (\mathcal{M}_{\mathrm{loc}})^{i,\mathbb{B}}$, respectively.
It is easy to deduce that for each $n\in \mathbf{N}^{+}$, $aM^{(n)}+bN^{(n)}\in\mathcal{M}_{\mathrm{loc}}$ and
\begin{align*}
(aM+bN)I_{\mathbb{B}\llbracket{0,T_n}\rrbracket}
&=aMI_{\mathbb{B}\llbracket{0,T_n}\rrbracket}+bNI_{\mathbb{B}\llbracket{0,T_n}\rrbracket}\\
&=aM^{(n)}I_{\mathbb{B}\llbracket{0,T_n}\rrbracket}+bN^{(n)}I_{\mathbb{B}\llbracket{0,T_n}\rrbracket}\\
&=(aM^{(n)}+bN^{(n)})I_{\mathbb{B}\llbracket{0,T_n}\rrbracket}.
\end{align*}
This implies that $(T_n,aM^{(n)}+bN^{(n)})$ is an FCS for $aM+bN\in (\mathcal{M}_{\mathrm{loc}})^\mathbb{B}$. Consequently, by invoking the facts
\begin{align*}
(aM^{(n)}+bN^{(n)})^{T_n\wedge (T_F-)}
=a(M^{(n)})^{T_n\wedge (T_F-)}+b(N^{(n)})^{T_n\wedge (T_F-)}\in\mathcal{M}_{\mathrm{loc}},\quad n\in \mathbf{N}^{+},
\end{align*}
we can establish the assertions.

$(4)$ Observe that $M^d=M-M_0\mathfrak{I}_\mathbb{B}-M^c$ with $M^c\in(\mathcal{M}^{c}_{\mathrm{loc,0}})^\mathbb{B}\subseteq(\mathcal{M}_{\mathrm{loc}})^{i,\mathbb{B}}$. Then the assertion follows directly from part (3) and Theorem \ref{Lem-M}(2).

$(5)$ By employing induction, the assertion can be easily derived from \eqref{continuation}. $\hfill\blacksquare$

%%%%%%%%%%%%%%%%%%%%%%%%%%%%%%%%%%%%%%%%%%%%%%%%%%%%%%%%%%%%%%%%%%%%%%%%%%%%%%%%%%%%%%%%%%%%%%%%%%%%%%%
\par\vspace{0.3cm}
%%%%%%%%%%%%%%%%%%%%%%%%%%%%%%%%%%%%%%%%%%%%%%%%%%%%%%%%%%%%%%%%%%%%%%%%%%%%%%%%%%%%%%%%%%%%%%%%%%%%%%%
\noindent {\bf Proof of Theorem \ref{[M]-property}.}
Let $\mathbb{B}$ be given by \eqref{B}.

$(1)$ The assertion that $[M,N]\in \mathcal{V}^\mathbb{B}$ with the FCS $(T_n,[M^{(n)},N^{(n)}])$ is a direct consequence of Theorems \ref{Lem-M}(2), \ref{property-qr-p}(1) and \ref{delta}(6). The relation $[M]=[M,M]$ indicates that $(T_n,[M^{(n)}])$ is a CS for $[M]$, which in turn implies that $(T_n,\sqrt{[M^{(n)}]})$ is a CS for $\sqrt{[M]}$.
For each $n\in \mathbf{N}^{+}$, Definition 7.29 in \cite{He} asserts that $[M^{(n)}]\in\mathcal{V}^+$, and Theorem 7.30 in \cite{He} establishes that $\sqrt{[M^{(n)}]}\in \mathcal{A}^+_{\mathrm{loc}}$. Consequently, it follows that $[M]\in (\mathcal{V}^+)^\mathbb{B}$ with the FCS $(T_n,[M^{(n)}])$, and that $\sqrt{[M]}\in (\mathcal{A}^+_{\mathrm{loc}})^\mathbb{B}$ with the FCS $(T_n,\sqrt{[M^{(n)}]})$.

$(2)$ The initial equality is straightforward, and by appealing to part (1), the demonstration of the second equality proceeds analogously to the proof of Theorem \ref{property-qr-p}(2).

$(3)$ Using \eqref{sigmaX}, \eqref{Mc-tau1}, \eqref{McS} and \eqref{[M,N]}, we deduce
\begin{align*}
[M^{\tau},N^{\tau}]
&=M^{\tau}_0N^{\tau}_0+\langle (M^{\tau})^c, (N^{\tau})^c \rangle+\Sigma (\Delta M^{\tau}\Delta N^{\tau})\\
&=M^{\tau}_0N^{\tau}_0+\langle (M^c)^{\tau}, (N^c)^{\tau} \rangle+\Sigma (\Delta M\Delta NI_{\llbracket{0,\tau}\rrbracket})\\
&=\left(M_0N_0\mathfrak{I}_\mathbb{B}+\langle M^c,N^c\rangle+\Sigma (\Delta M\Delta N)\right)^{\tau}\\
&=[M,N]^{\tau},
\end{align*}
which yields \eqref{MN}. By part (1), the proof of \eqref{MNB} is analogous to that of \eqref{Mc-tau2}.

$(4)$ Suppose that $(T_n,M^{(n)})$ and $(T_n,N^{(n)})$ are inner FCSs for $M\in (\mathcal{M}_{\mathrm{loc}})^{i,\mathbb{B}}$ and $N\in(\mathcal{M}_{\mathrm{loc}})^{i,\mathbb{B}}$, respectively. According to part $(1)$, $(T_n,[M^{(n)},N^{(n)}])$ constitutes an FCS for $[M,N]\in \mathcal{V}^\mathbb{B}$. Then it follows that
\[
(M^{(n)}N^{(n)}-[M^{(n)},N^{(n)}])I_{\mathbb{B}\llbracket{0,T_n}\rrbracket}
=(MN-[M,N])I_{\mathbb{B}\llbracket{0,T_n}\rrbracket}, \quad n\in \mathbf{N}^{+},
\]
which implies that $(T_n,M^{(n)}N^{(n)}-[M^{(n)},N^{(n)}])$ is a CS for $MN-[M,N]$.
For each $n\in \mathbf{N}^{+}$, Theorem 7.31 in \cite{He} demonstrates that $M^{(n)}N^{(n)}-[M^{(n)},N^{(n)}]\in\mathcal{M}_{\mathrm{loc},0}$.  And it can be also verified that, for each $n\in \mathbf{N}^{+}$, $(M^{(n)}N^{(n)}-[M^{(n)},N^{(n)}])^{T_n\wedge (T_F-)}\in\mathcal{M}_{\mathrm{loc},0}$ by considering the relations $(M^{(n)})^{T_n\wedge (T_F-)}\in\mathcal{M}_{\mathrm{loc}}$ and $(N^{(n)})^{T_n\wedge (T_F-)}\in\mathcal{M}_{\mathrm{loc}}$, alongside the identity
\[
(M^{(n)}N^{(n)}-[M^{(n)},N^{(n)}])^{T_n\wedge (T_F-)}
=(M^{(n)})^{T_n\wedge (T_F-)}(N^{(n)})^{T_n\wedge (T_F-)}-[(M^{(n)})^{T_n\wedge (T_F-)},(N^{(n)})^{T_n\wedge (T_F-)}].
\]
Consequently, it holds that $MN-[M,N]\in(\mathcal{M}_{\mathrm{loc,0}})^{i,\mathbb{B}}$ with the inner FCS $(T_n,M^{(n)}N^{(n)}-[M^{(n)},N^{(n)}])$.
By applying Theorem \ref{delta}(1), we deduce
\[
\Delta [M,N]I_{\mathbb{B}\llbracket{0,T_n}\rrbracket}=\Delta[M^{(n)},N^{(n)}]I_{\mathbb{B}\llbracket{0,T_n}\rrbracket}
=\Delta M^{(n)}\Delta N^{(n)}I_{\mathbb{B}\llbracket{0,T_n}\rrbracket}=\Delta M\Delta NI_{\mathbb{B}\llbracket{0,T_n}\rrbracket}, \quad n\in \mathbf{N}^{+},
\]
which, by Theorem \ref{process}(1), yields $\Delta [M,N]=\Delta M\Delta N$.

Next, we show the uniqueness of $[M,N]$. Suppose that $A$ is another $\mathbb{B}$-process in $\mathcal{V}^\mathbb{B}$ such that $MN-A\in (\mathcal{M}_{\mathrm{loc,0}})^{i,\mathbb{B}}$ and $\Delta A=\Delta M\Delta N$. Put $L=[M,N]-A$. Then $L\in\mathcal{V}^\mathbb{B}\cap(\mathcal{M}_{\mathrm{loc,0}})^{i,\mathbb{B}}$. From Theorem \ref{delta}(5), the fact $\Delta L=\Delta [M,N]-\Delta A=0\mathfrak{I}_\mathbb{B}$ implies $L\in \mathcal{C}^\mathbb{B}$. Theorem \ref{MC}(2) shows $L\in\mathcal{V}^\mathbb{B}\cap(\mathcal{M}^c_{\mathrm{loc,0}})^{\mathbb{B}}$, and Lemma \ref{LM-A2} then yields $L=0\mathfrak{I}_\mathbb{B}$. Hence, the proof is concluded.

$(5)$ {\it Sufficiency}. Let $M=0\mathfrak{I}_\mathbb{B}$. It is evident that $(T_n=T,M^{(n)}=0)$ is an FCS for $M\in (\mathcal{M}_{\mathrm{loc}})^\mathbb{B}$. From part $(1)$, $(T_n,[M^{(n)}]=0)$ is an FCS for $[M]\in (\mathcal{V}^+)^\mathbb{B}$. Consequently, it follows that
\[
[M]I_{\mathbb{B}\llbracket{0,T_n}\rrbracket}=0I_{\mathbb{B}\llbracket{0,T_n}\rrbracket}=0, \quad n\in \mathbf{N}^{+},
\]
which, by Theorem \ref{process}(1), yields $[M]=0\mathfrak{I}_\mathbb{B}$.

{\it Necessity}. Let $[M]=0\mathfrak{I}_\mathbb{B}$. Suppose that $(T_n,M^{(n)})$ is an inner FCS for $M\in (\mathcal{M}_{\mathrm{loc}})^{i,\mathbb{B}}$. According to part $(1)$, we obtain the equalities
\[
[M^{(n)}]I_{\mathbb{B}\llbracket{0,T_n}\rrbracket}=[M]I_{\mathbb{B}\llbracket{0,T_n}\rrbracket}=0, \quad n\in \mathbf{N}^{+}.
\]
By Definition 8.2 in \cite{He}, we further deduce
\[
[(M^{(n)})^{T_n\wedge (T_F-)}]=[M^{(n)}]^{T_n\wedge (T_F-)}=0, \quad n\in \mathbf{N}^{+},
\]
Then for each $n\in \mathbf{N}^{+}$, the assertion $(M^{(n)})^{T_n\wedge (T_F-)}\in\mathcal{M}_{\mathrm{loc}}$ yields $(M^{(n)})^{T_n\wedge (T_F-)}=0$ (see, e.g., Definition 7.29 in \cite{He}). Consequently, it follows that the CS $(T_n,M^{(n)})$ for $M$ deduces
\[
MI_{\mathbb{B}\llbracket{0,T_n}\rrbracket}=M^{(n)}I_{\mathbb{B}\llbracket{0,T_n}\rrbracket}
=(M^{(n)})^{T_n\wedge (T_F-)}I_{\mathbb{B}\llbracket{0,T_n}\rrbracket}=0, \quad n\in \mathbf{N}^{+}.
\]
This ultimately leads to the conclusion $M=0\mathfrak{I}_\mathbb{B}$ by Theorem \ref{process}(1).
$\hfill\blacksquare$

%%%%%%%%%%%%%%%%%%%%%%%%%%%%%%%%%%%%%%%%%%%%%%%%%%%%%%%%%%%%%%%%%%%%%%%%%%%%%%%%%%%%%%%%%%%%%%%%%%%%%%%
\par\vspace{0.3cm}
%%%%%%%%%%%%%%%%%%%%%%%%%%%%%%%%%%%%%%%%%%%%%%%%%%%%%%%%%%%%%%%%%%%%%%%%%%%%%%%%%%%%%%%%%%%%%%%%%%%%%%%
\noindent {\bf Proof of Theorem \ref{HM-o-p}.}
Suppose that $N$ is an arbitrary $\mathbb{B}$-local martingale.

$(1)$ According to $H,\;K\in \mathcal{L}_m^\mathbb{B}(M)$, it follows that
\[
[H_{\bullet}M,N]=H_{\bullet}[M,N],\quad [K_{\bullet}M,N]=K_{\bullet}[M,N],
\]
which, by invoking Theorem \ref{HAproperty}(1) and Theorem \ref{[M]-property}(2), leads to the conclusion that
\[
[a(H_{\bullet}M)+b(K_{\bullet}M),N]=(aH+bK)_{\bullet}[M,N].
\]
Subsequently, the validity of the assertion is established by appealing to Definition \ref{HM} and the fact $a(H_{\bullet}M)+b(K_{\bullet}M)\in(\mathcal{M}_{\mathrm{loc}})^{i,\mathbb{B}}$.

$(2)$ From the relation $H\in \mathcal{L}_m^\mathbb{B}(M)\cap \mathcal{L}_m^\mathbb{B}(\widetilde{M})$, we have
\[
[H_{\bullet}M,N]=H_{\bullet}[M,N],\quad [H_{\bullet}\widetilde{M},N]=H_{\bullet}[\widetilde{M},N],
\]
which, by application of Theorem \ref{HAproperty}(2) and Theorem \ref{[M]-property}(2), implies
\[
[a(H_{\bullet}M)+b(H_{\bullet}\widetilde{M}),N]=H_{\bullet}[aM+b\widetilde{M},N].
\]
Consequently, invoking Definition \ref{HM} and the fact $a(H_{\bullet}M)+b(H_{\bullet}\widetilde{M})\in(\mathcal{M}_{\mathrm{loc}})^{i,\mathbb{B}}$, we establish the validity of the assertion.

$(3)$ By leveraging Theorem \ref{HAproperty}(3) and Definition \ref{HM}, the necessity can be verified by the following relations:
\begin{align}
&\widetilde{H}\in\mathcal{L}_m^\mathbb{B}(H_{\bullet}M)\nonumber\\
\Rightarrow\quad& [\widetilde{H}_{\bullet}(H_{\bullet}M),N]
=\widetilde{H}_{\bullet}[H_{\bullet}M,N]=\widetilde{H}_{\bullet}(H_{\bullet}[M,N])
=(\widetilde{H}H)_{\bullet}[M,N]\label{equi}\\
\Rightarrow\quad&\widetilde{H}H\in\mathcal{L}_m^\mathbb{B}(M);\nonumber
\end{align}
and the sufficiency can be established by the following relations:
\begin{align*}
&\widetilde{H}H\in\mathcal{L}_m^\mathbb{B}(M)\\
\Rightarrow\quad& [(\widetilde{H}H)_{\bullet}M,N]=(\widetilde{H}H)_{\bullet}[M,N]
=\widetilde{H}_{\bullet}(H_{\bullet}[M,N])=\widetilde{H}_{\bullet}[H_{\bullet}M,N]\\
\Rightarrow\quad&\widetilde{H}\in\mathcal{L}_m^\mathbb{B}(H_{\bullet}M).
\end{align*}
Furthermore, \eqref{hHM} is straightforwardly obtained as a consequence of the relation \eqref{equi}.
$\hfill\blacksquare$

%%%%%%%%%%%%%%%%%%%%%%%%%%%%%%%%%%%%%%%%%%%%%%%%%%%%%%%%%%%%%%%%%%%%%%%%%%%%%%%%%%%%%%%%%%%%%%%%%%%%%%%
\par\vspace{0.3cm}
%%%%%%%%%%%%%%%%%%%%%%%%%%%%%%%%%%%%%%%%%%%%%%%%%%%%%%%%%%%%%%%%%%%%%%%%%%%%%%%%%%%%%%%%%%%%%%%%%%%%%%%
\noindent {\bf Proof of Theorem \ref{HM=}.}
To prove the theorem, we initially present the following lemma.
\begin{lemma}\label{bounded}
Let $M\in (\mathcal{M}_{\mathrm{loc}})^{i,\mathbb{B}}$, and suppose that $H$ is a $\mathbb{B}$-locally bounded predictable process. Then $H\in\mathcal{L}_m^\mathbb{B}(M)$. Furthermore, if $\widetilde{H}$ is a coupled locally bounded predictable process for $H$, and if $(T_n,M^{(n)},\widetilde{M})$ is an inner continuation for $M\in (\mathcal{M}_{\mathrm{loc}})^{i,\mathbb{B}}$, then $(T_n,\widetilde{H}.\widetilde{M}^{T_n})$ forms an inner FCS for $H_{\bullet}M\in (\mathcal{M}_{\mathrm{loc}})^{i,\mathbb{B}}$.
\end{lemma}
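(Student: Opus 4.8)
The plan is to construct the candidate integral $L$ explicitly from the data $(\widetilde H,\widetilde M,(T_n))$, verify that it is a $\mathbb{B}$-inner local martingale carrying the asserted inner FCS, and then check directly that it satisfies the defining identity of Definition \ref{HM}; the uniqueness noted after that definition then identifies $L$ with $H_{\bullet}M$. I deliberately avoid Theorems \ref{HM=} and \ref{eq-HM}, since this lemma is a prerequisite for them, and argue only from Definition \ref{HM} and results established earlier (in particular Lemma \ref{Lemma-HA}, which lives inside the proof of Theorem \ref{HA-equivalent}).

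First I would assemble the building blocks. By Theorem \ref{restriction}(1) the coupled process $\widetilde H$ is an honest locally bounded predictable process, so by the classical theory of stochastic integration against local martingales (locally bounded predictable integrands are always integrable, see \cite{He}) we have $\widetilde H.\widetilde M^{T_n}\in\mathcal{M}_{\mathrm{loc}}$ for each $n$, where $\widetilde M^{T_n}\in\mathcal{M}_{\mathrm{loc}}$ and, by Theorem \ref{MC}(5), $(T_n,\widetilde M^{T_n})$ is an inner FCS for $M\in(\mathcal{M}_{\mathrm{loc}})^{i,\mathbb{B}}$. Since stochastic integration commutes with stopping, $(\widetilde H.\widetilde M^{T_l})^{T_k}=\widetilde H.\widetilde M^{T_k\wedge T_l}=\widetilde H.\widetilde M^{T_k}$ for $k\le l$, so these integrals agree on $\llbracket{0,T_k}\rrbracket$ and a fortiori on $\mathbb{B}\llbracket{0,T_k}\rrbracket$. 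The compatibility hypothesis of Theorem \ref{restriction}(4) is thereby met, and it produces a $\mathbb{B}$-process $L\in(\mathcal{M}_{\mathrm{loc}})^\mathbb{B}$ with FCS $(T_n,\widetilde H.\widetilde M^{T_n})$.

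The hard part will be upgrading $L$ to $(\mathcal{M}_{\mathrm{loc}})^{i,\mathbb{B}}$, i.e. verifying the inner condition \eqref{inner-eq}, namely $(\widetilde H.\widetilde M^{T_n})^{T_n\wedge(T_F-)}\in\mathcal{M}_{\mathrm{loc}}$. The obstacle is to pass the pre-stopping operation $X\mapsto X^{T_F-}$ through the stochastic integral. I would first note that $\widetilde H.\widetilde M^{T_n}$ is already $T_n$-stopped, so $(\widetilde H.\widetilde M^{T_n})^{T_n\wedge(T_F-)}=(\widetilde H.\widetilde M^{T_n})^{T_F-}$, and then establish the identity $(\widetilde H.N)^{T_F-}=\widetilde H.N^{T_F-}$ for $N=\widetilde M^{T_n}$ by writing $N^{T_F-}=N^{T_F}-\Delta N_{T_F}I_{\llbracket{T_F,+\infty}\llbracket}$, invoking additivity of the integral together with $(\widetilde H.N)^{T_F}=\widetilde H.N^{T_F}$ and $\Delta(\widetilde H.N)_{T_F}=\widetilde H_{T_F}\Delta N_{T_F}$. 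Because $(\widetilde M^{T_n})^{T_n\wedge(T_F-)}\in\mathcal{M}_{\mathrm{loc}}$ (the inner FCS property) and $\widetilde H$ is locally bounded predictable, the right-hand side $\widetilde H.(\widetilde M^{T_n})^{T_n\wedge(T_F-)}$ lies in $\mathcal{M}_{\mathrm{loc}}$, which gives the inner condition and shows $(T_n,\widetilde H.\widetilde M^{T_n})$ is an inner FCS for $L\in(\mathcal{M}_{\mathrm{loc}})^{i,\mathbb{B}}$.

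Finally I would verify $[L,N]=H_{\bullet}[M,N]$ for every $N\in(\mathcal{M}_{\mathrm{loc}})^\mathbb{B}$ by exhibiting a single FCS common to both sides. Fixing an FCS $(S_n,N^{(n)})$ for $N$ and setting $\tau_n=T_n\wedge S_n$, Theorem \ref{process}(3) makes $(\tau_n,\widetilde H.\widetilde M^{T_n})$, $(\tau_n,\widetilde M^{T_n})$, $(\tau_n,N^{(n)})$ simultaneous FCSs for $L$, $M$, $N$. By Theorem \ref{[M]-property}(1) and the classical identity $[\widetilde H.P,Q]=\widetilde H.[P,Q]$, the sequence $(\tau_n,\widetilde H.[\widetilde M^{T_n},N^{(n)}])$ is an FCS for $[L,N]$; by Theorem \ref{[M]-property}(1) again—$(\tau_n,[\widetilde M^{T_n},N^{(n)}])$ being an FCS for $[M,N]$—together with Lemma \ref{Lemma-HA} applied to the $\mathbb{B}$-locally bounded predictable process $H$, the \emph{same} sequence is an FCS for $H_{\bullet}[M,N]$. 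Theorem \ref{process}(1) then forces $[L,N]=H_{\bullet}[M,N]$. By Definition \ref{HM} this establishes $H\in\mathcal{L}_m^\mathbb{B}(M)$ with $H_{\bullet}M=L$, and hence that $(T_n,\widetilde H.\widetilde M^{T_n})$ is an inner FCS for $H_{\bullet}M\in(\mathcal{M}_{\mathrm{loc}})^{i,\mathbb{B}}$, completing the proof. The main care throughout is to keep the two meanings of the ``$\bullet$''-integral (Stieltjes versus local-martingale) separate and to use only pre-Theorem \ref{HM=} machinery.
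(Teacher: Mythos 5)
Your proof is correct and follows essentially the same route as the paper's: you construct $L$ explicitly with FCS $(T_n,\widetilde{H}.\widetilde{M}^{T_n})$ via Theorem \ref{restriction}(4), verify the inner condition \eqref{inner-eq} by commuting pre-stopping at $T_F$ with the stochastic integral, establish $[L,N]=H_{\bullet}[M,N]$ for arbitrary $N\in(\mathcal{M}_{\mathrm{loc}})^{\mathbb{B}}$ by matching FCSs on each $\mathbb{B}\llbracket{0,\tau_n}\rrbracket$ using the classical identity $[\widetilde{H}.P,Q]=\widetilde{H}.[P,Q]$, and conclude via Definition \ref{HM} and its uniqueness remark. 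The only cosmetic deviations are that you spell out the identity $(\widetilde{H}.\widetilde{M}^{T_n})^{T_n\wedge(T_F-)}=\widetilde{H}.\widetilde{M}^{T_n\wedge(T_F-)}$ through the jump decomposition (the paper asserts it directly), you invoke Lemma \ref{Lemma-HA} where the paper cites Theorem \ref{HA-FCS}, and you handle $N$ through a general FCS with minimized stopping times rather than through a continuation $\widetilde{N}$.
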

\begin{proof}
Let $N$ be an arbitrary $\mathbb{B}$-local martingale, and let $\widetilde{H}$ be a coupled locally bounded predictable process for $H$. Suppose that $(T_n,N^{(n)},\widetilde{N})$ constitutes a continuation for $N\in (\mathcal{M}_{\mathrm{loc}})^{\mathbb{B}}$, and that $(T_n,M^{(n)},\widetilde{M})$ is an inner continuation for $M\in (\mathcal{M}_{\mathrm{loc}})^{i,\mathbb{B}}$. Given that $\widetilde{H}$ is a locally bounded predictable process, it follows from Theorem 9.2 in \cite{He} that $\widetilde{H}\in \mathcal{L}_m(\widetilde{M}^{T_n})$ for each $n\in\mathbf{N}^+$.
Define $\mathbb{C}=\bigcup\limits_{n}\llbracket{0,T_n}\rrbracket$ and $L=\widetilde{L}\mathfrak{I}_\mathbb{B}$, where
\[
\widetilde{L}=\left(\widetilde{H}_0\widetilde{M}_0I_{\llbracket{0}\rrbracket}
+\sum\limits_{n=1}^{{+\infty}}(\widetilde{H}.\widetilde{M}^{T_n})I_{\rrbracket{T_{n-1},T_n}
      \rrbracket}\right)\mathfrak{I}_{\mathbb{C}},\quad T_0=0.
\]

We begin by establishing that $L\in(\mathcal{M}_{\mathrm{loc}})^{i,\mathbb{B}}$ with the inner FCS $(T_n,\widetilde{H}.\widetilde{M}^{T_n})$. For $k,n\in \mathbf{N}^+$ with $n\leq k$, it is straightforward to observe that
\[
(\widetilde{H}.\widetilde{M}^{T_n})I_{\mathbb{B}\llbracket{0,T_n}\rrbracket}
=(\widetilde{H}.\widetilde{M}^{T_k})^{T_n}I_{\mathbb{B}\llbracket{0,T_n}\rrbracket}
=(\widetilde{H}.\widetilde{M}^{T_k})I_{\mathbb{B}\llbracket{0,T_n}\rrbracket}.
\]
By invoking Theorem \ref{restriction}(4), we deduce that $L\in(\mathcal{M}_{\mathrm{loc}})^\mathbb{B}$ with the FCS $(T_n,\widetilde{H}.\widetilde{M}^{T_n})$. According to Theorem \ref{MC}(5), $(T_n,\widetilde{M}^{T_n})$ constitutes an inner FCS for $M\in (\mathcal{M}_{\mathrm{loc}})^{i,\mathbb{B}}$. For each $n\in \mathbf{N}^+$, it holds that $\widetilde{M}^{T_n\wedge (T_F-)}\in\mathcal{M}_{\mathrm{loc}}$ and $\widetilde{H}\in \mathcal{L}_m(\widetilde{M}^{T_n\wedge (T_F-)})$. Consequently, it follows that
\[
(\widetilde{H}.\widetilde{M}^{T_n})^{T_n\wedge (T_F-)}
=\widetilde{H}.\widetilde{M}^{T_n\wedge (T_F-)}\in\mathcal{M}_{\mathrm{loc}}, \quad n\in \mathbf{N}^+.
\]
This further confirms that  $L\in(\mathcal{M}_{\mathrm{loc}})^{i,\mathbb{B}}$ with the inner FCS $(T_n,\widetilde{H}.\widetilde{M}^{T_n})$.

Subsequently, we demonstrate that $[L,N]=H_{\bullet}[M,N]$. For each $n\in \mathbf{N}^+$, we deduce
\begin{align*}
[L,N]I_{\mathbb{B}\llbracket{0,T_n}\rrbracket}
=&[\widetilde{H}.\widetilde{M}^{T_n},\widetilde{N}^{T_n}]I_{\mathbb{B}\llbracket{0,T_n}\rrbracket}\\
=&(\widetilde{H}.[\widetilde{M}^{T_n},\widetilde{N}^{T_n}])I_{\mathbb{B}\llbracket{0,T_n}\rrbracket}\\
=&(H_{\bullet}[M,N])I_{\mathbb{B}\llbracket{0,T_n}\rrbracket},
\end{align*}
where the first equality comes from Theorem \ref{[M]-property}(1), the second equality from the fact $\widetilde{H}\in\mathcal{L}_m(\widetilde{M}^{T_n})$, and the last equality from Theorem \ref{HA-FCS}.
Consequently, Theorem \ref{process}(1) implies that $[L,N]=H_{\bullet}[M,N]$.

Lastly, we directly infer the assertions based on Definition \ref{HM}.
\end{proof}

Let $\mathbb{B}$ be given by \eqref{B}. We present the proof.

(M1) $\Rightarrow$ (M2). Denote $L=H_{\bullet}M$, and fix a constant $\varepsilon>0$. Suppose that $\widetilde{H}$ is a coupled predictable process for $H\in\mathcal{P}^\mathbb{B}$, and that $(T_n,\widetilde{M}^{(n)},\widetilde{M})$ and $(T_n,\widetilde{L}^{(n)},\widetilde{L})$ are inner continuations for $M\in (\mathcal{M}_{\mathrm{loc}})^{i,\mathbb{B}}$ and $L\in (\mathcal{M}_{\mathrm{loc}})^{i,\mathbb{B}}$, respectively. By invoking Theorem \ref{HM-o-p} and Lemma \ref{bounded}, the $\mathbb{B}$-process $M$ can be represented as
\[
M=K_{\bullet}L+X_{\bullet}M,\quad K=\left(\frac{1}{H}I_{[|H|>\varepsilon]}\right)\mathfrak{I}_\mathbb{B},\;X=I_{[|H|\leq\varepsilon]}\mathfrak{I}_\mathbb{B},
\]
with $K$ and $X$ being $\mathbb{B}$-locally bounded predictable processes (according to Lemma \ref{bound-p}), ensuring the well-definedness of $K_{\bullet}L$ and $X_{\bullet}M$.
Next, define the processes:
\[
\widetilde{K}=\frac{1}{\widetilde{H}}I_{[|\widetilde{H}|>\varepsilon]},\quad \widetilde{X}=I_{[|\widetilde{H}|\leq\varepsilon]}.
\]
These are coupled locally bounded predictable processes for $K$ and $X$, respectively.
According to Lemma \ref{bounded}, $(T_n,\widetilde{K}.\widetilde{L}^{T_n})$ and $(T_n,\widetilde{X}.\widetilde{M}^{T_n})$ are inner FCSs for $K_{\bullet}L\in(\mathcal{M}_{\mathrm{loc}})^{i,\mathbb{B}}$ and $X_{\bullet}M\in(\mathcal{M}_{\mathrm{loc}})^{i,\mathbb{B}}$, respectively.
For each $n\in\mathbf{N}^+$, by setting
\[
M^{(n)}=\widetilde{K}.\widetilde{L}^{T_n}
+\widetilde{X}.\widetilde{M}^{T_n},
\]
it follows that $(M^{(n)})^{T_n\wedge(T_F-)}\in \mathcal{M}_{\mathrm{loc}}$ and
\begin{align*}
M^{(n)}I_{\mathbb{B}\llbracket{0,T_n}\rrbracket}
&=(\widetilde{K}.\widetilde{L}^{T_n}+\widetilde{X}.\widetilde{M}^{T_n})I_{\mathbb{B}\llbracket{0,T_n}\rrbracket}\\
&=(K_{\bullet}L+X_{\bullet}M)I_{\mathbb{B}\llbracket{0,T_n}\rrbracket}\\
&=MI_{\mathbb{B}\llbracket{0,T_n}\rrbracket},
\end{align*}
indicating that $(T_n,M^{(n)})$ is an inner FCS for $M\in (\mathcal{M}_{\mathrm{loc}})^{i,\mathbb{B}}$. For each $n\in\mathbf{N}^+$,  we infer $\widetilde{H}\in \mathcal{L}_m(M^{(n)})$ from
\[
(\widetilde{H}\widetilde{K}).\widetilde{L}^{T_n}
+(\widetilde{H}\widetilde{X}).\widetilde{M}^{T_n}=\widetilde{H}.(\widetilde{K}.\widetilde{L}^{T_n}
+\widetilde{X}.\widetilde{M}^{T_n})=\widetilde{H}.M^{(n)},
\]
where we use the fact that $\widetilde{H}\widetilde{K}$ and $\widetilde{H}\widetilde{X}$ are locally bounded predictable processes satisfying $\widetilde{H}\widetilde{K}\in\mathcal{L}_m(\widetilde{L}^{T_n})$ and  $\widetilde{H}\widetilde{X}\in\mathcal{L}_m(\widetilde{M}^{T_n})$.
In conclusion, $\widetilde{H}$ is a coupled predictable process for $H\in\mathcal{P}^\mathbb{B}$ and $(T_n,M^{(n)})$ is an inner FCS for $M\in (\mathcal{M}_{\mathrm{loc}})^{i,\mathbb{B}}$, such that $\widetilde{H}\in\mathcal{L}_m(M^{(n)})$ for each $n\in \mathbf{N}^+$, thereby establishing (M2).

(M2) $\Rightarrow$ (M1). Suppose that $(M2)$ holds. Define the following $\mathbb{B}$-process:
\begin{equation}\label{L=HM}
  L=\left((H_0M_0)I_{\llbracket{0}\rrbracket}
  +\sum\limits_{n=1}^{{+\infty}}(\widetilde{H}.{M}^{(n)})I_{\rrbracket{T_{n-1},T_n}
      \rrbracket}\right)\mathfrak{I}_\mathbb{B},\quad T_0=0.
\end{equation}

Our initial objective is to demonstrate that $L\in (\mathcal{M}_{\mathrm{loc}})^{i,\mathbb{B}}$ with the inner FCS $(T_n,\widetilde{H}.{M}^{(n)})$.
According to Theorem \ref{process}(2), for $k,\;l\in \mathbf{N}^+$ with $k\leq l$, the inner FCS $(T_n,M^{(n)})$ for $M\in (\mathcal{M}_{\mathrm{loc}})^{i,\mathbb{B}}$ implies that
$M^{(k)}I_{\mathbb{B}\llbracket{0,T_k}\rrbracket}=M^{(l)}I_{\mathbb{B}\llbracket{0,T_k}\rrbracket}$,
or equivalently,
\[
(M^{(k)})^{T_k\wedge(T_F-)}=(M^{(l)})^{T_k\wedge(T_F-)},
\]
which subsequently entails
\begin{align*}
(\widetilde{H}.M^{(k)})I_{\mathbb{B}\llbracket{0,T_k}\rrbracket}
&=(\widetilde{H}.M^{(k)})^{T_k\wedge(T_F-)}I_{\mathbb{B}\llbracket{0,T_k}\rrbracket}
=\left(\widetilde{H}.(M^{(k)})^{T_k\wedge(T_F-)}\right)I_{\mathbb{B}\llbracket{0,T_k}\rrbracket}\\
&=\left(\widetilde{H}.(M^{(l)})^{T_k\wedge(T_F-)}\right)I_{\mathbb{B}\llbracket{0,T_k}\rrbracket}
=(\widetilde{H}.M^{(l)})^{T_k\wedge(T_F-)}I_{\mathbb{B}\llbracket{0,T_k}\rrbracket}\\
&=(\widetilde{H}.M^{(l)})I_{\mathbb{B}\llbracket{0,T_k}\rrbracket}.
\end{align*}
Theorem \ref{restriction}(4) confirms that  $(T_n,\widetilde{H}.{M}^{(n)})$ is a CS for $L$. Furthermore, it is evident that for $n\in \mathbf{N}^+$, $\widetilde{H}.{M}^{(n)}\in \mathcal{M}_{\mathrm{loc}}$, $\widetilde{H}\in\mathcal{L}_m((M^{(n)})^{T_n\wedge(T_F-)})$, and
\[
(\widetilde{H}.M^{(n)})^{T_n\wedge(T_F-)}=\widetilde{H}.(M^{(n)})^{T_n\wedge(T_F-)}\in\mathcal{M}_{\mathrm{loc}}.
\]
Therefore, $(T_n,\widetilde{H}.{M}^{(n)})$ forms an inner FCS for $L\in (\mathcal{M}_{\mathrm{loc}})^{i,\mathbb{B}}$.

Next, we proceed to prove that $H\in\mathcal{L}^\mathbb{B}_m(M)$. Let $N\in (\mathcal{M}_{\mathrm{loc}})^{\mathbb{B}}$ with the FCS $(T_n, N^{(n)})$. By invoking Theorems \ref{HA-FCS} and \ref{[M]-property}(1), we establish that $(T_n,[\widetilde{H}.{M}^{(n)},N^{(n)}])$ and $(T_n,\widetilde{H}.[M^{(n)},N^{(n)}])$ are FCSs for $[L,N]\in \mathcal{V}^\mathbb{B}$ and $H_{\bullet}[M,N]\in \mathcal{V}^\mathbb{B}$, respectively. Consequently, for each $n\in \mathbf{N}^+$, we deduce that
\begin{align*}
[L,N]I_{\mathbb{B}\llbracket{0,T_n}\rrbracket}
=[\widetilde{H}.M^{(n)},N^{(n)}]I_{\mathbb{B}\llbracket{0,T_n}\rrbracket}
=(\widetilde{H}.[M^{(n)},N^{(n)}])I_{\mathbb{B}\llbracket{0,T_n}\rrbracket}
=(H_{\bullet}[M,N])I_{\mathbb{B}\llbracket{0,T_n}\rrbracket},
\end{align*}
which, by Theorem \ref{process}(1), implies $[L,N]=H_{\bullet}[M,N]$. Hence, we conclude that $H\in\mathcal{L}^\mathbb{B}_m(M)$.

(M2) $\Rightarrow$ (M3). Suppose that (M2) holds. For each $n\in \mathbf{N}^+$, define $H^{(n)}=\widetilde{H}$. It follows that $(T_n,H^{(n)})$ is an FCS for $H\in \mathcal{P}^\mathbb{B}$ such that $H^{(n)}\in\mathcal{L}_m(M^{(n)})$ for each $n\in \mathbf{N}^+$. Hence, the assertion (M3) is established.

(M3) $\Rightarrow$ (M4). Suppose that (M3) holds. For each $n\in \mathbf{N}^+$, according to the fact $H^{(n)}\in\mathcal{L}_m(M^{(n)})$, it is established that $(H^{(n)})^2\in\mathcal{L}_s([M^{(n)}])$. Theorems \ref{HA-FCS} and \ref{[M]-property}(1) show that $(T_n,(H^{(n)})^2.[{M}^{(n)}])$ is an FCS for ${H^2}_{\bullet}[M]\in \mathcal{V}^\mathbb{B}$, which gives
\[
\left((H^{(n)})^2.[{M}^{(n)}]\right)I_{\mathbb{B}\llbracket{0,T_n}\rrbracket}
=({H^2}_{\bullet}[M])I_{\mathbb{B}\llbracket{0,T_n}\rrbracket}, \quad n\in \mathbf{N}^+.
\]
This naturally yields
\[
\sqrt{(H^{(n)})^2.[{M}^{(n)}]}I_{\mathbb{B}\llbracket{0,T_n}\rrbracket}
=\sqrt{{H^2}_{\bullet}[M]}I_{\mathbb{B}\llbracket{0,T_n}\rrbracket}, \quad n\in \mathbf{N}^+,
\]
which, by the fact $\sqrt{(H^{(n)})^2.[{M}^{(n)}]}\in\mathcal{A}^+_{\mathrm{loc}}$ (see, e.g., Theorem 9.2 in \cite{He}), indicates $\sqrt{{H^2}_{\bullet}[M]}\in (\mathcal{A}^+_{\mathrm{loc}})^\mathbb{B}$.

(M4) $\Rightarrow$ (M2). Suppose that $\sqrt{{H^2}_{\bullet}[M]}\in (\mathcal{A}^+_{\mathrm{loc}})^\mathbb{B}$. Let $\widehat{H}$ denote a coupled predictable process for $H\in\mathcal{P}^\mathbb{B}$, and let $(T_n,N^{(n)})$ be an inner FCS for $M\in (\mathcal{M}_{\mathrm{loc}})^{i,\mathbb{B}}$. According to statement (A2), there exist a coupled predictable process $\widetilde{K}$ for $H^2\in\mathcal{P}^\mathbb{B}$ and an FCS $(T_n,V^{(n)})$ for $[M]\in \mathcal{V}^\mathbb{B}$, such that $(T_n,\sqrt{|\widetilde{K}|.V^{(n)}})$ is an FCS for $\sqrt{{H^2}_{\bullet}[M]}\in (\mathcal{A}^+_{\mathrm{loc}})^\mathbb{B}$, where $|\widetilde{K}|\in\mathcal{L}_s(V^{(n)})$ for each $n\in \mathbf{N}^+$. Define
\[
\widetilde{H}=\sqrt{|\widetilde{K}|}I_{[\widehat{H}\geq 0]}-\sqrt{|\widetilde{K}|}I_{[\widehat{H}< 0]},\quad
M^{(n)}=(N^{(n)})^{T_n\wedge(T_F-)},\quad n\in \mathbf{N}^+.
\]
It follows that $\widetilde{H}$ is a coupled predictable process for $H\in\mathcal{P}^\mathbb{B}$ satisfying $|\widetilde{K}|=\widetilde{H}^2$, and $(T_n,M^{(n)})$ is an inner FCS for $M\in (\mathcal{M}_{\mathrm{loc}})^{i,\mathbb{B}}$.
The FCSs $(T_n,V^{(n)})$ and $(T_n,[N^{(n)}])$ for $[M]\in \mathcal{V}^\mathbb{B}$ lead to the equality
\[
[N^{(n)}]I_{\mathbb{B}\llbracket{0,T_n}\rrbracket}=[M]I_{\mathbb{B}\llbracket{0,T_n}\rrbracket}
=V^{(n)}I_{\mathbb{B}\llbracket{0,T_n}\rrbracket}, \quad n\in \mathbf{N}^+.
\]
Then we infer that
\[
[M^{(n)}]=[(N^{(n)})^{T_n\wedge(T_F-)}]
=[N^{(n)}]^{T_n\wedge(T_F-)}=(V^{(n)})^{T_n\wedge(T_F-)}, \quad n\in \mathbf{N}^+.
\]
For each $n\in \mathbf{N}^+$, the relation
\[
(|\widetilde{K}|.V^{(n)})^{T_n\wedge(T_F-)}=\widetilde{H}^2.(V^{(n)})^{T_n\wedge(T_F-)}=\widetilde{H}^2.[M^{(n)}]
\]
demonstrates that $\widetilde{H}^2\in\mathcal{L}_s([M^{(n)}])$ and
\[
\sqrt{\widetilde{H}^2.[M^{(n)}]}=\left(\sqrt{|\widetilde{K}|.V^{(n)}}\right)^{T_n\wedge(T_F-)}\in\mathcal{A}^+_{\mathrm{loc}}.
\]
 Invoking Theorem 9.2 in \cite{He}, we conclude that $\widetilde{H}\in\mathcal{L}_m(M^{(n)})$ for each $n\in \mathbf{N}^+$, thereby establishing (M2).
$\hfill\blacksquare$

%%%%%%%%%%%%%%%%%%%%%%%%%%%%%%%%%%%%%%%%%%%%%%%%%%%%%%%%%%%%%%%%%%%%%%%%%%%%%%%%%%%%%%%%%%%%%%%%%%%%%%%
\par\vspace{0.3cm}
%%%%%%%%%%%%%%%%%%%%%%%%%%%%%%%%%%%%%%%%%%%%%%%%%%%%%%%%%%%%%%%%%%%%%%%%%%%%%%%%%%%%%%%%%%%%%%%%%%%%%%%
\noindent {\bf Proof of Theorem \ref{eq-HM}.}
Let $\mathbb{B}$ be given by \eqref{B}. Suppose that $(T_n,H^{(n)})$ is an FCS for $H\in\mathcal{P}^\mathbb{B}$, and $(T_n,M^{(n)})$ is an inner FCS for $M\in(\mathcal{M}_{\mathrm{loc}})^{i,\mathbb{B}}$, such that for each $n\in \mathbf{N}^+$, $H^{(n)}\in\mathcal{L}_m(M^{(n)})$. Define
\begin{equation}\label{HMn}
      L=\left((H_0M_0)I_{\llbracket{0}\rrbracket}+\sum\limits_{n=1}^{{+\infty}}
      (H^{(n)}.M^{(n)})I_{\rrbracket{T_{n-1},T_n}
      \rrbracket}\right)\mathfrak{I}_\mathbb{B},\quad T_0=0.
      \end{equation}
Utilizing the expression of \eqref{x-expression} and its independence property, it suffices to demonstrate that $L=H_{\bullet}M$ and that $(T_n,H^{(n)}.M^{(n)})$ is an inner FCS for $L\in(\mathcal{M}_{\mathrm{loc}})^{i,\mathbb{B}}$.

First, for any $k,\;l\in \mathbf{N}^+$ with $k\leq l$, we establish that
\begin{equation}\label{HMn-1}
H^{(l)}.(M^{(l)})^{T_k\wedge(T_F-)}=H^{(k)}.(M^{(k)})^{T_k\wedge(T_F-)}.
\end{equation}
By Theorem \ref{process}(2), the FCS $(T_n,H^{(n)})$ for $H\in\mathcal{P}^\mathbb{B}$ implies
\begin{equation}\label{HMn-2}
(H^{(l)}-H^{(k)})I_{\mathbb{B}\llbracket{0,T_k}\rrbracket}=0,
\end{equation}
while the inner FCS $(T_n,M^{(n)})$ for $M\in(\mathcal{M}_{\mathrm{loc}})^{i,\mathbb{B}}$ yields
\begin{equation}\label{HMn-3}
(M^{(k)})^{T_k\wedge(T_F-)}=(M^{(l)})^{T_k\wedge(T_F-)}\in\mathcal{M}_{\mathrm{loc}}.
\end{equation}
Given $H^{(k)}\in\mathcal{L}_m(M^{(k)})$ and $H^{(l)}\in\mathcal{L}_m(M^{(l)})$, it follows that
\[
H^{(k)}\in\mathcal{L}_m((M^{(k)})^{T_k\wedge(T_F-)})\quad \text{and}\quad H^{(l)}\in\mathcal{L}_m((M^{(l)})^{T_k\wedge(T_F-)})=\mathcal{L}_m((M^{(k)})^{T_k\wedge(T_F-)}).
\]
Combining these with \eqref{HMn-2}, we obtain
\[
[(H^{(l)}-H^{(k)}).(M^{(k)})^{T_k\wedge(T_F-)}]I_{\mathbb{B}\llbracket{0,T_k}\rrbracket}
=\left((H^{(l)}-H^{(k)})^2.[(M^{(k)})^{T_k\wedge(T_F-)}]\right)I_{\mathbb{B}\llbracket{0,T_k}\rrbracket}=0.
\]
Now we deduce that
\[
[(H^{(l)}-H^{(k)}).(M^{(k)})^{T_k\wedge(T_F-)}]=0,
\]
which, since $(H^{(l)}-H^{(k)}).(M^{(k)})^{T_k\wedge(T_F-)}\in \mathcal{M}_{\mathrm{loc}}$, implies
\[
(H^{(l)}-H^{(k)}).(M^{(k)})^{T_k\wedge(T_F-)}=0.
\]
Thus, \eqref{HMn-1} is derived from \eqref{HMn-3}.

Next, we show that $L\in(\mathcal{M}_{\mathrm{loc}})^{i,\mathbb{B}}$ with the inner FCS $(T_n,H^{(n)}.M^{(n)})$.
For $k,\;l\in \mathbf{N}^+$ with $k\leq l$, \eqref{HMn-1} gives
\[
(H^{(l)}.M^{(l)})I_{\mathbb{B}\llbracket{0,T_k}\rrbracket}=(H^{(k)}.M^{(k)})I_{\mathbb{B}\llbracket{0,T_k}\rrbracket},
\]
which, by Theorem \ref{restriction}(4), indicates that $(T_n,H^{(n)}.M^{(n)})$ is a CS for $L$. Since $(T_n,M^{(n)})$ is an inner FCS for $M\in(\mathcal{M}_{\mathrm{loc}})^{i,\mathbb{B}}$, for each $n\in \mathbf{N}^+$, it follows from $H^{(n)}\in\mathcal{L}_m(M^{(n)})$ that
$H^{(n)}\in\mathcal{L}_m((M^{(n)})^{T_n\wedge(T_F-)})$ and
\[
(H^{(n)}.M^{(n)})^{T_n\wedge(T_F-)}
=H^{(n)}.(M^{(n)})^{T_n\wedge(T_F-)}\in\mathcal{M}_{\mathrm{loc}}, \quad n\in \mathbf{N}^+.
\]
Consequently, it holds that $(T_n,H^{(n)}.M^{(n)})$ is an inner FCS for $L\in(\mathcal{M}_{\mathrm{loc}})^{i,\mathbb{B}}$.

Finally, we prove that $L=H_{\bullet}M$. Let $N\in (\mathcal{M}_{\mathrm{loc}})^\mathbb{B}$ with the FCS $(T_n, N^{(n)})$. According to Theorem \ref{HA-FCS} and Theorem \ref{[M]-property}(1), $(T_n,[H^{(n)}.{M}^{(n)},N^{(n)}])$ and $(T_n,H^{(n)}.[M^{(n)},N^{(n)}])$ are FCSs for $[L,N]\in \mathcal{V}^\mathbb{B}$ and $H_{\bullet}[M,N]\in \mathcal{V}^\mathbb{B}$, respectively. Therefore, for each $n\in \mathbf{N}^+$, we have
\begin{align*}
[L,N]I_{\mathbb{B}\llbracket{0,T_n}\rrbracket}
=[H^{(n)}.M^{(n)},N^{(n)}]I_{\mathbb{B}\llbracket{0,T_n}\rrbracket}
=(H^{(n)}.[M^{(n)},N^{(n)}])I_{\mathbb{B}\llbracket{0,T_n}\rrbracket}
=(H_{\bullet}[M,N])I_{\mathbb{B}\llbracket{0,T_n}\rrbracket},
\end{align*}
which, by Theorem \ref{process}(1), implies $[L,N]=H_{\bullet}[M,N]$. Hence, we establish the equality $L=H_{\bullet}M$, and finish the proof.
$\hfill\blacksquare$

%%%%%%%%%%%%%%%%%%%%%%%%%%%%%%%%%%%%%%%%%%%%%%%%%%%%%%%%%%%%%%%%%%%%%%%%%%%%%%%%%%%%%%%%%%%%%%%%%%%%%%%
\par\vspace{0.3cm}
%%%%%%%%%%%%%%%%%%%%%%%%%%%%%%%%%%%%%%%%%%%%%%%%%%%%%%%%%%%%%%%%%%%%%%%%%%%%%%%%%%%%%%%%%%%%%%%%%%%%%%%
\noindent {\bf Proof of Corollary \ref{bound-HM}.}
Let $(T_n,M^{(n)})$ denote an inner FCS for $M\in(\mathcal{M}_{\mathrm{loc}})^{i,\mathbb{B}}$. Suppose that $\widetilde{H}$ is a coupled locally bounded predictable process for $H$, and that $(T_n,H^{(n)})$ is an FCS for $H$ (a $\mathbb{B}$-locally bounded predictable process). For each $n\in \mathbf{N}^+$, it holds that $\widetilde{H}\in\mathcal{L}_m(M^{(n)})$ and $H^{(n)}\in\mathcal{L}_m(M^{(n)})$ (see, e.g., Theorem 9.2 in \cite{He}). Then the statements are inferred from Theorems \ref{HM=} and \ref{eq-HM}.
$\hfill\blacksquare$

%%%%%%%%%%%%%%%%%%%%%%%%%%%%%%%%%%%%%%%%%%%%%%%%%%%%%%%%%%%%%%%%%%%%%%%%%%%%%%%%%%%%%%%%%%%%%%%%%%%%%%%
\par\vspace{0.3cm}
%%%%%%%%%%%%%%%%%%%%%%%%%%%%%%%%%%%%%%%%%%%%%%%%%%%%%%%%%%%%%%%%%%%%%%%%%%%%%%%%%%%%%%%%%%%%%%%%%%%%%%%
\noindent {\bf Proof of Corollary \ref{HMc=}.}
The implications $(i)\Rightarrow (ii)\Rightarrow (iii)$ follow straightforwardly from Theorem \ref{HM=} and the fact $(\mathcal{M}_{\mathrm{loc}})^\mathbb{C}=(\mathcal{M}_{\mathrm{loc}})^{i,\mathbb{C}}$; furthermore, the implication
$(iv)\Rightarrow (i)$ is a direct result of Theorem \ref{HM=} and Corollary \ref{fcs-p}(2). Consequently, it remains to demonstrate the validity of the implication $(iii)\Rightarrow (iv)$. Suppose that $(iii)$ holds. Let $(S_n)$ be an FS for $\mathbb{C}$, and define $(\tau_n)=(S_n\wedge T_n)$. According to Corollary \ref{process-FS}(2), $(\tau_n)$ is an FS for $\mathbb{C}$ satisfying
\[
H^{\tau_n}=(H^{(n)})^{\tau_n},\quad M^{\tau_n}=(M^{(n)})^{\tau_n},\quad n\in \mathbf{N}^+.
\]
Subsequently, for each $n\in \mathbf{N}^+$, by invoking the equality
\[
(H^{(n)}.M^{(n)})^{\tau_n}=(H^{(n)})^{\tau_n}.(M^{(n)})^{\tau_n},
\]
we establish that $H^{\tau_n}\in\mathcal{L}_m(M^{\tau_n})$.
This derivation confirms the validity of statement $(iv)$.
$\hfill\blacksquare$

%%%%%%%%%%%%%%%%%%%%%%%%%%%%%%%%%%%%%%%%%%%%%%%%%%%%%%%%%%%%%%%%%%%%%%%%%%%%%%%%%%%%%%%%%%%%%%%%%%%%%%%
\par\vspace{0.3cm}
%%%%%%%%%%%%%%%%%%%%%%%%%%%%%%%%%%%%%%%%%%%%%%%%%%%%%%%%%%%%%%%%%%%%%%%%%%%%%%%%%%%%%%%%%%%%%%%%%%%%%%%
\noindent {\bf Proof of Corollary \ref{eq-HMc}.}
$(1)$ Let $(\tau_n)$ be an FS for $\mathbb{C}$. By virtue of Corollary \ref{fcs-p}(2), $(\tau_n, H^{\tau_n})$ is an FCS for $H\in\mathcal{P}^\mathbb{C}$, and $(\tau_n, M^{\tau_n})$ forms an inner FCS for $M\in(\mathcal{M}_{\mathrm{loc}})^{i,\mathbb{C}}=(\mathcal{M}_{\mathrm{loc}})^\mathbb{C}$.
Subsequently, it can be established that for each $n\in \mathbf{N}^+$,
\[
\sqrt{(H^{\tau_n})^2.[M^{\tau_n}]}=\sqrt{(H^2)^{\tau_n}.[M]^{\tau_n}}
=\left(\sqrt{{H^2}_{\bullet}[M]}\right)^{\tau_n}\in\mathcal{A}^+_{\mathrm{loc}},
\]
where the first equality is derived from \eqref{MN}, the second equality stems from Corollary \ref{HAp-equivalent}, and the last relation is a consequence of statement (M4) and Corollary \ref{fcs-p}(2). By invoking Theorem 9.2 of \cite{He}, we deduce that $H^{\tau_n}\in\mathcal{L}_m(M^{\tau_n})$ for each $n\in \mathbf{N}^+$.
As a result, the assertion can be easily verified by appealing to Theorem \ref{eq-HM}.

$(2)$ Let $(\tau_n)$ be an FS for $\mathbb{C}$, and set $\tau_0=0$. Define $(S_n)=(\tau_n\wedge T_n)$.
According to Corollary \ref{process-FS}(2), it follows that $(S_n)$ is also an FS for $\mathbb{C}$, fulfilling the conditions:
\[
H^{S_n}=(H^{(n)})^{S_n},\quad M^{S_n}=(M^{(n)})^{S_n},\quad n\in \mathbf{N}^+.
\]
Based on part (1), we can conclude that $(S_n,H^{S_n}.M^{S_n})$ constitutes an FCS for $H_{\bullet}M\in(\mathcal{M}_{\mathrm{loc}})^\mathbb{C}$.

For every $n\in \mathbf{N}^+$ and $i\in \mathbf{N}^+$, the FCS $(T_n,H^{(n)})$ for $H\in\mathcal{P}^\mathbb{C}$ gives rise to the following equality:
\begin{align*}
H^{(n)}I_{\llbracket{0,T_n}\rrbracket}I_{\llbracket{0,\tau_i}\rrbracket}
=HI_{\llbracket{0,T_n}\rrbracket}I_{\llbracket{0,\tau_i}\rrbracket},
\end{align*}
which entails that $(H^{(n)})^{T_n\wedge \tau_i}=H^{T_n\wedge \tau_i}$. Analogously, it holds that $(M^{(n)})^{T_n\wedge \tau_i}=M^{T_n\wedge \tau_i}$ for every $n\in \mathbb{N}^+$ and $i\in \mathbb{N}^+$.
Utilizing \eqref{HM-expression0} and the fact
\[
\mathbb{C}=\bigcup\limits_{i=1}^{{+\infty}}\llbracket{0,\tau_i}\rrbracket = \llbracket{0}\rrbracket\cup \left(\bigcup\limits_{i=1}^{{+\infty}}\rrbracket{\tau_{i-1},\tau_i}\rrbracket\right), \]
we can infer that, for each $n\in\mathbf{N}^+$,
\begin{align*}
&(H^{(n)}.M^{(n)})I_{\mathbb{C}\llbracket{0,T_n}\rrbracket}\\
=&H_0M_0I_{\llbracket{0}\rrbracket}+\sum_{i=1}^{\infty}(H^{(n)}.M^{(n)})I_{\llbracket{0,T_n}\rrbracket}I_{\rrbracket{\tau_{i-1},\tau_i}\rrbracket}\\
=&H_0M_0I_{\llbracket{0}\rrbracket}+\sum_{i=1}^{\infty}\left((H^{(n)})^{T_n\wedge \tau_i}.(M^{(n)})^{T_n\wedge \tau_i}\right)I_{\llbracket{0,T_n}\rrbracket}I_{\rrbracket{\tau_{i-1},\tau_i}\rrbracket}\\
=&H_0M_0I_{\llbracket{0}\rrbracket}+\sum_{i=1}^{\infty}\left(H^{T_n\wedge \tau_i}.M^{T_n\wedge \tau_i}\right)I_{\llbracket{0,T_n}\rrbracket}I_{\rrbracket{\tau_{i-1},,\tau_i}\rrbracket}\\
=&\left((H_0M_0)I_{\llbracket{0}\rrbracket}+\sum\limits_{i=1}^{{+\infty}}
      (H^{\tau_i}.M^{\tau_i})I_{\rrbracket{\tau_{i-1},\tau_i}
      \rrbracket}\right)I_{\mathbb{C}\llbracket{0,T_n}\rrbracket}\\
=&(H_{\bullet}M)I_{\mathbb{C}\llbracket{0,T_n}\rrbracket}.
\end{align*}
Consequently, $(T_n,H^{(n)}.M^{(n)})$ is an FCS for $H_{\bullet}M\in(\mathcal{M}_{\mathrm{loc}})^\mathbb{C}$, and the expression
\eqref{HMc-expression-2} can be straightforwardly derived from \eqref{x-expression}.
$\hfill\blacksquare$

%%%%%%%%%%%%%%%%%%%%%%%%%%%%%%%%%%%%%%%%%%%%%%%%%%%%%%%%%%%%%%%%%%%%%%%%%%%%%%%%%%%%%%%%%%%%%%%%%%%%%%%
\par\vspace{0.3cm}
%%%%%%%%%%%%%%%%%%%%%%%%%%%%%%%%%%%%%%%%%%%%%%%%%%%%%%%%%%%%%%%%%%%%%%%%%%%%%%%%%%%%%%%%%%%%%%%%%%%%%%%
\noindent {\bf Proof of Theorem \ref{HM-property}.}
Suppose that $(T_n,H^{(n)})$ is an FCS for $H\in\mathcal{P}^\mathbb{B}$, and that $(T_n,M^{(n)})$ is an inner FCS for $M\in(\mathcal{M}_{\mathrm{loc}})^{i,\mathbb{B}}$ such that $H^{(n)}\in\mathcal{L}_m(M^{(n)})$ for each $n\in \mathbf{N}^+$.

$(1)$ According to Theorem \ref{Lem-M}(2), it is established that $(T_n,(M^{(n)})^c)$ is an (inner) FCS for $M^c\in(\mathcal{M}^c_{\mathrm{loc},0})^{i,\mathbb{B}}=(\mathcal{M}^c_{\mathrm{loc},0})^\mathbb{B}$. For each $n\in \mathbf{N}^+$,  it can be directly inferred that
\[
H^{(n)}.(M^{(n)})^c=(H^{(n)}.M^{(n)})^c\in\mathcal{M}^c_{\mathrm{loc},0}.
\]
Subsequently, by appealing to Theorems \ref{HM=} and \ref{eq-HM}, we confirm that $H\in\mathcal{L}_m^\mathbb{B}(M^c)$ and $H_{\bullet}M^c\in(\mathcal{M}^c_{\mathrm{loc},0})^\mathbb{B}$. Furthermore, the equality $(H_{\bullet}M)^c=H_{\bullet}M^c$  is established through the following derivation:
\begin{equation*}
(H_{\bullet}M^c)I_{\mathbb{B}\llbracket{0,T_n}\rrbracket}
=(H^{(n)}.(M^{(n)})^c)I_{\mathbb{B}\llbracket{0,T_n}\rrbracket}
=(H^{(n)}.M^{(n)})^cI_{\mathbb{B}\llbracket{0,T_n}\rrbracket}
=(H_{\bullet}M)^cI_{\mathbb{B}\llbracket{0,T_n}\rrbracket},\quad n\in \mathbf{N}^+.
\end{equation*}

$(2)$ By leveraging Theorem \ref{MC}(4), the proof proceeds in a manner analogous to the demonstration of part (1).

$(3)$ Based on the results established in parts (1) and (2), we have demonstrated the inclusion $\mathcal{L}^\mathbb{B}_m(M)\subseteq\mathcal{L}_m^\mathbb{B}(M^c)\bigcap\mathcal{L}_m^\mathbb{B}(M^d)$. Conversely, the inclusion $\mathcal{L}_m^\mathbb{B}(M^c)\bigcap\mathcal{L}_m^\mathbb{B}(M^d)\subseteq\mathcal{L}^\mathbb{B}_m(M)$ can be straightforwardly inferred from \eqref{+HM}. Hence, the proof is now complete.
$\hfill\blacksquare$

%%%%%%%%%%%%%%%%%%%%%%%%%%%%%%%%%%%%%%%%%%%%%%%%%%%%%%%%%%%%%%%%%%%%%%%%%%%%%%%%%%%%%%%%%%%%%%%%%%%%%%%
\par\vspace{0.3cm}
%%%%%%%%%%%%%%%%%%%%%%%%%%%%%%%%%%%%%%%%%%%%%%%%%%%%%%%%%%%%%%%%%%%%%%%%%%%%%%%%%%%%%%%%%%%%%%%%%%%%%%%
\noindent {\bf Proof of Lemma \ref{XMA}.}
{\it Sufficiency}. Suppose that $X$ admits the decomposition as specified in \eqref{eq-XMA}. Let $(T_n, M^{(n)})$ and $(T_n, A^{(n)})$ be FCSs for $M\in (\mathcal{M}_{\mathrm{loc}})^\mathbb{B}$ and $A\in (\mathcal{V}_0)^\mathbb{B}$, respectively. For each $n\in \mathbf{N}^+$, by defining $X^{(n)}=M^{(n)}+A^{(n)}$, it is evident that $X^{(n)}\in \mathcal{S}$. Subsequently, we derive
\[
X^{(n)}I_{\mathbb{B}\llbracket{0,T_n}\rrbracket}
=M^{(n)}I_{\mathbb{B}\llbracket{0,T_n}\rrbracket}+A^{(n)}I_{\mathbb{B}\llbracket{0,T_n}\rrbracket}
=(M+A)I_{\mathbb{B}\llbracket{0,T_n}\rrbracket}
=XI_{\mathbb{B}\llbracket{0,T_n}\rrbracket},\quad n\in \mathbf{N}^+.
\]
This derivation consequently verifies that $X\in \mathcal{S}^\mathbb{B}$ with the FCS $(T_n, X^{(n)})$.

{\it Necessity}. Suppose $X\in \mathcal{S}^\mathbb{B}$.
We initially demonstrate the validity of \eqref{eq-XMA} for the specific case where $\mathbb{B}=\mathbb{C}$, i.e., $\mathbb{B}$ is a predictable set of interval type.
Let $(\tau_n)$ denote an FS for $\mathbb{C}$. According to Corollary \ref{fcs-p}(2), it follows that $(\tau_n,X^{\tau_n})$ is an FCS for $X\in \mathcal{S}^\mathbb{C}$. For each $n\in \mathbf{N}^+$, assume that $X^{\tau_n}=M^{(n)}+A^{(n)}$ is a decomposition of $X^{\tau_n}\in \mathcal{S}$, where $M^{(n)}\in \mathcal{M}_{\mathrm{loc}}$ and $A^{(n)}\in \mathcal{V}_0$.
Define the following processes: $\widetilde{M}^{(1)}=(M^{(1)})^{\tau_1}, \widetilde{A}^{(1)}=(A^{(1)})^{\tau_1}$, and for $n\in \mathbf{N}^+$,
\begin{equation}\label{MMAA}
\left\{
\begin{aligned}
\widetilde{M}^{(n+1)}&=\widetilde{M}^{(n)}+(M^{(n+1)})^{\tau_{n+1}}
-(M^{(n+1)})^{\tau_{n}},\\
\widetilde{A}^{(n+1)}&=\widetilde{A}^{(n)}+(A^{(n+1)})^{\tau_{n+1}}
-(A^{(n+1)})^{\tau_{n}}.
\end{aligned}
\right.
\end{equation}
For any $n,k\in \mathbf{N}^+$ with $n\leq k$, through induction, we deduce that   $X^{\tau_n}=\widetilde{M}^{(n)}+\widetilde{A}^{(n)}$ ($\widetilde{M}^{(n)}\in\mathcal{M}_{\mathrm{loc}}$ and $\widetilde{A}^{(n)}\in\mathcal{V}_0$) and
\begin{equation}\label{tm2}
(\widetilde{M}^{(k)})^{\tau_n}=(\widetilde{M}^{(n)})^{\tau_n},\quad (\widetilde{A}^{(k)})^{\tau_n}=(\widetilde{A}^{(n)})^{\tau_n}.
\end{equation}
Utilizing Theorem \ref{restriction}(4) and (\ref{tm2}), we establish that $(\tau_n,\widetilde{M}^{(n)})$ and $(\tau_n,\widetilde{A}^{(n)})$ are FCSs for $M\in (\mathcal{M}_{\mathrm{loc}})^\mathbb{C}$ and $A\in (\mathcal{V}_0)^\mathbb{C}$ respectively, where $M$ and $A$ are defined as
\begin{equation}\label{MA-p}
\left\{
\begin{aligned}
 M&=\left(X_0I_{\llbracket{0}\rrbracket}+\sum\limits_{n=1}^{{+\infty}}\widetilde{M}^{(n)}I_{\rrbracket{\tau_{n-1},\tau_n}
      \rrbracket}\right)\mathfrak{I}_\mathbb{C},\\
 A&=\left(\sum\limits_{n=1}^{{+\infty}}\widetilde{A}^{(n)}I_{\rrbracket{\tau_{n-1},\tau_n}
      \rrbracket}\right)\mathfrak{I}_\mathbb{C},\quad \tau_0=0.
\end{aligned}
\right.
\end{equation}
Consequently, for each $k\in \mathbf{N}^+$, we derive
\begin{align*}
(M+A)I_{\llbracket{0,\tau_k}\rrbracket}
&=X_0I_{\llbracket{0}\rrbracket}+\sum\limits_{n=1}^{k}(\widetilde{M}^{(n)}+\widetilde{A}^{(n)})I_{\rrbracket{\tau_{n-1},\tau_n}
      \rrbracket}\\
&=X_0I_{\llbracket{0}\rrbracket}+\sum\limits_{n=1}^{k}X^{\tau_n}I_{\rrbracket{\tau_{n-1},\tau_n}
      \rrbracket}\\
&=X_0I_{\llbracket{0}\rrbracket}+\sum\limits_{n=1}^{k}XI_{\rrbracket{\tau_{n-1},\tau_n}
      \rrbracket}\\
&=XI_{\llbracket{0,\tau_k}\rrbracket},
\end{align*}
which, by Corollary \ref{process-FS}(1), yields $X=M+A$ with $M\in (\mathcal{M}_{\mathrm{loc}})^\mathbb{C}$ and $A\in (\mathcal{V}_0)^\mathbb{C}$.

Subsequently, we proceed to prove the decomposition outlined in \eqref{eq-XMA} under the assumption that $\mathbb{B}$ constitutes an optional set of interval type.
Let $(T_n,X^{(n)},\widetilde{X})$ represent a continuation for $X\in \mathcal{S}^\mathbb{B}$, with $\mathbb{C}$ defined as
$\mathbb{C}=\bigcup\limits_{n}\llbracket{0,T_n}\rrbracket$. Consequently, $\widetilde{X}\in \mathcal{S}^\mathbb{C}$ admits a decomposition given by
\[
 \widetilde{X}=\widetilde{M}+\widetilde{A}, \quad \widetilde{M}\in (\mathcal{M}_{\mathrm{loc}})^\mathbb{C},\quad \widetilde{A}\in (\mathcal{V}_0)^\mathbb{C}.
\]
By defining $M=\widetilde{M}\mathfrak{I}_\mathbb{B}$ and $A=\widetilde{A}\mathfrak{I}_\mathbb{B}$, it follows that $M\in (\mathcal{M}_{\mathrm{loc}})^\mathbb{B}$ and $A\in (\mathcal{V}_0)^\mathbb{B}$ satisfying the conditions
\[
X=\widetilde{X}\mathfrak{I}_\mathbb{B}=(\widetilde{M}+\widetilde{A})\mathfrak{I}_\mathbb{B}
=M+A,
\]
which ultimately establishes the validity of \eqref{eq-XMA}.
$\hfill\blacksquare$

%%%%%%%%%%%%%%%%%%%%%%%%%%%%%%%%%%%%%%%%%%%%%%%%%%%%%%%%%%%%%%%%%%%%%%%%%%%%%%%%%%%%%%%%%%%%%%%%%%%%%%%
\par\vspace{0.3cm}
%%%%%%%%%%%%%%%%%%%%%%%%%%%%%%%%%%%%%%%%%%%%%%%%%%%%%%%%%%%%%%%%%%%%%%%%%%%%%%%%%%%%%%%%%%%%%%%%%%%%%%%
\noindent {\bf Proof of Lemma \ref{uXc}.}
In order to establish the proof, we commence by introducing a pivotal lemma.
\begin{lemma}\label{LM-A4}
If $M\in (\mathcal{M}_{\mathrm{loc}})^\mathbb{B}\cap\mathcal{V}^\mathbb{B}$, then $M\in(\mathcal{W}_{\mathrm{loc}})^\mathbb{B}$.
\end{lemma}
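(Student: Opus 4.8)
The plan is to reduce the $\mathbb{B}$-statement to the classical fact that every local martingale with finite variation is a locally integrable variation martingale (i.e. $\mathcal{M}_{\mathrm{loc}}\cap\mathcal{V}\subseteq\mathcal{W}_{\mathrm{loc}}$; see, e.g., the theory in \cite{He}), transported through a fundamental coupled sequence. The key obstacle, exactly as in the proof of Lemma \ref{LM-A2}, is that an arbitrary FCS $(T_n,M^{(n)})$ for $M\in(\mathcal{M}_{\mathrm{loc}})^\mathbb{B}$ carries information beyond $\mathbb{B}$, so the processes $M^{(n)}$ themselves need not lie in $\mathcal{W}_{\mathrm{loc}}$; one must first truncate at $T_n\wedge(T_F-)$ to obtain honest processes whose behaviour is dictated by $\mathbb{B}$.

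First I would write $\mathbb{B}$ in the form \eqref{B} and take a single FCS $(T_n,M^{(n)})$ for $M\in(\mathcal{M}_{\mathrm{loc}})^\mathbb{B}$ and an FCS $(T_n,N^{(n)})$ for $M\in\mathcal{V}^\mathbb{B}$ (after intersecting stopping times via Theorem \ref{process}(3), I may assume both use the same sequence $(T_n)$). As in \eqref{PQ-1} these give
\[
M^{(n)}I_{\mathbb{B}\llbracket{0,T_n}\rrbracket}=MI_{\mathbb{B}\llbracket{0,T_n}\rrbracket}=N^{(n)}I_{\mathbb{B}\llbracket{0,T_n}\rrbracket},\quad n\in\mathbf{N}^+,
\]
which is equivalent to $(M^{(n)})^{T_n\wedge(T_F-)}=(N^{(n)})^{T_n\wedge(T_F-)}$. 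Setting $P^{(n)}=(M^{(n)})^{T_n\wedge(T_F-)}$, the left-hand representative forces $P^{(n)}\in\mathcal{M}_{\mathrm{loc}}$ (as $\mathcal{M}_{\mathrm{loc}}$ is stable under stopping), while the right-hand representative forces $P^{(n)}\in\mathcal{V}$. Hence $P^{(n)}\in\mathcal{M}_{\mathrm{loc}}\cap\mathcal{V}$, and the classical result yields $P^{(n)}\in\mathcal{W}_{\mathrm{loc}}$ for every $n$.

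Finally I would check that $(T_n,P^{(n)})$ is a CS for $M$: since $P^{(n)}I_{\mathbb{B}\llbracket{0,T_n}\rrbracket}=(M^{(n)})^{T_n\wedge(T_F-)}I_{\mathbb{B}\llbracket{0,T_n}\rrbracket}=M^{(n)}I_{\mathbb{B}\llbracket{0,T_n}\rrbracket}=MI_{\mathbb{B}\llbracket{0,T_n}\rrbracket}$, the sequence $(T_n,P^{(n)})$ is a coupled sequence for $M$ with each $P^{(n)}\in\mathcal{W}_{\mathrm{loc}}$. By the definition of $\mathbb{B}$-processes this is precisely the statement that $M\in(\mathcal{W}_{\mathrm{loc}})^\mathbb{B}$, which completes the proof. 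I expect the only subtle point to be the truncation at $T_F-$ and the verification that the truncated process simultaneously inherits the local-martingale property and the finite-variation property from the two different representatives; once that is in place, invoking $\mathcal{M}_{\mathrm{loc}}\cap\mathcal{V}\subseteq\mathcal{W}_{\mathrm{loc}}$ termwise and repackaging via the coupled sequence is routine.
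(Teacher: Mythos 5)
Your overall strategy (reduce to the classical fact $\mathcal{M}_{\mathrm{loc}}\cap\mathcal{V}\subseteq\mathcal{W}_{\mathrm{loc}}$ via a common FCS and the identity \eqref{PQ-1}) is the same as the paper's, but your pivotal step fails. You set $P^{(n)}=(M^{(n)})^{T_n\wedge(T_F-)}$ and claim $P^{(n)}\in\mathcal{M}_{\mathrm{loc}}$ ``as $\mathcal{M}_{\mathrm{loc}}$ is stable under stopping.'' Truncation at $T_F-$ is not stopping at a stopping time: $X^{T_F-}=XI_{\llbracket{0,T_F}\llbracket}+X_{T_F-}I_{\llbracket{T_F,+\infty}\llbracket}$ freezes the path at its left limit and deletes the jump at $T_F$, and local martingales are \emph{not} closed under this operation. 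The paper's own example in Section 1.1 is a counterexample: for $\mathbb{B}^*=\llbracket{0,T}\llbracket$ with $T$ unit-exponential and $M^{(n)}=A^p-A\in\mathcal{M}_{\mathrm{loc}}$, one computes $(M^{(n)})^{T-}=A^p$, i.e.\ the increasing process $t\mapsto T\wedge t$, which is a nonnegative, nonzero process with null initial value and hence cannot be a local martingale. Indeed, if your claim were true, every $\mathbb{B}$-local martingale would automatically satisfy \eqref{inner-eq}, i.e.\ would be a $\mathbb{B}$-inner local martingale, contradicting $\widehat{M}\notin(\mathcal{M}_{\mathrm{loc}})^{i,\mathbb{B}^*}$ — this failure is precisely the phenomenon the whole paper is organized around. (In Lemma \ref{LM-A2} the analogous truncation is harmless only because there $M^{(k)}$ is continuous, so $(M^{(k)})^{T_k}=(M^{(k)})^{T_k\wedge(T_F-)}$; you cannot import that identification here.)

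The repair — and this is exactly what the paper does — is to transfer the finite-variation property onto the honestly stopped process rather than transferring the martingale property onto the pre-stopped one. Keep $(M^{(k)})^{T_k}\in\mathcal{M}_{\mathrm{loc}}$ (genuine stopping) and write
\[
(M^{(k)})^{T_k}=(N^{(k)})^{T_k\wedge(T_F-)}+\Delta(M^{(k)})^{T_k}_{T_F}I_{\llbracket{T_F,+\infty}\llbracket}.
\]
The first summand lies in $\mathcal{V}$ (pre-stopping \emph{does} preserve finite variation, and it equals the pre-stopped $N^{(k)}$ by \eqref{PQ-1}), and the second is a single-jump adapted process, hence also in $\mathcal{V}$. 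Therefore $(M^{(k)})^{T_k}\in\mathcal{M}_{\mathrm{loc}}\cap\mathcal{V}\subseteq\mathcal{W}_{\mathrm{loc}}$ by Theorem 7.19 in \cite{He}, and $(T_n,(M^{(n)})^{T_n})$ is an FCS for $M\in(\mathcal{W}_{\mathrm{loc}})^\mathbb{B}$, since $(M^{(n)})^{T_n}$ agrees with $M^{(n)}$, hence with $M$, on $\mathbb{B}\llbracket{0,T_n}\rrbracket$.
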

\begin{proof}
Suppose that $(T_n,M^{(n)})$ and $(T_n,N^{(n)})$ are FCSs for $M\in (\mathcal{M}_{\mathrm{loc}})^\mathbb{B}$ and $M\in\mathcal{V}^\mathbb{B}$, respectively. Fix $k\in \mathbf{N}^+$. \eqref{PQ-1} yields
\[
(M^{(k)})^{T_k\wedge (T_F-)}=(N^{(k)})^{T_k\wedge (T_F-)}.
\]
By utilizing the fact
\[
(M^{(k)})^{T_k}=(M^{(k)})^{T_k\wedge (T_F-)}+\Delta(M^{(k)})^{T_k}_{T_F}I_{\llbracket{T_F,+\infty}\llbracket}
=(N^{(k)})^{T_k\wedge (T_F-)}+\Delta(M^{(k)})^{T_k}_{T_F}I_{\llbracket{T_F,+\infty}\llbracket},
\]
we can establish that $(M^{(k)})^{T_k}\in\mathcal{V}$. Then it follows that $(M^{(k)})^{T_k}\in\mathcal{M}_{\mathrm{loc}}\cap\mathcal{V}$, which, by Theorem 7.19 in \cite{He}, implies that $(M^{(k)})^{T_k}\in\mathcal{W}_{\mathrm{loc}}$. Hence, $M\in(\mathcal{W}_{\mathrm{loc}})^\mathbb{B}$ with the FCS $(T_n,(M^{(n)})^{T_n})$.
\end{proof}

Now we are ready to prove the result.
Given two decompositions of $X$ resulting in the equality $M^c+M^d+A=N^c+N^d+V$, we can deduce that $A-V=N^c+N^d-M^c-M^d$, which subsequently implies
\[
A-V\in (\mathcal{M}_{\mathrm{loc}})^\mathbb{B}\cap(\mathcal{V}_0)^\mathbb{B}.
\]
By invoking Lemma \ref{LM-A4}, it is established that $A-V\in(\mathcal{W}_{\mathrm{loc},0})^\mathbb{B}$. Given $\mathcal{W}_{\mathrm{loc},0}\subseteq\mathcal{M}^d_{\mathrm{loc}}$, it follows that $A-V\in(\mathcal{M}^d_{\mathrm{loc}})^\mathbb{B}$. Furthermore, since
$N^c-M^c=(M^d-N^d)+(A-V)$, it becomes evident that
\[
N^c-M^c\in(\mathcal{M}^c_{\mathrm{loc},0})^\mathbb{B}\cap(\mathcal{M}^d_{\mathrm{loc}})^\mathbb{B}.
\]
Hence, by applying Lemma \ref{LM-A2}, we conclude that $N^c=M^c$, thereby completing the proof.
$\hfill\blacksquare$

%%%%%%%%%%%%%%%%%%%%%%%%%%%%%%%%%%%%%%%%%%%%%%%%%%%%%%%%%%%%%%%%%%%%%%%%%%%%%%%%%%%%%%%%%%%%%%%%%%%%%%%
\par\vspace{0.3cm}
%%%%%%%%%%%%%%%%%%%%%%%%%%%%%%%%%%%%%%%%%%%%%%%%%%%%%%%%%%%%%%%%%%%%%%%%%%%%%%%%%%%%%%%%%%%%%%%%%%%%%%%
\noindent {\bf Proof of Theorem \ref{XTc}.}
$(1)$
The proof proceeds analogously to that of Theorem \ref{Lem-M}(1). Fix $n\in \mathbf{N}^+$ and denote $\mathbb{B}_n=\mathbb{B}\llbracket{0,T_n}\rrbracket$.  Both $X\mathfrak{I}_{\mathbb{B}_n}$ and $X^{(n)}\mathfrak{I}_{\mathbb{B}_n}$ are $\mathbb{B}_n$-semimartingales. Assume that $X$ admits the decomposition \eqref{decomX}, and that $X^{(n)}$ admits the decomposition
$X^{(n)}=X^{(n)}_0+N^c+N^d+V$, where $N^c\in \mathcal{M}^c_{\mathrm{loc},0}$, $N^d\in \mathcal{M}^d_{\mathrm{loc}}$, and $V\in \mathcal{V}_0$.
Then $X^c=M^c$ and $(X^{(n)})^c=N^c$, and the following relations hold:
\[
\left\{
\begin{aligned}
X\mathfrak{I}_{\mathbb{B}_n}&=(X_0\mathfrak{I}_\mathbb{B}+M^c+M^d+A)\mathfrak{I}_{\mathbb{B}_n}
=X_0\mathfrak{I}_{\mathbb{B}_n}
    +M^c\mathfrak{I}_{\mathbb{B}_n}+M^d\mathfrak{I}_{\mathbb{B}_n}+A\mathfrak{I}_{\mathbb{B}_n},\\
X^{(n)}\mathfrak{I}_{\mathbb{B}_n}&=(X^{(n)}_0+N^c+N^d+V)\mathfrak{I}_{\mathbb{B}_n}=X^{(n)}_0\mathfrak{I}_{\mathbb{B}_n}
+N^c\mathfrak{I}_{\mathbb{B}_n}+N^d\mathfrak{I}_{\mathbb{B}_n}+V\mathfrak{I}_{\mathbb{B}_n},\\
\end{aligned}
\right.
\]
where $M^c\mathfrak{I}_{\mathbb{B}_n},N^c\mathfrak{I}_{\mathbb{B}_n}\in (\mathcal{M}^c_{\mathrm{loc},0})^{\mathbb{B}_n}$; $M^d\mathfrak{I}_{\mathbb{B}_n},N^d\mathfrak{I}_{\mathbb{B}_n}\in (\mathcal{M}^d_{\mathrm{loc}})^{\mathbb{B}_n}$; and $A\mathfrak{I}_{\mathbb{B}_n}, V\mathfrak{I}_{\mathbb{B}_n}\in (\mathcal{V}_0)^{\mathbb{B}_n}$.
Observing that $X\mathfrak{I}_{\mathbb{B}_n}=X^{(n)}\mathfrak{I}_{\mathbb{B}_n}$ and invoking the uniqueness of their continuous martingale parts, we deduce:
\begin{equation}\label{Xc-eq2}
X^c\mathfrak{I}_{\mathbb{B}_n}=M^c\mathfrak{I}_{\mathbb{B}_n}=(X\mathfrak{I}_{\mathbb{B}_n})^c=(X^{(n)}\mathfrak{I}_{\mathbb{B}_n})^c
=N^c\mathfrak{I}_{\mathbb{B}_n}=(X^{(n)})^c\mathfrak{I}_{\mathbb{B}_n}.
\end{equation}
Since (\ref{Xc-eq2}) holds for each $n\in \mathbf{N}^+$, we obtain $X^cI_{\mathbb{B}\llbracket{0,T_n}\rrbracket}=(X^{(n)})^cI_{\mathbb{B}\llbracket{0,T_n}\rrbracket}$, demonstrating that $(T_n,(X^{(n)})^c)$ is a CS for $X^c$. Given that $(X^{(n)})^c\in \mathcal{M}^c_{\mathrm{loc},0}$ for each $n\in \mathbf{N}^+$, the sequence $(T_n,(X^{(n)})^c)$ indeed constitutes an FCS for $X^c\in (\mathcal{M}^c_{\mathrm{loc},0})^\mathbb{B}$.

$(2)$ Let $\eqref{decomX}$ denote a decomposition of $X$. According to Theorem \ref{fcs}, it holds that $X^\tau\in \mathcal{S}$. Then $X^\tau$ possesses a unique continuous martingale part, denoted as $(X^\tau)^c$. Utilizing the decomposition given by \eqref{decomX}, we can deduce that
\[
X^\tau=(X_0\mathfrak{I}_\mathbb{B}+M^c+M^d+A)^{\tau}=X_0+(M^c)^{\tau}+(M^d)^{\tau}+A^{\tau},
\]
which, by Lemma \ref{uXc}, implies that the continuous martingale part of $X^\tau$ is given by $(X^\tau)^c=(M^c)^{\tau}$. Consequently, equation \eqref{Xc-eq} is established by the equality $X^c=M^c$. Regarding equation \eqref{XcB-eq}, the first equality has already been proven by \eqref{Xc-eq}, and it remains to demonstrate the second equality. The continuous martingale part of $X^\tau\mathfrak{I}_\mathbb{B}$ is denoted as $(X^\tau\mathfrak{I}_\mathbb{B})^c$. The relation
\[
 X^\tau\mathfrak{I}_\mathbb{B}=X_0\mathfrak{I}_\mathbb{B}
 +(M^c)^{\tau}\mathfrak{I}_\mathbb{B}+(M^d)^{\tau}\mathfrak{I}_\mathbb{B}+A^{\tau}\mathfrak{I}_\mathbb{B}
\]
reveals that the continuous martingale part of $X^\tau\mathfrak{I}_\mathbb{B}$ can alternatively be expressed as $(M^c)^{\tau}\mathfrak{I}_\mathbb{B}=(X^c)^{\tau}\mathfrak{I}_\mathbb{B}$. Therefore, the second equality of \eqref{XcB-eq} is derived from the uniqueness of the continuous martingale part of $X^\tau\mathfrak{I}_\mathbb{B}$.
$\hfill\blacksquare$

%%%%%%%%%%%%%%%%%%%%%%%%%%%%%%%%%%%%%%%%%%%%%%%%%%%%%%%%%%%%%%%%%%%%%%%%%%%%%%%%%%%%%%%%%%%%%%%%%%%%%%%
\par\vspace{0.3cm}
%%%%%%%%%%%%%%%%%%%%%%%%%%%%%%%%%%%%%%%%%%%%%%%%%%%%%%%%%%%%%%%%%%%%%%%%%%%%%%%%%%%%%%%%%%%%%%%%%%%%%%%
\noindent {\bf Proof of Theorem \ref{[X,Y]-fcs}.}
$(1)$ According to Definition \ref{[X,Y]} and Theorem \ref{property-qr-p}(2), it is demonstrated that
\[
[X,Y]=X_0Y_0\mathfrak{I}_\mathbb{B}+\langle X^c,Y^c\rangle+\Sigma(\Delta X\Delta Y)
=Y_0X_0\mathfrak{I}_\mathbb{B}+\langle Y^c,X^c\rangle+\Sigma(\Delta Y\Delta X)=[Y,X]
\]
and
\begin{align*}
[aX+bY,Z]
&=(aX+bY)_0Z_0\mathfrak{I}_\mathbb{B}+\langle (aX+bY)^c,Z^c\rangle+\Sigma(\Delta (aX+bY)\Delta Z)\\
&=(aX_0+bY_0)Z_0\mathfrak{I}_\mathbb{B}+\langle aX^c+bY^c,Z^c\rangle+\Sigma(a\Delta X+b\Delta Y)\Delta Z)\\
&=a\left(X_0Z_0\mathfrak{I}_\mathbb{B}+\langle X^c,Z^c\rangle+\Sigma(\Delta X\Delta Z)\right)+b\left(Y_0Z_0\mathfrak{I}_\mathbb{B}+\langle Y^c,Z^c\rangle+\Sigma(\Delta Y\Delta Z)\right)\\
&=a[X,Z]+b[Y,Z].
\end{align*}
Consequently, the assertion is established.

$(2)$ By invoking Theorems \ref{delta}(6), \ref{property-qr-p}(1) and \ref{XTc}(1), we deduce that for each $n\in \mathbf{N}^+$, $[X^{(n)},Y^{(n)}]\in \mathcal{V}$ and
\begin{align*}
&[X^{(n)},Y^{(n)}]I_{\mathbb{B}\llbracket{0,T_n}\rrbracket}\\
=&\left(X_0^{(n)}Y_0^{(n)}+\langle(X^{(n)})^c,(Y^{(n)})^c\rangle+\Sigma(\Delta X^{(n)}\Delta Y^{(n)})\right)I_{\mathbb{B}\llbracket{0,T_n}\rrbracket}\\
=&\left(X_0Y_0\mathfrak{I}_\mathbb{B}+\langle X^c,Y^c\rangle+\Sigma(\Delta X\Delta Y)\right)I_{\mathbb{B}\llbracket{0,T_n}\rrbracket}\\
=&[X,Y]I_{\mathbb{B}\llbracket{0,T_n}\rrbracket}
\end{align*}
Consequently, we establish that $[X,Y]\in \mathcal{V}^\mathbb{B}$ with the FCS $(T_n,[X^{(n)},Y^{(n)}])$. Additionally, noting that $[X]=[X,X]$, it follows that $(T_n,[X^{(n)}])$ is a CS for $[X]$. Since Definition 8.2 in \cite{He} yields $[X^{(n)}]\in \mathcal{V}^{+}$ for each $n\in \mathbf{N}^+$, we deduce that $[X]\in (\mathcal{V}^+)^\mathbb{B}$ with the FCS $(T_n,[X^{(n)}])$.

$(3)$ To begin, we establish the proof of \eqref{XY}. Theorems \ref{property-qr-p}(3) and \ref{XTc}(2) show
\begin{equation*}
      \langle (X^{\tau})^c,(Y^{\tau})^c\rangle=\langle (X^c)^{\tau},(Y^c)^{\tau}\rangle=\langle X^c,Y^c\rangle^{\tau},
\end{equation*}
and the relations \eqref{sigmaX} and \eqref{eqXT} yield
\[
\Sigma(\Delta (X^{\tau})\Delta (Y^{\tau}))
=\Sigma((\Delta X\Delta Y)I_{\llbracket{0,\tau}\rrbracket})
=(\Sigma(\Delta X\Delta Y))^\tau.
\]
Consequently, \eqref{XY} can be derived through
\begin{align*}
[X^{\tau},Y^{\tau}]
&=X_0Y_0+\langle (X^{\tau})^c,(Y^{\tau})^c\rangle+\Sigma(\Delta (X^{\tau})\Delta (Y^{\tau}))\\
&=X_0Y_0+\langle X^c,Y^c\rangle^{\tau}+(\Sigma(\Delta X\Delta Y))^\tau\\
&=\left(X_0Y_0\mathfrak{I}_\mathbb{B}+\langle X^c,Y^c\rangle+\Sigma(\Delta X\Delta Y)\right)^\tau\\
&=[X,Y]^{\tau}.
\end{align*}

Proceeding further, we direct our attention to the verification of \eqref{XYB}. Let us assume that $(T_n,X^{(n)})$ and $(T_n,Y^{(n)})$ are FCSs for $X\in\mathcal{S}^\mathbb{B}$ and $Y\in\mathcal{S}^\mathbb{B}$, respectively. According to Theorem \ref{fcs},
it follows that $(T_n,(X^{(n)})^\tau)$ and $(T_n,(Y^{(n)})^\tau)$ are FCSs for $X^\tau\mathfrak{I}_\mathbb{B}\in\mathcal{S}^\mathbb{B}$ and $Y^\tau\mathfrak{I}_\mathbb{B}\in\mathcal{S}^\mathbb{B}$, respectively. By invoking part (2) and Theorem \ref{fcs}, we can deduce that for each $n\in \mathbf{N}^+$,
\begin{align*}
[X^{\tau}\mathfrak{I}_\mathbb{B},Y^{\tau}\mathfrak{I}_\mathbb{B}]I_{\mathbb{B}\llbracket{0,T_n}\rrbracket}
&=[(X^{(n)})^\tau,(Y^{(n)})^\tau]I_{\mathbb{B}\llbracket{0,T_n}\rrbracket}
=[X^{(n)},Y^{(n)}]^\tau I_{\mathbb{B}\llbracket{0,T_n}\rrbracket}\\
&=[X,Y]^\tau I_{\mathbb{B}\llbracket{0,T_n}\rrbracket}
=([X,Y]^\tau \mathfrak{I}_\mathbb{B}) I_{\mathbb{B}\llbracket{0,T_n}\rrbracket},
\end{align*}
which establishes the first equality of \eqref{XYB}.
The second equality of \eqref{XYB} can be straightforwardly derived from \eqref{XY}. To obtain the last equality of \eqref{XYB}, we once again apply part (2) and Theorem \ref{fcs}, yielding:
\begin{align*}
[X^{\tau}\mathfrak{I}_\mathbb{B},Y]I_{\mathbb{B}\llbracket{0,T_n}\rrbracket}
&=[(X^{(n)})^\tau,Y^{(n)}]I_{\mathbb{B}\llbracket{0,T_n}\rrbracket}
=[X^{(n)},Y^{(n)}]^\tau I_{\mathbb{B}\llbracket{0,T_n}\rrbracket}\\
&=[X,Y]^\tau I_{\mathbb{B}\llbracket{0,T_n}\rrbracket}
=([X,Y]^\tau \mathfrak{I}_\mathbb{B}) I_{\mathbb{B}\llbracket{0,T_n}\rrbracket}
\end{align*}
for each $n\in \mathbf{N}^+$. Thus, the validity of \eqref{XYB} is firmly established.
$\hfill\blacksquare$

%%%%%%%%%%%%%%%%%%%%%%%%%%%%%%%%%%%%%%%%%%%%%%%%%%%%%%%%%%%%%%%%%%%%%%%%%%%%%%%%%%%%%%%%%%%%%%%%%%%%%%%
\par\vspace{0.3cm}
%%%%%%%%%%%%%%%%%%%%%%%%%%%%%%%%%%%%%%%%%%%%%%%%%%%%%%%%%%%%%%%%%%%%%%%%%%%%%%%%%%%%%%%%%%%%%%%%%%%%%%%
\noindent {\bf Proof of Lemma \ref{HX-unique}.}
Let $\mathbb{B}$ be given by \eqref{B}.
From statement (M2) and the relation $H\in \mathcal{L}^\mathbb{B}_m(M)\bigcap\mathcal{L}^\mathbb{B}_m(N)$, there exist two coupled predictable processes $\widetilde{H}$ and $\widetilde{K}$ for $H\in \mathcal{P}^\mathbb{B}$, and two inner FCSs $(T_n,M^{(n)})$ and $(T_n,N^{(n)})$ for $M\in (\mathcal{M}_{\mathrm{loc}})^{i,\mathbb{B}}$ and $N\in (\mathcal{M}_{\mathrm{loc}})^{i,\mathbb{B}}$ respectively, such that $\widetilde{H}\in \mathcal{L}_m(M^{(n)})$ and $\widetilde{K}\in \mathcal{L}_m(N^{(n)})$ for each $n\in \mathbf{N}^+$. Furthermore, according to statement (A2) and the relation $H\in \mathcal{L}^\mathbb{B}_s(A)\bigcap\mathcal{L}^\mathbb{B}_s(V)$, there exist two coupled predictable processes $\widehat{H}$ and $\widehat{K}$ for $H\in \mathcal{P}^\mathbb{B}$, and two FCSs $(T_n,A^{(n)})$ and $(T_n,V^{(n)})$ for $A\in (\mathcal{V}_0)^\mathbb{B}$ and $V\in (\mathcal{V}_0)^\mathbb{B}$ respectively, such that $\widehat{H}\in\mathcal{L}_s(A^{(n)})$ and $\widehat{K}\in\mathcal{L}_s(V^{(n)})$ for each $n\in \mathbf{N}^+$.

Given that both $(T_n,M^{(n)},A^{(n)})$ and $(T_n,N^{(n)},V^{(n)})$ constitute decomposed inner FCSs for $X\in \mathcal{S}^{i,\mathbb{B}}$, it is straightforward to observe the following equalities:
\[
(M^{(n)}+A^{(n)})^{T_n\wedge (T_F-)}=(N^{(n)}+V^{(n)})^{T_n\wedge (T_F-)}, \quad n\in \mathbf{N}^+.
\]
We define $L=\widetilde{L}-\widehat{L}$, where
\[
\widetilde{L}=\min\{\widetilde{H}^+,\widehat{H}^+,\widetilde{K}^+,\widehat{K}^+\},\quad
\widehat{L}=\min\{\widetilde{H}^-,\widehat{H}^-,\widetilde{K}^-,\widehat{K}^-\}.
\]
Consequently, $L$ emerges as a coupled predictable process for $H\in \mathcal{P}^\mathbb{B}$, satisfying the constraint
\[
|L|\leq \min\{|\widetilde{H}|,|\widehat{H}|,|\widetilde{K}|,|\widehat{K}|\}.
\]
Then it holds that $L\in \mathcal{L}_m(M^{(n)})\bigcap \mathcal{L}_m(N^{(n)})$ and $L\in \mathcal{L}_s(V^{(n)})\bigcap \mathcal{L}_s(A^{(n)})$ for each $n\in \mathbf{N}^+$.
Now, invoking Theorems \ref{HA-FCS} and \ref{eq-HM}, we deduce that for each $n\in \mathbf{N}^+$,
\begin{align*}
(H_{\bullet}M+H_{\bullet}A)I_{\mathbb{B}\llbracket{0,T_n}\rrbracket}
&=(L.M^{(n)}+L.A^{(n)})I_{\mathbb{B}\llbracket{0,T_n}\rrbracket}\\
&=\left(L.(M^{(n)}+A^{(n)})^{T_n\wedge (T_F-)}\right)I_{\mathbb{B}\llbracket{0,T_n}\rrbracket}\\
&=\left(L.(N^{(n)}+V^{(n)})^{T_n\wedge (T_F-)}\right)I_{\mathbb{B}\llbracket{0,T_n}\rrbracket}\\
&=(L.N^{(n)}+L.V^{(n)})I_{\mathbb{B}\llbracket{0,T_n}\rrbracket}\\
&=(H_{\bullet}N+H_{\bullet}V)I_{\mathbb{B}\llbracket{0,T_n}\rrbracket},
\end{align*}
which, by virtue of Theorem \ref{process}(1), leads to the confirmation of \eqref{HX-unique-1}.
$\hfill\blacksquare$

%%%%%%%%%%%%%%%%%%%%%%%%%%%%%%%%%%%%%%%%%%%%%%%%%%%%%%%%%%%%%%%%%%%%%%%%%%%%%%%%%%%%%%%%%%%%%%%%%%%%%%%
\par\vspace{0.3cm}
%%%%%%%%%%%%%%%%%%%%%%%%%%%%%%%%%%%%%%%%%%%%%%%%%%%%%%%%%%%%%%%%%%%%%%%%%%%%%%%%%%%%%%%%%%%%%%%%%%%%%%%
\noindent {\bf Proof of Theorem \ref{HX=}.}
The implication (X2) $\Rightarrow$ (X3) is straightforward, whereas the implication (X3) $\Rightarrow$ (X1) can be effortlessly established by invoking Theorems \ref{HA-equivalent} and \ref{HM=}. What remains to be demonstrated is the implication (X1) $\Rightarrow$ (X2).

Suppose that $X=M+A$ is an inner decomposition of $X$ such that $H\in \mathcal{L}_m^{\mathbb{B}}(M)\cap \mathcal{L}_s^{\mathbb{B}}(A)$, where $M\in (\mathcal{M}_{\mathrm{loc}})^{i,\mathbb{B}}$ and $A\in (\mathcal{V}_0)^\mathbb{B}$. By invoking Theorem \ref{HM=}, we can ascertain the existence of a coupled predictable process $\widetilde{K}$ for $H\in \mathcal{P}^\mathbb{B}$ and an inner FCS $(\widetilde{T}_n,M^{(n)})$ for $M\in (\mathcal{M}_{\mathrm{loc}})^{i,\mathbb{B}}$, satisfying $\widetilde{K}\in\mathcal{L}_m(M^{(n)})$ for each $n\in \mathbf{N}^+$. Furthermore, from Theorem \ref{HA-equivalent},
there exist a coupled predictable process $\widehat{K}$ for $H\in \mathcal{P}^\mathbb{B}$ and an FCS $(\widehat{T}_n,A^{(n)})$ for $A\in (\mathcal{V}_0)^\mathbb{B}$ such that $\widehat{K}\in\mathcal{L}_s(A^{(n)})$ for each $n\in \mathbf{N}^+$. We then define:
\[
(T_n)=(\widetilde{T}_n\wedge \widehat{T}_n),\quad \widetilde{H}=\min\{\widetilde{K}^+,\widehat{K}^+\}-\min\{\widetilde{K}^-,\widehat{K}^-\}.
\]
It is straightforward to verify that $\widetilde{H}$ is also a coupled predictable process for $H\in \mathcal{P}^\mathbb{B}$, fulfilling the condition $|\widetilde{H}|\leq \min\{|\widetilde{K}|,|\widehat{K}|\}$. According to Theorem \ref{process}(3) and Theorem \ref{MC}(1), $(T_n,M^{(n)})$ serves as an inner FCS for $M\in (\mathcal{M}_{\mathrm{loc}})^{i,\mathbb{B}}$, and $(T_n,A^{(n)})$ as an FCS for $A\in (\mathcal{V}_0)^\mathbb{B}$. Consequently, $(T_n,M^{(n)},A^{(n)})$ forms a decomposed inner FCS for $X\in \mathcal{S}^{i,\mathbb{B}}$.
Lastly, it is established that $\widetilde{H}\in\mathcal{L}_m(M^{(n)})\cap\mathcal{L}_s(A^{(n)})$ for each $n\in \mathbf{N}^+$, thereby confirming the assertion (X2).
$\hfill\blacksquare$

%%%%%%%%%%%%%%%%%%%%%%%%%%%%%%%%%%%%%%%%%%%%%%%%%%%%%%%%%%%%%%%%%%%%%%%%%%%%%%%%%%%%%%%%%%%%%%%%%%%%%%%
\par\vspace{0.3cm}
%%%%%%%%%%%%%%%%%%%%%%%%%%%%%%%%%%%%%%%%%%%%%%%%%%%%%%%%%%%%%%%%%%%%%%%%%%%%%%%%%%%%%%%%%%%%%%%%%%%%%%%
\noindent {\bf Proof of Theorem \ref{eq-HX}.}
By virtue of Theorem \ref{process}(3), it is established that $(\tau_n,K^{(n)})$ constitutes an FCSs for $H\in\mathcal{P}^\mathbb{B}$, while $(\tau_n,N^{(n)},V^{(n)})$ forms a decomposed inner FCS for $X\in \mathcal{S}^{i,\mathbb{B}}$. Leveraging Definition \ref{de-HX} alongside Theorems \ref{HA-FCS} and \ref{eq-HM}, it becomes evident that both $(T_n,H^{(n)}.M^{(n)},H^{(n)}.A^{(n)})$ and $(\tau_n,K^{(n)}.N^{(n)},K^{(n)}.V^{(n)})$ serve as decomposed inner FCSs for $H_{\bullet}X\in\mathcal{S}^{i,\mathbb{B}}$. Therefore, according to \eqref{x-expression}, we can establish the expression given by \eqref{HX-expression-2} and its associated independence property.
$\hfill\blacksquare$

%%%%%%%%%%%%%%%%%%%%%%%%%%%%%%%%%%%%%%%%%%%%%%%%%%%%%%%%%%%%%%%%%%%%%%%%%%%%%%%%%%%%%%%%%%%%%%%%%%%%%%%
\par\vspace{0.3cm}
%%%%%%%%%%%%%%%%%%%%%%%%%%%%%%%%%%%%%%%%%%%%%%%%%%%%%%%%%%%%%%%%%%%%%%%%%%%%%%%%%%%%%%%%%%%%%%%%%%%%%%%
\noindent {\bf Proof of Corollary \ref{bound-HX}.}
Suppose that $\widetilde{H}$ is a coupled locally bounded predictable process for $H$, $(T_n, H^{(n)})$ is an FCS for $H$ (a $\mathbb{B}$-locally bounded predictable process), and $(T_n,M^{(n)},A^{(n)})$ is  a decomposed inner FCS for $X\in\mathcal{S}^{i,\mathbb{B}}$. By invoking Theorem 9.2 in \cite{He}, it becomes evident that $\widetilde{H}\in\mathcal{L}_m(M^{(n)})\cap\mathcal{L}_s(A^{(n)})$ and $H^{(n)}\in\mathcal{L}_m(M^{(n)})\cap\mathcal{L}_s(A^{(n)})$ for each $n\in \mathbf{N}^+$. Subsequently, the assertions can be deduced by applying Theorems \ref{HX=} and \ref{eq-HX}.
$\hfill\blacksquare$

%%%%%%%%%%%%%%%%%%%%%%%%%%%%%%%%%%%%%%%%%%%%%%%%%%%%%%%%%%%%%%%%%%%%%%%%%%%%%%%%%%%%%%%%%%%%%%%%%%%%%%%
\par\vspace{0.3cm}
%%%%%%%%%%%%%%%%%%%%%%%%%%%%%%%%%%%%%%%%%%%%%%%%%%%%%%%%%%%%%%%%%%%%%%%%%%%%%%%%%%%%%%%%%%%%%%%%%%%%%%%
\noindent {\bf Proof of Corollary \ref{HX=c}.}
The implication $(i)\Rightarrow (ii)$ can be straightforwardly inferred by leveraging Theorem \ref{HX=} in conjunction with the established equality $\mathcal{S}^\mathbb{C}=\mathcal{S}^{i,\mathbb{C}}$. Furthermore, the implication $(ii)\Rightarrow (iii)$ is evident and requires no further elaboration. To complete the proof, we proceed to demonstrate the implications $(iii)\Rightarrow (iv)$ and $(iv)\Rightarrow (i)$.

$(iii)\Rightarrow (iv)$. Assuming that $(iii)$ is satisfied, let $(S_n)$ denote an FS for $\mathbb{C}$, and define $(\tau_n)=(T_n\wedge S_n)$. According to Corollary \ref{process-FS}(2), it follows that $(\tau_n)$ is an FS for $\mathbb{C}$ satisfying
\[
H^{\tau_n}=(H^{(n)})^{\tau_n},\quad X^{\tau_n}=(X^{(n)})^{\tau_n},\quad n\in \mathbf{N}^+.
\]
Subsequently, for each $n\in \mathbf{N}^+$, we can deduce that $H^{\tau_n}\in\mathcal{L}(X^{\tau_n})$ by applying the relation
\[
(H^{(n)}.X^{(n)})^{\tau_n}=(H^{(n)})^{\tau_n}.(X^{(n)})^{\tau_n}.
\]
This derivation establishes the validity of the assertion $(iv)$.

$(iv)\Rightarrow (i)$. Assume that condition $(iv)$ holds. For each $n\in \mathbb{N}^+$, we consider a decomposition $X^{\tau_n}=M^{(n)}+A^{(n)}$ of $X^{\tau_n}$ such that $H^{\tau_n}\in\mathcal{L}_m(M^{(n)})\cap\mathcal{L}_s(A^{(n)})$, where $M^{(n)}\in \mathcal{M}_{\mathrm{loc}}$ and $A^{(n)}\in \mathcal{V}_0$.
Consider the decomposition $X=M+A$, as specified in the proof of Lemma \ref{XMA}, where $M$ and $A$ are defined by \eqref{MA-p}. Additionally, it has been established that $(\tau_n,\widetilde{M}^{(n)})$ forms an inner FCS for $M\in (\mathcal{M}_{\mathrm{loc}})^\mathbb{C}$, and $(\tau_n,\widetilde{A}^{(n)})$ serves as an FCS for $A\in (\mathcal{V}_0)^\mathbb{C}$, where, for each $n\in \mathbf{N}^+$, $\widetilde{M}^{(n)}$ and $\widetilde{A}^{(n)}$ are defined by \eqref{MMAA}.

Let $\tau_0=0$. For each $n,k\in \mathbb{N}^+$ with $k\leq n$, the identity
\[
H^{\tau_n}=H^{\tau_k}I_{\llbracket{0,T_k}\rrbracket}+H^{\tau_n}I_{\rrbracket{\tau_k,+\infty}\llbracket}
\]
documents that $H^{\tau_n}\in\mathcal{L}_m((M^{(k)})^{\tau_k})\cap\mathcal{L}_m((M^{(k)})^{\tau_{k-1}})$ and $H^{\tau_n}\in\mathcal{L}_s((A^{(k)})^{\tau_k})\cap\mathcal{L}_s((A^{(k)})^{\tau_{k-1}})$.
Consequently, by applying \eqref{MMAA}, we infer that $H^{\tau_n}\in\mathcal{L}_m(\widetilde{M}^{(n)})\cap\mathcal{L}_s(\widetilde{A}^{(n)})$ for each $n\in \mathbb{N}^+$. Hence, $(\tau_n,H^{\tau_n})$ constitutes an FCS for $H\in\mathcal{P}^\mathbb{C}$, and $(\tau_n,\widetilde{M}^{(n)},\widetilde{A}^{(n)})$ forms a decomposed inner FCS for $X\in \mathcal{S}^{i,\mathbb{C}}$ such that for each $n\in \mathbf{N}^+$, $H^{\tau_n}\in\mathcal{L}_m(\widetilde{M}^{(n)})\cap\mathcal{L}_s(\widetilde{A}^{(n)})$. By virtue of statement (X3), this implies $H\in\mathcal{L}^\mathbb{C}(X)$.
$\hfill\blacksquare$

%%%%%%%%%%%%%%%%%%%%%%%%%%%%%%%%%%%%%%%%%%%%%%%%%%%%%%%%%%%%%%%%%%%%%%%%%%%%%%%%%%%%%%%%%%%%%%%%%%%%%%%
\par\vspace{0.3cm}
%%%%%%%%%%%%%%%%%%%%%%%%%%%%%%%%%%%%%%%%%%%%%%%%%%%%%%%%%%%%%%%%%%%%%%%%%%%%%%%%%%%%%%%%%%%%%%%%%%%%%%%
\noindent {\bf Proof of Corollary \ref{eq-HXc}.}
$(1)$ Based on Definition \ref{de-HX} and the relation $H\in \mathcal{L}^\mathbb{C}(X)$,  there exists a decomposition $X=M+A$ ($M\in (\mathcal{M}_{\mathrm{loc}})^\mathbb{C}$ and $A\in (\mathcal{V}_0)^\mathbb{C}$) such that $H\in \mathcal{L}_m^{\mathbb{C}}(M)\cap\mathcal{L}_s^{\mathbb{C}}(A)$.
Building upon this, Corollary \ref{HAp-equivalent}(2) establishes that $(\tau_n,H^{\tau_n}.A^{\tau_n})$ constitutes an FCS for $H_{\bullet}A\in\mathcal{V}^\mathbb{C}$. Similarly, Corollary \ref{eq-HMc}(1) asserts that $(\tau_n,H^{\tau_n}.M^{\tau_n})$ forms an FCS for $H_{\bullet}M\in(\mathcal{M}_{\mathrm{loc}})^\mathbb{C}$.
Since $X^{\tau_n}=M^{\tau_n}+A^{\tau_n}$ for each $n\in\mathbf{N}^+$, it becomes evident that $(\tau_n,H^{\tau_n}.X^{\tau_n})$ is an FCS for $H_{\bullet}X\in\mathcal{S}^\mathbb{C}$. Finally, the expression \eqref{HXc-expression0} is derived directly from \eqref{x-expression}.

$(2)$ Using part $(1)$, the proof is analogous with that of Corollary \ref{eq-HMc}$(2)$.
$\hfill\blacksquare$

%%%%%%%%%%%%%%%%%%%%%%%%%%%%%%%%%%%%%%%%%%%%%%%%%%%%%%%%%%%%%%%%%%%%%%%%%%%%%%%%%%%%%%%%%%%%%%%%%%%%%%%
\par\vspace{0.3cm}
%%%%%%%%%%%%%%%%%%%%%%%%%%%%%%%%%%%%%%%%%%%%%%%%%%%%%%%%%%%%%%%%%%%%%%%%%%%%%%%%%%%%%%%%%%%%%%%%%%%%%%%
\noindent {\bf Proof of Theorem \ref{HX-p}.}
Let $\mathbb{B}$ be given by \eqref{B}, and fix a constant $\varepsilon>0$. From Theorem \ref{HX=} and the relation $H\in \mathcal{L}^\mathbb{B}(X)$, there exist a coupled predictable process $\widetilde{H}$ for $H\in\mathcal{P}^\mathbb{B}$ and a decomposed inner FCSs $(T_n,\widetilde{M}^{(n)},\widetilde{A}^{(n)})$ for $X\in \mathcal{S}^{i,\mathbb{B}}$, such that $\widetilde{H}\in\mathcal{L}_m(\widetilde{M}^{(n)})\cap\mathcal{L}_s(\widetilde{A}^{(n)})$ for each $n\in \mathbf{N}^+$. Similarly, according to the relation $K\in \mathcal{L}^\mathbb{B}(X)$, there exist a coupled predictable process $\widetilde{K}$ for $K\in\mathcal{P}^\mathbb{B}$ and a decomposed inner FCSs $(T_n,\widehat{M}^{(n)},\widehat{A}^{(n)})$ for $X\in \mathcal{S}^{i,\mathbb{B}}$, such that $\widetilde{K}\in\mathcal{L}_m(\widehat{M}^{(n)})\cap\mathcal{L}_s(\widehat{A}^{(n)})$ for each $n\in \mathbf{N}^+$.

$(1)$ It is straightforward to observe that $aH\in\mathcal{L}^\mathbb{B}(X)$, with the property $(aH)_{\bullet}X=a(H_{\bullet}X)$. Consequently, our focus shifts to establishing the following assertions:
\[
H+K\in\mathcal{L}^\mathbb{B}(X),\quad  (H+K)_{\bullet}X=H_{\bullet}X+K_{\bullet}X.
\]
This demonstration is sufficient to complete our proof.

For each $n\in \mathbf{N}^+$, we define
\begin{align*}
M^{(n)}&=\left(\frac{1}{\widetilde{H}+\widetilde{K}}I_{[|\widetilde{H}+\widetilde{K}|>\varepsilon]}\right).
\left(\widetilde{H}.\widetilde{M}^{(n)}+\widetilde{K}.\widehat{M}^{(n)}\right)
+{I_{[|\widetilde{H}+\widetilde{K}|\leq\varepsilon]}}.\widetilde{M}^{(n)},\\
A^{(n)}&=\left(\frac{1}{\widetilde{H}+\widetilde{K}}I_{[|\widetilde{H}+\widetilde{K}|>\varepsilon]}\right).
\left(\widetilde{H}.\widetilde{A}^{(n)}+\widetilde{K}.\widehat{A}^{(n)}\right)
+{I_{[|\widetilde{H}+\widetilde{K}|\leq\varepsilon]}}.\widetilde{A}^{(n)},
\end{align*}
where all integrals involved are well-defined. For any $k,n\in \mathbf{N}^+$ with $n\leq k$, leveraging the properties that $(\widetilde{M}^{(n)})^{T_n\wedge (T_F-)}\in\mathcal{M}_{\mathrm{loc}}$ and  $(\widehat{M}^{(n)})^{T_n\wedge (T_F-)}\in\mathcal{M}_{\mathrm{loc}}$, it can be inferred that
\[
M^{(n)}I_{\mathbb{B}\llbracket{0,T_n}\rrbracket}=M^{(k)}I_{\mathbb{B}\llbracket{0,T_n}\rrbracket},\quad
(M^{(n)})^{T_n\wedge (T_F-)}\in\mathcal{M}_{\mathrm{loc}},
\]
Consequently, by Theorem \ref{restriction}(4), we establish that $(T_n,M^{(n)})$ is an inner FCS for a $\mathbb{B}$-process $M\in (\mathcal{M}_{\mathrm{loc}})^{i,\mathbb{B}}$, satisfying that $\widetilde{H}+\widetilde{K}\in\mathcal{L}_m(M^{(n)})$ for each $n\in \mathbf{N}^+$. Analogously, it holds that $(T_n,A^{(n)})$ is an FCS for a $\mathbb{B}$-process $A\in (\mathcal{V}_0)^\mathbb{B}$, fulfilling that $\widetilde{H}+\widetilde{K}\in\mathcal{L}_s(A^{(n)})$ for each $n\in \mathbf{N}^+$. Since both $(T_n,\widetilde{M}^{(n)}+\widetilde{A}^{(n)})$ and $(T_n,\widehat{M}^{(n)}+\widehat{A}^{(n)})$ are inner FCSs for $X\in \mathcal{S}^{i,\mathbb{B}}$, we deduce
\[
(\widetilde{M}^{(n)}+\widetilde{A}^{(n)})^{T_n\wedge (T_F-)}=(\widehat{M}^{(n)}+\widehat{A}^{(n)})^{T_n\wedge (T_F-)},\quad n\in \mathbf{N}^+.
\]
Then it follows that for each $n\in \mathbf{N}^+$,
\begin{align*}
&(M+A)I_{\mathbb{B}\llbracket{0,T_n}\rrbracket}\\
=&(M^{(n)}+A^{(n)})I_{\mathbb{B}\llbracket{0,T_n}\rrbracket}\\
=&\left\{\left(\frac{1}{\widetilde{H}+\widetilde{K}}I_{[|\widetilde{H}+\widetilde{K}|>\varepsilon]}\right).
\left(\widetilde{H}.(\widetilde{M}^{(n)}+\widetilde{A}^{(n)})+\widetilde{K}.(\widehat{M}^{(n)}+\widehat{A}^{(n)})\right)
+{I_{[|\widetilde{H}+\widetilde{K}|\leq\varepsilon]}}.(\widetilde{M}^{(n)}+\widetilde{A}^{(n)})\right\}I_{\mathbb{B}\llbracket{0,T_n}\rrbracket}\\
=&\left\{\left(\frac{1}{\widetilde{H}+\widetilde{K}}I_{[|\widetilde{H}+\widetilde{K}|>\varepsilon]}\right).
\left((\widetilde{H}+\widetilde{K}).(\widetilde{M}^{(n)}+\widetilde{A}^{(n)})^{T_n\wedge (T_F-)}\right)
+{I_{[|\widetilde{H}+\widetilde{K}|\leq\varepsilon]}}.(\widetilde{M}^{(n)}+\widetilde{A}^{(n)})\right\}I_{\mathbb{B}\llbracket{0,T_n}\rrbracket}\\
=&\left\{{I_{[|\widetilde{H}+\widetilde{K}|>\varepsilon]}}.(\widetilde{M}^{(n)}+\widetilde{A}^{(n)})+
{I_{[|\widetilde{H}+\widetilde{K}|\leq\varepsilon]}}.(\widetilde{M}^{(n)}+\widetilde{A}^{(n)})\right\}I_{\mathbb{B}\llbracket{0,T_n}\rrbracket}\\
=&(\widetilde{M}^{(n)}+\widetilde{A}^{(n)})I_{\mathbb{B}\llbracket{0,T_n}\rrbracket}\\
=&XI_{\mathbb{B}\llbracket{0,T_n}\rrbracket},
\end{align*}
which, by Theorem \ref{process}(1), implies $M+A=X\in \mathcal{S}^{i,\mathbb{B}}$ with the decomposed inner FCS $(T_n,M^{(n)},A^{(n)})$.
Hence, $\widetilde{H}+\widetilde{K}$ is a coupled predictable process for $H+K\in\mathcal{P}^\mathbb{B}$, and $(T_n,M^{(n)},A^{(n)})$ forms a decomposed inner FCS for $X\in \mathcal{S}^{i,\mathbb{B}}$ such that $\widetilde{H}+\widetilde{K}\in\mathcal{L}_m(M^{(n)})\cap\mathcal{L}_s(A^{(n)})$ for each $n\in \mathbf{N}^+$. As a result, we derive the assertion $H+K\in\mathcal{L}^\mathbb{B}(X)$ from Theorem \ref{HX=}.

Applying the inner FCS $(T_n,M^{(n)}+A^{(n)})$ for $X\in \mathcal{S}^{i,\mathbb{B}}$, it becomes evident that
\[
(M^{(n)}+A^{(n)})^{T_n\wedge (T_F-)}=(\widetilde{M}^{(n)}+\widetilde{A}^{(n)})^{T_n\wedge (T_F-)}=(\widehat{M}^{(n)}+\widehat{A}^{(n)})^{T_n\wedge (T_F-)},\quad n\in \mathbf{N}^+.
\]
Then it follows from Theorem \ref{eq-HX} that for each $n\in \mathbf{N}^+$,
\begin{align*}
((H+K)_{\bullet}X)I_{\mathbb{B}\llbracket{0,T_n}\rrbracket}
&=\left((\widetilde{H}+\widetilde{K}).(M^{(n)}+A^{(n)})\right)I_{\mathbb{B}\llbracket{0,T_n}\rrbracket}\\
&=\left(\widetilde{H}.(M^{(n)}+A^{(n)})^{T_n\wedge (T_F-)}+\widetilde{K}.(M^{(n)}+A^{(n)})^{T_n\wedge (T_F-)}\right)I_{\mathbb{B}\llbracket{0,T_n}\rrbracket}\\
&=\left(\widetilde{H}.(\widetilde{M}^{(n)}+\widetilde{A}^{(n)})^{T_n\wedge (T_F-)}\right)I_{\mathbb{B}\llbracket{0,T_n}\rrbracket}+\left(\widetilde{K}.(\widehat{M}^{(n)}+\widehat{A}^{(n)})^{T_n\wedge (T_F-)}\right)I_{\mathbb{B}\llbracket{0,T_n}\rrbracket}\\
&=\left(\widetilde{H}.(\widetilde{M}^{(n)}+\widetilde{A}^{(n)})\right)I_{\mathbb{B}\llbracket{0,T_n}\rrbracket}
+\left(\widetilde{K}.(\widehat{M}^{(n)}+\widehat{A}^{(n)})\right)I_{\mathbb{B}\llbracket{0,T_n}\rrbracket}\\
&=(H_{\bullet}X+K_{\bullet}X)I_{\mathbb{B}\llbracket{0,T_n}\rrbracket}.
\end{align*}
Thus, by Theorem \ref{process}(1), we establish the assertion $(H+K)_{\bullet}X=H_{\bullet}X+K_{\bullet}X$.

$(2)$ Suppose that $X=M+A$ ($M\in (\mathcal{M}_{\mathrm{loc}})^{i,\mathbb{B}}$ and $A\in (\mathcal{V}_0)^\mathbb{B}$) and $Y=N+V$ ($N\in (\mathcal{M}_{\mathrm{loc}})^{i,\mathbb{B}}$ and $V\in (\mathcal{V}_0)^\mathbb{B}$) are inner decompositions such that $H\in \mathcal{L}_m^{\mathbb{B}}(M)\bigcap\mathcal{L}_m^{\mathbb{B}}(N)$ and $H\in \mathcal{L}_s^{\mathbb{B}}(A)\bigcap\mathcal{L}_s^{\mathbb{B}}(V)$. By invoking Theorem \ref{HM-o-p}(2), it follows that $H\in \mathcal{L}_m^{\mathbb{B}}(aM+bN)$. Additionally, Theorem \ref{HAproperty}(2) establishes that $H\in \mathcal{L}_s^{\mathbb{B}}(aA+bV)$.
Observing $aX+bY=(aM+bN)+(aA+bV)$, where $aM+bN\in(\mathcal{M}_{\mathrm{loc}})^{i,\mathbb{B}}$ and $aA+bV\in(\mathcal{V}_0)^\mathbb{B}$, we can infer from Definition \ref{de-HX} that
$H\in\mathcal{L}^\mathbb{B}(aX+bY)$. Furthermore, \eqref{+HX} can be deduced as a result of employing \eqref{ab2} and \eqref{+HM}:
\begin{align*}
H_{\bullet}(aX+bY)
&=H_{\bullet}(aM+bN)+H_{\bullet}(aA+bV)\\
&=a(H_{\bullet}M)+b(H_{\bullet}M)+a(H_{\bullet}N)+b(H_{\bullet}V)\\
&=a(H_{\bullet}M+H_{\bullet}A)+b(H_{\bullet}N+H_{\bullet}V)\\
&=a(H_{\bullet}X)+b(H_{\bullet}Y).
\end{align*}

$(3)$ {\it Sufficiency.} Suppose that $LH\in\mathcal{L}^\mathbb{B}(X)$. Let $\widetilde{J}$ and $\widetilde{L}$ denote coupled predictable processes for $LH\in\mathcal{P}^\mathbb{B}$ and $L\in\mathcal{P}^\mathbb{B}$, respectively. We further assume that $(T_n,M^{(n)},A^{(n)})$ forms a decomposed inner FCS for $X\in \mathcal{S}^{i,\mathbb{B}}$ such that $\widetilde{J}\in\mathcal{L}_m(M^{(n)})\cap\mathcal{L}_s(A^{(n)})$ for each $n\in \mathbf{N}^+$.

For each $n\in \mathbf{N}^+$, we define
\begin{align*}
N^{(n)}&
=\left(\frac{1}{\widetilde{L}}I_{[|\widetilde{L}|>\varepsilon]}\right).
(\widetilde{J}.M^{(n)})+{I_{[|\widetilde{L}|\leq\varepsilon]}}.(\widetilde{H}.\widetilde{M}^{(n)})
=\left(\frac{\widetilde{J}}{\widetilde{L}}I_{[|\widetilde{L}|>\varepsilon]}\right)
.M^{(n)}+(\widetilde{H}I_{[|\widetilde{L}|\leq\varepsilon]}).\widetilde{M}^{(n)},\\
V^{(n)}&=\left(\frac{1}{\widetilde{L}}I_{[|\widetilde{L}|>\varepsilon]}\right).
(\widetilde{J}.A^{(n)})+{I_{[|\widetilde{L}|\leq\varepsilon]}}.(\widetilde{H}.\widetilde{A}^{(n)})
=\left(\frac{\widetilde{J}}{\widetilde{L}}I_{[|\widetilde{L}|>\varepsilon]}\right)
.A^{(n)}+(\widetilde{H}I_{[|\widetilde{L}|\leq\varepsilon]}).\widetilde{A}^{(n)},
\end{align*}
where all integrals involved are well-defined. For any $k,n\in \mathbf{N}^+$ with $n\leq k$, employing the properties that $(\widetilde{M}^{(n)})^{T_n\wedge (T_F-)}\in\mathcal{M}_{\mathrm{loc}}$ and  $(M^{(n)})^{T_n\wedge (T_F-)}\in\mathcal{M}_{\mathrm{loc}}$, it can be inferred that
\[
N^{(n)}I_{\mathbb{B}\llbracket{0,T_n}\rrbracket}=N^{(k)}I_{\mathbb{B}\llbracket{0,T_n}\rrbracket},\quad
(N^{(n)})^{T_n\wedge (T_F-)}\in\mathcal{M}_{\mathrm{loc}}.
\]
By virtue of Theorem \ref{restriction}(4), we establish that $(T_n,N^{(n)})$ is an inner FCS for a $\mathbb{B}$-process $N\in (\mathcal{M}_{\mathrm{loc}})^{i,\mathbb{B}}$ satisfying $\widetilde{L}\in\mathcal{L}_m(N^{(n)})$ for each $n\in \mathbf{N}^+$.
Similarly, it holds that $(T_n,V^{(n)})$ is an FCS for a $\mathbb{B}$-process $V\in (\mathcal{V}_0)^\mathbb{B}$ such that $\widetilde{L}\in\mathcal{L}_s(V^{(n)})$ for each $n\in \mathbf{N}^+$.
Furthermore, it is easy to verify that $(HI_{[|L|>\varepsilon]}){\mathfrak{I}_\mathbb{B}}_{\bullet} X\in\mathcal{S}^{i,\mathbb{B}}$ with the decomposed inner FCS
\[
\left(T_n,\left(\frac{\widetilde{J}}{\widetilde{L}}I_{[|\widetilde{L}|>\varepsilon]}\right)
.M^{(n)},\left(\frac{\widetilde{J}}{\widetilde{L}}I_{[|\widetilde{L}|>\varepsilon]}\right)
.A^{(n)}\right),
\]
and similarly, $(HI_{[|L|\leq\varepsilon]}){\mathfrak{I}_\mathbb{B}}_{\bullet} X\in\mathcal{S}^{i,\mathbb{B}}$ with the decomposed inner FCS
\[
\left(T_n,(\widetilde{H}I_{[|\widetilde{L}|\leq\varepsilon]}).\widetilde{M}^{(n)},
(\widetilde{H}I_{[|\widetilde{L}|\leq\varepsilon]}).\widetilde{A}^{(n)}\right).
\]
Hence, by invoking part $(1)$, we deduce that $(T_n,N^{(n)},V^{(n)})$ is a decomposed inner FCS for
\[
H_{\bullet} X=(HI_{[|L|>\varepsilon]}){\mathfrak{I}_\mathbb{B}}_{\bullet} X+(HI_{[|L|\leq\varepsilon]}){\mathfrak{I}_\mathbb{B}}_{\bullet} X\in\mathcal{S}^{i,\mathbb{B}},
\]
which implies $H_{\bullet} X=N+V$.
Consequently, given that $\widetilde{L}$ is identified as  a coupled predictable process for $L\in\mathcal{P}^\mathbb{B}$, $(T_n,N^{(n)},V^{(n)})$ forms a decomposed inner FCS for $H_{\bullet} X\in \mathcal{S}^{i,\mathbb{B}}$ such that $\widetilde{L}\in\mathcal{L}_m(N^{(n)})\cap\mathcal{L}_s(V^{(n)})$ for each $n\in \mathbf{N}^+$. Finally, the assertion $L\in\mathcal{L}^\mathbb{B}(H_{\bullet}X)$ is derived from Theorem \ref{HX=}.

In this case, we demonstrate the validity of \eqref{hHX}. For each $n\in \mathbf{N}^+$, the following equality holds:
\begin{align*}
\widetilde{L}.(N^{(n)}+V^{(n)})
=\left(\widetilde{J}I_{[|\widetilde{L}|>\varepsilon]}\right)
.(M^{(n)}+A^{(n)})+\left(\widetilde{L}\widetilde{H}I_{[|\widetilde{L}|\leq\varepsilon]}\right).(\widetilde{M}^{(n)}+\widetilde{A}^{(n)}),
\end{align*}
where the sequence $(T_n,(\widetilde{J}I_{[|\widetilde{L}|>\varepsilon]}).(M^{(n)}+A^{(n)}))$ is an inner FCS for $(LHI_{[|L|>\varepsilon]}){\mathfrak{I}_\mathbb{B}}_{\bullet}X\in \mathcal{S}^{i,\mathbb{B}}$, and the sequence $(T_n,(\widetilde{L}\widetilde{H}I_{[|\widetilde{L}|\leq\varepsilon]}).(\widetilde{M}^{(n)}+\widetilde{A}^{(n)}))$ is an inner FCS for $(LHI_{[|L|\leq \varepsilon]}){\mathfrak{I}_\mathbb{B}}_{\bullet}X\in \mathcal{S}^{i,\mathbb{B}}$.
By invoking part (1), it follows that for each $n\in \mathbf{N}^+$,
\begin{align*}
&(L_{\bullet}(H_{\bullet}X))I_{\mathbb{B}\llbracket{0,T_n}\rrbracket}\\
=&\left\{\widetilde{L}.(N^{(n)}+V^{(n)})\right\}I_{\mathbb{B}\llbracket{0,T_n}\rrbracket}\\
=&\left\{\left(\widetilde{J}I_{[|\widetilde{L}|>\varepsilon]}\right)
.(M^{(n)}+A^{(n)})+\left(\widetilde{L}\widetilde{H}I_{[|\widetilde{L}|\leq\varepsilon]}\right).(\widetilde{M}^{(n)}+\widetilde{A}^{(n)})\right\}
I_{\mathbb{B}\llbracket{0,T_n}\rrbracket}\\
=&\left\{(LHI_{[|L|>\varepsilon]}){\mathfrak{I}_\mathbb{B}}_{\bullet}X+(LHI_{[|L|\leq \varepsilon]}){\mathfrak{I}_\mathbb{B}}_{\bullet}X\right\}I_{\mathbb{B}\llbracket{0,T_n}\rrbracket}\\
=&((LH)_{\bullet}X)I_{\mathbb{B}\llbracket{0,T_n}\rrbracket}.
\end{align*}
Consequently, by applying Theorem \ref{process}(1), we obtain the desired result \eqref{hHX}.

{\it Necessity.} Suppose that $L\in\mathcal{L}^\mathbb{B}(H_{\bullet}X)$, and let $(T_n,Z^{(n)},U^{(n)})$ denote a decomposed inner FCS for  $L_{\bullet}(H_{\bullet}X)\in\mathcal{S}^{i,\mathbb{B}}$.
We further assume that $\widetilde{L}$ is a coupled predictable process for $L\in\mathcal{P}^\mathbb{B}$, and that $(T_n,N^{(n)},V^{(n)})$ forms a decomposed inner FCS for $H_{\bullet}X\in \mathcal{S}^{i,\mathbb{B}}$ such that $\widetilde{L}\in\mathcal{L}_m(N^{(n)})\cap\mathcal{L}_s(V^{(n)})$ for each $n\in \mathbf{N}^+$.

For each $n\in \mathbf{N}^+$, we define
\begin{align*}
M^{(n)}&
=\left(\frac{1}{\widetilde{H}}I_{[|\widetilde{H}|>\varepsilon]}\right).N^{(n)}
+{I_{[|\widetilde{H}|\leq\varepsilon,|\widetilde{L}\widetilde{H}|\leq\varepsilon]}}.\widetilde{M}^{(n)}
+\left(\frac{1}{\widetilde{L}\widetilde{H}}I_{[|\widetilde{H}|\leq\varepsilon,|\widetilde{L}\widetilde{H}|>\varepsilon]}\right).Z^{(n)},\\
A^{(n)}&
=\left(\frac{1}{\widetilde{H}}I_{[|\widetilde{H}|>\varepsilon]}\right).V^{(n)}
+{I_{[|\widetilde{H}|\leq\varepsilon,|\widetilde{L}\widetilde{H}|\leq\varepsilon]}}.\widetilde{A}^{(n)}
+\left(\frac{1}{\widetilde{L}\widetilde{H}}I_{[|\widetilde{H}|\leq\varepsilon,|\widetilde{L}\widetilde{H}|>\varepsilon]}\right).U^{(n)},
\end{align*}
where all integrals involved are well-defined.  For any $k,n\in \mathbf{N}^+$ with $n\leq k$, utilizing the properties that $(N^{(n)})^{T_n\wedge (T_F-)}\in\mathcal{M}_{\mathrm{loc}}$, $(\widetilde{M}^{(n)})^{T_n\wedge (T_F-)}\in\mathcal{M}_{\mathrm{loc}}$ and  $(Z^{(n)})^{T_n\wedge (T_F-)}\in\mathcal{M}_{\mathrm{loc}}$, it can be easily verified that
\[
M^{(n)}I_{\mathbb{B}\llbracket{0,T_n}\rrbracket}=M^{(k)}I_{\mathbb{B}\llbracket{0,T_n}\rrbracket},\quad
(M^{(n)})^{T_n\wedge (T_F-)}\in\mathcal{M}_{\mathrm{loc}}.
\]
By invoking Theorem \ref{restriction}(4), it follows that $(T_n,M^{(n)})$ is an inner FCS for a $\mathbb{B}$-process $M\in (\mathcal{M}_{\mathrm{loc}})^{i,\mathbb{B}}$ satisfying $\widetilde{L}\widetilde{H}\in\mathcal{L}_m(M^{(n)})$.
Similarly, it holds that $(T_n,A^{(n)})$ is an FCS for a $\mathbb{B}$-process $A\in (\mathcal{V}_0)^\mathbb{B}$ such that  $\widetilde{L}\widetilde{H}\in\mathcal{L}_s(A^{(n)})$ for each $n\in \mathbf{N}^+$.
Considering the two inner FCSs $(T_n,N^{(n)}+V^{(n)})$ and $(T_n,\widetilde{H}.(\widetilde{M}^{(n)}+\widetilde{A}^{(n)}))$  for $H_{\bullet}X\in \mathcal{S}^{i,\mathbb{B}}$, we obtain:
\[
(N^{(n)}+V^{(n)})^{T_n\wedge (T_F-)}=(\widetilde{H}.(\widetilde{M}^{(n)}+\widetilde{A}^{(n)}))^{T_n\wedge (T_F-)}=\widetilde{H}.(\widetilde{M}^{(n)}+\widetilde{A}^{(n)})^{T_n\wedge (T_F-)},\quad n\in \mathbf{N}^+.
\]
Analogously, for the inner FCSs $(T_n,Z^{(n)}+U^{(n)})$ and $(T_n,\widetilde{L}.(N^{(n)}+V^{(n)}))$  for $L_{\bullet}(H_{\bullet}X)\in\mathcal{S}^{i,\mathbb{B}}$, we deduce:
\[
(Z^{(n)}+U^{(n)})^{T_n\wedge (T_F-)}=(\widetilde{L}.(N^{(n)}+V^{(n)}))^{T_n\wedge (T_F-)}=(\widetilde{L}\widetilde{H}).(\widetilde{M}^{(n)}+\widetilde{A}^{(n)})^{T_n\wedge (T_F-)},\quad n\in \mathbf{N}^+.
\]
Thus, for each $n\in \mathbf{N}^+$, we have
\begin{align*}
&(M+A)I_{\mathbb{B}\llbracket{0,T_n}\rrbracket}\\
=&(M^{(n)}+A^{(n)})I_{\mathbb{B}\llbracket{0,T_n}\rrbracket}\\
=&\bigg\{\left(\frac{1}{\widetilde{H}}I_{[|\widetilde{H}|>\varepsilon]}\right).(N^{(n)}+V^{(n)})^{T_n\wedge(T_F-)}
+{I_{[|\widetilde{H}|\leq\varepsilon,|\widetilde{L}\widetilde{H}|\leq\varepsilon]}}.(\widetilde{M}^{(n)}+\widetilde{A}^{(n)})\\
&+\left(\frac{1}{\widetilde{L}\widetilde{H}}I_{[|\widetilde{H}|\leq\varepsilon,|\widetilde{L}\widetilde{H}|>\varepsilon]}\right).(Z^{(n)}+U^{(n)})^{T_n\wedge(T_F-)}\bigg\}
I_{\mathbb{B}\llbracket{0,T_n}\rrbracket}\\
=&\bigg\{\left(\frac{1}{\widetilde{H}}I_{[|\widetilde{H}|>\varepsilon]}\right).
\left(\widetilde{H}.(\widetilde{M}^{(n)}+\widetilde{A}^{(n)})^{T_n\wedge (T_F-)}\right)+{I_{[|\widetilde{H}|\leq\varepsilon,|\widetilde{L}\widetilde{H}|\leq\varepsilon]}}.(\widetilde{M}^{(n)}+\widetilde{A}^{(n)})\\
&+\left(\frac{1}{\widetilde{L}\widetilde{H}}I_{[|\widetilde{H}|\leq\varepsilon,|\widetilde{L}\widetilde{H}|>\varepsilon]}\right).\left((\widetilde{L}\widetilde{H}).(\widetilde{M}^{(n)}+\widetilde{A}^{(n)})^{T_n\wedge (T_F-)}\right)\bigg\}
I_{\mathbb{B}\llbracket{0,T_n}\rrbracket}\\
=&\left\{{I_{[|\widetilde{H}|>\varepsilon]}}.(\widetilde{M}^{(n)}+\widetilde{A}^{(n)})
+{I_{[|\widetilde{H}|\leq\varepsilon,|\widetilde{L}\widetilde{H}|\leq\varepsilon]}}.(\widetilde{M}^{(n)}+\widetilde{A}^{(n)})
+{I_{[|\widetilde{H}|\leq\varepsilon,|\widetilde{L}\widetilde{H}|>\varepsilon]}}.(\widetilde{M}^{(n)}+\widetilde{A}^{(n)})\right\}
I_{\mathbb{B}\llbracket{0,T_n}\rrbracket}\\
=&(\widetilde{M}^{(n)}+\widetilde{A}^{(n)})I_{\mathbb{B}\llbracket{0,T_n}\rrbracket}\\
=&XI_{\mathbb{B}\llbracket{0,T_n}\rrbracket}.
\end{align*}
By Theorem \ref{process}(1), we conclude that $M+A=X$.
Therefore, $\widetilde{L}\widetilde{H}$ is a coupled predictable process for $LH\in\mathcal{P}^\mathbb{B}$, and $(T_n,M^{(n)},A^{(n)})$ forms a decomposed inner FCS for $X\in \mathcal{S}^{i,\mathbb{B}}$ such that $\widetilde{L}\widetilde{H}\in\mathcal{L}_m(M^{(n)})\cap\mathcal{L}_s(A^{(n)})$ for each $n\in \mathbf{N}^+$. Finally, the assertion $LH\in\mathcal{L}^\mathbb{B}(X)$ is derived from Theorem \ref{HX=}.
$\hfill\blacksquare$

%%%%%%%%%%%%%%%%%%%%%%%%%%%%%%%%%%%%%%%%%%%%%%%%%%%%%%%%%%%%%%%%%%%%%%%%%%%%%%%%%%%%%%%%%%%%%%%%%%%%%%%
\par\vspace{0.3cm}
%%%%%%%%%%%%%%%%%%%%%%%%%%%%%%%%%%%%%%%%%%%%%%%%%%%%%%%%%%%%%%%%%%%%%%%%%%%%%%%%%%%%%%%%%%%%%%%%%%%%%%%
\noindent {\bf Proof of Theorem \ref{HX-property}.}
From $H\in\mathcal{L}^\mathbb{B}(X)$, Definition \ref{de-HX} yields an inner decomposition $X=M+A$, where $M\in (\mathcal{M}_{\mathrm{loc}})^{i,\mathbb{B}}$ and $A\in (\mathcal{V}_0)^\mathbb{B}$, ensuring that $H\in \mathcal{L}_m^{\mathbb{B}}(M)\cap\mathcal{L}_s^{\mathbb{B}}(A)$.
Assume that $(T_n,H^{(n)})$ is an FCS for $H\in\mathcal{P}^\mathbb{B}$, and that $(T_n,M^{(n)},A^{(n)})$ forms a decomposed inner FCS for $X\in \mathcal{S}^{i,\mathbb{B}}$, with the property that $H^{(n)}\in\mathcal{L}_m(M^{(n)})\cap\mathcal{L}_s(A^{(n)})$ for each $n\in \mathbf{N}^+$.
Subsequently, Theorem \ref{eq-HX} elucidates that $(T_n,H^{(n)}.M^{(n)},H^{(n)}.A^{(n)})$ serves as a decomposed inner FCS for $H_{\bullet}X\in\mathcal{S}^{i,\mathbb{B}}$.
Furthermore, Theorem \ref{fcs} establishes that $X^\tau \mathfrak{I}_\mathbb{B}\in\mathcal{S}^{i,\mathbb{B}}$ with the decomposed inner FCS $(T_n,(M^{(n)})^\tau,(A^{(n)})^\tau)$, and $H^\tau \mathfrak{I}_\mathbb{B}\in\mathcal{P}^\mathbb{B}$ with the FCS $(T_n,(H^{(n)})^\tau)$.
For the purpose of clarity, let $X^{(n)}=M^{(n)}+A^{(n)}$ for each $n\in \mathbf{N}^+$.

$(1)$ By applying Theorem \ref{HM-property}, it is straightforward to infer that $(H_{\bullet}X)^c=H_{\bullet}X^c$ through the following equalities:
\[
(H_{\bullet}X)^c=(H_{\bullet}M+H_{\bullet}A)^c=(H_{\bullet}M)^c
=H_{\bullet}M^c=H_{\bullet}X^c.
\]
The identity $(H_{\bullet}X)I_{\llbracket{0}\rrbracket}=HXI_{\llbracket{0}\rrbracket}$ is a direct consequence of equation \eqref{HX-expression-2}. Furthermore, invoking Theorem \ref{delta}(1), we obtain that for each $n\in \mathbf{N}^+$,
\begin{align*}
\Delta(H_{\bullet}X)I_{\mathbb{B}\llbracket{0,T_n}\rrbracket}
=\Delta(H^{(n)}.X^{(n)})I_{\mathbb{B}\llbracket{0,T_n}\rrbracket}
=(H^{(n)}\Delta X^{(n)})I_{\mathbb{B}\llbracket{0,T_n}\rrbracket}
=(H\Delta X)I_{\mathbb{B}\llbracket{0,T_n}\rrbracket},
\end{align*}
which, according to Theorem \ref{process}(1), implies $\Delta (H_{\bullet}X)=H\Delta X$.

$(2)$ Firstly, for each $n\in \mathbf{N}^+$, we establish that
$(H^{(n)}.M^{(n)})^\tau=H^{(n)}.(M^{(n)})^\tau$ and $(H^{(n)}.A^{(n)})^\tau=H^{(n)}.(A^{(n)})^\tau$, demonstrating that $H^{(n)}\in\mathcal{L}_m((M^{(n)})^\tau)\cap\mathcal{L}_s((A^{(n)})^\tau)$.
Consequently, Theorem \ref{HX=} confirms that $H\in \mathcal{L}^\mathbb{B}(X^\tau \mathfrak{I}_\mathbb{B})$. It is deduced from Theorem \ref{eq-HX} that $(T_n,H^{(n)}.(X^{(n)})^\tau)$ forms an inner FCS for $H_{\bullet}(X^\tau \mathfrak{I}_\mathbb{B})\in\mathcal{S}^{i,\mathbb{B}}$.
By leveraging Theorems \ref{fcs} and \ref{eq-HX}, it is straightforward to derive the following for $n\in\mathbf{N}^+$:
\begin{align*}
(H_{\bullet}X)^\tau I_{\mathbb{B}\llbracket{0,T_n}\rrbracket}
=(H^{(n)}.X^{(n)})^\tau I_{\mathbb{B}\llbracket{0,T_n}\rrbracket}
=(H^{(n)}.(X^{(n)})^\tau)I_{\mathbb{B}\llbracket{0,T_n}\rrbracket}
=(H_{\bullet}(X^\tau \mathfrak{I}_\mathbb{B}))I_{\mathbb{B}\llbracket{0,T_n}\rrbracket},
\end{align*}
which, by Theorem \ref{process}(1), leads to the equality $(H_{\bullet}X)^\tau\mathfrak{I}_{B}=H_{\bullet}(X^\tau\mathfrak{I}_{B})$.

Secondly, for each $n\in \mathbf{N}^+$, we establish the relations
$(H^{(n)}.M^{(n)})^\tau=(H^{(n)})^\tau.(M^{(n)})^\tau$ and $(H^{(n)}.A^{(n)})^\tau=(H^{(n)})^\tau.(A^{(n)})^\tau$, signifying that $(H^{(n)})^\tau\in\mathcal{L}_m((M^{(n)})^\tau)\cap\mathcal{L}_s((A^{(n)})^\tau)$.
Therefore, Theorem \ref{HX=} indicates that $H^\tau\mathfrak{I}_\mathbb{B}\in \mathcal{L}^\mathbb{B}(X^\tau \mathfrak{I}_\mathbb{B})$. From Theorem \ref{eq-HX}, $(T_n,(H^{(n)})^\tau.(X^{(n)})^\tau)$ forms an inner FCS for $(H^\tau \mathfrak{I}_\mathbb{B})_{\bullet}(X^\tau \mathfrak{I}_\mathbb{B})\in\mathcal{S}^{i,\mathbb{B}}$.
Utilizing Theorems \ref{fcs} and \ref{eq-HX}, we can deduce the following for $n\in\mathbf{N}^+$:
\begin{align*}
(H_{\bullet}X)^\tau I_{\mathbb{B}\llbracket{0,T_n}\rrbracket}
=(H^{(n)}.X^{(n)})^\tau I_{\mathbb{B}\llbracket{0,T_n}\rrbracket}
=((H^{(n)})^\tau.(X^{(n)})^\tau)I_{\mathbb{B}\llbracket{0,T_n}\rrbracket}
=((H^\tau \mathfrak{I}_\mathbb{B})_{\bullet}(X^\tau \mathfrak{I}_\mathbb{B}))I_{\mathbb{B}\llbracket{0,T_n}\rrbracket},
\end{align*}
which, by Theorem \ref{process}(1), indicates the equality $(H_{\bullet}X)^\tau\mathfrak{I}_{\mathbb{B}}=(H^\tau\mathfrak{I}_{\mathbb{B}})_{\bullet}(X^\tau\mathfrak{I}_{\mathbb{B}})$.

Finally, to conclude the proof, we demonstrate that $HI_{\llbracket{0,\tau}\rrbracket}\mathfrak{I}_\mathbb{B}\in \mathcal{L}^\mathbb{B}(X)$ and $(H_{\bullet}X)^\tau\mathfrak{I}_\mathbb{B}
      =(HI_{\llbracket{0,\tau}\rrbracket}\mathfrak{I}_\mathbb{B})_{\bullet}X$.
It is evident that
\begin{equation*}
(HI_{\llbracket{0,\tau}\rrbracket}\mathfrak{I}_\mathbb{B})I_{\mathbb{B}\llbracket{0,T_n}\rrbracket}
=HI_{\mathbb{B}\llbracket{0,T_n}\rrbracket}I_{\llbracket{0,\tau}\rrbracket}
=(H^{(n)}I_{\llbracket{0,\tau}\rrbracket})I_{\mathbb{B}\llbracket{0,T_n}\rrbracket}, \quad n\in \mathbf{N}^+,
\end{equation*}
which, by the relation $H^{(n)}I_{\llbracket{0,\tau}\rrbracket}\in \mathcal{P}$, implies that  $(T_n,H^{(n)}I_{\llbracket{0,\tau}\rrbracket})$ constitutes an FCS for $HI_{\llbracket{0,\tau}\rrbracket}\mathfrak{I}_\mathbb{B}\in \mathcal{P}^\mathbb{B}$.
For each $n\in \mathbf{N}^+$, it verifies that
$(H^{(n)}.M^{(n)})^\tau=(H^{(n)}I_{\llbracket{0,\tau}\rrbracket}).M^{(n)}$ and
$(H^{(n)}.A^{(n)})^\tau=(H^{(n)}I_{\llbracket{0,\tau}\rrbracket}).A^{(n)}$, meaning that $H^{(n)}I_{\llbracket{0,\tau}\rrbracket}\in\mathcal{L}_m(M^{(n)})\cap\mathcal{L}_s(A^{(n)})$.
Hence, Theorem \ref{HX=} confirms that $HI_{\llbracket{0,\tau}\rrbracket}\mathfrak{I}_\mathbb{B}\in \mathcal{L}^\mathbb{B}(X)$.
Theorem \ref{eq-HX} shows that $(T_n,(H^{(n)}I_{\llbracket{0,\tau}\rrbracket}).X^{(n)})$ forms an inner FCS for $(HI_{\llbracket{0,\tau}\rrbracket}\mathfrak{I}_\mathbb{B})_{\bullet}X\in\mathcal{S}^{i,\mathbb{B}}$.
By applying Theorems \ref{fcs} and \ref{eq-HX}, we obtain the following for $n\in\mathbf{N}^+$:
\begin{align*}
(H_{\bullet}X)^\tau I_{\mathbb{B}\llbracket{0,T_n}\rrbracket}
=(H^{(n)}.X^{(n)})^\tau I_{\mathbb{B}\llbracket{0,T_n}\rrbracket}
=((H^{(n)}I_{\llbracket{0,\tau}\rrbracket}).X^{(n)})I_{\mathbb{B}\llbracket{0,T_n}\rrbracket}
=((HI_{\llbracket{0,\tau}\rrbracket}\mathfrak{I}_\mathbb{B})_{\bullet}X)I_{\mathbb{B}\llbracket{0,T_n}\rrbracket},
\end{align*}
which, according to Theorem \ref{process}(1), establishes the equality $(H_{\bullet}X)^\tau\mathfrak{I}_\mathbb{B}
      =(HI_{\llbracket{0,\tau}\rrbracket}\mathfrak{I}_\mathbb{B})_{\bullet}X$.

$(3)$ Let $(T_n,\widetilde{L}^{(n)})$ denote an FCS for $L\in\mathcal{P}^\mathbb{B}$. For each $n\in \mathbf{N}^+$, we define
\[
L^{(n)}=\min\{(\widetilde{L}^{(n)})^+, |H^{(n)}|\}-\min\{(\widetilde{L}^{(n)})^-, |H^{(n)}|\}.
\]
Given that the inequality $|L|\leq |H|$ implies $|\widetilde{L}^{(n)}|\leq |H^{(n)}|$ on $\mathbb{B}\llbracket{0,T_n}\rrbracket$, it follows that $L^{(n)}I_{\mathbb{B}\llbracket{0,T_n}\rrbracket}=\widetilde{L}^{(n)}I_{\mathbb{B}\llbracket{0,T_n}\rrbracket}$ for each $n\in \mathbf{N}^+$.
Subsequently, $(T_n,L^{(n)})$ forms an FCS for $L\in \mathcal{P}^\mathbb{B}$, satisfying the condition $|L^{(n)}|\leq 2|H^{(n)}|$ for each $n\in \mathbf{N}^+$.
For each $n\in \mathbf{N}^+$, by invoking the facts $|L^{(n)}|\leq 2|H^{(n)}|$ and $2H^{(n)}\in\mathcal{L}_m(M^{(n)})\cap\mathcal{L}_s(A^{(n)})$, it follows that $L^{(n)}\in\mathcal{L}_m(M^{(n)})\cap\mathcal{L}_s(A^{(n)})$. Consequently,
$(T_n,M^{(n)},A^{(n)})$ forms a decomposed inner FCS for $X\in \mathcal{S}^{i,\mathbb{B}}$ such that $L^{(n)}\in\mathcal{L}_m(M^{(n)})\cap\mathcal{L}_s(A^{(n)})$ for each $n\in \mathbf{N}^+$.
By utilizing Theorem \ref{HX=}, we conclude that $L\in\mathcal{L}^\mathbb{B}(X)$.

$(4)$ Suppose that $(T_n,Y^{(n)})$ is an FCS for $Y\in \mathcal{S}^\mathbb{B}$.
For each $n\in \mathbf{N}^+$, we have the following sequence of equalities:
\begin{equation*}
[H_{\bullet}X,Y]I_{\mathbb{B}\llbracket{0,T_n}\rrbracket}
=[H^{(n)}.X^{(n)},Y^{(n)}]I_{\mathbb{B}\llbracket{0,T_n}\rrbracket}
=\left(H^{(n)}.[X^{(n)},Y^{(n)}]\right)I_{\mathbb{B}\llbracket{0,T_n}\rrbracket}
=\left(H_{\bullet}[X,Y]\right)I_{\mathbb{B}\llbracket{0,T_n}\rrbracket},
\end{equation*}
where the first equality comes from Theorem \ref{[X,Y]-fcs}(2) and $H_{\bullet}X\in\mathcal{S}^{i,\mathbb{B}}$ with the inner FCS $(T_n,H^{(n)}.X^{(n)})$, the second equality from Theorem 9.15 of \cite{He}, and the last equality from Theorem \ref{HA-FCS} and $[X,Y]\in \mathcal{V}^\mathbb{B}$ with the FCS $(T_n,[X^{(n)},Y^{(n)}])$.
Consequently, by applying Theorem \ref{process}(1), we obtain the desired result \eqref{HXY-p}.
$\hfill\blacksquare$

%%%%%%%%%%%%%%%%%%%%%%%%%%%%%%%%%%%%%%%%%%%%%%%%%%%%%%%%%%%%%%%%%%%%%%%%%%%%%%%%%%%%%%%%%%%%%%%%%%%%%%%
\par\vspace{0.3cm}
%%%%%%%%%%%%%%%%%%%%%%%%%%%%%%%%%%%%%%%%%%%%%%%%%%%%%%%%%%%%%%%%%%%%%%%%%%%%%%%%%%%%%%%%%%%%%%%%%%%%%%%
\noindent {\bf Proof of Theorem \ref{ito}.}
Based on Corollary \ref{deltaX}(2) and Corollary \ref{bound-HX}, the stochastic integrals featured in \eqref{ito-eq} are rigorously defined.
Consider an inner FCS $(T_n,X_k^{(n)})$ for $X_k\in\mathcal{S}^{i,\mathbb{B}}$, where $k=1,2,\cdots,d$.
For each $n\in \mathbf{N}^+$, define
$Z^{(n)}=(X_{1}^{(n)},X_{2}^{(n)},\cdots, X_{d}^{(n)})$ and introduce $\eta^{(n)}=\Sigma \alpha^{(n)}$ with
\[
\alpha^{(n)}=F(Z^{(n)})-F(Z^{(n)}_{-})-\sum_{k=1}^d D_kF(Z^{(n)}_{-})\Delta X^{(n)}_k.
\]
It is derived from Theorem \ref{delta}(1) and Corollary \ref{deltaX}(2) that $(T_n,\alpha^{(n)})$ is an FCS for $\alpha$ (a $\mathbb{B}$-thin process), where $\alpha=F(Z)-F(Z_{-})-\sum_{k=1}^d D_kF(Z_{-})\Delta X_k$,
and according to Theorem 9.35 in \cite{He}, $\eta^{(n)}$ is well-defined.
Consequently, Theorem \ref{thin}(1) establishes that $\eta$ is well-defined and that $(T_n,\eta^{(n)})$ constitutes an FCS for $\eta\in \mathcal{V}^\mathbb{B}$.
Subsequently, we infer that for each $n\in \mathbf{N}^+$, the following equalities hold:
\begin{align*}
&\big(F(Z)-F(Z(0))\mathfrak{I}_\mathbb{B}\big)I_{\mathbb{B}\llbracket{0,T_n}\rrbracket}\\
=&\big(F(Z^{(n)})-F(Z^{(n)}(0))\big)I_{\mathbb{B}\llbracket{0,T_n}\rrbracket}\\
=&\bigg(\sum_{k=1}^d D_kF(Z^{(n)}_{-}).(X_k^{(n)}-X_k^{(n)}(0))+\eta^{(n)}+
\frac{1}{2}\sum_{k,l=1}^d D_{kl}F(Z^{(n)}_{-}).\langle(X_k^{(n)})^c,(X_l^{(n)})^c\rangle\bigg)I_{\mathbb{B}\llbracket{0,T_n}\rrbracket}\\
=&\bigg(\sum_{k=1}^d D_kF(Z_{-})_{\bullet}(X_k-X_k(0)\mathfrak{I}_\mathbb{B})+\eta+
\frac{1}{2}\sum_{k,l=1}^d D_{kl}F(Z_{-})_{\bullet}\langle X_k^c,X_l^c\rangle\bigg)I_{\mathbb{B}\llbracket{0,T_n}\rrbracket}.
\end{align*}
Here, the second equality is derived from the It\^{o} formula for semimartingales (see, e.g., Theorem 9.35 in \cite{He}), while the third equality stems from Corollary \ref{bound-HX} and the properties $\langle X_k^c,X_l^c\rangle\in(\mathcal{A}_{\mathrm{loc}}\cap \mathcal{C})^\mathbb{B}$ with the FCS $(T_n,\langle(X_k^{(n)})^c,(X_l^{(n)})^c\rangle)$ (as per Theorem \ref{property-qr-p}(1) and Theorem \ref{XTc}(1)).
Therefore, Theorem \ref{process}(1) leads to the confirmation of \eqref{ito}.
$\hfill\blacksquare$

%%%%%%%%%%%%%%%%%%%%%%%%%%%%%%%%%%%%%%%%%%%%%%%%%%%%%%%%%%%%%%%%%%%%%%%%%%%%%%%%%%%%%%%%%%%%%%%%%%%%%%%
\par\vspace{0.3cm}
%%%%%%%%%%%%%%%%%%%%%%%%%%%%%%%%%%%%%%%%%%%%%%%%%%%%%%%%%%%%%%%%%%%%%%%%%%%%%%%%%%%%%%%%%%%%%%%%%%%%%%%
\noindent {\bf Proof of Corollary \ref{IbP-p}.}
By applying Theorem \ref{ito} with $d=2$, where $Z=(X,Y)$ and $F(x,y)=xy$, we derive the following expression:
\begin{equation}\label{IP-1}
XY-X_0Y_0\mathfrak{I}_\mathbb{B}={X_{-}}{}_{\bullet}(Y-Y_0\mathfrak{I}_\mathbb{B})+{Y_{-}}{}_{\bullet}(X-X_0\mathfrak{I}_\mathbb{B})
+\langle X^c,Y^c\rangle+\Sigma(XY-X_{-}Y_{-}-X_{-}\Delta Y-Y_{-}\Delta X).
\end{equation}
From \eqref{HX-expression-2}, it is straightforward to observe that
\[
{X_{-}}{}_{\bullet}(Y_0\mathfrak{I}_\mathbb{B})={Y_{-}}{}_{\bullet}(X_0\mathfrak{I}_\mathbb{B})
=X_0Y_0\mathfrak{I}_\mathbb{B}.
\]
Furthermore, utilizing the relation $\Delta X\Delta Y=XY-X_{-}Y_{-}-X_{-}\Delta Y-Y_{-}\Delta X$, we obtain
\begin{equation}\label{IP-2}
\langle X^c,Y^c\rangle+\Sigma(XY-X_{-}Y_{-}-X_{-}\Delta Y-Y_{-}\Delta X)
=[X,Y]-X_0Y_0\mathfrak{I}_\mathbb{B}.
\end{equation}
Finally, \eqref{IbP-eq} is established by invoking Theorem \ref{HX-p}(2) and substituting \eqref{IP-2} into \eqref{IP-1}.
$\hfill\blacksquare$

%%%%%%%%%%%%%%%%%%%%%%%%%%%%%%%%%%%%%%%%%%%%%%%%%%%%%%%%%%%%%%%%%%%%%%%%%%%%%%%%%%%%%%%%%%%%%%%%%%%%%%%
\par\vspace{0.3cm}
%%%%%%%%%%%%%%%%%%%%%%%%%%%%%%%%%%%%%%%%%%%%%%%%%%%%%%%%%%%%%%%%%%%%%%%%%%%%%%%%%%%%%%%%%%%%%%%%%%%%%%%
\noindent {\bf Proof of Proposition \ref{strategy-e}.}
Let $\mathbb{B}$ be given by \eqref{B}.
We just provide a proof for the case of the self-financing strategy, and the demonstration for the case of the $\alpha$-admissible strategy can be carried out in a similar manner.

{\it Necessity.} Assume that ${\vartheta}\in L(S,\mathbb{F},\mathbb{B})$ is a self-financing strategy. By leveraging equation \eqref{wealth1} and the results from Theorems \ref{HX=} and \ref{eq-HX}, we can infer the existence of an FCS $(T_n,\vartheta^{(n)})$ for ${\vartheta}\in \mathcal{P}^\mathbb{B}$ and a decomposed inner FCS $(T_n,\widetilde{M}^{(n)},\widetilde{A}^{(n)})$ for $S\in \mathcal{S}^{i,\mathbb{B}}$. These satisfy the condition that for every $n\in \mathbf{N}^+$, ${\vartheta}^{(n)}\in\mathcal{L}_m(\widetilde{M}^{(n)})\cap\mathcal{L}_s(\widetilde{A}^{(n)})$ and that $(T_n,{\vartheta}^{(n)}.\widetilde{M}^{(n)},{\vartheta}^{(n)}.\widetilde{M}^{(n)})$ forms a decomposed inner FCS for $\vartheta_{\bullet}S\in\mathcal{S}^{i,\mathbb{B}}$. Define the following:
\[
M^{(n)}=(\widetilde{M}^{(n)})^{T_n\wedge (T_F-)},\quad A^{(n)}=(\widetilde{A}^{(n)})^{T_n\wedge (T_F-)},\quad
\widetilde{S}^{(n)}=\widetilde{M}^{(n)}+\widetilde{A}^{(n)},\quad S^{(n)}=M^{(n)}+A^{(n)},\quad n\in \mathbf{N}^+.
\]
It follows that $(T_n,M^{(n)},A^{(n)})$ also constitutes a decomposed inner FCS for $S\in \mathcal{S}^{i,\mathbb{B}}$ such that ${\vartheta}^{(n)}\in\mathcal{L}_m(M^{(n)})\cap\mathcal{L}_s(A^{(n)})$ for each $n\in \mathbf{N}^+$.
For each $n\in \mathbf{N}^+$, denote $\widetilde{X}^{(n)}$ as the wealth in the financial market $(\widetilde{S}^{(n)},\mathbb{F})$. Consequently, $X^{(n)}=(\widetilde{X}^{(n)})^{T_n\wedge (T_F-)}$ represents the wealth in $(S^{(n)},\mathbb{F})$. By applying \eqref{wealth1} and (G), it becomes evident that for each $n\in \mathbf{N}^+$,
\[
\widetilde{X}^{(n)}I_{\mathbb{B}\llbracket{0,T_n}\rrbracket}
=XI_{\mathbb{B}\llbracket{0,T_n}\rrbracket}
=(({x}_0-\vartheta_0S_0)\mathfrak{I}_\mathbb{B}+{\vartheta}_{\bullet}{S})I_{\mathbb{B}\llbracket{0,T_n}\rrbracket}
=(x_0-{\vartheta}^{(n)}_0\widetilde{S}^{(n)}_0+{\vartheta}^{(n)}.\widetilde{S}^{(n)})I_{\mathbb{B}\llbracket{0,T_n}\rrbracket},
\]
which further implies that $X^{(n)}=x_0-{\vartheta}^{(n)}_0S^{(n)}_0+{\vartheta}^{(n)}.S^{(n)}$. Hence, for each $n\in \mathbf{N}^+$, the strategy ${\vartheta}^{(n)}$ is self-financing in the financial market $(S^{(n)},\mathbb{F})$.

{\it Sufficiency.} Let $(T_n,\vartheta^{(n)})$ denote the FCS for ${\vartheta}\in \mathcal{P}^\mathbb{B}$, and $(T_n,M^{(n)},A^{(n)})$ represent the decomposed inner FCS for $S\in \mathcal{S}^{i,\mathbb{B}}$. For each $n\in \mathbf{N}^+$, the strategy ${\vartheta}^{(n)}$ is assumed to be self-financing within the financial market defined by $({S}^{(n)}=M^{(n)}+A^{(n)},\mathbb{F})$. Consequently, it can be inferred that for each $n\in \mathbf{N}^+$, ${\vartheta}^{(n)}\in L({S}^{(n)},\mathbb{F})$ satisfying the equation:
\begin{equation}\label{wealth2}
{X}^{(n)}={x}_0-{\vartheta}^{(n)}_0S^{(n)}_0+{\vartheta}^{(n)}.{S}^{(n)},
\end{equation}
where ${X}^{(n)}$ signifies the investor's wealth in the financial market $({S}^{(n)},\mathbb{F})$. By employing (G), \eqref{wealth2}, and Theorem \ref{eq-HX}, we can effortlessly derive \eqref{wealth1} as follows:
\begin{align*}
{X}I_{\mathbb{B}\llbracket{0,T_n}\rrbracket}
&={X}^{(n)}I_{\mathbb{B}\llbracket{0,T_n}\rrbracket}\\
&=({x}_0-{\vartheta}^{(n)}_0S^{(n)}_0+{\vartheta}^{(n)}.{S}^{(n)})I_{\mathbb{B}\llbracket{0,T_n}\rrbracket}\\
&=(({x}_0-\vartheta_0S_0)\mathfrak{I}_\mathbb{B}+{\vartheta}_{\bullet}{S})I_{\mathbb{B}\llbracket{0,T_n}\rrbracket}
\end{align*}
for each $n\in \mathbf{N}^+$. Therefore, ${\vartheta}$ is self-financing in $(S,\mathbb{F},\mathbb{B})$.
$\hfill\blacksquare$

%%%%%%%%%%%%%%%%%%%%%%%%%%%%%%%%%%%%%%%%%%%%%%%%%%%%%%%%%%%%%%%%%%%%%%%%%%%%%%%%%%%%%%%%%%%%%%%%%%%%%%%
\par\vspace{0.3cm}
%%%%%%%%%%%%%%%%%%%%%%%%%%%%%%%%%%%%%%%%%%%%%%%%%%%%%%%%%%%%%%%%%%%%%%%%%%%%%%%%%%%%%%%%%%%%%%%%%%%%%%%
\noindent {\bf Proof of Lemma \ref{Z}.}
Let $S$ be specified by \eqref{expZ}.
It suffices to prove that $S$ is the unique $\mathbb{B}$-inner semimartingale satisfying \eqref{price}, and the rest of the proof is trivial.

Define $X=Z-\frac{1}{2}\langle Z^c\rangle$. It is evident that $X\in \mathcal{S}^{i,\mathbb{B}}$, satisfying the conditions $\Delta X=\Delta Z=0$ (as established by Theorem \ref{HX-property}(1)), $X^c=Z^c$ and $X_0=0$. Applying Theorem \ref{ito} with $d=1$ and $f(x)=s_0e^x$, we derive the following:
\begin{align*}
S=&f(X)\\
=&f(X_0)\mathfrak{I}_\mathbb{B}+f'(X_{-})_{\bullet}X+\frac{1}{2}f''(X_{-})_{\bullet}\langle X^c\rangle\\
=&s_0\mathfrak{I}_\mathbb{B}+{S_{-}}{}_{\bullet}\left(Z-\frac{1}{2}\langle Z^c\rangle\right)+\frac{1}{2}{S_{-}}{}_{\bullet}\langle Z^c\rangle\\
=&s_0\mathfrak{I}_\mathbb{B}+{S_{-}}{}_{\bullet}Z,
\end{align*}
where the last equality is justified by Theorem \ref{HX-p}. Consequently, $S$ emerges as a $\mathbb{B}$-inner semimartingale that fulfills the condition given by \eqref{price}.

Suppose that $U$ is another $\mathbb{B}$-inner semimartingale which satisfies the condition $U=s_0\mathfrak{I}_\mathbb{B}+{U_{-}}{}_{\bullet}Z$. By defining $Y=S-U$, we can invoke Theorem \ref{HX-p}(1) to establish that $Y={Y_{-}}{}_{\bullet}Z$.
Next, let $(T_n,\widetilde{Y}^{(n)})$ and $(T_n,Z^{(n)})$ be inner FCSs for $Y\in \mathcal{S}^{i,\mathbb{B}}$ and $Z\in \mathcal{S}^{i,\mathbb{B}}$, respectively. For each $n\in \mathbf{N}^+$, we set $Y^{(n)}=(\widetilde{Y}^{(n)})^{S_n\wedge (\tau-)}$. It is straightforward to verify that $(S_n,Y^{(n)})$ constitutes an inner FCS for $Y\in \mathcal{S}^{i,\mathbb{B}}$.
Furthermore, Corollaries \ref{deltaX}(2) and \ref{bound-HX} indicate that $(S_n,Y^{(n)}_{-}.Z^{(n)})$ is an inner FCS for ${Y_{-}}{}_{\bullet}Z\in \mathcal{S}^{i,\mathbb{B}}$. Given that both $(T_n,Y^{(n)})$ and $(T_n,Y^{(n)}_{-}.Z^{(n)})$ serve as inner FCSs for $Y\in \mathcal{S}^{i,\mathbb{B}}$, we can infer the following equality:
\[
Y^{(n)}=(\widetilde{Y}^{(n)})^{S_n\wedge (\tau-)}=(Y^{(n)}_{-}.Z^{(n)})^{S_n\wedge (\tau-)}
=Y^{(n)}_{-}.(Z^{(n)})^{S_n\wedge (\tau-)},\quad n\in \mathbf{N}^+,
\]
where $(Z^{(n)})^{S_n\wedge (\tau-)}$ is a semimartingale.
The Dol\'{e}an-Dade exponential formula (see, e.g., Theorem 9.39 in \cite{He}) then implies that $Y^{(n)}=0$ for each $n\in \mathbf{N}^+$. Consequently, we arrive at the conclusion that $Y=0\mathfrak{I}_\mathbb{B}$ (i.e., $U=S$), thereby completing the proof.
$\hfill\blacksquare$

%%%%%%%%%%%%%%%%%%%%%%%%%%%%%%%%%%%%%%%%%%%%%%%%%%%%%%%%%%%%%%%%%%%%%%%%%%%%%%%%%%%%%%%%%%%%%%%%%%%%%%%
\par\vspace{0.3cm}
%%%%%%%%%%%%%%%%%%%%%%%%%%%%%%%%%%%%%%%%%%%%%%%%%%%%%%%%%%%%%%%%%%%%%%%%%%%%%%%%%%%%%%%%%%%%%%%%%%%%%%%
\noindent {\bf Proof of Lemma \ref{Sn2}.}
(1) From the definition, it is straightforward to see that $\widetilde{M}^{(n)}\in \mathcal{M}_{\mathrm{loc},0}$. Given that $N^{(k)},k=1,2,\cdots,n$ are independent standard Brownian motions, it follows that
\[
\langle \widetilde{M}^{(n)}\rangle=\sum\limits_{k=1}^{n-1}I_{\rrbracket{a_{k-1},a_k}\rrbracket}.\widetilde{A}
+I_{\rrbracket{a_{n-1},+\infty}\llbracket}.\widetilde{A}
=\widetilde{A}.
\]
By invoking L\'{e}vy theorem (see, e.g., Theorem 3.16 in \cite{Karatzas-Shreve}), we conclude that $\widetilde{M}^{(n)}$ is indeed a standard Brownian motion.

(2) By using \eqref{expZ} and part (1), the proof is straightforward.
$\hfill\blacksquare$

%%%%%%%%%%%%%%%%%%%%%%%%%%%%%%%%%%%%%%%%%%%%%%%%%%%%%%%%%%%%%%%%%%%%%%%%%%%%%%%%%%%%%%%%%%%%%%%%%%%%%%%
\par\vspace{0.3cm}
%%%%%%%%%%%%%%%%%%%%%%%%%%%%%%%%%%%%%%%%%%%%%%%%%%%%%%%%%%%%%%%%%%%%%%%%%%%%%%%%%%%%%%%%%%%%%%%%%%%%%%%
\noindent {\bf Proof of Proposition \ref{pro}.}
According to Lemma \ref{Sn2},  $\widetilde{S}^{(n)}$ can alternatively be represented as the stochastic differential equation
\[
d\widetilde{S}^{(n)}_t=\widetilde{S}^{(n)}_t(\widetilde{\mu}^{(n)}_t dt+\widetilde{\sigma}^{(n)}_t d\widetilde{M}^{(n)}_t), \quad \widetilde{S}^{(n)}_0=s_0,\; t\in \mathbf{R}^+,
\]
and $(\tau_n,(\widetilde{S}^{(n)})^{a_n})$ forms an
inner FCS for $S\in\mathcal{S}^{i,\mathbb{B}}$.

(1) It is straightforward to verify that the financial market $((\widetilde{S}^{(n)})^{a_n},\mathbb{F})$ satisfies the NA condition for each $n\in \mathbf{N}^+$ (see, e.g., Theorem 12.1.8 in \cite{Oksendal}). Consequently, the financial market $(S,\mathbb{F},\mathbb{B})$ also satisfies NA.

(2) Fix $n\in \mathbf{N}^+$, and consider the financial market $(\widetilde{S}^{(n)},\mathbb{F})$ over the time span $\llbracket{0,a_n}\rrbracket$ and the investor's portfolio problem with uncertain horizon $\tau$.
Define $J$ as the inverse of the first derivative of $\varphi$, i.e., $J(y)=\frac{1}{y},y>0$, and put
\[
H_t=\exp\left\{-\int_0^t\frac{\widetilde{\mu}_s^{(n)}}{\widetilde{\sigma}_s^{(n)}}d\widetilde{M}_s^{(n)}
-\frac{1}{2}\int_0^t\left(\frac{\widetilde{\mu}_s^{(n)}}{\widetilde{\sigma}_s^{(n)}}\right)^2ds
\right\},\; t\geq 0,
\]
and $v_t=\frac{1}{x_0}$, where $x_0$ is the investor's initial wealth.
Then it can be verified that $J(v_0)=x_0$ holds and $(H_tJ(v_tH_t))_{t\geq 0}$ is a martingale.
Consequently, by applying Theorem 2 from \cite{Blanchet}, the optimal proportion $w^{(n)}$ of wealth invested in the stock, and the optimal wealth $X^{(n)}$ for the problem \eqref{optimal-problem} within the market $(\widetilde{S}^{(n)},\mathbb{F})$ is determined as
\[
w^{(n)}_t=-\frac{J'(v_tH_t)v_tH_t\mu^{(n)}_t}{J(v_tH_t)(\sigma^{(n)}_t)^2}
=\frac{\mu^{(n)}_t}{(\sigma^{(n)}_t)^2}
\]
and
\[
X^{(n)}_t=J(v_tH_t)=\frac{x_0}{H_t}.
\]
By utilizing (G2), (G3) and \eqref{mu-sigma},
we establish that $(\tau_n,w^{(n)})$ forms an FCS for $w\in \mathcal{P}^\mathbb{B}$, and $(\tau_n,X^{(n)})$ serves as a CS for $X^*$. It follows that the optimal strategy $\pi^{(n)}$ in $(\widetilde{S}^{(n)},\mathbb{F})$ can be established by $\pi^{(n)}=\frac{w^{(n)}X^{(n)}}{\widetilde{S}^{(n)}}$, and $(\tau_n,\pi^{(n)})$ forms an FCS for $\pi\in \mathcal{P}^\mathbb{B}$, where $\pi$ is given by \eqref{optimal-solution}.
Therefore, we conclude that $\pi=\frac{wX^*}{S}$ is the optimal strategy (w.r.t. $(\tau_n,s_0+\widetilde{S}^{(n)}\widetilde{\sigma}^{(n)}.\widetilde{M}^{(n)},\widetilde{S}^{(n)}\widetilde{\mu}^{(n)}
.\widetilde{A})$) in the financial market $(S,\mathbb{F},\mathbb{B})$.
$\hfill\blacksquare$

\end{document}